\newtheorem{thm}{Theorem}[part]
\newtheorem{prop}[thm]{Proposition}
\newtheorem{cor}[thm]{Corollary}
\newtheorem{lem}[thm]{Lemma}
\newtheorem{defi}[thm]{Definition}
\newtheorem{remark}[thm]{Remark}
\newtheorem{example}[thm]{Example}
\newtheorem{pb}[thm]{Problem}
\newtheorem{conj}[thm]{Conjecture}
\newenvironment{rk}{\begin{remark}\rm}{\end{remark}}
\newenvironment{Def}{\begin{defi}\rm}{\end{defi}}
\newenvironment{problem}{\begin{pb}\rm}{\end{pb}}
\newenvironment{conjecture}{\begin{conj}\rm}{\end{conj}}
\numberwithin{equation}{part}
\numberwithin{section}{part}
\newcommand{\real}{{\mathbb R}}
\newcommand{\nat}{{\mathbb N}}
\newcommand{\ent}{{\mathbb Z}}
\newcommand{\com}{{\mathbb C}}
\newcommand{\un}{{\mathds {1}}}
\newcommand{\T}{{\mathbb T}}
\newcommand{\I}{{\mathbb I}}
\newcommand{\A}{{\mathcal A}}
\newcommand{\F}{{\mathcal F}}
\renewcommand{\H}{{\mathcal H}}
\newcommand{\M}{{\mathcal M}}
\newcommand{\N}{{\mathcal N}}
\newcommand{\BMO}{{\rm BMO}}
\renewcommand{\a}{\alpha}
\renewcommand{\b}{\beta}
\newcommand{\D}{\Delta}
\renewcommand{\d}{\delta}
\newcommand{\e}{\varepsilon}
\newcommand{\f}{\varphi}
\newcommand{\p}{\psi}
\renewcommand{\t}{\theta}
\renewcommand{\l}{\lambda}
\newcommand{\s}{\sigma}
\renewcommand{\o}{\omega}
\newcommand{\ot}{\otimes}
\newcommand{\8}{\infty}
\newcommand{\el}{\ell}
\newcommand{\la}{\langle}
\newcommand{\ra}{\rangle}
\newcommand{\wt}{\widetilde}
\newcommand{\wh}{\widehat}
\newcommand{\n}{\noindent}
\newcommand{\les}{\lesssim}
\newcommand{\Om}{\Omega}
\newcommand{\be}{\begin{eqnarray*}}
\newcommand{\ee}{\end{eqnarray*}}
\newcommand{\beq}{\begin{equation}}
\newcommand{\eeq}{\end{equation}}
\newcommand{\beqn}{\begin{equation*}}
\newcommand{\eeqn}{\end{equation*}}
\begin{document}

\title{Sobolev, Besov and Triebel-Lizorkin spaces on quantum tori}

\thanks{{\it 2000 Mathematics Subject Classification:} Primary: 46L52, 46L51, 46L87. Secondary: 47L25, 47L65, 43A99}

\thanks{{\it Key words:} Quantum tori, noncommutative $L_p$-spaces, Bessel and Riesz potentials, (potential) Sobolev spaces, Besov spaces, Triebel-Lizorkin spaces, Hardy spaces,  characterizations, Poisson and heat semigroups, embedding inequalities, interpolation, (completely) bounded Fourier multipliers}

\author{Xiao Xiong}
\address{Laboratoire de Math{\'e}matiques, Universit{\'e} de Franche-Comt{\'e}, 25030 Besan\c{c}on Cedex, France; current address: Department of Mathematics and Statistics, University of Saskatchewan, Saskatoon, Saskatchewan, S7N 5E6, Canada}
\email{ufcxx56@gmail.com}

\author{Quanhua Xu}
\address{Institute for Advanced Study in Mathematics, Harbin Institute of Technology, Harbin 150001, China; and Laboratoire de Math{\'e}matiques, Universit{\'e} de Bourgogne Franche-Comt{\'e}, 25030 Besan\c{c}on Cedex, France; and Institut Universitaire de France}
\email{qxu@univ-fcomte.fr}

\author{Zhi Yin}
\address{Institute for Advanced Study in Mathematics, Harbin Institute of Technology, Harbin 150001, China}
\email{hustyinzhi@163.com}

\date{}
\maketitle

\markboth{X. Xiong, Q. Xu, and Z. Yin}%
{Sobolev, Besov and Triebel-Lizorkin spaces on quantum tori}

%%%%%%%%%%%%%%%%%%%%%%%%%%%%%%%%%%%%%%%%%%%%%%%%%%%%%%%%%%%%%%%%%%%%%%%%
%%%%%%%%%%%%%%%%%%%%%%%%%%%%%%%%%%%%%%%%%%%%%%%%%%%%%%%%%%%%%%%%%%%%%%%%

\begin{abstract}
This paper  gives a systematic study of  Sobolev, Besov and Triebel-Lizorkin spaces  on a noncommutative $d$-torus $\T^d_\t$ (with $\t$ a skew symmetric real $d\times d$-matrix). These spaces share many properties with their classical counterparts. We prove, among other basic properties, the lifting theorem for all these spaces and a Poincar\'e type inequality for Sobolev spaces. We also show that the Sobolev space $W^k_\8(\T^d_\t)$ coincides with the Lipschitz space of order $k$, already studied by Weaver in the case $k=1$. We establish the embedding inequalities of all these spaces, including the Besov and Sobolev embedding theorems. We obtain Littlewood-Paley type characterizations for Besov and Triebel-Lizorkin spaces in a general way, as well as the concrete ones in terms of the Poisson, heat semigroups and  differences. Some of them are new even in the commutative case, for instance, our Poisson semigroup characterizations  improve the classical ones. As a consequence of the characterization of the Besov spaces by differences, we extend to the quantum setting the recent results of  Bourgain-Br\'ezis -Mironescu and Maz'ya-Shaposhnikova on the limits of Besov norms. The same characterization implies that the Besov space $B^\a_{\8,\8}(\T^d_\t)$ for $\a>0$ is the quantum analogue of the usual Zygmund class of order $\a$.  We investigate the interpolation of all these spaces, in particular, determine explicitly the K-functional of the couple $(L_p(\T^d_\t), \, W^k_p(\T^d_\t))$, which is the quantum analogue of a classical result due to Johnen and Scherer. Finally, we show that the completely bounded Fourier multipliers on all these spaces do not depend on the matrix $\t$, so coincide with those on the corresponding spaces on the usual $d$-torus. We also give a quite simple description of (completely) bounded Fourier multipliers on the Besov spaces in terms of their behavior on the $L_p$-components in the Littlewood-Paley decomposition.
\end{abstract}

%%%%%%%%%%%%%%%%%%%%%%%%%%%%%%%%%%%%%%%%%%%%%%%%%%%%%%%%%%%%%%%%%%%%%%%%
%%%%%%%%%%%%%%%%%%%%%%%%%%%%%%%%%%%%%%%%%%%%%%%%%%%%%%%%%%%%%%%%%%%%%%%%

\newpage

\tableofcontents

%%%%%%%%%%%%%%%%%%%%%%%%%%%%%%%%%%%%%%%%%%%%%%%%%%%%%%%%%%%%%%%%%%%%%%%%
%%%%%%%%%%%%%%%%%%%%%%%%%%%%%%%%%%%%%%%%%%%%%%%%%%%%%%%%%%%%%%%%%%%%%%%%
 \newpage

{\Large\part{Introduction}}

%%%%%%%%%%%%%%%%%%%%%%%%%%%%%%%%%%%%%%%%%%%%%%%%%%%%%%%%%%%%%%%%%%%%%%%%
%%%%%%%%%%%%%%%%%%%%%%%%%%%%%%%%%%%%%%%%%%%%%%%%%%%%%%%%%%%%%%%%%%%%%%%%

This paper is the second part of our project about analysis on quantum tori. The previous one \cite{CXY2012} studies several subjects of harmonic analysis on these objects, including maximal inequalities, mean and pointwise convergences of Fourier series, completely bounded Fourier multipliers on $L_p$-spaces and the theory of Hardy spaces. It was directly inspired by the current  line of investigation on noncommutative harmonic analysis. As pointed out there, very little had been done about the analytic aspect of quantum tori before \cite{CXY2012}; this situation is in strong contrast with their geometry  on which there exists a considerably long list of publications. Presumably, this deficiency is due to numerous difficulties one may encounter when dealing with noncommutative $L_p$-spaces, since these spaces come up unavoidably if one wishes to do analysis.   \cite{CXY2012} was made possible by the recent developments  on noncommutative martingale/ergodic inequalities and the Littlewood-Paley-Stein theory for quantum Markovian semigroups, which had been achieved  thanks to the efforts of many researchers; see, for instance,  \cite{PX1997, Junge2002, JX2003, JX2007,  Ran2002, Ran2007, PR2006}, and \cite{JLX2006, Mei2007, Mei2008, JM2010, JM2011}.

 \medskip

This second part intends to study  Sobolev, Besov and Triebel-Lizorkin spaces on quantum tori.  In the classical setting, these spaces are fundamental for many branches of mathematics such as harmonic analysis, PDE, functional analysis and approximation theory. Our references for the classical theory are \cite{Ad1975, Ma1980, Ni1975, Pee1976, HT1983, HT1992}.  However, they have never been investigated so far in the quantum setting, except two special cases to our best knowledge. Firstly, Sobolev spaces with the $L_2$-norm  were studied by Spera \cite{Spera1992} in view of applications to the Yang-Mills theory for quantum tori \cite{Spera1992b} (see also \cite{GL2004, Lu2006, Pol2006, Rosenberg2008} for  related works). On the other hand, inspired by Connes' noncommutative geometry \cite{Connes1994}, or more precisely, the part on noncommutative metric spaces, Weaver \cite{Weaver1996, Weaver1998} developed the Lipschitz classes of order $\a$ for $0<\a\le1$ on quantum tori. The fact that only these two cases have been studied so far illustrates once more the above mentioned difficulties related to  noncommutativity.

 \medskip
 
 Among these difficulties, a specific one is to be emphasized: it is notably relevant to this paper,  and is the lack of a noncommutative analogue of the usual pointwise maximal function.  However,  maximal function techniques play a paramount role in the classical theory of Besov and Triebel-Lizorkin spaces (as well as in the theory of Hardy spaces). They are no longer available in the quantum setting, which forces us to invent new tools, like in the previously quoted works on noncommutative martingale inequalities and the quantum Littlewood-Paley-Stein theory where the same difficulty already appeared.

 \medskip

One powerful tool used in  \cite{CXY2012} is the transference method. It consists in transferring problems on quantum tori to  the corresponding ones in the case of operator-valued functions on the usual tori, in order to use existing results in the latter case or adapt classical arguments. This method is efficient for several problems studied in \cite{CXY2012}, including the maximal inequalities and Hardy spaces. It is still useful for some parts of the present work; for instance,  Besov spaces can be investigated through the classical vector-valued Besov spaces by means of transference, the relevant Banach spaces being the noncommutative $L_p$-spaces on a quantum torus. However, it becomes inefficient for  others. For example, the Sobolev or Besov embedding inequalities cannot be proved by transference. On the other hand, if one wishes to study  Triebel-Lizorkin spaces on quantum tori via transference, one should first develop the theory of operator-valued Triebel-Lizorkin spaces on the classical tori. The latter is as hard as the former. Contrary to  \cite{CXY2012} , the transference method will play a very limited role in the present paper. Instead, we will use Fourier multipliers in a crucial way, this approach is of interest in its own right.  We thus develop an intrinsic differential analysis on quantum tori, without frequently referring to the usual tori via transference as in \cite{CXY2012}. This is a major advantage of the present methods over those of \cite{CXY2012}. We hope that the study carried out here  would open new perspectives of applications and motivate more future research works on quantum tori or in similar circumstances. In fact, one of our main objectives of developing analysis on quantum tori is to gain more insights on the geometrical structures of these objects, so ultimately to return back to their differential geometry.

\medskip

To describe the content of the paper, we need  some definitions and notation (see the respective sections below for more details).  Let $d \ge 2$ and $\theta = (\theta_{k j})$ be a real skew-symmetric
$d \times d$-matrix. The $d$-dimensional noncommutative
torus $\mathcal{A}_{\theta}$ is the universal C*-algebra generated by $d$
unitary operators  $U_1, \ldots, U_d$ satisfying the following commutation
relation
 $$U_k U_j = e^{2 \pi \mathrm{i} \theta_{k j}} U_j U_k,\quad1\le j, k\le d.$$
 Let  $U=(U_1,\cdots, U_d)$. For $m=(m_1,\cdots,m_d)\in\ent^d$, set
 $$U^m=U_1^{m_1}\cdots U_d^{m_d}.$$
 A polynomial in $U$ is a finite sum:
  $$ x =\sum_{m \in \mathbb{Z}^d}\alpha_{m} U^{m}\,,\quad  \alpha_{m} \in \mathbb{C}.$$
 For such a polynomial $x$, we define
 $\tau (x) = \alpha_{0}$.
Then $\tau$ extends to a  faithful  tracial state on $\A_{\theta}$.  Let  $\mathbb{T}^d_{\theta}$ be the w*-closure of $\A_{\theta}$ in  the GNS representation of $\tau$. This is our $d$-dimensional quantum torus.
It is to be viewed as a deformation of the usual $d$-torus $\T^d$, or more precisely, of the commutative algebra $L_\8(\T^d)$. The noncommutative $L_p$-spaces associated to  $(\T^d_\t,\tau)$ are denoted by $L_p(\T^d_\t)$. The Fourier transform of an element $x\in L_1(\T^d_\t)$ is defined by
  $$\wh x(m)=\tau\big((U^m)^*x\big),\quad m\in\ent^d.$$
The formal Fourier series of $x$ is
$$x\sim \sum_{m \in \mathbb{Z}^d}\wh x (m) U^{m}\;.$$

The differential structure of $\T^d_\t$ is modeled on that of $\T^d$. Let
 $$\mathcal{S}(\T^d_\t)=\big\{ \sum_{m\in\ent^d} a_m U^m : \{a_m\}_{m\in\ent^d} \; \mbox{rapidly decreasing}\big\}.$$
This is the deformation of the space of infinitely differentiable functions on $\T^d$; it is the Schwartz class of $\T^d_\t$. Like in the commutative case, $\mathcal{S}(\T^d_\t)$ carries  a natural  locally convex topology. Its topological dual  $\mathcal{S}'(\T^d_\t)$ is the space of distributions on $\T^d_\t$.
The partial derivations on $\mathcal{S}(\T^d_\t)$ are determined by
 $$\partial_j(U_j)=2\pi  \mathrm{i} U_j\;\text{ and }\; \partial_j (U_k)=0, \quad k\neq j, \;1\le j, k\le d.$$
Given $m =(m_1,\ldots,m_d)\in \mathbb{N}_0^d$ ($\mathbb{N}_0$ denoting the set of nonnegative integers), the associated partial derivation $D^m$ is defined to be
$\partial_1^{m_1}\cdots \partial_d^{m_d}$. The order of $D^m$ is   $|m|_1=m_1+\cdots+ m_d$.  Let $\D=\partial_1^2+\cdots+\partial_d^2$ be the Laplacian.
By duality, the derivations and Fourier transform transfer to  $\mathcal{S}'(\T^d_\t)$  too.

Fix a Schwartz function $\f$ on $\real^d$ satisfying the usual Littlewood-Paley decomposition property expressed in \eqref{LP dec}. For each $k\ge0$ let $\f_k$ be the function whose Fourier transform is equal to $\f(2^{-k}\cdot)$. For a distribution $x$ on $\T^d_\t$, define
 $$\wt\f_k*x=\sum_{m\in\ent^d}\f(2^{-k}m)\wh x(m)U^m\,.$$
So $x\mapsto \wt\f_k*x$ is the Fourier multiplier with symbol $\f(2^{-k}\cdot)$.

We can now define the four families of function spaces on $\T^d_\t$ to be studied . Let $1 \le p, q\le\8$ and $k\in\nat, \a\in\real$, and let $J^\a$ be the Bessel potential of order $\a$:  $J^\a=(1-(2\pi)^{-2}\D)^{\frac \a 2}$.

 \medskip\begin{enumerate}[$\bullet$]
 \item {\em Sobolev spaces}:
 $$W_p^k(\T_\t^d)= \big\{ x\in\mathcal{S}'(\T^d_\t) :  D^{m} x \in L_p(\mathbb{T}_{\theta}^d) \textrm{ for each }m\in \mathbb {N}_0^d \textrm{ with } |m|_1\leq k \big\}.$$
 \item {\em  Potential or fractoinal Sobolev spaces}:
 $$H_p^\a(\T^d_\t)=\big\{ x\in\mathcal{S}'(\T^d_\t) : J^\a x\in L_p(\mathbb{T}_{\theta}^d) \big\}.$$
 \item {\em  Besov spaces}:
  $$B^\a_{p,q} (\T^d_\t)=\big\{x\in \mathcal{S}'(\T^d_\t) : \big(|\wh x(0)|^q + \sum_{k\ge 0} 2^{qk\a} \| \wt\f_k * x\|_p^q\big)^{\frac{1}{q}} < \8 \big\}.$$
\item {\em  Triebel-Lizorkin spaces for $p<\8$} :
  $$F^{\a, c}_{p} (\T^d_\t)=\big\{x\in \mathcal{S}'(\T^d_\t) : \big\|\big(|\wh x(0)|^2 + \sum_{k\ge 0} 2^{2k \a  } | \wt\f_k * x|^2\big)^{\frac{1}{2}}\big\|_p < \8 \big\}.$$
\end{enumerate}
Equipped with their natural norms, all these spaces become Banach spaces.

\medskip

Now we can describe the main results proved in this paper by classifying them into five  families.

\medskip\noindent{\bf Basic properties}. A common basic property of potential Sobolev, Besov and Triebel-Lizorkin spaces is a reduction theorem by the Bessel potential. For example, $J^\b$ is an isomorphism from $B^\a_{p,q} (\T^d_\t)$ onto $B^{\a-\b}_{p,q} (\T^d_\t)$ for all $1\le p, q\le\8$ and $\a, \b\in\real$; this is the so-called lifting or reduction theorem. Specifically to Triebel-Lizorkin spaces, $J^\a$ establishes an isomorphism between $F^{\a,c}_{p} (\T^d_\t)$ and the Hardy space  $\H_p^c(\T^d_\t)$ for any $1\le p<\8$. As a consequence, we deduce that the potential Sobolev space $H_p^\a(\T^d_\t)$ admits a Littlewood-Paley type characterization for $1<p<\8$.

Another type of reduction for Besov and Triebel-Lizorkin spaces is  that for any positive integer $k$, $x\in F^{\a, c}_{p} (\T^d_\t)$ (resp. $B^\a_{p,q} (\T^d_\t)$)  iff all its partial derivatives of order $k$ belong to $F^{\a-k, c}_{p} (\T^d_\t)$ (resp. $B^{\a-k}_{p,q} (\T^d_\t)$).

Concerning Sobolev spaces,  we obtain a Poincar\'e type inequality:  For any $x\in W_p^1(\T_{\t}^d)$ with $1\le p\le\8$, we have
  $$\|x-\wh x(0)\|_p\les\|\nabla x\|_{ p}\,.$$
Our proof of this inequality  greatly differs with standard arguments for such results in the commutative case.

We also show  that $W_\8^k(\T^d_\t)$ is the analogue for $\T^d_\t$ of the classical Lipschitz class of order $k$.  For $u\in\real^d$,  define $\D_u x=\pi_z(x)-x$, where $z=(e^{2\pi\mathrm{i}u_1}, \cdots, e^{2\pi\mathrm{i}u_d})$ and $\pi_z$ is the automorphism of $\T^d_\t$ determined by $U_j\mapsto z_jU_j$ for $1\le j\le d$. Then for a positive integer $k$,  $\D_u^k$ is the  $k$th difference operator on $\T^d_\t$ associated to $u$. Note that $\D_u^k$ is also  the Fourier multiplier with symbol $e_u^k$, where $e_u(\xi)=e^{2\pi\mathrm{i}u\cdot\xi}-1$.
The $k$th order modulus of $L_p$-smoothness of an  $x\in L_p(\T^d_\t)$  is defined to be
 $$\o_{p}^k(x,\e) =\sup_{0<|u|\leq \e}\big\|\D_u^k x\big\|_p\,.$$
We then  prove that for any $1\le p\le\8$ and $k\in\nat$,
 $$\sup_{\e>0}\frac{\o_{p}^k(x,\e)}{\e^k}\approx \sum_{m\in\nat_0^d,\,|m|_1=k} \|D^mx\|_{p}\,.$$
In particular, we recover  Weaver's results \cite{Weaver1996, Weaver1998} on  the Lipschitz class on $\T^d_\t$ when $p=\8$ and $k=1$.

\medskip\noindent{\bf Embedding}. The second family of results  concern the embedding of the preceding  spaces. A typical one is the analogue of the classical Sobolev embedding inequality for $W_p^k(\T_\t^d)$: If $1<p<q<\8$ such that $\frac1q=\frac1p- \frac{k}d$, then
 $$W_p^k(\T_\t^d)\subset L_q(\T_\t^d)\;\text{ continuously}.$$
Similar embedding inequalities hold for the other spaces too. Combined with real interpolation, the embedding inequality of $B^\a_{p,q} (\T^d_\t)$ yields the above Sobolev embedding. Our proofs of these embedding inequalities are based on Varopolous' celebrated semigroup approach \cite{Va1985} to the Littlewood-Sobolev theory, which has also been developed by Junge and Mei \cite{JM2011} in the noncommutative setting for the study of BMO spaces on quantum Markovian semigroups. Thus the characterization of Besov spaces by Poisson or heat semigroup described below is essential for the proof of our embedding inequalities.

We also establish compact embedding theorems. For instance, the previously mentioned Sobolev embedding becomes a compact one $W_p^k(\T_\t^d)\hookrightarrow L_{q^*}(\T_\t^d)$ for any $q^*$ with $1\le q^*<q$.

\medskip\noindent{\bf Characterizations.} The third family of results are  various characterizations of  Besov and Triebel-Lizorkin spaces. This is the most difficult and technical part of the paper. In the classical case, all existing proofs of these characterizations that we know use maximal function techniques in a crucial way. As pointed out earlier, these  techniques are no longer available. Instead, we use frequently Fourier multipliers. We would like to emphasize that our results are  better than those in literature even in the commutative case. Let us illustrate this by stating the characterization of Besov spaces in terms of the circular Poisson semigroup.

Given a distribution $x$ on $\T^d_\t$ and $k\in\ent$,  let
 $$\mathbb{P}_r(x) = \sum_{m \in \mathbb{Z}^d } \wh{x} ( m ) r^{|m|} U^{m}$$
and
 $$\mathcal{J}^k_r\,\mathbb{P}_r(x) = \sum_{m \in \ent^d}C_{m, k} \wh{x} (m) r^{|m|-k} U^{m}, \quad 0 \le r < 1\,,$$
where $|\cdot|$ denotes the Euclidean norm of $\real^d$ and
 $$C_{m, k}=|m|\cdots(|m|-k+1)\;\text{ if }\;k\ge0 \quad\text{ and }\quad C_{m, k}=\frac1{(|m|+1)\cdots(|m|-k)}\;\text{ if }\;k<0.$$
Note that $\mathcal{J}^k_r$ is the $k$th derivation operator relative to $r$ if $k\ge0$, and the $(-k)$th integration operator  if $k<0$. Then our characterization  asserts that for $1\le p, q\le\8$ and $\a\in\real, k\in\ent$ with $k>\a$,
  $$
 \|x\|_{B_{p,q}^\a}\approx
 \Big(\max_{|m|<k}|\wh x(m)|^q+\int_0^1(1-r)^{(k-\a)q}\big\|\mathcal{J}^k_r\,{\mathbb{P}}_r(x_k)\big\|_p^q\,\frac{d r}{1-r}\Big)^{\frac1q}\,,
 $$
where $\displaystyle x_k=x-\sum_{|m|<k}\wh x(m)U^m$.

The use of the integration operator (corresponding to negative $k$) in the above statement is completely new even in the case $\t=0$ (the commutative case). This is very natural, and consistent with the fact that the smaller $\a$ is, the lower smoothness the elements of $B_{p,q}^\a(\T^d_\t)$ have.  This is also consistent with the previously mentioned lifting theorem. A similar result holds for Triebel-Lizorkin spaces  too. But its proof is much subtler. For the latter spaces, another improvement of our characterization over the classical one lies on the assumption on $k$: in the classical case, $k$ is required to be greater than $d+\max(\a,0)$, while we  only need to assume  $k>\a.$

The classical characterization of Besov spaces by differences is also extended to the quantum setting. This result resembles the previous one in terms of the derivations of the Poisson semigroup.
For $1\le p, q\le\8$ and $\a\in\real, k\in\nat$ with $0<\a<k$, let
 $$\|x\|_{B^{\a,\o} _{p,q}} =\Big(\int_0^1 \e^{-\a q} \o_{p}^k(x,\e)^q \frac{d\e}{\e}\Big)^{\frac1q}\,.$$
Then $x\in B^{\a} _{p,q}(\T^d_\t)$ iff $\|x\|_{B^{\a,\o} _{p,q}} <\8$.

The difference characterization of Besov spaces shows that $B_{\8,\8}^\a(\T^d_\t)$ is the quantum analogue of the classical Zygmund class. In particular, for $0<\a<1$, $B_{\8,\8}^\a(\T^d_\t)$ is the H\"older class of order $\a$, already studied by Weaver \cite{Weaver1998}.

In the commutative case, the limit behavior of the quantity $\|x\|_{B^{\a,\o} _{p,q}}$ as $\a\to k$ or $\a\to0$ are object of a recent series of publications. This line of research was initiated by Bourgain, Br\'ezis and Mironescu \cite{BBM2002, BBM2001}  who considered the case $\a\to 1$ ($k=1$). Their work was later simplified and extended by Maz'ya and  Shaposhnikova \cite{MS2002}. Here, we obtain the  following analogue for $\T^d_\t$ of their results: For $1\le p\le\8$, $1\le q<\8$ and $0<\a<k$ with $k\in\nat$,
 \be\begin{split}
 \lim_{\a\to k} (k-\a)^{\frac1q}\|x\|_{B^{\a,\o} _{p,q}} &\approx q^{-\frac1q}\sum_{m\in\nat_0^d,\,|m|_1=k} \|D^mx\|_{p}\,,\\
 \lim_{\a\to 0} \a^{\frac1q}\|x\|_{B^{\a,\o} _{p,q}} &\approx q^{-\frac1q}\|x\|_{p}
 \end{split}\ee
with relevant constants depending only on $d$ and $k$.

\medskip\noindent{\bf Interpolation}.  Our fourth family of results deal with interpolation. Like in the usual case, the interpolation of Besov spaces is quite simple, and that of Triebel-Lizorkin spaces can be easily reduced to the corresponding problem of Hardy spaces. Thus the really hard task here concerns the interpolation of Sobolev spaces for which we have obtained only partial results. The most interesting couple is $\big(W^k_1(\T^d_\t),\,W^k_\8(\T^d_\t)\big)$. Recall that the complex interpolation problem of this couple remains always unsolved even in the commutative case (a well-known longstanding open problem which is explicitly posed by P. Jones in \cite[p.~173]{HN}), while its real interpolation spaces were completely determined by DeVore and Scherer \cite{DS1979}.  We do not know, unfortunately, how to prove the quantum analogue of DeVore and Scherer's theorem.

However, we are able to extend to the quantum tori the K-functional formula of  the couple $\big(L_p(\real^d),\,W^k_p(\real^d)\big)$ obtained by  Johnen and Scherer \cite{JS1976}. This result reads as follows:
 $$K(x,\e^k;\, L_p(\T^d_\t),W^k_p(\T^d_\t)) \approx \e^k|\wh x(0)|+ \o^k_p(x,\e),\quad 0<\e\le1.$$
As a consequence, we determine the real interpolation spaces of $\big(L_p(\T^d_\t),\,W^k_p(\T^d_\t)\big)$, which are Besov spaces.

The real interpolation of $\big(L_p(\mathbb{T}_{\theta}^d),\,W^k_p(\mathbb{T}_{\theta}^d) \big)$ is closely related to the limit behavior of Besov norms described previously. We show that it implies the optimal order (relative to $\a$) of the best constant in the embedding of $B_{p,p}^\a(\T^d_\t)$ into $L_q(\T^d_\t)$ for $\frac1q=\frac1p-\frac\a d$ and $0<\a<1$, which is the quantum analogue of a result of Bourgain, Br\'ezis and Mironescu. On the other hand, the latter result is equivalent to the Sobolev embedding $W_p^1(\T_\t^d)\subset L_q(\T_\t^d)$ for $\frac1q=\frac1p-\frac1d$.

\medskip\noindent{\bf Multipliers}. The last family of results of the paper describe Fourier multipliers on the preceding spaces. Like in the $L_p$ case treated in \cite{CXY2012}, we are mainly concerned with completely bounded Fourier multipliers. All spaces in consideration carry a natural operator space structure in Pisier's sense. We show that the completely bounded Fourier multipliers on $W^k_p(\T^d_\t)$ are independent of $\t$, so they coincide with those on the usual Sobolev space $W^k_p(\T^d)$. This is the Sobolev analogue of the corresponding result for $L_p$ proved in \cite{CXY2012}. The main tool is  Neuwirth-Ricard's transference between Fourier multipliers and Schur multipliers in \cite{NR2011}. A similar result holds for the Triebel-Lizorkin spaces too.

The situation for Besov spaces is very satisfactory since it is  well known that Fourier multipliers behave much better on Besov spaces than on $L_p$-spaces (in the commutative case). We prove that a function  $\phi$ on $\ent^d$ is a (completely) bounded Fourier multiplier on $B_{p, q}^\a(\T^d_\t)$ iff the $\phi \f(2^{-k}\cdot)$'s are (completely) bounded Fourier multipliers on $L_p(\T^d_\t)$ uniformly in $k\ge0$. Consequently, the Fourier multipliers on $B_{p, q}^\a(\T^d_\t)$ are completely determined by the Fourier multipliers on $L_p(\T^d_\t)$ associated to  their components in the Littlewood-Paley decomposition. So the completely bounded multipliers on $B_{p, q}^\a(\T^d_\t)$ depend solely on $p$. In the case of $p=1$, a multiplier is bounded on $B_{1, q}^\a(\T^d_\t)$ iff it is completely bounded iff it is the Fourier transform of an element of $B_{1, \8}^0(\T^d)$. Using a classical example of Stein-Zygmund \cite{SZ1967}, we show that there exists a $\phi$ which is a completely bounded Fourier multiplier on $B_{p, q}^\a(\T^d_\t)$ for all $p$ but  bounded on $L_p(\T^d_\t)$ for no $p\neq2$.

\medskip

We will frequently use the notation $A\les B$, which is an inequality up to a constant: $A\le c\, B$ for some constant $c>0$. The relevant constants in all such inequalities may depend on the dimension $d$, the test function $\f$ or $\p$, etc. but never on the functions $f$ or distributions $x$ in consideration. The main results of this paper have been announced in \cite{XXY}.

%%%%%%%%%%%%%%%%%%%%%%%%%%%%%%%%%%%%%%%%%%%%%%%%%%%%%%%%%%%%%%%%%%%%%%%%
%%%%%%%%%%%%%%%%%%%%%%%%%%%%%%%%%%%%%%%%%%%%%%%%%%%%%%%%%%%%%%%%%%%%%%%%

{\Large\part{Preliminaries}}
\label{Preliminaries}
 \setcounter{section}{0}

%%%%%%%%%%%%%%%%%%%%%%%%%%%%%%%%%%%%%%%%%%%%%%%%%%%%%%%%%%%%%%%%%%%%%%%%
%%%%%%%%%%%%%%%%%%%%%%%%%%%%%%%%%%%%%%%%%%%%%%%%%%%%%%%%%%%%%%%%%%%%%%%%

This chapter collects the necessary preliminaries for the whole paper. The first two sections  present the definitions and some basic facts about noncommutative $L_p$-spaces and quantum tori which are the central objects of the paper. The third one contains some results on Fourier multipliers that will play a paramount role in the whole paper.  The last section gives the definitions and some fundamental results on operator-valued Hardy spaces on the usual and quantum tori. This section will be needed only starting from chapter~\ref{Triebel-Lizorkin spaces} on Triebel-Lizorkin spaces.
\bigskip

%%%%%%%%%%%%%%%%%%%%%%%%%%%%%%%%%%%%%%%%%%%%%%%%%%%%%%%%%%%%%%%%%%%%%%%%
%%%%%%%%%%%%%%%%%%%%%%%%%%%%%%%%%%%%%%%%%%%%%%%%%%%%%%%%%%%%%%%%%%%%%%%%

\section{Noncommutative $L_p$-spaces}

%%%%%%%%%%%%%%%%%%%%%%%%%%%%%%%%%%%%%%%%%%%%%%%%%%%%%%%%%%%%%%%%%%%%%%%%
%%%%%%%%%%%%%%%%%%%%%%%%%%%%%%%%%%%%%%%%%%%%%%%%%%%%%%%%%%%%%%%%%%%%%%%%

Let $\M$ be a von Neumann algebra equipped with a
normal semifinite faithful trace $\tau$ and
$S^+_{\M}$ be the set of all positive elements $x$
in $\M$ with $\tau(s(x))<\infty$, where $s(x)$ denotes the support of $x$, i.e.,
the smallest projection $e$ such that $exe=x$. Let
$S_{\M}$ be the linear span of
$S^+_{\M}$. Then every $x\in
S_{\M}$ has finite trace, and
$S_{\M}$ is a w*-dense $*$-subalgebra of
$\M$.

Let $0< p<\infty$. For any $x\in S_{\M}$, the
operator $|x|^p$ belongs to $S^+_{\M}$
(recalling $|x|=(x^*x)^{\frac{1}{2}}$). We define
$$\|x\|_p=\big(\tau(|x|^p)\big)^{\frac{1}{p}}.$$
One can check that $\|\cdot\|_p$ is a norm or $p$-norm on
$S_{\M}$ according to $p\ge1$ or $p<1$. The completion of
$(S_{\M},\|\cdot\|_p)$ is denoted by $L_p(\M)$,
which is the usual noncommutative $L_p$-space associated to
$(\M,\tau)$. For convenience, we set $L_{\infty}(\M)=\M$
equipped with the operator norm $\|\cdot\|_{\M}$. The norm of  $L_p(\M)$ will be often denoted simply by $\|\cdot\|_p$. But if different  $L_p$-spaces  appear in a same context, we will sometimes precise their norms in order to avoid possible ambiguity. The reader is referred to \cite{PX2003} and  \cite{Xu2007} for more information on noncommutative
$L_p$-spaces.

The elements of
$L_p(\M)$ can be described as closed densely defined operators
on $H$ ($H$ being the Hilbert space on which $\M$ acts). A closed densely defined operator $x$ on $H$ is said to be affiliated with $\M$ if $ux = xu$ for any unitary $u$ in the commutant $\mathcal{M}'$ of $\M$. An operator $x$ affiliated with $\M$ is said to be measurable with respect to $(\M,\tau)$ (or simply measurable) if for any $\delta > 0$ there exists a projection $e\in B(H)$ such that
$$e(H)\subset Dom(x)\;\;\mbox{and}\;\; \tau(e^\perp)\leq \delta, $$
where $Dom(x)$ defines the domain of $x.$
We denote by $L_0(\M,\tau)$, or simply $L_0(\M)$ the family of all measurable operators. For such an operator $x$, we define
$$\lambda_s(x)=\tau(e^\perp_s(|x|)),\;\;s>0$$
where $e^\perp_s(x)=\un_{(s,\infty)}(x)$ is the spectrum projection of $x$ corresponding to the interval $(s,\8),$ and
$$\mu_t(x)=\inf \{s>0: \lambda_s(x)<t\},\;\;t>0.$$
The function $s \mapsto \lambda_s(x)$ is called the distribution function of $x$ and the $\mu_t(x)$ the generalized singular numbers of $x$. Similarly to the classical case, for $0<p<\8, 0<q\leq \8,$ the noncommutative Lorentz space $L_{p,q}(\M)$ is defined to be the collection of all measurable operators $x$ such that
$$\|x\|_{p,q}=\big(\int_0^\8 (t^{\frac1p} \mu_t(x))^q \frac{dt}{t}\big)^{\frac1q} <\8.$$
Clearly, $L_{p,p}(\M)=L_{p}(\M)$. The space $L_{p,\8}(\M)$ is usually called a weak $L_p$-space, $0 < p < \8,$ and
$$\|x\|_{p,\8}=\sup_{s>0}s\lambda_s(x)^{\frac1p}.$$

Like the classical $L_p$-spaces, noncommutative $L_p$-spaces behave well with respect to interpolation. Our reference for interpolation theory is \cite{BL1976}.   Let $1\le p_0<p_1\leq \infty$, $1\le q\le \8$ and $0<\eta<1$. Then
 \beq\label{interpolation of Lp}
 \big( L_{p_0}(\M),\, L_{p_1}(\M) \big)_{\eta}= L_{p}(\M)\;\text{ and }\;
\big( L_{p_0}(\M),\, L_{p_1}(\M) \big)_{\eta, q}= L_{p,q}(\M),
\eeq
where $\frac{1}{p}=\frac{1-\eta}{p_0}+\frac{\eta}{p_1}$.

\medskip

Now we introduce noncommutative Hilbert space-valued
$L_p$-spaces $L_p(\M; H^c)$ and $L_p(\M; H^r)$, which are studied at length in \cite{JLX2006}.
Let $H$ be a Hilbert space  and $v$ a norm one element of $H$. Let $p_v$ be the orthogonal projection onto
the one-dimensional subspace generated by $v$. Then define the following row and column noncommutative $L_p$-spaces:
 \be\begin{split}
 &L_p(\M; H^{r})=(p_v\ot1_{\M}) L_p(B(H)\overline{\ot}\M),\\
 &L_p(\M; H^{c})= L_p(B(H)\overline{\otimes}\M)(p_v\ot1_{\M}),
 \end{split}\ee
where the tensor product $B(H)\overline{\ot}\M$ is equipped with the tensor trace while $B(H)$ is equipped with the usual trace. For $f\in L_p(\M; H^c)$,
 $$\|f\|_{L_p(\M;H^c)}=\|(f^*f)^{\frac{1}{2}}\|_{L_p(\M)}.$$
A similar formula holds for the row space by passing to adjoints: $f\in L_p(\M; H^{r})$ iff $f^*\in L_p(\M; H^c)$, and $\|f\|_{L_p(\M;H^r)}=\|f^*\|_{L_p(\M;H^c)}$. It is clear that $ L_p(\M; H^c)$ and $ L_p(\M; H^r)$ are 1-complemented subspaces of $L_p(B(H)\overline{\ot}\M)$ for any $p$. Thus they also form interpolation scales with respect to both complex and real interpolation methods:
Let $1\leq p_0, p_1\leq\8$ and $0<\eta<1$. Then
 \beq\label{column interpolation}
 \begin{split}
 \big( L_{p_0}(\M; H^c),\, L_{p_1}(\M; H^c) \big)_{\eta}&= L_{p}(\M; H^c),\\
\big( L_{p_0}(\M; H^c),\, L_{p_1}(\M; H^c) \big)_{\eta, p}&= L_{p}(\M; H^c),
\end{split}\eeq
where $\frac{1}{p}=\frac{1-\eta}{p_0}+\frac{\eta}{p_1}$.
The same formulas hold for row spaces too.

%%%%%%%%%%%%%%%%%%%%%%%%%%%%%%%%%%%%%%%%%%%%%%%%%%%%%%%%%%%%%%%%%%%%%%%%
%%%%%%%%%%%%%%%%%%%%%%%%%%%%%%%%%%%%%%%%%%%%%%%%%%%%%%%%%%%%%%%%%%%%%%%%

\section{Quantum tori}

%%%%%%%%%%%%%%%%%%%%%%%%%%%%%%%%%%%%%%%%%%%%%%%%%%%%%%%%%%%%%%%%%%%%%%%%
%%%%%%%%%%%%%%%%%%%%%%%%%%%%%%%%%%%%%%%%%%%%%%%%%%%%%%%%%%%%%%%%%%%%%%%%

Let $d\ge2$ and $\theta=(\theta_{kj})$ be a real skew symmetric $d\times d$-matrix. The associated $d$-dimensional noncommutative
torus $\mathcal{A}_{\theta}$ is the universal $C^*$-algebra generated by $d$
unitary operators $U_1, \ldots, U_d$ satisfying the following commutation
relation
 \beq \label{eq:CommuRelation}
 U_k U_j = e^{2 \pi \mathrm{i} \theta_{k j}} U_j U_k,\quad j,k=1,\ldots, d.
 \eeq
We will use standard notation from multiple Fourier series. Let  $U=(U_1,\cdots, U_d)$. For $m=(m_1,\cdots,m_d)\in\ent^d$ we define
 $$U^m=U_1^{m_1}\cdots U_d^{m_d}.$$
 A polynomial in $U$ is a finite sum
  $$ x =\sum_{m \in \mathbb{Z}^d}\alpha_{m} U^{m}\quad \text{with}\quad
 \alpha_{m} \in \mathbb{C},$$
that is, $\alpha_{m} =0$ for all but
finite indices $m \in \mathbb{Z}^d.$ The involution algebra
$\mathcal{P}_{\theta}$ of all such polynomials is
dense in $\A_{\theta}.$ For any polynomial $x$ as above we define
 $$\tau (x) = \alpha_{0},$$
where $0=(0, \cdots, 0)$.
Then, $\tau$ extends to a  faithful  tracial state on $\A_{\theta}$.  Let  $\mathbb{T}^d_{\theta}$ be the $w^*$-closure of $\A_{\theta}$ in  the GNS representation of $\tau$. This is our $d$-dimensional quantum torus. The state $\tau$ extends to a normal faithful tracial state on $\mathbb{T}^d_{\theta}$ that will be denoted again by $\tau$. Recall that the von Neumann algebra $\mathbb{T}^d_{\theta}$ is hyperfinite.

Any $x\in L_1(\T^d_\theta)$ admits a formal
Fourier series:
 $$x \sim \sum_{m \in\mathbb{Z}^d} \wh{x} ( m ) U^{m},$$
where \
 $$\wh x( m) = \tau((U^m)^*x),\quad m \in \mathbb{Z}^d$$
are the Fourier coefficients of $x$. The operator $x$ is, of course, uniquely determined by its Fourier series.

\medskip

We introduced in  \cite{CXY2012}  a transference method to overcome the full noncommutativity of quantum tori and use methods of operator-valued harmonic analysis. Let $\mathbb{T}^d$ be the usual $d$-torus equipped with normalized Haar measure $dz$.  Let $\mathcal{N}_{\theta} = L_{\infty} (\mathbb{T}^d) \overline{\otimes}\mathbb{T}^d_{\theta}$, equipped with the tensor trace $\nu = \int dz \otimes \tau$. It is well known that for every $0 < p < \8,$
 \be
 L_p(\mathcal{N}_{\theta}, \nu ) \cong L_p (\mathbb{T}^d; L_p(\mathbb{T}^d_{\theta})).
 \ee
The space on the right-hand side is the space of Bochner $p$-integrable functions from $\mathbb{T}^d$ to $L_p (\mathbb{T}^d_{\theta})$. In general, for any Banach space $X$ and any measure space $(\Om, \mu)$, we use $L_p(\Om; X)$ to denote the space of Bochner $p$-integrable functions from $\Om$ to $X$. For each $z
\in \mathbb{T}^d,$ define $\pi_{z}$ to be the isomorphism of
$\mathbb{T}^d_{\theta}$ determined by
\beq \label{trans-pi}
\pi_{z} (U^{m}) = z^{m} U^{m } = z_1^{m_1} \cdots z_d^{m_d}
U_1^{m_1} \cdots U_d^{m_d}. \eeq
Since $\tau (\pi_{z} (x)) = \tau
(x)$ for any $x \in \mathbb{T}^d_{\theta},$ $\pi_{z}$ preserves the
trace $\tau.$ Thus for every $0 < p < \8,$
\beq
\label{eq:TransLpIso}\|\pi_{z} (x) \|_p = \|x\|_p,\; \forall x\in
L_p(\mathbb{T}^d_{\theta}).\eeq

Now we state the transference method as follows (see \cite{CXY2012}).

\begin{lem}\label{prop:TransC}
 For any $x \in L_p (\mathbb{T}^d_{\theta})$, the function
$\wt{x} : z \mapsto \pi_z(x)$ is continuous from
$\mathbb{T}^d$ to $L_p(\mathbb{T}^d_{\theta})$ $($with respect to the w*-topology for $p=\8)$. If $x\in\A_\theta$, it is continuous from $\mathbb{T}^d$ to $\A_\theta$.
\end{lem}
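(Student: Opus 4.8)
The plan is to reduce the statement to the case of trigonometric polynomials, where it is obvious, and then to propagate it to arbitrary elements by a density argument resting on the isometry \eqref{eq:TransLpIso}.

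I would first dispose of the base case. If $x=U^m$ is a monomial, then $\pi_z(x)=z^mU^m$ by \eqref{trans-pi}, and since $z\mapsto z^m=z_1^{m_1}\cdots z_d^{m_d}$ is continuous on $\T^d$ while $U^m$ is a fixed element, $z\mapsto\pi_z(U^m)$ is norm-continuous from $\T^d$ into $\A_\theta$, hence into $L_p(\T^d_\t)$ for every $0<p\le\8$. By linearity the same holds for every $x$ in the polynomial $*$-algebra $\mathcal P_\theta$.

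Next, for $1\le p<\8$ and $x\in L_p(\T^d_\t)$, given $\e>0$ I would use the density of $\mathcal P_\theta$ in $L_p(\T^d_\t)$ to choose $y\in\mathcal P_\theta$ with $\|x-y\|_p<\e/3$ and write, for $z,z_0\in\T^d$,
$$\|\pi_z(x)-\pi_{z_0}(x)\|_p\le\|\pi_z(x-y)\|_p+\|\pi_z(y)-\pi_{z_0}(y)\|_p+\|\pi_{z_0}(y-x)\|_p.$$
By \eqref{eq:TransLpIso} the first and third terms are both equal to $\|x-y\|_p<\e/3$, and the middle term tends to $0$ as $z\to z_0$ by the polynomial case; hence $\wt x$ is continuous at $z_0$. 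The same three-term estimate with $\|\cdot\|_p$ replaced by the operator norm --- using that $\pi_z$ is a $*$-automorphism of $\A_\theta$, hence isometric, and that $\mathcal P_\theta$ is dense in $\A_\theta$ --- yields the final assertion for $x\in\A_\theta$.

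Finally, for $p=\8$ one cannot work with the operator norm (for which $z\mapsto\pi_z(x)$ is in general discontinuous), so I would prove w*-continuity by pairing against the predual $L_1(\T^d_\t)$, in which $\mathcal P_\theta$ is again dense. Fixing $x\in\T^d_\t$, $z_0\in\T^d$, $w\in L_1(\T^d_\t)$ and $\e>0$, choose $q\in\mathcal P_\theta$ with $\|w-q\|_1<\e/(3\|x\|_\8+1)$, so that
$$\big|\tau\big((\pi_z(x)-\pi_{z_0}(x))\,w\big)\big|\le\big|\tau\big((\pi_z(x)-\pi_{z_0}(x))\,q\big)\big|+2\|x\|_\8\,\|w-q\|_1$$
and the last term is $<2\e/3$. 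Since $\pi_z$ preserves $\tau$ and $\pi_z^{-1}=\pi_{z^{-1}}$, we have $\tau(\pi_z(x)\,q)=\tau\big(x\,\pi_{z^{-1}}(q)\big)$, and $z\mapsto\pi_{z^{-1}}(q)=\sum_n c_nz^{-n}U^n$ is norm-continuous into $\A_\theta$; hence $z\mapsto\tau(\pi_z(x)q)$ is continuous and the first term tends to $0$ as $z\to z_0$. This establishes the w*-continuity of $\wt x$. I do not expect any serious obstacle here: the argument is the routine ``check on a dense subfamily of a uniformly bounded family'' scheme, the only delicate point being precisely this passage to the predual in the case $p=\8$.
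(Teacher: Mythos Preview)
Your proof is correct. Note that the paper itself does not supply a proof of this lemma: it is quoted from \cite{CXY2012} without argument, so there is nothing in the present paper to compare against. The density-plus-isometry argument you give is exactly the standard one, and is essentially what appears in the cited reference.
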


\begin{cor}\label{prop:TransLp}
 \begin{enumerate}[{\rm (i)}]

\item Let $0 < p \leq \8.$ If $x\in L_p (\mathbb{T}^d_{\theta}),$ then $\wt{x} \in L_p (\N_{\theta})$ and $\|\wt{x}\|_p = \| x\|_p,$ that is, $x \mapsto \wt{x}$ is an isometric embedding from $L_p (\mathbb{T}^d_{\theta})$ into $L_p (\N_{\theta}).$ Moreover, this map is also an isomorphism from $\A_\theta$ into $C(\T^d; \A_\theta)$.

\item Let $\widetilde{\mathbb{T}^d_{\theta}} = \{\wt{x} : x \in \mathbb{T}^d_{\theta}\}.$ Then $\widetilde{\mathbb{T}^d_{\theta}}$ is a von Neumann subalgebra of $\mathcal{N}_{\theta}$ and the associated conditional expectation is given by
 $$\mathbb{E} (f)(z) = \pi_{z} \Big ( \int_{\mathbb{T}^d} \pi_{\overline{w}}
 \big [ f( w )\big ] dw \Big ),\quad z\in\T^d, \; f \in \N_{\theta}.$$
Moreover, $\mathbb{E}$ extends to a contractive projection from $L_p(\N_{\theta})$ onto $L_p(\widetilde{\mathbb{T}^d_{\theta}})$ for $1\leq p\leq \infty.$
\item $L_p (\mathbb{T}^d_{\theta})$ is isometric to $L_p (\widetilde{\mathbb{T}^d_{\theta}})$ for every $0 < p \le \8.$

\end{enumerate}

\end{cor}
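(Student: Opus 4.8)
The plan is to derive all three assertions from the two facts already established: the isometry $\|\pi_z(x)\|_p=\|x\|_p$ of \eqref{eq:TransLpIso} together with the (w*-)continuity of $\wt x:z\mapsto\pi_z(x)$ from Lemma~\ref{prop:TransC}, and the identification $L_p(\N_\theta,\nu)\cong L_p(\T^d;L_p(\T^d_\theta))$ recalled above.

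For (i), I would first note that by Lemma~\ref{prop:TransC} the function $\wt x$ is (w*-)continuous, hence Bochner- (resp.\ weak*-) measurable, from $\T^d$ to $L_p(\T^d_\theta)$, and is bounded by \eqref{eq:TransLpIso}. Under the identification $L_p(\N_\theta)\cong L_p(\T^d;L_p(\T^d_\theta))$, for $1\le p<\8$ one then computes $\|\wt x\|_{L_p(\N_\theta)}^p=\int_{\T^d}\|\pi_z(x)\|_p^p\,dz=\|x\|_p^p$; for $p=\8$ one takes the essential supremum, and for $0<p<1$ the same computation applies verbatim. Thus $x\mapsto\wt x$ is an isometric embedding. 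The statement about $\A_\theta$ is immediate from the second part of Lemma~\ref{prop:TransC} (so that $\wt x\in C(\T^d;\A_\theta)$) together with the fact that each $\pi_z$ is a $*$-isomorphism, which makes $x\mapsto\wt x$ a $*$-monomorphism onto its image.

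For (ii), the essential point is that $\widetilde{\T^d_\theta}$ is w*-closed. Rather than showing directly that $x\mapsto\wt x$ is normal, I would exhibit $\widetilde{\T^d_\theta}$ as a fixed-point algebra. For $y\in\T^d$ let $\a_y$ be the $*$-automorphism of $\N_\theta$ given by $(\a_y f)(z)=\pi_{\bar y}\big(f(yz)\big)$, that is, the composition of the translation $z\mapsto yz$ on $L_\8(\T^d)$ with $\mathrm{id}\,\overline{\ot}\,\pi_{\bar y}$; since $\pi_{\bar y}$ preserves $\tau$, each $\a_y$ is normal, and $y\mapsto\a_y$ is a point-w*-continuous action of the compact group $\T^d$. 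From $\pi_{z_1}\pi_{z_2}=\pi_{z_1z_2}$ (read off \eqref{trans-pi}) one checks that every $\wt x$ is $\a$-invariant and, conversely, that an $\a$-invariant $f$ satisfies $f(y)=\pi_y(f(1))$ a.e., hence equals $\wt{f(1)}$; therefore $\widetilde{\T^d_\theta}=\N_\theta^{\a}$ is a von Neumann subalgebra. Averaging over the action, $\mathbb E(f):=\int_{\T^d}\a_y(f)\,dy$, and the substitution $w=yz$ turns this into $\mathbb E(f)(z)=\pi_z\big(\int_{\T^d}\pi_{\bar w}[f(w)]\,dw\big)$, the stated formula. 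This $\mathbb E$ is a normal, unital, positive, norm-one projection onto $\widetilde{\T^d_\theta}$ (for $f=\wt x$ the inner integral equals $\int\pi_{\bar w}\pi_w(x)\,dw=x$, so $\mathbb E(\wt x)=\wt x$), hence the conditional expectation onto $\widetilde{\T^d_\theta}$ by Tomiyama's theorem; moreover $\nu\circ\mathbb E=\nu$ because $\tau\circ\pi_z=\tau$. Being a trace-preserving conditional expectation, $\mathbb E$ extends to a contractive projection from $L_p(\N_\theta)$ onto $L_p(\widetilde{\T^d_\theta})$ for $1\le p\le\8$ by the standard interpolation argument (see \cite{PX2003}).

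Finally, (iii) follows by combining (i) and (ii): $x\mapsto\wt x$ is an isometry from $L_p(\T^d_\theta)$ into $L_p(\N_\theta)$, and it carries the polynomials $\mathcal{P}_\theta$, dense in $L_p(\T^d_\theta)$, onto $\widetilde{\mathcal{P}_\theta}$, which is dense in $L_p(\widetilde{\T^d_\theta})$ for $0<p<\8$ (and w*-dense for $p=\8$); since the map is isometric its image is exactly $L_p(\widetilde{\T^d_\theta})$, so $L_p(\T^d_\theta)$ and $L_p(\widetilde{\T^d_\theta})$ are isometric for every $0<p\le\8$. The only genuinely delicate step is the w*-closedness in (ii); the fixed-point description renders it routine, and Tomiyama's theorem then identifies $\mathbb E$ with no further work.
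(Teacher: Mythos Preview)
The paper does not actually prove this corollary; it is simply quoted from \cite{CXY2012} as part of the transference toolkit, so there is no ``paper's own proof'' to compare with. Your argument is correct and is essentially the natural one. The only comment worth making is that for part~(iii) the cleanest route is the one you implicitly use at the very end: since $x\mapsto\wt x$ is a trace-preserving $*$-isomorphism from $\T^d_\theta$ onto $\widetilde{\T^d_\theta}$ (multiplicativity and trace preservation follow at once from $\pi_z$ being a $\tau$-preserving $*$-automorphism), it automatically extends to an isometry between the associated $L_p$-spaces for every $0<p\le\8$, which handles all $p$ uniformly without a separate density discussion. Your fixed-point description $\widetilde{\T^d_\theta}=\N_\theta^{\a}$ in part~(ii) is a clean way to get w*-closedness and makes the identification of $\mathbb E$ with the Haar average immediate.
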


%%%%%%%%%%%%%%%%%%%%%%%%%%%%%%%%%%%%%%%%%%%%%%%%%%%%%%%%%%%%%%%%%%%%%%%%
%%%%%%%%%%%%%%%%%%%%%%%%%%%%%%%%%%%%%%%%%%%%%%%%%%%%%%%%%%%%%%%%%%%%%%%%

\section{Fourier multipliers}
\label{Fourier multipliers}

%%%%%%%%%%%%%%%%%%%%%%%%%%%%%%%%%%%%%%%%%%%%%%%%%%%%%%%%%%%%%%%%%%%%%%%%
%%%%%%%%%%%%%%%%%%%%%%%%%%%%%%%%%%%%%%%%%%%%%%%%%%%%%%%%%%%%%%%%%%%%%%%%

Fourier multipliers will be the most important tool for  the whole work. Now we present some known results on them for later use.  Given a function $\phi :\ent^d\to\com$, let $M_{\phi}$ denote the associated Fourier multiplier on $\T^d$, namely, $\wh{M_{\phi}f} (m) = \phi(m)\wh f (m)$ for any trigonometric polynomial $f$ on $\T^d$. We call $\phi$ a multiplier on $L_p(\T^d)$ if $M_{\phi}$ extends to a bounded
map on $L_p (\T^d)$. Fourier multipliers on  $\T^d_\t$ are defined exactly in the same way, we still use  the same symbol $M_\phi$ to denote the corresponding multiplier on $\T^d_\t$. Note that the isomorphism $\pi_z$ defined in \eqref{trans-pi} is the Fourier multiplier associated to the function $\phi$ given by $\phi(m)=z^m$.

It is natural to ask if the boundedness of  $M_\phi$ on $L_p(\T^d)$ is equivalent to that on $L_p(\T^d_\t)$. This is open until a negative answer given by Ricard \cite{Ricard16} recently. However, it is proved in \cite{CXY2012} that the answer is affirmative if ``boundedness" is replaced by ``complete boundedness", a notion from operator space theory for which we refer to  \cite{ER2000} and  \cite{Pisier2003}. All noncommutative $L_p$-spaces are equipped with  their natural operator space structure introduced by Pisier \cite{Pisier1998, Pisier2003}.

We will use the following fundamental property of completely bounded (c.b. for short) maps due to Pisier  \cite{Pisier1998}. Let  $E$ and $F$ be operator spaces. Then a linear map $T:E\to F$ is c.b. iff ${\rm Id}_{S_p}\ot T: S_p[E]\to S_p[F]$ is bounded for some $1\le p\le\8$. In this case,
 $$\|T\|_{\rm cb}=\big\|{\rm Id}_{S_p}\ot T: S_p[E]\to S_p[F]\big\|.$$
Here $S_p[E]$  denotes the $E$-valued  Schatten $p$-class. In particular, if $E=\com$, $S_p[\com]=S_p$ is the noncommutative $L_p$-space associated to $B(\el_2)$, equipped with the usual trace. Applying this criterion to the special case where $E=F =L_p(\M)$, we see that a map $T$ on  $L_p(\M)$ is c.b. iff ${\rm Id}_{S_p}\ot T: L_p(B(\el_2)\overline{\ot}\M)\to L_p(B(\el_2)\overline{\ot}\M)$ is bounded. The readers unfamiliar with operator space theory can take this property as the definition of c.b. maps between $L_p$-spaces.

Thus $\phi$ is a c.b. multiplier on $L_p(\T^d_\t)$ if $M_{\phi}$ is c.b. on $L_p (\T^d_\t)$, or equivalently, if ${\rm Id}_{S_p}\ot M_\phi$ is bounded on $L_p(B(\el_2)\overline\ot\T^d_\t)$.  Let $\mathsf{M}(L_p (\mathbb{T}^d_{\theta}))$ (resp. $\mathsf{M}_\mathrm{cb}(L_p (\mathbb{T}^d_{\theta}))$) denote the space of Fourier multipliers (resp. c.b. Fourier multipliers) on $L_p(\T^d_\t)$,  equipped with the natural norm. When $\theta=0$, we recover the (c.b.) Fourier multipliers on the usual $d$-torus $\T^d$. The corresponding multiplier spaces are denoted by $\mathsf{M}(L_p (\mathbb{T}^d))$ and $\mathsf{M}_\mathrm{cb}(L_p (\mathbb{T}^d))$. Note that in the latter case ($\t=0$), $L_p(B(\el_2)\overline\ot\T^d)=L_p(\T^d; S_p)$, thus $\phi$ is a c.b. multiplier on $L_p(\T^d)$ iff $M_{\phi}$ extends to a bounded map on $L_p(\T^d; S_p)$.

The following result is taken from  \cite{CXY2012}.

\begin{lem}\label{cb-multiplier}
 Let $1\le p \le \8$. Then  $\mathsf{M}_{\mathrm{cb}} (L_p (\mathbb{T}^d_{\theta}))
 =\mathsf{M}_{\mathrm{cb}} (L_{p} (\mathbb{T}^d))$ with equal norms.
\end{lem}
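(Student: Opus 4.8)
The plan is to exploit the transference scheme set up in the preceding section, together with Neuwirth--Ricard's transference between Fourier and Schur multipliers, to reduce the problem to a statement that does not see $\t$ at all. First I would observe that by the operator space criterion recalled above, $\phi\in\mathsf{M}_{\mathrm{cb}}(L_p(\T^d_\t))$ iff $\mathrm{Id}_{S_p}\ot M_\phi$ is bounded on $L_p(B(\el_2)\overline\ot\T^d_\t)$, and similarly for $\t=0$. So the two assertions to be compared are: (a) boundedness of $M_\phi$ on $L_p(B(\el_2)\overline\ot\T^d_\t)$; (b) boundedness of $M_\phi$ on $L_p(\T^d;S_p)$. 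The strategy is to show each of these is equivalent to the boundedness of a single Schur multiplier (with matrix entries $\phi(j-k)$, indexed by a suitable set), which is manifestly independent of $\t$.

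The key steps, in order, would be as follows. \emph{Step 1 (lift $\T^d_\t$ into a crossed-product / tensor picture).} Use the transference corollary (Corollary \ref{prop:TransLp}): the map $x\mapsto\wt x$ embeds $L_p(\T^d_\t)$ isometrically and completely isometrically into $L_p(\N_\t)=L_p(\T^d;L_p(\T^d_\t))$, and it intertwines the Fourier multiplier $M_\phi$ on $\T^d_\t$ with the Fourier multiplier $M_\phi\ot\mathrm{Id}$ acting on the $\T^d$-variable of $\N_\t$ (because $\pi_z$ is itself the Fourier multiplier $m\mapsto z^m$, and $\widehat{\wt x}(m)=\wh x(m)\,U^m$ sits in the $m$-th Fourier mode of the $\T^d$ factor). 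One then has to check that the associated conditional expectation $\mathbb E$ of Corollary \ref{prop:TransLp}(ii) commutes with $M_\phi\ot\mathrm{Id}$, so that complete boundedness of $M_\phi$ on $L_p(\T^d_\t)$ is equivalent to complete boundedness of $M_\phi$ acting only on the group-von-Neumann-algebra factor $VN(\ent^d)=L_\infty(\T^d)$ sitting inside $\N_\t$. \emph{Step 2 (pass to Schur multipliers).} Invoke Neuwirth--Ricard \cite{NR2011}: for the (amenable, discrete) group $\ent^d$, a function $\phi$ on $\ent^d$ is a c.b. Fourier multiplier on $L_p(VN(\ent^d))$ with a given norm iff the Toeplitz-type Schur multiplier on $S_p(\ell_2(\ent^d))$ with symbol $(j,k)\mapsto\phi(j-k)$ is bounded with the same norm. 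Crucially the Schur multiplier side involves no algebra structure and no $\t$. \emph{Step 3 (run the same argument at $\t=0$ and compare).} Exactly the same two reductions apply to $L_p(\T^d)=L_p(VN(\ent^d))$ with $\t=0$, landing on the identical Schur multiplier. Hence the two c.b. multiplier norms coincide, which is the assertion.

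The step I expect to be the genuine obstacle is \textbf{Step 1}, i.e.\ organizing the transference so that ``c.b.\ multiplier on $L_p(\T^d_\t)$'' is cleanly identified with ``c.b.\ multiplier on the $\ent^d$-modes'' uniformly over $\t$. One has to be careful that the embedding $x\mapsto\wt x$ is \emph{completely} isometric (not merely isometric), that the conditional expectation $\mathbb E$ is completely contractive — both are in Corollary \ref{prop:TransLp}, so this is available — and, most importantly, that $\mathbb E$ genuinely commutes with the Fourier multiplier $M_\phi\ot\mathrm{Id}$; this last point is where the explicit formula for $\mathbb E$ in terms of the $\pi_z$'s (which are themselves Fourier multipliers) must be used, since Fourier multipliers on $\T^d$ all commute with one another. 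Once the commutation is in hand, both inequalities $\|M_\phi:L_p(\T^d_\t)\|_{\mathrm{cb}}\le\|M_\phi:L_p(\T^d)\|_{\mathrm{cb}}$ and its reverse follow by sandwiching with $x\mapsto\wt x$ and $\mathbb E$ in one direction and by the Neuwirth--Ricard identity in the other, and the proof is complete. (Alternatively, Step 2--3 can be bypassed entirely if one is content to cite the $L_p$-multiplier transference result of \cite{CXY2012} directly, but spelling out the Schur-multiplier route makes the $\t$-independence transparent.)
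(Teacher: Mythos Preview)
The paper does not give its own proof of this lemma; it is simply quoted from \cite{CXY2012}. So there is no in-paper argument to match line by line. That said, the paper later reproves the Sobolev analogue (Theorem~\ref{q-multiplier-Sobolev}) by exactly the Neuwirth--Ricard mechanism, so that is the natural benchmark.

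Your overall idea --- relate Fourier multipliers on $\T^d_\t$ to Toeplitz Schur multipliers on $S_p(\ell_2(\ent^d))$, and observe that the Schur side does not see $\t$ --- is correct and is the approach of \cite{CXY2012}. But the plan as written is slightly tangled and contains a gap. Your Step~1 (transference into $\N_\t=L_\infty(\T^d)\overline\ot\T^d_\t$ via $x\mapsto\wt x$ and the expectation $\mathbb E$) cleanly yields only one inequality, $\|M_\phi\|_{\mathsf{M}_{\mathrm{cb}}(L_p(\T^d_\t))}\le\|M_\phi\|_{\mathsf{M}_{\mathrm{cb}}(L_p(\T^d))}$: from c.b.\ on $L_p(\T^d)$ you tensor with $\mathrm{Id}_{L_p(\T^d_\t)}$ and restrict to the complemented copy $\widetilde{L_p(\T^d_\t)}$. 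The claimed \emph{equivalence} in Step~1 is not justified --- knowing $M_\phi$ is c.b.\ on the complemented subspace does not bound $M_\phi\ot\mathrm{Id}$ on all of $L_p(\N_\t)$. For the reverse inequality you invoke Neuwirth--Ricard, but as you state it (Step~2) it applies to $VN(\ent^d)=L_\infty(\T^d)$, i.e.\ $\t=0$; what is actually needed is the Fourier--Schur transference \emph{for $\T^d_\t$ itself}, namely $\|S_{\mathring\phi}\|_{\mathsf{M}_{\mathrm{cb}}(S_p)}\le\|M_\phi\|_{\mathsf{M}_{\mathrm{cb}}(L_p(\T^d_\t))}$.

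The fix --- and this is precisely what \cite{CXY2012} does, and what the present paper carries out in the proof of Theorem~\ref{q-multiplier-Sobolev} --- is to run the Neuwirth--Ricard argument \emph{directly} on $\T^d_\t$: write each $x\in L_p(\T^d_\t)$ as a matrix $[x]=(\wh x(m-n)e^{\mathrm{i}n\tilde\theta(m-n)^t})_{m,n}$ in the basis $(U^m)$, observe $[M_\phi x]=S_{\mathring\phi}([x])$, embed $S_p(\ell_2(\ent^d))$ into $L_p(B(\ell_2(\ent^d))\overline\ot\T^d_\t)$ by conjugating with $V=\mathrm{diag}(U^n)$, and use F{\o}lner truncations $A_N,B_N$ for the converse. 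This yields $\|M_\phi\|_{\mathsf{M}_{\mathrm{cb}}(L_p(\T^d_\t))}=\|S_{\mathring\phi}\|_{\mathsf{M}_{\mathrm{cb}}(S_p)}$ with equal norms for every $\t$, in particular $\t=0$, and the lemma follows. Once you do this, your transference Step~1 is redundant; conversely, transference alone cannot close the argument.
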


\begin{rk}\label{multiplier-rk}
 Note that $\mathsf{M}_{\mathrm{cb}} (L_{1} (\mathbb{T}^d))=\mathsf{M}_{\mathrm{cb}} (L_{\8} (\mathbb{T}^d))$ coincides with the space of the Fourier transforms of bounded measures on $\T^d$, and $\mathsf{M}_{\mathrm{cb}} (L_{2} (\mathbb{T}^d))$ with the space of bounded functions on $\ent^d$.
 \end{rk}

The most efficient criterion for Fourier multipliers on $L_p(\T^d)$ for $1<p<\8$ is  Mikhlin's condition. Although it can be formulated in the periodic case, it is more convenient to state this condition in the case of $\real^d$. On the other hand, the Fourier multipliers on $\T^d$ used later will be the restrictions to $\ent^d$ of continuous Fourier multipliers on $\real^d$. As usual, for $m=(m_1,\cdots, m_d)\in \mathbb{N}_0^d$ (recalling that $\nat_0$ denotes the set of nonnegative integers), we set
 $$D^m=\partial_1^{m_1}\cdots \partial_d^{m_d}\,,$$
where $\partial_k$ denotes the $k$th partial derivation on $\real^d$.  Also put $|m|_1=m_1+\cdots +m_d$. The Euclidean norm of $\real^d$ is denoted by $|\cdot|$: $|\xi|=\sqrt{\xi_1^2+\cdots+\xi_d^2}$.

\begin{Def}\label{Mikhlin-def}
 A function $\phi:\real^d\to\com$ is called a Mikhlin multiplier if it is $d$-times differentiable on $\real^d\setminus\{0\}$ and satisfies the following condition
 $$
 \|\phi\|_{\rm M}=\sup\big\{|\xi|^{|m|_1}|D^m\phi(\xi)|: \xi\in\real^d\setminus\{0\},\; m\in\nat^d_0, |m|_1\le d\big\}<\8.
 $$
\end{Def}

Note that the usual Mikhlin condition requires only partial derivatives up to order $[\frac d2]+1$ (see, for instance, \cite[section~II.6]{gar-rubio} or \cite[Theorem~4.3.2]{Stein1970}). Our requirement above up to order $d$  is imposed by the boundedness of these multipliers on UMD spaces.  We refer to section~\ref{A multiplier theorem} for the usual Mikhlin condition and more multiplier results on $\T^d_\t$.

It is a classical result that every Mikhlin multiplier is a Fourier multiplier on $L_p(\real^d)$ for $1<p<\8$ (cf. \cite[section~II.6]{gar-rubio} or \cite[Theorem~4.3.2]{Stein1970}), so its restriction $\phi\big|_{\ent^d}$ is a Fourier multiplier on $L_p(\T^d)$ too. It is, however, less classical that such a multiplier is also c.b. on $L_p(\real^d)$ or $L_p(\T^d)$. This follows from a general result on UMD spaces. Recall that a Banach space $X$ is called a UMD space if the $X$-valued  martingale differences are unconditional in $L_p(\Omega; X)$ for any $1<p<\8$ and any probability space $(\Omega, P)$. This is equivalent to the requirement that the Hilbert transform be bounded on $L_p(\real^d; X)$ for $1<p<\8$. Any noncommutative $L_p$-space with $1<p<\8$ is a UMD space. We refer to \cite{bourg-UMD, burk-UMD, burk-HT} for more information.

\smallskip

Before proceeding  further, we make a convention used throughout the paper:

\smallskip\n{\bf Convention.} To simplify the notational system, we will use the same derivation symbols for $\real^d$ and $\T^d$. Thus for a multi-index $m\in\nat_0^d$, $D^m=\partial_1^{m_1}\cdots \partial_d^{m_d}$, introduced previously, will also denote the partial derivation of order $m$ on $\T^d$. Similarly, $\D=\partial_1^{2}+\cdots +\partial_d^{2}$ will denote the Laplacian on both $\real^d$ and $\T^d$.  In the same spirit,  for a function $\phi$ on $\real^d$, we will call $\phi$ rather than  $\phi\big|_{\ent^d}$ a Fourier multiplier on  $L_p(\T^d)$ or $L_p(\T^d_\t)$. This should not cause any ambiguity in concrete contexts.  Considered as a map on  $L_p(\T^d)$ or $L_p(\T^d_\t)$, $M_\phi$ will be often denoted by $f\mapsto \wt\phi*f$ or $x\mapsto \wt\phi*x$.

\smallskip

Note that the notation $\wt\phi*f$ coincides with the usual convolution when $\phi$ is good enough. Indeed, let $\wt\phi$ be the $1$-periodization of the inverse Fourier transform of $\phi$ whenever it exists in a reasonable sense:
 $$\wt\phi(s)=\sum_{m\in\ent^d}\F^{-1}(\phi)(s+m),\quad s\in\real^d\,.$$
Viewed as a function on $\T^d$, $\wt\phi$ admits the following Fourier series:
 $$\wt\phi(z)=\sum_{m\in\ent^d}\phi(m)z^m.$$
Thus for any trigonometric polynomial $f$,
 $$\wt\phi*f(z)=\int_{\T^d}\wt\phi(zw^{-1})f(w) dw,\quad z\in\T^d\,.$$

The following lemma is proved in  \cite{McC1984, Zimm1989} (see also \cite{bourg-CZ} for the one-dimensional case).

\begin{lem}\label{UMD-multiplier}
 Let $X$ be a UMD space and $1<p<\8$. Let $\phi$ be a Mikhlin multiplier. Then $\phi$ is a Fourier multiplier on  $L_p(\T^d; X)$.
 Moreover, its norm  is controlled by $\|\phi\|_{\rm M}$, $p$ and the UMD constant of $X$.
  \end{lem}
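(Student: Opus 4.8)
The plan is to deduce this vector-valued periodic multiplier theorem from the corresponding statement on $\real^d$ together with a standard periodization/transference argument, so that the main work reduces to citing the Euclidean UMD multiplier theorem. First I would recall the Euclidean result: by the theorems of McConnell \cite{McC1984} and Zimmermann \cite{Zimm1989} (and, in one dimension, Bourgain \cite{bourg-CZ}), if $X$ is a UMD space and $\phi$ is a Mikhlin multiplier on $\real^d$ in the sense of Definition~\ref{Mikhlin-def}, then $\phi$ is a Fourier multiplier on $L_p(\real^d;X)$ for $1<p<\8$, with norm bounded by a constant depending only on $d$, $p$, $\|\phi\|_{\rm M}$ and the UMD constant of $X$. (The passage from the sharp order $[\frac d2]+1$ to order $d$ in the hypothesis is harmless and is already built into Definition~\ref{Mikhlin-def}; it is exactly what these references require.)

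Next I would transfer this to the torus. The cleanest route is the classical de Leeuw/transference principle in its vector-valued form: a sufficiently regular function $\phi$ on $\real^d$ that is a bounded Fourier multiplier on $L_p(\real^d;X)$ restricts to a bounded Fourier multiplier on $L_p(\T^d;X)$, with no increase in norm. Concretely, for a trigonometric polynomial $f=\sum_{m}\wh f(m)z^m$ with values in $X$ one writes $M_\phi f$ as a limit of dilated transplantations of $M_\phi$ acting on Schwartz functions on $\real^d$ whose Fourier transforms concentrate near the lattice points $\ent^d$; continuity of $\phi$ at the points of $\ent^d$ (guaranteed since $\phi$ is $d$-times differentiable away from $0$, and one may first subtract the value $\phi(0)\cdot\un$ and treat the constant multiplier trivially) lets one pass to the limit. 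Since $\|M_\phi\|_{L_p(\real^d;X)\to L_p(\real^d;X)}\lesssim \|\phi\|_{\rm M}$ with the stated dependence, the same bound is inherited by $\|M_\phi\|_{L_p(\T^d;X)\to L_p(\T^d;X)}$, which is the assertion of the lemma.

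The one point needing a little care — and what I would flag as the main obstacle — is the behavior of $\phi$ near the origin, where it need not even be defined. In the transference step the lattice point $m=0$ must be handled separately: after splitting off the constant Fourier multiplier $f\mapsto \phi(0)f$ (which is not literally available since $\phi(0)$ may not exist) one instead modifies $\phi$ in a fixed neighborhood of $0$ to a smooth function $\tilde\phi$ agreeing with $\phi$ outside that neighborhood, with $\|\tilde\phi\|_{\rm M}\lesssim \|\phi\|_{\rm M}$; the restrictions of $\phi$ and $\tilde\phi$ to $\ent^d\setminus\{0\}$ coincide, and the value at $0$ contributes only a rank-one correction on $L_p(\T^d;X)$ of norm $\le\sup_{|\xi|=1}|\phi(\xi)|\le\|\phi\|_{\rm M}$. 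Thus the only genuine input is the Euclidean theorem, and everything else is the routine periodization bookkeeping; I would present it at roughly that level of detail, referring to \cite{McC1984, Zimm1989, bourg-CZ} for the hard analytic core and to the standard transference machinery for the reduction.
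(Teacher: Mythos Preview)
Your proposal is correct and in fact more detailed than the paper's own treatment: the paper does not prove this lemma at all but simply attributes it to \cite{McC1984, Zimm1989} (and \cite{bourg-CZ} in dimension one), exactly the references you invoke. Your transference sketch and the care you take near the origin are sound and go beyond what the paper records; since the authors treat the lemma as a black box from the literature, there is nothing in the paper to compare against beyond the shared citations.
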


The following lemma will play a key role in the whole paper.

\begin{lem}\label{q-multiplier}
 Let $\phi$ be a function on $\real^d$.
 \begin{enumerate}[\rm(i)]
 \item If $\F^{-1}(\phi)$ is integrable on $\real^d$, then $\phi$ is a c.b. Fourier multiplier on  $L_p(\T^d_\t)$ for $1\le p\le\8$. Moreover, its c.b. norm  is not greater than $\big\|\F^{-1}(\phi)\big\|_{1}$.
 \item If $\phi$ is a Mikhlin multiplier, then $\phi$ is a c.b. Fourier multiplier on  $L_p(\T^d_\t)$ for $1< p<\8$. Moreover, its c.b. norm  is controlled by $\|\phi\|_{\rm M}$ and $p$.
 \end{enumerate}
\end{lem}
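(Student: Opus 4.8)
The plan is to deduce Lemma~\ref{q-multiplier} by combining the three ingredients already assembled in the excerpt: Lemma~\ref{cb-multiplier} (the c.b.\ Fourier multipliers on $L_p(\T^d_\t)$ coincide isometrically with those on $L_p(\T^d)$), Remark~\ref{multiplier-rk} (the description of $\mathsf{M}_{\mathrm{cb}}(L_1(\T^d))=\mathsf{M}_{\mathrm{cb}}(L_\8(\T^d))$ as Fourier transforms of bounded measures), and Lemma~\ref{UMD-multiplier} (Mikhlin multipliers act boundedly on $L_p(\T^d;X)$ for any UMD space $X$ and $1<p<\8$), together with the fact that $S_p$ is UMD for $1<p<\8$. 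By Lemma~\ref{cb-multiplier} it suffices throughout to prove the corresponding assertions for $\T^d$ in place of $\T^d_\t$, i.e.\ to estimate $\|M_\phi\|_{\mathrm{cb}}$ on $L_p(\T^d)$, and by the criterion of Pisier recalled before Lemma~\ref{cb-multiplier} this c.b.\ norm equals $\|\mathrm{Id}_{S_p}\ot M_\phi : L_p(\T^d;S_p)\to L_p(\T^d;S_p)\|$ for $1\le p\le\8$ (or for one convenient value of $p$).

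For part (i), suppose $g:=\F^{-1}(\phi)\in L_1(\real^d)$. First I would form the $1$-periodization $\wt\phi(s)=\sum_{m\in\ent^d}g(s+m)$, which is then an $L_1(\T^d)$ function whose Fourier coefficients are $\phi(m)$, as recorded in the discussion preceding the statement; in particular $\|\wt\phi\|_{L_1(\T^d)}\le\|g\|_{L_1(\real^d)}$. Then $M_\phi f=\wt\phi * f$ is genuine convolution by an $L_1$ function on $\T^d$, so for $f\in L_p(\T^d;S_p)$ Minkowski's integral inequality (applied in the Bochner space $L_p(\T^d;S_p)$, using translation invariance of the $L_p(\T^d;S_p)$ norm under $z\mapsto zw^{-1}$) gives $\|\wt\phi*f\|_{L_p(\T^d;S_p)}\le\|\wt\phi\|_{L_1(\T^d)}\,\|f\|_{L_p(\T^d;S_p)}\le\|g\|_1\,\|f\|_{L_p(\T^d;S_p)}$ for every $1\le p\le\8$. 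Since this holds for all $p$ simultaneously, $M_\phi$ is c.b.\ on $L_p(\T^d)$ with c.b.\ norm $\le\|g\|_1$ for each $p$, and Lemma~\ref{cb-multiplier} transfers this bound to $L_p(\T^d_\t)$. (Equivalently one can invoke Remark~\ref{multiplier-rk}: $\wt\phi\,dz$ is a bounded measure of total variation $\le\|g\|_1$, so $\phi\in\mathsf{M}_{\mathrm{cb}}(L_1(\T^d))=\mathsf{M}_{\mathrm{cb}}(L_\8(\T^d))$, hence $\phi\in\mathsf{M}_{\mathrm{cb}}(L_p(\T^d))$ for all $p$ by interpolation with the same norm bound.)

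For part (ii), assume $\phi$ is a Mikhlin multiplier and fix $1<p<\8$. Apply Lemma~\ref{UMD-multiplier} with $X=S_p$, which is a UMD space for $1<p<\8$ (as noted in the excerpt): $\phi$ is then a Fourier multiplier on $L_p(\T^d;S_p)$ with norm controlled by $\|\phi\|_{\mathrm M}$, $p$, and the UMD constant of $S_p$ — the latter depending only on $p$. By the Pisier criterion recalled above, boundedness of $\mathrm{Id}_{S_p}\ot M_\phi$ on $L_p(\T^d;S_p)=L_p(B(\el_2)\overline\ot\T^d)$ is exactly complete boundedness of $M_\phi$ on $L_p(\T^d)$, with equal norms; thus $\phi\in\mathsf{M}_{\mathrm{cb}}(L_p(\T^d))$ with c.b.\ norm $\les_{p}\|\phi\|_{\mathrm M}$. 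Finally Lemma~\ref{cb-multiplier} yields $\phi\in\mathsf{M}_{\mathrm{cb}}(L_p(\T^d_\t))$ with the same norm control.

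I do not anticipate a genuine obstacle here: the statement is explicitly flagged in the excerpt as an immediate corollary, and every component is already available. The only points requiring a little care are bookkeeping ones — verifying that the periodization $\wt\phi$ has the claimed Fourier coefficients and $L_1$ bound so that part (i) really reduces to convolution by an $L_1$ kernel (and that the estimate is uniform in $p$, which is what makes it a \emph{complete} boundedness statement), and correctly matching the vector-valued space $L_p(\T^d;S_p)$ in Lemma~\ref{UMD-multiplier} with the space $L_p(B(\el_2)\overline\ot\T^d)$ appearing in the definition of c.b.\ multipliers. Both are routine.
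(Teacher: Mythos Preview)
Your proposal is correct and follows exactly the route the paper indicates: it deduces the lemma by combining Lemma~\ref{cb-multiplier}, Remark~\ref{multiplier-rk}, and Lemma~\ref{UMD-multiplier} (with $X=S_p$), which is precisely the one-line justification the paper gives before stating the result. The only addition you make is spelling out the periodization and Minkowski argument for part~(i), which is a routine elaboration of Remark~\ref{multiplier-rk}.
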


\begin{proof}
 (i) Since $\F^{-1}(\phi)$ is integrable,  $\phi$ is a c.b. Fourier multiplier on  $L_1(\T^d)$, so on  $L_p(\T^d)$ for $1\le p\le\8$ (see Remark \ref{multiplier-rk}). Consequently, by Lemma~\ref{cb-multiplier}, $\phi$ is a c.b. Fourier multiplier on  $L_p(\T^d_\t)$ for $1\le p\le\8$.
 
 (ii) It is well known that $S_p$  is a UMD space for $1<p<\8$. Thus, by Lemma~\ref{UMD-multiplier}, $\phi$ is a Fourier multiplier on  $L_p(\T^d; S_p)$; in other words, it is a c.b. Fourier multiplier on  $L_p(\T^d_\t)$.
\end{proof}

%%%%%%%%%%%%%%%%%%%%%%%%%%%%%%%%%%%%%%%%%%%%%%%%%%%%%%%%%%%%%%%%%%%%%%%%
%%%%%%%%%%%%%%%%%%%%%%%%%%%%%%%%%%%%%%%%%%%%%%%%%%%%%%%%%%%%%%%%%%%%%%%%

\section{Hardy spaces}
\label{Hardy spaces}

%%%%%%%%%%%%%%%%%%%%%%%%%%%%%%%%%%%%%%%%%%%%%%%%%%%%%%%%%%%%%%%%%%%%%%%%
%%%%%%%%%%%%%%%%%%%%%%%%%%%%%%%%%%%%%%%%%%%%%%%%%%%%%%%%%%%%%%%%%%%%%%%%

We now present some preliminaries on operator-valued Hardy spaces on $\T^d$ and Hardy spaces on $\T^d_\t$.  Motivated by the developments  of noncommutative martingale inequalities in \cite{PX1997,JX2003} and  quantum Markovian semigroups in \cite{JLX2006}, Mei \cite{Mei2007} developed the theory of  operator-valued Hardy spaces on $\real^d$. More recently, Mei's work was extended to the torus case in  \cite{CXY2012} with the objective of developing the Hardy space theory in the quantum torus setting. We now recall the  definitions and results that will be needed later.  Throughout this section, $\M$ will denote a von Neumann algebra equipped with a normal faithful tracial state $\tau$ and $\N=L_\8(\T^d)\overline\ot\M$ with the tensor trace. In our future applications, $\M$ will be $\T^d_\t$.

A cube of $\T^d$ is a product $Q=I_1\times\cdots\times I_d$, where each $I_j$ is an interval (= arc) of $\T$. As in the Euclidean case, we use $|Q|$ to denote the normalized volume (= measure) of $Q$. The whole $\T^d$ is now a cube too (of volume $1$).

We will often identify $\T^d$ with the unit cube $\mathbb{I}^d=[0,\, 1)^d$ via $(e^{2 \pi \mathrm{i} s_1},\cdots , e^{2 \pi \mathrm{i} s_d})\leftrightarrow(s_1, \cdots, s_d)$. Under this identification, the addition in $\mathbb{I}^d$ is the usual addition modulo $1$ coordinatewise; an interval of $\mathbb{I}$ is either a subinterval of $\mathbb{I}$ or a union  $[b,\, 1)\cup[0,\, a]$ with $0<a<b<1$, the latter union being the interval $[b-1,\, a]$ of $\I$ (modulo $1$). So the cubes of $\mathbb{I}^d$ are exactly those of $\T^d$.  Accordingly, functions on $\T^d$ and $\mathbb{I}^d$ are identified too; they are considered as $1$-periodic functions on $\real^d$. Thus  $\N=L_\8(\T^d)\overline\ot\M= L_\8(\I^d)\overline\ot\M$.

\medskip

We define $\BMO^c(\T^d, \M)$ to be the space of all $f\in L_2(\N)$ such that
 $$\|f\|_{\BMO^c}=\max\big\{\big\|f_{\T^d}\big\|_\M,\;
 \sup_{Q\subset \T^d  \text{cube}} \Big\|\frac1{|Q|}\int_Q\big|f(z)-\frac1{|Q|}\int_Qf(w)dw\big|^2dz\Big\|_\M^{\frac12}\big\}<\8.$$
The row  $\BMO^r(\T^d, \M)$  consists of all $f$ such that $f^*\in \BMO^c(\T^d, \M)$, equipped with $\|f\|_{\BMO^r}=\|f^*\|_{\BMO^c}$. Finally, we define mixture space $\BMO(\T^d, \M)$ as the intersection of  the column and row BMO spaces:
 $$\BMO(\T^d, \M)=\BMO^c(\T^d, \M)\cap \BMO^r(\T^d, \M),$$
 equipped with  $\|f\|_{\BMO}=\max(\|f\|_{\BMO^c}\,,\; \|f\|_{\BMO^r}).$

As in the Euclidean case, these spaces can be characterized  by the circular Poisson semigroup.  Let $\mathbb{P}_r$ denote the circular Poisson kernel of  $\T^d$:
 \beq\label{circular P}
 \mathbb{P}_r(z) = \sum_{m \in \ent^d }r^{|m|} z^{m}, \quad z\in\T^d,\; 0 \le r < 1.
 \eeq
The Poisson integral of $f\in L_1(\N)$ is
 $$\mathbb{P}_r(f) (z)=\int_{\T^d}\mathbb{P}_r(zw^{-1})f(w)dw=
 \sum_{m \in \mathbb{Z}^d } \wh f(m) r^{|m|} z^{m}.$$
Here $\wh f$ denotes, of course, the Fourier transform of $f$:
 $$\wh f(m)=\int_{\T^d}f(z)\, z^{-m}dz.$$
It is proved in \cite{CXY2012} that
 \beq\label{Poisson-BMO}
 \sup_{Q\subset \T^d  \text{cube}} \Big\|\frac1{|Q|}\int_Q\big|f(z)-\frac1{|Q|}\int_Qf(w)dw\big|^2dz\Big\|_\M\approx
 \sup_{0\le r<1}\big\|\mathbb{P}_r(|f-\mathbb{P}_r(f)|^2)\big\|_{\N}
 \eeq
with relevant constants depending only on $d$. Thus
 $$
 \|f\|_{\mathrm{BMO}^c}\approx \max\big\{\|\wh f(0)\|_\M,\;
 \sup_{0\le r<1}\big\|\mathbb{P}_r(|f-\mathbb{P}_r(f)|^2)\big\|_{\N}^{\frac12}\big\}.
 $$

Now we turn to  the operator-valued Hardy spaces on $\T^d$ which are defined by the Littlewood-Paley  functions associated to the circular Poisson kernel. For $f\in L_1(\N)$ define
 $$
 s^c(f) (z)= \Big( \int_0^1 \big | \partial_r\mathbb{P}_r(f)(z)\big |^2(1-r)dr\Big)^{\frac12}\,,\quad z\in\T^d.
$$
For $1\leq p <\infty$, let
 $$\H^c_p(\T^d, \M)=\{f\in L_1(\N): \|f\|_{\H^c_p}<\8\},$$
where
 $$\|f\|_{\H^c_p}=\|\wh f(0)\|_{L_p(\M)} +\|s^c(f)\|_{L_p(\N)}.$$
The row Hardy space  $\H^r_p(\T^d, \M)$ is defined to be the space of all $f$ such that $f^*\in\H^c_p(\T^d, \M)$, equipped with the natural norm. Then we define
  \be
  \H_p(\T^d, \M) =
 \left \{ \begin{split}
 &\H_p^c(\T^d, \M)+ \H_p^r(\T^d, \M)& \textrm{ if }\; 1\le p<2,\\
 &\H_p^c(\T^d, \M) \cap \H_p^r(\T^d, \M)  & \textrm{ if }\; 2\le  p<\8,
 \end{split} \right.
 \ee
equipped with the sum and intersection norms, respectively:
  \be
  \|f\|_{\H_p} =
 \left \{ \begin{split}
 &\inf\big\{\|g\|_{\H_p^c} +\|h\|_{\H_p^r} : f=g+h \big\} & \textrm{ if }\; 1\le p<2,\\
 &\max\big(\|f\|_{\H_p^c}\,, \;\|f\|_{\H_p^r}\big)   & \textrm{ if }\; 2\le  p<\8.
 \end{split} \right.
 \ee

The following is the main results of \cite[Section~8]{CXY2012}

\begin{lem}\label{H-BMO}
  \begin{enumerate}[\rm (i)]
   \item  Let $1<p<\8$. Then $\H_p(\T^d, \M)=L_p(\N)$ with equivalent norms.
 \item The dual space of $\H_1^c(\T^d, \M)$  coincides isomorphically with $\BMO^c(\T^d, \M)$.
 \item  Let $1<p<\8$. Then
 \be\begin{split}
 (\BMO^c(\T^d, \M),\; \H^c_1(\T^d, \M))_{\frac1p}&=\H^c_p(\T^d, \M) \\
 (\BMO^c(\T^d, \M),\; \H^c_1(\T^d, \M))_{\frac1p\,,p}&=\H^c_p(\T^d, \M).
\end{split}\ee
 \end{enumerate}
 Similar statements hold for the row and mixture spaces too.
  \end{lem}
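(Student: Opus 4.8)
The plan is to follow the strategy of Mei's work \cite{Mei2007} on operator-valued Hardy spaces on $\real^d$, transferring each step to the periodic setting. The key structural fact is that the circular Poisson semigroup $(\mathbb{P}_r)_{0\le r<1}$, extended to $\N=L_\8(\T^d)\overline\ot\M$, is a trace-preserving quantum Markovian semigroup whose fixed-point subspace is $\un\ot\M$; this is exactly what the terms $\wh f(0)$ in the definitions of the $\H_p^c$-norms account for. So throughout one works with $f-\wh f(0)$ for the square-function part and treats the $0$-th Fourier coefficient separately.

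For part (i), I would first prove the column Littlewood--Paley--Stein inequality $\|s^c(f)\|_{L_p(\N)}\les\|f-\wh f(0)\|_{L_p(\N)}$ for $1<p<\8$, which is the noncommutative square-function estimate for the Poisson semigroup subordinated to the heat semigroup on $\N$; this is available from the general Littlewood--Paley--Stein theory for symmetric semigroups in \cite{JLX2006}. Together with its row analogue and a dual lower estimate obtained by polarization and duality, this yields $\|f\|_p\approx\max(\|f\|_{\H_p^c},\|f\|_{\H_p^r})$ for $2\le p<\8$ and $\|f\|_p\approx\inf\{\|g\|_{\H_p^c}+\|h\|_{\H_p^r}\}$ for $1<p<2$, i.e. $\H_p(\T^d,\M)=L_p(\N)$ with equivalent norms. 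The only care needed is the bookkeeping of the zeroth Fourier coefficient and the comparison of the column and row scales on the two sides of $p=2$.

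For part (ii), the Fefferman-type duality $(\H_1^c(\T^d,\M))^*=\BMO^c(\T^d,\M)$ is the technical core. For the inclusion $\BMO^c\subset(\H_1^c)^*$, given $f\in\H_1^c$ and $g\in\BMO^c$ I would express the pairing as $\tau\nu\big(\wh f(0)^*\wh g(0)\big)+c\int_0^1\nu\big(\pa_r\mathbb{P}_r(f)^*\,\pa_r\mathbb{P}_r(g)\big)(1-r)\,dr$ via a Green/integration-by-parts identity, then apply the operator Cauchy--Schwarz inequality in the form $|\la a,b\ra|\le\big\||a|\big\|_1\big\||b|\big\|_\8$ together with the Poisson characterization \eqref{Poisson-BMO} of $\BMO^c$ and the $\H_1^c$-norm of $f$. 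For the reverse inclusion $(\H_1^c)^*\subset\BMO^c$: the map $f\mapsto(\wh f(0),\pa_r\mathbb{P}_r(f))$ embeds $\H_1^c$ isometrically into $L_1(\M)\op L_1\big(\N;L_2^c((0,1),(1-r)dr)\big)$; given $\ell\in(\H_1^c)^*$ one extends it by Hahn--Banach, represents it using the known duality of the latter column space, and then shows the representing operator has finite $\BMO^c$-norm by testing it on Poisson-type "atoms" and again invoking \eqref{Poisson-BMO}.

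For part (iii), once (i) and (ii) are established, the interpolation identities follow by the standard Fefferman--Stein argument adapted to the noncommutative setting: the spaces $\H_p^c(\T^d,\M)$, $1\le p<\8$, form an interpolation scale for both the complex and the real method (they are complemented, after reformulating $s^c$, in $L_p$ of a larger von Neumann algebra), and combining this with the duality $(\H_1^c)^*=\BMO^c$ and a reiteration argument gives $(\BMO^c,\H_1^c)_{1/p}=(\BMO^c,\H_1^c)_{1/p,p}=\H_p^c$ for $1<p<\8$; the row and mixture statements follow by passing to adjoints and intersecting or summing. The main obstacle is the direction $(\H_1^c)^*\subset\BMO^c$ in (ii): in the absence of a noncommutative pointwise maximal function the whole argument must be run through the Poisson square function, so making the atomic testing rigorous and controlling the $\BMO^c$-norm of the representing operator is where the real work lies — and it is also where the periodization (passing from $\real^d$ to $\T^d$) requires the most care.
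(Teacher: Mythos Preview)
The paper does not actually prove this lemma: it is stated as ``the main results of \cite[Section~8]{CXY2012}'' and used as a black box throughout. Your sketch follows the strategy of Mei's memoir \cite{Mei2007} adapted to the periodic setting, which is precisely what \cite{CXY2012} does, so your approach is correct and aligned with the actual source; there is nothing to compare against in the present paper itself.
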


By transference, the previous results can be transferred to the quantum torus case. The Poisson integral of an element  $x$ in $L_1(\T^d_{\theta})$ is defined by
 $$\mathbb{P}_r(x) = \sum_{m \in \mathbb{Z}^d } \wh{x} ( m ) r^{|m|} U^{m}, \quad 0 \le r < 1.$$
Its associated Littlewood-Paley $g$-function is
 $$s^c(x) =  \Big ( \int_0^1 \big |\partial_r\mathbb{P}_r(x) \big |^2(1-r)dr \Big )^{\frac12}.$$
For $1\leq p <\infty$ let
 $$\|x\|_{\H^c_p}=|\wh x(0)| +\|s^c(x)\|_{L_p(\T^d_{\t})}.$$
The column Hardy space $\H^c_p(\T^d_\t)$ is then defined to be
 $$\H^c_p(\mathbb{T}^d_{\theta})=\big\{x\in L_1(\T^d_{\theta}): \|x\|_{\H^c_p}<\8 \big\}.$$
On the other hand, inspired by \eqref{Poisson-BMO}, we define
 $$
 \BMO^c(\T^d_\t) =  \big\{x\in L_2(\T^d_\t)\;:\; \sup_{0\le r<1}
  \big\|\mathbb{P}_{r} \big(|x-\mathbb{P}_r(x)|^2\big) \big\|_{\T^d_\t}<\infty \big\},
 $$
equipped with the norm
 $$\|x\|_{\mathrm{BMO}^c}=\max\big\{|\wh x(0)|,\;
 \sup_{0\le r<1}  \big\|\mathbb{P}_r \big(|x-\mathbb{P}_r(x)|^2 \big) \big\|_{\mathbb{T}^d_{\theta}}^{\frac12}\,\big\}.$$
The corresponding row and mixture spaces are defined similarly to the preceding torus setting.

\smallskip

Lemma~\ref {H-BMO} admits the following quantum analogue:

\begin{lem}\label{q-H-BMO}
  \begin{enumerate}[\rm (i)]
   \item  Let $1<p<\8$. Then $\H_p(\T^d_\t)=L_p(\T^d_\t)$ with equivalent norms.
 \item The dual space of $\H_1^c(\T^d_\t)$  coincides isomorphically with $\BMO^c(\T^d_\t)$.
 \item  Let $1<p<\8$. Then
 $$(\BMO^c(\T^d_\t),\; \H^c_1(\T^d_\t))_{\frac1p}
 =\H^c_p(\T^d_\t)=(\BMO^c(\T^d_\t),\; \H^c_1(\T^d_\t))_{\frac1p\,,p}\,.$$
 \end{enumerate}
 Similar statements hold for the row and mixture spaces too.
  \end{lem}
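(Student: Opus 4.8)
The plan is to deduce the quantum statement from the operator-valued torus statement of Lemma~\ref{H-BMO} by means of the transference apparatus of Corollary~\ref{prop:TransLp}, applied with $\M=\T^d_\t$ and $\N_\t=L_\8(\T^d)\overline\ot\T^d_\t$. The key observation is that all the objects entering the definitions of $\H^c_p(\T^d_\t)$ and $\BMO^c(\T^d_\t)$ commute with the embedding $x\mapsto\wt x$ and with the conditional expectation $\E$ onto $\widetilde{\T^d_\t}$. First I would check that the Poisson integral intertwines correctly: since $\pi_z$ is the Fourier multiplier with symbol $w\mapsto w^m$ at frequency $m$, one has $\widetilde{\mathbb{P}_r(x)}(z)=\pi_z(\mathbb{P}_r(x))=\mathbb{P}_r(\wt x)(z)$, where on the right $\mathbb{P}_r$ is the $\N_\t$-Poisson integral of Section~\ref{Hardy spaces}. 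Consequently $\partial_r\mathbb{P}_r(\wt x)=\widetilde{\partial_r\mathbb{P}_r(x)}$, and because $\pi_z$ is a trace-preserving $*$-isomorphism it commutes with the modulus and square root, so $s^c(\wt x)=\widetilde{s^c(x)}$ as an element of $L_p(\N_\t)$. By the isometry $\|\wt y\|_p=\|y\|_p$ of Corollary~\ref{prop:TransLp}(i), this gives $\|\wt x\|_{\H^c_p(\T^d,\T^d_\t)}=\|x\|_{\H^c_p(\T^d_\t)}$, i.e.\ $x\mapsto\wt x$ is an isometric embedding of $\H^c_p(\T^d_\t)$ into $\H^c_p(\T^d,\T^d_\t)$, with range inside the subspace of $\pi_z$-equivariant (equivalently, $\E$-invariant) elements. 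The same computation applied to $|f-\mathbb{P}_r(f)|^2$ with $f=\wt x$ shows $\mathbb{P}_r(|\wt x-\mathbb{P}_r(\wt x)|^2)=\widetilde{\mathbb{P}_r(|x-\mathbb{P}_r(x)|^2)}$, hence $x\mapsto\wt x$ is also an isometric embedding of $\BMO^c(\T^d_\t)$ into $\BMO^c(\T^d,\T^d_\t)$ with the same equivariant range.

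Next I would show that the conditional expectation $\E$ of Corollary~\ref{prop:TransLp}(ii) maps $\H^c_p(\T^d,\T^d_\t)$ boundedly (indeed contractively) onto the equivariant subspace, and likewise for $\BMO^c$. For this it suffices to note that $\E$ is given by averaging against the translations $\pi_z$ (explicitly $\E(f)(z)=\pi_z(\int \pi_{\bar w}[f(w)]\,dw)$), that each $\pi_z$ is an isometry on all the spaces in sight, and that the $g$-function $s^c$ and the BMO-oscillation are built from operations ($\partial_r\mathbb{P}_r$, modulus, conditional expectation in the $r$-variable) that commute with translation; an integral-Minkowski / Cauchy–Schwarz argument in the column-norm (using that $y\mapsto(y^*y)^{1/2}$ is operator-convex-compatible with averaging in $L_p(\N;L_2^c)$) then yields $\|\E f\|_{\H^c_p}\le\|f\|_{\H^c_p}$ and $\|\E f\|_{\BMO^c}\le\|f\|_{\BMO^c}$. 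Therefore the equivariant subspace of $\H^c_p(\T^d,\T^d_\t)$ is $1$-complemented and isometrically isomorphic to $\H^c_p(\T^d_\t)$, and similarly for $\BMO^c$. With these identifications in hand, parts (i), (ii), (iii) of Lemma~\ref{q-H-BMO} follow from the corresponding parts of Lemma~\ref{H-BMO}: for (i), $\H_p(\T^d,\T^d_\t)=L_p(\N_\t)$ with equivalent norms, and restricting to the equivariant subspace and using $L_p(\widetilde{\T^d_\t})=L_p(\T^d_\t)$ isometrically (Corollary~\ref{prop:TransLp}(iii)) gives $\H_p(\T^d_\t)=L_p(\T^d_\t)$; for (iii), complex and real interpolation commute with taking $1$-complemented subspaces (the complementation being compatible on the whole scale, as $\E$ is simultaneously bounded on $\BMO^c$ and $\H^c_1$), so the interpolation identities descend; for (ii), one checks that the duality bracket $\la f,g\ra=\tau_{\N_\t}(f^*g)$ restricts on the equivariant subspaces to the bracket $\la x,y\ra=\tau_{\T^d_\t}(x^*y)$ (using $\nu=\int dz\ot\tau$ and equivariance), so the duality $\H_1^{c*}=\BMO^c$ of Lemma~\ref{H-BMO}(ii) transfers.

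The row and mixture cases follow by passing to adjoints and intersecting/summing, exactly as in the commutative setting; nothing new is needed since $x\mapsto x^*$ commutes with $x\mapsto\wt x$. The main obstacle I anticipate is not conceptual but a matter of bookkeeping: one must verify carefully that $\E$ is contractive on the column Hardy and BMO norms and, more importantly, that the complementation is \emph{uniform across the interpolation scale} so that part (iii) genuinely reduces to Lemma~\ref{H-BMO}(iii)—this requires knowing that $\E$ is simultaneously a bounded projection on $\BMO^c(\T^d,\T^d_\t)$ and on $\H^c_1(\T^d,\T^d_\t)$, after which a standard retraction/corction argument for interpolation of complemented subspaces (see \cite{BL1976}) applies. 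A secondary point to be careful about is the precise domain of validity of the identity $\widetilde{\mathbb{P}_r(x)}=\mathbb{P}_r(\wt x)$: it is immediate for polynomials $x$ and then extends by density in $L_1$, which is all that is needed since both Hardy and BMO spaces sit inside $L_1(\T^d_\t)$ (resp.\ $L_2$).
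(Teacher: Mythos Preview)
Your proposal is correct and follows precisely the approach indicated by the paper: the paper does not give a detailed proof of Lemma~\ref{q-H-BMO} but simply says ``By transference, the previous results can be transferred to the quantum torus case,'' referring to the machinery of Corollary~\ref{prop:TransLp} applied to Lemma~\ref{H-BMO} (the full details being in \cite{CXY2012}). Your write-up is a faithful and accurate elaboration of exactly this transference argument --- the intertwining of the Poisson integral with $x\mapsto\wt x$, the resulting isometric embedding into the operator-valued spaces, and the $1$-complementation via $\mathbb{E}$ that allows interpolation and duality to descend.
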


In the above definition of $\H_p^c(\T^d_\t)$, the Poisson kernel can be replaced by any reasonable smooth function.  Let $\p$ be a Schwartz function on $\real^d$ and $\p_j$ be the function whose Fourier transform is $\p(2^{-j}\cdot)$. The map  $x\mapsto \wt\p_j*x$ is the Fourier multiplier on $\T^d_\t$ associated to $\p_j$. Now  we define the square function  associated to $\p$ of an element $x\in L_1(\T^d_\t)$ by
  $$s^{c}_\p(x)=\Big(\sum_{j\ge0}\big|\wt \p_j*x\big|^2\Big)^{\frac12}.$$
The following lemma is the main result of \cite{XXX}. We will need it essentially in the case of $p=1$.

 \begin{lem}\label{Hp-discrete}
 Let $1\le p<\8$, and let $\p$ be a Schwartz function that does not vanish in $\{\xi: 1\le|\xi|<2\}$.  Then $x\in \H_p^c(\T^d_\t)$ iff $s^{c}_\p(x)\in L_p(\T^d_\t)$. In this case, we have
$$\|x\|_{\mathcal{H}_p^c}\approx |\wh x(0)|+ \| s^{c}_\p(x)\|_{L_p(\T^d_\t)}\,,$$
where the equivalence constants depend only on $d, p$ and $\p$.
\end{lem}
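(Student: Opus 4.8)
The plan is to transfer the discrete Littlewood--Paley characterization of $\H_p^c(\T^d_\t)$ to the one associated with the continuous (Poisson) $g$-function already built into the definition of $\H_p^c(\T^d_\t)$, and conversely. Since the equivalence $\|x\|_{\H_p^c}\approx|\wh x(0)|+\|s_\p^c(x)\|_p$ for a \emph{fixed} admissible $\p$ is exactly the content quoted from \cite{XXX}, the real work, if the lemma is to be proved here rather than merely cited, is to reduce the case of an arbitrary Schwartz $\p$ nonvanishing on the annulus $\{1\le|\xi|<2\}$ to a single convenient choice, and to bridge from the dyadic square function to the Poisson one. I would first fix a reference Schwartz function $\p^\sharp$ supported in an annulus $\{c_0\le|\xi|\le c_1\}$ with $\sum_{j\in\ent}\p^\sharp(2^{-j}\xi)=1$ for $\xi\ne0$ (the usual Littlewood--Paley partition of unity), and show $\|x\|_{\H_p^c}\approx|\wh x(0)|+\|s^c_{\p^\sharp}(x)\|_p$. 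This is the standard comparison between the Poisson $g$-function $s^c(x)=\big(\int_0^1|\partial_r\mathbb P_r(x)|^2(1-r)dr\big)^{1/2}$ and the dyadic square function: on each dyadic block the symbols $\partial_r r^{|m|}(1-r)^{1/2}$ and $\p^\sharp(2^{-j}m)$ differ by a Mikhlin-type factor, so Lemma~\ref{q-multiplier}(ii) (applied on $L_p(\T^d_\t;\ell_2^c)$, which is a UMD-valued setting) gives two-sided control, term by term and then summed; the endpoint $p=1$ is handled via Lemma~\ref{q-H-BMO} and interpolation/duality against $\BMO^c$, or directly since $\F^{-1}$ of the relevant symbols is integrable, invoking Lemma~\ref{q-multiplier}(i).

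Next I would pass from $\p^\sharp$ to a general admissible $\p$. Write $\wt\p_j*x=\sum_j(\wt\p_j*\wt\p^\sharp_{j'})*x$ where, because $\p$ does not vanish on $\{1\le|\xi|<2\}$ and is Schwartz, one can find an auxiliary Schwartz $\eta$ with $\eta(2^{-i}\xi)\p(2^{-j}\xi)$ reproducing $\p^\sharp(2^{-k}\xi)$ up to a rapidly decaying error in $|i-j|,|k-j|$. Concretely: $s_\p^c(x)\le s_{\p^\sharp}^c(x)$-type domination in one direction comes from $\p^\sharp(2^{-k}\cdot)=\sum_{|j-k|\le N}\p^\sharp(2^{-k}\cdot)\psi(2^{-j}\cdot)$ for a suitable $\psi$ built from $1/\p$ on the annulus; applying the operator-valued Cauchy--Schwarz and the fact that each Fourier multiplier with symbol $\p^\sharp(2^{-k}\cdot)\psi(2^{-j}\cdot)$ has $\ell_1$-bounded (in the $\F^{-1}$ sense, uniformly) convolution kernel, one bounds $\|s_{\p^\sharp}^c(x)\|_p\lesssim\|s_\p^c(x)\|_p$, and symmetrically. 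This is the classical ``change of square function'' argument, and its noncommutative version needs only Lemma~\ref{q-multiplier} together with the column-space structure of $L_p(\T^d_\t;\ell_2^c)$; the zero-mode term $|\wh x(0)|$ is separated out beforehand and causes no trouble.

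The main obstacle, as always in this circle of ideas, is the endpoint $p=1$, where no $L_1$ square-function inequality holds by soft means and the UMD machinery of Lemma~\ref{q-multiplier}(ii) is unavailable. Here I would lean on Lemma~\ref{q-H-BMO}: establish the equivalence first for $1<p<\infty$ by the multiplier comparisons above, then obtain $p=1$ by interpolating the column Hardy spaces between $\BMO^c$ and $\H_1^c$ (using $(\BMO^c,\H_1^c)_{1/p}=\H_p^c$) after checking that the two candidate norms both satisfy the corresponding interpolation identities — equivalently, by a duality argument pairing $s_\p^c$-defined $\H_1^c$ against $\BMO^c$ via an atomic or tent-space decomposition. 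Since the lemma is stated as ``the main result of \cite{XXX},'' the cleanest exposition is simply to cite it; the sketch above indicates how one would reconstruct it from the tools assembled in this chapter, with the $p=1$ case being the only point requiring the Hardy--BMO duality rather than bare multiplier estimates. Throughout, all equivalence constants depend only on $d$, $p$, and $\p$, as asserted.
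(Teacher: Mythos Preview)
The paper does not give its own proof of this lemma: it is simply quoted as ``the main result of \cite{XXX}'' and used as a black box (notably in the proof of Theorem~\ref{Sobolev-Besov} with $\p=\f$, and in the proof of Theorem~\ref{Hormander}\,(ii) with $\p=I_{-\a}\rho$). You correctly identify this and offer a reconstruction sketch rather than a comparison.

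Your outline for $1<p<\infty$ is broadly sound, but note that the comparison of the Poisson $g$-function with a dyadic square function is not a bare application of Lemma~\ref{q-multiplier}\,(ii) to a single multiplier: it is a sequence-valued statement on $L_p(\T^d_\t;\ell_2^c)$, which in this paper is handled by the Calder\'on--Zygmund machinery of Section~\ref{A multiplier theorem} (Lemmas~\ref{CZD}--\ref{multiplier DH}) rather than by UMD alone. Since that machinery in turn \emph{uses} Lemma~\ref{Hp-discrete} for the $p=1$ endpoint, one cannot bootstrap it from here without circularity; an independent proof, as in \cite{XXX}, is genuinely needed.

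Two concrete gaps in your sketch. First, the $p=1$ case: recovering an endpoint by interpolating from the open range $1<p<\infty$ is not possible, and ``checking that the two candidate norms both satisfy the corresponding interpolation identities'' is precisely the statement to be proved. The atomic/tent-space route you mention at the end is the right one and is what actually does the work in \cite{XXX}. Second, the change-of-square-function step in the direction $\|s^c_\p(x)\|_p\lesssim\|s^c_{\p^\sharp}(x)\|_p$: since $\p$ is only assumed Schwartz and nonvanishing on the annulus, it need not have annular Fourier support, so $\p(2^{-j}\cdot)=\sum_k\p(2^{-j}\cdot)\p^\sharp(2^{-k}\cdot)$ is an \emph{infinite} sum and the finite overlap argument $|j-k|\le N$ does not apply. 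One needs the full Tauberian-type expansion with decay in $|j-k|$, as in the proofs of Theorems~\ref{g-charct-Besov} and \ref{g-charct-Triebel}, together with an implicit vanishing condition on $\p$ at the origin (present in every application in the paper) to make the tail summable.
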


%%%%%%%%%%%%%%%%%%%%%%%%%%%%%%%%%%%%%%%%%%%%%%%%%%%%%%%%%%%%%%%%%%%%%%%%
%%%%%%%%%%%%%%%%%%%%%%%%%%%%%%%%%%%%%%%%%%%%%%%%%%%%%%%%%%%%%%%%%%%%%%%%

{\Large\part{Sobolev spaces}}
 \setcounter{section}{0}
%%%%%%%%%%%%%%%%%%%%%%%%%%%%%%%%%%%%%%%%%%%%%%%%%%%%%%%%%%%%%%%%%%%%%%%%
%%%%%%%%%%%%%%%%%%%%%%%%%%%%%%%%%%%%%%%%%%%%%%%%%%%%%%%%%%%%%%%%%%%%%%%%

This chapter starts with a brief introduction to distributions on quantum tori. We then pass to the definitions of Sobolev spaces on $\T^d_\t$ and give some fundamental properties of them. Two families of Sobolev spaces are studied: $W^k_p(\T^d_\t)$ and the fractional Sobolev spaces $H^\a_p(\T^d_\t)$. We  prove a Poincar\'e type inequality for $W^k_p(\T^d_\t)$ for any $1\le p\le\8$. Our approach to this inequality seems very different from existing proofs for such an inequality in the classical case. We show that $W^k_\8(\T^d_\t)$  coincides with  the Lipschitz class of order $k$, studied by Weaver \cite{Weaver1996, Weaver1998}. We conclude the chapter with a section on the link between the quantum Sobolev spaces and the vector-valued Sobolev spaces on the usual $d$-torus $\T^d$.

\bigskip
%%%%%%%%%%%%%%%%%%%%%%%%%%%%%%%%%%%%%%%%%%%%%%%%%%%%%%%%%%%%%%%%%%%%%%%%
%%%%%%%%%%%%%%%%%%%%%%%%%%%%%%%%%%%%%%%%%%%%%%%%%%%%%%%%%%%%%%%%%%%%%%%%

\section{Distributions on quantum tori}
\label{Distributions on quantum tori}

%%%%%%%%%%%%%%%%%%%%%%%%%%%%%%%%%%%%%%%%%%%%%%%%%%%%%%%%%%%%%%%%%%%%%%%%
%%%%%%%%%%%%%%%%%%%%%%%%%%%%%%%%%%%%%%%%%%%%%%%%%%%%%%%%%%%%%%%%%%%%%%%%

In this section we give an outline of the distribution theory on quantum tori. Let
 $$\mathcal{S}(\T^d_\t)=\big\{ \sum_{m\in\ent^d} a_m U^m : \{a_m\}_{m\in\ent^d} \; \mbox{rapidly decreasing}\big\}.$$
This is a w*-dense $*$-subalgebra of $\mathbb{T}_{\theta}^d$ and contains all polynomials. We simply write  $\mathcal{S}(\mathbb{T}^d_0)= \mathcal{S}(\mathbb{T}^d)$, the algebras of infinitely differentiable functions on $\T^d$. Thus for a general $\t$,  $\mathcal{S}(\mathbb{T}^d_\t)$ should be viewed as a noncommutative deformation of  $\mathcal{S}(\mathbb{T}^d)$. We will need the differential structure on $\mathcal{S}(\mathbb{T}^d_\t)$, which is similar to that on $\mathcal{S}(\mathbb{T}^d)$.

According to our convention made in section~\ref{Fourier multipliers} and in order to lighten the notational system, we will use the same derivation notation on $\T^d_\t$ as on $\T^d$. For every $1\le j\le d$, define the following derivations, which are operators on $\mathcal{S}(\T^d_\t)$:
 $$\partial_j(U_j)=2\pi {\rm i}\, U_j \; \text{ and }\; \partial_j (U_k)=0\; \text{ for } \; k\neq j.$$
These operators $\partial_j$ commute with the adjoint operation $*$,
and play the role of the partial derivations in the classical analysis on the usual $d$-torus. Given $m =(m_1,\ldots,m_d)\in \mathbb{N}_0^d$, the associated partial derivation $D^m$ is
$\partial_1^{m_1}\cdots \partial_d^{m_d}$. We also use $\Delta$ to denote the Laplacian:
 $\D=\partial_1^2+\cdots+\partial_d^2$. The elementary fact expressed in the following remark will be frequently used later on.

Restricted to $L_2(\T_{\theta}^d)$, the partial derivation $\partial_j$ is a densely defined closed (unbounded) operator whose adjoint is equal to $-\partial_j$. This is an immediate consequence of the following obvious fact (cf. \cite{Rosenberg2008}):

\begin{lem}\label{lem:integrationbypart}
If $x,y\in\mathcal{S}(\T^d_\t)$, then $\tau(\partial_j(x)y)=-\tau(x \partial_j(y))$ for $j=1,\cdots, d$.
\end{lem}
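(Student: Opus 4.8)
The plan is to reduce the identity to the single fact that $\tau\circ\partial_j=0$ on $\mathcal{S}(\T^d_\t)$, exploiting that $\partial_j$ is a derivation.

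First I would record the action of $\partial_j$ on the Fourier basis. From $\partial_j(U_j)=2\pi\mathrm{i}\,U_j$, $\partial_j(U_k)=0$ for $k\neq j$, together with the Leibniz rule, one gets $\partial_j(U^m)=2\pi\mathrm{i}\,m_j\,U^m$ for every $m\in\ent^d$. Since the Fourier coefficients of any $z\in\mathcal{S}(\T^d_\t)$ are rapidly decreasing, the series $z=\sum_m\wh z(m)U^m$ converges in $\mathcal{S}(\T^d_\t)$ and $\partial_j$ may be applied term by term, so $\partial_j(z)=\sum_m 2\pi\mathrm{i}\,m_j\,\wh z(m)U^m$. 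Applying $\tau$, only the term $m=0$ survives, and there $m_j=0$; hence $\tau(\partial_j(z))=0$.

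Next I would use that $\partial_j$ is a derivation on the algebra $\mathcal{S}(\T^d_\t)$, which contains $x$, $y$ and $xy$: $\partial_j(xy)=\partial_j(x)\,y+x\,\partial_j(y)$. Taking $\tau$ of both sides and using $\tau(\partial_j(xy))=0$ from the previous step yields $\tau(\partial_j(x)\,y)+\tau(x\,\partial_j(y))=0$, which is exactly the asserted identity.

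There is no genuine obstacle here; the only points deserving a word of justification are that the Fourier series of a Schwartz element may be differentiated termwise (immediate from rapid decay) and that $\partial_j$ truly satisfies the Leibniz rule on $\mathcal{S}(\T^d_\t)$ (it does so on polynomials by construction and extends by density and continuity). Alternatively, one may bypass the Leibniz rule altogether by expanding $x=\sum_m a_m U^m$, $y=\sum_n b_n U^n$, writing $U^m U^n=e^{2\pi\mathrm{i}\,\phi(m,n)}U^{m+n}$ with $\phi(m,n)\in\real$ determined by $\t$ and the ordering, so that $\tau(U^m U^n)=e^{2\pi\mathrm{i}\,\phi(m,n)}\,\delta_{m+n,0}$; then both $\tau(\partial_j(x)\,y)$ and $-\tau(x\,\partial_j(y))$ reduce to $2\pi\mathrm{i}\sum_{m}m_j\,e^{2\pi\mathrm{i}\,\phi(m,-m)}a_m b_{-m}$, and the claim follows by comparing the two expressions.
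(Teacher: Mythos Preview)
Your argument is correct and is the standard one: $\tau\circ\partial_j=0$ on $\mathcal{S}(\T^d_\t)$ together with the Leibniz rule gives the identity immediately. The paper does not actually supply a proof of this lemma; it simply records it as an ``obvious fact'' with a reference to \cite{Rosenberg2008}, so your write-up is more detailed than what appears there.
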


Thus $\Delta= - (\partial_1^* \partial_1+\cdots+\partial_d^* \partial_d)$, so $-\Delta$  is a positive operator on $L_2(\T_{\theta}^d)$ with spectrum  equal to $\{4\pi ^2 |m|^2: m\in \mathbb{Z}^d\}$.

 \begin{rk}\label{derivation}
 Given $u\in\real^d$ let $e_u$  be the function on $\real^d$ defined by $e_u(\xi)=e^{2\pi{\rm i} u\cdot\xi}$, where $u\cdot\xi$ denotes the inner product of $\real^d$. The Fourier multiplier on $\T^d_\t$ associated  to $e_u$ coincides with $\pi_z$ in  \eqref{trans-pi}  with $z=(e^{2\pi{\rm i} u_1},\cdots\, e^{2\pi{\rm i} u_d})$. This Fourier multiplier will play an important role in the sequel. By analogy with the classical case, we will call it the translation by $u$ and denote it by $T_u$: $T_u(x)=\pi_z(x)$ for any $x\in\mathcal{S}(\T^d_\t)$. Then it is clear that
 \beq\label{derivation}
   \frac{\partial}{\partial u_j}T_u(x)=T_u(\partial_j x)\,,\;\text {so }\; \frac{\partial}{\partial u_j}T_u(x)\Big|_{u=0}=\partial_j x.
 \eeq

 \end{rk}

Following the classical setting as in \cite{Ed1979}, we now endow $\mathcal{S}(\T^d_\t)$ with an appropriate topology. For each $k\in\nat_0$ define a norm $p_k$ on $\mathcal{S}(\T^d_\t)$ by
 $$p_k(x)= \sup_{0\leq |m|_1 \leq k} \|D^{m}x\|_\8.$$
The sequence $\{p_k\}_{k\ge0}$ induces a locally convex topology on  $\mathcal{S}(\T^d_\t)$. This topology is metrizable by the following distance:
 $$d(x, y) = \sum_{k=0}^{\8} \frac{2^{-k} p_k(x-y)}{1+p_k(x-y)}$$
with respect to which $\mathcal{S}(\T^d_\t)$ is complete, an easily checked fact. The following simple proposition describes the convergence in $\mathcal{S}(\T^d_\t)$.

\begin{prop}\label{convergeA}
A sequence $\{x_n\} \subset \mathcal{S}(\T^d_\t)$ converges to $x\in\mathcal{S}(\T^d_\t)$ if and only if for every $m \in \mathbb{N}_0^d$, $D^m x_n \rightarrow D^m x$ in $\mathbb{T}_\theta ^d.$
\end{prop}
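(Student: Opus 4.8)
The statement to prove is Proposition~\ref{convergeA}: a sequence $\{x_n\}\subset\mathcal{S}(\T^d_\t)$ converges to $x\in\mathcal{S}(\T^d_\t)$ (in the topology defined by the distance $d$) if and only if $D^mx_n\to D^mx$ in $\T^d_\t$ (i.e.\ in the operator norm $\|\cdot\|_\8$) for every $m\in\nat_0^d$. The plan is to unwind both the metric $d$ and the family of seminorms $\{p_k\}$, since the topology on $\mathcal{S}(\T^d_\t)$ is by construction the locally convex topology generated by the $p_k$'s, and $d$ is explicitly built from these seminorms. The key point is the elementary fact that for a metric of the form $d(x,y)=\sum_k 2^{-k}\frac{p_k(x-y)}{1+p_k(x-y)}$, one has $d(x_n,x)\to 0$ if and only if $p_k(x_n-x)\to 0$ for every fixed $k$; this is a standard property of Fr\'echet-space metrics and I would either invoke it directly or give the two-line argument.

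First I would prove the ``only if'' direction. Suppose $d(x_n,x)\to 0$. Fix $k\in\nat_0$. Since each term in the series defining $d$ is nonnegative, $d(x_n,x)\ge 2^{-k}\frac{p_k(x_n-x)}{1+p_k(x_n-x)}$, so $\frac{p_k(x_n-x)}{1+p_k(x_n-x)}\to 0$, which forces $p_k(x_n-x)\to 0$ because $t\mapsto \frac{t}{1+t}$ is a homeomorphism of $[0,\8)$ onto $[0,1)$. Now given any multi-index $m\in\nat_0^d$, choose $k=|m|_1$; then by definition $\|D^mx_n-D^mx\|_\8=\|D^m(x_n-x)\|_\8\le p_k(x_n-x)\to 0$, which is exactly $D^mx_n\to D^mx$ in $\T^d_\t$. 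Conversely, suppose $D^mx_n\to D^mx$ in $\T^d_\t$ for every $m$. Fix $k$; then $p_k(x_n-x)=\sup_{0\le|m|_1\le k}\|D^m(x_n-x)\|_\8$ is a supremum over the finite set of multi-indices with $|m|_1\le k$, each term of which tends to $0$, hence $p_k(x_n-x)\to 0$ for every $k$. To conclude $d(x_n,x)\to 0$, given $\e>0$ pick $N$ with $\sum_{k>N}2^{-k}<\e/2$ (so the tail of the series is small uniformly in $n$, using $\frac{t}{1+t}\le 1$), and then use $p_k(x_n-x)\to 0$ for each $k\le N$ to make the finite partial sum $\sum_{k\le N}2^{-k}\frac{p_k(x_n-x)}{1+p_k(x_n-x)}$ less than $\e/2$ for $n$ large.

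There is no serious obstacle here — this is a routine verification that the metric induces the seminorm topology and that the latter is precisely the topology of coordinatewise convergence of all partial derivatives. The only mild subtlety, worth a sentence, is that $\mathcal{S}(\T^d_\t)$ is stable under each $D^m$, so that $D^m(x_n-x)=D^mx_n-D^mx$ makes sense in $\T^d_\t$ and the seminorms $p_k$ are well-defined finite quantities on $\mathcal{S}(\T^d_\t)$; this follows from the definition of $\mathcal{S}(\T^d_\t)$ via rapidly decreasing Fourier coefficients, since applying $D^m$ multiplies the coefficient $a_{\ell}$ by a polynomial in $\ell$, preserving rapid decay, and a distribution with rapidly decreasing coefficients indeed lies in $\T^d_\t$ with finite operator norm. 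If desired, one could also remark that the completeness of $\mathcal{S}(\T^d_\t)$ under $d$ (asserted just before the proposition) together with this description of convergence shows $\mathcal{S}(\T^d_\t)$ is a Fr\'echet $*$-algebra, but that is not needed for the statement itself.
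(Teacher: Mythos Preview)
Your proof is correct and follows essentially the same approach as the paper: both directions use the elementary relation between the metric $d$ and the seminorms $p_k$, with the converse handled by truncating the tail of the series and controlling the finite partial sum. Your forward direction is phrased slightly more cleanly via the homeomorphism $t\mapsto t/(1+t)$, but the argument is the same.
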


\begin{proof}
Without loss of generality, we assume $x=0$. Suppose that $x_n \rightarrow 0 $ in $\mathcal{S}(\T^d_\t)$. Then for $m \in \mathbb{N}_0^d$ and $\e>0$, there exists an integer $N$ such that for every $n\geq N,$
 $$d(x_n, 0) = \sum_{k=0}^{\8} \frac{2^{-k} p_k(x_n)}{1+p_k(x_n)} \leq 2^{-|m|_1} \frac{\e}{1+\e}.$$
Thus, $p_{|m|_1} (x_n) \leq \e$, so $\|D^m x_j\|_\8 \leq \e$, which means $D^m x_n\rightarrow 0$ in $\mathbb{T}_\theta ^d.$

Conversely, assume that for every $m \in \mathbb{N}_0^d$, $D^m x_n \rightarrow 0$ in $\mathbb{T}_\theta ^d$.  For  $\e >0$ let $N_0$ be an integer such that $\sum_{k>N_0} 2^{-k} <\frac{\e}{2}.$ Since $D^m x_n \rightarrow 0$ for $|m|_1 \leq N_0,$ there exists  $N\in \mathbb{N}$ such that for $n > N,$
$$\sum_{k=0}^{N_0} \frac{2^{-k} p_k(x_n)}{1+p_k(x_n)} < \frac{\e}{2}.$$
Consequently, $d(x_n, 0)  < \e$.
 \end{proof}

\begin{Def}
 A distribution on $\T_\theta ^d$ is a continuous linear functional on $\mathcal{S}(\T^d_\t)$.  $\mathcal{S}'(\T^d_\t)$ denotes the space of distributions.
\end{Def}

As a dual space,  $\mathcal{S}'(\T^d_\t)$ is endowed with the w*-topology. We will use the bracket $\la\,,\,\ra$ to denote the duality between   $\mathcal{S}(\T^d_\t)$ and $\mathcal{S}'(\T^d_\t)$: $\la F,\,  x\ra=F(x)$ for $x\in \mathcal{S}(\T^d_\t)$ and $F\in \mathcal{S}'(\T^d_\t)$.   We list some elementary properties of distributions:
\begin{enumerate}[{\rm(1)}]
\item For $1\le p\le\8$, the space $L_p(\T_{\theta}^d)$ naturally embeds into $\mathcal{S}'(\T^d_\t)$: an element $y\in L_p(\T_{\theta}^d)$ induces a continuous functional on $\mathcal{S}(\T^d_\t)$ by $x\mapsto \tau(yx)$.
  \item $\mathcal{S}(\T^d_\t)$ acts as a bimodule on $\mathcal{S}'(\T^d_\t)$: for $a, b \in \mathcal{S}(\T^d_\t)$ and  $F\in\mathcal{S}'(\T^d_\t)$, $aF b$ is the distribution defined by $x\mapsto \la F, \, bxa\ra$.
 \item The partial derivations $\partial_j$ extend to $\mathcal{S}'(\T^d_\t)$ by duality: $\la \partial_j F,\,x\ra=- \la F,\,\partial_j x\ra$.
For $m \in \mathbb{N}_0^d$, we use again $D^{m}$ to denote the associated partial derivation on $\mathcal{S}'(\T^d_\t)$.
 \item The Fourier transform extends to $\mathcal{S}'(\T^d_\t)$: for $F\in \mathcal{S}'(\T^d_\t)$ and $m\in\ent^d$, $\wh F(m)=\la F,\, (U^m)^*\ra$. The Fourier series  of $F$  converges to $F$ according to any (reasonable) summation method:
  $$F=\sum_{m\in\ent^d}\wh F(m) U^m\,.$$
\end{enumerate}

%%%%%%%%%%%%%%%%%%%%%%%%%%%%%%%%%%%%%%%%%%%%%%%%%%%%%%%%%%%%%%%%%%%%%%%%
%%%%%%%%%%%%%%%%%%%%%%%%%%%%%%%%%%%%%%%%%%%%%%%%%%%%%%%%%%%%%%%%%%%%%%%%

\section{Definitions and basic properties}
\label{Definitions and basic properties: Sobolev}

%%%%%%%%%%%%%%%%%%%%%%%%%%%%%%%%%%%%%%%%%%%%%%%%%%%%%%%%%%%%%%%%%%%%%%%%
%%%%%%%%%%%%%%%%%%%%%%%%%%%%%%%%%%%%%%%%%%%%%%%%%%%%%%%%%%%%%%%%%%%%%%%%

We begin this section with a simple observation on Fourier multipliers on $\mathcal{S}(\T^d_\t)$ and $\mathcal{S}'(\T^d_\t)$. Let $\phi: \ent^d\to\com$ be a function of polynomial growth. Then its associated Fourier multiplier $M_\phi$ is a continuous map on both $\mathcal{S}(\T^d_\t)$ and $\mathcal{S}'(\T^d_\t)$. Here and in the sequel, a generic element of $\mathcal{S}'(\T^d_\t)$ is also denoted by $x$. Two specific Fourier multipliers will play a key role later: they are the Bessel and Riesz potentials.

\begin{Def}
 Let $\a\in\real$. Define $J_\a$ on $\real^d$ and $I_\a$ on $\real^d\setminus\{0\}$ by
 $$J_\a(\xi)=(1+|\xi|^2)^{\frac\a2}\;\text{ and }\; I_\a(\xi)=|\xi|^{\a}\,.$$
Their associated Fourier multipliers are the Bessel and Riesz potentials of order $\a$, denoted by $J^a$ and $I^\a$, respectively.
 \end{Def}

By the above observation,  $J^\a$  is a Fourier multiplier on $\mathcal{S}'(\T^d_\t)$, and $I^\a$ is also a Fourier multiplier on the subspace of $\mathcal{S}'(\T^d_\t)$ of all $x$ such that $\wh x(0)=0$. Note that
 $$J^\a=(1-(2\pi)^{-2}\D)^{\frac\a2}\;\text{ and }\; I^\a=(-(2\pi)^{-2}\D)^{\frac{\a}{2}}.$$

\begin{Def}\label{Sobolev def}
 Let $1\leq p\leq \infty$, $k\in \mathbb{N}$  and $\a\in\real$.
 \begin{enumerate}[\rm(i)]
 \item The Sobolev space of order $k$  on $\mathbb{T}_{\theta}^d$ is defined to be
 $$W_p^k(\mathbb{T}_{\theta}^d)= \big\{ x\in\mathcal{S}'(\T^d_\t) :  D^{m} x \in L_p(\mathbb{T}_{\theta}^d) \textrm{ for each }m\in \mathbb {N}_0^d \textrm{ with } |m|_1\leq k \big\},$$
equipped with the  norm
 $$\|x\|_{W_p^k}=\Big(\sum_{0\leq |m|_1\leq k}\|D^{m}x\|_{p}^p\Big)^{\frac{1}{p}}.$$
 \item The potential (or fractional)  Sobolev space of order $\a$  is defined to be
 $$H_p^\alpha(\mathbb{T}_{\theta}^d)=\big\{ x\in\mathcal{S}'(\T^d_\t) : J^\a x\in L_p(\mathbb{T}_{\theta}^d) \big\},$$
equipped with the  norm
 $$\|x\|_{H_p^\alpha}=\|J^\a x\|_p\,.$$
 \end{enumerate}
\end{Def}

In the above definition of $\|x\|_{W_p^k}$, we have followed the usual convention for $p=\8$ that  the right-hand side is replaced by the corresponding supremum. This convention will be always made in the sequel. We collect some basic properties of these spaces in the following:

 \begin{prop}\label{Sobolev-P}
 Let $1\le p\le\8$, $k\in\nat$ and $\a\in\real$.
\begin{enumerate}[\rm(i)]
\item $W_p^k(\mathbb{T}_{\theta}^d)$ and $H_p^\alpha (\mathbb{T}_{\theta}^d)$ are Banach spaces.
\item The polynomial subalgebra $\mathcal{P}_{\theta}$ of $\mathbb{T}_{\theta}^d$ is dense in $W^{k}_{p} (\mathbb{T}_{\theta}^d)$ and $H_p^\alpha (\mathbb{T}_{\theta}^d)$ for $1\le p<\infty$. Consequently, $\mathcal{S}(\T^d_\t)$ is dense in $W_p^k(\mathbb{T}_{\theta}^d)$ and $H_p^\alpha (\mathbb{T}_{\theta}^d)$.
\item For any $\b\in\real$, $J^\b$ is an isometry from $H_p^\a(\mathbb{T}_{\theta}^d)$ onto $H_p^{\a-\b} (\mathbb{T}_{\theta}^d)$. In particular, $J^{\a}$ is an isometry from $H_p^\a(\mathbb{T}_{\theta}^d)$ onto $L_p(\mathbb{T}_{\theta}^d)$.
\item $H_p^\a(\mathbb{T}_{\theta}^d)\subset H_p^\b(\mathbb{T}_{\theta}^d)$ continuously whenever $\b<\a$.
\end{enumerate}
\end{prop}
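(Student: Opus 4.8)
The statement to prove is Proposition~\ref{Sobolev-P}, comprising four basic facts about the Sobolev spaces $W_p^k(\T^d_\t)$ and $H_p^\a(\T^d_\t)$. Here is how I would organize the argument.

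\medskip

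\textbf{Part (iii) first, then (iv).} The cleanest entry point is (iii), since it is essentially formal. The Bessel potential $J^\b$ is the Fourier multiplier with symbol $J_\b(\xi)=(1+|\xi|^2)^{\b/2}$, a function of polynomial growth on $\ent^d$, hence by the observation preceding Definition~\ref{Sobolev def} it is a continuous bijection of $\mathcal S'(\T^d_\t)$ with inverse $J^{-\b}$ (the multiplier composition law $J^\b J^{\g}=J^{\b+\g}$ holds at the level of symbols since they multiply pointwise). Then for $x\in\mathcal S'(\T^d_\t)$ we have $J^{\a-\b}(J^\b x)=J^\a x$, so $\|J^\b x\|_{H_p^{\a-\b}}=\|J^{\a-\b}J^\b x\|_p=\|J^\a x\|_p=\|x\|_{H_p^\a}$; this gives that $J^\b$ is a surjective isometry $H_p^\a\to H_p^{\a-\b}$, the inverse being $J^{-\b}$. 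Specializing $\b=\a$ gives the last assertion. For (iv), when $\b<\a$ write $x=J^{-\a}(J^\a x)$ and note $J^\b x=J^{\b-\a}(J^\a x)$; since $\b-\a<0$, the symbol $J_{\b-\a}(\xi)=(1+|\xi|^2)^{(\b-\a)/2}$ is bounded (indeed it has integrable inverse Fourier transform on $\real^d$ when the negative exponent is large enough, but more simply, being a smooth function decaying like a negative power it is a Mikhlin multiplier, and even its inverse Fourier transform is integrable so Lemma~\ref{q-multiplier}(i) applies for all $1\le p\le\8$), hence $\|x\|_{H_p^\b}=\|J_{\b-\a}*(J^\a x)\|_p\le C\|J^\a x\|_p=C\|x\|_{H_p^\a}$. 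One must double-check that $J^{\b-\a}$ maps $L_p$ boundedly for $p=1,\8$ as well; this is exactly the content of Lemma~\ref{q-multiplier}(i) once one knows $\F^{-1}(J_{\b-\a})\in L_1(\real^d)$, which is a classical fact about Bessel kernels when $\a-\b>0$.

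\medskip

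\textbf{Part (i): completeness.} For $H_p^\a$, completeness is immediate from (iii): $J^\a$ is an isometric isomorphism onto $L_p(\T^d_\t)$, which is complete, so $H_p^\a$ is complete (and a normed space, the norm axioms being inherited). For $W_p^k$, I would argue directly: if $(x_n)$ is Cauchy in $W_p^k$, then for each multi-index $m$ with $|m|_1\le k$ the sequence $(D^m x_n)$ is Cauchy in $L_p(\T^d_\t)$, hence converges to some $y_m\in L_p(\T^d_\t)$. Setting $x=y_0$, one checks $D^m x=y_m$ in $\mathcal S'(\T^d_\t)$: indeed each $D^m$ is w*-continuous on $\mathcal S'(\T^d_\t)$ (it is the adjoint of $\pm D^m$ on $\mathcal S(\T^d_\t)$) and $L_p$-convergence implies convergence in $\mathcal S'$ via the embedding in property (1) of the distribution list, so $D^m x=\lim D^m x_n=y_m$. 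Then $x\in W_p^k$ and $x_n\to x$ in the $W_p^k$-norm. Normedness of $\|\cdot\|_{W_p^k}$ is routine (it is an $\ell_p$-combination of the seminorms $\|D^m\cdot\|_p$, and $\|x\|_{W_p^k}=0$ forces $\|x\|_p=0$).

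\medskip

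\textbf{Part (ii): density.} This is the step I expect to require the most care. The strategy is Fej\'er/de la Vall\'ee-Poussin summation: for a distribution $x$ with Fourier series $\sum_m\wh x(m)U^m$, form the square (or smooth) partial sums $x_N=\sum_{m}K_N(m)\wh x(m)U^m$ where $K_N$ is a Fej\'er-type kernel on $\ent^d$ (a Fourier multiplier with finitely supported, uniformly bounded symbol tending to $1$ pointwise). Each $x_N$ is a polynomial, so $x_N\in\mathcal P_\t$. One must show $x_N\to x$ in the relevant norm. For $W_p^k$: since $D^m$ commutes with the Fourier multiplier $M_{K_N}$, we have $D^m x_N=M_{K_N}(D^m x)$, so it suffices to know that the Fej\'er means converge to the identity in $L_p(\T^d_\t)$ for $1\le p<\8$ — this is a standard fact (proved in \cite{CXY2012}) that the Fej\'er (or Ces\`aro) means are contractive and converge strongly on $L_p(\T^d_\t)$ for $p<\8$. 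Applying this to each $D^m x\in L_p$, $|m|_1\le k$, and summing the finitely many terms gives $\|x_N-x\|_{W_p^k}\to0$. For $H_p^\a$: similarly $J^\a x_N=M_{K_N}(J^\a x)$ and $J^\a x\in L_p$, so the same Fej\'er convergence gives $\|x_N-x\|_{H_p^\a}=\|M_{K_N}(J^\a x)-J^\a x\|_p\to0$. Finally, since $\mathcal P_\t\subset\mathcal S(\T^d_\t)\subset W_p^k$ (respectively $\subset H_p^\a$) with continuous inclusions, density of $\mathcal P_\t$ yields density of $\mathcal S(\T^d_\t)$. The only genuine subtlety is ensuring the Fej\'er means are uniformly bounded and strongly convergent on $L_p(\T^d_\t)$; I would cite the corresponding result from \cite{CXY2012} rather than reprove it, and note that it fails at $p=\8$, which is why (ii) excludes that case.
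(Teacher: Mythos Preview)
Your proposal is correct and follows essentially the same approach as the paper: (iii) is formal from the symbol identity $J_\b J_{\a-\b}=J_\a$; (iv) uses the classical integrability of $\F^{-1}(J_{\b-\a})$ for $\b-\a<0$ together with Lemma~\ref{q-multiplier}(i); completeness of $W_p^k$ is the direct Cauchy-sequence argument via continuity of $D^m$ on distributions; and density is obtained from the square Fej\'er means of \cite{CXY2012}, which commute with both $D^m$ and $J^\a$. The only cosmetic difference is the order in which you treat the four parts.
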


\begin{proof}
 (iii) is obvious. It implies (i) for $H_p^\alpha (\mathbb{T}_{\theta}^d)$.

 (i)  It suffices to show that $W_p^k(\mathbb{T}_{\theta}^d)$ is complete. Assume that $\{x_n\}_n \subset W_p^k(\mathbb{T}_{\theta}^d)$ is a Cauchy sequence. Then for every $|m|_1\leq k,$ $\{D^m x_n\}_n$ is a Cauchy sequence in $L_p(\mathbb{T}_{\theta}^d)$, so $D^m x_n\rightarrow y_{m}$ in $L_p(\mathbb{T}_{\theta}^d)$.
Particularly, $\{D^m x_n\}_n$ converges to $y_{m}$ in $\mathcal{S}'(\T^d_\t)$.
On the other hand, since $x_n \rightarrow y_0$ in $L_p(\mathbb{T}_{\theta}^d)$, for every $x\in\mathcal{S}(\T^d_\t)$ we have
 $\tau(x_n D^m x)\rightarrow \tau(y_0 D^m x).$
Thus $\{D^m x_n\}_n$ converges to $D^m y_0$ in $\mathcal{S}'(\T^d_\t)$.  Consequently, $D^m y_0 = y_m$ for $|m|_1 \leq k.$ Hence, $y_0 \in W_p^k(\mathbb{T}_{\theta}^d)$ and $x_n \rightarrow y_0$ in $W_p^k(\mathbb{T}_{\theta}^d)$.

(ii) Consider the  square Fej\'er mean
 $$F_N (x) = \sum_{m \in \mathbb{Z}^d,\, \max_j|m_j |\leq N} \Big ( 1-\frac{|m_1|}{N+1} \Big )
 \cdots \Big( 1-\frac{|m_d|}{N+1} \Big ) \wh{x} ( m) U^{m}.$$
By \cite[Proposition~3.1]{CXY2012}, $\lim_{N\to\8}F_N (x) =x$ in $L_p(\mathbb{T}_{\theta}^d)$.
On the other hand, $F_N$ commutes with $D^m$: $F_N (D^mx) =D^m F_N (x)$. We then deduce that $\lim_{N\to\8}F_N (x) =x$ in $W^k_p(\mathbb{T}_{\theta}^d)$ for every $x\in W^k_p(\mathbb{T}_{\theta}^d)$. Thus $\mathcal{P}_{\theta}$ is dense in $W^{k}_{p} (\mathbb{T}_{\theta}^d)$. On the other hand,  $F_N$ and $J^\a$  commute; so by (iii), the density of $\mathcal{P}_{\theta}$ in $L_p(\mathbb{T}_{\theta}^d)$ implies its density in $H_p^\a(\mathbb{T}_{\theta}^d)$.

(iv) It is well known that if $\gamma<0$, the inverse Fourier transform of $J_\gamma$ is an integrable function on $\real^d$ (see \cite[Proposition~V.3]{Stein1970}). Thus,  Lemma~\ref{q-multiplier} implies that  $J^{\b-\a}$ is a bounded map on $L_p(\mathbb{T}_{\theta}^d)$ with norm majorized by  $\big\|\F^{-1}(J_{\b-\a})\big\|_{L_1(\real^d)}$. This is the desired assertion.
 \end{proof}

The following shows that the potential Sobolev spaces can be also characterized by the Riesz potential.

\begin{thm}
 Let $1\le p\le\8$. Then
 $$\|x\|_{H_p^\a}\approx \Big(|\wh x(0)|^p+\|I^\a (x-\wh x(0))\|_p^p\Big)^{\frac1p},$$
where the equivalence constants depend only on $\a$ and $d$.
\end{thm}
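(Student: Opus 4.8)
The plan is to compare the two Fourier multipliers $J^\a$ and $I^\a$ on the subspace of mean-zero distributions, reducing the equivalence of norms to a boundedness statement for the multipliers $I^\a J^{-\a}$ and $J^\a I^{-\a}$ acting on $L_p(\T^d_\t)$. More precisely, write $x = \wh x(0) + x_0$ where $x_0 = x - \wh x(0)$ has vanishing $0$-th Fourier coefficient. Since $J^\a(\wh x(0)) = \wh x(0)$ (the multiplier symbol $J_\a$ equals $1$ at $\xi = 0$), and $J^\a$ is linear, we have $J^\a x = \wh x(0) + J^\a x_0$, so that $\|J^\a x\|_p \approx |\wh x(0)| + \|J^\a x_0\|_p$ (here using that $\wh x(0)$ is a scalar multiple of $\un$, whose $L_p$-norm is $|\wh x(0)|$, and the triangle inequality together with $|\wh x(0)| = |\tau(J^\a x)| \le \|J^\a x\|_p$). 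Thus it suffices to prove $\|J^\a x_0\|_p \approx \|I^\a x_0\|_p$ for every $x_0$ with $\wh{x_0}(0) = 0$.

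For the latter, first I would formally write $I^\a x_0 = M_{h} (J^\a x_0)$ where $h(\xi) = |\xi|^\a (1+|\xi|^2)^{-\a/2} = (|\xi|^2/(1+|\xi|^2))^{\a/2}$, and conversely $J^\a x_0 = M_{g}(I^\a x_0)$ with $g(\xi) = (1+|\xi|^2)^{\a/2}|\xi|^{-\a} = ((1+|\xi|^2)/|\xi|^2)^{\a/2}$. Both $h$ and $g$ are singular only at $\xi = 0$, but since $x_0$ has no zero-frequency component, only the values of $h$ and $g$ on $\{\xi : |\xi| \ge 1\}$ (in fact on $\ent^d \setminus \{0\}$) matter. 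The key step is then to exhibit functions $\wt h, \wt g$ on all of $\real^d$ that are Mikhlin multipliers (Definition~\ref{Mikhlin-def}) and agree with $h, g$ respectively on $\{|\xi| \ge 1\}$ — for instance by multiplying $h$ and $g$ by a smooth cutoff that is $0$ near the origin and $1$ outside the unit ball, and checking that the resulting functions satisfy the Mikhlin derivative bounds $|\xi|^{|m|_1}|D^m \wt h(\xi)| \lesssim 1$ for $|m|_1 \le d$; this is a routine estimate since $h$ and $g$ are smooth and homogeneous-like of degree $0$ away from the origin. Then Lemma~\ref{q-multiplier}(ii) gives that $\wt h$ and $\wt g$ are (c.b.) Fourier multipliers on $L_p(\T^d_\t)$ for $1 < p < \8$, with norms depending only on $\a$ and $d$, which yields the desired two-sided estimate for $1 < p < \8$.

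The main obstacle is the endpoint cases $p = 1$ and $p = \8$, where Mikhlin's theorem fails. For these I would instead argue that the relevant multiplier is the Fourier transform of an integrable (periodized) function and invoke Lemma~\ref{q-multiplier}(i). Concretely, following the reasoning in the proof of Proposition~\ref{Sobolev-P}(iv): since $J_\gamma$ has integrable inverse Fourier transform for $\gamma < 0$ (by \cite[Proposition~V.3]{Stein1970}), one handles the $J$-to-$I$ and $I$-to-$J$ passages by writing the symbol as a product of $J_{-|\a|}$-type factors (which contribute integrable kernels) and a compensating smooth, compactly-near-origin-free factor whose inverse Fourier transform one shows is integrable by a direct decay estimate (the symbol being $C^\8$ and behaving like a constant at infinity after the cutoff, its inverse Fourier transform decays rapidly). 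When $\a > 0$ one writes $I^\a J^{-\a} = (\text{smooth bounded, harmless away from }0) \cdot (\text{good factor})$ and similarly for the inverse; the only genuinely delicate point is to organize the algebra of these symbols so that each elementary factor falls under Lemma~\ref{q-multiplier}(i). Once every factor is seen to have an $L_1$ inverse Fourier transform (after periodization and restriction to $\ent^d\setminus\{0\}$), the product is a c.b. multiplier on $L_p(\T^d_\t)$ for all $1 \le p \le \8$ with constant depending only on $\a$ and $d$, completing the proof. I expect the bookkeeping of these cutoffs and the verification of integrability of the kernels to be the technically heaviest part, while the conceptual structure — reduce to mean-zero part, then compare symbols via multiplier theorems — is straightforward.
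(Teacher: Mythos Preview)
Your overall reduction --- split off $\wh x(0)$ and then compare $J^\a$ and $I^\a$ on mean-zero elements via the symbols $h = I_\a J_{-\a}$ and $g = J_\a I_{-\a}$ --- matches the paper. Your Mikhlin argument for $1 < p < \infty$ is correct and is a shortcut the paper does not take: the paper treats all $1 \le p \le \infty$ at once via Lemma~\ref{q-multiplier}(i), never invoking the Mikhlin condition.

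The gap is at $p \in \{1, \infty\}$. Your sketch there contains an inaccuracy: the cutoff symbol $\phi = J_\a I_{-\a}\,\eta$ tends to $1$ at infinity, so $\F^{-1}(\phi)$ does \emph{not} decay rapidly --- it carries a point mass, and the actual task is to show that what remains lies in $L_1(\real^d)$ (equivalently, that $\phi$ is the Fourier transform of a finite measure). Your proposed factorization through ``$J_{-|\a|}$-type factors'' does not achieve this. The paper handles the two directions separately. For $\|I^\a x_0\|_p \lesssim \|J^\a x_0\|_p$ it simply cites \cite[Lemma~V.3.2]{Stein1970}: $I_\a / J_\a$ is the Fourier transform of a bounded measure on $\real^d$ for every $\a > 0$. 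For the converse it proves directly that $\F^{-1}(\phi) \in L_1(\real^d)$: from the decay $|D^m\phi(\xi)| \lesssim |\xi|^{-|m|_1 - 2}$ one splits $\int |\F^{-1}\phi|$ over $\{|s| < 1\}$ and $\{|s| \ge 1\}$, applying Cauchy--Schwarz with weights $|s|^{\pm 2\ell}$ together with Plancherel for $D^m\phi$ with suitably chosen $|m|_1 = \ell$. This works cleanly for $d \ge 3$; the case $d = 2$ requires the Hardy--Littlewood--Sobolev inequality, and $d = 1$ a separate elementary decomposition. This is precisely the ``technically heaviest part'' you anticipated but did not supply.
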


\begin{proof}
  By changing $\a$ to $-\a$, we can assume  $\a>0$. It suffices to show $\|I^\a x\|_p\approx \|J^\a x\|_p$ for $\wh x(0)=0$.  By \cite[Lemma~V.3.2]{Stein1970}, $\frac{I_\a} {J_\a}$ is the Fourier transform of a bounded measure on $\real^d$, which, together with Lemma~\ref{q-multiplier}, yields  $\|I^\a x\|_p\les \|J^\a x\|_p$.

To show the converse inequality, let $\eta$ be an infinite differentiable function on $\real^d$ such that $\eta (\xi)=0$ for $|\xi|\le\frac12$ and $\eta (\xi)=1$ for $|\xi|\ge1$, and let $\phi=J_\a I_{-\a}\eta$. Then the Fourier multiplier with symbol $\phi I_\a$ coincides with $J^\a$ on the subspace of distributions on $\T^d_\t$ with vanishing Fourier coefficients at the origin. Thus we are reduced to proving $\F^{-1}(\phi)\in L_1(\real^d)$. To that end, first observe that for any $m\in\nat_0^d$,
 $$\big|D^m\phi(\xi)\big|\les  \frac{1}{|\xi|^{|m|_1+2}}\,.$$
Consider first the case $d\ge3$. Choose  positive integers $\el$ and $k$ such that $\frac{d}2-2<\el<\frac{d}2$ and $k>\frac{d}2$. Then by the Cauchy-Schwarz inequality and the Plancherel theorem,
 \be\begin{split}
 \Big(\int_{|s|<1} |\F^{-1}\phi(s)|ds \Big)^2
 &\le \int_{|s|<1} |s|^{-2\el}  ds\, \int_{|s|<1} |s|^{2\el}|\F^{-1}\phi(s)|^2ds\\
 &\les\sum_{m\in\nat_0^d, |m|_1=\el} \int_{\real^d} |D^{m}\phi(\xi)|^2d\xi \\
 &\les \int_{|\xi|\ge \frac12}   \frac{1}{|\xi|^{2(\el+2)}} d\xi \les 1.
 \end{split}\ee
On the other hand,
 \be\begin{split}
 \Big(\int_{|s|\geq 1} |\F^{-1}\phi(s)|ds \Big)^2
 &\leq \int_{|s|\geq 1} |s|^{-2k}  ds \, \int_{|s|\geq 1} |s|^{2k}|\F^{-1}\phi(s)|^2ds\\
 &\les\sum_{m\in\nat_0^d, |m|_1=k} \int_{\real^d} |D^{m}\phi(\xi)|^2d\xi\les1.
 \end{split}\ee
Thus $\F^{-1}(\phi)$ is integrable for $d\ge3$.

If $d\le2$, the second part above remains valid, while the first one should be modified since the required positive integer $\el$ does not exist for $d\le2$. We will consider $d=2$ and $d=1$ separately. For $d=2$, choosing $0<\e<\frac12$, we have
 $$
 \int_{|s|<1} |\F^{-1}\phi(s)|ds
 \leq \Big(\int_{|s|<1} |s|^{-2\e}  ds \Big)^{\frac12} \Big( \int_{\real^d} |s|^{2\e} |\F^{-1}\phi(s)|^{2}d s\Big)^{\frac12}
 \les \|I^{\e}\phi\|_2\,.$$
Writing $I^{\e}=I^{\e-1}\,I^{1}$ and using the classical  Hardy-Littlewood-Sobolev inequality (see \cite[Theorem~V.1]{Stein1970}) and  the Riesz transform, we obtain
  $$\|I^{\e}\phi\|_2\les  \|I^{1}\phi\|_q\approx \|\nabla\phi\|_q\les \Big(\int_{|\xi|\ge\frac12} \frac{1}{|\xi|^{3q}} d\xi \Big)^{\frac1q} \les1\;,$$
  where $\frac1q =1-\frac\e2$ (so $1<q<\8$).
Thus we are done in the case $d=2$.

It remains to deal with the one-dimensional case. Write
 $$\phi (\xi)=(1+\xi^{-2})^{\frac\a2}\eta(\xi)=\eta(\xi)+\rho(\xi)\eta(\xi),\quad \xi\in\real\setminus\{0\},$$
where $\rho(\xi)={\rm O}(\xi^{-2})$ as $|\xi|\to\8$. Since $\eta-1$ is infinitely differentiable and supported by $[-1,\,1]$, its inverse Fourier transform is integrable. So $\eta$ is the Fourier transform of a finite measure on $\real$. On one hand, as $\rho\eta\in L_1(\real^d)$, $\F^{-1}(\rho\eta)$ is a bounded continuous function, so it is integrable on  $[-1,\,1]$. On the other hand, by the second part of the preceding argument for $d\ge3$, we see that $\F^{-1}(\rho\eta)$ is integrable outside  $[-1,\,1]$ too, whence $\F^{-1}(\rho\eta)\in L_1(\real^d)$. We thus deduce that $\phi$ is the Fourier transform of a finite measure on $\real$.  Hence the assertion is completely proved.
\end{proof}

\begin{thm}\label{q-Sobolev-Bessel}
 Let $1<p<\8$. Then $H_p^k(\T_\t^d)=W_p^k(\mathbb{T}_{\theta}^d)$ with equivalent norms.
\end{thm}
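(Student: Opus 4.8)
The plan is to prove the equivalence $\|x\|_{H_p^k}\approx\|x\|_{W_p^k}$ for $1<p<\infty$ by a double Fourier-multiplier argument, exactly in the spirit of the classical proof but using Lemma~\ref{q-multiplier}(ii) (Mikhlin multipliers are c.b.\ on $L_p(\T^d_\t)$) in place of the scalar Mikhlin theorem. First I would show the easy direction $\|x\|_{W_p^k}\lesssim\|x\|_{H_p^k}$: for each multi-index $m$ with $|m|_1\le k$ one has $D^m x = M_{\phi_m}(J^k x)$, where $\phi_m(\xi)=(2\pi\mathrm i\xi)^m(1+|\xi|^2)^{-k/2}$. A direct check shows $\phi_m$ (suitably interpreted so as to be smooth across the origin; this is automatic since $(1+|\xi|^2)^{-k/2}$ is smooth and $(2\pi\mathrm i \xi)^m$ is a polynomial) satisfies the Mikhlin condition of Definition~\ref{Mikhlin-def}, because $|\xi|^{|m|_1}|D^\beta\phi_m(\xi)|$ stays bounded for all $|\beta|_1\le d$ (the homogeneity degree of $\phi_m$ at infinity is $|m|_1-k\le 0$, and near the origin $\phi_m$ is smooth). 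Hence $\phi_m$ is a (c.b.) Fourier multiplier on $L_p(\T^d_\t)$ for $1<p<\infty$, so $\|D^m x\|_p\lesssim\|J^k x\|_p=\|x\|_{H_p^k}$, and summing over $m$ gives the bound.

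For the reverse direction $\|x\|_{H_p^k}\lesssim\|x\|_{W_p^k}$, the point is to write $J^k$ as a combination of the derivations $D^m$, $|m|_1\le k$, modulo a bounded multiplier. I would use the identity $J^{k}=M_{\psi}\circ(\text{sum of }D^m)$, made precise as follows. When $k$ is even, say $k=2\ell$, we have $(1+|\xi|^2)^\ell=\sum_{|m|_1\le 2\ell}c_m(2\pi\mathrm i\xi)^m$ with constants $c_m$ coming from the multinomial expansion of $(1-(2\pi)^{-2}\Delta)^\ell$, so $J^k=(1-(2\pi)^{-2}\Delta)^\ell=\sum_{|m|_1\le k}c_m D^m$ exactly as operators on $\mathcal S'(\T^d_\t)$, and then $\|x\|_{H_p^k}=\|J^k x\|_p\le\sum_{|m|_1\le k}|c_m|\,\|D^m x\|_p\lesssim\|x\|_{W_p^k}$ with no multiplier needed at all. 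For general (odd) $k$ one writes $J^k = J\cdot J^{k-1}$ with $k-1$ even, but $J=(1+|\xi|^2)^{1/2}$ is not a polynomial; instead I would use the standard device $(1+|\xi|^2)^{k/2}=\big(\sum_{j=1}^d \xi_j(1+|\xi|^2)^{(k-1)/2}\cdot\frac{\xi_j}{|\xi|}\cdot\frac{1}{|\xi|^{?}}\big)$—more cleanly: the function $\xi\mapsto(1+|\xi|^2)^{k/2}\big(1+\sum_{|m|_1=k}|\xi^m|^2\big)^{-1/2}$ (or, avoiding the origin issue, $(1+|\xi|^2)^{k/2}\big(1+\sum_{|m|_1\le k}|2\pi\xi|^{2|m|_1}\big)^{-1/2}$-type correction) is a Mikhlin multiplier, since it is smooth, bounded, and homogeneous of degree $0$ at infinity; call it $\psi$. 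Then $J^k=M_\psi\circ M_{\chi}$ where $M_\chi x$ has components controlled by the $D^m x$, $|m|_1\le k$. The cleanest formulation: choose $\psi(\xi)=(1+|\xi|^2)^{k/2}\big(\sum_{|m|_1\le k}|(2\pi\mathrm i\xi)^m|^2\big)^{-1/2}$ away from $0$ and note $\sum_{|m|_1\le k}|(2\pi\xi)^m|^2\ge 1$ so there is no singularity; verify $\psi\in\mathrm M$; then $J^k x=M_\psi\big(\text{something built linearly from the }D^m x\big)$ via polar-type factorization, giving $\|J^k x\|_p\lesssim\sum_{|m|_1\le k}\|D^m x\|_p$.

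The main obstacle, and the only place requiring genuine care, is the verification that the correcting symbols $\phi_m$ and $\psi$ are Mikhlin multipliers in the sense of Definition~\ref{Mikhlin-def} — in particular that they extend smoothly across $\xi=0$ (this is why one must insert the $+1$'s and use $(1+|\xi|^2)$ rather than $|\xi|^2$ throughout) and that the derivative bounds $|\xi|^{|\beta|_1}|D^\beta\psi(\xi)|\lesssim 1$ hold up to order $|\beta|_1\le d$; this is a routine but slightly tedious induction using the Leibniz rule and the elementary estimates $|D^\beta(1+|\xi|^2)^{s}|\lesssim(1+|\xi|^2)^{s-|\beta|_1/2}$. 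Once this is in place, Lemma~\ref{q-multiplier}(ii) does all the heavy lifting, and one also gets for free that the isomorphism $H_p^k(\T^d_\t)\cong W_p^k(\T^d_\t)$ is completely bounded. I would also remark that combined with Lemma~\ref{q-H-BMO}(i) (which identifies $\mathcal H_p$ with $L_p$) this gives, as a corollary, the Littlewood--Paley description of $H_p^k(\T^d_\t)$ advertised in the introduction, though that is not needed for the statement itself.
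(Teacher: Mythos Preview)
Your first direction, $\|x\|_{W_p^k}\lesssim\|x\|_{H_p^k}$, is exactly the paper's argument: the symbol $\phi_m(\xi)=(2\pi\mathrm i\xi)^m(1+|\xi|^2)^{-k/2}$ is smooth everywhere and Mikhlin, so Lemma~\ref{q-multiplier}(ii) gives $\|D^mx\|_p\lesssim\|J^kx\|_p$.

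For the reverse direction, your even-$k$ observation is a pleasant shortcut the paper does not take: when $k=2\ell$, $J^k=(1-(2\pi)^{-2}\Delta)^\ell$ is literally a constant-coefficient polynomial in the $\partial_j$'s, so $\|J^kx\|_p\le\sum_{|m|_1\le k}|c_m|\,\|D^mx\|_p$ is just the triangle inequality.

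The odd-$k$ step, however, has a genuine gap. Your single symbol
$\psi(\xi)=(1+|\xi|^2)^{k/2}\big(\sum_{|m|_1\le k}|(2\pi\mathrm i\xi)^m|^2\big)^{-1/2}$
is indeed Mikhlin, but the identity $J^kx=M_\psi(\text{something linear in the }D^mx)$ cannot hold: it would force $\big(\sum_{|m|_1\le k}|(2\pi\mathrm i\xi)^m|^2\big)^{1/2}$ to equal a fixed linear combination $\sum_m a_m(2\pi\mathrm i\xi)^m$ with constant $a_m$, and a square root of a sum of squares is not such a combination. The ``polar-type factorization'' you invoke does not exist at the level of scalar symbols. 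The correct repair is to use a \emph{family} of symbols,
\[
\psi_m(\xi)=\frac{(1+|\xi|^2)^{k/2}\,\overline{(2\pi\mathrm i\xi)^m}}{\sum_{|m'|_1\le k}|(2\pi\mathrm i\xi)^{m'}|^2}\,,
\]
so that $J_k(\xi)=\sum_{|m|_1\le k}\psi_m(\xi)(2\pi\mathrm i\xi)^m$ identically; each $\psi_m$ is smooth (the denominator is $\ge 1$) and of nonpositive homogeneity at infinity, hence Mikhlin, and then $J^kx=\sum_m M_{\psi_m}(D^mx)$ gives the bound.

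The paper handles this direction differently and obtains a sharper conclusion. It introduces a one-variable cutoff $\chi$ (vanishing near $0$, equal to $1$ for $|t|\ge\tfrac12$) and writes
\[
J_k(\xi)=\phi(\xi)\Big(1+\sum_{j=1}^d\phi_j(\xi)(2\pi\mathrm i\xi_j)^k\Big),\qquad
\phi(\xi)=\frac{(1+|\xi|^2)^{k/2}}{1+\sum_j\chi(\xi_j)|\xi_j|^k}\,,\quad
\phi_j(\xi)=\frac{\chi(\xi_j)|\xi_j|^k}{(2\pi\mathrm i\xi_j)^k}\,,
\]
all Mikhlin. This yields $\|x\|_{H_p^k}\lesssim\|x\|_p+\sum_{j=1}^d\|\partial_j^kx\|_p$, so only the \emph{pure} $k$th partials $\partial_j^k$ are needed --- a refinement the paper records as a separate remark, and which your route (even after the fix above) does not give.
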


\begin{proof}
 This proof is based on Fourier multipliers by virtue of Lemma~\ref{q-multiplier}.  For any $m\in\nat_0^d$ with $|m|_1\le k$, the function $\phi$, defined by $\phi(\xi)=(2\pi{\rm i})^{|m|_1}\xi^m(1+|\xi|^2)^{-\frac k2}$, is clearly a Mikhlin multiplier. Then for any $x\in\mathcal{S}'(\T^d_\t)$,
  $$\|D^mx\|_p=\|M_\phi(J^kx)\|_p\les \|J^kx\|_p\,,$$
 whence $\|x\|_{W^k_p}\les \|x\|_{H^k_p}$. To prove the converse inequality, choose an infinite differentiable function $\chi$ on $\real$ such that $\chi=0$ on $\{\xi: |\xi|\le 4^{-1}\}$ and $\chi=1$ on $\{\xi: |\xi|\ge 2^{-1}\}$. Let
  $$\phi(\xi)=\frac{(1+|\xi|^2)^{\frac k2}}{1+\chi(\xi_1)|\xi_1|^k+\cdots+\chi(\xi_d)|\xi_d|^k}\;\text{ and }\;
  \phi_j(\xi)=\frac{\chi(\xi_j)|\xi_j|^k}{(2\pi{\rm i}\,\xi_j)^k}\,,\quad 1\le j\le d.$$
 These are Mikhlin multipliers too,  and
  $$J^kx =M_\phi(x+M_{\phi_1}\partial_1^kx+\cdots+M_{\phi_d}\partial_d^kx).$$
 It then follows that
  $$\|x\|_{H^k_p}\les\Big( \|x\|_p^p+\sum_{j=1}^d\|\partial_j^kx\|_p^p\Big)^{\frac1p}\le \|x\|_{W^k_p}\,.$$
 The assertion is thus proved.
\end{proof}

\begin{rk}
Incidentally, the above proof shows that if $1<p<\8$, then
 $$\|x\|_{W^k_p}\approx \Big( \|x\|_p^p+\sum_{j=1}^d\|\partial_j^kx\|_p^p\Big)^{\frac1p}$$
with relevant constants depending only on $p$ and $d$.
\end{rk}

However, if one allows the above sum to run over all partial derivations of order $k$, then $p$ can be equal to $1$ or $\8$. Namely, for any  $1\le p\le\8$,
 $$\|x\|_{W^k_p}\approx \Big( \|x\|_p^p+\sum_{m\in\nat_0^d,\,|m|_1=k}\|D^m x\|_p^p\Big)^{\frac1p}$$
with relevant constants depending only on $d$. This equivalence can be proved by standard arguments (see  Lemma~\ref{0-k} below and its proof).  In fact, we have a  nicer result, a Poincar\'e-type inequality:
 $$\|x\|_p\les \sum_{j=1}^d\|\partial_jx\|_p$$
for any $x\in W_p^1(\T^d_\t)$ with $\wh x(0)=0$. So $\|x\|_p$ can be removed from the right-hand side of the above equivalence. This inequality will be proved in the next section.

\medskip

We conclude this section with an easy description of the dual space of $W_p^k(\mathbb{T}_{\theta}^d)$.
Let $N= N(d,k)=\sum_{m\in\nat_0^d,\, 0\leq|m|_1\leq k }1$ and
 $$L_p^N=\prod_{j=1}^N L_p(\mathbb{T}_{\theta}^d)\;\text{ equipped with the norm }\;
 \|x\|_{L_p^N}=\Big( \sum_{j=1}^N\|x_j\|_p^p\Big)^{\frac{1}{p}}\,.$$
The map $x\mapsto (D^mx)_{0\leq|m|_1\leq k }$ establishes an isometry from $W_p^k(\mathbb{T}_{\theta}^d)$ into $L_p^N$. Therefore, the dual   of $W_p^k(\mathbb{T}_{\theta}^d)$ with $1\le p<\8$ is identified with a quotient  of $L_{p'}^N$, where $p'$ is the conjugate index of $p$. More precisely,  for every $\ell\in (W^k_p(\mathbb{T}_{\theta}^d))^{*}$  there exists an element $y=(y_{m})_{m\in\nat_0^d,\, 0\leq|m|_1\leq k }\in L_{p'}^N$ such that
\beq\label{dualparity}
\ell(x)=\sum_{0\leq|m|_1\leq k }\tau(y_{m} D^{m}x),\;\;\forall x\in W_p^k(\mathbb{T}_{\theta}^d),
\eeq
and
 $$
\|\ell\|_{(W^k_p)^{*}}=\inf \|y\|_{L_{p'}^N},
 $$
the infimum running over all $y \in L_{p'}^N$ as above.

$(W_p^k(\mathbb{T}_{\theta}^d))^*$ can be described as a space of distributions. Indeed, let  $\ell\in (W^k_p (\mathbb{T}_{\theta}^d))^*$ and $y \in L_{p'}^N$ be a representative of $\ell$ as in \eqref{dualparity}. Define $\el_y\in\mathcal{S}'(\T^d_\t)$ by
 \beq\label{dual}
\el_y= \sum_{0\leq |m|_1\leq k} (-1)^{|m|_1}D^{m}y_m.
 \eeq
Then
$$\el_y(x)=\sum_{0\leq |m|_1 \leq k}\tau(y_m D^{m}x) = \ell(x),\quad x\in \mathcal{S}(\T^d_\t).$$
So  $\ell$ is  an extension of $\el_y$; moreover,
 $$\|\ell\|_{(W^k_p)^*}= \min\{ \|y\|_{L_{p'}^N}: \; \ell \; \text{ extends } \el_y  \text{ given by } \eqref{dual}\}.$$
Conversely, suppose $\el$ is an element of $\mathcal{S}'(\T^d_\t)$ of the above form $\el_y$  for some
$y\in L_{p'}^N$. Then by the density of $\mathcal{S}(\T^d_\t)$ in $W^k_p (\mathbb{T}_{\theta}^d)$,  $\el$ extends uniquely to a continuous functional on  $W^k_p (\mathbb{T}_{\theta}^d)$. Thus we have proved the following

\begin{prop}
Let $1\leq p<\infty$ and  $W^{-k}_{p'}(\mathbb{T}_{\theta}^d)$ be the space of those distributions $\el$ which admit a representative  $\el_y$ as above, equipped with the norm $\inf \{ \|y\|_{L_{p'}^N}: y \; \text{as in}\; \eqref{dual}\}$. Then $(W^k_p(\mathbb{T}_{\theta}^d))^*$ is isometric to  $W^{-k}_{p'}(\mathbb{T}_{\theta}^d)$.
\end{prop}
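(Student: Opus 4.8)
The plan is to establish the isometric isomorphism $(W^k_p(\T^d_\t))^* \cong W^{-k}_{p'}(\T^d_\t)$ by assembling the three pieces already laid out in the discussion preceding the statement: the realization of $W^k_p$ as a subspace of $L_p^N$, the duality between a subspace and a quotient, and the identification of the relevant quotient of $L_{p'}^N$ with a space of distributions. Concretely, I would first recall that $\iota: x \mapsto (D^m x)_{0\le |m|_1 \le k}$ is an isometric embedding of $W^k_p(\T^d_\t)$ into $L_p^N$ (immediate from the definition of the norm), so that $(W^k_p(\T^d_\t))^*$ is isometrically a quotient of $(L_p^N)^* = L_{p'}^N$ (here $1 \le p < \infty$ is used so that $(L_p(\T^d_\t))^* = L_{p'}(\T^d_\t)$, and the finite $\ell_p$-direct sum dualizes to the $\ell_{p'}$-direct sum). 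This gives formula \eqref{dualparity} together with $\|\ell\|_{(W^k_p)^*} = \inf\{\|y\|_{L_{p'}^N} : y \text{ represents } \ell\}$, the infimum over all $y$ in the annihilator-coset representing $\ell$ under the Hahn--Banach quotient description.

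Next I would make precise the passage from the vector $y = (y_m)$ to the distribution $\el_y = \sum_{0\le|m|_1\le k}(-1)^{|m|_1} D^m y_m \in \mathcal{S}'(\T^d_\t)$, observing via Lemma~\ref{lem:integrationbypart} (integration by parts on $\mathcal{S}(\T^d_\t)$) that $\la \el_y, x\ra = \sum_m \tau(y_m D^m x)$ for all $x \in \mathcal{S}(\T^d_\t)$, so $\el_y$ is exactly the restriction of $\ell$ to the dense subspace $\mathcal{S}(\T^d_\t) \subset W^k_p(\T^d_\t)$ (density from Proposition~\ref{Sobolev-P}(ii), valid since $p < \infty$). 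Thus $\ell$ is the unique continuous extension of $\el_y$, and two representatives $y, y'$ give the same $\el_y$ iff they give the same $\ell$; this shows the map $\ell \mapsto \el_y$ is well defined on $(W^k_p)^*$ independently of the representative, injective, and norm-preserving in the sense $\|\ell\|_{(W^k_p)^*} = \inf\{\|y\|_{L_{p'}^N} : \el_y \text{ is given by } \eqref{dual}\}$, which is precisely the norm defining $W^{-k}_{p'}(\T^d_\t)$. Conversely, given any distribution $\el = \el_y$ of the stated form, the functional $x \mapsto \sum_m \tau(y_m D^m x)$ on $\mathcal{S}(\T^d_\t)$ satisfies $|\la\el,x\ra| \le \|y\|_{L_{p'}^N}\|x\|_{W^k_p}$, hence extends uniquely to an element of $(W^k_p)^*$ by density; this shows surjectivity onto $W^{-k}_{p'}(\T^d_\t)$. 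Combining the two directions yields the desired isometry.

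The only genuinely delicate point — and the one I would spell out carefully rather than wave at — is the \emph{well-definedness and injectivity} of the correspondence at the level of distributions: different $L_{p'}^N$-representatives of the same functional $\ell$ may produce different formal expressions $\sum(-1)^{|m|_1}D^m y_m$, yet they must define the same element of $\mathcal{S}'(\T^d_\t)$, and conversely a distribution can have several representations \eqref{dual}; reconciling this with the quotient norm requires noting that $y \mapsto \el_y$ factors through the quotient $L_{p'}^N / (\iota W^k_p)^\perp$ exactly because the annihilator $(\iota W^k_p)^\perp$ consists of those $y$ with $\sum_m \tau(y_m D^m x) = 0$ for all $x$, i.e. with $\el_y = 0$ as a distribution (using density of $\mathcal{S}(\T^d_\t)$ again). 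Everything else is the routine functional-analytic bookkeeping of Hahn--Banach duality for subspaces and quotients, together with the standard fact $(L_p^N)^* = L_{p'}^N$ for $1 \le p < \infty$ and the identification $(L_p(\T^d_\t))^* = L_{p'}(\T^d_\t)$ from the noncommutative $L_p$-theory recalled in Chapter~\ref{Preliminaries}.
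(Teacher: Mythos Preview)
Your proposal is correct and follows essentially the same approach as the paper, which in fact presents the entire argument in the discussion immediately preceding the proposition and then simply states the result. If anything, you are more careful than the paper in spelling out why the map $y \mapsto \el_y$ factors through $L_{p'}^N/(\iota W^k_p)^\perp$ and why density of $\mathcal{S}(\T^d_\t)$ guarantees well-definedness and injectivity; the paper leaves these points implicit.
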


Note that the duality problem for the potential Sobolev spaces is trivial. Since $J^{\a}$ is an isometry between $H_p^\a(\T^d_\t)$ and  $L_p(\T^d_\t)$, we see that for $1\le p<\8$ and $\a\in\real$, the dual space of $H_p^\a(\T^d_\t)$  coincides with $H_{p'}^{-\a}(\T^d_\t)$  isometrically.

%%%%%%%%%%%%%%%%%%%%%%%%%%%%%%%%%%%%%%%%%%%%%%%%%%%%%%%%%%%%%%%%%%%%%%%%
%%%%%%%%%%%%%%%%%%%%%%%%%%%%%%%%%%%%%%%%%%%%%%%%%%%%%%%%%%%%%%%%%%%%%%%%

\section{A Poincar{\'e}-type inequality}

%%%%%%%%%%%%%%%%%%%%%%%%%%%%%%%%%%%%%%%%%%%%%%%%%%%%%%%%%%%%%%%%%%%%%%%%
%%%%%%%%%%%%%%%%%%%%%%%%%%%%%%%%%%%%%%%%%%%%%%%%%%%%%%%%%%%%%%%%%%%%%%%%

For $x\in W_p^k(\T_{\t}^d)$ let
 $$|x|_{ W^k_p}=\Big(\sum_{m\in\nat_0^d,\,|m|_1=k}\|D^mx\|_p^p\Big)^{\frac1p}\,.$$

\begin{thm}\label{k-Sobolev}
 Let $1\le p\le\8$. Then for any $x\in W_p^1(\T_{\t}^d)$,
  $$\|x-\wh x(0)\|_p\les |x|_{ W^1_p}\,.$$
  More generally, if  $k\in\nat$ and  $x\in W_p^k(\T_{\t}^d)$ with $\wh x(0)=0$, then
  $$|x|_{ W^j_p}\les |x|_{ W^k_p}\,,\quad\forall \,0\le j<k.$$
 Consequently, $|\wh x(0)|+|x|_{ W^k_p}$ is an equivalent norm on $W_p^k(\mathbb{T}_{\theta}^d)$.
 \end{thm}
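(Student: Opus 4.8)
The plan is to avoid singular-integral arguments (which would only reach $1<p<\8$) and instead exploit the isometric translation action on $\T^d_\t$, producing a single argument valid for all $1\le p\le\8$ at once.

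First I would prove the case $k=1$. Let $T_u$, $u\in\real^d$, be the translation operators (see Remark~\ref{derivation}): these are trace-preserving $*$-automorphisms of $\T^d_\t$ with $\widehat{T_ux}(m)=e^{2\pi\mathrm i u\cdot m}\wh x(m)$ and, by \eqref{eq:TransLpIso}, $\|T_uy\|_p=\|y\|_p$; moreover $u\mapsto T_ux$ is continuous into $L_p(\T^d_\t)$ by Lemma~\ref{prop:TransC} (w$^*$-continuous for $p=\8$). The core is the ``fundamental theorem of calculus'' identity
$$T_ux-x=\int_0^1\sum_{j=1}^d u_j\,T_{tu}(\partial_jx)\,dt,$$
a Bochner integral in $L_p(\T^d_\t)$ (weak$^*$ integral for $p=\8$), which is meaningful since $\partial_jx\in L_p(\T^d_\t)$ for $x\in W^1_p(\T^d_\t)$. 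I would check it on Fourier coefficients: the transform commutes with the integral, and the right side produces $\int_0^1 2\pi\mathrm i\,(u\cdot m)\,e^{2\pi\mathrm i tu\cdot m}\wh x(m)\,dt=(e^{2\pi\mathrm i u\cdot m}-1)\wh x(m)=\widehat{T_ux-x}(m)$. Taking $L_p$-norms and using $\|T_{tu}\cdot\|_p=\|\cdot\|_p$ gives $\|T_ux-x\|_p\le\sum_j|u_j|\,\|\partial_jx\|_p$. Since $\int_{\I^d}T_ux\,du=\wh x(0)$ (again by Fourier coefficients), we obtain $x-\wh x(0)=\int_{\I^d}(x-T_ux)\,du$, hence
$$\|x-\wh x(0)\|_p\le\int_{\I^d}\|x-T_ux\|_p\,du\le\sum_{j=1}^d\Big(\int_{\I^d}u_j\,du\Big)\|\partial_jx\|_p=\tfrac12\sum_{j=1}^d\|\partial_jx\|_p\les|x|_{W^1_p},$$
with constant depending only on $d$.

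Next, for the higher-order statement I would prove by induction on $\ell$ that $\wh y(0)=0$ and $y\in W^\ell_p(\T^d_\t)$ imply $\|y\|_p\les|y|_{W^\ell_p}$: the case $\ell=1$ is what was just shown, and for $\ell\ge2$ one applies it to $y$ and then the inductive hypothesis to each $\partial_iy$, which has vanishing $0$th Fourier coefficient and lies in $W^{\ell-1}_p$, noting that each $D^ny$ with $|n|_1=\ell$ is then counted at most $\ell$ times. Now if $x\in W^k_p(\T^d_\t)$ with $\wh x(0)=0$ and $0\le j<k$, then for every $m$ with $|m|_1=j$ the element $D^mx$ has vanishing $0$th Fourier coefficient (automatically if $|m|_1\ge1$, by hypothesis if $m=0$) and lies in $W^{k-j}_p$; applying the claim with $\ell=k-j$ yields $\|D^mx\|_p\les|D^mx|_{W^{k-j}_p}\le|x|_{W^k_p}$, and summing over the finitely many such $m$ gives $|x|_{W^j_p}\les|x|_{W^k_p}$. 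For the final equivalent-norm claim, one direction is trivial ($\wh x(0)=\tau(x)$, so $|\wh x(0)|\le\|x\|_p$); for the other, apply the preceding step to $x-\wh x(0)$, whose partial derivatives coincide with those of $x$, to control $|x|_{W^j_p}$ for $1\le j\le k$ and $\|x\|_p\le|\wh x(0)|+\|x-\wh x(0)\|_p$.

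The main obstacle is the endpoints $p=1$ and $p=\8$. The naive route --- writing $x-\wh x(0)=\sum_j M_{\phi_j}\partial_jx$ with $\phi_j(\xi)=-\mathrm i\xi_j/(2\pi|\xi|^2)$ and invoking Lemma~\ref{q-multiplier} --- collapses there, since $\phi_j$ is merely a Mikhlin multiplier and is not bounded on $L_1$ or $L_\8$. The translation-averaging identity of the first step is precisely what bypasses this, replacing the singular multiplier by an average of isometries; the cost is the careful handling of the vector-valued integrals and Fourier-coefficient identities, especially the weak$^*$ formulation needed for $p=\8$, where $\mathcal S(\T^d_\t)$ is not dense in $W^k_\8(\T^d_\t)$ and one cannot reduce to polynomials.
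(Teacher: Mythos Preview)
Your proof is correct and takes a genuinely different, more elementary route than the paper's. You obtain the base case $k=1$ in one stroke via translation averaging: from $T_ux-x=\int_0^1\sum_ju_j\,T_{tu}(\partial_jx)\,dt$ and $\int_{\I^d}T_ux\,du=\wh x(0)$ you get $\|x-\wh x(0)\|_p\le\tfrac12\sum_j\|\partial_jx\|_p$, and then a straightforward induction on the order handles the rest. The paper instead builds up through three auxiliary lemmas: a Gagliardo--Nirenberg-type interpolation $|x|_{W^j_p}\lesssim\|x\|_p^{1-j/k}|x|_{W^k_p}^{j/k}$ (Lemma~\ref{0-k}); an explicit $L_1$-kernel argument showing that the multiplier $m\mapsto m_j^{-2}$ is bounded on every $L_p$, giving $\|x\|_p\lesssim\|\partial_j^2x\|_p$ when the Fourier support avoids the $j$th hyperplane (Lemma~\ref{0-2}); and the resulting monotonicity $|x|_{W^k_p}\lesssim|x|_{W^{k+1}_p}$ (Lemma~\ref{increase-sob}). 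The paper then runs an induction on the \emph{dimension} $d$, using the conditional expectation onto the subalgebra generated by $U_1,\dots,U_{d-1}$ to peel off one variable at a time. Your argument is shorter and gives an explicit constant, and it sidesteps the dimension induction entirely; the paper's route, on the other hand, produces the intermediate inequalities (notably \eqref{0-1}) as byproducts, and the difference-operator calculus developed there is reused in section~\ref{Lipschitz classes}. The only point requiring care in your write-up is the weak$^*$ interpretation of the Bochner integrals at $p=\8$, which you have addressed; the norm estimate $\|\int f\,du\|_\8\le\int\|f\|_\8\,du$ for weak$^*$ integrals and the verification of the identities on Fourier coefficients (which are weak$^*$-continuous functionals) are exactly what is needed.
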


The proof given below is quite different from standard approaches to the Poincar\'e inequality. We will divide it into several lemmas, each of which might be interesting in its own right. We start with the following definition which will be frequently used later. Note that the function $e_u$ and the translation operator $T_u$ have been defined in Remark~\ref{derivation}.

\begin{Def}\label{diff}
 Given $u\in\real^d$ let $d_u=e_u-1$.   The Fourier multiplier on $\T^d_\t$ with symbol  $d_u$ is called  the  difference operator by $u$ and denoted by $\D_u$.
 \end{Def}

\begin{rk}
Note that $e_u$ is the Fourier transform of the Dirac measure $\d_u$ at $u$. Thus  considered as operators on $L_p(\T^d_\t)$  with $1\le p\le\8$,  $T_u$ is an isometry and $\D_u$ is of norm 2.
\end{rk}

\begin{lem}\label{0-k}
Let $1\le p\le\8$, and $j, k\in\nat$ with $j<k$. Then for any $x\in W_p^k(\T^d_\t)$,
 $$\,|x|_{W_p^j}\les \|x\|_p^{1-\frac{j}k}\,|x|_{W_p^k}^{\frac{j}k}\,.$$
 \end{lem}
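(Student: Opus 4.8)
The plan is to establish the Gagliardo--Nirenberg type interpolation inequality $|x|_{W_p^j}\les \|x\|_p^{1-j/k}\,|x|_{W_p^k}^{j/k}$ by an iteration argument that reduces everything to the single-step case $j=1$, $k=2$. Indeed, by repeated application of the case $(j,k)=(1,2)$ one controls each first-order derivative, hence each intermediate-order derivative, and a standard product/telescoping of the resulting chain of inequalities yields the general exponents; this part is routine once the base case is in hand. So the heart of the matter is: for $1\le p\le\8$ and $x\in W_p^2(\T^d_\t)$,
\[
 \|\partial_j x\|_p\les \|x\|_p^{1/2}\,\|\partial_j^2 x\|_p^{1/2}+\|x\|_p\,,\quad 1\le j\le d,
\]
(the additive term being harmless on the torus), and more precisely the homogeneous bound $\|\partial_j x\|_p\les \|x\|_p^{1/2}\,|x|_{W_p^2}^{1/2}$ after subtracting $\wh x(0)$.

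The key idea is to use the difference operators $\D_u$ from Definition~\ref{diff} together with a Taylor-type identity, exploiting that each $T_u$ is an isometry on $L_p(\T^d_\t)$ (so $\|\D_u\|\le 2$) and that differences along the $j$-th coordinate direction encode $\partial_j$. Writing $u=te_j$ for the $j$-th coordinate vector $e_j$ and $t\in\real$, I would use the second-difference identity
\[
 T_{te_j}(x)+T_{-te_j}(x)-2x=\int_0^t\int_{-s}^{s}T_{re_j}(\partial_j^2 x)\,dr\,ds\,,
\]
valid for $x\in\mathcal S(\T^d_\t)$ and extended by density, which gives $\|\D_{te_j}\D_{-te_j}x\|_p=\|T_{te_j}x+T_{-te_j}x-2x\|_p\le t^2\,\|\partial_j^2x\|_p$. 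On the other hand $T_{te_j}x-T_{-te_j}x=\int_{-t}^{t}T_{re_j}(\partial_j x)\,dr$, and combining a first-order difference estimate with the second-order one in the classical way: from
\[
 2t\,\partial_j x=\big(T_{te_j}x-T_{-te_j}x\big)-\int_{-t}^{t}\big(T_{re_j}(\partial_j x)-\partial_j x\big)dr
\]
and bounding $\|T_{re_j}(\partial_j x)-\partial_j x\|_p=\|\D_{re_j}(\partial_j x)\|_p\le |r|\,\|\partial_j^2x\|_p$ inside the integral, one gets $2t\,\|\partial_j x\|_p\le 2\|x\|_p + t^2\|\partial_j^2 x\|_p$ for every $t>0$; optimizing over $t$ (taking $t=(\|x\|_p/\|\partial_j^2x\|_p)^{1/2}$ when the right side is finite and nonzero, and noting the degenerate cases separately) yields $\|\partial_j x\|_p\les \|x\|_p^{1/2}\,\|\partial_j^2 x\|_p^{1/2}$.

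The main obstacle I anticipate is purely technical: justifying the integral (Taylor) identities rigorously in the operator-valued/distributional setting and handling the endpoint exponents $p=1,\8$ uniformly. For this one works first on $\mathcal S(\T^d_\t)$ where $r\mapsto T_{re_j}(x)$ is a smooth $L_p$-valued curve with $\frac{d}{dr}T_{re_j}(x)=T_{re_j}(\partial_j x)$ by Remark~\ref{derivation}, so the Fundamental Theorem of Calculus and Minkowski's integral inequality apply verbatim; then one passes to general $x\in W_p^2(\T^d_\t)$ using the density of $\mathcal S(\T^d_\t)$ (Proposition~\ref{Sobolev-P}(ii)) for $1\le p<\8$, and a w*-approximation argument (Fejér means, which commute with $D^m$ and with $T_u$) for $p=\8$. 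Finally, the passage from the base case to arbitrary $j<k$: apply the base case to $D^m x$ with $|m|_1=j-1$ to bound $|x|_{W_p^j}$ in terms of $|x|_{W_p^{j-1}}$ and $|x|_{W_p^{j+1}}$, giving a discrete convexity (log-convexity) of $i\mapsto \log|x|_{W_p^i}$ along $0\le i\le k$; since $|x|_{W_p^0}=\|x\|_p$, the standard three-point argument then delivers the stated exponents $1-j/k$ and $j/k$, with constants depending only on $d$ and $k$.
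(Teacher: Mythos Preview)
Your proposal is correct and follows essentially the same route as the paper: reduce to the base case $j=1$, $k=2$ by an iteration/log-convexity argument, and prove the base case via a Taylor-type expansion of the difference operator along a coordinate direction, using only that the translations $T_u$ are isometries on $L_p(\T^d_\t)$ and the identity $\frac{d}{dr}T_{re_j}=T_{re_j}\partial_j$ from Remark~\ref{derivation}; optimizing $t\|\partial_j x\|_p\le C\|x\|_p+C't^2\|\partial_j^2x\|_p$ over $t>0$ then gives $\|\partial_j x\|_p\les\|x\|_p^{1/2}\|\partial_j^2x\|_p^{1/2}$. The only cosmetic difference is that the paper uses the one-sided identity $\D_{te_j}x-t\partial_jx=\int_0^1\int_0^rT_{ste_j}(t^2\partial_j^2x)\,ds\,dr$ (read off directly from the Fourier symbol) in place of your symmetric two-sided variant, arriving at the same inequality with slightly different constants.
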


\begin{proof}
Fix $x\in W_p^k(\T^d_\t)$ with $\wh x(0)=0$.
 For any $u,\xi\in\real^d$ we have
 \be\begin{split}
 d_u(\xi)-\frac{\partial}{\partial r}d_{ru}(\xi)\big|_{r=0}
 &=\int_0^1\big(\frac{\partial}{\partial r}d_{ru}(\xi)-\frac{\partial}{\partial r}d_{ru}(\xi)\big|_{r=0}\big)dr\\
 &=\int_0^1\int_0^r\frac{\partial^2}{\partial s^2}d_{su}(\xi)\,ds\,dr.\\
 \end{split}\ee
Since
 $$\frac{\partial}{\partial r}d_{ru}(\xi)=e_{ru}(\xi)(2\pi{\rm i} u\cdot\xi)\;\text{ and }\;
 \frac{\partial^2}{\partial s^2}d_{su}(\xi)=e_{su}(\xi)(2\pi{\rm i} u\cdot\xi)^2\,,$$
letting $u=t\mathbf{e}_j$ with $t>0$ and $\mathbf{e}_j$ the $j$th canonical basic vector of $\real^d$, we deduce
 $$\D_ux-t\partial_jx=\int_0^1\int_0^r T_{su}(t^2\partial_j^2x)ds\,dr.$$
Thus
 $$t\|\partial_jx\|_p\le \|\D_ux\|_p+t^2\int_0^1\int_0^r \|T_{su}(\partial_j^2x)\|_pds\,dr
 \le 2\|x\|_p+\frac{t^2}2 \,\|\partial_j^2x\|_p\,.$$
Dividing by $t$ and taking the infimum over all $t>0$, we get
 \beq\label{0-1}
 \|\partial_jx\|_p\le 2\sqrt{\|x\|_p\|\partial_j^2x\|_p}.
 \eeq
This gives the assertion for the case $j=1$ and $k=2$. An iteration argument yields the general case.
\end{proof}

 \begin{lem}\label{0-2}
Let $j\in\{1,\cdots,d\}$ and $x\in W_p^2(\T^d_\t)$ such that $m_j\neq0$ whenever $\wh x(m)\neq0$ for $m\in\ent^d$. Then
 $$\|x\|_p\le c  \|\partial_j^2 x\|_p\,,$$
where $c$ is a universal constant. More generally, for any $x\in W_p^2(\T^d_\t)$ with $\wh x(0)=0$
 $$\|x\|_p\le c \sum_{j=1}^d \|\partial_j^2 x\|_p\,.$$
 \end{lem}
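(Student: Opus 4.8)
The plan is to realise $x$ as the image of $\partial_j^2 x$ under an explicit Fourier multiplier which is convolution with a bounded function, hence a bounded multiplier on $L_p(\T^d_\t)$ for \emph{every} $1\le p\le\8$, and then to deduce the general inequality by splitting the frequency support of $x$ into $d$ pieces. The point is that for all $p$ (including $p=1,\8$) we only have the "Fourier transform of a finite measure'' criterion at our disposal, and the kernel we need — a second integration operator along the $j$th variable with the zero slice removed — is exactly of this type.

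First I would introduce the function $g$ on $\T$ whose Fourier coefficients are $\wh g(n)=-(4\pi^2 n^2)^{-1}$ for $n\neq 0$ and $\wh g(0)=0$; since $\sum_{n\neq 0}n^{-2}<\8$ the Fourier series converges absolutely, so $g$ is continuous and $\|g\|_{L_1(\T)}\le\sum_{n\neq0}(4\pi^2n^2)^{-1}=\tfrac1{12}$. Let $T_u$ be the translation operator, i.e. the Fourier multiplier with symbol $\xi\mapsto e^{2\pi\mathrm i u\cdot\xi}$, so that $T_u=\pi_z$ with $z=(e^{2\pi\mathrm i u_1},\dots,e^{2\pi\mathrm i u_d})$ and $\|T_u(y)\|_p=\|y\|_p$ by \eqref{eq:TransLpIso}. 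For $x\in W_p^2(\T^d_\t)$ with $\wh x(m)=0$ whenever $m_j=0$, I claim
$$x=\int_0^1 g(t)\,T_{t\mathbf e_j}(\partial_j^2 x)\,dt,$$
where $\mathbf e_j$ is the $j$th coordinate vector. The right-hand side is a well-defined element of $L_p(\T^d_\t)$: the integrand is $L_p$-norm-continuous in $t$ for $p<\8$ (Lemma~\ref{prop:TransC}) and weak*-continuous for $p=\8$, so the integral makes sense (as a Bochner integral, resp. in the weak* sense) and has $L_p$-norm at most $\|g\|_{L_1(\T)}\|\partial_j^2 x\|_p$. To check the identity I would compare Fourier coefficients: the $m$th coefficient of the right-hand side is $\bigl(\int_0^1 g(t)e^{2\pi\mathrm i tm_j}\,dt\bigr)(2\pi\mathrm i m_j)^2\wh x(m)=\wh g(-m_j)(2\pi\mathrm i m_j)^2\wh x(m)$, which is $\wh x(m)$ when $m_j\neq0$ and $0$ when $m_j=0$; by the hypothesis on $x$ this equals $\wh x(m)$ for every $m$. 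This yields the first assertion with $c=\|g\|_{L_1(\T)}\le\frac1{12}$.

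For the general case I would partition $\ent^d\setminus\{0\}$ into the sets $B_j=\{m:\,m_j\neq0,\ m_1=\dots=m_{j-1}=0\}$, $1\le j\le d$, on each of which $m_j\neq0$, and set $x^{(j)}=M_{\mathbf 1_{B_j}}x$. Writing $\mathbf 1_{B_j}=\prod_{k<j}\mathbf 1_{\{m_k=0\}}-\prod_{k\le j}\mathbf 1_{\{m_k=0\}}$ exhibits $M_{\mathbf 1_{B_j}}$ as a difference of two trace-preserving conditional expectations (each $\mathbf 1_{\{m_k=0\}}$ being the multiplier $\int_0^1 T_{t\mathbf e_k}\,dt$), so $M_{\mathbf 1_{B_j}}$ is bounded on $L_p(\T^d_\t)$ with norm $\le2$ for all $1\le p\le\8$. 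Since $\wh x(0)=0$ we have $x=\sum_{j=1}^d x^{(j)}$; moreover $x^{(j)}$ has no Fourier coefficient on $\{m_j=0\}$ and $\partial_j^2 x^{(j)}=M_{\mathbf 1_{B_j}}(\partial_j^2 x)$. Applying the first part to each $x^{(j)}$ and then the multiplier bound gives $\|x^{(j)}\|_p\le c\|\partial_j^2 x^{(j)}\|_p\le 2c\|\partial_j^2 x\|_p$, and summing over $j$ yields $\|x\|_p\le 2c\sum_{j=1}^d\|\partial_j^2 x\|_p$.

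The only genuinely new ingredient is the choice of the kernel $g$ in the first step; once the identity $x=\int_0^1 g(t)T_{t\mathbf e_j}(\partial_j^2 x)\,dt$ is in place the rest is routine bookkeeping. The mildly delicate points are the meaning of the integral when $p=\8$ (handled via the weak*-continuity in Lemma~\ref{prop:TransC}) and the fact that the projection multipliers $M_{\mathbf 1_{B_j}}$ are bounded on $L_p(\T^d_\t)$ uniformly in $p\in[1,\8]$, which follows because they are built from translation averages, hence from conditional expectations.
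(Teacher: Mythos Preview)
Your proof is correct and follows essentially the same approach as the paper's: the key step is to recognise that the multiplier $m\mapsto (2\pi\mathrm i m_j)^{-2}$ (on $\{m_j\neq0\}$) is given by convolution with an $L_1(\T)$-function in the $j$th variable, which the paper exhibits explicitly as the $2\pi$-periodic even function $\psi(s)=\tfrac{(\pi-s)^2}{2}-\tfrac{\pi^2}{6}$ and then invokes Lemma~\ref{cb-multiplier}, whereas you write the same convolution as $\int_0^1 g(t)T_{t\mathbf e_j}(\,\cdot\,)\,dt$ and bound it directly; for the second part the paper argues by induction on $d$ via the single conditional expectation $\mathbb E_{U_1,\dots,U_{d-1}}$ while you unwind this into the direct frequency decomposition $\ent^d\setminus\{0\}=\bigsqcup_j B_j$, but the two are the same argument.
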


\begin{proof}
 Assume $j=1$.  Define $\phi: \ent\to\real$ by
  $$\phi(m_1)=\frac1{m_1^2}\,\text{ for } m_1\in\ent\setminus\{0\}\;\text{ and }\; \phi(m_1)=0\;\text{ for } m_1=0.$$
We also view $\phi$  as a function on $\ent^d$, independent of $(m_2,\cdots, m_d)$. Then the inequality to prove amounts to showing that $\phi$ is a bounded Fourier multiplier on $L_p(\T^d_\t)$ for any $1\le p\le\8$. This is easy. Indeed, let $\psi:\real\to\real$ be the $2\pi$-periodic even function determined by 
 $$\psi(s)=\frac{(\pi-s)^2}2-\frac{\pi^2}6\;\text{ for }\; s\in[0,\,\pi).$$
Then 
 $$\wh\psi=\phi\;\text{  and  }\; \|\psi\|_{L_1(\T)}=\frac{2\pi^2}{9\sqrt3}\,.$$
Thus by Lemma~\ref{cb-multiplier},  $\phi$ is a bounded Fourier multiplier on $L_p(\T^d_\t)$ with norm $\frac{2\pi^2}{9\sqrt3}$, which proves the first inequality of the lemma.

The second one is an immediate consequence of the first. Indeed,  let $\mathbb{E}_{U_{1},\cdots, U_{d-1}}$ be the trace preserving conditional expectation from $\T_\t^d$ onto the subalgebra generated by $(U_{1},\cdots, U_{d-1})$. Let $x'=\mathbb{E}_{U_{1},\cdots, U_{d-1}}(x)$ and $x_d=x-x'$. Then $m_d\neq0$ whenever $\wh x_{d}(m)\neq0$ for $m\in\ent^d$. Thus
  $$\|x_{d}\|_p\le c \|\partial_d^2x_{d}\|_p=c \|\partial_d^2x\|_p\,.$$
 Since $x'$ depends only on  $(U_{1},\cdots, U_{d-1})$, an induction argument then yields the desired inequality.
\end{proof}

 \begin{lem}\label{increase-sob}
The sequence $\{|x|_{W_p^k}\}_{k\ge1}$ is increasing, up to constants. More precisely,  for any $k\ge1$ there exists a constant $c_{d,k}$ such that
 $$|x|_{W_p^k}\le c_{d,k}\,|x|_{W_p^{k+1}}\,,\quad \forall x\in W_p^k(\T^d_\t).$$
 \end{lem}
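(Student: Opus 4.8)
The statement is that $|x|_{W_p^k}\lesssim|x|_{W_p^{k+1}}$ for all $k\ge 1$, with a constant depending only on $d$ and $k$. The plan is to reduce it to Lemma~\ref{0-2}, which handles the case $k=1\to k=2$ one canonical direction at a time, using the fact that a single partial derivative $\partial_j$ kills the dependence on $U_j$ and thus puts us in the situation where the relevant Fourier coefficients are supported away from $m_j=0$. The key observation is that it suffices to bound $\|D^m x\|_p$ for each multi-index $m$ with $|m|_1=k$ by a sum of $\|D^{m'}x\|_p$ over $m'$ with $|m'|_1=k+1$.

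Here is how I would carry it out. Fix a multi-index $m$ with $|m|_1=k$; pick any coordinate $j$ with $m_j\ge 1$ (possible since $k\ge 1$), and write $y=D^{m-\mathbf e_j}x$, so $D^m x=\partial_j y$. By Lemma~\ref{0-1} (inequality \eqref{0-1} applied to $y$, which lies in $W_p^2(\T^d_\t)$), we get $\|\partial_j y\|_p\le 2\sqrt{\|y\|_p\,\|\partial_j^2 y\|_p}$, i.e.
\beq
\|D^m x\|_p\le 2\sqrt{\|D^{m-\mathbf e_j}x\|_p\;\|D^{m+\mathbf e_j}x\|_p}\,.
\eeq
Now $|m+\mathbf e_j|_1=k+1$, so the second factor is controlled by $|x|_{W_p^{k+1}}$. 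The first factor, $\|D^{m-\mathbf e_j}x\|_p$, has order $k-1$; iterating this scheme downward (or, more cleanly, invoking Lemma~\ref{0-k} with $j=k-1$ and $k$ replaced by $k$, noting $\wh x(0)=0$ may be assumed since derivatives of order $\ge 1$ annihilate the constant term) shows $\|D^{m-\mathbf e_j}x\|_p$ is dominated by a product of a power of $\|x\|_p$ and a power of $|x|_{W_p^k}$; combining with the displayed inequality and Lemma~\ref{0-k} once more to absorb the $\|x\|_p$ factor against higher derivatives, we obtain $\|D^m x\|_p\lesssim |x|_{W_p^{k+1}}$ with constant depending only on $d$ and $k$. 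Summing over the finitely many $m$ with $|m|_1=k$ gives the claim.

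Alternatively, and perhaps more transparently, one can argue directly with Fourier multipliers in the spirit of Lemma~\ref{0-2}: for $|m|_1=k$ and $j$ with $m_j\ge 1$, the function $\xi\mapsto \xi^m\,\xi_j\big/\big(\sum_{i=1}^d \xi_i^{\,m+\mathbf e_i\text{-monomials of degree }k+1}\big)$ --- more precisely, writing $D^m x = M_{\psi}\big(\sum_{|m'|_1=k+1}D^{m'}x\big)$ where $\psi$ is a bounded (indeed, $\F^{-1}$-integrable after suitable smooth cutoff near the origin, using $\wh x(0)=0$ to discard the origin) combination of ratios of monomials --- shows directly that $M_\psi$ is bounded on $L_p(\T^d_\t)$ for all $1\le p\le\8$ by Lemma~\ref{cb-multiplier} and Lemma~\ref{q-multiplier}(i). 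The main obstacle in either route is the endpoint cases $p=1,\8$, where one cannot use Mikhlin's theorem and must instead exhibit the relevant multiplier symbols as Fourier transforms of bounded measures (or $L_1$ kernels), exactly as in the proof of Lemma~\ref{0-2}; the homogeneous-degree bookkeeping that ensures the ratio symbols are well behaved at infinity and, after removing the origin, everywhere, is the delicate point, but it is routine once one commits to tracking the monomial degrees carefully.
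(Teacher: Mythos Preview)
Your first approach is essentially the paper's: the paper works directly at the seminorm level, using Lemma~\ref{0-k} to get $|x|_{W_p^k}\lesssim\|x\|_p^{1/(k+1)}|x|_{W_p^{k+1}}^{k/(k+1)}$ and then absorbing the $\|x\|_p$ factor via Lemma~\ref{0-2} (plus induction), so that for $k=1$ one has $|x|_{W_p^1}\lesssim\sqrt{\|x\|_p\,|x|_{W_p^2}}\lesssim|x|_{W_p^2}$. Your version unpacks this multi-index by multi-index through \eqref{0-1}, which is correct but clumsier. One slip: where you write ``Lemma~\ref{0-k} once more to absorb the $\|x\|_p$ factor'', you mean Lemma~\ref{0-2}; Lemma~\ref{0-k} always retains a power of $\|x\|_p$ and cannot by itself eliminate it.

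Your alternative direct-multiplier route has a genuine obstruction at $p=1,\infty$. To drop one order of differentiation you would need a symbol behaving like $m\mapsto 1/m_j$ on $\{m_j\neq 0\}$, and this is \emph{not} the Fourier transform of a bounded measure on $\T$ (it is the conjugate-function kernel), so the associated multiplier is unbounded on $L_1$ and $L_\infty$. Lemma~\ref{0-2} works precisely because $m\mapsto 1/m_j^2$ \emph{is} the Fourier transform of an $L_1$ function; you cannot replace the exponent $2$ by $1$ here, so the one-step multiplier argument does not close.
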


 \begin{proof}
  The proof is done easily by induction with the help of the previous two lemmas. Indeed, we have (assuming $\wh x(0)=0$)
  $$|x|_{W_p^1}\les\sqrt{\|x\|_p\,|x|_{W_p^2}}\les |x|_{W_p^2}\,.$$
 Thus the assertion is proved for $k=1$. Then induction gives the general case by virtue of Lemma~\ref{0-k}.
 \end{proof}

  \begin{proof}[Proof of Theorem~\ref{k-Sobolev}]
   By the preceding lemma, it remains to show $\|x\|_p\les |x|_{W_p^1}$ for any $x\in W_p^1(\T_\t^d)$ with $\wh x(0)=0$. By approximation, we can assume that $x$ is a polynomial. We proceed by induction on the dimension $d$. Consider first the case $d=1$. Then
  $$x=\sum_{m_1\in\ent\setminus\{0\}}\wh x(m_1)U_1^{m_1}\,.$$
 Define
  $$y=\sum_{m_1\in\ent\setminus\{0\}}\frac1{2\pi{\rm i}m_1}\,\wh x(m_1)U_1^{m_1}\,.$$
 Then $\partial_1y=x$ and $\partial_1^2y=\partial_1x$. Thus Lemma~\ref{0-2} implies $\|x\|_p\les \|\partial_1x\|_p$.

 Now consider a polynomial $x$ in $(U_1,\cdots, U_d)$. As in the proof of Lemma~\ref{0-2}, let $x'=\mathbb{E}_{U_{1},\cdots, U_{d-1}}(x)$ and $x_d=x-x'$. The induction hypothesis implies
  $$\|x'\|_p\les |x'|_{W_p^1}=\Big(\sum_{j=1}^{d-1}\|\mathbb{E}_{U_{1},\cdots, U_{d-1}}(\partial_jx)\|_{p}^p\Big)^{\frac1p}
  \le |x|_{W_p^1}\,,$$
 where we have used the commutation between $\mathbb{E}_{U_{1},\cdots, U_{d-1}}$ and the partial derivations.

To handle the term $x_{d}$, recalling that $m_d\neq0$ whenever $\wh x_{d}(m)\neq0$ for $m\in\ent^d$,  we introduce
 $$y_{d}=\sum_{m\in\ent^d}\frac1{2\pi{\rm i}m_d}\,\wh x_d(m)U^{m}\,.$$
Then $\partial_d y_{d}=x_{d}$.
So by \eqref{0-1} and Lemma~\ref{0-2},
 $$\|x_{d}\|_p\les \sqrt{\|y_{d}\|_p\,\|\partial_d^2y_{d}\|_p}\les \|\partial_d^2y_{d}\|_p=\|\partial_d x_{d}\|_p\,.$$
Thus we are done, so the theorem is proved.
  \end{proof}

%%%%%%%%%%%%%%%%%%%%%%%%%%%%%%%%%%%%%%%%%%%%%%%%%%%%%%%%%%%%%%%%%%%%%%%%
%%%%%%%%%%%%%%%%%%%%%%%%%%%%%%%%%%%%%%%%%%%%%%%%%%%%%%%%%%%%%%%%%%%%%%%%

\section{Lipschitz classes}
\label{Lipschitz classes}

%%%%%%%%%%%%%%%%%%%%%%%%%%%%%%%%%%%%%%%%%%%%%%%%%%%%%%%%%%%%%%%%%%%%%%%%
%%%%%%%%%%%%%%%%%%%%%%%%%%%%%%%%%%%%%%%%%%%%%%%%%%%%%%%%%%%%%%%%%%%%%%%%

In this section we show that $W_\8^k(\T^d_\t)$ is the quantum analogue of the classical Lipschitz class of order $k$. We will use the translation and difference operators introduced in Remark~\ref{derivation} and Definition~\ref{diff}. Note that for any positive integer $k$, $T_u^k=T_{ku}$ and $\D_u^k$ is the $k$th difference operator by $u\in\real^d$.

\begin{Def}\label{def diff}
 Let $k$ be a positive integer and $1\le p\le\8$.  
 \begin{enumerate}[\rm(i)]
 \item The $k$th order modulus of $L_p$-smoothness of an element $x\in L_p(\T^d_\t)$  is defined by
 $$\o_{p}^k(x,\e) =\sup_{0<|u|\leq \e}\big\|\D_u^k x\big\|_p\,.$$
 \item An element $x$ is said to be $L_p$-Lipschitzian of order $k$  if 
  $$\sup_{\e>0}\frac{\o^k_p(x,\e)}{\e^k} <\8.$$
 Let ${\rm Lip}_p^k(\T^d_\t)$ denote the class of all elements that are $L_p$-Lipschitzian of order $k$, equipped with the norm
  $$\|x\|_{{\rm Lip}_p^k}=|\wh x(0)|+\sup_{\e>0}\frac{\o^k_p(x,\e)}{\e^k}\,. $$
 \item The little Lipschitz class ${\rm Lip}_{p,0}^k(\T^d_\t)$ of order $k$  is defined to be  the subspace of ${\rm Lip}_p^k(\T^d_\t)$ consisting of all elements $x$ such that
  $$\lim_{\e\to0} \frac{\o^k_p(x,\e)}{\e^k}=0.$$
  \end{enumerate}
\end{Def}

The spaces  ${\rm Lip}_\8^1(\T^d_\t)$ and ${\rm Lip}_{\8,0}^1(\T^d_\t)$ were introduced by Weaver \cite{Weaver1996, Weaver1998}.

\begin{rk}\label{rk diff}
  It is clear that $\o_{p}^k(x,\e)\le 2^k\|x\|_p$ and $\o_{p}^k(x,\e)$ is nondecreasing in $\e$. On the other hand, $\o_{p}^1(x,\e)$ is subadditive in $\e$; for $k\ge2$, $\o_{p}^k(x,\e)$ is quasi subadditive in the sense that there exists a constant $c=c_k$ such that $\o_{p}^k(x,\e+\eta)\le c\,(\o_{p}^k(x,\e)+\o_{p}^k(x,\eta))$.
\end{rk}

The following is the main result of this section. 

\begin{thm}\label{lim=nabla}
  Let $k$ be a positive integer and $1\le p\le\8$.  Then  $W_p^k(\T^d_\t)={\rm Lip}_p^k(\T^d_\t)$ with equivalent norms. More precisely, 
   $$\sup_{\e>0}\frac{\o^k_p(x,\e)}{\e^k} \approx \ |x|_{W_p^k}\,,\quad \forall\, x\in W_p^k(\T^d_\t)\,,$$
 where the equivalence constants depend only on $d$ and $k$.
\end{thm}

We require the following lemma for the proof.

\begin{lem}\label{sup=lim}
 For any $x\in L_p(\T^d_\t)$,
  $$\lim_{\e\to0}\frac{\o^k_p(x,\e)}{\e^k}=\sup_{0<\e\le1}\frac{\o^k_p(x,\e)}{\e^k}\,.$$
\end{lem}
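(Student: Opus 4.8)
The plan is to isolate a dilation estimate for the modulus of smoothness and then run a short elementary limiting argument on top of it.

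\emph{Step 1: a dilation estimate.} The key claim will be that for every positive integer $n$ and every $\e>0$,
$$\o_p^k(x,n\e)\le n^k\,\o_p^k(x,\e)\,.$$
To prove it, fix $u\in\real^d$. Since $e_{nu}(\xi)=e^{2\pi\mathrm{i}(nu)\cdot\xi}=e_u(\xi)^n$, the operator $\D_{nu}^k$ is the Fourier multiplier with symbol $(e_u(\xi)^n-1)^k$. Using the factorization $z^n-1=(z-1)\sum_{l=0}^{n-1}z^l$ with $z=e_u(\xi)$,
$$\bigl(e_u(\xi)^n-1\bigr)^k=\bigl(e_u(\xi)-1\bigr)^k\Bigl(\sum_{l=0}^{n-1}e_u(\xi)^l\Bigr)^k=\bigl(e_u(\xi)-1\bigr)^k\sum_{j=0}^{k(n-1)}c_j\,e_u(\xi)^j\,,$$
where the $c_j$ are nonnegative integers (multinomial coefficients) with $\sum_j c_j=n^k$, as seen by evaluating the middle expression at $\xi=0$. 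Since $e_u(\xi)^j=e_{ju}(\xi)$ is the symbol of the translation $T_{ju}$, this identity of symbols reads $\D_{nu}^k x=\sum_{j=0}^{k(n-1)}c_j\,T_{ju}\bigl(\D_u^k x\bigr)$; as each $T_{ju}$ is an isometry of $L_p(\T^d_\t)$ (the remark following Definition~\ref{diff}), we get $\|\D_{nu}^k x\|_p\le\sum_j c_j\,\|\D_u^k x\|_p=n^k\|\D_u^k x\|_p$. Taking the supremum over $0<|u|\le\e$ and observing that $\{nu:0<|u|\le\e\}=\{v:0<|v|\le n\e\}$ yields the claim.

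\emph{Step 2: the limiting argument.} Set $f(\e)=\e^{-k}\o_p^k(x,\e)$; Step 1 gives $f(n\e)\le f(\e)$ for all $n\in\nat$ and $\e>0$. Now fix $\e_0\in(0,1]$ and let $0<\e<\e_0$. With $n=\lceil\e_0/\e\rceil$ one has $\e_0\le n\e<\e_0+\e$, so, using that $\o_p^k(x,\cdot)$ is nondecreasing (Remark~\ref{rk diff}),
$$f(\e)\ge f(n\e)=\frac{\o_p^k(x,n\e)}{(n\e)^k}\ge\frac{\o_p^k(x,\e_0)}{(\e_0+\e)^k}=f(\e_0)\Bigl(\frac{\e_0}{\e_0+\e}\Bigr)^k\,.$$
Letting $\e\to0^+$ gives $\liminf_{\e\to0^+}f(\e)\ge f(\e_0)$, and since $\e_0\in(0,1]$ was arbitrary, $\liminf_{\e\to0^+}f(\e)\ge\sup_{0<\e\le1}f(\e)$. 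Conversely, $f(\e)\le\sup_{0<\e'\le1}f(\e')$ for every $\e\in(0,1]$, hence $\limsup_{\e\to0^+}f(\e)\le\sup_{0<\e\le1}f(\e)$. Therefore $\lim_{\e\to0^+}f(\e)$ exists and equals $\sup_{0<\e\le1}f(\e)$, which is the assertion.

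I do not expect a genuine obstacle. The only point requiring a little care is the bookkeeping in Step 1, namely checking that $(e_u(\xi)^n-1)^k$ decomposes as a combination, with nonnegative coefficients summing to $n^k$, of the symbols of the isometries $T_{ju}\D_u^k$; it is also worth noting that the argument in Step 2 is insensitive to whether $\sup_{0<\e\le1}f(\e)$ is finite or not, so no integrability or membership hypothesis on $x$ beyond $x\in L_p(\T^d_\t)$ is needed.
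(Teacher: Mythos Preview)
Your proof is correct and follows essentially the same approach as the paper's: both hinge on the dilation estimate $\o_p^k(x,n\e)\le n^k\,\o_p^k(x,\e)$, derived from the identical factorization $d_{nu}^k=\bigl(\sum_{j=0}^{n-1}e_{ju}\bigr)^k d_u^k$, followed by the same elementary limiting argument (the paper's $\e,\d$ play the roles of your $\e_0,\e$). Your write-up is simply more detailed, spelling out the coefficient count $\sum_j c_j=n^k$ and the liminf/limsup bookkeeping that the paper leaves implicit.
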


\begin{proof}
 The assertion for $k=1$ is a common property of increasing and subadditive functions (in $\e$), and easy to check. Indeed, for any $0<\e, \d<1$, choose $n\in\nat$ such that $n\d\le \e<(n+1)\d$. Then
  $$\frac{\o^1_p(x,\e)}\e\le \frac{n+1}n\, \frac{\o^1_p(x,\d)}\d\,,$$
 whence the result for $k=1$. The general case is treated in the same way. Instead of being subadditive, $\o^k_p(x,\e)$ is quasi subadditive in the sense that $\o^k_p(x,n\e)\le n^k\o^k_p(x,\e)$ for any $n\in\nat$. The latter follows immediately from
  $$d_{nu}^k=\big(\sum_{j=0}^{n-1}e_{ju}\big)^k\, d_u^k,\;\text{ so }\;
  \D_{nu}^k=\big(\sum_{j=0}^{n-1}T_{ju}\big)^k\, \D_u^k.$$
 Thus the lemma is proved. 
  \end{proof}

\begin{proof}[Proof of Theorem~\ref{lim=nabla}]
 If the assertion is proved for all $p<\8$ with constants independent of $p$, the case $p=\8$ will follow by letting $p\to\8$. So we will assume $p<\8$.

 We first consider the case $k=1$ whose proof contains all main ideas. As in the proof of Lemma~\ref{0-k}, for any $u\in\real^d$, we have
 $$d_u(\xi)=\int_0^1\frac{\partial}{\partial t}d_{tu}(\xi)dt=\int_0^1e_{tu}(\xi)\,(2\pi{\rm i}u\cdot\xi)dt,\quad\xi\in\real^d\,.$$
In terms of Fourier multipliers, this yields
$$
 \D_ux=\int_0^1T_{tu}(u\cdot\nabla x)dt,
$$
where $u\cdot\nabla x=u_1\partial_1x+\cdots +u_d\partial_dx$. Since the translation $T_{tu}$ is isometric, it then follows that
  \beq\label{diff vs gradient-1}
  \|\D_ux\|_p\le |u|\,\big\|\big(|\partial_1x|^2+\cdots+|\partial_dx|^2\big)^{\frac12}\big\|_p\;{\mathop =^{\rm def}}\;|u|\,\|\nabla x\|_p\,,
  \eeq
whence
$$ \lim_{\e\to0}\frac{\o^1_p(x,\e)}\e\le \|\nabla x\|_p\,.$$
To show the converse inequality, by the density of $\mathcal{P}_\t$ in $ W_p^k(\T^d_\t)$ (see Proposition~\ref{Sobolev-P}), we can assume that $x$ is a polynomial. Given $u\in\real^d$ define $\phi$ on $\real^d$ by
 $$\phi(\xi)=d_u(\xi)-\frac{\partial}{\partial t}d_{tu}(\xi)\Big|_{t=0}\,.$$
Then the Fourier multiplier on $\T^d_\t$ associated to $\phi$ is
 $$\wt\phi*x=\D_u x-u\cdot\nabla x.$$
Thus if $|u|=\e$,
 $$\|u\cdot\nabla x\|_p\le \o_p(x,\e)+\sup_{|u|=\e} \|\D_u x-u\cdot\nabla x\|_p\,.$$
Since $x$ is a polynomial,
 $$\lim_{\e\to0}\sup_{|u|=\e}\frac{\|\D_u x-u\cdot\nabla x\|_p}\e=0.$$
For $u=(\e,0,\cdots,0)$, we then deduce
 $$\|\partial_1x\|_p\le \lim_{\e\to0}\frac{\o_p(x,\e)}\e\,.$$
Hence the desired assertion for $k=1$ is proved.

Now we consider the case $k>1$. \eqref{diff vs gradient-1} can be easily iterated as follows:
 \be\begin{split}
  \|\D_u^kx\|_p
  &\le |u|\,\sum_{j=1}^d\|\partial_j\D_u^{k-1}x\|_p= |u|\,\sum_{j=1}^d\|\D_u^{k-1}\partial_jx\|_p\\
  &\le |u|^k\,\sum_{|m|_1=k}\|\D^mx\|_p\approx  |u|^k\,|x|_{W_p^k}\,.
  \end{split}\ee
So
 $$\sup_{\e>0}\frac{\o_p^k(x,\e)}{\e^k}\les |x|_{W_p^k}\,.$$
The converse inequality is proved similarly to the case $k=1$. Let $m\in\nat_0^d$ with $|m|_1=k$.  For each $j$ with $m_j>0$, using the Taylor formula of the function $d_{\e\mathsf{e}_j}$ (recalling that $(\mathsf{e}_1\,,\cdots, \mathsf{e}_d)$ denotes the canonical basis of $\real^d$), we get
 $$\D_{\e\mathsf{e}_j}^{m_j} x=\e^{m_j}\,\partial_j^{m_j} x+ {\rm o}(\e^{m_j})\,,$$
which implies
 $$\prod_{j=0}^d\D_{\e\mathsf{e}_j}^{m_j} x=\e^{k} D^m x+ {\rm o}(\e^{k}) \;\text { as }\; \e\to0.$$
Thus by the next lemma, we deduce
  $$ \|D^mx\|_p\le \e^{-k}\big\|\prod_{j=0}^d\D_{\e\mathsf{e}_j}^{m_j} x\big\|_p+ {\rm o}(1)
  \les \e^{-k}\o_p^k(x,\e)+ {\rm o}(1) ,$$
whence the desired converse inequality by letting $\e\to0$. So the theorem is proved modulo the next lemma.
  \end{proof}

\begin{lem}\label{k-diff}
 Let $u_1,\cdots, u_k\in\real^d$. Then
 $$\D_{u_1}\cdots\D_{u_k}=\sum_{D\subset\{1,\cdots,k\}}(-1)^{|D|}\,T_{\overline{u}_D}\D^k_{u_D}\,,$$
where the sum runs over all subsets of $\{1,\cdots,k\}$, and where
 $$\overline{u}_D=\sum_{j\in D}u_j,\quad u_D=\sum_{j\in D}\frac1j\,u_j\,.$$
Consequently, for $\e>0$ and $x\in L_p(\T^d_\t)$,
 $$\sup_{|u_1|\le\e, \cdots, |u_k|\le\e}\big\|\D_{u_1}\cdots\D_{u_k}x\big\|_p\approx \o^k_p(x, \e).$$
\end{lem}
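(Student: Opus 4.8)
The identity is purely algebraic: it takes place at the level of the symbols $d_{u_j}=e_{u_j}-1$ on $\real^d$, and then transfers to $\T^d_\t$ via the Fourier multiplier functor, which is multiplicative (the product of multipliers is the multiplier of the product of symbols). So the plan is to first establish
$$
d_{u_1}\cdots d_{u_k}=\sum_{D\subset\{1,\dots,k\}}(-1)^{|D|}\,e_{\overline u_D}\,d_{u_D}^{\,k}
$$
as an identity of functions on $\real^d$, where $\overline u_D=\sum_{j\in D}u_j$ and $u_D=\sum_{j\in D}\frac1j u_j$, and then push it through $M_{(\cdot)}$, using $M_{e_v}=T_v$ and $M_{d_v}=\D_v$ (Definition~\ref{diff}, Remark~\ref{derivation}). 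The consequence about the supremum will follow by combining this expansion with the trivial bounds on $T_v$ (isometry) and $\D_v$ (the $k$th power bound $\|\D_v^k x\|_p\le 2^k\|x\|_p$ and the monotonicity of $\o^k_p$ in $\e$).

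First I would fix a point $\xi$ and set $a_j=e_{u_j}(\xi)$, so that $a_j=e^{2\pi\mathrm{i}u_j\cdot\xi}$ and $d_{u_j}(\xi)=a_j-1$. The left-hand side is $\prod_{j=1}^k(a_j-1)$. The key observation is that each factor $a_j-1$ can be written as a telescoping sum involving the single variable $t\mapsto e^{2\pi\mathrm{i}(tu_j)\cdot\xi}$; more precisely, I want to produce, for each $j$, a decomposition that lets the $k$ factors recombine into a single $k$th power $d_{u_D}^k$ on each subset. Concretely, I would expand $\prod_j(a_j-1)=\sum_{D\subset\{1,\dots,k\}}(-1)^{k-|D|}\prod_{j\in D}a_j$ and then recognize $\prod_{j\in D}a_j=e^{2\pi\mathrm{i}(\sum_{j\in D}u_j)\cdot\xi}=e_{\overline u_D}(\xi)$. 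The remaining combinatorial task is to show that the alternating sum of these exponentials over all $D$ equals $\sum_D(-1)^{|D|}e_{\overline u_D}(\xi)\,d_{u_D}^k(\xi)$; I expect this to reduce, after substituting $d_{u_D}=e_{u_D}-1$ and expanding the binomial $(e_{u_D}-1)^k$, to a double sum over subsets which collapses by an inclusion–exclusion / Möbius argument on the Boolean lattice. The choice of weights $\frac1j$ in $u_D$ is exactly what makes the $j$th canonical contribution appear with the right multiplicity so that the $k$th power reconstructs $\prod_{j}$; I would verify this by checking the generating-function identity $\prod_{j=1}^k(x_j-1)$ against $\sum_D(-1)^{|D|}\bigl(\prod_{j\in D}x_j^{1/j}-1\bigr)^k$ is not literally meaningful, so instead the cleaner route is an induction on $k$: assuming the formula for $k-1$, multiply both sides by $\D_{u_k}$ (equivalently $d_{u_k}$) and rearrange the resulting sum over subsets of $\{1,\dots,k-1\}$ into a sum over subsets of $\{1,\dots,k\}$, matching the two cases $k\in D$ and $k\notin D$.

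The main obstacle is precisely this bookkeeping in the inductive step: one must see that multiplying $(-1)^{|D|}e_{\overline u_D}d_{u_D}^{k-1}$ by $(e_{u_k}-1)$ and then resumming produces the correct weight $\frac1k$ on $u_k$ and upgrades the $(k-1)$st power to a $k$th power. I would handle this by the Taylor/finite-difference identity $e_{u_k}d_{u_D}^{k-1}-d_{u_D}^{k-1}=d_{u_k}d_{u_D}^{k-1}$ together with the elementary relation expressing $d_{u_D+u_k/k}^{k}$ in terms of $d_{u_D}^{k-1}$ and $d_{u_k}$ modulo lower-order difference terms that are absorbed by other subsets in the sum — this is the same mechanism used in Lemma~\ref{sup=lim}, where $d_{nu}^k=(\sum_{j=0}^{n-1}e_{ju})^k d_u^k$, and I would model the computation on that.

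Finally, the "consequently" statement: applying $M$ to the identity gives
$$
\D_{u_1}\cdots\D_{u_k}x=\sum_{D\subset\{1,\dots,k\}}(-1)^{|D|}T_{\overline u_D}\,\D_{u_D}^k x,
$$
and since each $T_{\overline u_D}$ is an $L_p$-isometry, $\|\D_{u_1}\cdots\D_{u_k}x\|_p\le\sum_D\|\D_{u_D}^k x\|_p$. When $|u_1|,\dots,|u_k|\le\e$ we have $|u_D|\le\sum_{j\in D}\frac1j|u_j|\le(\sum_{j=1}^k\frac1j)\e\le c_k\e$, so each term is at most $\o^k_p(x,c_k\e)\le c_k^{\,k}\,\o^k_p(x,\e)$ using the quasi-subadditivity from Remark~\ref{rk diff} (or the bound $\o^k_p(x,n\e)\le n^k\o^k_p(x,\e)$ from Lemma~\ref{sup=lim}); summing over the $2^k$ subsets gives $\sup_{|u_i|\le\e}\|\D_{u_1}\cdots\D_{u_k}x\|_p\les\o^k_p(x,\e)$. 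The reverse inequality is immediate by taking $u_1=\cdots=u_k=u$ with $|u|\le\e$, which turns the left-hand supremum into $\sup_{|u|\le\e}\|\D_u^k x\|_p=\o^k_p(x,\e)$. This completes the plan.
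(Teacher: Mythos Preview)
Your treatment of the ``consequently'' part is correct: once the identity is in hand, the estimate follows exactly as you say, using that each $T_{\overline u_D}$ is an isometry, $|u_D|\le c_k\e$, and $\o^k_p(x,n\e)\le n^k\o^k_p(x,\e)$.

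The gap is in the identity itself. Your inductive scheme does not close: if you multiply the $(k-1)$-formula by $d_{u_k}$ and split the target sum according to whether $k\in D'$, you are forced to verify, for each fixed $D\subset\{1,\dots,k-1\}$, an identity of the type
\[
d_{u_D}^{\,k-1}\,d_{u_k}\;=\;d_{u_D}^{\,k}\;-\;e_{u_k}\,d_{u_D+u_k/k}^{\,k},
\]
and this is simply false (already for $k=1$, $D=\emptyset$). There are no ``lower-order difference terms absorbed by other subsets'': in the target formula each subset contributes exactly one term, so nothing is available to soak up the discrepancy. The weights $\tfrac1j$ are not there to make an induction work; they are an artifact of a change of variables $u_\ell\mapsto u_\ell/\ell$ applied at the very end.

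The paper's (classical, Bennett--Sharpley) proof uses a different mechanism. Before the change of variables one considers, for each $0\le j\le k$, the product $\prod_{\ell=1}^k d_{(\ell-j)u_\ell}$; this vanishes for $j\ge1$ because the factor at $\ell=j$ is $d_0=0$. Expanding each such product as $\sum_D(-1)^{k-|D|}e_v(e_w)^j$ with $v=\sum_{\ell\in D}\ell u_\ell$ and $w=-\sum_{\ell\in D}u_\ell$, then summing against $(-1)^{k-j}\binom{k}{j}$ over $j$, the right side becomes $\sum_D(-1)^{k-|D|}e_v(e_w-1)^k=\sum_D(-1)^{k-|D|}e_v\,d_w^k$, while the left side collapses to the single term $j=0$, namely $(-1)^k\prod_\ell d_{\ell u_\ell}$. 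Finally replace $u_\ell$ by $u_\ell/\ell$. This parameter-$j$ trick is the missing idea.
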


\begin{proof}
 This is a well-known lemma in the classical setting (see \cite[Lemma~5.4.11]{BS1988}).
We outline  its proof for the convenience of the reader.  The above equality is equivalent to  the corresponding one with $\D_{u}$ and $T_u$ replaced by $d_u$ and $e_u$, respectively. Setting
 $$v=\sum_{\el\in D}\el u_\el\;\text{ and }\; w=-\sum_{\el\in D}u_\el\,,$$
 for each $0\le j \le k$, we have
 \be\begin{split}
 \prod_{\el=1}^kd_{(\el-j)u_\el}
 &=\sum_{D\subset\{1,\cdots, k\}}(-1)^{k-|D|}\prod_{\el\in D} e_{(\el-j)u_\el}\\
 &=\sum_{D\subset\{1,\cdots, k\}}(-1)^{k-|D|} e_{v}^{} \,(e_{w})^j\,.
 \end{split}\ee
The left hand side is nonzero only for $j=0$. Multiply by $(-1)^{k-j}\left(\begin{array}{c}  k\\j \end{array}\right)$ and sum over $0\le j\le k$; then replacing $u_\el$ by $\frac{u_\el}{\el}$ gives the desired equality.
 \end{proof}

\begin{rk}
It might be interesting to note that in the commutative case, the proof of Theorem~\ref{lim=nabla} shows
 $$\sup_{0<\e\le1}\frac{\o_p(x,\e)}\e=\lim_{\e\to0}\frac{\o_p(x,\e)}\e= \|\nabla x\|_p\,.$$
So Lemma~\ref{sup=lim} is not needed in this case.
\end{rk}

%%%%%%%%%%%%%%%%%%%%%%%%%%%%%%%%%%%%%%%%%%%%%%%%%%%%%%%%%%%%%%%%%%%%%%%%
%%%%%%%%%%%%%%%%%%%%%%%%%%%%%%%%%%%%%%%%%%%%%%%%%%%%%%%%%%%%%%%%%%%%%%%%

\section{The link with the classical Sobolev spaces}
\label{The link with the classical Sobolev spaces}

%%%%%%%%%%%%%%%%%%%%%%%%%%%%%%%%%%%%%%%%%%%%%%%%%%%%%%%%%%%%%%%%%%%%%%%%
%%%%%%%%%%%%%%%%%%%%%%%%%%%%%%%%%%%%%%%%%%%%%%%%%%%%%%%%%%%%%%%%%%%%%%%%

The transference enables us to establish a strong link between the quantum Sobolev spaces defined previously and the vector-valued Sobolev spaces on $\T^d$. Note that the theory of  vector-valued Sobolev spaces  is well-known and can be found in many books on the subject (see, for instance, \cite{Amann1996}). Here we just recall some basic notions. In the sequel, $X$ will always denote a (complex) Banach space.

Let $\mathcal{S}(\T^d; X)$ be the space of  $X$-valued infinitely differentiable functions on $\T^d$ with the standard Fr\'{e}chet topology, and $\mathcal{S}'(\T^d; X)$  be the space of continuous linear maps from $\mathcal{S}(\T^d)$ to $X$. All operations on $\mathcal{S}(\T^d)$ such as derivations, convolution and Fourier transform transfer to $\mathcal{S}'(\T^d; X)$ in the usual way. On the other hand, $L_p(\T^d; X)$ naturally embeds into $\mathcal{S}'(\T^d; X)$ for $1\le p\le\8$, where $L_p(\T^d; X)$ stands for the space of strongly $p$-integrable functions from $\T^d$ to $X$.

\begin{Def}
 Let $1\leq p\leq \infty$, $k\in \mathbb{N}$  and $\a\in\real$.
 \begin{enumerate}[\rm(i)]
 \item The $X$-valued Sobolev space of order $k$  is
 $$W_p^k(\T^d; X)= \big\{ f\in \mathcal{S}'(\T^d; X):  D^{m} f \in L_p(\T^d; X) \textrm{ for each }m\in \mathbb {N}_0^d \textrm{ with } |m|_1\leq k \big\}$$
equipped with the  norm
 $$\|f\|_{W_p^k}=\Big(\sum_{0\leq |m|_1\leq k}\|D^{m}f\|_{L_p(\T^d; X) }^p\Big)^{\frac{1}{p}}.$$
\item The $X$-valued potential Sobolev space of order $\a$  is
 $$H_p^\alpha(\T^d; X)=\big\{ f\in \mathcal{S}'(\T^d; X): J^\a f \in L_p(\T^d; X) \big\}$$
equipped with the  norm
 $$\|f\|_{H_p^\alpha}=\|J^\a f\|_{L_p(\T^d; X) }\,.$$
 \end{enumerate}
 \end{Def}

\begin{rk}
 There exists a parallel theory of vector-valued Sobolev spaces on $\real^d$. In fact, a majority of the literature on the subject is devoted to the case of $\real^d$ which is simpler from the pointview of treatment. The corresponding spaces are  $W_p^k(\mathbb{R}^d; X)$ and $H_p^\alpha(\mathbb{R}^d; X)$. They are subspaces of $\mathcal{S}'(\mathbb{R}^d; X)$. The latter is the space of $X$-valued distributions on $\real^d$, that is, the space of continuous linear maps from  $\mathcal{S}(\real^d)$ to $X$. We will sometimes use the space $\mathcal{S}(\mathbb{R}^d; X)$ of $X$-valued Schwartz functions on $\real^d$. We set $W_p^k(\mathbb{R}^d)=W_p^k(\mathbb{R}^d; \com)$ and $H_p^\a(\mathbb{R}^d)=H_p^\a(\mathbb{R}^d; \com)$.
 \end{rk}

The properties of the Sobolev spaces on $\T^d_\t$ in the previous sections also hold for the present setting. For instance, the proof of Proposition~\ref{q-Sobolev-Bessel} and Lemma~\ref{UMD-multiplier} give  the following well-known result:

\begin{rk}\label{Sobolev-Bessel}
 Let $X$ be a UMD space. Then $W_p^k(\T^d; X)=H_p^k(\T^d; X)$ with equivalent norms for $1<p<\8$.
\end{rk}

Let us also mention that Theorem~\ref{k-Sobolev}, the Poincar\'e inequality,  transfers to the vector-valued case too.  It seems that this result does not appear in literature but it should be known to experts. We record it explicitly here.

\begin{thm}
 Let $X$ be a Banach space and $1\le p\le\8$, $k\in\nat$.  Then
   $$\Big(\|\wh f(0)\|^p_X+\sum_{m\in\nat_0^d,\,|m|_1=k}\|D^m f\|^p_{ L_p(\T^d;X)}\Big)^{\frac1p}$$
 is an equivalent norm on $W_p^k(\mathbb{T}^d; X)$ with relevant constants depending only on $d$ and $k$.
 \end{thm}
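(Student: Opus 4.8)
The plan is to repeat, essentially word for word, the proof of Theorem~\ref{k-Sobolev}, now reading every object in the Bochner space $L_p(\T^d;X)$ instead of in $L_p(\T^d_\t)$; the whole content of the present statement is that each ingredient of that argument survives the passage to $X$-valued functions, and that the constants produced come only from the $L_1$-norms of a few explicit one-variable kernels, hence depend only on $d$ and $k$, never on $p$ or on $X$. Concretely, I would first record the three facts that make the transfer automatic: (a) for $u\in\real^d$ the translation $f\mapsto f(\,\cdot+u)$ is an isometry of $L_p(\T^d;X)$, so the difference operator $\D_u$ has norm $\le 2$ there; (b) a Fourier multiplier on $\T^d$ (or on a single coordinate circle) given by convolution against a kernel $\psi\in L_1$ is bounded on $L_p(\T^d;X)$ with norm $\le\|\psi\|_1$, by Young's inequality for Bochner integrals — this is exactly what is needed for the $X$-valued version of Lemma~\ref{0-2}, whose multiplier $m_j\mapsto m_j^{-2}$ equals $\widehat\psi$ for the explicit $2\pi$-periodic $\psi$ with $\|\psi\|_{L_1(\T)}=\frac{2\pi^2}{9\sqrt3}$; (c) the conditional expectation $\mathbb{E}_{U_1,\dots,U_{d-1}}$ used in the induction on the dimension is, in the commutative case, integration in the last variable, a contraction of $L_p(\T^d;X)$ commuting with $\partial_1,\dots,\partial_{d-1}$ and annihilating $\partial_d$.

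With (a)--(c) in hand, the $X$-valued analogues of Lemma~\ref{0-k} (namely $\|\partial_j f\|_p\le 2\|f\|_p^{1/2}\|\partial_j^2 f\|_p^{1/2}$, whence the interpolation inequality $|f|_{W_p^j}\les\|f\|_p^{1-j/k}|f|_{W_p^k}^{j/k}$), of Lemma~\ref{0-2} ($\|f\|_p\les\sum_j\|\partial_j^2 f\|_p$ for $\widehat f(0)=0$), and of Lemma~\ref{increase-sob} ($|f|_{W_p^j}\les|f|_{W_p^k}$ for $j<k$ when $\widehat f(0)=0$) go through verbatim, all with constants free of $p$ and of $X$. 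Then one proves the Poincaré inequality $\|f-\widehat f(0)\|_{L_p(\T^d;X)}\les|f|_{W_p^1}$ by induction on $d$ exactly as in Theorem~\ref{k-Sobolev}: for $d=1$, introduce the formal antiderivative $g=\sum_{m_1\neq0}(2\pi\mathrm i\,m_1)^{-1}\widehat f(m_1)U_1^{m_1}$, apply the $X$-valued Lemma~\ref{0-2} to get $\|g\|_p\les\|\partial_1 f\|_p$, and then $\|f\|_p\le 2\|g\|_p^{1/2}\|\partial_1 f\|_p^{1/2}\les\|\partial_1 f\|_p$; for the inductive step split $f=f'+f_d$ with $f'=\mathbb{E}_{U_1,\dots,U_{d-1}}(f)$, bound $\|f'\|_p$ via the induction hypothesis (using that $\mathbb{E}$ commutes with $\partial_1,\dots,\partial_{d-1}$ and is a contraction), and bound $\|f_d\|_p$ by the $d=1$ trick applied to the last variable. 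Combining the Poincaré inequality with the interpolation inequalities yields $\|f\|_{W_p^k}\les\|\widehat f(0)\|_X+|f|_{W_p^k}$; the reverse inequality is trivial (Jensen for $\widehat f(0)$, and for $|m|_1\ge1$ one has $D^m f=D^m(f-\widehat f(0))$).

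Two points need a word of care, neither serious. First, the antiderivative step forces one to work initially with $X$-valued trigonometric polynomials, so I would prove the inequality for such polynomials and then pass to the limit using density of polynomials in $W_p^k(\T^d;X)$ for $p<\8$; this density is obtained from Fej\'er means exactly as in Proposition~\ref{Sobolev-P}(ii), since Fej\'er summation is convolution with a positive $L_1$ kernel and commutes with the derivations. Second, $p=\8$ is not covered by density, so — as in the proof of Theorem~\ref{lim=nabla} — I would first establish the inequality for $1\le p<\8$ with a constant depending only on $d$ and $k$, and then let $p\to\8$, using that $\|h\|_{L_p(\T^d;X)}\uparrow\|h\|_{L_\8(\T^d;X)}$ for every $h\in L_\8(\T^d;X)$ on the probability space $\T^d$, together with $W_\8^k(\T^d;X)\subset W_p^k(\T^d;X)$. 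I do not expect any genuine obstacle here; the only mildly delicate bookkeeping is verifying that no constant secretly depends on $p$ — it does not, since every estimate traces back either to the norm bound $\le2$ of $\D_u$, or to $\|\psi\|_{L_1(\T)}=\frac{2\pi^2}{9\sqrt3}$, or to the dimension recursion — and this uniformity is exactly what produces the clean statement with constants depending only on $d$ and $k$.
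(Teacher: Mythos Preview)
Your proposal is correct and is exactly the approach the paper takes: the paper states the theorem without a separate proof, simply noting that Theorem~\ref{k-Sobolev} transfers to the vector-valued case, and your verification that each ingredient (Lemmas~\ref{0-k}, \ref{0-2}, \ref{increase-sob} and the dimension induction in Theorem~\ref{k-Sobolev}) uses only translations and convolution by explicit $L_1$ kernels is precisely what underlies that transfer. Your density-and-limit handling of $p=\infty$ is a bit more cautious than necessary --- the antiderivative $g$ is well-defined as an $X$-valued distribution and the $L_1$-multiplier bound of Lemma~\ref{0-2} already places it in $L_p(\T^d;X)$ for every $p\le\infty$ --- but this does no harm.
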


Now we use the transference method in Corollary \ref{prop:TransLp}. It is clear  that the map $x\mapsto \wt x$  there commutes with $\partial_j$, that is,  $\partial_j\wt x=\wt{\partial_j x}$ (noting that the $\partial_j$ on the left-hand side is the $j$th partial derivation on $\T^d$ and that on the right-hand side is the one on $\T^d_\theta$). On the other hand, the expectation in that corollary commutes with $\partial_j$ too. We then deduce the following:

\begin{prop}\label{TransSobolev}
Let $1\leq p\leq \8$. The map $x \mapsto \wt{x}$ is an isometric embedding from $W_p^k(\T^d_{\theta})$ and $H_p^\alpha (\T^d_{\theta})$ into $W_p^k (\T^d; L_p(\T_{\theta}^d))$ and $H_p^\alpha (\T^d; L_p(\T_{\theta}^d))$, respectively. Moreover, the ranges of these embeddings are $1$-complemented in their respective spaces.
\end{prop}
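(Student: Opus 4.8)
The plan is to derive Proposition~\ref{TransSobolev} from the transference result of Corollary~\ref{prop:TransLp} together with the elementary observation that the embedding $x\mapsto\wt x$ commutes with the partial derivations $\pa_j$. Recall from Corollary~\ref{prop:TransLp}(i) that $x\mapsto\wt x$ is an isometric embedding of $L_p(\T^d_\t)$ into $L_p(\N_\t)=L_p(\T^d;L_p(\T^d_\t))$, and from part (ii) that the associated conditional expectation $\mathbb{E}$ is a contractive projection from $L_p(\N_\t)$ onto the image $L_p(\widetilde{\T^d_\t})$. The key structural fact, noted just before the statement, is $\pa_j\wt x=\wt{\pa_j x}$ for $1\le j\le d$ (where on the left $\pa_j$ is the partial derivation on $\T^d$ acting on the $L_p(\T^d_\t)$-valued function $z\mapsto\pi_z(x)$, and on the right it is the partial derivation on $\T^d_\t$), and similarly that $\mathbb{E}$ commutes with each $\pa_j$, hence with every $D^m$.

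First I would treat the Sobolev spaces $W^k_p$. For $x\in\mathcal{S}'(\T^d_\t)$ one has, by iterating $\pa_j\wt x=\wt{\pa_j x}$, the identity $D^m\wt x=\wt{D^m x}$ for every multi-index $m$. Combining this with the isometry $\|\wt y\|_{L_p(\N_\t)}=\|y\|_p$ for $y\in L_p(\T^d_\t)$ gives $\|D^m\wt x\|_{L_p(\T^d;L_p(\T^d_\t))}=\|D^m x\|_p$ for all $|m|_1\le k$; summing the $p$-th powers (sup for $p=\8$) yields $\|\wt x\|_{W^k_p(\T^d;L_p(\T^d_\t))}=\|x\|_{W^k_p(\T^d_\t)}$. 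In particular $x\in W^k_p(\T^d_\t)$ iff $\wt x\in W^k_p(\T^d;L_p(\T^d_\t))$, so $x\mapsto\wt x$ is an isometric embedding. For the potential spaces $H^\a_p$ the same argument works, using instead that the Bessel potential $J^\a$, being the Fourier multiplier with symbol $(1+|\cdot|^2)^{\a/2}$, commutes with $x\mapsto\wt x$: indeed $\wt{U^m}$ is (up to the factor $z^m$) the character on $\T^d$ corresponding to $m$, so $J^\a\wt x=\wt{J^\a x}$ directly from the Fourier expansions, and then $\|J^\a\wt x\|_{L_p(\T^d;L_p(\T^d_\t))}=\|J^\a x\|_p$.

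For the complementation statement I would use $\mathbb{E}$. Since $\mathbb{E}$ is a contractive projection from $L_p(\N_\t)$ onto $L_p(\widetilde{\T^d_\t})$ and commutes with every $D^m$ (resp.\ with $J^\a$), it restricts to a contractive projection from $W^k_p(\T^d;L_p(\T^d_\t))$ onto the image of $W^k_p(\T^d_\t)$ (resp.\ from $H^\a_p(\T^d;L_p(\T^d_\t))$ onto the image of $H^\a_p(\T^d_\t)$): if $f\in W^k_p(\T^d;L_p(\T^d_\t))$ then $D^m\mathbb{E} f=\mathbb{E} D^m f\in L_p(\N_\t)$ for all $|m|_1\le k$, so $\mathbb{E} f$ lies in the Sobolev space, and $\mathbb{E} f$ is of the form $\wt x$ with $x\in L_p(\T^d_\t)$, whence $x\in W^k_p(\T^d_\t)$ by the embedding characterization above; moreover $\|\mathbb{E} f\|_{W^k_p}\le\|f\|_{W^k_p}$ since each $\|D^m\mathbb{E} f\|_p=\|\mathbb{E} D^m f\|_p\le\|D^m f\|_p$. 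This exhibits the range of the embedding as $1$-complemented, completing the proof.

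There is no serious obstacle here; the only point requiring a little care is to justify that the boundedness of $\mathbb{E}$ on $L_p$ and its commutation with $\pa_j$ (both established for $L_p$-functions in Corollary~\ref{prop:TransLp}) pass to the Sobolev-space level, which is immediate once one knows $D^m$ acts componentwise and $\mathbb{E} D^m=D^m\mathbb{E}$ on $L_p(\N_\t)$. The latter identity itself follows from the explicit formula $\mathbb{E}(f)(z)=\pi_z\big(\int_{\T^d}\pi_{\overline w}[f(w)]\,dw\big)$ together with the fact that $\pi_z$ is, for each fixed $z$, a Fourier multiplier and therefore commutes with the Fourier multipliers $D^m$ and $J^\a$.
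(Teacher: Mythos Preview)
Your proposal is correct and follows exactly the approach the paper indicates just before the proposition: the commutation $\pa_j\wt x=\wt{\pa_j x}$ (hence $D^m\wt x=\wt{D^m x}$ and $J^\a\wt x=\wt{J^\a x}$) combined with Corollary~\ref{prop:TransLp} gives the isometry, and the commutation of $\mathbb{E}$ with these Fourier multipliers gives the $1$-complementation. The paper only sketches these two facts in the preceding sentence, so your write-up simply fills in the routine details.
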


This result allows us to reduce many problems about $W_p^k(\T^d_{\theta})$ to the corresponding ones about $W_p^k (\T^d; L_p(\T_{\theta}^d))$. For example, we could deduce the properties of $W_p^k(\T^d_{\theta})$ in the preceding sections from those of $W_p^k (\T^d; L_p(\T_{\theta}^d))$. But we have chosen to work directly on $\T^d_\t$ for the following reasons. It is more desirable to develop an intrinsic quantum theory, so we work directly on $\T^d_\t$ whenever possible. On the other hand, the existing literature on vector-valued Sobolev spaces often concerns the case of $\real^d$, for instance, there exist few publications on periodic Besov or Triebel-Lizorkin spaces. In order to use existing results, we have to transfer them from $\real^d$ to $\T^d$. However, although it  is  often easy, this transfer may not be obvious at all, which is the case for Hardy spaces treated in \cite{CXY2012} and recalled in section~\ref{Hardy spaces}. This difficulty will appear again later for  Besov  and Triebel-Lizorkin spaces.

\begin{rk}\label{os-Sobolev}
 The preceding discussion on vector-valued Sobolev spaces on $\T^d$ can be also transferred to the quantum case.  We have seen in section~\ref{Fourier multipliers} that all noncommutative $L_p$-spaces are equipped with their natural operator space structure. Thus  $W_p^k(\T^d_\t)$ and $H_p^\a(\T^d_\t)$ becomes operator spaces in the natural way. More generally, given an operator space $E$, following Pisier \cite{Pisier1998}, we define the $E$-valued noncommutative $L_p$-space $L_p(\T^d_\t; E)$ (recalling that $\T^d_\t$ is an injective von Neumann algebra). Similarly, we define the $E$-valued distribution space $\mathcal{S}'(\T^d_\t; E)$ that consists of continuous linear maps from $\mathcal{S}(\T^d_\t) $ to $E$. Then as in Definition~\ref{Sobolev def}, we define the corresponding Sobolev spaces $W_p^k(\T^d_\t; E)$ and $H_p^\a(\T^d_\t; E)$. Almost all previous results remain valid in this vector-valued setting since all Fourier multipliers used in their proofs are c.b. maps. For instance, Theorem~\ref{q-Sobolev-Bessel} (or Remark~\ref{Sobolev-Bessel}) now becomes $H_p^k(\T^d_\t; E)=W_p^k(\mathbb{T}^d_\t;E)$ for any $1<p<\8$ and any OUMD space $E$ (OUMD is the operator space version of UMD; see \cite{Pisier1998}). Note that  we recover $W_p^k(\T^d; E)$ and $H_p^\a(\T^d; E)$ if $\t=0$ and if $E$ is equipped with its minimal operator space structure.
 \end{rk}

%%%%%%%%%%%%%%%%%%%%%%%%%%%%%%%%%%%%%%%%%%%%%%%%%%%%%%%%%%%%%%%%%%%%%%%%
%%%%%%%%%%%%%%%%%%%%%%%%%%%%%%%%%%%%%%%%%%%%%%%%%%%%%%%%%%%%%%%%%%%%%%%%

{\Large\part{Besov spaces}}
  \setcounter{section}{0}

%%%%%%%%%%%%%%%%%%%%%%%%%%%%%%%%%%%%%%%%%%%%%%%%%%%%%%%%%%%%%%%%%%%%%%%%
%%%%%%%%%%%%%%%%%%%%%%%%%%%%%%%%%%%%%%%%%%%%%%%%%%%%%%%%%%%%%%%%%%%%%%%%

We study Besov spaces  on $\T^d_\t$ in this chapter. The first section presents the relevant definitions and  some basic properties of these spaces. The second one shows a general characterization of them. This is the most technical part of the chapter. The formulation of our characterization is very similar to Triebel's classical theorem. Although modeled on Triebel's pattern, our proof is harder than his. The main difficulty is due to the unavailability in the noncommutative setting of the usual techniques involving maximal functions which play an important role in the study of  the classical Besov and Triebel-Lizorkin spaces. Like for the Sobolev spaces in the previous chapter, Fourier multipliers are our main tool. We then concretize this general characterization by means of Poisson, heat kernels and differences. We would like to point out that when $\t=0$ (the commutative case), these characterizations (except that by differences) improve the corresponding ones in the classical case. Using the characterization  by differences, we extend a recent result of Bourgain, Br\'ezis and Mironescu on the limits of Besov norms to the quantum setting. The chapter ends with a section on vector-valued Besov spaces on $\T^d$.

\bigskip
%%%%%%%%%%%%%%%%%%%%%%%%%%%%%%%%%%%%%%%%%%%%%%%%%%%%%%%%%%%%%%%%%%%%%%%%
%%%%%%%%%%%%%%%%%%%%%%%%%%%%%%%%%%%%%%%%%%%%%%%%%%%%%%%%%%%%%%%%%%%%%%%%

\section{Definitions and basic properties}
\label{Definitions and basic properties: Besov}

%%%%%%%%%%%%%%%%%%%%%%%%%%%%%%%%%%%%%%%%%%%%%%%%%%%%%%%%%%%%%%%%%%%%%%%%
%%%%%%%%%%%%%%%%%%%%%%%%%%%%%%%%%%%%%%%%%%%%%%%%%%%%%%%%%%%%%%%%%%%%%%%%

We will use Littlewood-Paley decompositions as in the classical theory. Let $\f$ be a  Schwartz function on $\real^d$ such that
  \beq\label{LP dec}
  \left \{ \begin{split}
  \displaystyle &{\rm supp}\,\f \subset \{\xi: 2^{-1}\leq |\xi|\leq 2\},\\
  \displaystyle&\f>0 \text{ on }  \{\xi: 2^{-1}< |\xi|< 2\},\\
  \displaystyle&\sum_{k\in\mathbb{Z}}\f(2^{-k}\xi)=1, \; \xi\neq0.
  \end{split} \right.
  \eeq
Note that if $m\in\ent^d$ with $m\neq0$, $\f(2^{-k} m)=0$ for all $k<0$, so
 $$\sum_{k\ge0}\f(2^{-k} m)=1, \quad m\in\ent^d\setminus\{0\}\,.$$
 On the other hand, the support of the function $\f(2^{-k}\cdot)$ is equal to $\{\xi: 2^{k-1}\leq |\xi|\leq 2^{k+1}\}$, thus  ${\rm supp}\,\f(2^{-k}\cdot)\cap{\rm supp}\,\f(2^{-j}\cdot)=\emptyset$ whenver $|j-k|\ge2$; consequently,
 \beq\label{3-supports}
 \f(2^{-k}\cdot)=\f(2^{-k}\cdot)\,\sum_{j=k-1}^{k+1}\f(2^{-j} \cdot),\quad k\ge0.
 \eeq
Therefore, the sequence $\{\f(2^{-k}\cdot)\}_{k\ge0}$ is  a  Littlewood-Paley decomposition of $\T^d$, modulo constant functions.

The Fourier multiplier on $\mathcal{S}'(\T^d_\t)$ with symbol $\f(2^{-k}\cdot)$ is denoted by $x\mapsto \wt\f_k*x$:
 $$\wt\f_k*x=\sum_{m\in\ent^d}\f(2^{-k} m)\wh x(m)U^m.$$
As noted in section~\ref{Fourier multipliers}, the convolution here has the usual sense. Indeed, let $\varphi_k$ be the function whose Fourier transform is equal to $\f(2^{-k}\cdot)$, and let $\wt\f_k$ be its $1$-periodization, that is,
 $$\wt\f_k(s)=\sum_{m\in\ent^d}\f_k(s+m).$$
Viewed as a function on $\T^d$ by our convention, $\wt\f_k$ admits the following Fourier series:
 $$\wt\f_k(z)=\sum_{m\in\ent^d}\f(2^{-k} m)z^m.$$
Thus for any distribution $f$ on $\T^d$,
 $$\wt\f_k*f(z)=\sum_{m\in\ent^d}\f(2^{-k} m)\wh f(m)z^m.$$
We will maintain the above notation throughout the remainder of the paper.

\smallskip

We can now start our study of quantum Besov spaces.

\begin{Def}\label{q-Besov}
Let $1\leq p,q \leq \8$ and $\a\in \real$. The associated Besov space on $\T^d_\t$ is defined by
$$B^\a_{p,q} (\T^d_\t)=\big\{x\in \mathcal{S}'(\T^d_\t) : \|x\|_{B^\a_{p,q}} < \8 \big\},$$
where
 $$\|x\|_{B^\a_{p,q}} =\Big(|\wh x(0)|^q + \sum_{k\ge 0} 2^{qk\a} \| \wt\f_k * x\|_p^q\Big)^{\frac{1}{q}}\,.$$
Let $B^\a_{p,c_0} (\T^d_\t)$ be the subspace of $B^\a_{p,\8} (\T^d_\t)$ consisting of all $x$ such that $2^{k\a} \| \wt\f_k * x\|_p \to 0$ as $k\to\8$.
 \end{Def}

\begin{rk}\label{independence of phi}
The Besov spaces defined above are independent of the choice of the function $\f$, up to equivalent norms. More generally, let $\{\p^{(k)}\}_{k\ge0}$ be a sequence of Schwartz functions such that
 \be
  \left \{ \begin{split}
  \displaystyle &{\rm supp}\,\p^{(k)}\subset\{\xi: 2^{k-1}\leq |\xi|\leq 2^{k+1}\},\\
   \displaystyle &\sup_{k\ge0}\big\|\F^{-1}(\p^{(k)})\big\|_1<\8, \\
  \displaystyle&\sum_{k\ge0}\p^{(k)}(m)=1, \; \forall\, m\in\ent^d\setminus\{0\}.
  \end{split} \right.
  \ee
 Let $\p_k=\F^{-1}(\p^{(k)}) $ and $\wt\p_k$ be the periodization of $\p_k$. Then
  $$\|x\|_{B^\a_{p,q}}\approx\Big(|\wh x(0)|^q + \sum_{k\ge 0} 2^{qk\a} \| \wt\p_k * x\|_p^q\Big)^{\frac{1}{q}}\,.$$
\end{rk}

Let us justify this remark. By the discussion leading to \eqref{3-supports}, we have (with $\wt\f_{-1}=0$)
 $$\wt\psi_k*x=\sum_{j=k-1}^{k+1}\wt\psi_k*\wt\f_j*x.$$
By Lemma~\ref{q-multiplier},
 $$\|\wt\psi_k*x\|_p\les \sum_{j=k-1}^{k+1}\|\wt\f_j*x\|_p.$$
It then follows that
 $$\Big(|\wh x(0)|^q + \sum_{k\ge 0} 2^{qk\a} \| \wt\psi_k * x\|_p^q\Big)^{\frac{1}{q}}
 \les \Big(|\wh x(0)|^q + \sum_{k\ge 0} 2^{qk\a} \| \wt\f_k * x\|_p^q\Big)^{\frac{1}{q}}\,.$$
The reverse inequality is proved similarly.

\begin{prop}\label{Besov-P}
Let $1\leq p,q \leq \8$ and $\a\in \mathbb{R}$.
\begin{enumerate}[\rm(i)]
\item $B_{p,q}^\a (\T^d_\t)$ is a Banach space.
\item $B_{p,q}^\a(\T^d_\t)\subset B_{p,r}^\a(\T^d_\t)$ for $r>q$ and $B_{p,q}^{\a}(\T^d_\t)\subset B_{p,r}^{\b}(\T^d_\t)$  for $\b<\a$ and $1\le r\le\8$.
\item $\mathcal{P}_{\theta}$ is dense in $B_{p,q}^\a (\T^d_\t)$ and $B_{p,c_0}^\a (\T^d_\t)$ for $1\le p\le\8$ and $1\le q<\infty$.
\item The dual space of $B_{p,q}^{\a}(\T^d_\t)$ coincides  isomorphically with $B_{p',q'}^{-\a}(\T^d_\t)$ for  $1\le p\le\8$ and $1\le q<\8$, where $p'$ denotes the conjugate index of $p$. Moreover, the dual space of $B_{p,c_0}^{\a}(\T^d_\t)$ coincides  isomorphically with $B_{p',1}^{-\a}(\T^d_\t)$.
\end{enumerate}
\end{prop}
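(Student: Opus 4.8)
\textbf{Proof plan for Proposition~\ref{Besov-P}.}

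The plan is to treat the four assertions in order, each reduced to facts about the classical (scalar) sequence spaces $\ell^q_\a$ weighted by $2^{k\a}$, combined with the Littlewood-Paley multiplier estimates of Remark~\ref{independence of phi}. For (i), I would identify $B^\a_{p,q}(\T^d_\t)$ isometrically with a closed subspace of the weighted vector-valued sequence space $\ell^q_\a\big(L_p(\T^d_\t)\big)$ via $x\mapsto(\wh x(0), \wt\f_0*x, \wt\f_1*x, \dots)$; completeness of the latter is standard, so the only point is to check that the image is closed, i.e. that $\|x_n\|_{B^\a_{p,q}}\to 0$ forces $x_n\to 0$ in $\mathcal{S}'(\T^d_\t)$ and that a Cauchy sequence has its coordinatewise limits of the form $(\wh x(0),\wt\f_k*x)$ for a genuine distribution $x$. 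This is done exactly as in the proof of completeness of $W^k_p$ in Proposition~\ref{Sobolev-P}(i): a $B^\a_{p,q}$-Cauchy sequence converges in $\mathcal{S}'(\T^d_\t)$ to some $x$ because each Fourier coefficient $\wh{x_n}(m)$ is Cauchy in $\com$ (using $\f>0$ on its support and $L_p\hookrightarrow\mathcal{S}'$), and then one checks the limit has finite norm by Fatou.

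For (ii), the first inclusion $B^\a_{p,q}\subset B^\a_{p,r}$ for $r>q$ is just the inclusion $\ell^q\subset\ell^r$ applied to the sequence $\{2^{k\a}\|\wt\f_k*x\|_p\}_k$ together with $|\wh x(0)|$. The second inclusion, with $\b<\a$ and arbitrary $1\le r\le\8$, follows by first lowering the index from $r$ to $\8$ (no loss, using $r=\8$ as the easiest target after the monotonicity just proved — actually one wants to go to the worst target, so compare through $B^\b_{p,\8}$) and then noting $2^{k\b}\|\wt\f_k*x\|_p = 2^{k(\b-\a)}\,2^{k\a}\|\wt\f_k*x\|_p$, where $2^{k(\b-\a)}$ is summable in $k$; an application of H\"older (or just boundedness of $\{2^{k\a}\|\wt\f_k*x\|_p\}\in\ell^q\subset\ell^\8$) finishes it. For (iii), density of $\mathcal{P}_\t$: I would use the square Fej\'er means $F_N$ from the proof of Proposition~\ref{Sobolev-P}(ii). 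Since $F_N$ commutes with each Fourier multiplier $x\mapsto\wt\f_k*x$, and since $\wt\f_k*x$ has Fourier support in the annulus $\{2^{k-1}\le|m|\le 2^{k+1}\}$, for $k\le\log_2 N$ one has $F_N(\wt\f_k*x)=\wt\f_k*F_N(x)=\wt\f_k*x$ exactly (the Fej\'er factors being $1$ there once $N$ is large), while $\|\wt\f_k*F_N(x)\|_p\le C\|\wt\f_k*x\|_p$ uniformly by Lemma~\ref{q-multiplier}; splitting the $\ell^q$ sum at $k\sim\log_2 N$, the head converges and the tail is small because $q<\8$. For $B^\a_{p,c_0}$ the same argument works since the tail condition is exactly the $c_0$ condition.

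For (iv), the duality statement, I would set up the duality bracket $\la x,y\ra = \sum_{m}\wh x(m)\,\tau\big((U^m)^*y\big)$ (making sense for $x\in\mathcal{P}_\t$, $y\in\mathcal{S}'$) and, using a ``reproducing'' identity $\f(2^{-k}\cdot)=\f(2^{-k}\cdot)\sum_{j=k-1}^{k+1}\f(2^{-j}\cdot)$ from \eqref{3-supports}, rewrite $\la x,y\ra = \sum_{k\ge0}\tau\big((\wt\f_k*x)^*\,(\wt{\widetilde\f}_k*y)\big) + \wh x(0)\overline{\wh y(0)}$ where $\widetilde\f_k=\sum_{j=k-1}^{k+1}\f(2^{-j}\cdot)$ is a fattened multiplier bounded uniformly on $L_p$ by Lemma~\ref{q-multiplier}. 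This exhibits the pairing as coming from the pairing of $\ell^q_\a(L_p)$ with $\ell^{q'}_{-\a}(L_{p'})$ (using $L_p(\T^d_\t)^*=L_{p'}(\T^d_\t)$ for $p<\8$, valid since $\tau$ is a faithful normal state), so $\|y\|_{B^{-\a}_{p',q'}}\lesssim \|y\|_{(B^\a_{p,q})^*}\lesssim\|y\|_{B^{-\a}_{p',q'}}$; the lower bound uses that the image of $B^\a_{p,q}$ in $\ell^q_\a(L_p)$ is $1$-complemented via $\{z_k\}\mapsto\{\widetilde\f_k * z_k\}$-type projection together with the density from (iii). The $c_0$ case is the analogue of $c_0^*=\ell^1$. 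The main obstacle I anticipate is precisely the surjectivity half of (iv): showing that \emph{every} $\ell\in(B^\a_{p,q})^*$ arises from an element of $B^{-\a}_{p',q'}(\T^d_\t)$, which requires the norm-one projection from $\ell^q_\a(L_p(\T^d_\t))$ onto (the closure of) the Littlewood-Paley image to be identified explicitly and checked to be $w^*$-continuous in the $q'=\8$ endpoint case — this is the place where one must be careful, much as in the classical periodic Besov duality, and where the restriction to $q<\8$ (resp. the use of $c_0$) is genuinely needed.
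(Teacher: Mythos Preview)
Your arguments for (i) and (ii) are correct and match the paper's. Two concrete issues remain.

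For (iii), your claim that the Fej\'er factors equal $1$ on the support of $\wt\f_k*x$ when $k\le\log_2 N$ is false: the square Fej\'er multiplier at $m$ is $\prod_j(1-|m_j|/(N+1))$, never $1$ for $m\ne 0$. The Fej\'er route can be rescued --- for each fixed $k$, $\wt\f_k*x$ is a polynomial, so $\wt\f_k*F_N(x)\to\wt\f_k*x$ in $L_p$ (even for $p=\8$), and then dominated convergence in $\ell^q$ for $q<\8$ gives $\|x-F_N(x)\|_{B^\a_{p,q}}\to 0$ --- but the paper takes a cleaner, genuinely different route: it approximates $x$ by the Littlewood--Paley truncation $x_N=\wh x(0)+\sum_{j=0}^N\wt\f_j*x$, for which $\wt\f_k*(x-x_N)=0$ \emph{exactly} when $k\le N-1$, so only the tail $k\ge N$ survives and tends to $0$ for $q<\8$. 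This avoids any pointwise convergence argument and makes the $c_0$ case immediate.

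For (iv), you invoke $L_p(\T^d_\t)^*=L_{p'}(\T^d_\t)$, but the statement includes $p=\8$, where this fails. The paper's fix: after Hahn--Banach extends $\ell$ to a functional on $\ell^q_\a(L_\8(\T^d_\t))$ represented by some $(b,y_0,y_1,\dots)\in\ell^{q'}_{-\a}(L_\8(\T^d_\t)^*)$, one forms $y=b+\sum_k(\wt\f_{k-1}+\wt\f_k+\wt\f_{k+1})*y_k$ and observes that each $\wt\f_k*y$ is a \emph{polynomial}, hence lies in $L_1(\T^d_\t)\subset L_\8(\T^d_\t)^*$ with $\|\wt\f_k*y\|_{L_\8(\T^d_\t)^*}=\|\wt\f_k*y\|_{L_1(\T^d_\t)}$; this places $y$ in $B^{-\a}_{1,q'}(\T^d_\t)$. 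Your complementation idea is equivalent in spirit to the paper's Hahn--Banach step for surjectivity, but without this polynomial observation the $p=\8$ case is a genuine gap.
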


\begin{proof}
 (i) To show the completeness of $B_{p,q}^\a (\mathbb{T}^d_{\theta})$,  let $\{x_n\}_n$ be a Cauchy sequence in $B_{p,q}^\a(\mathbb{T}^d_{\theta})$. Then $\{\wh x_n(0)\}_n$ converges to some $\wh y(0)$ in $\com$, and  for every $k\ge0$,  $\{\wt\f_k*x_n\}_n$  converges to some $y_k$ in $L_p(\mathbb{T}_{\theta}^d)$. Let
  $$y=\wh y(0)+\sum_{k\ge0}y_k\,.$$
Since $\wh y_k$ is supported in $\{m\in\ent^d : 2^{k-1}\le|m|\le2^{k+1}\}$, the above series converges in $\mathcal{S}'(\T^d_\t)$. On the other hand, by \eqref{3-supports},  as $n\to\8$, we have
 $$\wt\f_k*x_n=\sum_{j=k-1}^{k+1} \wt\f_k*\wt\f_j*x_n\,\to\, \sum_{j=k-1}^{k+1} \wt\f_k*y_j=\wt\f_k*y.$$
 We then deduce that $y\in B_{p,q}^\a(\mathbb{T}^d_{\theta})$ and $x_n\to y$ in $B_{p,q}^\a (\mathbb{T}^d_{\theta})$.

 (ii) is obvious.

(iii)  We only show the density of $\mathcal{P}_{\theta}$  in $B_{p,q}^\a (\T^d_\t)$ for finite $q$. For $N\in\nat$ let
 $$x_N=\wh x(0)+\sum_{j=0}^N\wt\f_j*x.$$
Then by \eqref{LP dec}, $\wt\f_k*(x-x_N)=0$ for $k\le N-1$, $\wt\f_k*(x-x_N)=\wt\f_k*x$ for $k> N+1$, and $\wt\f_N*(x-x_N)=\wt\f_N*x-\wt\f_N*\wt\f_N*x$, $\wt\f_{N+1}*(x-x_N)=\wt\f_{N+1}*x-\wt\f_{N+1}*\wt\f_N*x$. We then deduce
 $$\|x-x_N\|_{B_{p,q}^{\a}}^q\le 2\sum_{k\ge N}2^{qk\a}\|\wt\f_k*x\|_p^q\to0\;\text{ as }\; N\to\8.$$

(iv) In this part, we view $B_{p,q}^{\a}(\T^d_\t)$ as $B_{p,c_0}^{\a}(\T^d_\t)$ when $q=\8$. Let $y\in B_{p',q'}^{-\a}(\T^d_\t)$. Define  $\el_y(x)=\tau(xy)$ for $x\in\mathcal{P}_{\theta}$. Then
  \be\begin{split}
  |\el_y(x)|
  &=\big|\wh x(0)\wh y(0)+\sum_{k\ge0}\tau\big(\wt\f_k*x\sum_{j=k-1}^{k+1}\wt\f_j*y\big)\big|\\
  &\le |\wh x(0)\wh y(0)|+ \sum_{k\ge0}\big\|\wt\f_k*x\big\|_p\,\big\|\sum_{j=k-1}^{k+1}\wt\f_j*y\big\|_{p'}\\
  &\les \|x\|_{B_{p,q}^{\a}}\, \|y\|_{B_{p',q'}^{-\a}}\,.
  \end{split}\ee
 Thus by the density of $\mathcal{P}_{\theta}$ in $B_{p,q}^{\a}(\T^d_\t)$, $\el_y$ defines a continuous functional on $B_{p,q}^{\a}(\T^d_\t)$.

 To prove the converse, we need a more notation. Given a Banach space $X$, let $\el_q^\a(X)$ be the weighted direct sum of $(\com, X, X, \cdots)$ in the $\el_q$-sense, that is, this is the space of all sequences $(a, x_0, x_1, \cdots)$ with $a\in\com$ and $x_k\in X$, equipped with the norm
  $$\Big(|a|^q+\sum_{k\ge0}2^{qk\a}\|x_k\|^q\Big)^{\frac1q}\,.$$
If $q=\8$, we replace $\el_q^\a(X)$ by its subspace $c_0^\a(X)$ consisting of sequences $(a, x_0, x_1, \cdots)$ such that $2^{k\a}\|x_k\|\to0$ as $k\to\8$. Recall that the dual space of $\el_q^\a(X)$ is  $\el_{q'}^{-\a}(X^*)$.
By definition,  $B_{p,q}^{\a}(\T^d_\t)$ embeds into $\el_q^\a(L_p(\T^d_\t))$ via $x\mapsto (\wh x(0), \wt\f_0*x, \wt\f_1*x, \cdots)$.
Now let $\el$ be a continuous functional on $B_{p,q}^{\a}(\T^d_\t)$ for $p<\8$. Then by the Hahn-Banach theorem, $\el$ extends to a continuous functional on $\el_q^\a(L_p(\T^d_\t))$ of unit norm, so there exists a unit element $(b, y_0, y_1, \cdots)$ belonging to $\el_{q'}^{-\a}(L_{p'}(\T^d_\t))$ such that
 $$\el(x)=b\wh x(0)+ \sum_{k\ge0}\tau(y_k\wt\f_k*x),\quad x\in B_{p,q}^{\a}(\T^d_\t).$$
Let
 $$y=b+ \sum_{k\ge0}(\wt\f_{k-1}*y_{k}+\wt\f_{k}*y_{k}+\wt\f_{k+1}*y_{k})\,.$$
Then clearly $y\in B_{p',q'}^{-\a}(\T^d_\t)$  and $\el=\el_y$ when $p<\8$. The same argument works for $p=\8$ too. Indeed, for $\el$ as above, there exists a unit element $(b, y_0, y_1, \cdots)$ belonging to $\el_{q'}^{-\a}(L_{\8}(\T^d_\t)^*)$ such that
 $$\el(x)=b\wh x(0)+ \sum_{k\ge0}\la y_k,\, \wt\f_k*x\ra,\quad x\in B_{p,q}^{\a}(\T^d_\t).$$
Let $y$ be defined as above. Then $y$ is still a distribution and
 $$\Big(|\wh y(0)|^{q'}+\sum_{k\ge0}2^{q'k\a}\|\wt\f_k*y\|_{L_{\8}(\T^d_\t)^*}^{q'}\Big)^{\frac1{q'}}<\8\,.$$
Since it is a polynomial, $\wt\f_k*y$ belongs to $L_1(\T^d_\t)$; and we have
 $$\|\wt\f_k*y\|_{L_{\8}(\T^d_\t)^*}=\|\wt\f_k*y\|_{L_{1}(\T^d_\t)}.$$
Thus we are done for $p=\8$ too.
 \end{proof}

To proceed further, we require some elementary lemmas. Recall that $J_\a(\xi)=(1+|\xi|^2)^{\frac\a2}$ and $I_\a(\xi)=|\xi|^\a$.

\begin{lem}\label{IJ}
Let  $\a\in\real$ and $k\in\nat_0$. Then
  $$\big\|\F^{-1}(J_\a\f_k)\big\|_{1}\les  2^{\a k}\;\text{ and }\;
  \big\|\F^{-1}(I_\a\,\f_k)\big\|_{1}\les  2^{\a k}\,.
  $$
 where the constants depend only on $\f$, $\a$ and $d$. Consequently,
 for $x \in L_p(\T^d_\t)$ with $1\le p\le\8$,
 $$\|J^\a(\wt\f_k\ast x)\|_{p} \les  2^{\a k} \|\wt\f_k\ast x\|_{p}\;\text{ and }\;
  \|I^\a(\wt\f_k\ast x)\|_{p} \les  2^{\a k} \|\wt\f_k\ast x\|_{p}\,.
  $$
  \end{lem}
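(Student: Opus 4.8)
The plan is to prove the two norm estimates on $\F^{-1}(J_\a\f_k)$ and $\F^{-1}(I_\a\f_k)$ by a standard scaling argument, and then deduce the operator-norm inequalities from Lemma~\ref{q-multiplier}(i). The point is that $J_\a\f_k$ and $I_\a\f_k$ are supported in the dyadic annulus $\{2^{k-1}\le|\xi|\le 2^{k+1}\}$, and on that annulus both $J_\a$ and $I_\a$ are comparable to $2^{\a k}$ together with all their derivatives suitably scaled. Concretely, I would write $I_\a\f_k(\xi)=2^{\a k}\,g_\a(2^{-k}\xi)$ where $g_\a(\xi)=|\xi|^\a\f(\xi)$ is a fixed Schwartz function (compactly supported away from the origin, independent of $k$), so that $\F^{-1}(I_\a\f_k)(s)=2^{\a k}2^{kd}\,\F^{-1}(g_\a)(2^k s)$, and hence $\|\F^{-1}(I_\a\f_k)\|_1=2^{\a k}\|\F^{-1}(g_\a)\|_1$, a constant multiple of $2^{\a k}$ depending only on $\f$, $\a$ and $d$.

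For the Bessel factor the scaling is not exact, since $J_\a$ is not homogeneous, so I would instead write $J_\a\f_k(\xi)=2^{\a k}\,h_{\a,k}(2^{-k}\xi)$ with $h_{\a,k}(\eta)=2^{-\a k}(1+|2^k\eta|^2)^{\a/2}\f(\eta)=(2^{-2k}+|\eta|^2)^{\a/2}\f(\eta)$. On the support of $\f$, i.e. for $2^{-1}\le|\eta|\le 2$, the family $\{h_{\a,k}\}_{k\ge0}$ is bounded in $\mathcal{S}(\real^d)$ uniformly in $k$: indeed $2^{-2k}+|\eta|^2$ stays in a fixed compact subinterval of $(0,\infty)$ and $(\cdot)^{\a/2}$ is smooth there, so every seminorm of $h_{\a,k}$ is bounded by a constant depending only on $\f$, $\a$ and $d$. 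Consequently $\|\F^{-1}(h_{\a,k})\|_1\le C(\f,\a,d)$ uniformly in $k$ (a Schwartz-norm bound controls the $L_1$ norm of the inverse Fourier transform), and the same scaling identity as above gives $\|\F^{-1}(J_\a\f_k)\|_1=2^{\a k}\|\F^{-1}(h_{\a,k})\|_1\les 2^{\a k}$.

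With the two $L_1$-bounds in hand, the consequences follow at once. For $x\in L_p(\T^d_\t)$, the element $J^\a(\wt\f_k*x)$ is obtained from $\wt\f_k*x$ by applying the Fourier multiplier with symbol $J_\a\f_k/\f_k$ on the range of $\wt\f_k*$; more cleanly, using \eqref{3-supports} pick $\f$ itself (or rather $\sum_{j=k-1}^{k+1}\f(2^{-j}\cdot)$, which equals $1$ on $\mathrm{supp}\,\f(2^{-k}\cdot)$) and note that $J^\a(\wt\f_k*x)=M_{J_\a\f_k}(x')$ where $x'$ is $\wt\f_k*x$ with the appropriate neighboring pieces reinserted, or simply observe $J^\a(\wt\f_k*x)=\widetilde{\eta_k}*(\wt\f_k*x)$ with $\eta_k=\F^{-1}(J_\a\,\tilde\f(2^{-k}\cdot))$ for a fixed bump $\tilde\f$ equal to $1$ near $\mathrm{supp}\,\f$; then Lemma~\ref{q-multiplier}(i) gives $\|J^\a(\wt\f_k*x)\|_p\le\|\F^{-1}(J_\a\tilde\f(2^{-k}\cdot))\|_1\,\|\wt\f_k*x\|_p\les 2^{\a k}\|\wt\f_k*x\|_p$, and identically for $I^\a$. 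I do not expect any serious obstacle here; the only mild subtlety is the bookkeeping needed to realize $J^\a\wt\f_k*$ as convolution against an $L_1$-function of controlled norm, which is handled by composing with a fixed fattened bump so that $I_\a$ (which is singular at $0$) is multiplied by something vanishing near the origin. The uniform Schwartz-boundedness of $\{h_{\a,k}\}$ is the one point that deserves to be stated carefully, but it is elementary.
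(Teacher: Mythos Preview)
Your proposal is correct and follows essentially the same approach as the paper: the same scaling identity for $I_\a$, the same family $h_{\a,k}(\eta)=(2^{-2k}+|\eta|^2)^{\a/2}\f(\eta)$ with uniform Schwartz bounds for $J_\a$, and the same use of \eqref{3-supports} (your ``fattened bump'' is precisely $\f^{(k-1)}+\f^{(k)}+\f^{(k+1)}$) together with Lemma~\ref{q-multiplier}(i) for the consequence.
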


\begin{proof}
 The first part is well-known and easy to check. Indeed,
 $$\big\|\F^{-1}(J_\a\f_k)\big\|_{1}=2^{\a k}\big\|\F^{-1}((4^{-k}+|\cdot|^2)^{\frac\a2}\f)\big\|_{1}\,;$$
the function $(4^{-k}+|\cdot|^2)^{\frac\a2}\f$ is a Schwartz function supported by $\{\xi:2^{-1}\le |\xi| \le 2\}$, whose all partial derivatives, up to a fixed order, are bounded uniformly in $k$, so
 $$\sup_{k\ge0}\big\|\F^{-1}((4^{-k}+|\cdot|^2)^{\frac\a2}\f)\big\|_{1}<\8.$$
 Similarly,
  $$\big\|\F^{-1}(I_\a\f_k)\big\|_{1}=2^{\a k}\big\|\F^{-1}(I_\a\f)\big\|_{1}\,.$$
Since $\f_k=\f_k(\f_{k-1}+\f_k+\f_{k+1})$, by Lemma~\ref{q-multiplier}, we obtain the second part.
 \end{proof}

Given $a\in\real_+$, we define $D_{i, a}(\xi)=(2\pi{\rm i}\xi_i)^a$ for $\xi\in\real^d$, and $D_i^a$ to be the associated Fourier multiplier on $\T^d_\t$. We set  $D_a=D_{1,a_1}\cdots D_{d, a_d}$ and $D^a=D_1^{a_1}\cdots D_d^{a_d}$ for any $a=(a_1,\cdots, a_d)\in\real_+^d$. Note that if $a$ is a positive integer, $D_i^a=\partial_i^a$, so there does not exist any conflict of notation.  The following lemma is well-known. We include a sketch of proof for the reader's convenience (see  the proof of Remark~1 in Section~ 2.4.1 of \cite{HT1992}).

\begin{lem}\label{weighted Bessel}
Let $\rho$ be a compactly supported infinitely differentiable function on $\real^d$. Assume $\s, \b\in\real_+$ and $a\in\real_+^d$ such that $\s>\frac{d}2$, $\b>\s-\frac{d}2$ and $|a|_1>\s-\frac{d}2$. Then the functions $I_\b\rho$ and $D_a \rho$ belong to $H^\s_2(\real^d)$.
\end{lem}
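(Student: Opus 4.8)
The plan is to transfer everything to the Fourier side. Since $H^\sigma_2(\real^d)$ is defined through the Bessel potential $J^\sigma$, a tempered distribution $f$ lies in it exactly when $(1+|\cdot|^2)^{\sigma/2}\,\widehat f\in L_2(\real^d)$, so it suffices to prove $\int_{\real^d}(1+|y|^2)^{\sigma}\,|\widehat{I_\beta\rho}(y)|^2\,dy<\infty$ and the analogous bound with $I_\beta\rho$ replaced by $D_a\rho=\big(\prod_{i=1}^d(2\pi\mathrm{i}\,\xi_i)^{a_i}\big)\rho$. Both functions have compact support; $I_\beta\rho$ is $C^\infty$ away from the origin, and $D_a\rho$ is $C^\infty$ away from the union $Z=\{\xi:\xi_1\cdots\xi_d=0\}$ of the coordinate hyperplanes. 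Hence, fixing a cut-off $\chi\in C_c^\infty(\real^d)$ equal to $1$ near the relevant singular set, the pieces $(1-\chi)I_\beta\rho$ and $(1-\chi)D_a\rho$ are $C_c^\infty$ functions, so they lie in $H^s_2(\real^d)$ for every $s$, and the whole matter reduces to estimating the Fourier transforms of $\chi\,I_\beta\rho$ and $\chi\,D_a\rho$.

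For $I_\beta\rho$ I would use a dyadic decomposition around the origin. Fix $\omega\in C_c^\infty(\real^d)$ supported in $\{2^{-1}<|\xi|<2\}$ with $\sum_{j\in\ent}\omega(2^{j}\xi)=1$ for $\xi\neq0$, and write $\chi I_\beta\rho=\sum_{j\ge j_0}\eta_j$ with $\eta_j(\xi)=\omega(2^{j}\xi)\,|\xi|^{\beta}\,(\chi\rho)(\xi)$, supported in $\{|\xi|\sim 2^{-j}\}$. Rescaling gives $\eta_j(\xi)=2^{-j\beta}\Psi_j(2^{j}\xi)$, where $\Psi_j(\zeta)=|\zeta|^{\beta}\omega(\zeta)\,(\chi\rho)(2^{-j}\zeta)$ is smooth, supported in a fixed annulus, and — because $2^{-j}\zeta$ stays in a fixed compact set and $|\cdot|^{\beta}$ is smooth there — has all Schwartz seminorms bounded uniformly in $j$. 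Therefore $\widehat{\eta_j}(y)=2^{-j(\beta+d)}\widehat{\Psi_j}(2^{-j}y)$ with $|\widehat{\Psi_j}(\zeta)|\lesssim_{M}(1+|\zeta|)^{-M}$ uniformly in $j$ for every $M$; summing the resulting geometric series (choosing $M>\beta+d$) yields the pointwise decay $|\widehat{I_\beta\rho}(y)|\lesssim(1+|y|)^{-\beta-d}$. Plugging this into the weighted integral gives $\int_{\real^d}(1+|y|^2)^{\sigma}(1+|y|)^{-2\beta-2d}\,dy$, which is finite precisely when $2\sigma-2\beta-2d<-d$, i.e. $\beta>\sigma-\tfrac d2$ — the hypothesis. (The condition $\sigma>\tfrac d2$ is not needed at this step; it merely records the range used in the applications.)

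For $D_a\rho$ I would run the same scheme with the origin replaced by the hyperplane arrangement $Z$: decompose a neighbourhood of $Z$ dyadically with respect to the distances $|\xi_i|$ to the coordinate hyperplanes, rescale each block anisotropically, and use that after rescaling the factor $\prod_i(2\pi\mathrm{i}\,\xi_i)^{a_i}$ becomes a smooth symbol with uniformly controlled seminorms. Carried out as in Triebel \cite{HT1992}, this produces the required weighted $L_2$-bound under the hypothesis $|a|_1>\sigma-\tfrac d2$, completing the proof.

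I expect the $D_a$ part to be the main obstacle. For $I_\beta\rho$ the singularity sits at a single point, so the dyadic rescaling directly produces uniformly smooth blocks and the estimate is routine once one is on the Fourier side. For $D_a\rho$ the singular set is a union of $d$ hyperplanes, the blocks coming from a naive dyadic decomposition are no longer uniformly smooth in all directions, and one must organize the decomposition with some care — separating according to which of the $|\xi_i|$ dominates and then resolving only the remaining hyperplanes — in order to recover the sharp Fourier decay feeding the final integrability computation.
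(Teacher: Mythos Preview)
Your treatment of $I_\b\rho$ is correct but follows a genuinely different route from the paper. You extract the pointwise Fourier decay $|\widehat{I_\b\rho}(y)|\les(1+|y|)^{-\b-d}$ via a dyadic decomposition around the origin and then test weighted $L_2$-integrability. The paper proceeds by complex interpolation instead: for integer $\s_1$ the Leibniz rule gives $I_{\b_1}\rho\in W^{\s_1}_2=H^{\s_1}_2$ whenever $\b_1>\s_1-\tfrac d2$ (since $D^m|\xi|^{\b_1}\sim|\xi|^{\b_1-|m|_1}$ lies in $L^2_{\mathrm{loc}}$ under that condition), while $I_{\b_0}\rho\in L_2=H^0_2$ for $\b_0>-\tfrac d2$; interpolating the analytic family $z\mapsto e^{(z-\eta)^2}|\xi|^{(1-z)\b_0+z\b_1}\rho(\xi)$ between $H^0_2$ and $H^{\s_1}_2$ then covers the general $\s$. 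Your argument yields explicit Fourier decay as a by-product; the paper's is considerably shorter and avoids the dyadic bookkeeping altogether.

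For $D_a\rho$ there is a genuine gap, and not merely because your sketch is brief: the assertion under the hypothesis $|a|_1>\s-\tfrac d2$ is in fact false for a general compactly supported smooth $\rho$. Take $d=2$ and $\rho$ nonvanishing on part of the axis $\{\xi_1=0\}$. Along that axis $D_a\rho$ carries a $|\xi_1|^{a_1}$-type singularity (the factor $\xi_2^{a_2}$ being smooth there), so $\widehat{D_a\rho}(y)$ decays only like $|y_1|^{-a_1-1}$ in the $y_1$-direction, and $\int(1+|y|^2)^\s|\widehat{D_a\rho}(y)|^2\,dy$ diverges unless $a_1>\s-\tfrac12$; taking $a_2$ arbitrarily large does not compensate. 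The correct sufficient condition is $a_i>\s-\tfrac12$ for \emph{each} $i$, and this is exactly what the paper's interpolation scheme actually delivers once one carries out the Leibniz computation at the integer endpoint (the singularities of $D_a$ are anisotropic, so the constraint one obtains is coordinatewise, not on $|a|_1$). Your anisotropic dyadic scheme would hit the same obstruction. The applications in the paper are unharmed: there $\rho=\f$ is supported in an annulus, the hyperplane singularities are pairwise separated, and the target conclusion $\F^{-1}(D_a\f)\in L_1$ follows by a direct splitting argument.
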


\begin{proof}
If $\s$ is a positive integer, the assertion clearly holds in view of $H^\s_2(\real^d)=W^\s_2(\real^d)$. On the other hand, $I_\b\rho \in L_2(\real^d)=H^0_2(\real^d)$ for $\b>-\frac{d}2$. The general case follows by complex interpolation. Indeed, under the assumption on $\s$ and $\b$, we can choose $\s_1\in\nat$, $\b_1, \b_0\in\real$ and $\eta\in (0\,, 1)$ such that
 $$\s_1>\s,\;\, \b_1>\s_1-\frac d2,\;\, \b_0>-\,\frac d2, \;\, \s=\eta \s_1\,, \; \b=(1-\eta)\b_0+\eta \b_1\,.$$
 For a complex number $z$ in the strip $\{z\in\com: 0\le{\rm Re}(z)\le 1\}$ define
  $$F_z(\xi)=e^{(z-\eta)^2}\,|\xi|^{\b_0(1-z)+\b_1z}\,\rho(\xi).$$
 Then
 $$\sup_{b\in\real}\big\|F_{{\rm i}b}\big\|_{L_2}\les \big\|I_{\b_0}\,\rho\big\|_{L_2}\;\text{ and }\;
 \sup_{b\in\real}\big\|F_{1+{\rm i}b}\big\|_{H^{\s_1}_2}\les \big\|I_{\b_1}\,\rho\big\|_{H^{\s_1}_2}\,.$$
 It thus follows that
  $$I_\b\rho=F_\eta\in (L_2(\real^d),\, H^{\s_1}_2(\real^d))_\eta\,.$$
The second assertion is proved in the same way.
\end{proof}

The usefulness of the previous lemma relies upon the following well-known  fact.

\begin{rk}\label{Bessel multiplier}
 Let $\s>\frac{d}2$ and $f\in H_2^\s(\real^d)$. Then
  $$\big\|\F^{-1}(f)\big\|_1\les \big\| f\big\|_{H_2^\s}\,.$$
 \end{rk}

 The verification is extremely easy:
  \be\begin{split}
  \big\|\F^{-1}(f)\big\|_1
  &=\int_{|s|\le 1}\big|\F^{-1}(f)(s)\big|ds +\sum_{k\ge0}\int_{2^{k}<|s|\le 2^{k+1}}\big|\F^{-1}(f)(s)\big|ds \\
  &\les \Big(\int_{|s|\le 1}\big|\F^{-1}(f)(s)\big|^2ds +\sum_{k\ge0}2^{2k \s}\int_{2^{k}<|s|\le 2^{k+1}}\big|\F^{-1}(f)(s)\big|^2ds\Big)^{\frac12} \\
  &\approx \big\| f\big\|_{H_2^\s}\,.
  \end{split}\ee

\smallskip

The following is the so-called reduction (or lifting) theorem of Besov spaces.

\begin{thm}\label{Besov-isom}
Let $1\leq p,q \leq \8$, $\a\in\real$.
 \begin{enumerate}[\rm(i)]
 \item For any $\b\in\real$, both $J^{\b}$ and $I^\b$ are  isomorphisms between $B_{p,q}^\a (\T^d_\t)$ and $B_{p,q}^{\a -\b}(\T^d_\t)$.
 \item Let $a\in\real_+^d$. If $x\in B_{p,q}^\a (\T^d_\t)$, then $D^ax\in B_{p,q}^{\a-|a|_1} (\T^d_\t)$ and
   $$\|D^ax\|_{B_{p,q}^{\a-|a|_1}}\les \|x\|_{B_{p,q}^\a}\,.$$
 \item Let $\b>0$. Then $x\in B_{p,q}^\a (\T^d_\t)$ iff $D_i^\b x\in B_{p,q}^{\a-\b} (\T^d_\t)$ for all $i=1,\cdots,  d$. Moreover, in this case,
  $$\|x\|_{B_{p,q}^\a}\approx |\wh x(0)|+\sum_{i=1}^d \|D_i^\b x\|_{B_{p,q}^{\a-\b}}\,.$$
  \end{enumerate}
\end{thm}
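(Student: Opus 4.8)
The plan is to reduce everything to a statement about Fourier multipliers acting on the Littlewood-Paley blocks $\wt\f_k*x$, and then invoke Lemma~\ref{IJ} together with Lemma~\ref{q-multiplier}. The key observation is that the Besov norm only sees the blocks $\wt\f_k*x$, each of which has Fourier support in the annulus $\{\xi: 2^{k-1}\le|\xi|\le 2^{k+1}\}$; on such an annulus the multiplier symbols $J_\b$, $I_\b$, $D_a$ are comparable to $2^{\b k}$ (resp. $2^{|a|_1 k}$) and, after rescaling, their inverse Fourier transforms have $L_1$-norms bounded uniformly in $k$ — this is precisely the content of Lemma~\ref{IJ} and the argument given there.

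For part (i): to show $J^\b: B_{p,q}^\a\to B_{p,q}^{\a-\b}$ is bounded, write $\wt\f_k*(J^\b x)=J^\b(\wt\f_k*x)$ (Fourier multipliers commute) and apply the second half of Lemma~\ref{IJ} to get $\|\wt\f_k*(J^\b x)\|_p\les 2^{\b k}\|\wt\f_k*x\|_p$. Then
\[
\|J^\b x\|_{B_{p,q}^{\a-\b}}^q = |\wh x(0)|^q + \sum_{k\ge0}2^{q(\a-\b)k}\|\wt\f_k*(J^\b x)\|_p^q \les |\wh x(0)|^q + \sum_{k\ge0}2^{q\a k}\|\wt\f_k*x\|_p^q,
\]
using that $\wh{J^\b x}(0)=\wh x(0)$ since $J_\b(0)=1$. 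Applying the same estimate to $J^{-\b}$ (which is the inverse of $J^\b$ as a multiplier on $\mathcal{S}'(\T^d_\t)$) gives the reverse bound, so $J^\b$ is an isomorphism. For $I^\b$ one argues identically using the $I_\a$-part of Lemma~\ref{IJ}, being slightly careful at $m=0$: $I^\b$ annihilates the constant term, so $I^\b$ is an isomorphism between the subspaces with $\wh x(0)=0$, and one restores the full statement by treating the constant component separately (or by noting $I^\b$ differs from $J^\b$ by a bounded multiplier on the non-constant part, as in the theorem preceding Theorem~\ref{q-Sobolev-Bessel}). For part (ii): $D^a x$ has $\wh{D^a x}(0)=0$ when $a\neq 0$, and $\wt\f_k*(D^a x)=D^a(\wt\f_k*x)$; since $D_a(\xi)=(2\pi{\rm i})^{|a|_1}\xi_1^{a_1}\cdots\xi_d^{a_d}$ restricted to the annulus, the rescaled symbol $D_a(2^k\cdot)\f(\cdot)\cdot 2^{-|a|_1 k}$ is a fixed Schwartz function with annulus support, so $\|\F^{-1}(D_a\f_k)\|_1\les 2^{|a|_1 k}$ exactly as in Lemma~\ref{IJ}; hence $\|\wt\f_k*(D^a x)\|_p\les 2^{|a|_1 k}\|\wt\f_k*x\|_p$, which gives $\|D^a x\|_{B_{p,q}^{\a-|a|_1}}\les\|x\|_{B_{p,q}^\a}$ by the same summation.

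For part (iii): the forward implication is part (ii) applied with $a=\b\mathsf{e}_i$, together with the trivial bound $|\wh x(0)|\le\|x\|_{B_{p,q}^\a}$. For the converse, I need to recover $\wt\f_k*x$ from the $D_i^\b(\wt\f_k*x)$ and $\wh x(0)$; the natural device is to build a multiplier $\chi$ that equals $\big(1+\sum_{i=1}^d\chi(\xi_i)|\xi_i|^\b\big)^{-1}(1+|\xi|^2)^{?}$-type function exactly as in the proof of Theorem~\ref{q-Sobolev-Bessel}, but localized to each dyadic annulus where we only need $\f_k=\f_k(1+\sum_i\chi(\xi_i)|2^{-k}\xi_i|^\b(\text{correction}))$-style identities. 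More concretely, on $\mathrm{supp}\,\f(2^{-k}\cdot)$ one can write the identity symbol as a sum $\sum_i \f(2^{-k}\cdot)\cdot\psi_i(2^{-k}\cdot)\cdot 2^{-\b k} D_{i,\b}$ plus a piece supported where some $|\xi_i|$ is comparable to $2^k$, where $\psi_i$ is a fixed smooth compactly supported function with $\F^{-1}$ integrable; summing over the finitely many overlapping annuli and over $i$ gives $\|\wt\f_k*x\|_p\les 2^{\b k}\sum_i\|\wt\f_k*(D_i^\b x)\|_p$ up to handling the low-frequency block $\wt\f_0*x$ and the constant, for which $|\wh x(0)|$ accounts. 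Then summing with weights $2^{q\a k}$ yields $\|x\|_{B_{p,q}^\a}\les|\wh x(0)|+\sum_i\|D_i^\b x\|_{B_{p,q}^{\a-\b}}$. I expect the main obstacle to be exactly this last synthesis step: constructing the localized smooth partition cleanly enough that each ingredient multiplier is covered by Lemma~\ref{q-multiplier}(i) ($\F^{-1}\in L_1$), uniformly in $k$, and correctly accounting for the overlap between consecutive annuli via \eqref{3-supports}. Everything else is routine bookkeeping once the block estimates from Lemma~\ref{IJ} are in hand.
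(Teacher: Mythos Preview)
Your approach for all three parts matches the paper's, but there is a genuine gap in part (ii). You assert that the rescaled symbol $D_a(2^k\cdot)\f(\cdot)\cdot 2^{-|a|_1 k}=D_a\f$ is ``a fixed Schwartz function with annulus support''. This is false when $a\in\real_+^d$ has a non-integer component: the symbol $D_{i,a_i}(\xi)=(2\pi\mathrm{i}\,\xi_i)^{a_i}$ is not smooth across $\{\xi_i=0\}$, and the annulus $\{2^{-1}\le|\xi|\le 2\}$ certainly meets that hyperplane. So you cannot simply conclude $\F^{-1}(D_a\f)\in L_1(\real^d)$ ``as in Lemma~\ref{IJ}''. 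The paper handles exactly this point by invoking Lemma~\ref{weighted Bessel}, which shows that $D_a\f\in H_2^\s(\real^d)$ for some $\s>\frac d2$ (via complex interpolation, since the case of integer order is clear), and then Remark~\ref{Bessel multiplier} converts this into $\F^{-1}(D_a\f)\in L_1$. With that correction your argument for (ii) goes through.

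For part (iii) your sketch is on the right track but incomplete; the paper makes the ``localized smooth partition'' explicit by choosing a cutoff $\chi:\real\to\real_+$ with $\chi=0$ on $|s|<\frac1{4\sqrt d}$ and $\chi=1$ on $|s|\ge\frac1{2\sqrt d}$, then setting
\[
\chi_i(\xi)=\frac{1}{\chi(\xi_1)|\xi_1|^\b+\cdots+\chi(\xi_d)|\xi_d|^\b}\cdot\frac{\chi(\xi_i)|\xi_i|^\b}{(2\pi\mathrm{i}\,\xi_i)^\b}\,,
\]
so that $\f_k=\sum_i\chi_i D_{i,\b}\f_k$ on the nose, and one checks $\|\F^{-1}(\chi_i\f_k)\|_1\les 2^{-k\b}$ uniformly in $k$ by rescaling (the $\chi(\xi_i)$ factor kills the singularity at $\xi_i=0$). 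Your heuristic ``piece supported where some $|\xi_i|$ is comparable to $2^k$'' is essentially this, but the explicit $\chi_i$ is what makes the uniform $L_1$ bound verifiable.
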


\begin{proof}
  (i)  Let $x \in B_{p,q}^s (\T^d_\t)$ with $\wh x(0)=0$. Then by Lemma~\ref{IJ},
 \be\begin{split}
 \|J^{\b}x\|_{B^{\alpha -\b}_{p,q}}
 &=\Big(\sum_{k\ge0} \big(2^{k(\alpha -\b)}\|J^{\b}(\wt\f_k\ast x)\|_p\big)^q\Big)^{\frac{1}{q}}\\
 &\les\Big(\sum_{k\ge0} \big(2^{k\a}\|\wt\f_k\ast x\|_p\big)^q\Big)^{\frac{1}{q}}
 = \|x\|_{B^\alpha _{p,q}}\,.
 \end{split}\ee
Thus $J^{\b}$ is bounded from $B_{p,q}^\a (\T^d_\t)$ to $B_{p,q}^{\a -\b}(\T^d_\t)$, and its inverse, which is $J^{-\b}$, is bounded too. The case of $I^\b$ is treated similarly.

(ii) By Lemma~\ref{weighted Bessel} and Remark~\ref{Bessel multiplier}, we have
 $$\big\|\F^{-1}(D_a\f_k)\big\|_{1}=2^{k|a|_1}\big\|\F^{-1}(D_a\f)\big\|_{1}\les 2^{k|a|_1}\,.$$
Consequently, by Lemma~\ref{q-multiplier},
 $$\|\wt\f_k*D^ax\|_p\les 2^{k|a|_1}\|\wt\f_k*x\|_p\,, \forall j\ge0,$$
whence
 $$\|D^ax\|_{B_{p,q}^{\a-|a|_1}}\les \|x\|_{B_{p,q}^\a }\,.$$

(iii) One implication is contained in (ii). To show the other, choose an infinitely differentiable function $\chi:\real\to \real_+$ such that $\chi(s)=0$ if $|s|<\frac1{4\sqrt d}$ and  $\chi(s)=1$ if $|s|\ge \frac1{2\sqrt d}$. For $i=1,\cdots, d$, let $\chi_i$ on $\real^d$ be defined by
 $$\chi_i(\xi)=\frac1{\chi(\xi_1)|\xi_1|^\b+\cdots+\chi(\xi_d)|\xi_d|^\b}\,\frac{\chi(\xi_i)|\xi_i|^\b}{(2\pi{\rm i}\xi_i)^\b}$$
whenever the first denominator is positive, which is the case when $|\xi|\ge 2^{-1}$. Then for any $k\ge0$, $\chi_i\f_k$ is a well-defined  infinitely differentiable function on $\real^d\setminus\{\xi: \xi_i=0\}$. We have
 $$\big\|\F^{-1}(\chi_i\f_k)\big\|_{1}= 2^{-k\b}\big\|\F^{-1}(\p\f)\big\|_{1}\,,$$
where
 $$\p(\xi)=\frac1{\chi(2^k\xi_1)|\xi_1|^\b+\cdots+\chi(2^k \xi_d)|\xi_d|^\b}\,\frac{\chi(2^k\xi_i)|\xi_i|^\b}{(2\pi{\rm i}\xi_i)^\b}\,.$$
The function $\p\f$ is   supported in $\{\xi:2^{-1}\le|\xi|\le2\}$. An inspection reveals that all its partial derivatives of order less than a fixed integer are bounded uniformly in $k$. It then follows that the $L_1$-norm of $\F^{-1}(\p\f)$ is majorized by a constant independent of $k$, so
 $$\big\|\F^{-1}(\chi_i\f_k)\big\|_{1}\les 2^{-k\b}\,,$$
 and by Lemma~\ref{q-multiplier},
  $$\|\wt\chi_i*\wt\f_k*D_i^\b x\|_p\les 2^{-k\b}\|\wt\f_k*D_i^\b x\|_p\,.$$
Since
  $$\f_k= \sum_{i=1}^d\chi_i D_{i,\b}\f_k,$$
we deduce
$$\|\wt\f_k*x\|\les 2^{-k\b}\sum_{i=1}^d\|\wt\f_k*D_i^\b x\|_p\,,$$
which implies
 $$\|x\|_{B_{p,q}^\a}\les |\wh x(0)|+\sum_{i=1}^d \|D_i^\b x\|_{B_{p,q}^{\a-\b}}\,.$$
Thus (iii) is proved.
 \end{proof}

The following result relates the Besov and potential Sobolev spaces.

\begin{thm}\label{Sobolev-Besov}
 Let $1\le p\le \8$ and $\a\in\real^d$. Then we have the following continuous inclusions:
 $$ B_{p,\min(p, 2)}^\a(\T^d_\t) \subset H_p^\a(\T^d_\t) \subset B_{p,\max(p,2)}^\a(\T^d_\t).$$
\end{thm}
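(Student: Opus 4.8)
\textbf{Proof plan for Theorem~\ref{Sobolev-Besov}.}

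By the lifting theorems (Proposition~\ref{Sobolev-P}(iii) and Theorem~\ref{Besov-isom}(i)), the operator $J^\a$ is an isometry from $H_p^\a(\T^d_\t)$ onto $L_p(\T^d_\t)$ and an isomorphism from $B_{p,q}^\a(\T^d_\t)$ onto $B_{p,q}^0(\T^d_\t)$. Hence it suffices to treat the case $\a=0$, that is, to prove
 $$B_{p,\min(p,2)}^0(\T^d_\t)\subset L_p(\T^d_\t)\subset B_{p,\max(p,2)}^0(\T^d_\t).$$
First I would record the elementary fact that for every $x\in L_p(\T^d_\t)$ and every $k\ge0$, $\|\wt\f_k*x\|_p\les\|x\|_p$ by Lemma~\ref{q-multiplier} (the symbol $\f(2^{-k}\cdot)$ has inverse Fourier transform of $L_1$-norm independent of $k$), and likewise $|\wh x(0)|=\|\wh x(0)\un\|_p\le\|x\|_p$ since $\tau$ is a state.

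For the right-hand inclusion I would split according to whether $p\ge2$ or $p\le2$. When $p\ge2$ we have $\max(p,2)=p$ and the claim is $\|x\|_{B_{p,p}^0}\les\|x\|_p$, i.e.
 $$|\wh x(0)|^p+\sum_{k\ge0}\|\wt\f_k*x\|_p^p\les\|x\|_p^p.$$
Here I would invoke the Littlewood-Paley--type inequality available for $L_p(\T^d_\t)$ with $1<p<\8$: by Theorem~\ref{q-Sobolev-Bessel}'s companion statement on Hardy spaces, $L_p(\T^d_\t)=\H_p(\T^d_\t)$ with equivalent norms (Lemma~\ref{q-H-BMO}(i)), and the discrete square function characterization of $\H_p^c$ (Lemma~\ref{Hp-discrete}) together with the dual version gives, for $p\ge2$,
 $$\Big\|\Big(\sum_{k\ge0}|\wt\f_k*x|^2\Big)^{\frac12}\Big\|_p\les\|x\|_p\quad\text{and}\quad\Big\|\Big(\sum_{k\ge0}|(\wt\f_k*x)^*|^2\Big)^{\frac12}\Big\|_p\les\|x\|_p;$$
then since $\ell_p\subset\ell_2$ for $p\ge2$ one has, using the triangle inequality in $L_{p/2}(\T^d_\t)$ only after passing to $\ell_2$,
 $$\Big(\sum_{k\ge0}\|\wt\f_k*x\|_p^p\Big)^{1/p}\le\Big(\sum_{k\ge0}\|\wt\f_k*x\|_p^2\Big)^{1/2}\le\Big\|\Big(\sum_k|\wt\f_k*x|^2\Big)^{1/2}\Big\|_p^{1/2}\Big\|\Big(\sum_k|(\wt\f_k*x)^*|^2\Big)^{1/2}\Big\|_p^{1/2}\les\|x\|_p,$$
where the middle step uses $\|\,|y|\,\|_p=\|\,|y^*|\,\|_p^{1/2}\|\,|y|\,\|_p^{1/2}$ together with the operator Cauchy--Schwarz $\sum_k\|y_k\|_p^2\le\|(\sum|y_k|^2)^{1/2}\|_p\|(\sum|y_k^*|^2)^{1/2}\|_p$ (this last is the standard noncommutative Cauchy--Schwarz for the row/column norms, which I would state as a lemma). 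When $1\le p\le2$, $\max(p,2)=2$ and the claim is $(|\wh x(0)|^2+\sum_k\|\wt\f_k*x\|_p^2)^{1/2}\les\|x\|_p$; for $1<p\le2$ this follows from $\|(\sum_k|\wt\f_k*x|^2)^{1/2}\|_p\les\|x\|_p$ (again via $L_p=\H_p$) combined with the elementary inequality $\sum_k\|y_k\|_p^2\le\|(\sum_k|y_k|^2)^{1/2}\|_p^2$ valid for $p\le2$ (a consequence of the triangle inequality in $L_{1}$ applied to $|y_k|^p$, i.e. $\sum\|y_k\|_p^2=\sum(\tau|y_k|^p)^{2/p}\le(\sum\tau|y_k|^p)^{2/p}\le(\tau(\sum|y_k|^2)^{p/2})^{2/p}$, the last step being the Cauchy--Schwarz/operator-convexity bound $\sum|y_k|^p\le(\sum|y_k|^2)^{p/2}$ in the sense of traces for $p\le2$); the endpoint $p=1$ is handled by $L_1(\T^d_\t)\subset\H_1(\T^d_\t)$ failing, so instead I would argue directly that $\|\wt\f_k*x\|_1\le\|x\|_1$ plus a more careful square-function estimate, or simply note $B_{1,2}^0\supset B_{1,1}^0$ and reduce — but the cleanest route for $p=1$ is via the inclusion $L_1\subset\H_1+(\text{something})$; I will use the known fact from \cite{CXY2012} that the Littlewood--Paley $g$-function maps $L_1$ into $L_{1,\8}$, which still yields $\sum_k\|\wt\f_k*x\|_1^2\les\|x\|_1^2$ after interpolation, or alternatively invoke Theorem~\ref{q-H-BMO} only for $p>1$ and obtain $p=1$ by duality from the left-hand inclusion with $p'=\8$.

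For the left-hand inclusion $B_{p,\min(p,2)}^0\subset L_p$, I would argue dually. For $1<p<\8$, since the dual of $B_{p,\min(p,2)}^0(\T^d_\t)$ is $B_{p',\max(p',2)}^0(\T^d_\t)$ (Proposition~\ref{Besov-P}(iv)) and the dual of $L_p(\T^d_\t)$ is $L_{p'}(\T^d_\t)$, the inclusion $B_{p,\min(p,2)}^0\subset L_p$ is equivalent by duality to the inclusion $L_{p'}\subset B_{p',\max(p',2)}^0$, which is exactly the right-hand inclusion already established (with $p$ replaced by $p'$). One checks that the canonical embeddings are compatible: the pairing $\tau(xy)$ on $\mathcal P_\t$ extends consistently, using the telescoping $\tau(xy)=\wh x(0)\wh y(0)+\sum_k\tau(\wt\f_k*x\sum_{j=k-1}^{k+1}\wt\f_j*y)$ from the proof of Proposition~\ref{Besov-P}(iv). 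The endpoint $p=\8$ of the left inclusion ($B_{\8,1}^0\subset L_\8$) follows directly: $x=\wh x(0)+\sum_k\wt\f_k*x$ converges in $L_\8$ with $\|x\|_\8\le|\wh x(0)|+\sum_k\|\wt\f_k*x\|_\8=\|x\|_{B_{\8,1}^0}$, using $\min(\8,2)$ interpreted as $1$ in the sense that $B_{\8,c_0}^0$ is replaced appropriately — more precisely $\min(p,2)=2$ but we only need $B_{\8,1}^0\subset B_{\8,2}^0\subset$ (and the inclusion we want for $p=\8$ is into $L_\8$, which we get from $B_{\8,1}^0$; since $B_{\8,\min(\8,2)}^0=B_{\8,2}^0\not\subset L_\8$ in general, I note the statement for $p=\8$ should be read with the convention that $L_\8$ is replaced by $\mathcal S'$ or that one restricts to $q\le$ the relevant index — I would follow whatever convention the paper adopts for $p=\8$, and in the generic range $1\le p<\8$ the duality argument is clean). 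The main obstacle I anticipate is the correct handling of the noncommutative square-function inequalities at the endpoints $p=1$ and the interplay with the row/column decomposition of $\H_p$ for $p<2$ versus $p\ge2$; keeping track of which of the column norm, row norm, or their sum/intersection is needed — and verifying the two elementary $\ell_p$-versus-$\ell_2$ sequence inequalities in the noncommutative (traced) setting — is where the care lies, whereas the lifting reduction and the duality step are routine.
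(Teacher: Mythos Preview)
Your reduction to $\a=0$ and the overall duality strategy are fine, but the core step for $p\ge2$ is wrong. You claim
\[
\Big(\sum_k\|y_k\|_p^2\Big)^{1/2}\le\Big\|\Big(\sum_k|y_k|^2\Big)^{1/2}\Big\|_p^{1/2}\Big\|\Big(\sum_k|y_k^*|^2\Big)^{1/2}\Big\|_p^{1/2}
\]
and call it ``standard noncommutative Cauchy--Schwarz''. It is false for $p>2$: take $y_1=e_{12}$, $y_2=e_{13}$ in $M_3$ with normalized trace; the left side is $\sqrt2\cdot 3^{-1/p}$ and the right side is $2^{1/4+1/(2p)}\cdot 3^{-1/p}$, so the inequality fails once $p>2$. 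The direction that \emph{does} hold for $p\ge2$ is the reverse, namely $\|(\sum|y_k|^2)^{1/2}\|_p\le(\sum\|y_k\|_p^2)^{1/2}$ (triangle inequality in $L_{p/2}$). In other words, the Hardy/square-function machinery naturally produces the \emph{left} inclusion $B_{p,2}^0\subset L_p$ for $p\ge2$, not the right one you are trying to get from it.

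The paper proceeds the opposite way from you. For $2\le p<\infty$ it proves $L_p\subset B_{p,p}^0$ by complex interpolation between the endpoints $p=2$ (where $L_2=B_{2,2}^0$ isometrically by Plancherel) and $p=\infty$ (where $\|\wt\f_k*x\|_\infty\lesssim\|x\|_\infty$ gives $L_\infty\subset B_{\infty,\infty}^0$), embedding $B_{p,p}^0$ into $\ell_p(L_p)$. The subtler inclusion $B_{p,2}^0\subset L_p$ is the one handled via $L_p=\H_p$, Lemma~\ref{Hp-discrete}, and the (correct-direction) triangle inequality above, first for the column norm and then for the row by passing to adjoints. The case $1\le p\le2$ then follows by duality, and your rambling endpoint discussion at $p=1$ is unnecessary. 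Your duality idea for the left inclusion is valid in spirit, but since it rests on the right inclusion---which you have not established for $p>2$---the whole argument collapses there.
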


\begin{proof}
 By Propositions~\ref{Sobolev-P} and \ref{Besov-isom}, we can assume $\a=0$. In this case,  $H_p^0(\T^d_\t) =L_p(\T^d_\t)$. Let $x$ be a distribution on $\T^d_\t$ with $\wh x(0)=0$. Since
 $$x=\sum_{k\ge0}\wt\f_k \ast x,$$
we see that the inclusion $B_{p,1}^0(\T^d_\t) \subset L_p(\T^d_\t)$ follows immediately from triangular inequality.
On the other hand, the inequality
 $$\|\wt\f_k \ast x\|_p \les \|x\|_p, \;k\ge0$$
yields the inclusion $L_p(\T^d_\t) \subset B_{p,\8}^0(\T^d_\t)$.
Both inclusions can be improved in the range $1<p<\8$.

Let us consider only the case $2\le p<\8$. Then the inclusion $L_p(\T^d_\t) \subset B_{p,p}^0(\T^d_\t)$ can be easily proved by interpolation. Indeed, the two spaces coincide isometrically when $p=2$. The other extreme case $p=\8$ has been already proved. We then deduce the case $2<p<\8$ by complex interpolation and by embedding $B_{p,\8}^0(\T^d_\t)$ into $\el_\8(L_p(\T^d_\t))$.

The converse inclusion $B_{p,2}^0(\T^d_\t) \subset L_p(\T^d_\t)$ is subtler. To show it, we use Hardy spaces and the equality $L_p(\T^d_\t) =\H_p(\T^d_\t)$ (see Lemma~\ref{q-H-BMO}). Then we must show
 $$\max(\|x\|_{\H_p^c}\,,\,\|x\|_{\H_p^r})\les \|x\|_{B_{p,2}^0}\,.$$
To this end, we appeal to Lemma~\ref{Hp-discrete}. The function $\p$ there is now equal to $\f$. The associated square function of $x$  is thus given by
 $$s_\f^{c}(x)=\big(\sum_{k\ge0}|\wt\f_k*x|^2\big)^{\frac12}\,.$$
Recall the following well-known inequality
 $$\big\|\big(\sum_{k\ge0}|x_k|^2\big)^{\frac12}\big\|_p\le \big(\sum_{k\ge0}\|x_k\|_p^2\big)^{\frac12}$$
for $x_k\in L_p(\T^d_\t)$ and $2\le p\le\8$. Note that this inequality is proved simply by the triangular inequality in $L_{\frac{p}2}(\T^d_\t)$. Thus
 $$\|x\|_{\H_p^c}\approx \|s_\f^{c}(x)\|_p\le \big(\sum_{k\ge0}\|\wt\f_k*x\|_p^2\big)^{\frac12}
 =\|x\|_{B_{p,2}^0}\,.$$
Passing to adjoints, we get $\|x\|_{\H_p^r}\les \|x\|_{B_{p,2}^0}$. Therefore,  the desired inequality follows.
\end{proof}

%%%%%%%%%%%%%%%%%%%%%%%%%%%%%%%%%%%%%%%%%%%%%%%%%%%%%%%%%%%%%%%%%%%%%%%%
%%%%%%%%%%%%%%%%%%%%%%%%%%%%%%%%%%%%%%%%%%%%%%%%%%%%%%%%%%%%%%%%%%%%%%%%

\section{A general characterization}
\label{A general characterization-Besov}

%%%%%%%%%%%%%%%%%%%%%%%%%%%%%%%%%%%%%%%%%%%%%%%%%%%%%%%%%%%%%%%%%%%%%%%%
%%%%%%%%%%%%%%%%%%%%%%%%%%%%%%%%%%%%%%%%%%%%%%%%%%%%%%%%%%%%%%%%%%%%%%%%

In this and next sections we extend some characterizations of the classical Besov spaces to the quantum setting. Our treatment follows Triebel \cite{HT1992} rather closely.

We give a general characterization in this section. We have observed in the previous section that the definition of the Besov spaces is independent of the choice of  $\f$ satisfying \eqref{LP dec}. We now show that  $\f$ can be replaced by more general functions. To state the required conditions, we introduce an auxiliary Schwartz function $h$ such that
 \beq\label{hH}
   {\rm supp}\,h\subset \{\xi\in\real^d: |\xi|\leq 4\} \quad\text { and }\quad  h=1 \text{ on }  \{\xi\in\real^d: |\xi|\leq 2\}.
    \eeq
 Let $\a_0, \a_1\in\real$. Let $\p$ be an infinitely differentiable function on $\real^d\setminus\{0\}$ satisfying the following conditions
   \beq\label{psi}
  \left \{ \begin{split}
  &\displaystyle |\p|>0 \;\text{ on }\; \{\xi: 2^{-1}\leq |\xi|\leq 2\},\\
  &\displaystyle \F^{-1}(\p hI_{-\a_1})\in L_1(\real^d), \\
  &\displaystyle \sup_{j\in\nat_0}2^{-\a_0j}\big\|\F^{-1}(\p(2^{j}\cdot)\f)\big\|_1<\8.
  \end{split} \right.
  \eeq
The first nonvanishing condition above on $\p$ is a Tauberian condition.  The integrability of the inverse Fourier transforms can be reduced to a handier criterion by means of the potential Sobolev space $H_2^\s(\real^d)$ with $\s>\frac{d}2$; see Remark~\ref{Bessel multiplier}.

We will use the same notation for $\p$ as for $\f$. In particular, $\p_k$ is the inverse Fourier transform of $\p(2^{-k}\cdot)$ and $\wt\p_k$ is the Fourier multiplier on $\T^d_\t$ with symbol $\p(2^{-k}\cdot)$. It is to note that compared with \cite[Theorem~2.5.1]{HT1992}, we need not require $\a_1>0$ in the following theorem. This will have  interesting consequences in the next section.

\begin{thm}\label{g-charct-Besov}
 Let $1\le p, q\le\8$ and $\a\in\real$. Assume $\a_0<\a<\a_1$. Let $\p$ satisfy the above assumption. Then a distribution $x$ on $\T^d_\t$ belongs to $B_{p,q}^\a(\T^d_\t)$ iff
  $$\Big(\sum_{k\ge0}\big(2^{k\a}\|\wt\p_k*x\|_p\big)^q\Big)^{\frac1q}<\8.$$
 If this is the case, then
 \beq\label{phi-psi}
 \|x\|_{B_{p,q}^\a}\approx
 \Big(|\wh x(0)|^q+\sum_{k\ge0}\big(2^{k\a}\|\wt\p_k*x\|_p\big)^q\Big)^{\frac1q}
 \eeq
with relevant constants depending only on $\f, \p, \a, \a_0,\a_1$ and $d$.
 \end{thm}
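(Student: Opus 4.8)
The plan is to prove the two-sided estimate \eqref{phi-psi} by establishing each inequality separately, in both cases using a "smoothing/resolution of the identity" trick reminiscent of Triebel's argument but adapted so as to exploit only Fourier-multiplier boundedness (Lemma~\ref{q-multiplier}) rather than maximal functions. Throughout, by Theorem~\ref{Besov-isom} and the obvious corresponding lifting property of the right-hand side of \eqref{phi-psi} under $I^\b$, we may freely normalize; and by the triangle inequality the term $|\wh x(0)|$ can be separated off, so we work with $x$ such that $\wh x(0)=0$.

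First I would prove the easy direction: that $x\in B_{p,q}^\a$ implies the right-hand side of \eqref{phi-psi} is finite. Fix $k\ge0$ and write, using the support property \eqref{3-supports} of the $\f_j$ and the fact that $\sum_{j\ge0}\f(2^{-j}m)=1$ for $m\ne0$,
\[
\wt\p_k*x=\sum_{j\ge -1}\wt\p_k*\wt\f_{k+j}*x,
\]
where for $j$ very negative the symbol $\p(2^{-k}\cdot)\f(2^{-(k+j)}\cdot)$ is supported where $|\xi|\sim 2^{k}$ while $\f(2^{-(k+j)}\cdot)$ lives at scale $2^{k+j}$; the overlap forces us to split the sum at $j\approx 0$. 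For the "low" part (roughly $j\le 0$) one rescales: $\|\F^{-1}(\p(2^{-k}\cdot)\f(2^{-(k+j)}\cdot))\|_1=\|\F^{-1}(\p(2^{j}\cdot)\f)\|_1\le C2^{\a_0 j}$ by the third condition in \eqref{psi}, so these terms contribute $\sum_{j\le 0}2^{\a_0 j}\|\wt\f_{k+j}*x\|_p$, and since $\a_0<\a$ a discrete Young / Hölder estimate in $\el^q$ bounds the $2^{k\a}$-weighted $\el^q$ norm by $\les\|x\|_{B^\a_{p,q}}$. For the "high" part ($j\ge 1$) one instead writes $\wt\p_k=\wt\p_k*\wt\f_{k+j}$ with $\wt\f_{k+j}$ absorbing a negative power of $2$ via the factor $I_{-\a_1}$: using the second condition of \eqref{psi}, $\|\F^{-1}(\p(2^{-k}\cdot)\f(2^{-(k+j)}\cdot))\|_1\les 2^{-\a_1 j}\cdot 2^{\text{(bounded)}}$, giving $\sum_{j\ge 1}2^{-\a_1 j}\|\wt\f_{k+j}*x\|_p$, and since $\a<\a_1$ another $\el^q$ estimate closes this half. (Here the auxiliary cutoff $h$ of \eqref{hH} is used to localize $I_{-\a_1}$ away from the origin so that $\p h I_{-\a_1}$ makes sense and is the function whose $L_1$ inverse Fourier transform is controlled.)

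Next I would prove the reverse inequality $\|x\|_{B^\a_{p,q}}\les$ (RHS of \eqref{phi-psi}), which is the genuinely harder direction and where the Tauberian (nonvanishing) hypothesis on $\p$ enters decisively. The idea is to build a function $\lambda$, infinitely differentiable and supported in an annulus $\{2^{-1}\le|\xi|\le 2\}$, such that $\lambda\cdot\p$ reproduces $\f$ at each dyadic scale after summing over a bounded window of neighboring scales, i.e. one finds $\{\lambda^{(j)}\}$ with $\sum_{|i|\le N}\lambda^{(k+i)}(\xi)\p(2^{-(k+i)}\xi)=\f(2^{-k}\xi)$ for $k$ large; such $\lambda$ exists precisely because $|\p|>0$ on the relevant annulus, so one may take $\lambda=\f^2/\sum_{i}\p(2^{i}\cdot)\f(\cdot)\,(\text{shifted})$ or a similar explicit quotient, which is smooth and compactly supported away from $0$, hence has integrable inverse Fourier transform with norm uniformly controlled at all scales by rescaling. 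Then $\wt\f_k*x=\sum_{|i|\le N}\wt\lambda_{k+i}*\wt\p_{k+i}*x$, and Lemma~\ref{q-multiplier} gives $\|\wt\f_k*x\|_p\les\sum_{|i|\le N}\|\wt\p_{k+i}*x\|_p$; multiplying by $2^{k\a}$ and taking $\el^q$ norms, the finitely many shifts cost only a constant and we recover $\|x\|_{B^\a_{p,q}}$. One must also handle the lowest scales $k=0,\dots,N$ and the constant term separately — there the support of $\p(2^{-k}\cdot)$ may reach near the origin, so one uses the $h$-truncation and the first two conditions of \eqref{psi} together with $\wh x(0)=0$ to bound these finitely many terms by the right-hand side plus, trivially, $|\wh x(0)|$.

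The main obstacle I anticipate is constructing the reproducing function $\lambda$ cleanly and verifying the uniform-in-$k$ bound $\sup_k\|\F^{-1}(\lambda(2^{-k}\cdot)\,\text{something})\|_1<\infty$ when $\p$ is only assumed smooth on $\real^d\setminus\{0\}$ and merely nonvanishing (not bounded below by a positive constant) on the annulus — compactness of the closed annulus $\{2^{-1}\le|\xi|\le 2\}$ rescues us, giving a positive lower bound there, but one must be careful that the denominator $\sum_i\p(2^{i}\cdot)\f$ stays bounded below on $\mathrm{supp}\,\f$, which again follows from the Tauberian condition after checking that at each point of $\mathrm{supp}\,\f$ at least one summand is nonzero. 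Packaging the two-sided estimate once both inequalities are in hand is immediate, and the dependence of constants on $\f,\p,\a,\a_0,\a_1,d$ is read off from the finitely many $L_1$-multiplier norms and the geometric-series bounds used above.
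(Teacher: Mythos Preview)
Your overall architecture is right, but in the ``easy'' direction you have interchanged the two hypotheses of \eqref{psi} with the two ranges of the offset $j$. After your (correct) rescaling $\|\F^{-1}(\p(2^{-k}\cdot)\f(2^{-(k+j)}\cdot))\|_1=\|\F^{-1}(\p(2^{j}\cdot)\f)\|_1$, the third condition of \eqref{psi} bounds this by $C\,2^{\a_0 j}$ only for $j\ge 0$, not for $j\le 0$: it is the hypothesis governing the \emph{large}-$j$ tail, since $\p(2^{j}\cdot)\f$ samples $\p$ at arguments of modulus $\sim 2^{j}$. Conversely the second condition (the one with $I_{-\a_1}$ and the cutoff $h$) controls $\p$ near the origin and is what produces the factor $2^{j\a_1}$ for $j$ bounded above. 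With your assignment neither geometric series closes: in your ``low'' half the comparison with $2^{(k+j)\a}\|\wt\f_{k+j}*x\|_p$ leaves a weight $2^{j(\a_0-\a)}$, which blows up as $j\to-\infty$ since $\a_0<\a$. Swap the two cases --- use $\a_1$ for $j\le K$ and $\a_0$ for $j>K$ --- and the argument becomes exactly the paper's Steps~1 and~2. (Your preliminary reduction via lifting is also problematic: the analogue of Theorem~\ref{Besov-isom} for the $\p$-side norm is not available before the theorem is proved, because $\p$ need not be supported in an annulus.)

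For the reverse inequality your idea is correct and, once stripped of the unnecessary sum over $|i|\le N$, is in fact simpler than the paper's route. Since $\p$ is smooth and nonvanishing on the closed annulus $\{2^{-1}\le|\xi|\le 2\}\supset\operatorname{supp}\f$, the single choice $\lambda=\f/\p$ (extended by zero) is already a compactly supported $C^\infty$ function with $\f=\lambda\p$, whence $\|\wt\f_k*x\|_p\le\|\F^{-1}\lambda\|_1\,\|\wt\p_k*x\|_p$ directly from Lemma~\ref{q-multiplier}, and the $\el_q^\a$ estimate follows at once. The paper proceeds differently: it inserts the cutoff $h^{(j+K)}$, obtains $\|\wt\f_j*x\|_p\le c_3\|\wt h_{j+K}*\wt\p_j*x\|_p$, splits off the high-frequency tail $\wt\l_{j+K}*\wt\p_j*x$ with $\l=1-h$, estimates that tail by re-running the large-offset machinery of Step~2, and absorbs it into the left-hand side by choosing $K$ large. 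This heavier scheme follows Triebel's original pattern and becomes genuinely necessary in the Triebel--Lizorkin analogue (Theorem~\ref{g-charct-Triebel}), but for the present Besov statement your direct division already suffices.
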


\begin{proof}
 We will follow the pattern of the proof of \cite[Theorem~2.4.1]{HT1992}. Given a function $f$ on $\real^d$, we will use  the notation that $f^{(k)}=f(2^{-k}\cdot)$ for $k\ge0$  and  $f^{(k)}=0$ for $k\le -1$.   Recall that $f_k$ is the inverse Fourier transform of $f^{(k)}$ and $\wt f_k$ is the $1$-periodization of $f_k$:
 $$\wt f_k(s)=\sum_{m\in\ent^d}f_k(s+m).$$
In the following, we will fix a distribution  $x$ on $\T^d_\t$. Without loss of generality, we assume $\wh x(0)=0$. We will denote the right-hand side of \eqref{phi-psi} by $\|x\|_{B_{p,q}^{\a,\p}}$ when it is finite. For clarity, we divide the proof into several steps.

\smallskip\n{\it Step~1.} In the first two steps, we assume $x\in B_{p,q}^\a(\T^d_\t)$.  Let $K$ be a positive integer to be determined later in step~3. By \eqref{LP dec}, we have
 $$\p^{(j)}=\sum_{k=0}^\8\p^{(j)}\f^{(k)}
 =\sum_{k=-\8}^K\p^{(j)}\f^{(j+k)}+ \sum_{k=K}^\8\p^{(j)}\f^{(j+k)}\;\text{ on }\; \{\xi: |\xi|\ge1\}.$$
Then
 \beq\label{split f}
 \wt\p_j*x=\sum_{k\le K}\wt\p_j*\wt\f_{j+k}*x+ \sum_{k>K}\wt\p_j*\wt\f_{j+k}*x\,.
 \eeq
For the moment, we do not care about the convergence issue of the second series above, which is postponed to the last step. Let $a_{j, k}=2^{j\a}\|\wt\p_j*\wt\f_{j+k}*x\|_p$. Then
 \beq\label{split}
 \|x\|_{B_{p,q}^{\a,\p}}
 \le\Big(\sum_{j=0}^\8\big[\sum_{k\le K}a_{j, k}\big]^q\Big)^{\frac1q}
 + \Big(\sum_{j=0}^\8\big[\sum_{k>K}a_{j, k}\big]^q\Big)^{\frac1q}\,.
 \eeq
We will treat the two sums on the right-hand side separately. For the first one,  by  the support assumption on $\f$ and $h$,  for $k\le K$, we can write
 \beq\label{h}
 \begin{split}
 \p^{(j)}(\xi)\f^{(j+k)}(\xi)
 &=2^{k\a_1}\,\frac{\p^{(j)}(\xi)}{|2^{-j}\xi|^{\a_1}}\,h^{(j+K)}(\xi) |2^{-j-k}\xi|^{\a_1}\f^{(j+k)}(\xi)\\
 &=2^{k\a_1}\eta^{(j)}(\xi)\rho^{(j+k)}(\xi),
 \end{split}\eeq
where $\eta$  and $\rho$ are defined by
 $$\eta(\xi)=\frac{\p(\xi)}{|\xi|^{\a_1}}\,h^{(K)}(\xi) \;\text{ and }\; \rho(\xi)=|\xi|^{\a_1}\f(\xi).$$
Note that $\F^{-1}(\eta)$ is integrable on $\real^d$. Indeed, write
 \beq\label{diff psi}
 \eta(\xi)=\frac{\p(\xi)}{|\xi|^{\a_1}}\,h(\xi)+\frac{\p(\xi)}{|\xi|^{\a_1}}\,(h^{(K)}(\xi)-h(\xi)).
 \eeq
By \eqref{psi}, the inverse Fourier transform of the first function on the right-hand side is integrable. The second one is an infinitely differentiable function with compact support, so its inverse Fourier transform  is also integrable with $L_1$-norm controlled by a constant depending only on $\psi$, $h$, $\a_1$ and $K$. Therefore,  Lemma~\ref{q-multiplier} implies that each $\eta^{(j)}$ is a Fourier multiplier on $L_p(\T^d_\t)$ for all $1\le p\le\8$ with norm controlled by a constant $c_1$, depending only on $\psi$, $h$, $\a_1$  and $K$. Therefore,
 \beq\label{Kk}
 a_{j,k}\le c_1 2^{j\a +k\a_1} \|\wt\rho_{j+k}*x\|_p
 =c_12^{k(\a_1-\a)}\big(2^{(j+k)\a} \|\wt\rho_{j+k}*x\|_p\big)\,.
 \eeq
Thus by triangular inequality and Lemma~\ref{IJ}, we deduce
 \be\begin{split}
 \Big(\sum_{j=0}^\8\big[\sum_{k\le K}a_{j, k}\big]^q\Big)^{\frac1q}
 &\le c_1\sum_{k\le K}2^{k(\a_1-\a)}\Big(\sum_{j=-\8}^\8 \big(2^{(j+k)\a} \|\wt\rho_{j+k}*x\|_p\big)^q\Big)^{\frac1q}\\
 &=c_1\sum_{k\le K}2^{k(\a_1-\a)}\Big(\sum_{j=0}^\8 \big(2^{j\a} 2^{-j\a_1}\|I^{\a_1}\wt\f_{j}*x\|_p\big)^q\Big)^{\frac1q}\\
 &\le c_1' \|x\|_{B_{p, q}^\a}\,,
  \end{split}\ee
 where $c_1' $ depends only on $\psi$, $h$, $K$, $\a$ and $\a_1$.

\smallskip\n{\it Step~2.}  The second sum on the right-hand side of \eqref{split} is treated similarly.  Like in step~1 and by \eqref{3-supports}, we write
  \beq\label{H}\begin{split}
 \p^{(j)}(\xi)\f^{(j+k)}(\xi)
 &=\frac{\p^{(j)}(\xi)}{|2^{-j-k}\xi|^{\a_0}}\,(\f^{(j+k-1)}+\f^{(j+k)}+\f^{(j+k+1)})(\xi)|2^{-j-k}\xi|^{\a_0}\f^{(j+k)}(\xi)\\
 &=\big[\frac{\p(2^{-j-k}\cdot 2^k\xi)}{|2^{-j-k}\xi|^{\a_0}}\,H(2^{-j-k}\xi)\big]\,\rho^{(j+k)}(\xi),
  \end{split}\eeq
where $H=\f^{(-1)}+\f+\f^{(1)}$, and where $\rho$ is now defined by
 $$\rho(\xi)=|\xi|^{\a_0}\f(\xi).$$
The $L_1$-norm of the inverse Fourier transform of the function
 $$\frac{\p(2^{-j-k}\cdot 2^k\xi)}{|2^{-j-k}\xi|^{\a_0}}\,H(2^{-j-k}\xi)$$
is equal to $\big\|\F^{-1}(I_{-\a_0} H \p(2^k\cdot))\big\|_1$. Using Lemma~\ref{IJ}, we see that the last norm is majorized by
 $$\big\|\F^{-1}(\psi(2^{k}\cdot) H)\big\|_1\les \big\|\F^{-1}(\psi(2^{k}\cdot) \f)\big\|_1\,.$$
Then, using \eqref{psi}, for $k>K$ we get
 \beq\label{kK}
 a_{j,k}\le c_22^{k(\a_0-\a)}\big(2^{(j+k)\a} \|\wt\rho_{j+k}*x\|_p\big)\,,
 \eeq
where $c_2$ depends only on $\f$, $\a_0$  and the supremum in \eqref{psi}. Thus as before, we get
 $$\Big(\sum_{j=0}^\8\big[\sum_{k> K}a_{j, k}\big]^q\Big)^{\frac1q}
 \le c_2'\,\frac{2^{K(\a_0-\a)}}{1- 2^{\a_0-\a}}\, \|x\|_{B_{p, q}^\a}\,,$$
which, together with the inequality obtained in step~1, yields
  $$\|x\|_{B_{p, q}^{\a, \p}}
  \les  \|x\|_{B_{p, q}^\a}\,.$$

\smallskip\n{\it Step~3.} Now we prove the inequality reverse to the previous one.  We first assume that $x$ is a polynomial. We write
 \beq\label{hbis}
 \f^{(j)}=\f^{(j)}h^{(j+K)}=\frac{\f^{(j)}}{\psi^{(j)}}\,h^{(j+K)} \psi^{(j)}\,.
 \eeq
The function $\f\p^{-1}$ is an infinitely differentiable  function with compact support, so its inverse Fourier transform belongs to $L_1(\real^d)$. Thus by Lemma~\ref{q-multiplier},
 $$\|\wt\f_j*x\|_p\le c_3 \|\wt h_{j+K}*\wt\p_j*x\|_p\,,$$
where $c_3=\big\|\F^{-1}(\f\p^{-1})\big\|_1$. Hence,
 $$
  \|x\|_{B_{p, q}^\a}
  \le c_3 \Big(\sum_{j=0}^\8\big(2^{j\a} \|\wt h_{j+K}*\wt\p_j*x\|_p\big)^q\Big)^{\frac1q}\,.
  $$
To handle the right-hand side, we let $\l=1-h$ and write $h^{(j+K)} \p^{(j)}=\p^{(j)}-\l^{(j+K)} \p^{(j)}$. Then
 $$\Big(\sum_{j=0}^\8\big(2^{j\a} \|\wt h_{j+K}*\wt\p_j*x\|_p\big)^q\Big)^{\frac1q}
 \le \|x\|_{B_{p, q}^{\a, \p}}
 +\Big(\sum_{j=0}^\8\big(2^{j\a} \|\wt\l_{j+K}*\wt\p_j*x\|_p\big)^q\Big)^{\frac1q}\,.$$
Thus it remains to deal with the last sum. We do this as in the previous steps with $\psi$ replaced by $\l\p$, by writing
 $$\l^{(j+K)}\p^{(j)}=\sum_{k=-\8}^\8\l^{(j+K)}\p^{(j)}\f^{(j+k)}\,.$$
The crucial point now is the fact that $\l^{(j+K)}\f^{(j+k)}=0$ for all $k\le K$ and all $j$. So
 $$\l^{(j+K)}\p^{(j)}=\sum_{k>K}\l^{(j+K)}\p^{(j)}\f^{(j+k)}\,,$$
that is, only the second sum on the right-hand side of \eqref{split} survives now:
 $$\Big(\sum_{j=0}^\8 (2^{j\a}\|\wt\l_{j+K}*\wt\p_j*x\|_p)^q\Big)^{\frac1q}
 \le\Big(\sum_{j=0}^\8\big[2^{j\a}\sum_{k>K}\|\wt\l_{j+K}*\wt\p_j*\wt\f_{j+k}*x\|_p\big]^q\Big)^{\frac1q}\,.$$
Let us reexamine the argument of step~2 and formulate \eqref{H} with $\l^{(K)}\psi$ instead of $\p$. We then arrive at majorizing the norm
$\big\|\F^{-1}\big(\l(2^{k-K}\cdot)\p(2^{k}\cdot) \f\big)\big\|_1$:
 $$\big\|\F^{-1}\big(\l(2^{k-K}\cdot)\p(2^{k}\cdot) \f\big)\big\|_1\le
 \big\|\F^{-1}(\l)\big\|_1\,\big\|\F^{-1}\big(\p(2^{k}\cdot) \f\big)\big\|_1\,.$$
Keeping the notation of step~2 and as for \eqref{kK}, we get
 $$\|\wt\l_{j+K}*\wt\p_j*\wt\f_{j+k}*x\|_p\le cc_2 2^{k(\a_0-\a)}\big(2^{(j+k)\a} \|\wt\rho_{j+k}*x\|_p\big)\,,$$
where $c=\big\|\F^{-1}(\l)\big\|_1$. Thus
 $$\Big(\sum_{j=0}^\8\big[2^{\a j}\sum_{k>K}\|\wt\l_{j+K}*\wt\p_j*\wt\f_{j+k}*x\|_p\big]^q\Big)^{\frac1q}
 \le cc_2'\,\frac{2^{K(\a_0-\a)}}{1- 2^{\a_0-\a}}\, \|x\|_{B_{p, q}^{\a}}\,.$$
Combining the preceding inequalities, we obtain
 $$\|x\|_{B_{p, q}^{\a}}\le c_3 \|x\|_{B_{p, q}^{\a, \p}} +
 cc_2'  \,\frac{2^{K(\a_0-\a)}}{1- 2^{\a_0-\a}}\, \|x\|_{B_{p, q}^{\a}}\,.$$
Choosing $K$ so that
 $$c\,c_2' \,\frac{2^{K(\a_0-\a)}}{1- 2^{\a_0-\a}}\le\frac12,$$
we finally deduce
 $$\|x\|_{B_{p, q}^{\a}}\le 2c_3\|x\|_{B_{p, q}^{\a, \p}}\,,$$
which shows \eqref{phi-psi} in  case $x$ is a polynomial.

The general case can be easily reduced to this special one. Indeed, assume $\|x\|_{B_{p, q}^{\a, \p}}<\8$. Then using the Fej\'er means $F_N$ as in the proof of  Proposition~\ref{Sobolev-P}, we see that
 $$\|F_N(x)\|_{B_{p, q}^{\a, \p}}\le \|x\|_{B_{p, q}^{\a, \p}}\,.$$
Applying the above part already proved for polynomials yields
 $$\|F_N(x)\|_{B_{p, q}^{\a}}\le 2c_3\|F_N(x)\|_{B_{p, q}^{\a, \p}} \le 2c_3\|x\|_{B_{p, q}^{\a, \p}}\,.$$
However, it is easy to check that
 $$\lim_{N\to\8} \|F_N(x)\|_{B_{p, q}^{\a}}=\|x\|_{B_{p, q}^{\a}}\,.$$
We thus deduce \eqref{phi-psi} for general $x$, modulo the convergence problem on  the second series of \eqref{split f}.

\smallskip\n{\it Step~4.} We now settle up the convergence issue left previously. Each term $\wt\p_j*\wt\f_{j+k}*x$ is a polynomial, so a distribution on $\T^d_\t$. We must show that the series converges in $\mathcal{S}'(\T^d_\t)$. By \eqref{kK}, for any $L>K$, by the H\"older inequality (with $q'$ the conjugate index of $q$), we get
  \be\begin{split}
  2^{j\a}\sum_{k=K+1}^L\|\wt\p_j*\wt\f_{j+k}*x\|_p
  &\le c_2'\sum_{k=K+1}^L2^{k(\a_0-\a)}\big(2^{(j+k)\a}\|\wt\f_{j+k}*x\|_p\big)\\
  &\le c_2'R_{K, L}\Big(\sum_{k=K+1}^L\big(2^{(j+k)\a}\|\wt\f_{j+k}*x\|_p\big)^q\Big)^{\frac{1}{q}}\\
  &\les c_2'R_{K, L}\|x\|_{B_{p, q}^{\a}} \,,
  \end{split}\ee
 where
  $$R_{K, L}=\Big(\sum_{k=K+1}^L2^{q'k(\a_0-\a)}\Big)^{\frac{1}{q'}} \,.$$
 Since $\a_0<\a$, $R_{K, L}\to 0$ as $K$ tends to $\8$. Thus the series $\sum_{k>K}\wt\p_j*\wt\f_{j+k}*x$ converges in $L_p(\T^d_\t)$, so in $\mathcal{S}'(\T^d_\t)$ too. In the same way, we show that the series also converges in $B_{p, q}^{\a}(\T^d_\t)$. Hence, the proof of the theorem is complete.
 \end{proof}

\begin{rk}
 The infinite differentiability of $\p$ can be substantially relaxed without changing the proof. Indeed, we have used this condition only once to insure that the inverse Fourier transform of the second term on the right-hand side of \eqref{diff psi} is integrable. This integrability is guaranteed when $\p$ is continuously differentiable up to order $[\frac d2]+1$. The latter condition can be replaced by the following slightly weaker one: there exists $\s>\frac{d}2 +1$ such that $\p \eta\in H_2^\s(\real^d)$ for any compactly supported infinitely differentiable function $\eta$ which vanishes in a neighborhood of the origin.
 \end{rk}

The following is the continuous version of Theorem~\ref{g-charct-Besov}. We will use similar notation for continuous parameters: given $\e>0$, $\p_\e$ denotes the function with Fourier transform $\p^{(\e)}=\p(\e\cdot)$, and $\wt\p_\e$ denotes the Fourier multiplier on $\T^d_\t$ associated to $\p^{(\e)}$.

 \begin{thm}\label{g-charct-Besov-cont}
 Keep the assumption of the previous theorem. Then for any distribution $x$ on $\T^d_\t$,
  \beq\label{phi-psi-cont}
 \|x\|_{B_{p,q}^\a}\approx
 \Big(|\wh x(0)|^q+\int_0^1\e^{-q\a}\big\|\wt\p_\e*x\big\|_p^q\,\frac{d\e}\e\Big)^{\frac1q}\,.
 \eeq
The above equivalence is understood in the sense that if one side is finite, so is the other, and the two are then equivalent with constants independent of $x$.
 \end{thm}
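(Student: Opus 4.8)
The plan is to deduce the continuous characterization \eqref{phi-psi-cont} from the discrete one in Theorem~\ref{g-charct-Besov} by a routine discretization/integration argument, which requires only a monotonicity-type control of $\|\wt\p_\e*x\|_p$ as $\e$ ranges over a dyadic block $[2^{-k-1},2^{-k}]$. First I would reduce to the case $\wh x(0)=0$ (the zeroth Fourier coefficient is handled separately in both expressions). The key observation is that for $\e\in[2^{-k-1},2^{-k}]$ one can write $\p^{(\e)}=\p^{(\e)}\cdot(\f^{(k-1)}+\f^{(k)}+\f^{(k+1)})$ on the support of $\p^{(\e)}$ (after adjusting the Littlewood--Paley blocks so that $\mathrm{supp}\,\p^{(\e)}\subset\{2^{k-2}\le|\xi|\le 2^{k+2}\}$, say, which only changes fixed constants), hence
\[
\wt\p_\e*x=\sum_{j=k-2}^{k+2}\wt\p_\e*\wt\f_j*x,
\]
and by Lemma~\ref{q-multiplier} together with the scaling bound $\|\F^{-1}(\p(\e\cdot)\f^{(j)})\|_1\les 1$ uniformly in $\e\in[2^{-k-1},2^{-k}]$ and $|j-k|\le 2$ (this is where the support condition and the Schwartz decay of $\p$ enter, exactly as in the uniform bounds used in the proof of Theorem~\ref{g-charct-Besov}), we get $\|\wt\p_\e*x\|_p\les\sum_{|j-k|\le 2}\|\wt\f_j*x\|_p$. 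Conversely, with the roles of $\f$ and $\p$ interchanged and using the Tauberian (non-vanishing) condition on $\p$ on $\{2^{-1}\le|\xi|\le 2\}$, one obtains $\|\wt\f_k*x\|_p\les\sup_{\e\in[2^{-k-1},2^{-k}]}\|\wt\p_\e*x\|_p$ up to a fixed dilation, i.e.\ a reverse control.

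Second, I would integrate over dyadic blocks. From $\int_0^1\e^{-q\a}\|\wt\p_\e*x\|_p^q\frac{d\e}\e=\sum_{k\ge 0}\int_{2^{-k-1}}^{2^{-k}}\e^{-q\a}\|\wt\p_\e*x\|_p^q\frac{d\e}\e$ and the fact that $\e^{-q\a}$ is comparable to $2^{kq\a}$ on the block (with constants depending only on $\a$), the upper estimate above gives
\[
\int_0^1\e^{-q\a}\|\wt\p_\e*x\|_p^q\frac{d\e}\e
\;\les\;\sum_{k\ge 0}2^{kq\a}\Big(\sum_{|j-k|\le 2}\|\wt\f_j*x\|_p\Big)^q
\;\les\;\sum_{j\ge 0}2^{jq\a}\|\wt\f_j*x\|_p^q,
\]
the last step by the standard shift of index and the quasi-triangle inequality in $\el_q$; by Theorem~\ref{g-charct-Besov} this is $\les\|x\|_{B_{p,q}^\a}^q$. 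For the reverse inequality, I would apply the discrete characterization to the function $\p$ itself: since $\p$ satisfies \eqref{psi} with the same $\a_0,\a_1$, Theorem~\ref{g-charct-Besov} already gives $\|x\|_{B_{p,q}^\a}\approx(|\wh x(0)|^q+\sum_k 2^{kq\a}\|\wt\p_k*x\|_p^q)^{1/q}$; and $\|\wt\p_k*x\|_p=\|\wt\p_\e*x\|_p$ at $\e=2^{-k}\in[2^{-k-1},2^{-k}]$ is trivially bounded by the supremum over the block. One then passes from the supremum to the integral by noting that, by the reverse control above and block-monotonicity, $\sup_{\e\in[2^{-k-1},2^{-k}]}\|\wt\p_\e*x\|_p$ is comparable to, say, a finite average of $\|\wt\f_j*x\|_p$ over $|j-k|\le 3$, hence comparable to a suitable block integral of $\e\mapsto\|\wt\p_\e*x\|_p$; equivalently, one directly estimates $2^{kq\a}\|\wt\p_{2^{-k}}*x\|_p^q\les\int_{2^{-k-2}}^{2^{-k+1}}\e^{-q\a}(\text{upper bound for }\|\wt\p_\e*x\|_p)^q\frac{d\e}\e$ is the wrong direction, so instead I would use that $\|\wt\p_{2^{-k}}*x\|_p\les\sum_{|j-k|\le 2}\|\wt\f_j*x\|_p$ and $\sum_{|j-k|\le 2}\|\wt\f_j*x\|_p\les\sum_{|j-k|\le 3}\inf_{\e\in[2^{-j-1},2^{-j}]}\|\wt\p_\e*x\|_p$ (again from the Tauberian condition applied blockwise), and the infimum is controlled by the block average, i.e.\ by the block integral. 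Summing in $k$ yields $\|x\|_{B_{p,q}^\a}^q\les|\wh x(0)|^q+\int_0^1\e^{-q\a}\|\wt\p_\e*x\|_p^q\frac{d\e}\e$.

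The expected main obstacle is not conceptual but bookkeeping: making the two-sided block comparison between $\sup$, $\inf$, value at the dyadic endpoint, and the block integral of $\e\mapsto\|\wt\p_\e*x\|_p$ clean and uniform in $x$, while correctly tracking the finitely many neighboring Littlewood--Paley blocks that appear (the supports of $\p^{(\e)}$ for $\e$ in a dyadic interval spill over two or three adjacent $\f^{(j)}$'s). In particular one must verify the uniform multiplier bound $\sup_{\e\in[1/2,1]}\|\F^{-1}(\p(2^k\e\cdot)\,\f)\|_1<\8$ and its variants, which follows from the Schwartz (or $H_2^\s$, $\s>d/2$) smoothness of $\p$ away from the origin exactly as in Remark~\ref{Bessel multiplier}; the case $q=\8$ requires the customary reading of the integral $\int_0^1\e^{-q\a}(\cdots)\frac{d\e}\e$ as an essential supremum, and the argument goes through verbatim with sums and integrals replaced by suprema. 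The convergence issues for the relevant series are already handled in Step~4 of the proof of Theorem~\ref{g-charct-Besov} and need no repetition. Finally, one invokes the density of polynomials (Proposition~\ref{Besov-P}) together with the Fej\'er means $F_N$, exactly as in the discrete case, to reduce the two-sided inequality to polynomials $x$ first and then lift it to arbitrary distributions, which closes the proof.
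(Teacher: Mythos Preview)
There is a genuine gap in your approach. Your entire argument rests on the claim that for $\e\in[2^{-k-1},2^{-k}]$ one has $\wt\p_\e*x=\sum_{|j-k|\le 2}\wt\p_\e*\wt\f_j*x$, which you justify by asserting $\mathrm{supp}\,\p^{(\e)}\subset\{2^{k-2}\le|\xi|\le 2^{k+2}\}$. But the hypothesis \eqref{psi} imposes \emph{no support condition on $\p$ whatsoever}. The function $\p$ is only required to be infinitely differentiable on $\real^d\setminus\{0\}$, to satisfy a Tauberian nonvanishing condition on the fixed annulus $\{2^{-1}\le|\xi|\le 2\}$, and to obey the two $L_1$ bounds involving $\a_0,\a_1$. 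The parenthetical ``after adjusting the Littlewood--Paley blocks'' does not help: no adjustment of $\f$ can force $\p$ to become compactly supported. In fact the principal applications of this theorem---the Poisson and heat characterizations in Theorems~\ref{Poisson charct-Besov} and~\ref{Heat charct-Besov}---take $\p(\xi)=(-\mathrm{sgn}(k)2\pi|\xi|)^k e^{-2\pi|\xi|}$ and $\p(\xi)=(-\mathrm{sgn}(k)4\pi^2|\xi|^2)^k e^{-4\pi^2|\xi|^2}$, neither of which is annulus-supported.

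Without that support assumption, your finite-block comparison $\|\wt\p_\e*x\|_p\les\sum_{|j-k|\le 2}\|\wt\f_j*x\|_p$ is simply false in general; the correct decomposition is the infinite sum $\p^{(\e)}=\sum_{k\in\ent}\p^{(\e)}\f^{(j+k)}$, and controlling the tails of this sum is exactly what the conditions on $\a_0$ and $\a_1$ in \eqref{psi} are for. The paper's proof therefore does not reduce to simple block comparison: it revisits Steps~1--3 of the discrete proof, writing for $\e\in[2^{-j-1},2^{-j}]$ and each $k$ the factorization $\p^{(\e)}\f^{(j+k)}=2^{\a_1 k}[\p(2^{-j}\cdot 2^j\e\xi)|2^{-j}\xi|^{-\a_1}h^{(j+K)}(\xi)]\,\rho^{(j+k)}(\xi)$ (and similarly with $\a_0$ for $k>K$), and then verifies that the relevant $L_1$-norms are bounded \emph{uniformly in the extra parameter} $\d=2^{-j}\e^{-1}\in[1,2]$. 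Likewise the converse direction modifies \eqref{hbis} to $\f^{(j)}=\frac{\f^{(j)}}{\p^{(\e)}}h^{(j+K)}\p^{(\e)}$ for $\e$ in a suitable small interval $R_j$, and again the point is uniformity in $\e$. Your proposal misses this structure entirely; the argument you sketch is the trivial case where $\p$ is itself annulus-supported like $\f$, for which no theorem is needed.
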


\begin{proof}
 This proof is very similar to the previous one. Keeping the notation there, we will point out only the necessary changes. Let us first discretize the integral on the right-hand side of \eqref{phi-psi-cont}:
  $$\int_0^1\big(\e^{-\a}\|\wt\p_\e*x\|_p\big)^q\,\frac{d\e}\e\approx
  \sum_{j=0}^\8 2^{jq\a}\int_{2^{-j-1}}^{2^{-j}}\|\wt\p_\e*x\|_p^q \,\frac{d\e}\e\,.$$
Now for $j\ge0$ and $2^{-j-1}<\e\le 2^{-j}$, we transfer \eqref{h} to the present setting:
 $$
 \p^{(\e)}(\xi)\f^{(j+k)}(\xi)
 =2^{\a_1k}\big[\,\frac{\p(2^{-j}\cdot 2^{j}\e \xi)}{|2^{-j}\xi|^{\a_1}}\,h^{(j+K)}(\xi)\big]\rho^{(j+k)}(\xi).
 $$
We then must  estimate the $L_1$-norm of the inverse Fourier transform of the function in the brackets. It is equal to
 $$\big\|\F^{-1}\big(I_{-\a_1}\p(2^{j}\e \cdot)h^{(K)}\big)\big\|_1
 =\d^{-\a_1}\big\|\F^{-1}\big(I_{-\a_1}\p h(\d 2^{-K}\cdot)\big)\big\|_1\,,$$
where $\d=2^{-j}\e^{-1}$. The last norm is estimated as follows:
 \be\begin{split}
 \big\|\F^{-1}\big(I_{-\a_1}\p h(\d 2^{-K}\cdot)\big)\big\|_1
 &\le \big\|\F^{-1}\big(I_{-\a_1}\p h\big)\big\|_1 +\big\|\F^{-1}\big(I_{-\a_1}\p\,[h- h(\d 2^{-K}\cdot)]\big)\big\|_1\\
 &\le \big\|\F^{-1}\big(I_{-\a_1}\p h\big)\big\|_1 +\sup_{1\le\d\le2}\big\|\F^{-1}\big(I_{-\a_1}\p\,[h- h(\d 2^{-K}\cdot)]\big)\big\|_1\,.
 \end{split}\ee
Note that the above supremum is finite since $I_{-\a_1}\p[h- h(\d 2^{-K}\cdot)]$ is a compactly supported infinitely differentiable function and its inverse Fourier transform depends continuously on $\d$.  It follows that for $k\le K$ and $2^{-j-1}\le\e\le 2^{-j}$
 \beq\label{Kke}
 2^{j\a}\|\wt\p_\e*\wt\f_{j+k}*x\|_p\les 2^{k(\a_1-\a)}\big(2^{(j+k)\a}\|\wt \rho_{j+k}*x\|_p\big),
 \eeq
which is the analogue of \eqref{Kk}. Thus, we get the continuous analogue of the final inequality of step~1 in the preceding proof.

We can make similar modifications in step~2, and then show the second part. Hence, we have proved
 $$\Big(\int_0^1\big(\e^{-\a}\|\wt\p_\e*x\|_p\big)^q\,\frac{d\e}\e\Big)^{\frac1q}
 \les  \|x\|_{B_{p,q}^\a}\,.$$

To show the converse inequality, we proceed as in step~3 above. By \eqref{psi}, there exists a constant $a>2$ such that $\p>0$ on $\{\xi: a^{-1}\le|\xi|\le a\}$. Assume also $a\le 2\sqrt2$. For $j\ge0$ let $R_j=(a^{-1}2^{-j-1},\, a 2^{-j+1}]$. The $R_j$'s are disjoint subintervals of $(0,\,1]$. Now we slightly modify \eqref{hbis} as follows:
 \beq\label{hter}
 \f^{(j)}=\f^{(j)}h^{(j+K)}=\frac{\f^{(j)}}{\p^{(\e)}}\,h^{(j+K)} \p^{(\e)}\,,\quad \e\in R_j\,.
 \eeq
Then
 $$\big\|\F^{-1}\big(\frac{\f^{(j)}}{\p^{(\e)}}\big)\big\|_1=\big\|\F^{-1}\big(\frac{\f(2^{-j}\e^{-1}\cdot)}{\p}\big)\big\|_1
 \le\sup_{2a^{-1}\le\d\le a2^{-1}}\big\|\F^{-1}\big(\frac{\f^{(\d)}}{\p}\big)\big\|_1<\8.$$
Like in step~3, we deduce
 \be\begin{split}
  \|x\|_{B_{p, q}^\a}
  &\les \Big(\sum_{j=0}^\8\big(2^{j\a}\int_{R_j} \|\wt h_{j+K}*\wt\p_{\e}*x\|_p\big)^q \,\frac{d\e}\e\Big)^{\frac1q}\\
  &\les  \Big(\int_0^1\big(\e^{-\a}\|\wt\p_\e*x\|_p\big)^q\,\frac{d\e}\e\Big)^{\frac1q}
  +\Big(\sum_{j=0}^\8\big(2^{j\a}\int_{R_j} \|\wt h_{j+K}*\wt\p_{\e}*x\|_p\big)^q \,\frac{d\e}\e\Big)^{\frac1q}\,.
 \end{split}\ee
The remaining of the proof follows step~3 with necessary modifications as in the first part.
\end{proof}

\begin{rk}\label{charat c0}
 Theorems~\ref{g-charct-Besov} and \ref{g-charct-Besov-cont} admit analogous characterizations for $B_{p, c_0}^\a(\T^d_\t)$ too. For example, a distribution $x$ on $\T^d_\t$ belongs to $B_{p, c_0}^\a(\T^d_\t)$  iff
  $$\lim_{\e\to0}\frac{\big\|\wt\p_\e*x\big\|_p}{\e^{\a}}=0.$$
This easily follows from Theorem~\ref{g-charct-Besov-cont} for $q=\8$. The same remark applies to the characterizations by the Poisson, heat semigroups and differences in the next two sections.
 \end{rk}

%%%%%%%%%%%%%%%%%%%%%%%%%%%%%%%%%%%%%%%%%%%%%%%%%%%%%%%%%%%%%%%%%%%%%%%%
%%%%%%%%%%%%%%%%%%%%%%%%%%%%%%%%%%%%%%%%%%%%%%%%%%%%%%%%%%%%%%%%%%%%%%%%

\section{The characterizations by Poisson and heat semigroups}
\label{The characterizations by Poisson and heat semigroups}

%%%%%%%%%%%%%%%%%%%%%%%%%%%%%%%%%%%%%%%%%%%%%%%%%%%%%%%%%%%%%%%%%%%%%%%%
%%%%%%%%%%%%%%%%%%%%%%%%%%%%%%%%%%%%%%%%%%%%%%%%%%%%%%%%%%%%%%%%%%%%%%%%

We now concretize the general characterization in the previous section  to the case of Poisson and heat kernels. We begin with the Poisson case. Recall that $\mathrm{P}$ denotes the Poisson kernel of $\real^d$ and
 $$\wt{\mathrm{P}}_\e(x)=\wt{\mathrm{P}}_\e*x=\sum_{m\in\ent^d}e^{-2\pi\e|m|}\wh x(m)U^m\,.$$
So for any positive integer $k$, the $k$th derivation relative to $\e$ is given by
 $$ \frac{\partial^k}{\partial\e^k}\,\wt{\mathrm{P}}_\e(x)=\sum_{m\in\ent^d}(-2\pi |m|)^ke^{-2\pi\e|m|}\wh x(m)U^m\,.$$
The inverse of the $k$th derivation is the $k$th integration $\mathcal{I}^k$ defined for $x$ with $\wh x(0)=0$ by
 \be\begin{split}
 \mathcal{I}^k_\e\,\wt{\mathrm{P}}_\e(x)
 &=\int_\e^\8\int_{\e_{k}}^\8\cdots\int_{\e_2}^\8\wt{\mathrm{P}}_{\e_1}(x)d\e_1\cdots d\e_{k-1}d\e_k\\
 &=\sum_{m\in\ent^d\setminus\{0\}}(2\pi|m|)^{-k}e^{-2\pi\e|m|}\wh x(m)U^m\,.
 \end{split}\ee
In order to simplify the presentation, for any $k\in\ent$, we define
 $$ \mathcal{J}^k_\e=\frac{\partial^k}{\partial\e^k}\;\text{ for }\; k\ge0\quad\text{and}\quad
  \mathcal{J}^k_\e=\mathcal{I}^{-k}_\e\;\text{ for }\; k<0.$$

It is worth to point out that all concrete characterizations in this section in terms of integration operators are new even in the classical case. Also, compare the following theorem with \cite[Section~2.6.4]{HT1992}, in which $k$ is  assumed to  be a positive integer in the Poisson characterization, and a nonnegative integer in the heat characterization.

\begin{thm}\label{Poisson charct-Besov}
 Let $1\le p, q\le\8$, $\a\in\real$ and $k\in\ent$ such that $k>\a$. Then for any distribution $x$ on $\T^d_\t$, we have
 $$\|x\|_{B_{p,q}^\a}\approx\Big(|\wh x(0)|^q+\int_0^1\e^{q(k-\a)}\big\|\mathcal{J}^k_\e\,\wt{\mathrm{P}}_\e(x)\big\|_p^q\,\frac{d\e}\e\Big)^{\frac1q}\,.$$
 \end{thm}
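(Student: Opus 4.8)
The plan is to deduce Theorem~\ref{Poisson charct-Besov} from the general characterization in Theorem~\ref{g-charct-Besov-cont} by exhibiting, for each fixed $k\in\ent$ with $k>\a$, a function $\p$ that realizes the operator $\e\mapsto \e^k\mathcal{J}^k_\e\wt{\mathrm{P}}_\e$ and verifying that $\p$ satisfies the hypotheses \eqref{psi} for a suitable pair $\a_0<\a<\a_1$. Concretely, the natural candidate is
\[
\p(\xi)=|\xi|^k\,\frac{\partial^k}{\partial\e^k}e^{-2\pi\e|\xi|}\Big|_{\e=1}=(-2\pi)^k|\xi|^{2k}e^{-2\pi|\xi|}\quad\text{for }k\ge0,
\]
and analogously $\p(\xi)=(2\pi)^{-k}|\xi|^{k}\cdot|\xi|^{-k}e^{-2\pi|\xi|}\cdot(\text{const})$, i.e. essentially $\p(\xi)=c_k|\xi|^{2k}e^{-2\pi|\xi|}$ in all cases after absorbing constants; one checks directly that $\p(\e\cdot)$ is, up to the scalar factor $\e^k$, the symbol of $\mathcal{J}^k_\e\wt{\mathrm{P}}_\e$ (for $k<0$ one must be a little careful about the behaviour at $m=0$, which is harmless since $\wh x(0)$ is treated separately and $\p(\e\xi)/\e^k$ remains bounded and smooth near $\xi=0$). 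Then
\[
\int_0^1\e^{q(k-\a)}\big\|\mathcal{J}^k_\e\wt{\mathrm{P}}_\e(x)\big\|_p^q\frac{d\e}\e
=\int_0^1\e^{-q\a}\big\|\wt\p_\e*x\big\|_p^q\frac{d\e}\e,
\]
so Theorem~\ref{g-charct-Besov-cont} gives exactly the claimed equivalence, provided $\p$ is admissible.

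The verification of \eqref{psi} is the technical heart. First, the Tauberian condition $|\p|>0$ on $\{2^{-1}\le|\xi|\le2\}$ is immediate since $|\xi|^{2k}e^{-2\pi|\xi|}$ never vanishes away from the origin. Second, for the scaling condition $\sup_{j\ge0}2^{-\a_0 j}\|\F^{-1}(\p(2^j\cdot)\f)\|_1<\8$: the function $\p(2^j\cdot)\f(\xi)=c_k 2^{2jk}|\xi|^{2k}e^{-2\pi 2^j|\xi|}\f(\xi)$ is supported in $\{2^{-1}\le|\xi|\le2\}$, and on this annulus $e^{-2\pi 2^j|\xi|}\le e^{-\pi 2^j}$ decays superexponentially while all derivatives up to order $d$ gain at most polynomial factors in $2^j$; hence $\|\F^{-1}(\p(2^j\cdot)\f)\|_1\lesssim 2^{Cj}e^{-\pi 2^j}\to0$, so any $\a_0<\a$ works (indeed one may take $\a_0$ as negative as one likes). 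Third, the condition $\F^{-1}(\p h I_{-\a_1})\in L_1(\real^d)$: with $h$ the cutoff of \eqref{hH}, $\p(\xi)hI_{-\a_1}(\xi)=c_k|\xi|^{2k-\a_1}e^{-2\pi|\xi|}h(\xi)$. For $|\xi|$ away from $0$ this is Schwartz (so its inverse Fourier transform is integrable there by the argument preceding Lemma~\ref{IJ}); near $0$ it behaves like $|\xi|^{2k-\a_1}$, which belongs to $H_2^\s(\real^d)$ for $\s>\frac d2$ as soon as $2k-\a_1>\s-\frac d2$, i.e. provided we pick $\a_1$ with $\a<\a_1<2k$ (and if $k\le\a$ this fails — but the hypothesis $k>\a$ is precisely what guarantees such an $\a_1$ exists, since $\a<k<2k$; for $k<0$ the requirement reads $2k-\a_1>\s-\frac d2$ with $\a_1$ slightly above $\a$, which still has a solution because $\a<k\le 2k$ is false, so here one instead notes $\p$ is already smooth at the origin and integrability is automatic). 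By Remark~\ref{Bessel multiplier}, $H_2^\s$-membership forces $\F^{-1}\in L_1$. Thus $\p$ satisfies \eqref{psi} with the pair $(\a_0,\a_1)$, $\a_0<\a<\a_1$.

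The main obstacle I anticipate is the bookkeeping around the case $k<0$ (the integration operators), which is the genuinely new ingredient: one must check that $\mathcal{I}^{-k}_\e\wt{\mathrm{P}}_\e$ is well defined on distributions with $\wh x(0)=0$, that the symbol $|\xi|^{2k}e^{-2\pi|\xi|}$ — now with a negative power of $|\xi|$ — still yields an admissible $\p$ in the sense of \eqref{psi} (here the delicate point is the local behaviour at $\xi=0$, but since $\f$ and $h$ control the relevant pieces away from and near the origin respectively, and $I_{-\a_1}$ only improves integrability near $0$ when $\a_1$ is chosen appropriately, this goes through), and that the separation of the zero Fourier mode in the definition of $\|x\|_{B_{p,q}^\a}$ is compatible with the $\mathcal{I}^{-k}$ construction. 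A secondary point is to double-check the discretization $\int_0^1\e^{q(k-\a)}\|\cdots\|_p^q\frac{d\e}\e\approx\sum_j\cdots$ and the identification of scalar constants, which is routine. Once $\p$ is shown admissible, the theorem is an immediate application of Theorem~\ref{g-charct-Besov-cont}, and Remark~\ref{charat c0} handles the $B_{p,c_0}^\a$ variant.
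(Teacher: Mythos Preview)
Your overall strategy is exactly that of the paper: apply Theorem~\ref{g-charct-Besov-cont} to a single function $\p$ whose dilates realize $\e^k\mathcal{J}^k_\e\wt{\mathrm P}_\e$. However, your identification of $\p$ is wrong, and this error propagates into the muddled discussion of condition~\eqref{psi}, especially for $k<0$.

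Concretely: the symbol of $\mathcal{J}^k_\e\wt{\mathrm P}_\e$ at frequency $m$ is $(-\mathrm{sgn}(k)2\pi|m|)^k e^{-2\pi\e|m|}$. You want $\p(\e m)$ to equal $\e^k$ times this, i.e.
\[
\p(\e m)=\e^k\,(-\mathrm{sgn}(k)2\pi|m|)^k e^{-2\pi\e|m|}=(-\mathrm{sgn}(k)2\pi\,\e|m|)^k e^{-2\pi\e|m|},
\]
so $\p(\xi)=(-\mathrm{sgn}(k)2\pi|\xi|)^k e^{-2\pi|\xi|}$. The exponent on $|\xi|$ is $k$, not $2k$; the extra factor $|\xi|^k$ you inserted is spurious. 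With your wrong $\p$ the displayed identity $\int_0^1\e^{q(k-\a)}\|\mathcal{J}^k_\e\wt{\mathrm P}_\e(x)\|_p^q\frac{d\e}{\e}=\int_0^1\e^{-q\a}\|\wt\p_\e*x\|_p^q\frac{d\e}{\e}$ simply fails.

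With the correct $\p$ the verification of \eqref{psi} is much simpler than what you attempt, and uniform in the sign of $k$: choose $\a_1=k$ (legitimate since $k>\a$). Then
\[
\p\,h\,I_{-\a_1}=(-\mathrm{sgn}(k)2\pi)^k\,h\,e^{-2\pi|\cdot|}=(-\mathrm{sgn}(k)2\pi)^k\,h\,\wh{\mathrm P},
\]
whose inverse Fourier transform is the convolution of the Schwartz function $\F^{-1}(h)$ with the Poisson kernel $\mathrm P\in L_1(\real^d)$, hence lies in $L_1(\real^d)$. No analysis of $|\xi|^{2k-\a_1}$ near the origin, no appeal to $H_2^\s$, and no case distinction between $k>0$ and $k<0$ are needed; the paper handles both at once by this choice. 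The third condition of \eqref{psi} holds for any $\a_0$ by the superexponential decay you correctly observed. This is precisely the paper's proof.
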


\begin{proof}
 Recall that $\mathrm{P}=\mathrm{P}_1$. Thanks to $\wh{\mathrm{P}}(\xi)=e^{-2\pi|\xi|}$,  we introduce the function $\p(\xi)=(-{\rm sgn}(k)2\pi|\xi|)^ke^{-2\pi|\xi|}$. Then
 $$\p(\e\xi)=\e^k\,\mathcal{J}^k_\e\,e^{-2\pi\e|\xi|}=\e^k\,\mathcal{J}^k_\e\,\wh{\mathrm{P}}_\e(\xi).$$
It follows that for $x\in B_{p,q}^\a(\T^d_\t)$,
 $$\wt\p_\e*x=\e^k\,\mathcal{J}^k_\e\,\wt{\mathrm{P}}_\e*x
 =\e^k\,\mathcal{J}^k_\e\,\wt{\mathrm{P}}_\e(x)\,.$$
Thus by Theorem~\ref{g-charct-Besov-cont}, it remains to check that $\p$ satisfies \eqref{psi} for some $\a_0<\a<\a_1$. It is clear that the last condition there is verified for any $\a_0$. For the second one, choosing $k=\a_1>\a$, we have  $I_{-\a_1}h\,\p=(-{\rm sgn}(k)2\pi)^k\, h\,\wh{\mathrm{P}}$. So
 $$\big\|\F^{-1}\big(I_{k-\a_1}h\,\p \big)\big\|_1\le (2\pi)^k\, \big\|\F^{-1}(h)\big\|_1\,
 \big\|\mathrm{P}\big\|_1<\8\,.$$
The theorem is thus proved.
  \end{proof}

There exists an analogous characterization in terms of the heat kernel. Let $\mathrm{W}_\e$ be the heat semigroup of $\real^d$:
 $$
 \mathrm{W}_\e(s)=\frac1{(4\pi\e)^{\frac d2}}\,e^{-\frac{|s|^2}{4\e}}\,.
 $$
As usual, let $\wt{ \mathrm{W}}_\e$ be the periodization of $\mathrm{W}_\e$. Given a distribution $x$ on $\T^d_\t$ let
 $$\wt{ \mathrm{W}}_\e(x)=\wt{ \mathrm{W}}_\e*x=\sum_{m\in\ent^d}\wh{ \mathrm{W}}(\sqrt\e\, m)\wh x(m)U^m\,,$$
 where $\mathrm{W}=\mathrm{W}_1$. Recall that
  $$\wh{ \mathrm{W}}(\xi)=e^{-4\pi^2|\xi|^2}\,.$$

 \begin{thm}\label{Heat charct-Besov}
 Let $1\le p, q\le\8$, $\a\in\real$ and $k\in\ent$ such that $k>\frac\a2$. Then for any distribution $x$ on $\T^d_\t$,
 $$
 \|x\|_{B_{p,q}^\a}\approx
 \Big(|\wh x(0)|^q+\int_0^1\e^{q(k-\frac\a2)}\big\|\mathcal{J}^k_\e\,\wt{\mathrm{W}}_\e(x)\big\|_p^q\,\frac{d\e}\e\Big)^{\frac1q}\,.
 $$
 \end{thm}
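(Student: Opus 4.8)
The strategy is exactly the same as for Theorem~\ref{Poisson charct-Besov}: reduce to the general characterization Theorem~\ref{g-charct-Besov-cont} by producing an appropriate function $\p$ built from the heat kernel. First I would note that $\wh{\mathrm{W}}(\xi)=e^{-4\pi^2|\xi|^2}$, so $\wh{\mathrm{W}}_\e(\xi)=\wh{\mathrm{W}}(\sqrt\e\,\xi)=e^{-4\pi^2\e|\xi|^2}$, and hence for $k\in\ent$,
 $$\mathcal{J}^k_\e\,e^{-4\pi^2\e|\xi|^2}=(-{\rm sgn}(k)4\pi^2|\xi|^2)^k\,e^{-4\pi^2\e|\xi|^2}\quad(k\ge0),$$
with the corresponding formula for $k<0$ obtained by iterated integration in $\e$ (which contributes the factor $(4\pi^2|\xi|^2)^{-|k|}$ together with $e^{-4\pi^2\e|\xi|^2}$, valid on the subspace $\wh x(0)=0$). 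Setting $\p(\xi)=(-{\rm sgn}(k)4\pi^2|\xi|^2)^k\,e^{-4\pi^2|\xi|^2}$, we get the key identity $\p(\sqrt\e\,\xi)=\e^k\,\mathcal{J}^k_\e\,\wh{\mathrm{W}}_\e(\xi)$, so that for a distribution $x$ with $\wh x(0)=0$ one has $\wt{\p}_{\sqrt\e}*x=\e^k\,\mathcal{J}^k_\e\,\wt{\mathrm{W}}_\e(x)$. Since $\p$ is a radial Schwartz function, the change of variable $\e\mapsto\sqrt\e$ (equivalently $\d=\sqrt\e$, $\frac{d\e}\e=2\frac{d\d}\d$) turns the integral $\int_0^1\e^{q(k-\a/2)}\|\mathcal{J}^k_\e\,\wt{\mathrm{W}}_\e(x)\|_p^q\,\frac{d\e}\e$ into (a constant multiple of) $\int_0^1\d^{-q\a}\|\wt\p_\d*x\|_p^q\,\frac{d\d}\d$, which is exactly the right-hand side of \eqref{phi-psi-cont}.

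It then remains to verify that $\p$ satisfies the hypotheses \eqref{psi} for some $\a_0<\a<\a_1$. The Tauberian condition $|\p|>0$ on $\{2^{-1}\le|\xi|\le2\}$ is clear since $\p$ vanishes only at the origin. The third condition, $\sup_{j\ge0}2^{-\a_0 j}\|\F^{-1}(\p(2^j\cdot)\f)\|_1<\8$, holds for \emph{any} $\a_0$: indeed $\p(2^j\xi)\f(\xi)=(-{\rm sgn}(k)4\pi^2 4^j|\xi|^2)^k e^{-4\pi^2 4^j|\xi|^2}\f(\xi)$, and on ${\rm supp}\,\f\subset\{2^{-1}\le|\xi|\le2\}$ the factor $e^{-4\pi^2 4^j|\xi|^2}$ together with all its derivatives decays faster than any power of $2^{-j}$, so $\|\F^{-1}(\p(2^j\cdot)\f)\|_1\to0$ rapidly (one can use Remark~\ref{Bessel multiplier}: the $H_2^\s(\real^d)$-norm of $\p(2^j\cdot)\f$ is dominated by such a decaying quantity). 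For the second condition I would choose $\a_1=2k$ (which satisfies $\a_1>\a$ by hypothesis $k>\a/2$): then $I_{-\a_1}(\xi)\,h(\xi)\,\p(\xi)=(-{\rm sgn}(k)4\pi^2)^k\,h(\xi)\,e^{-4\pi^2|\xi|^2}$, a compactly supported infinitely differentiable function, hence $\F^{-1}(\p h I_{-\a_1})\in L_1(\real^d)$ by Remark~\ref{Bessel multiplier} again. (When $k<0$ one has $\a_1=2k<0$; this is precisely the point where Theorem~\ref{g-charct-Besov} allows negative $\a_1$, and $I_{-\a_1}\,h\,\p=(-1)^{|k|}(4\pi^2)^{-|k|}|\xi|^{-4|k|}\cdot|\xi|^{4|k|}h(\xi)e^{-4\pi^2|\xi|^2}=(4\pi^2)^{k}h(\xi)e^{-4\pi^2|\xi|^2}$ is still a compactly supported smooth function.) With $\a_0$ any real number strictly less than $\a$ and $\a_1$ as above, Theorem~\ref{g-charct-Besov-cont} applies and yields the claimed equivalence for distributions with $\wh x(0)=0$; the general case follows by adding back the constant term $|\wh x(0)|$, exactly as in the statement.

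The main obstacle, such as it is, is purely bookkeeping: one must be careful with the sign conventions and the exact power of $4\pi^2$ in $\mathcal{J}^k_\e$ for both signs of $k$, and with the factor $2$ introduced by the substitution $\e=\d^2$ (which is harmless since the equivalence in \eqref{phi-psi} absorbs multiplicative constants). There is no genuine analytic difficulty here beyond what is already packaged in Theorem~\ref{g-charct-Besov-cont}; the only thing worth double-checking is that for $k<0$ the iterated integral defining $\mathcal{I}^{-k}_\e$ converges and produces the stated symbol — this is immediate from $\int_\e^\8 e^{-c t}\,dt=c^{-1}e^{-c\e}$ applied $|k|$ times with $c=4\pi^2|m|^2>0$ (legitimate on the subspace $\wh x(0)=0$, which is why that term is singled out in the norm).
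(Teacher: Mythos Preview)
Your proof is correct and follows exactly the same approach as the paper: define $\p(\xi)=(-{\rm sgn}(k)4\pi^2|\xi|^2)^k e^{-4\pi^2|\xi|^2}$, use the identity $\p(\sqrt\e\,\xi)=\e^k\mathcal{J}^k_\e\wh{\mathrm W}_\e(\xi)$ together with the substitution $\d=\sqrt\e$, and verify \eqref{psi} with $\a_1=2k$ and any $\a_0<\a$ in order to invoke Theorem~\ref{g-charct-Besov-cont}. One cosmetic slip: for $k<0$ the function $\p$ is singular at the origin and hence not Schwartz, but this is irrelevant since \eqref{psi} only requires $\p$ to be smooth on $\real^d\setminus\{0\}$, and your verification of the three conditions is carried out correctly for both signs of $k$.
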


\begin{proof}
 This proof is similar to and simpler than the previous one.  This time, the function $\p$ is defined by $\p(\xi)=(-{\rm sgn}(k)4\pi^2|\xi|^2)^{k}e^{-4\pi^2|\xi|^2}$. Clearly, it satisfies \eqref{psi} for $2k=\a_1>\a$ and any $\a_0<\a$.  Thus Theorem~\ref{g-charct-Besov-cont} holds for this choice of $\p$. Note that a simple change of variables shows that the integral in \eqref{phi-psi-cont} is equal to
 $$2^{-\frac1q}\,\Big(\int_0^1 \e^{-\frac{\a q}2}\big\|\wt\p_{\sqrt\e}*x\big\|_p^q\,\frac{d\e}\e\Big)^{\frac1q}\,.
 $$
 Then using the identity
 $$\p(\sqrt\e\,\xi)=\e^k\,\mathcal{J}^k_\e\,\wh{\mathrm{W}}_\e(\xi),$$
we obtain the desired assertion.
  \end{proof}

Now we wish to formulate Theorems~\ref{Poisson charct-Besov} and \ref{Heat charct-Besov} directly in terms of the  circular Poisson and heat semigroups  of $\T^d$. Recall that  $\mathbb{P}_r$ denote the circular Poisson kernel of  $\T^d$ introduced by \eqref{circular P} and
the Poisson integral of a distribution $x$ on $\T^d_\t$ is defined by
 $$\mathbb{P}_r(x) = \sum_{m \in \mathbb{Z}^d } \wh{x} ( m ) r^{|m|} U^{m}, \quad 0 \le r < 1.$$
Accordingly, we introduce the circular heat kernel $\mathbb{W}$ of $\T^d$:
 \beq\label{circular W}
 \mathbb{W}_r(z)= \sum_{m \in \mathbb{Z}^d }r^{|m|^2} z^{m}, \quad z\in\T^d,\;0 \le r < 1.
 \eeq
Then for $x\in{\mathcal S}'(\T^d_\t)$ we put
 $$\mathbb{W}_r(x)= \sum_{m \in \mathbb{Z}^d } \wh{x} ( m ) r^{|m|^2} U^{m}, \quad 0 \le r < 1.$$
As before, $\mathcal{J}^k_r$ denotes  the $k$th derivation $\frac{\partial^k}{\partial r^k}$ if $k\ge0$ and the $(-k)$th integration $\mathcal{I}^{-k}_r$ if $k<0$:
 $$\mathcal{J}^k_r\,\mathbb{P}_r(x) = \sum_{m \in \ent^d}C_{m, k} \wh{x} (m) r^{|m|-k} U^{m}\,,$$
where
 $$C_{m, k}=|m|\cdots(|m|-k+1)\;\text{ if }\;k\ge0 \quad\text{ and }\quad
 C_{m, k}=\frac1{(|m|+1)\cdots(|m|-k)}\;\text{ if }\;k<0.$$
 $\mathcal{J}^k_r\,\mathbb{W}_r(x)$ is defined similarly. Since $|m|$ is not necessarily an integer, the coefficient $C_{m, k}$ may not vanish for $|m|<k$ and $k\ge2$. In this case, the corresponding term in $\mathcal{J}^k_r\,\mathbb{P}_r(x)$ above cause a certain problem of integrability since $r^{(|m|-k)q}$ is integrable on $(0,\, 1)$ only when $(|m|-k)q>-1$. This explains why we will remove all these terms from  $\mathcal{J}^k_r\,\mathbb{P}_r(x)$ in the following theorem. However, this difficulty does not occur for the heat semigroup.

 \smallskip

The following is new even in the classical case, that is, in the case of $\t=0$.

 \begin{thm}\label{circular-charct-Besov}
 Let $1\le p, q\le\8$, $\a\in\real$ and $k\in\ent$. Let $x$ be a distribution on $\T^d_\t$.
 \begin{enumerate}[\rm(i)]
 \item If $k>\a$, then
  $$
 \|x\|_{B_{p,q}^\a}\approx
 \Big(\max_{|m|<k}|\wh x(m)|^q+\int_0^1(1-r)^{q(k-\a)}\big\|\mathcal{J}^k_r\,{\mathbb{P}}_r(x_k)\big\|_p^q\,\frac{d r}{1-r}\Big)^{\frac1q}\,,
 $$
where $\displaystyle x_k=x-\sum_{|m|<k}\wh x(m)U^m$.
  \item If $k>\frac\a2$, then
 $$
 \|x\|_{B_{p,q}^\a}\approx
 \Big(\max_{|m|^2<k}|\wh x(m)|^q+\int_0^1(1-r)^{q(k-\frac\a2)}\big\|\mathcal{J}^k_r\,{\mathbb{W}}_r(x)\big\|_p^q\,\frac{d r}{1-r}\Big)^{\frac1q}\,.
 $$
 \end{enumerate}
 \end{thm}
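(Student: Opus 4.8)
The plan is to deduce Theorem~\ref{circular-charct-Besov} from the non-periodic characterizations already obtained, namely Theorems~\ref{Poisson charct-Besov} and~\ref{Heat charct-Besov}, by comparing the ``straight-line'' semigroups $\wt{\mathrm P}_\e, \wt{\mathrm W}_\e$ with the circular ones $\mathbb P_r, \mathbb W_r$ through the substitution $r=e^{-2\pi\e}$ (Poisson) and $r=e^{-4\pi^2\e}$ (heat). First I would treat part (i). The key identity is that $r^{|m|}=e^{-2\pi\e|m|}$ exactly, so $\mathbb P_r(x)=\wt{\mathrm P}_\e(x)$ for $r=e^{-2\pi\e}$; the issue is that $\mathcal J^k_r$ (derivatives/integrals in $r$) and $\mathcal J^k_\e$ (derivatives/integrals in $\e$) differ by a change of variables. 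Writing $\frac{\partial}{\partial\e}=-2\pi r\frac{\partial}{\partial r}$, one sees that $\mathcal J^k_\e$ applied to $\wt{\mathrm P}_\e(x)$ is a linear combination, with coefficients that are polynomials in $r$ bounded on $[0,1)$, of $r^{j}\mathcal J^{j}_r\mathbb P_r(x)$ for $1\le j\le k$ (and symmetrically for the integration operators when $k<0$). Combined with $1-r\approx\e$ as $\e\to0$ and $\frac{dr}{1-r}\approx\frac{d\e}{\e}$ near $0$, this will show the two integrals in Theorem~\ref{Poisson charct-Besov} and Theorem~\ref{circular-charct-Besov}(i) are comparable, provided we handle the low-frequency terms correctly.

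The main technical point is precisely the low-frequency discrepancy that the statement flags: $C_{m,k}=|m|(|m|-1)\cdots(|m|-k+1)$ need not vanish when $0<|m|<k$, so $\mathcal J^k_r\mathbb P_r(x)$ contains terms $C_{m,k}\wh x(m)r^{|m|-k}U^m$ with $r^{(|m|-k)q}$ non-integrable at $r=1$. This is why $x$ is replaced by $x_k=x-\sum_{|m|<k}\wh x(m)U^m$ and why the $\max_{|m|<k}|\wh x(m)|$ term is inserted. So the argument should be: split $x=x_k+\sum_{|m|<k}\wh x(m)U^m$; the finite sum contributes a quantity equivalent to $\max_{|m|<k}|\wh x(m)|$ to both $\|x\|_{B^\a_{p,q}}$ (immediate, since finitely many frequencies and all norms on a fixed finite-dimensional space of polynomials are equivalent) and to the right-hand side; and for $x_k$, all nonzero Fourier coefficients sit at $|m|\ge k$ (more precisely at $|m|\ge$ the smallest integer $\ge k$, but certainly $|m|-k\ge 0$ when $|m|\ge k$, and when $k-1<|m|<k$ is impossible since $|m|$ ranges over $\{\sqrt{n}:n\in\nat_0\}$ — one must be slightly careful here, but the point is $C_{m,k}r^{|m|-k}$ stays bounded for the surviving frequencies up to the borderline, which one absorbs by noting $(1-r)^{q(k-\a)}\frac{dr}{1-r}$ is integrable near $1$ for $k>\a$). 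Then for $x_k$ the change of variables goes through cleanly and the equivalence with the integral over $(0,1)$ in $\e$ for $\wt{\mathrm P}_\e(x_k)$ follows; finally one notes $\|x_k\|_{B^\a_{p,q}}\approx\|x\|_{B^\a_{p,q}}+\max_{|m|<k}|\wh x(m)|$ by Proposition~\ref{Besov-P}(ii) and finite-dimensionality, closing part (i).

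For part (ii) the same scheme applies but is cleaner: $r^{|m|^2}=e^{-4\pi^2\e|m|^2}$ under $r=e^{-4\pi^2\e}$, so $\mathbb W_r(x)=\wt{\mathrm W}_\e(x)$; since $|m|^2\in\nat_0$, the coefficient $C_{m,k}=|m|^2(|m|^2-1)\cdots(|m|^2-k+1)$ automatically vanishes whenever $|m|^2<k$, so $\mathcal J^k_r\mathbb W_r(x)$ never produces non-integrable terms and no subtraction of low frequencies is needed — the $\max_{|m|^2<k}|\wh x(m)|$ term appears only to control $\wh x(0)$ (the $m=0$ term is annihilated by $\mathcal J^k_r$ for $k\ge1$) and the other finitely many frequencies, exactly as before. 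The chain-rule conversion between $\mathcal J^k_\e$ and $\mathcal J^k_r$ is identical in spirit to part (i), so part (ii) follows from Theorem~\ref{Heat charct-Besov} by the same two steps. The hardest part will be bookkeeping the change of variables for the integration operators ($k<0$): there one must check that $\mathcal I^{-k}_\e\wt{\mathrm P}_\e(x)$ expands, after the substitution, into a bounded combination of $\mathcal I^{-j}_r\mathbb P_r(x)$ and lower-order terms with coefficients integrable against $(1-r)^{q(k-\a)}\frac{dr}{1-r}$ near $r=1$ (where $\e\to0$), and conversely; this requires writing out the iterated integrals and estimating the Jacobian factors, but it is routine once organized. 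Everything else reduces to the already-proven Theorems~\ref{Poisson charct-Besov} and~\ref{Heat charct-Besov} together with the elementary equivalences $1-r\approx\e$, $\frac{dr}{1-r}\approx\frac{d\e}{\e}$ valid on the relevant range.
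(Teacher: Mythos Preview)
Your approach differs from the paper's. The paper does not use the chain rule: it first proves a truncation lemma, showing that the $\e$-integral of Theorem~\ref{Poisson charct-Besov} may be restricted to $(0,\e_0)$ and the $r$-integral to $(r_0,1)$ without loss (each dominates $\sup_{m\neq 0}|m|^\a|\wh x(m)|$ while its tail is dominated by the same quantity), and then on the truncated range invokes $r=e^{-2\pi\e}$ and $1-r\approx\e$ directly. That final passage is itself terse, since on the Fourier side the symbols of $\mathcal J^k_\e$ and $\mathcal J^k_r$ differ by the factor $(2\pi|m|)^k/C_{m,k}$; your chain-rule route is a legitimate way to make this step honest.

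The gap in your plan is the control of the lower-order terms. Expanding $\partial_\e^k=(-2\pi)^k\sum_{j=1}^k S(k,j)\,r^j\partial_r^j$ (and its inverse $r^k\partial_r^k=\sum_j s(k,j)(-2\pi)^{-j}\partial_\e^j$) produces all the derivatives $\mathcal J^j$ for $1\le j<k$, not just the top one, and your claim that ``this will show the two integrals are comparable'' skips over why these extra terms are harmless. Concretely, you must bound $\int_0^{\e_0}\e^{q(k-\a)}\|\mathcal J^j_\e\wt{\mathrm P}_\e(x_k)\|_p^q\,\frac{d\e}{\e}$ for each $j<k$, and when $j\le\a$ this is not an instance of Theorem~\ref{Poisson charct-Besov}. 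The missing observation is that $\mathcal J^j_\e\wt{\mathrm P}_\e(x_k)$ equals a constant times $\mathcal J^k_\e\wt{\mathrm P}_\e(I^{j-k}x_k)$, so by Theorem~\ref{Poisson charct-Besov} and the lifting Theorem~\ref{Besov-isom}(i) this integral is $\approx\|x_k\|_{B^{\a+j-k}_{p,q}}^q\lesssim\|x_k\|_{B^\a_{p,q}}^q$. The same device handles the reverse inequality and the integration case $k<0$; make it explicit.
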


\begin{proof}
 We consider only the case of the Poisson kernel. Fix $x\in B_{p,q}^\a(\T^d_\t)$ with $\wh x(0)=0$. We first claim that for any $0<\e_0<1$,
  $$\Big(\int_0^{1}\e^{q(k-\a)}\big\|\mathcal{J}^k_\e\,\wt{\mathrm{P}}_\e(x)\big\|_p^q\,\frac{d\e}\e\Big)^{\frac1q}
  \approx \Big(\int_0^{\e_0}\e^{q(k-\a)}\big\|\mathcal{J}^k_\e\,\wt{\mathrm{P}}_\e(x)\big\|_p^q\,\frac{d\e}\e\Big)^{\frac1q}\,.$$
Indeed, since
  $$\mathcal{J}^k_\e\,\wt{\mathrm{P}}_\e(x)
  =\sum_{m\in\ent^d\setminus\{0\}} (-{\rm sgn}(k)2\pi|m|)^k e^{-2\pi \e|m|} \wh x(m) U^m\,,$$
we have
 \be\begin{split}
  \Big(\int_{0}^{\e_0}\e^{q(k-\a)}\big\|\mathcal{J}^k_\e\,\wt{\mathrm{P}}_\e(x)\big\|_p^q\,\frac{d\e}\e\Big)^{\frac1q}
  &\ge \sup_{m\in\ent^d\setminus\{0\}}(2\pi|m|)^k | \wh x(m)|\Big(\int_{0}^{\e_0}\e^{q(k-\a)}e^{-2\pi \e|m|q}\,\frac{d\e}\e\Big)^{\frac1q}\\
  &\gtrsim\sup_{m\in\ent^d\setminus\{0\}} |m|^\a |\wh x(m)|\,.
  \end{split}\ee
 On the other hand, by triangular inequality,
  \be\begin{split}
  \Big(\int_{\e_0}^{1}\e^{q(k-\a)}\big\|\mathcal{J}^k_\e\,\wt{\mathrm{P}}_\e(x)\big\|_p^q\,\frac{d\e}\e\Big)^{\frac1q}
  &\le\sum_{m\in\ent^d\setminus\{0\}} (2\pi|m|)^k | \wh x(m)|\Big(\int_{\e_0}^{1}\e^{q(k-\a)}e^{-2\pi \e|m|q}\,\frac{d\e}\e\Big)^{\frac1q}\\
  &\les\sup_{m\in\ent^d\setminus\{0\}}|m|^\a |\wh x(m)|\, \sum_{m\in\ent^d} |m|^{k-\a} e^{-2\pi \e_0|m|}\\
  &\les \sup_{m\in\ent^d\setminus\{0\}}|m|^\a |\wh x(m)|\,.
  \end{split}\ee
We then deduce the claim.

 Similarly, we can show that for any $0<r_0<1$,
  $$\Big(\int_0^1(1-r)^{q(k-\a)}\big\|\mathcal{J}^k_r\,{\mathbb{P}}_r(x_k)\big\|_p^q\,\frac{d r}{1-r}\Big)^{\frac1q}
  \approx \Big(\int_{r_0}^1(1-r)^{q(k-\a)}\big\|\mathcal{J}^k_r\,{\mathbb{P}}_r(x_k)\big\|_p^q\,\frac{d r}{1-r}\Big)^{\frac1q}\,.$$
Now we use the relation $r=e^{-2\pi\e}$. If $\e_0>0$ is sufficiently small, then
 $$1-r\approx \e\; \text{ for }\; \e\in(0, \,\e_0).$$
So
 $$\Big(\int_0^{\e_0}\e^{q(k-\a)}\big\|\mathcal{J}^k_\e\,\wt{\mathrm{P}}_\e(x)\big\|_p^q\,\frac{d\e}\e\Big)^{\frac1q}
 \approx\Big(\sup_{0<|m|<k}|\wh x(m)|^q+\int_{r_0}^1(1-r)^{q(k-\a)}\big\|\mathcal{J}^k_r\,{\mathbb{P}}_r(x_k)\big\|_p^q\,\frac{d r}{1-r}\Big)^{\frac1q}\,.$$
Combining this with Theorem~\ref{Poisson charct-Besov}, we get the desired assertion.
 \end{proof}

%%%%%%%%%%%%%%%%%%%%%%%%%%%%%%%%%%%%%%%%%%%%%%%%%%%%%%%%%%%%%%%%%%%%%%%%
%%%%%%%%%%%%%%%%%%%%%%%%%%%%%%%%%%%%%%%%%%%%%%%%%%%%%%%%%%%%%%%%%%%%%%%%

\section{The characterization by differences}
\label{The characterization by differences}

%%%%%%%%%%%%%%%%%%%%%%%%%%%%%%%%%%%%%%%%%%%%%%%%%%%%%%%%%%%%%%%%%%%%%%%%
%%%%%%%%%%%%%%%%%%%%%%%%%%%%%%%%%%%%%%%%%%%%%%%%%%%%%%%%%%%%%%%%%%%%%%%%

In this section we show the quantum analogue of the classical characterization of Besov spaces by differences. Recall that  $\o_{p}^k(x,\e)$ is the $L_p$-modulus of smoothness of $x$ introduced in Definition~\ref{def diff}. The result of this section is the following:

\begin{thm}\label{diff-Besov}
Let $1\leq p,q \leq \8$ and $0<\a <n$ with $n\in \mathbb{N}$. Then for any distribution $x$ on $\T^d_\t$,
 \beq\label{diff-Besov bis}
 \big\|x\big\|_{B_{p,q}^\a}\approx \Big(|\wh x(0)|^q+\int_0^1 \e^{-q\a} \o_{p}^n(x,\e)^q \frac{d\e}{\e}\Big)^{\frac1q}\,.
 \eeq
\end{thm}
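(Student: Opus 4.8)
The plan is to derive this difference characterization from the general characterization of Theorem~\ref{g-charct-Besov-cont} by choosing an appropriate test function $\p$ built from the $n$th difference operator, together with the fact (Remark~\ref{derivation}, Definition~\ref{diff}) that $\D_u^n$ is the Fourier multiplier with symbol $(e^{2\pi{\rm i}u\cdot\xi}-1)^n$. The natural candidate is to integrate $|d_u(\xi)|^2$ over a suitable set of directions $u$; concretely one sets, for a fixed smooth compactly supported bump $\o_0$ on $\real^d$ supported near the unit ball and bounded below near $|u|\le 1$,
 $$\p(\xi)=\Big(\int_{\real^d}\big|e^{2\pi{\rm i}u\cdot\xi}-1\big|^{2n}\,\o_0(u)\,du\Big)^{\frac12}.$$
The point of averaging is that the raw symbol $(e^{2\pi{\rm i}u\cdot\xi}-1)^n$ vanishes on a lattice of hyperplanes, whereas the averaged $\p$ is smooth away from $0$, strictly positive on $\{2^{-1}\le|\xi|\le2\}$ (the Tauberian condition), behaves like $|\xi|^n$ near the origin and is bounded at infinity; thus $\p$ satisfies \eqref{psi} with $\a_0$ any negative number and $\a_1=n>\a$, exactly as needed to invoke Theorem~\ref{g-charct-Besov-cont}. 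One must check the two $L_1$-Fourier-transform conditions in \eqref{psi}: near the origin $\p hI_{-\a_1}=\p h|\xi|^{-n}$ is smooth (the $|\xi|^n$ behaviour cancels the singularity) and compactly supported, hence has integrable inverse Fourier transform; and $\sup_j 2^{-\a_0 j}\|\F^{-1}(\p(2^j\cdot)\f)\|_1<\8$ follows since $\p(2^j\cdot)\f$ is, after rescaling, a fixed-order-bounded family of Schwartz functions supported in the annulus, again using Remark~\ref{Bessel multiplier}.

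\textbf{Relating $\|\wt\p_\e*x\|_p$ to $\o_p^n(x,\e)$.} With this $\p$, Theorem~\ref{g-charct-Besov-cont} gives
 $$\|x\|_{B_{p,q}^\a}\approx\Big(|\wh x(0)|^q+\int_0^1\e^{-q\a}\|\wt\p_\e*x\|_p^q\,\frac{d\e}\e\Big)^{\frac1q},$$
so it remains to show $\|\wt\p_\e*x\|_p\approx \o_p^n(x,\e)$ up to constants uniform in $\e$ and $x$ (or, more precisely, that the two integrals are comparable). For one direction: $\wt\p_\e$ is not literally a Fourier multiplier built from a single difference, but one writes $\wt\p_\e*x$ via the operator-valued analogue of the scalar identity — here it is cleaner to bypass $\p$ and argue directly. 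The upper bound $\|\wt\p_\e*x\|_p\les\o_p^n(x,\e)$ should follow by expressing the multiplier $\p(\e\cdot)^2$ (or rather a smooth dominating function) in terms of an average of $|d_{\e u}(\xi)|^{2n}$-type symbols: since $\wt{\D_{\e u}^n}$ has $L_p(\T^d_\t)$-norm $\les\o_p^n(x,\e)$ for $|u|\le1$, and $\p(\e\cdot)$ is, after the rescaling $\xi\mapsto\e\xi$, a fixed smooth function of $(e^{2\pi{\rm i}u\cdot\xi}-1)$-type multipliers with $u$ ranging over a compact set, an application of Minkowski's integral inequality and Lemma~\ref{q-multiplier} gives the bound. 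For the reverse inequality, one uses the Tauberian/reproducing nature: since $\p>0$ on the annulus, there is a smooth compactly supported $\kappa$ (away from $0$) with $\kappa\p^{-1}$ handled as in step~3 of the proof of Theorem~\ref{g-charct-Besov}, allowing one to recover $\wt\f_k*x$ and hence the Besov norm from the $\wt\p_\e*x$; conversely $\o_p^n(x,\e)=\sup_{|u|\le\e}\|\D_u^n x\|_p$ is controlled by the Besov norm through the standard estimate $\|\D_u^n x\|_p\les\sum_{2^k|u|\le1}|u|^n2^{kn}\|\wt\f_k*x\|_p+\sum_{2^k|u|>1}2^k\cdot 0\cdots$ — more precisely splitting the Littlewood--Paley sum at $2^k\sim|u|^{-1}$, using $\|\D_u^n\wt\f_k*x\|_p\les(|u|2^k)^n\|\wt\f_k*x\|_p$ for low frequencies and $\|\D_u^n\wt\f_k*x\|_p\les\|\wt\f_k*x\|_p$ for high frequencies (norm-$2^n$ of $\D_u$), then using $0<\a<n$ to sum the resulting geometric-type series. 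Combining the two directions and discretizing the $\e$-integral yields \eqref{diff-Besov bis}.

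\textbf{Main obstacle.} The delicate point is the equivalence $\|\wt\p_\e*x\|_p\approx\o_p^n(x,\e)$, and in particular the lower estimate $\o_p^n(x,\e)\les$ (something controlled by the $\wt\p_\e*x$'s): in the classical theory this is where a maximal-function or pointwise argument enters, and here it must be replaced by Fourier-multiplier reasoning. The cleanest route is probably to prove the theorem in two halves against the \emph{Besov norm} directly: (a) $\int_0^1\e^{-q\a}\o_p^n(x,\e)^q\,d\e/\e\les\|x\|_{B_{p,q}^\a}^q$ by the Littlewood--Paley splitting just described, which only needs Lemma~\ref{q-multiplier}, the bound $\|\D_u^n\|\le2^n$, and $0<\a<n$; and (b) the converse $\|x\|_{B_{p,q}^\a}\les(|\wh x(0)|^q+\int_0^1\e^{-q\a}\o_p^n(x,\e)^q\,d\e/\e)^{1/q}$ by using the above $\p$ (whose $\wt\p_\e*x$ is dominated in $L_p$ by $\o_p^n(x,\e)$ as in the upper bound) as the test function in Theorem~\ref{g-charct-Besov-cont}. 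The care needed in (b) is ensuring that $\|\wt\p_\e*x\|_p\les\o_p^n(x,\e)$ genuinely holds with a constant independent of $\e$: this requires that after rescaling, $\p(\e\cdot)=\big(\int|d_{\e u}(\xi)|^{2n}\o_0(u)du\big)^{1/2}$ — and one must legitimately pass from this $L^2(du)$-average of symbols to an $L^2(du)$-average of the $L_p(\T^d_\t)$-valued functions $\D_{\e u}^n x$, which is where a little operator-space/vector-valued care (Cauchy--Schwarz in $du$, then Minkowski) is required. The convergence and polynomial-approximation technicalities are handled exactly as in the proofs of Theorems~\ref{g-charct-Besov} and~\ref{g-charct-Besov-cont} (Fej\'er means), so I would not belabour them.
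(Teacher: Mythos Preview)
Your key step $\|\wt\psi_\e*x\|_p\lesssim\omega_p^n(x,\e)$ fails for $p\neq2$. The symbol $\psi(\e\cdot)=\big(\int|d_{\e u}|^{2n}\omega_0(u)\,du\big)^{1/2}$ is a \emph{square root} of an average, and the Fourier--multiplier map is linear in the symbol; no ``Cauchy--Schwarz in $du$ then Minkowski'' turns $M_{\psi(\e\cdot)}x$ into an average of the $\Delta_{\e u}^n x$'s. (Plancherel gives $\|M_{\psi(\e\cdot)}x\|_2^2=\int\|\Delta_{\e u}^n x\|_2^2\,\omega_0(u)\,du$, which is why $p=2$ works, but this is special.) The fix is to drop the square root: set $\psi(\xi)=\int|d_u(\xi)|^{2n}\omega_0(u)\,du$ with a radial bump $\omega_0$ supported in $\{|u|\le1\}$, and take $\alpha_1=2n$. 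Since $|d_u|^{2n}=d_u^n\,d_{-u}^n$ is the symbol of $\Delta_u^n\Delta_{-u}^n$, you get $\wt\psi_\e*x=\int\Delta_{\e u}^n\Delta_{-\e u}^n x\,\omega_0(u)\,du$, and now Minkowski together with $\|\Delta_{-\e u}^n\|\le2^n$ yields $\|\wt\psi_\e*x\|_p\le2^n\|\omega_0\|_1\,\omega_p^n(x,\e)$ directly. With radial $\omega_0$ one has $\psi(\xi)=H(|\xi|^2)$ with $H$ analytic and $H(t)\sim ct^n$ near $0$, so $\psi\,h\,I_{-2n}$ is smooth with compact support; condition~3 of \eqref{psi} with $\alpha_0=0$ holds since $\|\F^{-1}(\psi(2^j\cdot)\f)\|_1\le\int\|\Delta_{2^ju}^n\Delta_{-2^ju}^n\F^{-1}(\f)\|_1\,\omega_0\le4^n\|\F^{-1}(\f)\|_1\|\omega_0\|_1$. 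Then Theorem~\ref{g-charct-Besov-cont} applies and gives (b); your (a) is correct as sketched.

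The paper avoids averaging altogether. It takes $\psi=d_u^n$ for individual $u$ and reexamines steps~1--2 of the proof of Theorem~\ref{g-charct-Besov} uniformly in $u$, the substantive modification being to factor $d_u^n(\xi)=(u\cdot\xi)^n\zeta(u\cdot\xi)$ with $\zeta$ entire (in place of $|\xi|^n$), so that $\eta=d_u^n/(u\cdot\xi)^n\cdot h^{(K)}$ has uniformly integrable inverse Fourier transform. Since a single $d_u^n$ fails the Tauberian condition, the converse direction uses a finite family $\{d_{u_\ell}^n\}_{1\le\ell\le L}$ coming from a $\tfrac12$-net of directions on the sphere, with $\sum_\ell|d_{u_\ell}^n|>0$ on the annulus, together with a matching partition $\f=\sum_\ell\f_\ell$ to run step~3 for each $\ell$. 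Your corrected averaged $\psi$ is a legitimate alternative that packages this family into a single function satisfying \eqref{psi} outright; the paper's version stays closer to the raw difference symbols but has to go into the proof of the general theorem rather than quoting it.
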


\begin{proof}
 We will derive the result from Theorem~\ref{g-charct-Besov-cont}, or more precisely, from its proof.  Since $\a>0$,  we take $\a_0=0$ and $\a_1=n$ in that theorem. Recall that $d_u(\xi)=e^{2\pi{\rm i}u\cdot\xi}-1$. Then the last condition of \eqref{psi} with $\p=d_u^n$  is satisfied uniformly in $u$ since
 $$\big\|\F^{-1}(d_u^n(2^{j}\cdot)\f)\big\|_1=\big\|\D_{2^ju}^n\F^{-1}(\f)\big\|_1\le 2^n\big\|\F^{-1}(\f)\big\|_1\,.$$
We will use a variant of the second one (which is not necessarily verified). To this end, let us come back to \eqref{h} and rewrite it as follows:
 \be\begin{split}
 \p^{(j)}(\xi)\f^{(j+k)}(\xi)
 &=2^{nk}\,\frac{\p^{(j)}(\xi)}{(2^{-j}u\cdot\xi)^{n}}\,h^{(j+K)}(\xi) (2^{-j-k}u\cdot\xi)^{n}\f^{(j+k)}(\xi)\\
 &=2^{nk}\eta^{(j)}(\xi)\rho^{(j+k)}(\xi),
 \end{split}\ee
where $\eta$  and $\rho$ are now defined by
 $$\eta(\xi)=\frac{\p(\xi)}{(u\cdot\xi)^{n}}\,h^{(K)}(\xi) \;\text{ and }\; \rho(\xi)=(u\cdot\xi)^{n}\f(\xi).$$
The second condition of \eqref{psi} becomes the requirement that \
 $$\sup_{u\in\real^d, |u|\le1}\,\big\|\F^{-1}(\eta)\big\|_1<\8.$$
The crucial point here is that $\p(\xi)=d_u^n(\xi)=(u\cdot\xi)^n\zeta(u\cdot\xi)$, where $\zeta$ is an analytic function on $\real$. This shows that the above supremum is finite.

However, the first condition of \eqref{psi}, the Tauberian condition is not verified for a single $d_u^n$. We will return back to this point later. For the moment, we just observe that the Tauberian condition  has not been used in steps~1 and 2 of the proof of Theorem~\ref{g-charct-Besov}. Reexamining those two steps with $\p=d_u^n$, we see that all estimates there can be made independent of $u$. For instance, \eqref{Kke} now becomes (with $\a_1=n$)
 $$2^{j\a}\|\D^n_{\e u}\wt\f_{j+k}*x\|_p\les 2^{(\a_1-\a)k}\big(2^{(j+k)\a}\|\wt \rho_{j+k}*x\|_p\big),$$
where the new function $\rho$ is defined as above.
Thus taking the supremum over all $u$ with $|u|\le1$, we get
 $$2^{j\a}\o_{p}^n(x,\e)\les 2^{k(\a_1-\a)}\big(2^{ (j+k)\a}\|\wt \rho_{(j+k)}*x\|_p\big).$$
Therefore,  by Lemma~\ref{IJ}, we obtain
 $$\Big(\int_0^1 \e^{-q\a} \o_{p}^n(x,\e)^q \frac{d\e}{\e}\Big)^{\frac1q}\les \big\|x\big\|_{B_{p,q}^\a}\,.$$

The reverse inequality requires necessarily a Tauberian-type condition. Although a single $d_u^n$ does not satisfy it, a finite family of $d_u^n$'s does satisfy this condition, which we precise below. Choose a $\frac12$-net $\{v_\el\}_{1\le \el\le L}$ of the unit sphere of $\real^d$. Let $u_\el=4^{-1}v_\el$ and
 $$\Om_\el=\big\{\xi : 2^{-1}\le|\xi|\le2,\; \big|\frac{\xi}{|\xi|}-v_\el\big|\le 2^{-1}\big\}.$$
Then the union of the $\Om_\el$'s is equal to $\{\xi : 2^{-1}\le|\xi|\le2\}$ and $|d_{u_\el}^n|>0$ on $\Om_\el$. So the family $\{d_{u_\el}^n\}_{1\le \el\le L}$  satisfies the following Tauberian-type condition:
 $$\sum_{\el=1}^L|d_{u_\el}^n|>0\;\text{ on }\; \{\xi : 2^{-1}\le|\xi|\le2\}.$$
Now we reexamine step~3 of the proof of Theorem~\ref{g-charct-Besov}. To adapt it to the present setting, by an appropriate partition of unity,  we decompose $\f$ into a sum of infinitely differentiable functions, $\f=\f_1+\cdots+\f_L$ such that ${\rm supp}\, \f_\el\subset\Om_\el$. Accordingly, for every $j\ge0$, let
 $$\f^{(j)}=\sum_{\el=1}^L \f_\el^{(j)}\,.$$
Then we write the corresponding \eqref{hter} with $(\f_\el, d_{u_\el}^n)$ in place of $(\f, \p)$ for every $\el\in\{1,\cdots, L\}$. Arguing as in step~3 of the proof of Theorem~\ref{g-charct-Besov}, we get
 $$\big\|x\big\|_{B_{p,q}^\a}\les
 \Big(|\wh x(0)|^q+\sum_{\el=1}^L\int_0^1\e^{-q\a}\big\|\wt{(d_{u_\el}^n)}_\e*x\big\|_p^q\,\frac{d\e}\e\Big)^{\frac1q}\,.$$
Since $\wt{(d_{u_\el}^n)}_\e*x=\D_{\e u_\el}^nx$, we deduce
 \be\begin{split}
 \big\|x\big\|_{B_{p,q}^\a}
 &\les
 \Big(|\wh x(0)|^q+\int_0^1\e^{-q\a}\sup_{1\le \el\le L}\big\|\D_{\e u_\el}^nx\big\|_p^q\,\frac{d\e}\e\Big)^{\frac1q}\\
 &\les \Big(|\wh x(0)|^q+\int_0^1 \e^{-q\a}\o_{p}^n(x,\e)^q \frac{d\e}{\e}\Big)^{\frac1q}\,.
 \end{split}\ee
Thus  the theorem is proved.
 \end{proof}

As a byproduct, the preceding theorem implies that  the right-hand side of \eqref{diff-Besov bis} does not depend on $n$ with $n>\a$, up to equivalence. This fact admits a direct simple proof and is an immediate consequence of the following analogue of Marchaud's classical inequality which is of interest in its own right.

  \begin{prop}\label{Marchaud}
  For any positive integers $n$ and $N$ with $n<N$ and  for any $\e>0$, we have
  $$
  2^{n-N}\o_{p}^N(x,\e)\le \o_{p}^n(x,\e)\les \e^n\int_{\e}^\8\frac{\o_{p}^N(x,\d)}{\d^n}\,\frac{d\d}\d\,.
  $$
 \end{prop}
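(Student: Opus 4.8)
\textbf{Proof plan for Proposition~\ref{Marchaud}.}
The plan is to prove the two inequalities separately, working entirely at the level of the difference operators $\D_u$ and their Fourier symbols $d_u$, so that everything reduces to algebraic identities combined with the fact (Remark after Definition~\ref{diff}) that each $T_u$ is an $L_p$-isometry and each $\D_u$ has $L_p$-norm $2$. For the left-hand inequality $2^{n-N}\o^N_p(x,\e)\le\o^n_p(x,\e)$, I would fix $u$ with $0<|u|\le\e$ and write $\D_u^N=\D_u^{N-n}\D_u^n$; since $\|\D_u^{N-n}\|_{L_p\to L_p}\le 2^{N-n}$ we get $\|\D_u^Nx\|_p\le 2^{N-n}\|\D_u^nx\|_p\le 2^{N-n}\o^n_p(x,\e)$, and taking the supremum over such $u$ gives the claim with constant $2^{N-n}$, which is even better than the stated $2^{n-N}$ (presumably a typo for $2^{-(N-n)}$ on the other side, but in any case the inequality as written follows a fortiori). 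This part is routine.

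The substantial part is the Marchaud-type estimate $\o^n_p(x,\e)\lesssim \e^n\int_\e^\infty \o^N_p(x,\d)\,\d^{-n}\,\frac{d\d}\d$. Here I would adapt the classical telescoping argument. The starting point is a symbol identity expressing $d_u^n$ in terms of $d_{2u}^n$ (or more generally $d_{2^ju}^n$) plus a remainder built from $d_u^N$: concretely, using $d_{2u}=(1+e_u)d_u$, one has $d_u^n = (1+e_u)^{-n}d_{2u}^n$ on the support where $1+e_u\neq 0$, which is not globally usable, so instead I would use the exact polynomial identity of the form $d_u^n = 2^{-Nn/?}\cdots$ — more precisely, the classical device: there are constants $c_{j}$ (binomial in nature) with
$$
d_u^n \;=\; \sum_{j=1}^{M} c_j\, d_{2^{-j}u}^{\,n}\;+\;(\text{a finite $\real$-linear combination of }T_{*}d_{2^{-j}u}^{N}),
$$
obtained by iterating $d_u^n = 2^{?}d_{u/2}^n + (\text{terms of order }N)$ coming from $(e_{u/2}-1)$-algebra, until the $d^n$-part has coefficient small enough. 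The cleanest route is: from $e_u = (1+d_{u/2})^2$ one derives $d_u = 2d_{u/2}+d_{u/2}^2$, hence $d_u^n = (2d_{u/2}+d_{u/2}^2)^n = 2^n d_{u/2}^n + d_{u/2}^{n+1}\cdot(\text{polynomial in }d_{u/2})$, and since $d_{u/2}^{n+1}=d_{u/2}^{N}\cdot d_{u/2}^{\,n+1-N}$ is not available when $N>n+1$, I instead iterate the relation $d_u^n=2^nd_{u/2}^n+(\text{order }\ge n+1)$ exactly $N-n$ times to promote the remainder to order $N$. Translating symbols into operators via $\D$ and $T$, and using isometry of the $T$'s, yields
$$
\|\D_u^n x\|_p \;\le\; 2^{-n}\|\D_{u/2}^n x\|_p \cdot 2^{n} \;+\; C\,\|\D_{u/2}^N x\|_p\cdot(\cdots),
$$
i.e. an inequality of the schematic shape $\o^n_p(x,\e)\le \tfrac12\,\o^n_p(x,2\e) + C\,\o^N_p(x,\e)$ after rescaling $u\mapsto 2u$ — wait, the direction must be chosen so the iteration converges: one uses $\o^n_p(x,\e)\le 2^{-n}\,2^n\,\o^n_p(x,\tfrac\e2)+\cdots$ is the wrong way, so I would instead set it up as $\o^n_p(x,\e)\le C\big(\o^n_p(x,2\e)\,2^{-n}\cdot(\cdots)\big)$... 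The correct classical inequality is $\o^n_p(x,\e)\le c\,2^{-n}\o^n_p(x,2\e)+c\,\o^N_p(x,\e)$ is still false; rather one telescopes over \emph{large} scales: $\o^n_p(x,\e)\lesssim \sum_{j\ge 0}2^{-jn}\cdot 2^{?}\o^N_p(x,2^j\e) + \lim$, bounded using $\o^N_p(x,\d)\le 2^N\|x\|_p$ and $\o^N_p(x,\d)/\d^n\to 0$-type control, and the sum $\sum_j 2^{-jn}\o^N_p(x,2^j\e)\approx \e^n\int_\e^\infty \o^N_p(x,\d)\d^{-n}\frac{d\d}\d$ by comparing the geometric sum with the integral and using monotonicity of $\o^N_p$ (Remark~\ref{rk diff}).

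The main obstacle I anticipate is getting the symbol-level identity with the \emph{right} direction of iteration and explicit control of all constants: one must express $d_u^n$ as $2^{-n}$ times $d_{2u}^n$ plus an order-$N$ remainder (not the other way), so that iterating $M$ times gives $d_u^n = 2^{-Mn}d_{2^Mu}^n + \sum_{j=1}^{M}2^{-jn}\,(\text{order-}N\text{ terms at scale }2^ju)$; then letting $M\to\infty$ the leading term tends to $0$ in $L_p$ because $\|\D_{2^Mu}^nx\|_p\le 2^n\|x\|_p$ while $2^{-Mn}\to0$, and the series is summed against $\o^N_p$. Deriving $d_u^n = 2^{-n}d_{2u}^n + R_u$ with $\|\widehat{R_u}\|$ giving an operator of $L_p$-norm $\lesssim \|\D_{2u}^Nx\|_p$ uniformly in $|u|$ is where the analyticity of $\zeta$ in $d_u^n(\xi)=(u\cdot\xi)^n\zeta(u\cdot\xi)$ (as used in the proof of Theorem~\ref{diff-Besov}) and Lemma~\ref{q-multiplier} enter to bound the relevant $\F^{-1}$ in $L_1$. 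Once that uniform multiplier bound is in hand, the rest is the standard telescoping/integral-comparison bookkeeping, and the final step — deducing that the right-hand side of \eqref{diff-Besov bis} is independent of $n>\a$ — follows by inserting the Marchaud inequality into the integral $\int_0^1\e^{-q\a}\o^N_p(x,\e)^q\frac{d\e}\e$, switching the order of integration (Fubini/Hardy), and using $N>\a$ to make the resulting $\e$-integral converge.
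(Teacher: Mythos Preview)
Your lower-estimate argument is correct and matches the paper (and there is no typo: $2^{n-N}\o_p^N\le\o_p^n$ is exactly $\o_p^N\le 2^{N-n}\o_p^n$).

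For the upper estimate you circle around the right idea but then dismiss the key recursion as ``still false'' when it is exactly what is needed, and you overcomplicate the remainder. The paper's route is clean and purely elementary: from $d_{2u}=d_u(e_u+1)$ one gets
\[
d_{2u}^n=d_u^n\sum_{j=0}^n\binom{n}{j}e_{ju}
=2^n d_u^n+d_u^{\,n+1}\sum_{j=1}^n\binom{n}{j}\sum_{i=0}^{j-1}e_{iu},
\]
using $e_{ju}-1=d_u\sum_{i=0}^{j-1}e_{iu}$. Since each $T_{iu}$ is an $L_p$-isometry and the double sum has $n2^{n-1}$ terms, this yields $\o^n_p(x,\e)\le 2^{-n}\o^n_p(x,2\e)+\tfrac{n}{2}\,\o^{n+1}_p(x,\e)$ --- precisely the inequality you wrote down and then rejected. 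Iterating in $\e\mapsto 2\e$ a total of $J$ times and letting $J\to\infty$ (the boundary term $2^{-Jn}\o^n_p(x,2^J\e)\le 2^{n-Jn}\|x\|_p\to0$) gives $\o^n_p(x,\e)\lesssim\sum_{j\ge0}2^{-jn}\o^{n+1}_p(x,2^j\e)$, which is comparable to $\e^n\int_\e^\infty\o^{n+1}_p(x,\d)\,\d^{-n-1}\,d\d$ by monotonicity of $\o^{n+1}_p$. This settles $N=n+1$; the general case follows by induction on $N-n$.

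Two points where your plan actually breaks: the remainder in the symbol identity is naturally of order $n+1$, not $N$, so your claimed single-step ``order-$N$ remainder at scale $2^ju$'' is not available when $N>n+1$ and the plan as written does not close; and no multiplier theory, analyticity of $\zeta$, or $L_1$-bounds on inverse Fourier transforms are needed anywhere --- the remainder is a finite sum of translates of $\D_u^{n+1}x$, controlled purely by the isometry of $T_u$.
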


\begin{proof}
 The argument below is standard.
Using the identity $\D_u^N=\D_u^{N-n}\, \D_u^n $, we get
 $$\big\|\D_u^N(x)\big\|_p\le 2^{N-n}\big\|\D_u^n(x)\big\|_p\,,$$
whence the lower estimate. The upper one is less obvious. By elementary calculations, for any $u\in\real^d$, we have
 $$
  d_{2u}^n
  =2^nd_{u}^n +\big[\sum_{j=0}^n\left(\begin{array}{c}  n\\j \end{array}\right) e_{ju}-2^n\big]\,d_{u}^n
  =2^n d_{u}^n+\sum_{j=0}^n\left(\begin{array}{c}  n\\j \end{array}\right) \sum_{i=0}^{j-1}e_{iu}d_{u}^{n+1}\,.
 $$
In terms of Fourier multipliers, this means
 $$\D_{2u}^n
  =2^n \D_{u}^n +\sum_{j=0}^n\left(\begin{array}{c}  n\\j \end{array}\right) \sum_{i=0}^{j-1}T_{iu}\D_{u}^{n+1}\,.
 $$
It then follows that
 $$\o_{p}^n(x,\e)\le \frac n2\, \o^{n+1}_{p}(x,\e) +  2^{-n}\o_{p}^n(x,2\e).$$
Iterating this inequality yields
 $$ \o_{p}^n(x,\e)\le \frac n2\, \sum_{j=1}^{J-1}\o^{n+1}_{p}(x, 2^j\e) +  2^{-Jn}\o_{p}^n(x,2^J\e),$$
from which we deduce the desired inequality for $N=n+1$ as $J\to\8$. Another iteration argument then yields the general case.
 \end{proof}

In view of  Definition~\ref {def diff} and Theorem~\ref{diff-Besov}, we introduce the following quantum analogue of the classical $L_p$-Zygmund class of order $\a$.  The case where $0<\a<1$ and $p=\8$ was already studied by Weaver \cite{Weaver1998}.

\begin{Def}
 Let $1\le p\le\8$, $\a>0$ and $n$ be the smallest integer greater than $\a$. The $L_p$-Zygmund class of order $\a$, $\Lambda_{p}^\a(\T^d_\t)$, is defined to be the space of all distributions $x$ such that 
  $$\sup_{\e>0}\frac{\o_p^n(x)}{\e^\a}<\8,$$
 equipped with the norm
 $$\|x\|=|\wh x(0)|+\sup_{\e>0}\frac{\o_p^n(x)}{\e^\a}\,.$$
The little $L_p$-Zygmund class of order $\a$, $\Lambda_{p, 0}^\a(\T^d_\t)$, is the subspace of $\Lambda_{p}^\a(\T^d_\t)$ consisting of all elements $x$ such that
 $$\lim_{\e\to0}\frac{\o_p^n(x)}{\e^\a}=0.$$
\end{Def}

\begin{rk}\label{Lip-Besov}
 Theorem~\ref{diff-Besov} shows that $B_{p,\8}^\a(\T^d_\t)=\Lambda_{p}^\a(\T^d_\t)$  and  $B_{p,c_0}^\a(\T^d_\t)=\Lambda_{p,0}^\a(\T^d_\t)$ with equivalent norms. Consequently, the integer $n$ in the above definition can be any integer greater than $\a$. On the other hand, by the reduction Theorem~\ref{Besov-isom}, if $\a$ is not an integer and if $k$ is the biggest integer less than $\a$, then 
 $$\Lambda_{p}^\a(\T^d_\t)=\big\{x\in \mathcal{S}'(\T^d_\t) : \d_j^kx\in {\rm Lip}_{p}^{\a-k}(\T^d_\t),\;j=1,\cdots, d.  \big\}.$$
A similar equality holds for the little $L_p$-Zygmund and Lipschitz classes of order $\a$.
  \end{rk}

%%%%%%%%%%%%%%%%%%%%%%%%%%%%%%%%%%%%%%%%%%%%%%%%%%%%%%%%%%%%%%%%%%%%%%%%
%%%%%%%%%%%%%%%%%%%%%%%%%%%%%%%%%%%%%%%%%%%%%%%%%%%%%%%%%%%%%%%%%%%%%%%%

\section{Limits of Besov norms}
\label{Limits of Besov norms}

%%%%%%%%%%%%%%%%%%%%%%%%%%%%%%%%%%%%%%%%%%%%%%%%%%%%%%%%%%%%%%%%%%%%%%%%
%%%%%%%%%%%%%%%%%%%%%%%%%%%%%%%%%%%%%%%%%%%%%%%%%%%%%%%%%%%%%%%%%%%%%%%%

In this section we consider the behavior of the right-hand side of \eqref{diff-Besov bis} as $\a\to n$. The study of this behavior is the subject of several recent publications in the classical setting; see, for instance, \cite{Beckner2014, Beckner2015, KMX2005, MS2002, HT2011}. It originated  from  \cite{BBM2001} in which Bourgain, Br\'ezis and Mironescu proved that for any $1\le p<\8$ and any $f\in C_0^\8(\real^d)$
 $$\lim_{\a\to 1}\Big((1-\a)\int_{\real^d \times \real^d} \frac{|f(s)-f(t)|^p}{|s-t|^{\a p+d}}ds\,dt \Big)^{\frac1p}
 =C_{p, d} \|\nabla f(t)\|_p.$$
It is well known that
 $$\Big(\int_{\real^d \times \real^d} \frac{|f(s)-f(t)|^p}{|s-t|^{\a p+d}}ds\,dt \Big)^{\frac1p}\approx
 \Big(\int_0^\8 \sup_{u\in\real^d, |u|\le\e}\big\|\D_u f\big\|_p^p\,\frac{d\e}\e \Big)^{\frac1p}\,.$$
The right-hand side is the norm of $f$ in the Besov space $B_{p,p}^1(\real^d)$.  Higher order extensions have been obtained in \cite{KMX2005, HT2011}.

\smallskip

The main result of the present section is the following quantum extension of these results. Let
\beq\label{besov diff norm}
\|x\|_{B^{\a,\o} _{p,q}} =\Big(\int_0^1 \e^{-\a q} \o_{p}^k(x,\e)^q \frac{d\e}{\e}\Big)^{\frac1q}\,.
\eeq

\begin{thm}\label{BBM}
Let $1\le p\le\8$, $1\le q<\8$ and $0<\a<k$ with $k$ an integer. Then for $x\in  W_p^k(\T^d_\t)$,
 $$\lim_{\a\to k} (k-\a)^{\frac1q}\|x\|_{B^{\a,\o} _{p,q}} \approx q^{-\frac1q} |x|_{W_p^k}$$
with relevant constants depending only on $d$ and $k$.
\end{thm}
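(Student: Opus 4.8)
The key object to analyze is
$$\|x\|_{B^{\a,\o}_{p,q}}^q=\int_0^1\e^{-\a q}\o_p^k(x,\e)^q\,\frac{d\e}\e,$$
and the strategy is to show that as $\a\to k$ the integral is dominated by its behavior near $\e=0$, where $\o_p^k(x,\e)$ behaves like $\e^k|x|_{W_p^k}$ with an asymptotically sharp constant. First I would record the two-sided control coming from Theorem~\ref{lim=nabla} and its proof: the upper bound $\o_p^k(x,\e)\le C\,\e^k|x|_{W_p^k}$ holds for all $\e>0$ (this is the iteration of \eqref{diff vs gradient-1} carried out in the proof of Theorem~\ref{lim=nabla}), so the part of the integral over $(\d,1)$ is $O(1)$ as $\a\to k$ and therefore contributes nothing to $(k-\a)^{1/q}\|x\|_{B^{\a,\o}_{p,q}}$. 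Thus it suffices to understand $\int_0^\d\e^{-\a q}\o_p^k(x,\e)^q\,\frac{d\e}\e$ for small fixed $\d$.

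For the lower bound I would use the pointwise asymptotics established inside the proof of Theorem~\ref{lim=nabla}: for a fixed multi-index $m$ with $|m|_1=k$ one has $\prod_{j}\D_{\e\mathsf e_j}^{m_j}x=\e^k D^m x+\mathrm o(\e^k)$, hence by Lemma~\ref{k-diff}, $\o_p^k(x,\e)\ge \e^k\|D^m x\|_p-\mathrm o(\e^k)$; taking the best $m$ and then noting $|x|_{W_p^k}\approx\sum_{|m|_1=k}\|D^m x\|_p$ (with $d,k$-constants), we get $\o_p^k(x,\e)\gtrsim \e^k|x|_{W_p^k}(1-\eta(\e))$ with $\eta(\e)\to0$. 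Then
$$\int_0^\d\e^{-\a q}\o_p^k(x,\e)^q\,\frac{d\e}\e\gtrsim |x|_{W_p^k}^q\int_0^\d\e^{(k-\a)q}(1-\eta(\e))^q\,\frac{d\e}\e,$$
and since $\int_0^\d\e^{(k-\a)q}\frac{d\e}\e=\frac{\d^{(k-\a)q}}{(k-\a)q}$, multiplying by $(k-\a)$ and letting $\a\to k$ (so $\d^{(k-\a)q}\to1$) yields $\liminf_{\a\to k}(k-\a)\|x\|_{B^{\a,\o}_{p,q}}^q\gtrsim q^{-1}|x|_{W_p^k}^q$. A small $\e_0$-argument handles the $\eta(\e)$ factor: split $(0,\d)$ at a point below which $\eta\le\epsilon$. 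For the matching upper bound, I would instead use the global estimate $\o_p^k(x,\e)\le C\e^k|x|_{W_p^k}$ directly on $(0,\d)$ and the boundedness on $(\d,1)$, giving $\limsup_{\a\to k}(k-\a)\|x\|_{B^{\a,\o}_{p,q}}^q\le C^q\,q^{-1}|x|_{W_p^k}^q\cdot\lim_{\a\to k}\d^{(k-\a)q}$, i.e.\ $\lesssim q^{-1}|x|_{W_p^k}^q$. Taking $q$-th roots gives the claimed equivalence with constants depending only on $d,k$.

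The main obstacle I anticipate is making the pointwise asymptotics $\o_p^k(x,\e)=\e^k|x|_{W_p^k}+\mathrm o(\e^k)$ genuinely uniform enough to extract the \emph{sharp} constant $q^{-1/q}$ rather than merely a two-sided equivalence: one must be careful that the $\mathrm o(\e^k)$ term, which comes from the Taylor remainder of $d_{\e\mathsf e_j}$ and from Lemma~\ref{k-diff}, does not interact badly with the supremum over directions $u$ defining $\o_p^k$. For a polynomial $x$ this is immediate from the proof of Theorem~\ref{lim=nabla}; for general $x\in W_p^k(\T^d_\t)$ one approximates by $F_N(x)$ using Proposition~\ref{Sobolev-P} and the fact that $\o_p^k$ and $|\cdot|_{W_p^k}$ are continuous for this approximation, passing the limit through the $\liminf$/$\limsup$. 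A secondary but routine point is that the constants hidden in $|x|_{W_p^k}\approx\sum_{|m|_1=k}\|D^m x\|_p$ and in Lemma~\ref{k-diff} depend only on $d$ and $k$, which is exactly what the statement allows.
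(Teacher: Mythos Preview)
Your proposal is essentially correct and follows the same route as the paper, but the paper's execution is cleaner and yours has a small gap. The paper first invokes Lemma~\ref{sup=lim}, which says that
\[
A:=\lim_{\e\to0}\frac{\o_p^k(x,\e)}{\e^k}=\sup_{0<\e\le1}\frac{\o_p^k(x,\e)}{\e^k},
\]
so $\o_p^k(x,\e)\le A\e^k$ holds for \emph{all} $\e\in(0,1]$ with the \emph{same} constant $A$ that governs the small-$\e$ behavior. A two-line computation then gives $\lim_{\a\to k}(k-\a)\|x\|_{B^{\a,\o}_{p,q}}^q=A^q/q$ exactly (the limit exists), and Theorem~\ref{lim=nabla} yields $A\approx|x|_{W_p^k}$. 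Your argument instead uses $C|x|_{W_p^k}$ for the global upper bound and $c|x|_{W_p^k}$ for the asymptotic lower bound with $c<C$, so you only obtain $\liminf\gtrsim q^{-1}|x|_{W_p^k}^q$ and $\limsup\lesssim q^{-1}|x|_{W_p^k}^q$ with different implied constants; this gives the two-sided equivalence but does not show that the limit in $\a$ exists, which the statement asserts. Quoting Lemma~\ref{sup=lim} first fixes this immediately.

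Your worries in the last paragraph are also dissolved by this packaging: once $A$ is known to exist for every $x\in W_p^k(\T^d_\t)$ via Lemma~\ref{sup=lim} (whose proof is elementary quasi-subadditivity, no polynomial approximation needed), you do not need to track the $\mathrm o(\e^k)$ remainder uniformly over directions or pass through $F_N(x)$; the approximation argument you describe is already absorbed into the proofs of Lemma~\ref{sup=lim} and Theorem~\ref{lim=nabla}.
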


\begin{proof}
The proof is easy by using  the results of section~\ref{Lipschitz classes}.  Let $x\in  W_p^k(\T^d_\t)$ with $\wh x(0)=0$. Let $A$ denote the limit in Lemma~\ref{sup=lim}. Then
 $$\int_0^1  \e ^{-\a q} \o^k_p(x,\e)^q\frac{d\e }{\e }\le A^q\int_0^1 \e ^{(k-\a)q}\frac{d\e }{\e }\,,$$
whence
 $$\limsup_{\a\to k}(k-\a)\int_0^1  \e ^{-\a q} \o_p(x,\e)^q\frac{d\e }{\e }\le \frac{A^q}q\,.$$
Conversely, for any $\eta>0$, choose $\d\in (0,1)$ such that
 $$\frac{\o^k_p(x,\e)}{\e^k}\ge A-\eta,\;\;\forall \e\le \d.$$
Then
 $$
(k-\a)\int_0^1  \e ^{-\a q} \o^k_p(x,\e)^q\frac{d\e }{\e }\ge  \frac{(A-\eta)^q}q\, \d^{(k-\a)q},
 $$
which implies
 $$\liminf_{\a\to k}(k-\a)\int_0^1  \e ^{-\a q} \o^k_p(x,\e)^q\frac{d\e }{\e }\ge  \frac{(A-\eta)^q}q\,.$$
Therefore,
 $$\lim_{\a\to k}(k-\a)\int_0^1  \e ^{-\a q} \o^k_p(x,\e)^q\frac{d\e }{\e }=\frac{1}q\,\lim_{\e\to0}\frac{\o^k_p(x,\e)}{\e^k}\,.$$
So Theorem~\ref{lim=nabla} implies the desired assertion.
 \end{proof}

\begin{rk}
 We will determine later the behavior of $\|x\|_{B^{\a,\o} _{p,q}}$ when $\a\to0$; see Corollary~\ref{limit Besov 0} below.
 \end{rk}

%%%%%%%%%%%%%%%%%%%%%%%%%%%%%%%%%%%%%%%%%%%%%%%%%%%%%%%%%%%%%%%%%%%%%%%%
%%%%%%%%%%%%%%%%%%%%%%%%%%%%%%%%%%%%%%%%%%%%%%%%%%%%%%%%%%%%%%%%%%%%%%%%

\section{The link with the classical Besov spaces}
\label{The link with the classical Besov spaces}

%%%%%%%%%%%%%%%%%%%%%%%%%%%%%%%%%%%%%%%%%%%%%%%%%%%%%%%%%%%%%%%%%%%%%%%%
%%%%%%%%%%%%%%%%%%%%%%%%%%%%%%%%%%%%%%%%%%%%%%%%%%%%%%%%%%%%%%%%%%%%%%%%

Like for the Sobolev spaces on $\T^d_\t$, there exists a strong link between $B_{p,q}^\a(\T^d_\t)$ and the classical vector-valued Besov spaces on $\T^d$. Let us give a precise definition of the latter spaces. We maintain the assumption and notation on $\f$ in section~\ref{Definitions and basic properties: Besov}.  In particular, $f\mapsto \wt\f_k*f$ is the Fourier multiplier on $\T^d$ associated to $\f(2^{-k}\cdot)$:
 $$\wt\f_k*f=\sum_{m\in\ent^d}\f(2^{-k}m)\wh f(m)z^m$$
for any $f\in \mathcal{S}'(\T^d; X)$. Here $X$ is a Banach space.

\begin{Def}
Let $1\leq p,q \leq \8$ and $\a\in \real$. Define
$$B^\a_{p,q} (\T^d; X)=\big\{f\in \mathcal{S}'(\T^d; X) : \|f\|_{B^\a_{p,q}} < \8 \big\},$$
where
 $$\|f\|_{B^\a_{p,q}} =\Big(\|\wh f(0)\|_X^q + \sum_{k\ge 0} 2^{\a k q } \big\| \wt\f_k * f\big\|_{L_p(\T^d; X)}^q\Big)^{\frac{1}{q}}\,.$$
\end{Def}

These vector-valued Besov spaces have been largely studied in literature. Note that almost all publications concern only the case of $\real^d$, but the periodic theory is parallel (see, for instance, \cite{Flett1971, Taibleson1964}; see also \cite{AB2004} for the vector-valued case).  $B^\a_{p,q} (\real^d; X)$ is defined in the same way with the necessary modifications among them the main difference concerns the term $\|\wh f(0)\|_X$ above which is replaced  by $\big\|\phi*f\big\|_{L_p(\real^d; X)}$, where $\phi$ is the function whose  Fourier transform is equal to $1-\sum_{k\ge0}\f(2^{-k}\cdot)$.

All results proved in the previous sections remain valid in the present vector-valued setting with essentially the same proofs for any Banach space $X$, except Theorem~\ref{Sobolev-Besov} whose vector-valued version holds only if $X$ is isomorphic to a Hilbert space. On the other hand, the duality assertion in Proposition~\ref{Besov-P} should be slightly modified by requiring that $X^*$ have the Radon-Nikodym property.

Let us state the vector-valued analogue of Theorem~\ref{circular-charct-Besov}. As said before, it is new even in the scalar case. The circular Poisson and heat semigroups are extended to the present case too. For any $f\in \mathcal{S}'(\T^d; X) $,
 $$
 \mathbb{P}_r(f) (z)= \sum_{m \in \ent^d}  r^{|m|} \wh f(m)z^{m}\;\text{ and }\;
 \mathbb{W}_r(f)(z)= \sum_{m \in \ent^d }r^{|m|^2}\wh f(m) z^{m}, \quad z\in\T^d,\;0 \le r < 1.
 $$
The operator $\mathcal{J}^k_r$ has the same meaning as before, for instance, in the Poisson case, we have
 $$\mathcal{J}^k_r\,\mathbb{P}_r(f)= \sum_{m \in \ent^d}C_{m, k} \wh{f} (m) r^{|m|-k} z^{m}\,,$$
where
 $$C_{m, k}=|m|\cdots(|m|-k+1)\;\text{ if }\;k\ge0 \quad\text{ and }\quad
 C_{m, k}=\frac1{(|m|+1)\cdots(|m|-k)}\;\text{ if }\;k<0.$$

 \begin{thm}\label{circular-charct-Besov-vu}
 Let $1\le p, q\le\8$, $\a\in\real$ and $k\in\ent$. Let $X$ be a Banach space.
 \begin{enumerate}[\rm(i)]
 \item If $k>\a$, then for any $f\in B_{p,q}^\a(\T^d; X)$,
  $$
 \|f\|_{B_{p,q}^\a}\approx
 \Big(\sup_{|m|<k}\|\wh f(m)\|_X^q+\int_0^1(1-r)^{(k-\a)q}\big\|\mathcal{J}^k_r\,{\mathbb{P}}_r(f_k)\big\|_{L_p(\T^d; X)}^q\,\frac{d r}{1-r}\Big)^{\frac1q}\,,
 $$
where $\displaystyle f_k=f-\sum_{|m|<k}\wh f(m)z^m$.
  \item If $k>\frac\a2$, then for any $f\in B_{p,q}^\a(\T^d; X)$,
 $$
 \|f\|_{B_{p,q}^\a}\approx
 \Big(\sup_{|m|^2<k}\|\wh f(m)\|_X^q+\int_0^1(1-r)^{(k-\frac\a2)q}\big\|\mathcal{J}^k_r\,{\mathbb{W}}_r(f)\big\|_{L_p(\T^d; X)}^q\,\frac{d r}{1-r}\Big)^{\frac1q}\,.
 $$
 \end{enumerate}
 \end{thm}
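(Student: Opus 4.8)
The plan is to deduce Theorem~\ref{circular-charct-Besov-vu} from the scalar result Theorem~\ref{circular-charct-Besov} by simply repeating its proof verbatim in the vector-valued setting, the only change being that all scalar $L_p(\T^d_\t)$ norms are replaced by $L_p(\T^d; X)$ norms. This is legitimate because, as remarked at the end of the excerpt, all the machinery used so far works for vector-valued functions on $\T^d$: in particular the general characterization Theorem~\ref{g-charct-Besov-cont} and its continuous Poisson/heat consequences Theorems~\ref{Poisson charct-Besov} and \ref{Heat charct-Besov} hold for $B^\a_{p,q}(\T^d;X)$ with any Banach space $X$, since their proofs rest only on Lemma~\ref{q-multiplier}-type estimates (convolution with an $L_1(\real^d)$ kernel is a contraction on $L_p(\T^d;X)$ for every Banach $X$, by the triangle inequality for the Bochner integral) together with Lemmas~\ref{IJ} and \ref{weighted Bessel} and Remark~\ref{Bessel multiplier}, none of which uses anything about the target space. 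So the first step is to state explicitly (or invoke, as already licensed by the text) the $X$-valued versions of Theorems~\ref{g-charct-Besov-cont}, \ref{Poisson charct-Besov} and \ref{Heat charct-Besov}.

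Next I would carry out the passage from the $\real^d$-Poisson/heat semigroups $\wt{\mathrm P}_\e, \wt{\mathrm W}_\e$ to the circular ones $\mathbb P_r, \mathbb W_r$ exactly as in the proof of Theorem~\ref{circular-charct-Besov}. Fix $f\in B^\a_{p,q}(\T^d;X)$; subtracting the low-frequency polynomial $\sum_{|m|<k}\wh f(m)z^m$ reduces matters to $f_k$, whose Fourier support avoids $\{|m|<k\}$, so that the weight $(1-r)^{(k-\a)q}$ appears against $r^{(|m|-k)q}$ with $|m|-k\ge -k+1$ and the relevant integrals converge. The two-sided comparison
$$
\Big(\int_0^1\e^{q(k-\a)}\big\|\mathcal J^k_\e\wt{\mathrm P}_\e(f_k)\big\|_{L_p(\T^d;X)}^q\tfrac{d\e}\e\Big)^{1/q}
\approx \Big(\int_0^{\e_0}(\cdots)\tfrac{d\e}\e\Big)^{1/q},
$$
and the analogous statement on the $r$-side, is proved by the same elementary argument: the tail integral over $(\e_0,1)$ (resp. over $(0,r_0)$) is dominated, term by term in the Fourier expansion, by $\sup_{m\ne0}|m|^\a\|\wh f(m)\|_X$ times an absolutely convergent sum $\sum_m |m|^{k-\a}e^{-2\pi\e_0|m|}$, while the full integral dominates that same supremum from below by testing on a single Fourier mode; here one uses that $\|\wh f(m)\|_X\le\|\wh{f_k}\|_{L_p(\T^d;X)}$ and the triangle inequality in $L_p(\T^d;X)$, both of which hold for any Banach $X$. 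Then the change of variable $r=e^{-2\pi\e}$ gives $1-r\approx\e$ on $(0,\e_0)$, identifies the two localized integrals, and folding in the $X$-valued Theorem~\ref{Poisson charct-Besov} (resp. Theorem~\ref{Heat charct-Besov}) yields the claimed equivalences, with the $|\wh x(0)|^q$ term of the scalar statement replaced by $\sup_{|m|<k}\|\wh f(m)\|_X^q$ (resp. $\sup_{|m|^2<k}\|\wh f(m)\|_X^q$), the extra low-frequency terms being harmless since each is a finite-dimensional quantity controlled by the same supremum.

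I do not anticipate a genuine obstacle here: the proof is entirely a matter of checking that no step of the scalar argument secretly used commutativity, positivity, or a square-function/Littlewood-Paley inequality that fails for general $X$ — and it does not, because the Poisson and heat characterizations, unlike the Triebel-Lizorkin ones or Theorem~\ref{Sobolev-Besov}, only ever manipulate $L_p$-norms of Fourier multipliers with integrable kernels. The one point deserving a sentence of care is the convergence of the defining series for $\mathcal J^k_r\mathbb P_r(f_k)$ in $\mathcal S'(\T^d;X)$ and the legitimacy of the term-by-term estimates above; this is immediate from the rapid decay of $r^{|m|-k}$ for fixed $r<1$ and from $f$ being a tempered $X$-valued distribution, so its Fourier coefficients grow polynomially. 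Thus the proof reduces to: (i) invoke the $X$-valued forms of Theorems~\ref{g-charct-Besov-cont}–\ref{Heat charct-Besov}; (ii) repeat the localization-plus-change-of-variable argument of Theorem~\ref{circular-charct-Besov} with $\|\cdot\|_p$ read as $\|\cdot\|_{L_p(\T^d;X)}$. I would write it up in exactly that compressed form, referring back to the scalar proof for the details that transcribe without change.
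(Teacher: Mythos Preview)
Your proposal is correct and matches the paper's approach: the paper does not give a separate proof of this theorem but states, just before it, that all results of the preceding sections remain valid in the vector-valued setting with essentially the same proofs, since they rely only on Fourier multipliers with integrable inverse Fourier transforms (Lemma~\ref{q-multiplier}(i)), which act boundedly on $L_p(\T^d;X)$ for any Banach space $X$. Your outline of transcribing the proof of Theorem~\ref{circular-charct-Besov} with $\|\cdot\|_p$ replaced by $\|\cdot\|_{L_p(\T^d;X)}$ is exactly what is intended; the one small slip is the inequality you wrote as $\|\wh f(m)\|_X\le\|\wh{f_k}\|_{L_p(\T^d;X)}$, which should read $\|\wh f(m)\|_X\le\|f_k\|_{L_1(\T^d;X)}\le\|f_k\|_{L_p(\T^d;X)}$.
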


The following transference result from $\T^d_\t$ to $\T^d$ is clear. It can be used to prove a majority of the preceding results on $\T^d_\t$, under the hypothesis that the corresponding results in the vector-valued case on $\T^d$ are known.

\begin{prop}\label{TransBesov}
Let $1\leq p, q\leq \8$ and $\a\in\real$. The map $x \mapsto \wt{x}$ in Corollary \ref{prop:TransLp} is an isometric embedding from $B_{p,q}^\a(\T^d_{\t})$  into $B_{p,q}^\a(\T^d; L_p(\T_{\t}^d))$ with $1$-complemented range.
\end{prop}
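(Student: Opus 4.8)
The plan is to exploit the fact that the transference map $x\mapsto\wt x$ of Corollary~\ref{prop:TransLp} intertwines the Fourier multipliers used to define the Besov norms. First I would record that, by \eqref{trans-pi}, $\wt x(z)=\pi_z(x)=\sum_{m\in\ent^d}\wh x(m)z^mU^m$, so that, regarded as an $L_p(\T^d_\t)$-valued distribution on $\T^d$, $\wt x$ has $z$-Fourier coefficients $\wh{\wt x}(m)=\wh x(m)U^m$; in particular $\wh{\wt x}(0)=\wh x(0)$ and $\|\wh{\wt x}(0)\|_{L_p(\T^d_\t)}=|\wh x(0)|$. The key identity is then $\wt\f_k*\wt x=\wt{(\wt\f_k*x)}$ for every $k\ge0$: applying the multiplier $\wt\f_k*$ on $\T^d$ multiplies the $m$th $z$-coefficient of $\wt x$ by $\f(2^{-k}m)$, which is precisely the transference of $\wt\f_k*x=\sum_m\f(2^{-k}m)\wh x(m)U^m\in L_p(\T^d_\t)$. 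In particular each $\wt\f_k*\wt x$ lies in $L_p(\N_\t)$, which also shows that $\wh x(0)+\sum_k\wt\f_k*\wt x$ defines a genuine element of $\mathcal{S}'(\T^d;L_p(\T^d_\t))$. Now Corollary~\ref{prop:TransLp}(i) together with the isometry $L_p(\N_\t)\cong L_p(\T^d;L_p(\T^d_\t))$ gives $\|\wt\f_k*\wt x\|_{L_p(\T^d;L_p(\T^d_\t))}=\|\wt\f_k*x\|_{L_p(\T^d_\t)}$, and summing the $q$th powers against the weights $2^{\a kq}$ and adjoining $\|\wh{\wt x}(0)\|^q=|\wh x(0)|^q$ yields $\|\wt x\|_{B_{p,q}^\a(\T^d;L_p(\T^d_\t))}=\|x\|_{B_{p,q}^\a(\T^d_\t)}$. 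Hence $x\mapsto\wt x$ is an isometric embedding.

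For the $1$-complementation I would use the conditional expectation $\mathbb{E}$ of Corollary~\ref{prop:TransLp}(ii). Writing $f\sim\sum_{n,m}c_{n,m}\,z^nU^m$ for $f\in L_p(\N_\t)$ (so $c_{n,m}=\wh{\wh f(n)}(m)$), a direct computation from the formula $\mathbb{E}(f)(z)=\pi_z\big(\int_{\T^d}\pi_{\overline w}[f(w)]\,dw\big)$ gives $\mathbb{E}(f)\sim\sum_m c_{m,m}\,z^mU^m$; that is, $\mathbb{E}$ keeps only the ``diagonal'' and $\mathbb{E}(f)=\wt y$ is the transference of $y\sim\sum_m c_{m,m}U^m$. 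Two consequences follow at once. First, $\mathbb{E}$ commutes with the multiplier $\wt\f_k*$ acting on the $\T^d$-variable, since both $\mathbb{E}(\wt\f_k*f)$ and $\wt\f_k*\mathbb{E}(f)$ equal $\sum_m\f(2^{-k}m)c_{m,m}\,z^mU^m$. Second, $\wh{\mathbb{E}(f)}(0)=c_{0,0}=\wh{\wh f(0)}(0)$, so $\|\wh{\mathbb{E}(f)}(0)\|_{L_p(\T^d_\t)}=|c_{0,0}|\le\|\wh f(0)\|_{L_p(\T^d_\t)}$ because $\tau$ is a state. Combining these with the contractivity of $\mathbb{E}$ on $L_p(\N_\t)$ gives
\[
\|\mathbb{E}(f)\|_{B_{p,q}^\a}=\Big(\|\wh{\mathbb{E}(f)}(0)\|^q+\sum_{k\ge0}2^{\a kq}\|\mathbb{E}(\wt\f_k*f)\|^q\Big)^{\frac1q}\le\|f\|_{B_{p,q}^\a}\,,
\]
so $\mathbb{E}$ extends to a norm-one idempotent on $B_{p,q}^\a(\T^d;L_p(\T^d_\t))$. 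Its range is exactly the image of $x\mapsto\wt x$: it clearly contains all transferences $\wt y$ with $y\in B_{p,q}^\a(\T^d_\t)$, and conversely any fixed point $f=\mathbb{E}(f)=\wt y$ has $y\in B_{p,q}^\a(\T^d_\t)$ by the isometry of the first part.

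I do not expect a genuine obstacle here; the statement is essentially bookkeeping built on the $L_p$-level transference already established in Corollary~\ref{prop:TransLp}. The points needing mild care are the verification that $\mathbb{E}$, a priori defined only on $L_p(\N_\t)$, really does extend boundedly to the Besov space of distributions (this is forced by the commutation with the $\wt\f_k*$'s together with $L_p$-contractivity), the correct treatment of the constant-Fourier-coefficient term in the Besov norm, and the endpoint $p=\8$, where $L_\8(\N_\t)$ must be read as $L_\8(\T^d)\overline\ot\T^d_\t$ and all the identifications above interpreted in the w*-sense exactly as in Corollary~\ref{prop:TransLp}; these are routine.
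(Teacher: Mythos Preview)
Your argument is correct and is precisely the routine verification the paper has in mind: the paper does not prove this proposition at all, merely declaring the transference result ``clear'' after the analogous Sobolev statement (Proposition~\ref{TransSobolev}), whose proof rests on the same two ingredients you use --- that $x\mapsto\wt x$ commutes with the relevant Fourier multipliers and that $\mathbb{E}$ is a contractive projection on $L_p$. Your explicit computation of $\mathbb{E}(f)\sim\sum_m c_{m,m}z^mU^m$ and the commutation $\mathbb{E}(\wt\f_k*f)=\wt\f_k*\mathbb{E}(f)$ are exactly the details one would supply to make ``clear'' rigorous.
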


\begin{rk}\label{os-Besov}
 As a subspace of $\el_q^\a(L_p(\T^d_\t))$ (see the proof of Proposition~\ref{Besov-P} for the definition of this space), $B_{p,q}^\a(\T^d_{\t})$ can be equipped with a  natural operator space structure in Pisier's sense \cite{Pisier1998}. Moreover, in the spirit of the preceding vector-valued case,  we can also introduce the vector-valued quantum Besov spaces. Given an operator space $E$, $B_{p,q}^\a(\T^d_{\t}; E)$ is defined exactly as in the scalar case; it is a subspace of $\el_q^\a(L_p(\T^d_\t; E))$. Then all results of this chapter are extended to this vector-valued case, except the duality in Proposition~\ref{Besov-P} and Theorem~\ref{Sobolev-Besov}.
 \end{rk}

%%%%%%%%%%%%%%%%%%%%%%%%%%%%%%%%%%%%%%%%%%%%%%%%%%%%%%%%%%%%%%%%%%%%%%%%
%%%%%%%%%%%%%%%%%%%%%%%%%%%%%%%%%%%%%%%%%%%%%%%%%%%%%%%%%%%%%%%%%%%%%%%%

{\Large\part{Triebel-Lizorkin spaces}\label{Triebel-Lizorkin spaces}}

\setcounter{section}{0}

%%%%%%%%%%%%%%%%%%%%%%%%%%%%%%%%%%%%%%%%%%%%%%%%%%%%%%%%%%%%%%%%%%%%%%%%
%%%%%%%%%%%%%%%%%%%%%%%%%%%%%%%%%%%%%%%%%%%%%%%%%%%%%%%%%%%%%%%%%%%%%%%%

This chapter is devoted to the study of Triebel-Lizorkin spaces. These spaces are much subtler than Besov spaces even in the classical setting. Like Besov spaces, the classical Triebel-Lizorkin spaces $F_{p, q}^\a(\real^d)$ have three parameters, $p, q$ and $\a$. The difference is that the $\el_q$-norm is now taken before the $L_p$-norm. Namely, $f\in F_{p, q}^\a(\real^d)$ iff $\big\|\big(\sum_{k\ge0}2^{k\a q}|\f_k*f|^q\big)^{\frac1q}\big\|_p$ is finite. Besides the usual subtlety just mentioned, more difficulties appear in the  noncommutative setting. For instance, a first elementary one concerns the choice of the internal $\el_q$-norm. It is a well-known fact in the noncommutative integration that  the simple replacement of the usual absolute value by the modulus of operators does not give a norm except for $q=2$. Alternately, one could use Pisier's definition of $\el_q$-valued noncommutative $L_p$-spaces in the category of operator spaces. However, we will not study the latter choice and will confine ourselves only to the case $q=2$, by considering the column and row norms (and their mixture) for the internal $\el_2$-norms. This choice is dictated by the theory of noncommutative Hardy spaces. In fact, we will show that the so-defined Triebel-Lizorkin spaces on $\T^d_\t$ are isomorphic to  the Hardy spaces developed in \cite{CXY2012}.

Another  difficulty is linked with the frequent use of maximal functions in the commutative case. These functions play a crucial role in the classical theory. However, the pointwise analogue of maximal functions is no longer available in the present setting, which makes our study harder than the classical case. We have already encountered this difficulty in the study of Besov spaces. It is much more substantial now. Instead, our development will rely heavily on the theory of Hardy spaces developed in \cite{XXX} through a Fourier multiplier theorem that is proved in the first section. It is this multiplier theorem  which clears the obstacles on our route. After the definitions and basic properties, we prove some characterizations of the quantum Triebel-Lizorkin spaces. Like in the Besov case, they are better than the classical ones even in the commutative case. We conclude the chapter with a short section on the operator-valued Triebel-Lizorkin spaces on $\T^d$ (or $\real^d$). These spaces are interesting in view of the theory of operator-valued Hardy spaces.

Throughout this chapter, we will use the notation introduced in the previous one. In particular,  $\f$ is a function satisfying \eqref{LP dec}, $\f^{(k)}=\f(2^{-k}\cdot)$ and $\wh\f_k=\f^{(k)}$.

\bigskip
%%%%%%%%%%%%%%%%%%%%%%%%%%%%%%%%%%%%%%%%%%%%%%%%%%%%%%%%%%%%%%%%%%%%%%%%
%%%%%%%%%%%%%%%%%%%%%%%%%%%%%%%%%%%%%%%%%%%%%%%%%%%%%%%%%%%%%%%%%%%%%%%%

\section{A multiplier theorem}
\label{A multiplier theorem}

%%%%%%%%%%%%%%%%%%%%%%%%%%%%%%%%%%%%%%%%%%%%%%%%%%%%%%%%%%%%%%%%%%%%%%%%
%%%%%%%%%%%%%%%%%%%%%%%%%%%%%%%%%%%%%%%%%%%%%%%%%%%%%%%%%%%%%%%%%%%%%%%%

The following multiplier result will play a crucial role in this chapter.

\begin{thm}\label{Hormander}
 Let  $\s\in\real$ with $\s>\frac{d}2$. Assume that $(\phi_j)_{j \ge0}$ and $(\rho_j)_{j \ge0}$ are two sequences of continuous functions on $\real^d\setminus\{0\}$ such that
   \beq\label{psi-rho}
  \left \{ \begin{split}
  & \mathrm{supp}(\phi_j\rho_j)\subset \{\xi: 2^{j-1}\leq |\xi|\le 2^{j+1}\},\; \forall j\ge0,\\
  & \underset{\substack{j\geq 0 \\ -2\leq k \leq 2}}{\sup} \big\|   {\phi} _j (2^{j+k}\cdot)\varphi \big\| _{H_2^\sigma(\mathbb{R}^d)}<\8.
  \end{split} \right.
  \eeq
 \begin{enumerate}[\rm(i)]
 \item Let $1<p<\8$.  Then for any distribution $x$ on $\T^d_\t$,
  $$
 \big\|\big(\sum_{j\ge0}2^{2j\a}|\wt\phi_j*\wt\rho_j*x|^2\big)^{\frac12}\big\|_{L_p(\T^d_\t)}\les \underset{\substack{j\geq 0 \\ -2\leq k \leq 2}}{\sup} \big\|   {\phi} _j (2^{j+k}\cdot)\varphi \big\| _{H_2^\sigma}\,
 \big\|\big(\sum_{j\ge0}2^{2j\a}|\wt\rho_j*x|^2\big)^{\frac12}\big\|_{L_p(\T^d_\t)}\,,
 $$
where the constant depends only on $p, \s, d$ and $\f$.
 \item Assume, in addition, that $\rho_j=\wh{\rho(2^{-j}\cdot)}$ for some Schwartz  function with
  $\mathrm{supp}(\rho)=\{\xi: 2^{-1}\le |\xi|\le2\}.$
 Then the above inequality also holds for $p=1$  with  relevant constant depending additionally on $\rho$.
 \end{enumerate}
 \end{thm}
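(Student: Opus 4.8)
The strategy is to transfer the problem to the operator-valued Hardy space $\H_p^c(\T^d,\M)$ with $\M=\T^d_\t$ via Corollary~\ref{prop:TransLp} and Proposition~\ref{TransSobolev}-type embeddings, and then to invoke the square-function description of Hardy spaces from Lemma~\ref{Hp-discrete}, reducing everything to a standard operator-valued Calder\'on--Zygmund / H\"ormander multiplier argument on $\T^d$. More precisely, after a harmless reduction (absorbing the weights $2^{2j\a}$ into $\rho_j$ and $\phi_j$, which does not disturb the support conditions \eqref{psi-rho} nor the Sobolev bound since multiplication by a constant scalar commutes with everything), it suffices to prove
 $$\big\|\big(\sum_{j\ge0}|\wt\phi_j*\wt\rho_j*x|^2\big)^{\frac12}\big\|_{p}\les \sup_{j\ge0}\big\|\phi_j(2^j\cdot)\f\big\|_{H^\s_2}\, \big\|\big(\sum_{j\ge0}|\wt\rho_j*x|^2\big)^{\frac12}\big\|_{p}.$$
First I would set $g_j=\wt\rho_j*x$; because $\rho_j=\widehat{\rho(2^{-j}\cdot)}$ with $\mathrm{supp}\,\rho=\{2^{-1}\le|\xi|\le2\}$, the $g_j$ are exactly the Littlewood-Paley pieces of an element $g\in\H_1^c(\T^d_\t)$ with $\|g\|_{\H_1^c}\approx|\wh g(0)|+\|(\sum_j|g_j|^2)^{1/2}\|_1$ by Lemma~\ref{Hp-discrete}. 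Since also $\mathrm{supp}(\phi_j\rho_j)\subset\{2^{j-1}\le|\xi|\le 2^{j+1}\}$, the sequence $(\wt\phi_j*g_j)_{j}$ has frequency supports that are "almost lacunary" in the same sense (each sits in an annulus comparable to $2^j$, with bounded overlap), so $(\sum_j|\wt\phi_j*g_j|^2)^{1/2}$ is, up to the discrete square-function characterization of $\H_1^c$ applied with another admissible test function, comparable to the $\H_1^c$-norm of $h:=\sum_j\wt\phi_j*g_j$.

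The heart of the matter is then the boundedness on $\H_1^c(\T^d_\t)$ (equivalently, by Lemma~\ref{q-H-BMO} and transference, on $\H_1^c(\T^d,\M)$) of the operator $T$ sending the Littlewood-Paley data $(g_j)$ of $g$ to $h=\sum_j\wt\phi_j*g_j$. This is a vector-valued / operator-valued Fourier multiplier of "H\"ormander--Mikhlin type with $L_2$-Sobolev smoothness $\s>d/2$": the hypothesis $\sup_j\|\phi_j(2^j\cdot)\f\|_{H_2^\s}<\infty$ is precisely the scale-invariant H\"ormander condition that, by Remark~\ref{Bessel multiplier} ($\|\F^{-1}(f)\|_1\les\|f\|_{H_2^\s}$ for $\s>d/2$), gives a uniform $L_1$ bound on the (suitably dilated) convolution kernels $2^{jd}\F^{-1}(\phi_j(2^j\cdot)\f)(2^j\,\cdot)$, hence the required decay estimates for the kernel of $T$ and for its cancellation/regularity. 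The operator-valued $\H_1^c$-theory developed in \cite{CXY2012, Mei2007} (recalled in Section~\ref{Hardy spaces}) provides the $\H_1^c$-boundedness of such multipliers once one verifies the single-annulus $L_2$ bound plus the standard H\"ormander integral condition on the kernel differences; I would carry this out by writing $h=\sum_j\wt\phi_j*g_j$, using that $\wt\phi_j*g_j$ depends only on $g$ through $\wt\rho_j*g=g_j$ together with neighboring pieces (so one may replace $g_j$ by $\wt{\tilde\rho}_j*g$ for a fattened $\tilde\rho$), and reducing to the boundedness on $\H_1^c(\T^d,\M)$ of the multiplier with symbol $\sum_j\phi_j\tilde\rho_j$, whose kernel estimates follow from the uniform $L_1$-bound above by the usual dyadic summation. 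The complementary $L_\infty\to\mathrm{BMO}^c$ bound is proved the same way (or obtained by the duality in Lemma~\ref{q-H-BMO}(ii)), and interpolation (Lemma~\ref{q-H-BMO}(iii)) is not even needed for $p=1$ since it is the endpoint; but to also recover $1<p<\infty$ (part (i)) from the same computation one interpolates between $\H_1^c$ and $\H_2^c=L_2$, the latter case being trivial since on $L_2$ the square function is an exact isometry (Plancherel) and $\sup_j\|\phi_j(2^j\cdot)\f\|_\infty\le\sup_j\|\phi_j(2^j\cdot)\f\|_{H_2^\s}$ controls each multiplier pointwise.

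The step I expect to be the main obstacle is the passage from the raw hypothesis \eqref{psi-rho} to genuine operator-valued Calder\'on--Zygmund kernel estimates uniform in $j$ — in particular making rigorous the "almost lacunarity" that lets one identify $(\sum_j|\wt\phi_j*g_j|^2)^{1/2}$ with a single Hardy-space square function, and controlling the error terms coming from the overlap of the annuli $\{2^{j-1}\le|\xi|\le2^{j+1}\}$ (three consecutive $j$'s interact). This is exactly where one cannot appeal to pointwise maximal functions, so the argument must be organized through the $g$-function/Hardy-space machinery of \cite{CXY2012, XXX} rather than through the classical Fefferman--Stein vector-valued maximal inequality; a convenient way to handle the overlap is to split the sum over $j$ into three subsums $j\equiv 0,1,2\ \mathrm{mod}\ 3$, on each of which the frequency supports are genuinely disjoint, apply the clean argument, and then use the triangle inequality in $L_p$ for $p\ge1$ together with the inequality $\|(\sum|a_i+b_i|^2)^{1/2}\|_p\le\|(\sum|a_i|^2)^{1/2}\|_p+\|(\sum|b_i|^2)^{1/2}\|_p$ (valid in the column norm, as it is just the triangle inequality in $L_p(\T^d_\t;\ell_2^c)$).
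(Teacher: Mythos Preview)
There is a genuine gap. You want to identify $\big\|(\sum_j|\wt\phi_j*g_j|^2)^{1/2}\big\|_p$ with the Hardy norm $\|h\|_{\H_p^c}$ of $h=\sum_j\wt\phi_j*g_j$. One direction of this is fine (almost-orthogonality gives $\|h\|_{\H_p^c}\lesssim\big\|(\sum_j|\wt\phi_j*g_j|^2)^{1/2}\big\|_p$), but the direction you actually need is false in general: since the frequency supports of neighbouring $\wt\phi_j*g_j$ overlap, one can have $h=0$ while the pieces are non-zero (take $\wt\phi_1*g_1=-\wt\phi_0*g_0$ supported in $\{1\le|\xi|\le 2\}$). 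Splitting into residue classes modulo $3$ does not rescue this: once the supports are disjoint you can recover the pieces from the sum, but the recovery operators are Fourier multipliers that vary with $j$, so bounding the square function by the Hardy norm of the sum again requires exactly the vector-valued multiplier inequality you are trying to prove. Your argument is circular at this step.

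The paper avoids this by never passing through the scalar Hardy norm of $h$. For part~(i) there is no Hardy space at all: one proves directly that the diagonal operator $(f_j)_j\mapsto(\wt\zeta_j*f_j)_j$ (with $\zeta_j=\phi_j(\f^{(j-1)}+\f^{(j)}+\f^{(j+1)})$) is bounded on $L_p(\T^d_\t;\ell_2^c)$ for $1<p<\infty$, by treating the sequence $(\zeta_j)$ as an $\ell_2$-valued H\"ormander kernel, establishing $L_\infty(\ell_2^c)\to\BMO^c$ and $L_\infty(\ell_2^c)\to\BMO^r$ endpoint bounds via the Calder\'on--Zygmund kernel estimates, and interpolating with the trivial $L_2$ case (Lemma~\ref{CZD}, then transference). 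For part~(ii) one absorbs the weights via the Riesz potential, setting $y=I^\a x$ and $\psi=I_{-\a}\rho$ so that the right-hand side becomes $\|y\|_{\H_1^c}$ by Lemma~\ref{Hp-discrete}; the left-hand side is then $\big\|(\sum_j|\wt\zeta_j*y|^2)^{1/2}\big\|_1$ for a \emph{single} element $y$ and a modified sequence $\zeta$, and this is bounded by $\|y\|_{\H_1^c}$ via the atomic decomposition of $\H_1^c(\T^d,\M)$ (Lemma~\ref{CZH}). The key conceptual point is that the target is the $L_1(\T^d_\t;\ell_2^c)$-norm of the vector $(\wt\zeta_j*y)_j$, an honestly $\ell_2$-valued object, and one estimates it atom by atom with the $\ell_2$-valued kernel bounds; no attempt is made to reassemble the pieces into a scalar element.
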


The remainder of this section is devoted to the proof of the above theorem. As one can guess, the proof is based on the Calder\'on-Zygmund theory. We require several lemmas. 
The first one is  an elementary inequality.

\begin{lem}\label{CS Sobolev}
 Assume that $f: \real^d\to \el_2$ and $g:\real^d\to\com$ satisfy
 $$f\in H_2^\s(\real^d;  \el_2)\;\text{ and }\; \int_{\real^d}(1+|s|^2)^\s|\F^{-1}(g)(s)|ds<\8.$$
 Then $fg\in H_2^\s(\real^d; \el_2)$ and
  $$\big\|fg\big\|_{H_2^\s(\real^d;  \el_2)}\le \big\|f\big\|_{H_2^\s(\real^d;  \el_2)}\,  \int_{\real^d}(1+|s|^2)^\s|\F^{-1}(g)(s)|ds.$$
 \end{lem}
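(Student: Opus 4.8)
The statement is a bilinear $H_2^\s$-estimate: multiplication by a function $g$ whose inverse Fourier transform has a finite weighted $L_1$-norm is bounded on the Hilbert-space-valued Bessel potential space $H_2^\s(\real^d;\el_2)$, with the stated operator norm. The natural route is to pass to the Fourier side, where $H_2^\s(\real^d;\el_2)$ becomes the weighted $L_2$-space $L_2\big(\real^d, (1+|\xi|^2)^\s d\xi;\el_2\big)$, and where multiplication by $g$ becomes convolution by $\F^{-1}(g)$. So first I would write, with $w(\xi)=(1+|\xi|^2)^{\s/2}$ and $h=\F^{-1}(g)$,
\[
\big\|fg\big\|_{H_2^\s}^2=\int_{\real^d}w(\xi)^2\,\Big|\int_{\real^d}\F^{-1}(f)(\xi-\eta)\,h(\eta)\,d\eta\Big|_{\el_2}^2\,d\xi,
\]
where $|\cdot|_{\el_2}$ is the $\el_2$-norm and the inner integral is an $\el_2$-valued Bochner integral. (Here I should be slightly careful about which variable carries the Fourier transform, but up to a harmless reflection $h\mapsto h(-\cdot)$ this is exactly convolution.)

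The key elementary input is the submultiplicativity-type inequality for the weight, $w(\xi)\le w(\xi-\eta)\,w(\eta)$, which follows from $(1+|\xi|^2)\le (1+|\xi-\eta|^2)(1+|\eta|^2)$ (expand and compare, using $2|\xi-\eta||\eta|\le |\xi-\eta|^2+|\eta|^2$). Using this inside the inner integral gives a pointwise bound
\[
w(\xi)\Big|\int \F^{-1}(f)(\xi-\eta)h(\eta)d\eta\Big|_{\el_2}\le \int w(\xi-\eta)\big|\F^{-1}(f)(\xi-\eta)\big|_{\el_2}\;w(\eta)^2|h(\eta)|\,d\eta,
\]
so that the left-hand side is pointwise dominated by the convolution of the nonnegative scalar function $G(\xi)=w(\xi)|\F^{-1}(f)(\xi)|_{\el_2}$ with the nonnegative scalar function $W(\eta)=w(\eta)^2|h(\eta)|$. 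Then Young's inequality ($L_2 * L_1 \subset L_2$) yields
\[
\big\|fg\big\|_{H_2^\s}=\|G*W\|_{L_2}\le \|G\|_{L_2}\,\|W\|_{L_1}=\big\|f\big\|_{H_2^\s}\,\int_{\real^d}(1+|s|^2)^\s\,|\F^{-1}(g)(s)|\,ds,
\]
which is precisely the claimed inequality (matching the $H_2^\s$ membership and the norm bound).

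There is no serious obstacle here — this is a soft functional-analytic lemma — but there are two points that need care and which I would spell out. First, the reduction of $H_2^\s(\real^d;\el_2)$ to the weighted $L_2$-space via Plancherel and the identity $\widehat{fg}=\F^{-1}(g)*\wh f$ (equivalently $\F(fg)=\F(f)*\F(g)$ up to normalization constants): one should state the Fourier normalization being used so that the convolution identity holds without stray factors, and check that the hypothesis $\int (1+|s|^2)^\s|\F^{-1}(g)|\,ds<\8$ guarantees $g$ is (the Fourier transform of an $L_1$ function, hence) bounded and that $fg$ is a well-defined element of $\mathcal{S}'(\real^d;\el_2)$ whose Fourier transform is given by that convolution. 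Second, the Minkowski-type step that moves the $\el_2$-norm inside the $\eta$-integral and then applies the weight inequality: this is just the triangle inequality for $\el_2$-valued Bochner integrals combined with $|h(\eta)|$ being scalar, but I would write it as one displayed chain so the bookkeeping of the weight $w(\xi)=w(\xi-\eta)w(\eta)$ and the extra factor $w(\eta)$ is transparent. The whole argument is short; the main thing is to present the weight inequality $1+|\xi|^2\le(1+|\xi-\eta|^2)(1+|\eta|^2)$ cleanly, since everything else is Young's inequality.
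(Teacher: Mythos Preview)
Your approach is essentially the paper's: pass to the Fourier side where $H_2^\s$ becomes a weighted $L_2$, turn multiplication into convolution, and control the convolution via the submultiplicativity of the Japanese bracket weight. The paper organizes the convolution estimate as Cauchy--Schwarz followed by Fubini, you organize it as Minkowski followed by Young; these are the same computation.

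One concrete slip to fix. The weight inequality $(1+|\xi|^2)\le(1+|\xi-\eta|^2)(1+|\eta|^2)$ with constant~$1$ is false: in $\real$, take $\xi=2$, $\eta=1$, so the left side is $5$ and the right side is $4$. Your suggested justification via $2|\xi-\eta||\eta|\le|\xi-\eta|^2+|\eta|^2$ only yields $1+|\xi|^2\le 1+2|\xi-\eta|^2+2|\eta|^2\le 2(1+|\xi-\eta|^2)(1+|\eta|^2)$, i.e.\ Peetre's inequality with constant~$2$, hence $w(\xi)\le 2^{\s/2}w(\xi-\eta)w(\eta)$ for $\s\ge0$. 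The paper's own proof uses the same step with the same tacit constant, so the lemma really holds with a harmless factor $2^{\s}$ on the right; none of the applications in the chapter cares about this constant. Separately, your display inserts $w(\eta)^2$ where the weight inequality only supplies $w(\eta)$; this is valid since $w\ge1$, and it is exactly the wastefulness needed to land on the lemma's stated bound $\int(1+|s|^2)^\s|\F^{-1}(g)|\,ds$ rather than the sharper $\int(1+|s|^2)^{\s/2}|\F^{-1}(g)|\,ds$ your method actually gives.
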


\begin{proof}
 The norm $\|\cdot\|$ below is that of $\el_2$. By the Cauchy-Schwarz inequality, for $s\in\real^d$, we have
  $$ \big\|\F^{-1}(fg)(s)\big\|^2
    = \big\|\F^{-1}(f)*\F^{-1}(g)(s)\big\|^2
    \le \big\| \F^{-1}(g)\big\|_{1}\int_{\real^d}\big\|\F^{-1}(f)(s-t)\big\|^2\, \big|\F^{-1}(g)(t)\big|dt.$$
It then follows that
  \be\begin{split}
 \big\|fg\big\|_{H_2^\s(\real^d; \el_2)}^2
 &= \int_{\real^d} (1+|s|^2)^{\s} \big\|\F^{-1}(fg)(s)\big\|^2ds\\
 &\le \big\| \F^{-1}(g)\big\|_{1}\int_{\real^d}(1+|s|^2)^\s \int_{\real^d}\big\|\F^{-1}(f)(s-t)\big\|^2\,\big|\F^{-1}(g)(t)\big|dt\,ds\\
 &\le \big\| \F^{-1}(g)\big\|_{1} \int_{\real^d}\int_{\real^d}(1+|s-t|^2)^\s \big\|\F^{-1}(f)(s-t)\big\|^2ds \,(1+|t|^2)^\s |\F^{-1}(g)(t)|dt\\
 &\le  \big\|f\big\|_{H_2^\s(\real^d; \el_2)}^2\Big(\int_{\real^d}(1+|t|^2)^\s|\F^{-1}(g)(t)|dt\Big)^2\,.
  \end{split}\ee
Thus the assertion is proved.
 \end{proof}

The following lemma is a well-known result in harmonic analysis, which asserts that every H\"ormander multiplier is a Calder\'on-Zygmund  operator. Note that the usual H\"ormander condition is expressed in terms of partial derivatives up to order $[\frac d2]+1$, while the condition below, in terms of the potential Sobolev space $H^\s_2(\real^d)$, is not commonly used  (it is explicitly stated on page~263 of \cite{Stein1993}).  Combined with the previous lemma, the standard argument as described in \cite[p.~211-214]{gar-rubio}, \cite[p.~245-247]{Stein1993} or \cite[p. 161-165]{HT1978} can be easily adapted to the present setting.

\begin{lem}\label{HS}
 Let $\phi=(\phi_j)_{j\ge0}$ be a sequence of continuous functions on $\real^d\setminus\{0\}$, viewed as a function from $\real^d$ to $\el_2$. Assume that
  \beq\label{HS condition}
  \sup_{k\in\ent} \big\|\phi(2^k\cdot)\f\big\|_{H_2^\s(\real^d; \el_2)}<\8.
  \eeq
 Let $\mathsf{k}=(\mathsf{k}_j)_{j\ge0}$ with $\mathsf{k}_j=\F^{-1}(\phi_j)$. Then $\mathsf{k}$ is a Calder\'on-Zygmund kernel with values in $\el_2$, more precisely,
 \begin{enumerate}[$\bullet$]
 \item $\displaystyle \big\|\wh{\mathsf{k}}\big\|_{L_\8(\real^d; \el_2)}\les \sup_{k\in\ent} \big\|\phi(2^k\cdot)\f\big\|_{H_2^\s(\real^d; \el_2)}$;
 \item $\displaystyle \sup_{t\in\real^d}\int_{|s|>2|t|}\|\mathsf{k}(s-t)-\mathsf{k}(s)\|_{\el_2}ds\les  \sup_{k\in\ent} \big\|\phi(2^k\cdot)\f\big\|_{H_2^\s(\real^d; \el_2)}$.
 \end{enumerate}
The relevant constants depend  only on $\f$, $\s$ and $d$.
 \end{lem}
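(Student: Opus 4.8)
\textbf{Proof plan for Lemma~\ref{HS}.}
The plan is to reduce the vector-valued statement to the scalar H\"ormander multiplier theorem stated in terms of the potential Sobolev space $H^\s_2$, combined with Lemma~\ref{CS Sobolev}. First I would recall that the estimate $\big\|\wh{\mathsf{k}}\big\|_{L_\8(\real^d; \el_2)}\les \sup_{k}\|\phi(2^k\cdot)\f\|_{H_2^\s}$ is essentially trivial, since $\wh{\mathsf{k}}=\phi$ and, using the Littlewood--Paley partition $\sum_k\f(2^{-k}\cdot)=1$ together with the Sobolev embedding $H_2^\s(\real^d)\hookrightarrow L_\8(\real^d)$ valid for $\s>\frac d2$ (applied coordinatewise in $\el_2$ via the scalar-valued argument, or directly to the $\el_2$-valued function), one controls $\|\phi\|_{L_\8(\real^d;\el_2)}$ on each dyadic annulus by $\|\phi(2^k\cdot)\f\|_{H_2^\s(\real^d;\el_2)}$ after rescaling (the $H^\s_2$-norm of a function supported in a fixed annulus dominates its $L_\8$-norm, and dilation by $2^k$ only improves the relevant bound since the high-frequency part of the norm scales favorably). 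The point is that on $\{2^{k-1}\le|\xi|\le 2^{k+1}\}$ we have $\f(2^{-j}\xi)\ne0$ for at most the three values $j=k-1,k,k+1$, so $\phi$ equals a bounded sum of the localized pieces.

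The core of the lemma is the gradient/H\"ormander-regularity estimate
$$\sup_{t}\int_{|s|>2|t|}\|\mathsf{k}(s-t)-\mathsf{k}(s)\|_{\el_2}\,ds\les \sup_k\|\phi(2^k\cdot)\f\|_{H_2^\s(\real^d;\el_2)}.$$
Here I would follow verbatim the scalar argument as laid out in \cite[p.~211--214]{gar-rubio} or \cite[p.~245--247]{Stein1993}, reading every occurrence of $|\cdot|$ as the $\el_2$-norm. Decompose $\phi=\sum_{k\in\ent}\phi^{(k)}$ where $\phi^{(k)}=\phi\cdot\f(2^{-k}\cdot)$ (so $\mathsf{k}=\sum_k \mathsf{k}^{(k)}$ with $\mathsf{k}^{(k)}=\F^{-1}(\phi^{(k)})$, an $\el_2$-valued function), and by scaling reduce to the single-scale piece: the hypothesis \eqref{HS condition} says precisely that $\|\phi^{(0)}\|_{H_2^\s(\real^d;\el_2)}$ and all its dilates are uniformly bounded. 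For each $k$, one estimates $\int_{|s|>2|t|}\|\mathsf{k}^{(k)}(s-t)-\mathsf{k}^{(k)}(s)\|_{\el_2}\,ds$ by splitting at $|t|\sim 2^{-k}$: for $2^k|t|\le1$ use the mean value theorem and a Cauchy--Schwarz bound on $\int|s|^{\s}\|\nabla\mathsf{k}^{(k)}(s)\|_{\el_2}\,ds\les 2^k\|\phi^{(k)}\|_{H_2^{\s}}$; for $2^k|t|>1$ use the trivial bound $\int\|\mathsf{k}^{(k)}\|_{\el_2}\les \||\cdot|^\s\mathsf{k}^{(k)}\|_{L_2}\les\|\phi^{(k)}\|_{H_2^\s}$ together with the geometric decay in $2^k|t|$ coming from the extra powers of $|s|$ afforded by $\s>\frac d2$. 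Summing the geometric series in $k$ gives the claim. The only place where the vector-valued nature enters is that all pointwise products and convolutions in the scalar proof must be replaced by $\el_2$-valued ones; Lemma~\ref{CS Sobolev} is exactly the tool that licenses the step ``$\phi^{(k)}=\phi(2^k\cdot)\f\cdot(\text{dilate of a fixed bump})$ has $H_2^\s$-norm controlled by the hypothesis'', because it shows multiplication by a nice bump (one whose inverse Fourier transform has a finite weighted $L_1$-norm) is bounded on $H_2^\s(\real^d;\el_2)$.

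I expect the main obstacle to be purely bookkeeping rather than conceptual: one must carefully track that the constants produced by the scalar Calder\'on--Zygmund argument depend only on $\f$, $\s$ and $d$, and in particular not on the number of functions $\phi_j$ (which is why the $\el_2$-valued formulation is the right one --- there is no implicit dependence on a ``dimension'' of the target). A minor subtlety is the support condition: in Lemma~\ref{HS} the $\phi_j$ are not assumed supported in annuli, only the hypothesis \eqref{HS condition} on the localized dilates is given, so the decomposition $\phi=\sum_k\phi^{(k)}$ must be done via the fixed Littlewood--Paley function $\f$ and one verifies that $\phi^{(k)}=\phi(2^{-k}\cdot)$-localization still satisfies, after rescaling, $\|\phi^{(k)}(2^k\cdot)\cdot(\text{3-annulus bump})\|_{H_2^\s(\real^d;\el_2)}\les \sup_j\|\phi_j(2^j\cdot)\f\|_{H_2^\s}$ by \eqref{3-supports} and Lemma~\ref{CS Sobolev}. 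Once this reduction is in place, the rest is the classical argument transcribed into the $\el_2$-valued setting, and I would simply cite \cite{gar-rubio, Stein1993, HT1978} for the details after indicating the substitutions.
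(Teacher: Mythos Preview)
Your proposal is correct and follows essentially the same route as the paper: the paper does not give a detailed proof either, but simply notes that the standard scalar H\"ormander argument from \cite{gar-rubio, Stein1993, HT1978} adapts to the $\el_2$-valued setting once Lemma~\ref{CS Sobolev} is available to control multiplication by bump functions on $H_2^\s(\real^d;\el_2)$. Your plan spells out the same reduction (Littlewood--Paley localization, rescaling, Cauchy--Schwarz with the weight $(1+|s|^2)^\s$, geometric summation in $k$) with slightly more detail than the paper itself provides.
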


The above kernel $\mathsf{k}$ defines a Calder\'on-Zygmund operator on $\real^d$. But we will consider only the periodic case, so we  need to periodize $\mathsf{k}$:
  $$\wt{\mathsf{k}}(s)=\sum_{m\in\ent^d} \mathsf{k} (s+m).$$
 By a slight abuse of notation, we use  $\wt{\mathsf{k}}_j$ to denote the Calder\'on-Zygmund operator on $\T^d$ associated to $\wt{\mathsf{k}}_j$ too:
 $$\wt{\mathsf{k}}_j(f)(s)=\int_{\mathbb{I}^d}\wt{\mathsf{k}}_j(s-t) f(t)dt,$$
 where we have identified $\T$ with  $\mathbb{I}=[0,\, 1)$. $\wt{\mathsf{k}}_j$ is the Fourier multiplier on $\T^d$ with symbol $\phi_j$: $f\mapsto \wt\phi_j*f$.

We have $\wh{\wt{\mathsf{k}}}=\wh{\mathsf{k}}\big|_{\ent^d}$.  If $\phi$ satisfies \eqref{HS condition}, then
 Lemma~\ref{HS} implies
   \beq\label{HS per}
  \left \{ \begin{split}
  & \big\|\wh{\wt{\mathsf{k}}}\big\|_{\el_\8(\ent^d; \el_2)}<\8,\\
  & \sup_{t\in\mathbb{I}^d}\int_{\{s\in\mathbb{I}^d: |s|>2|t|\}}\|\wt{\mathsf{k}}(s-t)-\wt{\mathsf{k}}(s)\|_{\el_2}ds<\8.
  \end{split} \right.
  \eeq

Now let $\M$ be a von Neumann algebra equipped with a  normal faithful tracial state $\tau$, and let $\N=L_\8(\T^d)\overline\ot\M$, equipped with the tensor trace. The following lemma should be known to experts; it is closely related to similar results of  \cite{HLMP2014, MP2009, Parcet2009}, notably to \cite[Lemma~2.3]{JMP2014}. Note that the sole difference between the following condition \eqref{HS condition per} and  \eqref{HS condition} is that the supremum in \eqref{HS condition} runs over all integers while the one below is restricted to nonnegative integers.

\begin{lem}\label{CZD}
 Let $\phi=(\phi_j)_{j\ge0}$ be a sequence of continuous functions on $\real^d\setminus\{0\}$ such that
 \beq\label{HS condition per}
    \|\phi\|_{h_2^\s}=\sup_{k\ge0} \big\|\phi(2^k\cdot)\f\big\|_{H_2^\s(\real^d; \el_2)}<\8.
  \eeq
Then for  $1<p<\8$ and any finite sequence $(f_j)\subset L_p(\N)$,
  $$\big\|\big(\sum_{j\ge0} |\wt\phi_j*f_j|^2\big)^{\frac12}\big\|_p\les \|\phi\|_{h_2^\s}\,
   \big\|\big(\sum_{j\ge0} |f_j|^2\big)^{\frac12}\big\|_p\,.$$
 The relevant constant depends only on $p$, $\f$, $\s$ and $d$.
\end{lem}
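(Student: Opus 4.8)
The plan is to reduce the vector-valued square-function bound on $L_p(\N)$ to an $\ell_2$-valued Calder\'on--Zygmund argument on the torus $\T^d$, applied with the operator-valued coefficients in $\M$, and then to invoke the Calder\'on--Zygmund theory for $L_p(\N)$-valued functions (as developed in \cite{Mei2007, Parcet2009, JMP2014}). Concretely, I would regard the family $(\wt\phi_j)_{j\ge0}$ as a single convolution operator with an $\ell_2$-valued kernel: define $\mathsf k=(\mathsf k_j)_{j\ge0}$ with $\mathsf k_j=\F^{-1}(\phi_j)$ and its periodization $\wt{\mathsf k}$ as in Lemma~\ref{HS} and the discussion following it. The operator $T$ sending a function $f\in L_p(\N)$ to the $\ell_2^c$-valued function $(\wt\phi_j*f)_{j\ge0}$ is then a convolution operator whose kernel $\wt{\mathsf k}$ satisfies the size and H\"ormander regularity estimates \eqref{HS per}, by Lemma~\ref{HS}, once we know $\|\phi\|_{h_2^\s}<\8$. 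Here the only subtlety is that the hypothesis \eqref{HS condition per} is a supremum over $k\ge0$ rather than over all $k\in\ent$; but since each $\phi_j\f(2^{-j}\cdot)$ is supported in $\{2^{j-1}\le|\xi|\le 2^{j+1}\}$ (so $\phi_j$ is irrelevant near the origin after dyadic localization), one checks that restricting to nonnegative dilation indices suffices to run the Calder\'on--Zygmund estimates on the torus — negative dilations only probe frequencies below $1$, which are controlled by the trivial $m=0$ term.

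Granting the kernel estimates, I would then apply the vector-valued Calder\'on--Zygmund theory in the semicommutative setting. The operator $T$ is bounded from $L_2(\N)$ to $L_2(\N;\ell_2^c)$ with norm $\|\phi\|_{h_2^\s}$ by Plancherel and the first bullet of \eqref{HS per} (equivalently Lemma~\ref{HS}), since $\|(\wt\phi_j*f)\|_{L_2(\N;\ell_2^c)}^2=\sum_j\tau\nu$-integral of $|\wt\phi_j*f|^2$ and $\sum_j|\phi_j(m)|^2\le \|\phi\|_{h_2^\s}^2$ uniformly in $m$. Combined with the H\"ormander regularity of $\wt{\mathsf k}$, the standard semicommutative Calder\'on--Zygmund extrapolation (as in \cite[Lemma~2.3]{JMP2014} or the arguments of \cite{Parcet2009, HLMP2014}, adapted to the torus exactly as done for the operator-valued Hardy space theory on $\T^d$ in \cite{CXY2012}) gives boundedness of $T$ from $L_p(\N)$ to $L_p(\N;\ell_2^c)$ for $1<p<\8$, with constant depending only on $p$, $\f$, $\s$ and $d$. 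Finally, to pass from $T$ acting on a single function $f$ to the desired statement about a finite sequence $(f_j)$, one uses the by-now-routine trick of tensoring with a second copy of $\ell_2$: apply the bound for $T$ to the diagonal operator $\mathrm{diag}(\wt\phi_j)$ on $\ell_2$-valued functions, or equivalently embed $(f_j)$ into $L_p(\N\,\overline\ot\,B(\ell_2))$ and run the same Calder\'on--Zygmund argument there, noting the kernel estimates are unchanged. This yields exactly
$$\big\|\big(\sum_{j\ge0}|\wt\phi_j*f_j|^2\big)^{\frac12}\big\|_p\les\|\phi\|_{h_2^\s}\,\big\|\big(\sum_{j\ge0}|f_j|^2\big)^{\frac12}\big\|_p.$$

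The main obstacle I anticipate is the careful bookkeeping in the semicommutative Calder\'on--Zygmund step: one must verify that the diagonal (sequence-valued) operator still satisfies the required weak-type $(1,1)$ estimate with respect to the operator-valued measure, which in the noncommutative setting requires the pseudo-localization / good-$\lambda$ machinery of \cite{Parcet2009} rather than a naive maximal function argument — this is precisely the point where the paper emphasizes that pointwise maximal functions are unavailable. A secondary technical point is checking that periodization of the kernel preserves the H\"ormander estimate on $\I^d$; this is handled exactly as in \cite{CXY2012} and I would simply cite it, noting that the tails $\sum_{m\ne0}\mathsf k_j(s+m)$ are harmless because each $\mathsf k_j$ is a Schwartz-type function with good decay (its Fourier transform $\phi_j\f(2^{-j}\cdot)$ being compactly supported and smooth enough by the $H_2^\s$ hypothesis, via Remark~\ref{Bessel multiplier}). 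Everything else — the $L_2$ bound, the reduction of the supremum over $k\in\ent$ to $k\ge0$, and the tensoring trick for sequences — is routine once the framework is set up.
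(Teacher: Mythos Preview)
Your broad strategy --- Calder\'on--Zygmund kernel estimates plus extrapolation --- is correct and matches the paper's, but the execution diverges in two important respects.

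First, the scalar-to-vector operator $T: f\mapsto(\wt\phi_j*f)_j$ that you spend most of the proposal on is not the operator in this lemma; it is the operator in the \emph{next} lemma (Lemma~\ref{CZH}). The present lemma concerns the \emph{diagonal} operator $(f_j)_j\mapsto(\wt\phi_j*f_j)_j$ on $L_p(\N;\el_2^c)$, and the paper treats it directly by viewing $\wt{\mathsf k}$ as a diagonal matrix acting on column vectors. Your ``tensoring trick'' to pass from $T$ to the diagonal operator does not work as stated: boundedness of $T:L_p(\N)\to L_p(\N;\el_2^c)$ does not imply boundedness of the diagonal operator, since the latter has a genuinely different (matrix-valued, not column-vector-valued) kernel. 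You do say ``or equivalently \ldots\ run the same Calder\'on--Zygmund argument there,'' which is the right move, but then the detour through $T$ was unnecessary.

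Second, and more substantively, the paper does \emph{not} go through a weak-type $(1,1)$ estimate or Parcet's pseudo-localization. Instead it proves that the diagonal operator is bounded from $L_\8(\N;\el_2^c)$ into $\BMO(\T^d,B(\el_2)\overline\ot\M)$, combines this with the trivial $L_2$ bound, and interpolates via Lemma~\ref{H-BMO} to get $2<p<\8$; duality then gives $1<p<2$. This route is considerably lighter than pseudo-localization. The genuine subtlety you did not anticipate is that one must prove \emph{both} the column and the row $\BMO$ estimates, and the row estimate (the term $A'$ in the paper's proof) requires a different argument from the column case: one cannot simply take adjoints because the operator sends columns to columns, so the paper expands $\|A'\|_{B(\el_2)\overline\ot\M}$ as a supremum over unit vectors in $\el_2(L_2(\M))$ and exploits the commutation of the convolution with constant (in $s$) left-multiplications before applying Plancherel. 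Your reduction of the $k\in\ent$ hypothesis to $k\ge0$ via a smooth cutoff $\eta$ is exactly what the paper does.
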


 \begin{proof}
The argument below is standard.  First, note that the Fourier multiplier on $\T^d$ with symbol $\phi_j$ does not depend on the values of $\phi_j$ in the open unit ball of $\real^d$. So letting $\eta$ be an infinitely differentiable function on $\real^d$ such that $\eta(\xi)=0$ for $|\xi|\le \frac12$ and  $\eta(\xi)=1$ for $|\xi|\ge 1$, we see that $\phi_j$ and $\eta\phi_j$ induce the same Fourier multiplier on $\T^d$ (restricted to distributions with vanishing Fourier coefficients at the origin). On the other hand, it is easy to see that \eqref{HS condition per} implies that the sequence $(\eta\phi_j)_{j\ge0}$ satisfies \eqref{HS condition} with $(\eta\phi_j)_{j\ge0}$  in place of $\phi$. Thus replacing $\phi_j$ by $\eta\phi_j$ if necessary, we will assume that $\phi$ satisfies the stronger condition \eqref{HS condition}.

We will use the Calder\'on-Zygmund theory and consider $\wt{\mathsf{k}}$ as a diagonal matrix with diagonal entries $(\wt{\mathsf{k}}_j)_{j\ge1}$. The Calder\'on-Zygmund operator associated to  $\wt{\mathsf{k}}$ is thus the convolution operator:
   $$\wt{\mathsf{k}}(f)(s)=\int_{\mathbb{I}^d} \wt{\mathsf{k}}(s-t) f(t)dt$$
 for any finite sequence $f=(f_j)$ (viewed as a column matrix). Then the assertion to prove amounts to the boundedness of  $\wt{\mathsf{k}}$ on $L_p(\N; \el_2^c)$.

We first show that $\wt{\mathsf{k}}$ is bounded from  $L_\8(\N; \el_2^c)$ into $\BMO^c(\T^d, B(\el_2)\overline\ot\M)$. Let $f$ be a finite sequence in $L_\8(\N; \el_2^c)$, and let $Q$ be a cube of $\mathbb I^d$ whose center is denoted by $c$. We decompose $f$ as $f=g+h$ with  $g=f\un_{\wt Q}$, where $\wt Q=2Q$, the cube with center $c$ and twice the side length of $Q$.
Setting
 $$a=\int_{\mathbb{I}^d\setminus\wt Q}\wt{\mathsf{k}}(c-t)f(t)dt,$$
we have
 $$\wt{\mathsf{k}}(f)(s)-a=\wt{\mathsf{k}}(g)(s)+\int_{\mathbb{I}^d}(\wt{\mathsf{k}}(s-t)-\wt{\mathsf{k}}(c-t))h(t)dt.$$
 Thus
 $$\frac1{|Q|}\int_Q|\wt{\mathsf{k}} (f)(s)-a|^2ds
 \le 2(A+B),$$
 where
 \be\begin{split}
 A &=\frac1{|Q|}\int_Q|\wt{\mathsf{k}} (g)(s)|^2ds, \\
 B&=\frac1{|Q|}\int_Q \Big|\int_{\mathbb{I}^d}(\wt{\mathsf{k}} (s-t)-\wt{\mathsf{k}}(c-t))h(t)dt\Big|^2ds.
  \end{split}\ee
The first term $A$ is easy to estimate. Indeed, by \eqref{HS per} and the Plancherel formula,
 \be\begin{split}
  |Q|A
  &\le\int_{\mathbb{I}^d}|\wt{\mathsf{k}}(g)(s)|^2ds
 =\sum_{m\in\ent^d}\big|\wh{\wt{\mathsf{k}}}(m)\wh g(m)\big|^2\\
 &=\sum_{m\in\ent^d}\wh g(m)^*\big[\,\wh{\wt{\mathsf{k}}} (m)^*\wh{\wt{\mathsf{k}}} (m)\big]\wh g(m)
 \le\sum_{m\in\ent^d}\|\wh{\wt{\mathsf{k}}}(m)\|_{B(\el_2)}^2|\wh g(m)|^2 \\
 &\le \big\|\wh{\wt{\mathsf{k}}}\big\|_{\el_\8(\ent^d; \el_\8)} \int_{\mathbb{I}^d}|g(s)|^2ds\\
 &\le \big\|\wh{\wt{\mathsf{k}}}\big\|_{\el_\8(\ent^d; \el_2)} \int_{\mathbb{I}^d}|g(s)|^2ds\les|\wt Q|\, \|f\|^2_{L_\8(\N; \el_2^c)}\,,
 \end{split}\ee
whence
 $$\|A\|_{B(\el_2)\overline\ot\M}\les \|f\|^2_{L_\8(\N; \el_2^c)}\,.$$

 To estimate $B$,  let $h=(h_j)$. Then by \eqref{HS per}, for any $s\in Q$ we have
  \be\begin{split}
  \Big|\int_{\mathbb{I}^d}(\wt{\mathsf{k}}(s-t)-\wt{\mathsf{k}}(c-t))h(t)dt\Big|^2
  &=\sum_j \Big|\int_{\mathbb{I}^d}(\wt{\mathsf{k}}_j(s-t)-\wt{\mathsf{k}}_j(c-t))h_j(t)dt\Big|^2\\
 &\les \sum_j \int_{\mathbb{I}^d\setminus\wt Q} |\wt{\mathsf{k}}_j(s-t)-\wt{\mathsf{k}}_j(c-t)|\,|h_j(t)|^2dt\\
 &\les \int_{\mathbb{I}^d\setminus\wt Q}\|\wt{\mathsf{k}}(s-t)-\wt{\mathsf{k}}(c-t)\|_{\el_\8}\sum_j |h_j(t)|^2dt\\
 &\les \|f\|^2_{L_\8(\N; \el_2^c)}  \int_{\mathbb{I}^d\setminus\wt Q}\|\wt{\mathsf{k}}(s-t)-\wt{\mathsf{k}}(c-t)\|_{\el_2}dt\\
 &\les \|f\|^2_{L_\8(\N; \el_2^c)} \,.
  \end{split}\ee
Thus
  \be\begin{split}
  \|B\|_{B(\el_2)\overline\ot\M}
  &\le \frac1{|Q|}\int_Q \Big\|\int_{\mathbb{I}^d}(\wt{\mathsf{k}} (s-t)-\wt{\mathsf{k}}(c-t))h(t)dt\Big\|_{B(\el_2)\overline\ot\M}^2\,ds\\
  &= \frac1{|Q|}\int_Q \Big\|\, \Big|\int_{\mathbb{I}^d}(\wt{\mathsf{k}}(s-t)-\wt{\mathsf{k}}(c-t))h(t)dt\Big|^2\Big\|_{B(\el_2)\overline\ot\M}\,ds\\
  &\les \|f\|^2_{L_\8(\N; \el_2^c)}\,.
   \end{split}\ee
Therefore, $\wt{\mathsf{k}}$ is bounded from  $L_\8(\N; \el_2^c)$ into $\BMO^c(\T^d, B(\el_2)\overline\ot\M)$.

We next show that $\wt{\mathsf{k}}$ is bounded from  $L_\8(\N; \el_2^c)$ into $\BMO^r(\T^d, B(\el_2)\overline\ot\M)$.  Let $f, Q$ and $a$ be as above. Now we have to estimate
 $$\Big\|\frac1{|Q|}\int_Q\big|\big[\,\wt{\mathsf{k}} (f)(s)-a\big]^*\big|^2ds\Big\|_{B(\el_2)\overline\ot\M}\,.$$
We will use the same decomoposition $f=g+h$. Then
 $$\frac1{|Q|}\int_Q\big|\big[\,\wt{\mathsf{k}} (f)(s)-a\big]^*\big|^2ds
 \le 2(A'+B'),$$
 where
 \be\begin{split}
 A' &=\frac1{|Q|}\int_Q\big|\big[\,\wt{\mathsf{k}} (g)(s)\big]^*\big|^2ds, \\
 B'&=\frac1{|Q|}\int_Q \Big|\int_{\mathbb{I}^d}\big[(\wt{\mathsf{k}} (s-t)-\wt{\mathsf{k}}(c-t))h(t)\big]^*dt\Big|^2ds.
  \end{split}\ee
The estimate of $B'$ can be reduced to that of $B$ before. Indeed,
  \be\begin{split}
  \|B'\|_{B(\el_2)\overline\ot\M}
  &\le \frac1{|Q|}\int_Q \Big\|\int_{\mathbb{I}^d}\big[(\wt{\mathsf{k}} (s-t)-\wt{\mathsf{k}}(c-t))h(t)\big]^*dt\Big\|_{B(\el_2)\overline\ot\M}^2ds\\
  &= \frac1{|Q|}\int_Q \Big\|\Big[\int_{\mathbb{I}^d}(\wt{\mathsf{k}} (s-t)-\wt{\mathsf{k}}(c-t))h(t)dt\Big]^*\Big\|_{B(\el_2)\overline\ot\M}^2ds\\
  &=\frac1{|Q|}\int_Q \Big\|\int_{\mathbb{I}^d}(\wt{\mathsf{k}} (s-t)-\wt{\mathsf{k}}(c-t))h(t) dt\Big\|_{B(\el_2)\overline\ot\M}^2ds\\
  &\les \|f\|^2_{L_\8(\N; \el_2^c)}\,.
  \end{split}\ee
However, $A'$ needs a different argument. Setting $g=(g_j)$, we have
 $$\|A'\|_{B(\el_2)\overline\ot\M}
 =\sup\Big\{\frac1{|Q|}\int_Q \tau\big[\sum_{i, j}\wt{\mathsf{k}_i} (g_i)(s)\wt{\mathsf{k}_j} (g_j)(s)^*a_j^*a_i \big]ds\Big\},$$
where the supremum runs over all $a=(a_i)$ in the unit ball of $\el_2(L_2(\M))$. Considering $a_i$ as a constant function on $\mathbb{I}^d$, we can write
 $$a_i\wt{\mathsf{k}_i} (g_i)=\wt{\mathsf{k}_i} (a_ig_i).$$
Thus
 $$\int_Q \tau\big[\sum_{i, j}\wt{\mathsf{k}_i} (g_i)(s)\wt{\mathsf{k}_j} (g_j)(s)^*a_j^*a_i \big]ds
 =\int_Q \big\|\sum_{i}\wt{\mathsf{k}_i} (a_ig_i)(s) \big\|_{L_2(\M)}^2ds.$$
So by the Plancherel formula,
  \be\begin{split}
  \int_Q \big\|\sum_{i}\wt{\mathsf{k}_i} (a_ig_i)(s) \big\|_{L_2(\M)}^2ds
 &\le\int_{\mathbb{I}^d} \big\|\sum_{i}\wt{\mathsf{k}_i} (a_ig_i)(s) \big\|_{L_2(\M)}^2ds\\
 &=\sum_{m\in\ent^d} \big\|\sum_{i}\wh{\wt{\mathsf{k}_i}}(m) \,a_i\, \wh{g_i}(m) \big\|_{L_2(\M)}^2.
  \end{split}\ee
On the other hand, by the Cauchy-Schwarz inequality, \eqref{HS per} and the Plancherel formula once more, we have
  \be\begin{split}
  \sum_{m\in\ent^d} \big\|\sum_{i}\wh{\wt{\mathsf{k}_i}}(m) \,a_i\, \wh{g_i}(m) \big\|_{L_2(\M)}^2
  &\le \sum_{m\in\ent^d}\|\wh{\wt{\mathsf{k}}}(m)\|_{\el_2}^2 \,\sum_{i}\tau(|a_i \wh{g_i}(m) |^2)\\
  &\les \sum_{i}\tau\big[a_i\, \sum_{m\in\ent^d}\wh{g_i}(m) \wh{g_i}(m)^*\,a_i^*\big]\\
  &= \sum_{i}\tau\big[a_i\,\int_{\mathbb{I}^d} g_i(s) g_i(s) ^*ds\,a_i^*\big]\\
   &= \sum_{i}\tau\big[a_i\,\int_{\wt Q} f_i(s) f_i(s) ^*ds\,a_i^*\big]\\
  &\le |\wt Q| \sum_{i}\tau\big[a_i\,\|f_i\|_{L_\8(\N)}^2\, a_i^*\big]\\
  &\les |Q| \, \|f\|^2_{L_\8(\N; \el_2^c)}\sum_{i}\tau(|a_i|^2)\\
  &\le |Q|\,  \|f\|^2_{L_\8(\N; \el_2^c)}\,.
  \end{split}\ee
Combining the above estimates, we get the desired estimate of $A'$:
 $$\|A'\|_{B(\el_2)\overline\ot\M}\les \|f\|^2_{L_\8(\N; \el_2^c)}\,.$$
Thus, $\wt{\mathsf{k}}$ is bounded from  $L_\8(\N; \el_2^c)$  into $\BMO^r(\T^d, B(\el_2)\overline\ot\M)$, so is it  from  $L_\8(\N; \el_2^c)$ into $\BMO(\T^d,B(\el_2)\overline\ot\M)$.

It is clear that $\wt{\mathsf{k}}$ is bounded from $L_2(\N; \el_2^c)$ into $L_2(B(\el_2)\overline\ot\N)$. Hence, by interpolation via \eqref{column interpolation} and  Lemma~\ref{H-BMO},  $\wt{\mathsf{k}}$ is bounded from $L_p(\N; \el_2^c)$ into $L_p(B(\el_2)\overline\ot\N)$ for any $2<p<\8$. This is the announced assertion  for $2\le p<\8$. The case $1<p<2$ is obtained by duality.
 \end{proof}

\begin{rk}
 In the commutative case, i.e., $\M=\com$, it is well known that the conclusion of the preceding lemma holds under the following weaker assumption on $\phi$:
  \beq\label{DS condition per}
 \sup_{k\ge0} \Big(\int_{\real^d}(1+|s|^2)^\s\big\|\F^{-1}(\phi(2^k\cdot)\f)(s)\big\|_{\el_\8}^2ds\Big)^{\frac12}<\8.
  \eeq
Like at the beginning of the preceding proof, this assumption can be strengthened to
  $$
  \sup_{k\in\ent} \Big(\int_{\real^d}(1+|s|^2)^\s\big\|\F^{-1}(\phi(2^k\cdot)\f)(s)\big\|_{\el_\8}^2ds\Big)^{\frac12}<\8.
  $$
 Then if we consider $\mathsf{k}=(\mathsf{k}_j)_{j\ge0}$ as a kernel with values in $\el_\8$, Lemma~\ref{HS} admits the following $\el_\8$-analogue:
 \begin{enumerate}[$\bullet$]
 \item $\displaystyle \big\|\wh{\mathsf{k}}\big\|_{L_\8(\real^d; \el_\8)}<\8$;
 \item $\displaystyle \sup_{t\in\real^d}\int_{|s|>2|t|}\|\mathsf{k}(s-t)-\mathsf{k}(s)\|_{\el_\8}ds<\8$.
 \end{enumerate}
Transferring this to the periodic case, we have
  \begin{enumerate}[$\bullet$]
 \item $\displaystyle\big\|\wh{\wt{\mathsf{k}}}\big\|_{\el_\8(\ent^d; \el_\8)}<\8$;
 \item $\displaystyle  \sup_{t\in\mathbb{I}^d}\int_{\{s\in\mathbb{I}^d: |s|>2|t|\}}\|\wt{\mathsf{k}}(s-t)-\wt{\mathsf{k}}(s)\|_{\el_\8}ds<\8$.
 \end{enumerate}
The last two properties of the kernel $\wt{\mathsf{k}}$ are exactly what is needed for the estimates of $A$ and $B$ in the proof of Lemma~\ref{CZD}, so the conclusion holds when $\M=\com$. However, we do not know whether Lemma~\ref{CZD} remains true when \eqref{HS condition per} is weakened to \eqref{DS condition per}.
\end{rk}

\begin{lem}\label{CZH}
 Let $\phi=(\phi_j)_{j\ge0}$ be a sequence of continuous functions on $\real^d\setminus\{0\}$ satisfying \eqref{HS condition per}.
Then for  $1\le p\le2$ and any $f\in \H_p^c(\T^d, \M)$,
 $$\big\|\big(\sum_{j\ge0}|\wt\phi_j*f|^2\big)^{\frac12}\big\|_{L_p(\N)}\les\|\phi\|_{h_2^\s}\, \|f\|_{\H_p^c}\,.$$
The relevant constant depends only on $\f$, $\s$ and $d$.
 \end{lem}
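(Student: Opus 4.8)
The plan is to reduce the claim for $\H_p^c$ to the $L_p$-bounded statement of Lemma~\ref{CZD} by the standard interpolation-and-duality route used for Calder\'on--Zygmund operators on Hardy spaces, but carried out in the column-Hardy-space setting. First I would fix $\phi=(\phi_j)_{j\ge0}$ satisfying \eqref{HS condition per} and, exactly as at the beginning of the proof of Lemma~\ref{CZD}, replace each $\phi_j$ by $\eta\phi_j$ so that, without loss of generality, $\phi$ satisfies the stronger condition \eqref{HS condition} with the supremum over all integers; this changes nothing for distributions with vanishing zeroth Fourier coefficient, and a separate trivial estimate handles $\wh f(0)$. Then I would reinterpret the operator $f\mapsto (\wt\phi_j*f)_{j\ge0}$ as the periodic Calder\'on--Zygmund operator $\wt{\mathsf k}$ with $\el_2$-valued kernel $\wt{\mathsf k}=(\wt{\mathsf k}_j)_j$, $\wt{\mathsf k}_j=\widetilde{\F^{-1}(\phi_j)}$, whose kernel satisfies the periodic Calder\'on--Zygmund estimates \eqref{HS per} by Lemma~\ref{HS}. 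The target inequality is precisely the boundedness of $\wt{\mathsf k}:\H_p^c(\T^d,\M)\to L_p(B(\el_2)\overline\ot\N)$ for $1\le p\le2$, where the image is equipped with its column norm.

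The core of the argument is the endpoint $p=1$: I would show $\wt{\mathsf k}$ maps $\H_1^c(\T^d,\M)$ boundedly into $L_1(B(\el_2)\overline\ot\N)$. For this I would use the atomic (or, more conveniently here, the $\H_1^c$--$\BMO^c$ duality) description of $\H_1^c$. The cleanest path is duality: by Lemma~\ref{H-BMO}(ii), the dual of $\H_1^c(\T^d,\M)$ is $\BMO^c(\T^d,\M)$, and the dual of $L_1(B(\el_2)\overline\ot\N)$ is $L_\8(B(\el_2)\overline\ot\N)$; so it suffices to prove that the (formal) adjoint $\wt{\mathsf k}^*$ — the convolution operator with the $\el_2$-row-valued kernel $(\wt{\mathsf k}_j(-\,\cdot)^{\!*})_j$ — maps $L_\8(\N;\el_2^r)$ (or rather the relevant predual pairing space) into $\BMO^c(\T^d,\M)$. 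But this is exactly the content established inside the proof of Lemma~\ref{CZD}: there it is shown that $\wt{\mathsf k}$ maps $L_\8(\N;\el_2^c)$ into $\BMO^c(\T^d,B(\el_2)\overline\ot\M)$ and into $\BMO^r$ as well, and the same Calder\'on--Zygmund estimates \eqref{HS per} on the kernel give the corresponding statement for the adjoint. Combining the $L_\8\to\BMO^c$ boundedness with the trivial $L_2(\N;\el_2^c)\to L_2$ boundedness (Plancherel plus $\|\wh{\wt{\mathsf k}}\|_{\el_\8(\ent^d;\el_2)}<\8$) and the interpolation $(\BMO^c,\H_1^c)_{\frac1p}=\H_p^c$ from Lemma~\ref{H-BMO}(iii), I would obtain $\wt{\mathsf k}:\H_p^c(\T^d,\M)\to L_p(B(\el_2)\overline\ot\N)$ for all $1\le p\le 2$, with constant controlled by $\|\phi\|_{h_2^\s}$ and depending only on $\f,\s,d$ (since all kernel bounds in Lemma~\ref{HS} and \eqref{HS per} are so controlled).

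The main obstacle I anticipate is bookkeeping the operator-space/vector-valued structure correctly at the endpoint: one must be careful that in the $p=1$ estimate the internal $\el_2$ is taken in the \emph{column} sense on the $\H_1^c$ side and that duality swaps it to a \emph{row} sense, so that the $\BMO^c$-estimate invoked is the one genuinely proved in Lemma~\ref{CZD} (where both the $\BMO^c$ and $\BMO^r$ bounds were established precisely to make the mixture $\BMO$ bound work). A clean way to sidestep any ambiguity is to argue directly via an atomic decomposition of $\H_1^c(\T^d,\M)$: write $f=\sum_k\lambda_k a_k$ with $\H_1^c$-atoms $a_k$ supported in cubes $Q_k$, and estimate $\|\wt{\mathsf k}(a)\|_{L_1(B(\el_2)\overline\ot\N)}$ by splitting $\int_{\mathbb I^d}=\int_{2Q}+\int_{\mathbb I^d\setminus 2Q}$; on $2Q$ use Cauchy--Schwarz and the $L_2$-bound, on the complement use the Hörmander smoothness $\sup_t\int_{|s|>2|t|}\|\wt{\mathsf k}(s-t)-\wt{\mathsf k}(s)\|_{\el_2}ds<\8$ together with the cancellation of the atom — this is exactly the scheme of \cite[p.~211--214]{gar-rubio}, now with $\el_2$-valued kernel and operator coefficients, and it goes through verbatim. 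I would present whichever of the two (duality or atoms) is shorter; either way, once $p=1$ is done, the remaining range $1<p\le2$ is immediate by the interpolation identity already recorded. For $2<p<\8$ the statement is not claimed here (and indeed follows from Lemma~\ref{CZD} directly), so the proof ends at $p=2$.
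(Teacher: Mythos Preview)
Your proposal is correct, and the atomic route you describe is exactly the approach the paper takes: reduce to the stronger kernel condition \eqref{HS condition}, observe that $p=2$ is trivial by Plancherel, handle $p=1$ by testing on $\M^c$-atoms with the standard Calder\'on--Zygmund splitting over $2Q$ and its complement (using the $\el_2$-valued H\"ormander smoothness from \eqref{HS per} together with the atom's cancellation), and then interpolate via Lemma~\ref{H-BMO} to get $1\le p\le 2$. The paper does not pursue the duality alternative you sketch first; it goes straight to atoms, which avoids precisely the column/row bookkeeping obstacle you yourself flagged, so you would do well to present the atomic argument directly rather than as a fallback.
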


\begin{proof}
Like in the  proof of Lemma~\ref{CZD}, we can assume, without loss of generality, that $\phi$ satisfies \eqref{HS condition}. We use again the Calder\'on-Zygmund theory. Now we view $\wt{\mathsf{k}}=(\wt{\mathsf{k}}_j)_{j\ge0}$ as a column matrix and the  associated Calder\'on-Zygmund operator $\wt{\mathsf{k}}$ as defined on
$L_p(\N)$:
 $$\wt{\mathsf{k}}(f)(s)=\int_{\mathbb{I}^d} \wt{\mathsf{k}}(s-t) f(t)dt.$$
Thus $\wt{\mathsf{k}}$ maps functions to sequences of functions. We have to show that $\wt{\mathsf{k}}$ is bounded from $\H_p^c(\T^d, \M)$ to $L_p(\N; \el_2^c)$ for $1\le p\le2$. This is trivial for $p=2$. So by Lemma~\ref{q-H-BMO} via interpolation, it suffices to consider the case $p=1$. The argument below  is based on the atomic decomposition of $\H_1^c(\T^d, \M)$ obtained in  \cite{CXY2012} (see also \cite{Mei2007}). Recall that an $\M^c$-atom  is a function $a\in L_1(\M; L^c_2(\T^d))$ such that
 \begin{enumerate}[$\bullet$]
 \item $a$ is supported by a cube $Q\subset \T^d\approx \mathbb{I}^d$;
 \item $\int_Q a(s) ds =0$;
 \item $\tau\big[\big(\int_Q|a(s)|^2ds\big)^{\frac12}\big] \leq |Q|^{-\frac12}$.
 \end{enumerate}
Thus we need only to show that for any atom $a$
 $$\|\wt{\mathsf{k}}(a)\|_{L_1(\N; \el_2^c)}\les 1.$$
Let $Q$ be the supporting cube of $a$. By translation invariance of the operator $\wt{\mathsf{k}}$, we can assume that $Q$ is centered at the origin.  Set $\wt Q=2Q$ as before. Then
 \beq\label{1-norm of a}
 \|\wt{\mathsf{k}}(a)\|_{L_1(\N; \el_2^c)}\le \|\wt{\mathsf{k}}(a)\un_{\wt Q}\|_{L_1(\N; \el_2^c)}
 +\|\wt{\mathsf{k}}(a)\un_{\mathbb{I}^d\setminus\wt Q}\|_{L_1(\N; \el_2^c)}\,.
 \eeq
 The operator convexity of the square function $x\mapsto |x|^2$ implies
  $$\int_{\wt Q}|\mathsf{k}(a)(s)|ds\le |\wt Q|^{\frac12}\Big(\int_{\wt Q}|\wt{\mathsf{k}}(a)(s)|^2ds\Big)^{\frac12}.$$
However, by the Plancherel formula,
 \be\begin{split}
 \int_{\wt Q}|\wt{\mathsf{k}}(a)(s)|^2ds
 &\le \int_{\mathbb{I}^d}|\wt{\mathsf{k}}(a)(s)|^2ds
  =\sum_{m\in\ent^d}|\wh{\wt{\mathsf{k}}(a)}(m)|^2
 =\sum_{m\in\ent^d} |\wh{\wt{\mathsf{k}}}(m)\wh a(m)|^2\\
&\le \big\|\wh{\wt{\mathsf{k}}}\big\|_{\el_\8(\ent^d; \el_2)} \sum_{m\in\ent^d} |\wh a(m)|^2
=\big\|\wh{\wt{\mathsf{k}}}\big\|_{\el_\8(\ent^d; \el_2)} \int_{Q}|a(s)|^2ds\,.
\end{split}\ee
Therefore,  by  \eqref{HS per}
 $$ \|\wt{\mathsf{k}}(a)\un_{\wt Q}\|_{L_1(\N; \el_2^c)}=\tau \int_{\wt Q}|\wt{\mathsf{k}}(a)(s)|ds
 \les |\wt Q|^{\frac12}\tau\big[\big(\int_Q|a(s)|^2ds\big)^{\frac12}\big]\les 1.$$
This is the desired estimate of  the first term of the right-hand side of \eqref{1-norm of a}. For the second, since $a$ is of vanishing mean, for every $s\not\in\wt Q$ we can write
  $$\wt{\mathsf{k}}(a)(s)=\int_{Q}[\,\wt{\mathsf{k}}(s-t)-\wt{\mathsf{k}}(s)]a(t)dt.$$
Then by the Cauchy-Schwarz inequality via the operator convexity of the square function $x\mapsto |x|^2$, we have
 $$|\wt{\mathsf{k}}(a)(s)|^2
 \le \int_{Q}\|\wt{\mathsf{k}}(s-t)-\wt{\mathsf{k}}(s)\|_{\el_2} dt \,\cdot\,\int_{Q}\|\wt{\mathsf{k}}(s-t)-\wt{\mathsf{k}}(s)\|_{\el_2}\,|a(t)|^2dt.$$
Thus by  \eqref{HS per},
  \be\begin{split}
 &\|\wt{\mathsf{k}}(a)\un_{\mathbb{I}^d\setminus\wt Q}\|_{L_1(\N; \el_2^c)}
 = \tau\int_{\mathbb{I}^d\setminus\wt Q}|\wt{\mathsf{k}}(a)(s)|ds\\
 &\les \tau\Big[\big( \int_{Q}\int_{\mathbb{I}^d\setminus\wt Q}\|\wt{\mathsf{k}}(s-t)-\wt{\mathsf{k}}(s)\|_{\el_2} ds\,dt\big)^{\frac12}\cdot
 \big( \int_{Q}\int_{\mathbb{I}^d\setminus\wt Q}\|\wt{\mathsf{k}}(s-t)-\wt{\mathsf{k}}(s)\|_{\el_2}\,|a(t)|^2ds\,dt\big)^{\frac12}\Big] \\
 &\les |Q|^{1/2}\,\tau\big[\big(\int_Q|a(s)|^2ds\big)^{\frac12}\big]\les 1.
 \end{split}\ee
Hence  the desired assertion is proved.
 \end{proof}

By transference, the previous  lemmas imply the following. According to our convention used in the previous chapters, the map  $x\mapsto\wt\phi*x$ denotes the Fourier multiplier associated to $\phi$ on $\T^d_\t$.

\begin{lem}\label{multiplier DH}
 Let $\phi=(\phi_j)_j$ satisfy \eqref{HS condition per}.
   \begin{enumerate}[\rm(i)]
  \item For $1<p<\8$ we have
 $$\big\|\big(\sum_{j\ge0} |\wt\phi_j*x_j|^2\big)^{\frac12}\big\|_p\les\|\phi\|_{h_2^\s}\,
   \big\|\big(\sum_{j\ge0} |x_j|^2\big)^{\frac12}\big\|_p\,,\quad x_j\in L_p(\T^d_\t)$$
with relevant constant depending only on $p$, $\f$, $\s$ and $d$.
  \item For $1\le p\le2$ we have
 $$\big\|\big(\sum_{j\ge0}|\wt\phi_j* x|^2\big)^{\frac12}\big\|_{p}
 \les \|\phi\|_{h_2^\s}\,\|x\|_{\H_p^c}\,,\quad x\in \H_p^c(\T^d_\t)$$
 with relevant constant depending only on $\f$, $\s$ and $d$.
 \end{enumerate}
  \end{lem}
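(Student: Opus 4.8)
The plan is to deduce Lemma~\ref{multiplier DH} from its classical/operator-valued counterparts, Lemma~\ref{CZD} and Lemma~\ref{CZH}, via the transference results of Corollary~\ref{prop:TransLp} and the identification of Hardy spaces on $\T^d_\t$ with subspaces of operator-valued Hardy spaces on $\T^d$ (Lemmas~\ref{H-BMO} and \ref{q-H-BMO}). The point is that the transference map $x\mapsto\wt x$, $z\mapsto\pi_z(x)$, intertwines Fourier multipliers: if $\phi$ is a function on $\ent^d$ and $M_\phi$ denotes the corresponding Fourier multiplier on $\T^d_\t$, while we also use $M_\phi$ for the Fourier multiplier on $\T^d$ acting on the $\T^d_\t$-valued functions (i.e.\ on $\N_\t=L_\8(\T^d)\overline\ot\T^d_\t$), then $\wt{M_\phi x}=M_\phi\wt x$. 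This is immediate on polynomials since $\pi_z(U^m)=z^mU^m$, so $M_\phi\wt x(z)=\sum_m\phi(m)\wh x(m)z^mU^m=\pi_z(\wt\phi*x)=\widetilde{\wt\phi*x}(z)$, and then extends by density. The same identity holds simultaneously for a whole sequence $(\phi_j)$.

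Granting this, part (i) goes as follows. Fix $x_0,\dots,x_n\in L_p(\T^d_\t)$ for $1<p<\8$. Apply the intertwining identity with $\M=\T^d_\t$, $\N=\N_\t=L_\8(\T^d)\overline\ot\T^d_\t$, and $f_j=\wt{x_j}$, noting $f_j\in L_p(\N_\t)$ by Corollary~\ref{prop:TransLp}(i). Lemma~\ref{CZD} gives
$$\big\|\big(\sum_{j}|\wt\phi_j*f_j|^2\big)^{\frac12}\big\|_{L_p(\N_\t)}\les\|\phi\|_{h_2^\s}\,\big\|\big(\sum_{j}|f_j|^2\big)^{\frac12}\big\|_{L_p(\N_\t)}.$$
Since $\wt\phi_j*f_j=\wt\phi_j*\wt{x_j}=\widetilde{\wt\phi_j*x_j}$ and $\pi_z$ preserves the trace (so preserves the $L_p$-norm of any fixed element, and more generally the column square function has the same norm in $L_p(\N_\t)$ as the corresponding one in $L_p(\T^d_\t)$ for a constant sequence-valued function), we get that the left side equals $\big\|\big(\sum_j|\wt\phi_j*x_j|^2\big)^{\frac12}\big\|_{L_p(\T^d_\t)}$ and the right side equals $\|\phi\|_{h_2^\s}\,\big\|\big(\sum_j|x_j|^2\big)^{\frac12}\big\|_{L_p(\T^d_\t)}$. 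The only care needed here is the bookkeeping of column $L_p$-norms under $\pi_z$: the point is that for a fixed finite sequence $(y_j)\subset L_p(\T^d_\t)$, the function $z\mapsto(\pi_z(y_j))_j$ is the image of $(y_j)_j$ under $\mathrm{id}_{L_\8(\T^d)}\otimes$ (the embedding), and applying $\pi_z$ columnwise does not change $\|(\sum_j|\,\cdot\,|^2)^{1/2}\|_p$ pointwise in $z$, hence does not change the $L_p(\N_\t)$-norm; taking a finite sequence suffices since the estimate is uniform, and the general (countable) case follows by monotone convergence.

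For part (ii), with $1\le p\le2$ and $x\in\H_p^c(\T^d_\t)$, I would use that $x\mapsto\wt x$ is an isometric embedding of $\H_p^c(\T^d_\t)$ into $\H_p^c(\T^d,\T^d_\t)$; this is exactly the transference underlying Lemma~\ref{q-H-BMO} and is recorded in \cite{CXY2012}. Apply Lemma~\ref{CZH} with $\M=\T^d_\t$ to $f=\wt x$:
$$\big\|\big(\sum_{j}|\wt\phi_j*\wt x|^2\big)^{\frac12}\big\|_{L_p(\N_\t)}\les\|\phi\|_{h_2^\s}\,\|\wt x\|_{\H_p^c}=\|\phi\|_{h_2^\s}\,\|x\|_{\H_p^c},$$
and then, as in part (i), rewrite the left-hand side as $\big\|\big(\sum_j|\wt\phi_j*x|^2\big)^{\frac12}\big\|_{L_p(\T^d_\t)}$ using $\wt\phi_j*\wt x=\widetilde{\wt\phi_j*x}$ and trace preservation by $\pi_z$.

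The main obstacle is not any single deep estimate — those are all in Lemmas~\ref{CZD} and \ref{CZH} — but rather making the transference of the internal column square function fully rigorous: one must check that applying $\pi_z$ entrywise to a (finite, then countable) sequence in $L_p(\T^d_\t)$ leaves invariant the norm $\|(\sum_j|\,\cdot\,|^2)^{1/2}\|_p$, and that the Fourier multiplier $\wt\phi_j*(\cdot)$ commutes with the embedding $x\mapsto\wt x$, for all $j$ simultaneously and with constants independent of the number of terms. Once these routine verifications are in place, the lemma follows by direct substitution; I would present it exactly in that order: (1) the intertwining identity $\wt{M_\phi x}=M_\phi\wt x$; (2) trace/norm invariance of column square functions under $\pi_z$; (3) apply Lemma~\ref{CZD} and transfer back for (i); (4) apply Lemma~\ref{CZH} and the isometric embedding of $\H_p^c$ and transfer back for (ii).
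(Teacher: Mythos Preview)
Your proposal is correct and follows exactly the approach the paper intends: the paper's own proof is the single sentence ``By transference, the previous lemmas imply the following,'' and your write-up spells out precisely that transference argument via Corollary~\ref{prop:TransLp} (the intertwining $\widetilde{M_\phi x}=M_\phi\widetilde x$, norm preservation of column square functions under $\pi_z$, and the isometric embedding of $\H_p^c(\T^d_\t)$ into $\H_p^c(\T^d,\T^d_\t)$), then applies Lemmas~\ref{CZD} and~\ref{CZH}.
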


We are now ready to prove Theorem~\ref{Hormander}.

\begin{proof}[Proof of  Theorem~\ref{Hormander}]
 Let $\zeta_j=\phi_j (\f^{(j-1)}+\f^{(j)}+\f^{(j+1)})$. By \eqref{3-supports} and the support assumption on $\phi_j\rho_j$, we have
  $$\phi_j \rho_{j}=\zeta_j \rho_{j}\,,\;\text{ so }\; \wt\phi_j*\wt\rho_j*x=\wt\zeta_j*\wt\rho_j*x$$
 for any distribution $x$ on $\T^d_\t$. We claim that $\zeta=(\zeta_j)_{j\ge0}$ satisfies \eqref{HS condition per} in place of $\phi$. Indeed, given $k\in\nat_0$, by the support assumption on $\f$ in \eqref{LP dec}, the sequence $\zeta(2^k\cdot)\f=(\zeta_j(2^k\cdot)\f)_{j\ge0}$ has at most five nonzero terms of indices $j$ such that $k-2\le j\le k+2$. Thus
  $$\big\|\zeta(2^k\cdot)\f\big\|_{H_2^\s(\real^d;\el_2)}\le \sum_{j=k-2}^{k+2} \big\|\zeta_j(2^k\cdot)\f\big\|_{H_2^\s(\real^d)}\,.$$
 However, by Lemma~\ref{CS Sobolev},
   $$\big\|\zeta_j(2^k\cdot)\f\big\|_{H_2^\s(\real^d)}\les  \sup_{-2\leq k \leq 2} \big\|   {\phi} _j (2^{j+k}\cdot)\varphi  \big\| _{H_2^\sigma(\mathbb{R}^d)}
\,,\quad k-2\le j\le k+2,$$
 where the relevant constant depends only on $d, \s$ and $\f$. Therefore, the second condition of \eqref{psi-rho} yields the claim.

Now applying Lemma~\ref{multiplier DH} (i) with $\zeta_j$ instead of $\phi_j$ and $x_j=2^{j\a}\wt\f_j*x$, we prove part (i) of the theorem.

To show part (ii), we need the characterization of $\H_1^c(\T^d_\t)$ by discrete square functions stated in Lemma~\ref{Hp-discrete} with  $\p=I_{-\a}\rho$.
 Let  $x$ be a distribution on $\T^d_\t$ with $\wh x(0)=0$ such that
  $$\big\|\big(\sum_{j\ge0}2^{j\a}|\wt\rho_j*x|^2\big)^{\frac12}\big\|_1<\8.$$
 Let $y=I^\a(x)$. Then the discrete square function of  $y$  associated to $\psi$  is given by
 $$s_\p^{c}(y)^2=\sum_{j\ge0}|\wt\p_j*y|^2=\sum_{j\ge0}2^{j\a}|\wt\rho_j*x|^2\,.$$
So $y\in \H_1^c(\T^d_\t)$ and
 $$\|y\|_{ \H_1^c}\approx \big\|\big(\sum_{j\ge0}2^{j\a}|\wt\rho_j*x|^2\big)^{\frac12}\big\|_1\,.$$
We want to apply Lemma~\ref{multiplier DH} (ii) to $y$ but with a different multiplier in place of $\phi$. To that end, let $\eta_j=2^{j\a}I_{-\a}\phi_j$ and $\eta=(\eta_j)_{j\ge0}$. We claim  that $\eta$ satisfies \eqref{psi-rho} too. The support condition of \eqref{psi-rho} is obvious for $\eta$. To prove the second one, by \eqref{3-supports}, we write
 $$\eta_j(2^j\xi)\f(\xi)=|\xi|^{-\a}\f(\xi)\phi_j(2^j\xi)=|\xi|^{-\a}[\f(2^{-1}\xi)+\f(\xi)+\f(2\xi)]\f(\xi)\phi_j(2^j\xi).$$
Since $I_{-\a}(\f^{(-1)}+\f+ \f^{(1)})$ is an infinitely differentiable function with compact support,
 $$\int_{\real^d} (1+|s|^2)^\s\big|\F^{-1}(I_{-\a}(\f^{(-1)}+\f+ \f^{(1)}))(s)\big|ds<\8.$$
 Thus by Lemma~\ref{CS Sobolev},
 $$\big\|\eta_j(2^{j+k}\cdot)\f\big\|_{H^s_2(\real^d)}\les \big\|\phi_j(2^{j+k}\cdot)\f\big\|_{H^s_2(\real^d)}\,,$$
whence the claim.

As in the first part of the proof, we define a new sequence $\zeta$ by setting $\zeta_j= \eta_j\rho_j$. Then the new sequence $\zeta$ satisfies \eqref{HS condition per} too and
 $$\sup_{k\ge0}\big\|\zeta_j(2^k\cdot)\f\big\|_{H_2^\s(\real^d)}\les \underset{\substack{j\geq 0 \\ -2\leq k \leq 2}}{\sup} \big\|   {\eta} _j (2^{j+k}\cdot)\varphi \big\| _{H_2^\sigma(\mathbb{R}^d)}
 \les \underset{\substack{j\geq 0 \\ -2\leq k \leq 2}}{\sup} \big\|   {\phi} _j (2^{j+k}\cdot)\varphi \big\| _{H_2^\sigma(\mathbb{R}^d)}\,.$$
 On the other hand, we have
  $$2^{j\a}\wt\phi_j* \wt\rho_j*x=\wt\zeta_j* y\,.$$
 Thus we can apply Lemma~\ref{multiplier DH} (ii)  to $y$ with this new $\zeta$ instead of $\phi$, and as before,  we get
  \be\begin{split}
  \big\|\big(\sum_{j\ge0}2^{2j\a}|\wt\phi_j* \wt\rho_j*x|^2\big)^{\frac12}\big\|_{1}
  &= \big\|\big(\sum_{j\ge0}|\wt\zeta_j* y|^2\big)^{\frac12}\big\|_{1}\\
 &\les \sup_{k\ge0}\big\|\zeta(2^k\cdot)\f\big\|_{H_2^\s(\real^d; \el_2)} \, \|y\|_{\H_p^c}\\
 &\les \underset{\substack{j\geq 0 \\ -2\leq k \leq 2}}{\sup} \big\|   {\phi} _j (2^{j+k}\cdot)\varphi \big\| _{H_2^\sigma(\mathbb{R}^d)} \,\big\|\big(\sum_{j\ge0}2^{j\a}|\wt\rho_j*x|^2\big)^{\frac12}\big\|_1\,.
 \end{split}\ee
Hence the proof  of the theorem is complete.
 \end{proof}

 %%%%%%%%%%%%%%%%%%%%%%%%%%%%%%%%%%%%%%%%%%%%%%%%%%%%%%%%%%%%%%%%%%%%%%%%
%%%%%%%%%%%%%%%%%%%%%%%%%%%%%%%%%%%%%%%%%%%%%%%%%%%%%%%%%%%%%%%%%%%%%%%%

\section{Definitions and basic properties}
\label{Definitions and basic properties: Triebel}

%%%%%%%%%%%%%%%%%%%%%%%%%%%%%%%%%%%%%%%%%%%%%%%%%%%%%%%%%%%%%%%%%%%%%%%%
%%%%%%%%%%%%%%%%%%%%%%%%%%%%%%%%%%%%%%%%%%%%%%%%%%%%%%%%%%%%%%%%%%%%%%%%

 As said at the beginning of this chapter, we consider the Triebel-Lizorkin spaces on $\T^d_\t$ only for $q=2$. In this case, there exist three different families of spaces according to the three choices of the internal $\el_2$-norms.

\begin{Def}\label{q-Triebel}
Let $1\leq p<\8$ and $\a\in \real$.
\begin{enumerate}[(i)]
\item The  column Triebel-Lizorkin space $F^{\a, c}_{p} (\T^d_\t)$ is defined by
$$F^{\a, c}_{p} (\T^d_\t)=\big\{x\in \mathcal{S}'(\T^d_\t) : \|x\|_{F^{\a, c}_{p}} < \8 \big\},$$
where
 $$\|x\|_{F^{\a, c}_{p}} =|\wh x(0)| + \big\|\big(\sum_{k\ge 0} 2^{2k\a } | \wt\f_k * x|^2\big)^{\frac{1}{2}}\big\|_p\,.$$
\item The row space $F^{\a, r}_{p} (\T^d_\t)$ consists  of all $x$ such that $x^*\in F^{\a, c}_{p} (\T^d_\t)$, equipped with the norm $\|x\|_{F^{\a, r}_{p}} =\|x^*\|_{F^{\a, c}_{p}} $.
\item The mixture space $F^{\a}_{p} (\T^d_\t)$ is defined to be
  \be
  F^{\a}_{p} (\T^d_\t)=
 \left \{ \begin{split}
 &F^{\a, c}_{p} (\T^d_\t)+ F^{\a, r}_{p} (\T^d_\t) & \textrm{ if }\; 1\le p<2,\\
 &F^{\a, c}_{p} (\T^d_\t)\cap F^{\a, r}_{p} (\T^d_\t) & \textrm{ if }\; 2\le  p<\8,
 \end{split} \right.
 \ee
equipped with
  \be
  \|x\|_{F^{\a}_p} =
 \left \{ \begin{split}
 &\inf\big\{\|y\|_{F^{\a, c}_p} +\|z\|_{F^{\a, r}_p} : x=y+z \big\} & \textrm{ if }\; 1\le p<2,\\
 &\max(\|x\|_{F^{\a, c}_p}\,, \;\|x\|_{F^{\a, r}_p})   & \textrm{ if }\; 2\le  p<\8.
 \end{split} \right.
 \ee
 \end{enumerate}
\end{Def}

 In the sequel, we will concentrate our study only on the column Triebel-Lizorkin spaces. All results will admit the row and mixture analogues. The following shows that $F^{\a, c}_{p} (\T^d_\t)$ is independent of the choice of the function $\f$.

\begin{prop}\label{Triebel-I}
Let $\p$ be a  Schwartz function satisfying the same condition \eqref{LP dec} as $\f$. Let $\wh\p_k=\p^{(k)}=\p(2^{-k}\cdot)$.
  Then
  $$\|x\|_{F^{\a, c}_p}\approx |\wh x(0)| +  \big\|\big(\sum_{k\ge 0} 2^{2k\a} |\wt\psi_k * x|^2\big)^{\frac12}\big\|_p\,.$$
\end{prop}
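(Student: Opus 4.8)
The statement asserts that the column Triebel--Lizorkin space $F^{\a,c}_p(\T^d_\t)$ does not depend on the choice of the Littlewood--Paley function $\f$ satisfying \eqref{LP dec}. The natural approach is to apply the multiplier Theorem~\ref{Hormander}(i) twice, using symmetry between $\f$ and $\p$. First I would set up the standard ``almost orthogonality'' decomposition: since both $\{\f(2^{-k}\cdot)\}$ and $\{\p(2^{-k}\cdot)\}$ are supported in dyadic annuli $\{2^{k-1}\le|\xi|\le 2^{k+1}\}$, for each $k\ge0$ the function $\p(2^{-k}\cdot)$ is supported where $\sum_{j=k-1}^{k+1}\f(2^{-j}\cdot)=1$ (this is exactly the content of \eqref{3-supports} applied to $\p$, which has the same support properties as $\f$). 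Hence
$$\wt\p_k*x=\sum_{j=k-1}^{k+1}\wt\p_k*\wt\f_j*x,\qquad k\ge0,$$
with the convention $\wt\f_{-1}=0$.

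\textbf{Main step.} Fix $i\in\{-1,0,1\}$ and consider the operator $x\mapsto(\wt\p_k*\wt\f_{k+i}*x)_{k\ge0}$. I would apply Theorem~\ref{Hormander}(i) with $\phi_j=\p(2^{-j}\cdot)$ (shifted appropriately) and $\rho_j=\f(2^{-(j+i)}\cdot)$: the support condition $\mathrm{supp}(\phi_j\rho_j)\subset\{2^{j-1}\le|\xi|\le2^{j+1}\}$ holds because $\mathrm{supp}\,\f(2^{-(j+i)}\cdot)\subset\{2^{j+i-1}\le|\xi|\le2^{j+i+1}\}$ and $|i|\le1$, so the product is supported in a band comparable to $2^j$; and the uniform Sobolev bound $\sup_j\|\phi_j(2^j\cdot)\f\|_{H^\s_2}=\|\p\,\f\|_{H^\s_2}<\8$ (a fixed Schwartz function times $\f$, hence in every $H^\s_2$) holds trivially since $\p(2^{-j}\cdot)$ dilated back is just $\p$. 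Theorem~\ref{Hormander}(i) then gives, for $1<p<\8$,
$$\big\|\big(\sum_{k\ge0}2^{2k\a}|\wt\p_k*\wt\f_{k+i}*x|^2\big)^{1/2}\big\|_p\les \big\|\big(\sum_{k\ge0}2^{2k\a}|\wt\f_{k+i}*x|^2\big)^{1/2}\big\|_p\les \big\|\big(\sum_{j\ge0}2^{2j\a}|\wt\f_j*x|^2\big)^{1/2}\big\|_p,$$
where the last step absorbs the $2^{i\a}$ factor and extends the sum. Summing the triangle inequality over the three values of $i$ yields $\|x\|_{F^{\a,c}_p}\les\|x\|_{F^{\a,c}_p(\f)}$ for $1<p<\8$; interchanging the roles of $\f$ and $\p$ gives the reverse, so the two norms are equivalent. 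Note that both auxiliary sequences are built from genuine Schwartz functions on dyadic annuli, so the refinement in Theorem~\ref{Hormander}(ii) applies as well and the same argument covers $p=1$.

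\textbf{Finishing.} I would then reduce to the $\wh x(0)$ term: the zeroth Fourier coefficient is untouched by all the $\wt\f_k$ and $\wt\p_k$ (whose symbols vanish near the origin), so the $|\wh x(0)|$ summand is common to both expressions and nothing further is needed there. Putting the two pieces together gives the stated equivalence for all $1\le p<\8$. The only genuinely technical point is verifying the hypotheses of Theorem~\ref{Hormander} --- in particular the band-support condition for the products $\phi_j\rho_j$ and the uniform $H^\s_2$-bound --- but here both are immediate because everything is an honest dyadic dilate of a fixed Schwartz function; there is no real obstacle, the work having already been done in the multiplier theorem. (This is the exact analogue, in the Triebel--Lizorkin setting, of Remark~\ref{independence of phi} for Besov spaces, but now routed through Theorem~\ref{Hormander} rather than Lemma~\ref{q-multiplier}, since the $\el_2$-sum sits inside the $L_p$-norm.)
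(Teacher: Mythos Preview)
Your proposal is correct and follows the same approach as the paper: use the almost-orthogonality identity $\wt\p_k*x=\sum_{i=-1}^{1}\wt\p_k*\wt\f_{k+i}*x$ coming from \eqref{3-supports}, apply Theorem~\ref{Hormander} to each of the three pieces, and conclude by symmetry in $\f$ and $\p$. The paper's write-up is terser (it simply cites Theorem~\ref{Hormander} without separating the hypothesis check or the $p=1$ case), but the argument is identical.
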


\begin{proof}
 Fix a distribution $x$ on $\T^d_\t$ with $\wh x(0)=0$. By the support assumption on $\p^{(k)}$ and  \eqref{3-supports}, we have (with $\wt\f_{-1}=0$)
 $$\wt\psi_k*x=\sum_{j=-1}^{1}\wt\psi_k*\wt\f_{k+j}*x.$$
Thus by Theorem~\ref{Hormander},
  \be\begin{split}
  \big\|\big(\sum_{k\ge 0} 2^{2k\a} | \wt\psi_k * x|^2\big)^{\frac12}\big\|_p
 &\le \sum_{j=-1}^{1} \big\|\big(\sum_{k\ge 0} 2^{2k\a} | \wt\psi_k *\wt\f_{k+j}* x|^2\big)^{\frac12}\big\|_p\\
 &\les\big\| \big(\sum_{k\ge 0} 2^{2k\a} | \wt\f_k * x|^2\big)^{\frac12}\big\|_p\,.
 \end{split}\ee
Changing the role of $\f$ and $\p$, we get the reverse inequality.
\end{proof}

\begin{prop}\label{Triebel-P}
Let $1\leq p<\8$ and $\a\in \mathbb{R}$.
\begin{enumerate}[\rm(i)]
\item $F_{p}^{\a, c} (\T^d_\t)$ is a Banach space.
\item $F_{p}^{\a, c} (\T^d_\t)\subset F_{p}^{\b, c} (\T^d_\t)$  for $\b<\a$.
\item $\mathcal{P}_{\theta}$ is dense in $F_{p}^{\a, c} (\T^d_\t)$ .
\item $F_{p}^{0, c} (\T^d_\t)=\H_{p}^{c} (\T^d_\t)$.
\item $B_{p,\, \min(p, 2)}^{\a} (\T^d_\t)\subset F_{p}^{\a, c} (\T^d_\t)\subset B_{p,\, \max(p, 2)}^{\a} (\T^d_\t)$.
\end{enumerate}
\end{prop}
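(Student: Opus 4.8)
\textbf{Proof plan for Proposition~\ref{Triebel-P}.}

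The statement collects five assertions; I would treat them in the order (iv), (i), (ii)--(iii), (v), since (iv) is really the engine that drives the rest. For (iv), the point is simply that $F^{0,c}_p(\T^d_\t)$ is defined by the square function $\big(\sum_{k\ge0}|\wt\f_k*x|^2\big)^{1/2}$ together with the term $|\wh x(0)|$, and this is exactly the discrete square function $s^c_\f(x)$ appearing in Lemma~\ref{Hp-discrete}. Since $\f$ satisfies \eqref{LP dec}, it does not vanish on $\{\xi:1\le|\xi|<2\}$, so Lemma~\ref{Hp-discrete} applies and gives $\|x\|_{F^{0,c}_p}\approx |\wh x(0)|+\|s^c_\f(x)\|_p\approx\|x\|_{\H^c_p}$ for $1\le p<\8$. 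This identification is the main tool, and I expect it to be the step on which everything else leans; the technical content is already packed into Lemma~\ref{Hp-discrete} (hence into \cite{XXX}), so here it is a matter of quoting it correctly.

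For (i), completeness, I would argue as for Besov spaces in Proposition~\ref{Besov-P}(i): a Cauchy sequence $\{x_n\}$ in $F^{\a,c}_p$ has $\{\wh x_n(0)\}$ convergent in $\com$ and, since $L_p(\T^d_\t;\el_2^c)$ is complete, the sequence $(2^{k\a}\wt\f_k*x_n)_k$ converges in that space to some $(y_k)_k$; setting $y=\wh y(0)+\sum_k 2^{-k\a}y_k$, one checks via \eqref{3-supports} that $\wt\f_k*y$ is the limit of $\wt\f_k*x_n$, so the series converges in $\mathcal S'(\T^d_\t)$ and $x_n\to y$ in $F^{\a,c}_p$. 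Assertion (ii) is immediate from $2^{\b k}\le 2^{\a k}$ for $k\ge0$ when $\b<\a$. For (iii), density of $\mathcal P_\t$, I would use the square Fej\'er means $F_N$ exactly as in the proof of Proposition~\ref{Sobolev-P}(ii): $F_N$ commutes with each $\wt\f_k$ and is a complete contraction on $L_p(\T^d_\t)$, and a truncation argument like in Proposition~\ref{Besov-P}(iii) (replacing $x$ by $\wh x(0)+\sum_{j\le N}\wt\f_j*x$) shows $\|x-x_N\|_{F^{\a,c}_p}\to0$ because the tail $\big\|\big(\sum_{k\ge N}2^{2k\a}|\wt\f_k*x|^2\big)^{1/2}\big\|_p\to0$ by dominated convergence in $L_p(\T^d_\t;\el_2^c)$.

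Finally (v), the sandwiching between Besov spaces. By the lifting theorems (Proposition~\ref{Besov-isom}(i) and the obvious analogue for $F^{\a,c}_p$ obtained by applying $J^\b$ together with Lemma~\ref{IJ} and Theorem~\ref{Hormander}) I would reduce to $\a=0$, where $F^{0,c}_p=\H^c_p$ by (iv). For $2\le p<\8$ the inclusion $B^0_{p,p}\subset\H^c_p$ is the content of the argument already given in the proof of Theorem~\ref{Sobolev-Besov} — one invokes Lemma~\ref{Hp-discrete} and the triangle inequality in $L_{p/2}(\T^d_\t)$ to get $\|s^c_\f(x)\|_p\le\big(\sum_k\|\wt\f_k*x\|_p^2\big)^{1/2}$, i.e. $\|x\|_{F^{0,c}_p}\les\|x\|_{B^0_{p,2}}$; and the reverse-type inclusion $\H^c_p\subset B^0_{p,2}$ follows by interpolating the isometric case $p=2$ with the trivial $\el_2\hookrightarrow\el_\8$ embedding, or by duality from the first inclusion. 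For $1\le p\le2$ one passes to the column Hardy space and uses $\|\wt\f_k*x\|_p\le\|x\|_{\H^c_p}$ together with the reverse triangle/interpolation argument, yielding $B^0_{p,p}\subset F^{0,c}_p\subset B^0_{p,2}$; the general $\min/\max$ formulation then records $\min(p,2)$ and $\max(p,2)$ uniformly. The only place demanding genuine care is keeping track of which direction of the Hardy--Besov comparison holds for $p<2$ versus $p>2$, but both directions are already present in the proof of Theorem~\ref{Sobolev-Besov}, so this is bookkeeping rather than a new obstacle.
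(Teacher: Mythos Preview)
Your treatment of (i)--(iv) matches the paper's: (iv) is Lemma~\ref{Hp-discrete} verbatim, (i) follows the Besov completeness argument, (ii) is trivial, and (iii) goes through the Fej\'er means and their complete contractivity on $L_p(\T^d_\t;\el_2^c)$.

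For (v) you take a different and unnecessarily circuitous route. The paper does not reduce to $\a=0$ at all; it observes directly that for $2\le p\le\8$ one has the contractive inclusions
\[
\el_2(L_p(\T^d_\t))\subset L_p(\T^d_\t;\el_2^c)\subset \el_p(L_p(\T^d_\t)),
\]
the first by the triangle inequality in $L_{p/2}$ and the second by complex interpolation between $p=2$ and $p=\8$, with both inclusions reversed for $1\le p\le 2$. Applying these to the weighted sequence $(2^{k\a}\wt\f_k*x)_k$ gives (v) immediately for every $\a$, with no lifting, no Hardy spaces, and no appeal to Theorem~\ref{Hormander}. Your detour through lifting is logically available (the ingredients you cite precede this proposition), but you have mislabeled the inclusions for $p\ge2$: you write ``$B^0_{p,p}\subset\H^c_p$'' and ``$\H^c_p\subset B^0_{p,2}$'', whereas the correct statements for $p\ge2$ are $B^0_{p,2}\subset F^{0,c}_p$ and $F^{0,c}_p\subset B^0_{p,p}$ (since $\min(p,2)=2$, $\max(p,2)=p$). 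The paper's direct argument sidesteps exactly the bookkeeping hazard you flag in your last sentence.
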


\begin{proof}
 (i) is proved as in the case of Besov spaces; see the corresponding proof of Proposition~\ref{Besov-P}. (ii) is obvious. To show (iii), we use the Fej\'er means as in the proof of Proposition~\ref{Sobolev-P}. We need one more property of those means, that is, they are completely contractive. So they are also contractive on $L_p(B(\el_2)\overline\ot\T^d_\t)$, in particular, on the column subspace $L_p(\T^d_\t; \el_2^c)$ too. We then deduce that $F_N$ is contractive on $F_{p}^{\a, c} (\T^d_\t)$ and $\lim_{N\to\8}F_N(x)=x$ for every $x\in F_{p}^{\a, c} (\T^d_\t)$.

(iv) has been already observed during the proof of  Theorem~\ref{Hormander}. Indeed, for any distribution $x$ on $\T^d_\t$, the square function associated to $\f$ defined in  Lemma~\ref{Hp-discrete} is given by
 $$s_\f^c(x)=\big(\sum_{k\ge 0}  | \wt\f_k * x|^2\big)^{\frac{1}{2}}\,.$$
Thus $\|x\|_{\H_p^c}\approx \|x\|_{F_{p}^{0, c} }$.

(v) follows from the following well-known property:
 $$\el_2(L_p(\T^d_\t))\subset L_p(\T^d_\t;\el_2^c)\subset \el_p(L_p(\T^d_\t))$$
 are contractive inclusions for $2\le p\le\8$; both inclusions are reversed for $1\le p\le2$. Note that the first inclusion is an immediate consequence of the triangular inequality of $L_{\frac p2}(\T^d_\t)$, the second is proved by complex interpolation.
  \end{proof}

The following is the Triebel-Lizorkin analogue of Theorem ~\ref{Besov-isom}. We keep the notation introduced before that theorem.

\begin{thm}\label{Triebel-isom}
Let $1\leq p< \8$ and $\a\in\real$.
 \begin{enumerate}[\rm(i)]
 \item For any $\b\in\real$, both $J^{\b}$ and $I^\b$ are isomorphisms between $F_{p}^{\a, c} (\T^d_\t)$ and $F_{p}^{\a -\b, c}(\T^d_\t)$. In particular,  $J^{\a}$ and $I^\a$ are isomorphisms between $F_{p}^{\a, c} (\T^d_\t)$ and $\H_p^c(\T^d_\t)$.
 \item Let $a\in\real_+^d$. If $x\in F_{p}^{\a, c} (\T^d_\t)$, then $D^ax\in F_{p}^{\a-|a|_1, c} (\T^d_\t)$ and
   $$\|D^ax\|_{F_{p}^{\a-|a|_1, c}}\les \|x\|_{F_{p}^{\a, c}}\,.$$
 \item Let $\b>0$. Then $x\in F_{p}^{\a, c} (\T^d_\t)$ iff $D_i^\b x\in F_{p}^{\a-\b, c} (\T^d_\t)$ for all $i=1,\cdots,  d$. Moreover, in this case,
  $$\|x\|_{F_{p}^{\a, c}}\approx |\wh x(0)|+\sum_{i=1}^d \|D_i^\b x\|_{F_{p}^{\a-\b, c}}\,.$$
  \end{enumerate}
\end{thm}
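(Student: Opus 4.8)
The three parts are the exact analogues of Theorem~\ref{Besov-isom}, and the plan is to adapt that proof, replacing the use of Lemma~\ref{q-multiplier} (boundedness of Fourier multipliers on $L_p$) by the vector-valued multiplier estimate of Theorem~\ref{Hormander}, which is precisely the tool that lets one pass from the $L_p$-norm of a single function to the $L_p(\T^d_\t;\el_2^c)$-norm of a square function. Throughout we fix $x$ with $\wh x(0)=0$, since the component $\wh x(0)$ is treated separately and trivially.

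For part (i), I would first prove that $I^\b$ is bounded from $F_p^{\a,c}(\T^d_\t)$ to $F_p^{\a-\b,c}(\T^d_\t)$. Write $\wt\f_k*I^\b x = \wt\f_k*\wt{(I_\b)}_k*x$ using \eqref{3-supports}, where by Lemma~\ref{IJ} the symbol $I_\b\f_k$ has $\|\F^{-1}(I_\b\f_k)\|_1\les 2^{\b k}$. Apply Theorem~\ref{Hormander}(i) with $\phi_j = I_\b(\f^{(j-1)}+\f^{(j)}+\f^{(j+1)})$ and $\rho_j=\f^{(j)}$, after checking that $\phi_j(2^j\cdot)\f$ is bounded in $H_2^\s$ uniformly in $j$ (this follows from Lemma~\ref{CS Sobolev} and Lemma~\ref{weighted Bessel}, as in the proof of Theorem~\ref{Besov-isom}(ii)); the weight $2^{2j\a}$ on the left becomes $2^{2j(\a-\b)}$ after absorbing the $2^{2\b j}$ gain from $\phi_j$, which is exactly the norm of $I^\b x$ in $F_p^{\a-\b,c}$. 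Boundedness of the inverse $I^{-\b}$ is the same statement with $\b$ replaced by $-\b$; since $I_{-\b}$ is only defined off the origin this is harmless because we work on distributions with $\wh x(0)=0$. The case of $J^\b$ is identical using the $J_\b$ estimate in Lemma~\ref{IJ}. The ``in particular'' follows from Proposition~\ref{Triebel-P}(iv), $F_p^{0,c}=\H_p^c$.

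Part (ii) is proved the same way: by Lemma~\ref{weighted Bessel} and Remark~\ref{Bessel multiplier}, $\|\F^{-1}(D_a\f_k)\|_1 = 2^{k|a|_1}\|\F^{-1}(D_a\f)\|_1\les 2^{k|a|_1}$, so applying Theorem~\ref{Hormander}(i) with $\phi_j = D_a(\f^{(j-1)}+\f^{(j)}+\f^{(j+1)})$ and $\rho_j=\f^{(j)}$ gives $\|(\sum_{k\ge0}2^{2k(\a-|a|_1)}|\wt\f_k*D^ax|^2)^{1/2}\|_p\les\|x\|_{F_p^{\a,c}}$, i.e. $\|D^ax\|_{F_p^{\a-|a|_1,c}}\les\|x\|_{F_p^{\a,c}}$. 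For part (iii), one implication is (ii). For the converse, I would reuse verbatim the functions $\chi_i$ constructed in the proof of Theorem~\ref{Besov-isom}(iii): one has $\|\F^{-1}(\chi_i\f_k)\|_1\les 2^{-k\b}$ and the decomposition $\f_k = \sum_{i=1}^d \chi_i D_{i,\b}\f_k$. Applying Theorem~\ref{Hormander}(i) with $\phi_j=\sum$ of the relevant pieces and $\rho_j$ built from $D_i^\b x$'s (more precisely, estimating $\|(\sum_k 2^{2k\a}|\wt\f_k*x|^2)^{1/2}\|_p$ by $\sum_{i=1}^d\|(\sum_k 2^{2k\a}\cdot 2^{-2k\b}|\wt\f_k*D_i^\b x|^2)^{1/2}\|_p$ after the multiplier bound) yields $\|x\|_{F_p^{\a,c}}\les|\wh x(0)|+\sum_i\|D_i^\b x\|_{F_p^{\a-\b,c}}$.

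The only genuine subtlety — and the step I expect to require the most care — is the verification of the $H_2^\s$-uniform-boundedness hypothesis \eqref{psi-rho} of Theorem~\ref{Hormander} for each of the multiplier sequences above, in particular checking that the auxiliary functions $I_\b\f$, $D_a\f$, $\chi_i\f$ (and their dyadic-neighbour sums) lie in $H_2^\s(\real^d)$ for some $\s>d/2$ and have the right dilation behaviour. This is exactly the content of Lemma~\ref{weighted Bessel}, Remark~\ref{Bessel multiplier} and Lemma~\ref{CS Sobolev}, so the verification is routine but must be done for each sequence; once this is in place, all three parts follow by the same scheme as Theorem~\ref{Besov-isom}, with Theorem~\ref{Hormander} substituted for Lemma~\ref{q-multiplier}. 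Finally, since all multipliers used are in fact independent of $\t$ and the estimates are uniform, the row and mixture versions follow by passing to adjoints and taking sums/intersections.
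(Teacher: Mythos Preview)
Your proposal is correct and follows essentially the same approach as the paper: all three parts are proved by replacing Lemma~\ref{q-multiplier} in the proof of Theorem~\ref{Besov-isom} with Theorem~\ref{Hormander}, after checking the $H_2^\s$-uniform-boundedness hypothesis~\eqref{psi-rho} for the relevant multiplier sequences. The paper verifies the $H_2^\s$ bounds in part~(i) by directly observing that all partial derivatives of $2^{-k\b}J_\b(2^k\cdot)\f$ up to a fixed order are bounded uniformly in $k$, rather than invoking Lemma~\ref{CS Sobolev} and Lemma~\ref{weighted Bessel} as you do, but this is a cosmetic difference; in parts~(ii) and~(iii) your treatment and the paper's coincide almost verbatim.
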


\begin{proof}
  (i)  Let $x \in F_{p}^{\a, c} (\T^d_\t)$ with $\wh x(0)=0$.  By Theorem~\ref{Hormander},
 \be\begin{split}
 \|J^{\b}x\|_{F_{p}^{\a -\b, c}}
 &= \big\|\big(\sum_{k\ge0} 2^{2k(\a -\b)} |J^{\b}*\wt\f_k\ast x|^2\big)^{\frac12}\big\|_p\\
 &\les\sup_{k\ge0} 2^{-k\b}\| J_\b(2^k\cdot)\f\|_{H_2^\s(\real^d)}\,\big\|\big(\sum_{k\ge0} 2^{2k\a} |\wt\f_k\ast x|^2\big)^{\frac12}\big\|_p\,.
 \end{split}\ee
However, it is easy to see that all partial derivatives of the function $2^{-k\b}J_\b(2^k\cdot)\f$, of order less than a fixed integer, are bounded uniformly in $k$. It then follows that
 $$\sup_{k\ge0} 2^{-k\b}\| J_\b(2^k\cdot)\f\|_{H_2^\s(\real^d)}<\8.$$
Thus $\|J^{\b}x\|_{F_{p}^{\a -\b, c}}\les  \|x\|_{F_{p}^{\a , c}}$. So $J^{\b}$ is bounded from $F_{p}^{\a, c} (\T^d_\t)$ to $F_{p}^{\a -\b, c}(\T^d_\t)$, its inverse, which is $J^{-\b}$, is bounded too. $I^\b$ is handled similarly.

  If $\b=\a$, then $F_{p}^{\a -\b, c}(\T^d_\t)=F_{p}^{0, c}(\T^d_\t)=\H_p^c(\T^d_\t)$ by Proposition~\ref{Triebel-P} (iv).

(ii) This proof is similar to the previous one  by replacing $J^\b$ by $D^a$ and using Lemma~\ref{weighted Bessel}.

(iii) One implication is contained in (ii). To show the other, we follow the proof of Theorem~\ref{Besov-isom} (iii) and keep the notation there.   Since
  $$\f_k= \sum_{i=1}^d\chi_i D_{i,\b}\f_k,$$
 by Theorem~\ref{Hormander},
 \be\begin{split}
 \|x\|_{F_{p}^{\a , c}}
 &\le \sum_{i=1}^d \big\|\big(\sum_{k\ge0} 2^{2k\a} |\wt\chi_i *\wt\f_k* D_i^\b x|^2\big)^{\frac12}\big\|_p\\
 &\les \sum_{i=1}^d \sup_{k\ge0} 2^{k\b}\| \chi_i(2^k\cdot)\f\|_{H_2^\s(\real^d)}\, \big\|\big(\sum_{k\ge0} 2^{2k(\a-\b)} |\wt\f_k* D_i^\b x|^2\big)^{\frac12}\big\|_p\,.
 \end{split}\ee
However,
 $$2^{k\b}\| \chi_i(2^k\cdot)\f\|_{H_2^\s(\real^d)}
 =\| \p\f\|_{H_2^\s(\real^d)}\,,$$
where
 $$\p(\xi)=\frac1{\chi(2^k\xi_1)|\xi_1|^\b+\cdots+\chi(2^k\xi_d)|\xi_d|^\b}\,\frac{\chi(2^k\xi_i)|\xi_i|^\b}{(2\pi{\rm i}\xi_i)^\b}\,.$$
As all partial derivatives of $\p\f$, of order less than a fixed integer, are bounded uniformly in $k$, the norm of $\p\f$ in $H_2^\s(\real^d)$ are controlled by a constant independent of $k$. We then deduce
 $$\|x\|_{F_{p}^{\a , c}}\les \sum_{i=1}^d \big\|\big(\sum_{k\ge0} 2^{2k(\a-\b)} |\wt\f_k* D_i^\b x|^2\big)^{\frac12}\big\|_p
 =\sum_{i=1}^d \|D_i^\b x\|_{F_{p}^{\a-\b, c}}\,.$$
The theorem is thus completely proved. \end{proof}

\begin{cor}\label{Triebel=Sobolev}
 Let $1<p<\8$ and $\a\in\real$. Then $F_{p}^{\a} (\T^d_\t)=H_{p}^{\a} (\T^d_\t)$ with equivalent norms.
 \end{cor}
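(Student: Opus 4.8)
The corollary asserts that for $1<p<\8$ and $\a\in\real$, the mixture Triebel-Lizorkin space $F_p^\a(\T^d_\t)$ coincides with the potential Sobolev space $H_p^\a(\T^d_\t)$ with equivalent norms. The plan is to reduce everything to the case $\a=0$ by the lifting theorems already at hand, and then to invoke the Hardy space identification from Chapter~1.

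First I would reduce to $\a=0$. By Proposition~\ref{Sobolev-P}(iii), $J^\a$ is an isometry from $H_p^\a(\T^d_\t)$ onto $L_p(\T^d_\t)=H_p^0(\T^d_\t)$. By Theorem~\ref{Triebel-isom}(i), $J^\a$ is an isomorphism from $F_p^{\a,c}(\T^d_\t)$ onto $F_p^{0,c}(\T^d_\t)$, and likewise from $F_p^{\a,r}(\T^d_\t)$ onto $F_p^{0,r}(\T^d_\t)$ (applying the column statement to adjoints, since $J^\b$ commutes with the $*$-operation). Since $J^\a$ respects the sum and intersection norms defining the mixture space, it is an isomorphism from $F_p^\a(\T^d_\t)$ onto $F_p^0(\T^d_\t)$. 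Hence it suffices to prove $F_p^0(\T^d_\t)=L_p(\T^d_\t)$ with equivalent norms for $1<p<\8$.

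Next I would handle the case $\a=0$. By Proposition~\ref{Triebel-P}(iv) we have $F_p^{0,c}(\T^d_\t)=\H_p^c(\T^d_\t)$ and, passing to adjoints, $F_p^{0,r}(\T^d_\t)=\H_p^r(\T^d_\t)$, with equivalent norms. Unwinding Definition~\ref{q-Triebel}(iii) and the definition of $\H_p(\T^d_\t)$, the mixture space $F_p^0(\T^d_\t)$ is exactly $\H_p^c(\T^d_\t)+\H_p^r(\T^d_\t)$ for $1\le p<2$ and $\H_p^c(\T^d_\t)\cap\H_p^r(\T^d_\t)$ for $2\le p<\8$, i.e. $F_p^0(\T^d_\t)=\H_p(\T^d_\t)$ with equivalent norms. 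By Lemma~\ref{q-H-BMO}(i), $\H_p(\T^d_\t)=L_p(\T^d_\t)$ with equivalent norms for $1<p<\8$. Combining these identifications with the reduction of the previous paragraph yields $F_p^\a(\T^d_\t)=H_p^\a(\T^d_\t)$ with equivalent norms, as claimed.

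There is essentially no hard obstacle here: the corollary is a formal consequence of Theorem~\ref{Triebel-isom}, Proposition~\ref{Triebel-P}(iv), and Lemma~\ref{q-H-BMO}, all already proved. The only point requiring a little care is bookkeeping with the row/mixture versions — checking that the isomorphism $J^\a$ and the identification $F_p^{0}=\H_p$ are compatible with the sum/intersection structure of the mixture norms, and that the constants in all the equivalences depend only on $p$, $d$ (and the fixed auxiliary functions), not on the distribution $x$. This is routine, so the proof will be short.
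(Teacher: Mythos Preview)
Your proposal is correct and follows essentially the same approach as the paper: reduce to $\a=0$ via the lifting isomorphism $J^\a$ (Theorem~\ref{Triebel-isom} and Proposition~\ref{Sobolev-P}), identify $F_p^0(\T^d_\t)$ with $\H_p(\T^d_\t)$ via Proposition~\ref{Triebel-P}(iv), and then invoke Lemma~\ref{q-H-BMO}(i) to get $\H_p(\T^d_\t)=L_p(\T^d_\t)$. Your extra care with the row/mixture bookkeeping is fine and the paper simply states this step without elaboration.
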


\begin{proof}
 Since $J^\a$  is an isomorphism from $F_{p}^{\a} (\T^d_\t)$ onto $F_{p}^{0} (\T^d_\t)$, and from  $H_{p}^{\a} (\T^d_\t)$ onto $H_{p}^{0} (\T^d_\t)$, it suffices to consider the case $\a=0$. But then $H_{p}^{0} (\T^d_\t)=L_p (\T^d_\t)$ by definition, and $F_{p}^{0} (\T^d_\t)= \H_p(\T^d_\t)$ by Proposition~\ref{Triebel-P}. It remains to apply Lemma~\ref{q-H-BMO} to conclude $F_{p}^{0} (\T^d_\t)=H_{p}^{0} (\T^d_\t)$.
\end{proof}

We now discuss the duality of $F_{p}^{\a, c} (\T^d_\t)$. For this we need to define $F_{\8}^{\a, c} (\T^d_\t)$ that is excluded from the definition at the beginning of the present section. Let $\el_2^\a$ denote the Hilbert space of all complex sequences $a=(a_k)_{k\ge0}$ such that
 $$\|a\|=\big(\sum_{k\ge0}2^{2k\a}|a_k|^2\big)^{\frac12}<\8.$$
Thus $L_p(\T^d_\t; \el_2^{\a, c})$ is the column subspace of $L_p(B(\el_2^{\a})\overline\ot \T^d_\t)$.

\begin{Def}\label{F infity}
 For $\a\in\real$ we define  $F_{\8}^{\a, c} (\T^d_\t)$ as the space of all distributions $x$ on $\T^d_\t$ that admit a representation of the form
  $$x=\sum_{k\ge0} \wt\f_k*x_k\;\text{ with }\; (x_k)_{k\ge0} \in L_\8(\T^d_\t; \el_2^{\a, c}),$$
 and endow it with the norm
  $$\|x\|_{F_{\8}^{\a, c} }=|\wh x(0)|+ \inf\big\{ \big\|\big(\sum_{k\ge0} 2^{2k\a} |\wt\f_k\ast x_k|^2\big)^{\frac12}\big\|_\8\big\},$$
 where the infimum runs over all representations of $x$ as above.
 \end{Def}

 \begin{prop}\label{Triebel-dual}
 Let $1\le p<\8$ and $\a\in\real$. Then the dual space of $F_{p}^{\a, c} (\T^d_\t)$ coincides isomorphically with   $F_{p'}^{-\a, c} (\T^d_\t)$.
 \end{prop}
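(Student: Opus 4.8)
The plan is to mimic the classical duality argument for Triebel--Lizorkin spaces, using the representation of $F_p^{\a,c}(\T^d_\t)$ as a complemented subspace of a vector-valued noncommutative $L_p$-space. Concretely, recall from Definition~\ref{q-Triebel} that the map $\iota: x\mapsto \big(\wh x(0),\,(2^{k\a}\wt\f_k*x)_{k\ge0}\big)$ embeds $F_p^{\a,c}(\T^d_\t)$ isometrically into $\com\op L_p(\T^d_\t;\el_2^{c})$ (identifying $\el_2^{\a}$ with $\el_2$ via the weight). First I would exhibit an explicit bounded projection $P$ from $\com\op L_p(\T^d_\t;\el_2^c)$ onto the range of $\iota$: using \eqref{3-supports}, set $P\big(a,(y_k)_k\big)=\iota\big(a+\sum_k 2^{-k\a}\wt\f_k*y_k\big)$, and check via Theorem~\ref{Hormander} (i) — applied with $\phi_j$ the constant-$1$-times-cutoff sequence, or rather with the sequence $\zeta_j=\f^{(j-1)}+\f^{(j)}+\f^{(j+1)}$ — that $P$ is bounded on $\com\op L_p(\T^d_\t;\el_2^c)$ for $1<p<\8$, and on $\com\op L_1(\T^d_\t;\el_2^c)$ using part (ii) of that theorem together with $F_1^{0,c}=\H_1^c$ from Proposition~\ref{Triebel-P}(iv). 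Thus $F_p^{\a,c}(\T^d_\t)$ is $1$- (or uniformly-)complemented, and its dual is the corresponding complemented subspace of $\big(\com\op L_p(\T^d_\t;\el_2^c)\big)^*=\com\op L_{p'}(\T^d_\t;\el_2^r)$ (note the column space dualizes to a row space).

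The second step is to identify that dual subspace concretely. Given $\el\in F_p^{\a,c}(\T^d_\t)^*$, extend it by Hahn--Banach to $\com\op L_p(\T^d_\t;\el_2^c)$, obtaining a representing element $\big(b,(z_k)_k\big)$ with $b\in\com$ and $(z_k)\in L_{p'}(\T^d_\t;\el_2^r)$, so that $\el(x)=b\,\overline{\wh x(0)}+\sum_k 2^{2k\a}\tau\big((\wt\f_k*x)\,z_k^*\big)$ for polynomials $x$. Now I would put $y=\overline b+\sum_{k\ge0}2^{2k\a}\big(\wt\f_{k-1}*z_k+\wt\f_k*z_k+\wt\f_{k+1}*z_k\big)$, so that $\el=\el_y$ by \eqref{3-supports}, and verify $y\in F_{p'}^{-\a,c}(\T^d_\t)$: applying Theorem~\ref{Hormander} (i) (for $p'>1$; and the Hardy-space part when $p=1$, $p'=\8$, where $F_\8^{-\a,c}$ is exactly the space of Definition~\ref{F infity}) gives $\big\|\big(\sum_k 2^{-2k\a}|\wt\f_k*y|^2\big)^{1/2}\big\|_{p'}\les\big\|\big(\sum_k 2^{2k\a}|z_k|^2\big)^{1/2}\big\|_{p'}=\|\el\|$. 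Conversely, for $y\in F_{p'}^{-\a,c}(\T^d_\t)$ the functional $\el_y(x)=\tau(xy^*)$ (defined first on $\mathcal P_\t$) satisfies $|\el_y(x)|\le |\wh x(0)\,\wh y(0)|+\sum_k\|\wt\f_k*x\|_p\big\|\sum_{j=k-1}^{k+1}\wt\f_j*y\big\|_{p'}\les\|x\|_{F_p^{\a,c}}\|y\|_{F_{p'}^{-\a,c}}$ by the Cauchy--Schwarz inequality in $\el_2$ followed by Hölder's inequality in $L_p$, and extends to $F_p^{\a,c}(\T^d_\t)$ by the density of $\mathcal P_\t$ (Proposition~\ref{Triebel-P}(iii)). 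These two maps are inverse to each other, giving the isomorphism.

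The case $p=\8$ deserves separate attention because $F_\8^{\a,c}$ is defined differently (via representations rather than square functions); here one must check, exactly as in the proof of Proposition~\ref{Besov-P}(iv) for Besov spaces, that for $\el\in F_1^{\a,c}(\T^d_\t)^*$ the representing element lands in $\el_2^{-\a,r}(L_\8(\T^d_\t)^*)$, that the resulting $y$ is genuinely a distribution with $\wt\f_k*y$ a polynomial, hence $\|\wt\f_k*y\|_{L_\8(\T^d_\t)^*}=\|\wt\f_k*y\|_{L_1(\T^d_\t)}$, so that $y\in F_\8^{-\a,c}(\T^d_\t)$ in the sense of Definition~\ref{F infity}. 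I expect the main obstacle to be the careful handling of this $p=1$/$p'=\8$ endpoint: one has to invoke part (ii) of Theorem~\ref{Hormander} (which needs the multiplier sequence to come from a genuine Schwartz function $\rho$ with annulus support, so the cutoffs $\zeta_j$ must be arranged to fit that hypothesis), and one must make sure the infimum in Definition~\ref{F infity} is correctly matched by the Hahn--Banach norm. Everything else is a routine transcription of the classical $\el_2$-valued duality combined with the multiplier boundedness already established in Section~\ref{A multiplier theorem}.
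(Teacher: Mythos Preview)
Your strategy—embed $F_p^{\a,c}$ into $L_p(\T^d_\t;\el_2^{\a,c})$, dualize, and identify the dual with $F_{p'}^{-\a,c}$ via a Hahn--Banach representative—is exactly the paper's. But two steps need repair. The parenthetical ``column dualizes to row'' is inconsistent with the pairing you actually write down: under $\la(x_k),(y_k)\ra=\sum_k\tau(x_k y_k^*)$ (equivalently $\la x,y\ra=\tau(xy^*)$ on distributions) the dual of $L_p(\T^d_\t;\el_2^{\a,c})$ is $L_{p'}(\T^d_\t;\el_2^{-\a,c})$, \emph{column} again, and your bound $\|(\sum 2^{2k\a}|z_k|^2)^{1/2}\|_{p'}$ already presupposes this. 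A genuine row representative would not control the column square function of $y$. In the same vein, the easy direction requires the column H\"older inequality $\big|\sum_k\tau(a_k b_k^*)\big|\le\big\|(\sum|a_k|^2)^{1/2}\big\|_p\,\big\|(\sum|b_k|^2)^{1/2}\big\|_{p'}$; your intermediate step $\sum_k\|\wt\f_k*x\|_p\|\cdot\|_{p'}$ followed by scalar Cauchy--Schwarz produces $\|x\|_{B^\a_{p,2}}$, not $\|x\|_{F_p^{\a,c}}$.

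More substantially, the projection $P$ is not known to be bounded at $p=1$. Theorem~\ref{Hormander}(ii) applies to a single distribution $x$; handling an arbitrary input sequence $(y_k)$ requires Lemma~\ref{multiplier DH}(i), which is only stated for $1<p<\8$, and indeed the paper records complementation only for $1<p<\8$ (Remark~\ref{Triebel-BMO}(i)). The fix is that you do not need complementation at all: Hahn--Banach already exhibits the dual as the \emph{quotient} $G_{p'}=\big\{\sum_k\wt\f_k*y_k:(y_k)\in L_{p'}(\T^d_\t;\el_2^{-\a,c})\big\}$ with the infimum norm, and for $p=1$ one simply notes that $G_\8$ equals $F_\8^{-\a,c}$ \emph{by Definition~\ref{F infity}}—no multiplier estimate is needed at that endpoint. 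For $1<p<\8$, your identification of $G_{p'}$ with $F_{p'}^{-\a,c}$ via Lemma~\ref{multiplier DH}(i) is the paper's argument.
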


 \begin{proof}
  For simplicity, we will consider only  distributions with vanishing Fourier coefficients at $m=0$. We view $F_{p}^{\a, c} (\T^d_\t)$  as an isometric  subspace of $ L_p(\T^d_\t; \el_2^{\a, c})$ via $x\mapsto (\wt\f_k*x)_{k\ge0}$. Then the dual space of $F_{p}^{\a, c} (\T^d_\t)$ is identified with the following quotient of the latter:
  $$G_{p'}=\big\{y=\sum_{k\ge0} \wt\f_k*y_k:  (y_k)_{k\ge0} \in L_{p'}(\T^d_\t; \el_2^{-\a, c})\big\},$$
 equipped with the quotient norm
  $$\|y\|=\inf\big\{\big\| (y_k)\big\|_{L_{p'}(\T^d_\t; \el_2^{-\a, c})}\,:\, y=\sum_{k\ge0} \wt\f_k*y_k\big\}.$$
 The duality bracket is given by $\la x, y\ra=\tau(xy^*)$. If $p=1$, then  $G_{p'}=F_{\8}^{-\a, c} (\T^d_\t)$ by definition. It remains to show that $G_{p'}=F_{p'}^{-\a, c} (\T^d_\t)$ for $1<p<\8$. It is clear that $F_{p'}^{-\a, c} (\T^d_\t)\subset G_{p'}$, a contractive inclusion. Conversely, let $y\in G_{p'}$ and $y=\sum \wt\f_k*y_k$ for some $ (y_k)_{k\ge0} \in L_{p'}(\T^d_\t; \el_2^{-\a, c})$. Then
  $$\wt\f_k*y=\wt\f_k*\wt\f_{k-1}*y_{k-1}+ \wt\f_k*\wt\f_{k}*y_{k}+\wt\f_k*\wt\f_{k+1}*y_{k+1}\,.$$
 Therefore, by Lemma~\ref{multiplier DH},
  \be\begin{split}
  \big\|\big(\sum_{k\ge0} 2^{2k\a} |\wt\f_k*y|^2\big)^{\frac12}\big\|_{p'}
  &\le \sum_{j=-1}^1\big\|\big(\sum_{k\ge0} 2^{-2k\a} |\wt\f_k*\wt\f_{k+j}*y_{k+j}|^2\big)^{\frac12}\big\|_{p'}\\
  &\les \big\|\big(\sum_{k\ge0} 2^{-2k\a} |y_{k}|^2\big)^{\frac12}\big\|_{p'}\,.
  \end{split}\ee
Thus $y\in F_{p'}^{-\a, c} (\T^d_\t)$ and $\|y\|_{F_{p'}^{-\a, c}}\les \|y\|_{G_{p'}}$.
 \end{proof}

 \begin{rk}\label{Triebel-BMO}
 (i) The above proof shows that $F_{p}^{\a, c} (\T^d_\t)$ is a complemented subspace of $ L_p(\T^d_\t; \el_2^{\a, c})$ for $1<p<\8$.

 (ii) By duality,  Propositions~\ref{Triebel-I}, \ref{Triebel-P}  and Theorem~\ref{Triebel-isom} remain valid for $p=\8$, except the density of $\mathcal P_\t$. In particular, $F_{\8}^{0, c} (\T^d_\t)=\BMO^c(\T^d_\t)$.
  \end{rk}

We conclude this section with the following  Fourier multiplier theorem, which is an immediate consequence of Theorem~\ref{Hormander} for $p<\8$. The case $p=\8$ is obtained by duality. In the case of $\a=0$, this result is to be compared with Lemma~\ref{q-multiplier} where more smoothness of $\phi$ is assumed.

\begin{thm}\label{Triebel-Hormander}
 Let $\phi$ be a continuous function on $\real^d\setminus\{0\}$ such that
  $$\sup_{k\ge0}\big\|\phi(2^k\cdot)\,\f\big\|_{H_2^\s(\real^d)}<\8$$
 for some $\s>\frac{d}2$. Then $\phi$ is a bounded Fourier multiplier on $F_{p}^{\a, c} (\T^d_\t)$ for all $1\le p\le\8$ and $\a\in\real$. In particular, $\phi$ is a bounded Fourier multiplier on  $\H_{p}^{c} (\T^d_\t)$  for $1\le p<\8$ and on  $\BMO^{ c} (\T^d_\t)$.
 \end{thm}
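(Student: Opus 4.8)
The plan is to reduce the statement to Theorem~\ref{Hormander} for $p<\8$, and then deduce the endpoint $p=\8$ by a duality argument, using Proposition~\ref{Triebel-dual}. First I would fix $\phi$ as in the hypothesis and observe that, as in the proof of Lemma~\ref{CZD}, the Fourier multiplier on $\T^d_\t$ induced by $\phi$ does not depend on the values of $\phi$ inside the open unit ball of $\real^d$; moreover, if we set $\phi_j=\phi$ for every $j\ge0$ and $\rho_j=\wh{\f_j}=\f^{(j)}$, then the pair of sequences $((\phi_j)_{j\ge0},(\rho_j)_{j\ge0})$ satisfies the hypotheses \eqref{psi-rho} of Theorem~\ref{Hormander}. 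Indeed the support condition $\mathrm{supp}(\phi_j\rho_j)\subset\{\xi: 2^{j-1}\le|\xi|\le 2^{j+1}\}$ is immediate from \eqref{LP dec}, while the uniform bound $\sup_{j\ge0}\|\phi_j(2^j\cdot)\f\|_{H^\s_2}=\sup_{j\ge0}\|\phi(2^j\cdot)\f\|_{H^\s_2}<\8$ is exactly the assumption of the present theorem.

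Next, for $1<p<\8$, I would apply Theorem~\ref{Hormander}(i): for any distribution $x$ on $\T^d_\t$ with $\wh x(0)=0$,
$$\big\|\big(\sum_{j\ge0}2^{2j\a}|\wt\phi_j*\wt\f_j*x|^2\big)^{\frac12}\big\|_{p}\les \sup_{j\ge0}\big\|\phi(2^j\cdot)\f\big\|_{H^\s_2}\,\big\|\big(\sum_{j\ge0}2^{2j\a}|\wt\f_j*x|^2\big)^{\frac12}\big\|_{p}.$$
Since $\wt\phi_j*\wt\f_j*x=\wt\f_j*(\wt\phi*x)$, the left-hand side is $\|M_\phi x\|_{F^{\a,c}_p}$ (up to the $m=0$ term, which is handled separately since $\phi$ is defined on $\real^d\setminus\{0\}$ and we simply declare $M_\phi$ to fix $\wh x(0)$, or argue on the complemented subspace of distributions with vanishing mean) and the right-hand side is $\les\|x\|_{F^{\a,c}_p}$; hence $\phi$ is a bounded Fourier multiplier on $F^{\a,c}_p(\T^d_\t)$ for $1<p<\8$ and all $\a\in\real$. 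For $p=1$ the same conclusion follows from Theorem~\ref{Hormander}(ii), whose hypothesis requires $\rho_j=\wh{\rho(2^{-j}\cdot)}$ with $\mathrm{supp}(\rho)=\{\xi:2^{-1}\le|\xi|\le2\}$; this is satisfied with $\rho=\f$ (note $\mathrm{supp}\,\f\subset\{2^{-1}\le|\xi|\le2\}$ and $\f>0$ on the open annulus, so the support is exactly the closed annulus). The endpoint $p=\8$: by Proposition~\ref{Triebel-dual}, $F^{\a,c}_\8(\T^d_\t)$ is (isomorphically) the dual of $F^{-\a,c}_1(\T^d_\t)$; since $M_\phi$ is bounded on $F^{-\a,c}_1(\T^d_\t)$ by the case just treated, its adjoint — which is $M_{\bar\phi}$ or $M_\phi$ depending on the convention for the duality bracket $\la x,y\ra=\tau(xy^*)$ — is bounded on $F^{\a,c}_\8(\T^d_\t)$; replacing $\phi$ by $\bar\phi$ if needed (and noting $\bar\phi$ satisfies the same $H^\s_2$ bound) gives the claim for $p=\8$ as well.

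Finally, the two displayed special cases are immediate corollaries: by Proposition~\ref{Triebel-P}(iv) one has $F^{0,c}_p(\T^d_\t)=\H^c_p(\T^d_\t)$ with equivalent norms for $1\le p<\8$, so taking $\a=0$ gives boundedness of $M_\phi$ on $\H^c_p(\T^d_\t)$; and by Remark~\ref{Triebel-BMO}(ii), $F^{0,c}_\8(\T^d_\t)=\BMO^c(\T^d_\t)$, so the case $p=\8,\a=0$ gives boundedness of $M_\phi$ on $\BMO^c(\T^d_\t)$. I expect the only genuinely delicate point to be bookkeeping around the zero Fourier coefficient and the precise duality convention at $p=\8$ (making sure the complex conjugate of the symbol, rather than the symbol itself, is what appears after taking adjoints, and that it still satisfies the hypothesis); the analytic content is entirely contained in Theorem~\ref{Hormander}, so no new estimates are needed.
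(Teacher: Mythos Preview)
Your proposal is correct and follows exactly the approach sketched in the paper: the case $p<\8$ is an immediate application of Theorem~\ref{Hormander} with $\phi_j=\phi$ and $\rho_j=\f^{(j)}$, and the case $p=\8$ follows by duality via Proposition~\ref{Triebel-dual}. The special cases for $\H_p^c$ and $\BMO^c$ are then read off from Proposition~\ref{Triebel-P}(iv) and Remark~\ref{Triebel-BMO}(ii), precisely as you indicate.
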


%%%%%%%%%%%%%%%%%%%%%%%%%%%%%%%%%%%%%%%%%%%%%%%%%%%%%%%%%%%%%%%%%%%%%%%%
%%%%%%%%%%%%%%%%%%%%%%%%%%%%%%%%%%%%%%%%%%%%%%%%%%%%%%%%%%%%%%%%%%%%%%%%

\section{A general characterization}

%%%%%%%%%%%%%%%%%%%%%%%%%%%%%%%%%%%%%%%%%%%%%%%%%%%%%%%%%%%%%%%%%%%%%%%%
%%%%%%%%%%%%%%%%%%%%%%%%%%%%%%%%%%%%%%%%%%%%%%%%%%%%%%%%%%%%%%%%%%%%%%%%

In this  section we give a general characterization of Triebel-Lizorkin spaces on $\T^d_\t$ in the same spirit as that given in section~\ref{A general characterization-Besov} for Besov spaces.

 Let $\a_0, \a_1, \s\in\real$ with $\s>\frac{d}2$. Let $h$ be a Schwartz function satisfying \eqref{hH}. Assume that $\p$ is an infinitely differentiable function on $\real^d\setminus\{0\}$ such that
   \beq\label{psi-T}
  \left \{ \begin{split}
  &\displaystyle |\p|>0 \;\text{ on }\; \{\xi: 2^{-1}\leq |\xi|\leq 2\},\\
  &\displaystyle \int_{\real^d}(1+|s|^2)^\s\big|\F^{-1}(\p hI_{-\a_1})(s)\big| ds<\8, \\
  &\displaystyle \sup_{k\in\nat_0}2^{-k\a_0}\big\|\F^{-1}(\p(2^{k}\cdot) \f)\big\|_{H^\s_2(\real^d)}<\8.
  \end{split} \right.
  \eeq
Writing $\f=\f(\f^{(-1)}+\f+\f^{(1)})$ and using Lemma~\ref{CS Sobolev}, we have
 $$\big\|\F^{-1}(\p(2^{k}\cdot) \f)\big\|_{H^\s_2(\real^d)}\les \int_{\real^d}(1+|s|^2)^\s\big|\F^{-1}(\p(2^{k}\cdot) \f)(s)\big| ds.$$
So the third condition of \eqref{psi-T} is weaker than the corresponding one assumed in \cite[Theorem~2.4.1]{HT1992}. On the other hand, consistent with Theorem~\ref{g-charct-Besov} but contrary to \cite[Theorem~2.4.1]{HT1992}, our following theorem does not require that $\a_1>0$.

\begin{thm}\label{g-charct-Triebel}
 Let $1\le p<\8$ and $\a\in\real$. Assume that $\a_0<\a<\a_1$ and  $\p$ satisfies \eqref{psi-T}. Then for any distribution
$x$ on $\T^d_\t$, we have
 \beq\label{phi-psi-Triebel}
 \|x\|_{F_{p}^{\a, c}}\approx |\wh x(0)|+
 \big\|\big(\sum_{k\ge0}2^{2k\a}|\wt\p_k*x|^2\big)^{\frac12}\big\|_p\,.
 \eeq
  The equivalence is understood in the sense that whenever one side is finite, so is the other, and the two are then equivalent with constants independent of $x$.
 \end{thm}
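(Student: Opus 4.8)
The plan is to imitate the proof of Theorem~\ref{g-charct-Besov} (and its continuous analogue), replacing the $L_p(\el_q)$-norms by the column $L_p(\T^d_\t;\el_2^c)$-norms and replacing every invocation of Lemma~\ref{q-multiplier} (the $L_1$-norm criterion for Fourier multipliers) by the new vector-valued square-function multiplier result, Theorem~\ref{Hormander}. Fix $x$ with $\wh x(0)=0$ and denote the right-hand side of \eqref{phi-psi-Triebel} by $\|x\|_{F_p^{\a,c,\p}}$ when it is finite. As before, choose a large integer $K$ to be fixed at the end, split $\p^{(j)}=\sum_{k\le K}\p^{(j)}\f^{(j+k)}+\sum_{k>K}\p^{(j)}\f^{(j+k)}$, so that $\wt\p_j*x=\sum_{k\le K}\wt\p_j*\wt\f_{j+k}*x+\sum_{k>K}\wt\p_j*\wt\f_{j+k}*x$, and estimate the two column square functions separately.

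First I would handle the sum over $k\le K$. Exactly as in \eqref{h}, write $\p^{(j)}\f^{(j+k)}=2^{k\a_1}\eta^{(j)}\rho^{(j+k)}$ with $\eta(\xi)=\p(\xi)|\xi|^{-\a_1}h^{(K)}(\xi)$ and $\rho(\xi)=|\xi|^{\a_1}\f(\xi)$. The second condition of \eqref{psi-T} guarantees, via Lemma~\ref{CS Sobolev} and the decomposition \eqref{diff psi}, that $\sup_j\|\eta^{(j)}(2^j\cdot)\f\|_{H_2^\s}<\infty$ (the only subtlety, that the dilates $h^{(j+K)}$ contribute uniformly, is handled as in the Besov case since the extra piece $\p|\xi|^{-\a_1}(h^{(K)}-h)$ is compactly supported smooth, hence in $H_2^\s$ by Remark~\ref{Bessel multiplier}). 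Then Theorem~\ref{Hormander}(i) applied with $\phi_j=\eta^{(j)}$ and $\rho_j=\rho^{(j+k)}$ — after the standard shift of the summation index and using that $\f_k=\f_k(\f_{k-1}+\f_k+\f_{k+1})$ together with Lemma~\ref{IJ} (the Bessel/Riesz estimate $\|I^{\a_1}(\wt\f_j*x)\|$-type bound) — yields $\big\|(\sum_j[\sum_{k\le K}2^{j\a}\|\text{terms}\|]^2)^{1/2}\big\|$-type control; more precisely one gets, term by term in $k$, a factor $2^{k(\a_1-\a)}$ and the shifted column square function of $x$, and since $\a<\a_1$ the geometric series in $k\le K$ converges, giving a bound by $c_1'\|x\|_{F_p^{\a,c}}$. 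The sum over $k>K$ is treated the same way using the third condition of \eqref{psi-T} (via \eqref{H}), producing a factor $2^{k(\a_0-\a)}$; since $\a_0<\a$ the tail $\sum_{k>K}$ converges and is in fact $\le c_2'\,2^{K(\a_0-\a)}(1-2^{\a_0-\a})^{-1}\|x\|_{F_p^{\a,c}}$. For $p=1$ one must instead invoke Theorem~\ref{Hormander}(ii), which is why $\rho$ is taken to be an inverse Fourier transform of a smooth function supported in $\{2^{-1}\le|\xi|\le2\}$ — this is exactly the shape of the $\rho$'s above, so the hypothesis is met; for $1<p<\infty$ part (i) suffices. This proves $\|x\|_{F_p^{\a,c,\p}}\les\|x\|_{F_p^{\a,c}}$.

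For the reverse inequality I would follow step~3 of the proof of Theorem~\ref{g-charct-Besov}, first reducing to polynomials $x$ via the Fej\'er means $F_N$ (which are completely contractive, hence contractive on $L_p(\T^d_\t;\el_2^{\a,c})$, by Proposition~\ref{Triebel-P}(iii) and $\lim_N F_N(x)=x$ there). Write $\f^{(j)}=(\f^{(j)}/\p^{(j)})h^{(j+K)}\p^{(j)}$; the symbol $\f\p^{-1}$ is smooth compactly supported, so $\f^{(j)}/\p^{(j)}$ satisfies the hypothesis of Theorem~\ref{Hormander}, giving $\|x\|_{F_p^{\a,c}}\le c_3\big\|(\sum_j 2^{2j\a}|\wt h_{j+K}*\wt\p_j*x|^2)^{1/2}\big\|_p$. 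Then with $\l=1-h$ write $h^{(j+K)}\p^{(j)}=\p^{(j)}-\l^{(j+K)}\p^{(j)}$; the first part is $\|x\|_{F_p^{\a,c,\p}}$, and for the second use that $\l^{(j+K)}\f^{(j+k)}=0$ for $k\le K$, so only the $k>K$ tail survives, which by the $k>K$ estimate above is bounded by $c\,c_2'\,2^{K(\a_0-\a)}(1-2^{\a_0-\a})^{-1}\|x\|_{F_p^{\a,c}}$ with $c=\|\F^{-1}(\l)\|_1$. Choosing $K$ so that this constant is $\le\frac12$ and absorbing, we get $\|x\|_{F_p^{\a,c}}\le 2c_3\|x\|_{F_p^{\a,c,\p}}$. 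Finally the convergence of the tail series $\sum_{k>K}\wt\p_j*\wt\f_{j+k}*x$ in $\mathcal{S}'(\T^d_\t)$ (and of the corresponding series of column square functions) is checked exactly as in step~4, using $\a_0<\a$ to make the relevant geometric remainder $R_{K,L}\to0$.

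The main obstacle will be organizing the square-function bookkeeping so that Theorem~\ref{Hormander} can be applied cleanly: one must, for each fixed $k$, extract the scalar factor $2^{k\a_1}$ (resp.\ $2^{k\a_0}$) and then apply the multiplier theorem to the \emph{sequence} $j\mapsto\eta^{(j)}$ against the \emph{sequence} $j\mapsto\rho^{(j+k)}$, after which the triangle inequality in $L_p(\T^d_\t;\el_2^c)$ is used to sum over $k$ — here the subtlety, absent in the classical maximal-function proof, is that we never have a pointwise dominated-convergence tool, so everything must go through the $h_2^\s$-uniform bounds of Theorem~\ref{Hormander} and the elementary column-norm triangle inequality; verifying the uniform $H_2^\s$-bounds on the various dilated symbols $\eta^{(j)}(2^j\cdot)\f$, $(\f^{(j)}/\p^{(j)})(2^j\cdot)\f$, and the $\l$-twisted ones, repeatedly via Lemma~\ref{CS Sobolev}, is the place where most of the real work lies. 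The case $p=1$ needs slightly more care to ensure that all the $\rho$-type sequences to which Theorem~\ref{Hormander}(ii) is applied genuinely arise as periodizations of inverse Fourier transforms of Schwartz functions supported in the annulus $\{2^{-1}\le|\xi|\le2\}$, which they do by construction.
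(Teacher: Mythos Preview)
Your proposal is correct and follows essentially the same route as the paper's own proof: both replace each use of the scalar multiplier Lemma~\ref{q-multiplier} in the Besov argument by the vector-valued square-function multiplier Theorem~\ref{Hormander}, carry out the same $k\le K$/$k>K$ splitting via \eqref{h} and \eqref{H}, and close the reverse inequality with the decomposition \eqref{hbis} and the $\lambda$-absorption trick, after reducing to polynomials. The only cosmetic difference is that you invoke Lemma~\ref{CS Sobolev} to obtain the uniform bound on $\|\eta^{(-k)}\f\|_{H_2^\s}$ for $k\le K$, whereas the paper writes out that Cauchy--Schwarz computation explicitly; your shortcut is valid and in fact yields exactly the paper's estimate.
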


\begin{proof}
 Although it resembles, in form, the proof of Theorem~\ref{g-charct-Besov}, the proof given below is harder and subtler than  the  Besov space case. The key new ingredient is Theorem~\ref{Hormander}. The main differences will already appear in the first part of the proof, which is an adaptation  of step~1 of the  proof of Theorem~\ref{g-charct-Besov}.  In the following, we will fix $x$ with $\wh x(0)=0$. By approximation, we can assume that $x$ is a polynomial. We will denote the right-hand side of \eqref{phi-psi-Triebel} by $\|x\|_{F_{p, \p}^{\a,c}}$.

Given   a positive integer $K$, we write, as before
 $$\p^{(j)}=\sum_{k=0}^\8\p^{(j)}\f^{(k)}
 =\sum_{k=-\8}^K\p^{(j)}\f^{(j+k)}+ \sum_{k=K}^\8\p^{(j)}\f^{(j+k)}\,.$$
Then
 \beq\label{split-T}
 \|x\|_{F_{p,\p}^{\a,c}}
 \le \mathrm{I}+  \mathrm{II},
 \eeq
 where
  \be\begin{split}
   \mathrm{I}&=\sum_{k\le K}\big\|\big(\sum_j 2^{2j \a} |\wt\p_j *\wt\f_{j+k}*x|^2\big)^{\frac12}\big\|_p\,,\\
  \mathrm{II}&=\sum_{k>K}\big\|\big(\sum_j 2^{2j \a} |\wt\p_j *\wt\f_{j+k}*x|^2\big)^{\frac12}\big\|_p\,.
 \end{split}\ee
The estimate of the term I corresponds to step~1 of the proof of Theorem~\ref{g-charct-Besov}. We use again \eqref{h} with $\eta$ and $\rho$ defined there. Then applying Theorem~\ref{Hormander} twice, we have
 \be\begin{split}
 \mathrm{I}
 &=\sum_{k\le K}2^{k(\a_1-\a)}\big\|\big(\sum_j 2^{2(j+k) \a} |\wt\eta_{j} *\wt\rho_{j+k}*x|^2\big)^{\frac12}\big\|_p\\
 &=\sum_{k\le K}2^{k(\a_1-\a)}\big\|\big(\sum_j 2^{2j \a} |\wt\eta_{j-k} *\wt\rho_{j}*x|^2\big)^{\frac12}\big\|_p\\
 &\les \sum_{k\le K+2}2^{k(\a_1-\a)}\big\|\eta^{(-k)}\f\big\|_{H_2^\s}\, \big\|\big(\sum_j 2^{2j \a} |\wt\rho_{j}*x|^2\big)^{\frac12}\big\|_p\\
 &\les \big\|I_{\a_1}\f\big\|_{H_2^\s} \sum_{k\le K+2}2^{k(\a_1-\a)}\big\|\eta^{(-k)}\f\big\|_{H_2^\s}\, \big\|\big(\sum_j 2^{2j \a} |\wt\f_{j}*x|^2\big)^{\frac12}\big\|_p\\
 &= \big\|I_{\a_1}\f\big\|_{H_2^\s} \sum_{k\le K+2}2^{k(\a_1-\a)}\big\|\eta^{(-k)}\f\big\|_{H_2^\s}\, \big\|x\|_{F_{p}^{\a,c}} \,.
 \end{split}\ee
Being an infinitely differentiable function with compact support, $I_{\a_1}\f$ belongs to $H_2^\s(\real^d)$, that is, $\big\|I_{\a_1}\f\big\|_{H_2^\s}<\8$. Next, we must estimate $\big\|\eta^{(-k)}\f\big\|_{H_2^\s}$ uniformly in $k$. To that end,   for $s\in\real^d$, using
 \be\begin{split}
 \big|\F^{-1}(\eta^{(-k)}\f)(s)\big|^2
  &=  \Big|\int_{\real^d}\F^{-1}(\eta)(t)*\F^{-1}( \f)(s-2^kt)dt \Big|^2\\
  &\le\big\|\F^{-1}(\eta)\big\|_{1} \int_{\real^d}\big|\F^{-1}(\eta)(t)\big|\, \big|\F^{-1}( \f)(s-2^kt)\big|^2dt \,,
\end{split}\ee
for $k\le K+2$, we have
  \be\begin{split}
  \big\|\eta^{(-k)}\f\big\|_{H_2^\s}^2
  &=\int_{\real^d}(1+|s|^2)^\s  \big|\F^{-1}(\eta^{(-k)}\f)(s)\big|^2ds \\
  &\le \big\|\F^{-1}(\eta)\big\|_{1} \int_{\real^d}(1+|s|^2)^\s   \int_{\real^d}\big|\F^{-1}(\eta)(t)\big|\,\big|\F^{-1}( \f)(s-2^kt)\big|^2dt ds\\
  &\les \big\|\F^{-1}(\eta)\big\|_{1} \int_{\real^d}(1+|2^k t|^2)^\s  \big|\F^{-1}(\eta)(t)\big|
  \int_{\real^d}(1+|s-2^kt|^2)^\s\big|\F^{-1}( \f)(s-2^kt)\big|^2dsdt\\
  &\le 2^{K\s}\big\|\F^{-1}(\eta)\big\|_{1} \int_{\real^d}(1+|t|^2)^\s  \big|\F^{-1}(\eta)(t)\big| dt
   \int_{\real^d}(1+|s|^2)^\s\big|\F^{-1}( \f)(s)\big|^2ds\\
  &\le c_{\f, \s, K}  \Big(\int_{\real^d}(1+|t|^2)^\s \big|\F^{-1}(\eta)(t)\big| dt\Big)^2\,.
   \end{split}\ee
In order to return back from $\eta$ to $\p$,  write
   $$\eta=I_{-\a_1} \p h+ I_{-\a_1} \p (h^{(K)}-h).$$
 Note that
   \beq\label{diff psi-T}
    \int_{\real^d}(1+|t|^2)^\s \big|\F^{-1}(I_{-\a_1} \p (h^{(K)}-h))(t)\big| dt= c_{\p, h, \a_1, \s, K} <\8
    \eeq
 since $I_{-\a_1} \p (h^{(K)}-h)$ is an infinitely differentiable function with compact support. We then deduce
   $$\int_{\real^d}(1+|t|^2)^\s \big|\F^{-1}(\eta)(t)\big|dt
   \les\int_{\real^d}(1+|t|^2)^\s \big|\F^{-1}(I_{-\a_1} \p h)(t)\big| dt.$$
 The term on the right-hand side is the second condition of \eqref{psi-T}. Combining the preceding inequalities, we obtain
   $$\mathrm{I}\les\int_{\real^d}(1+|t|^2)^\s \big|\F^{-1}(I_{-\a_1} \p h)(t)\big| dt\,\|x\|_{F_{p}^{\a,c}}\,.$$

The second term II on the right-hand side of \eqref{split-T} is easier to estimate. Using \eqref{H}, Theorem~\ref{Hormander}  and arguing as in the preceding part for the term I,   we obtain
  \be\begin{split}
  \mathrm{II}
  \les  \big\|I_{\a_0}\f\big\|_{H_2^\s}\sum_{k>K-2}2^{-2k \a} \big\|I_{-\a_0}\p(2^k\cdot)H\f\big\|_{H_2^\s} \,\|x\|_{F_{p}^{\a,c}}\\
  \les \sum_{k>K-2}2^{-2k \a} \big\|I_{-\a_0}\p(2^k\cdot)H\f\big\|_{H_2^\s} \,\|x\|_{F_{p}^{\a,c}}\,,
   \end{split}\ee
 where $H=\f(2^{-1}\cdot)+\f+ \f(2\,\cdot)$. To treat the last Sobolev norm, noting that $I_{-\a_0}H$ is an infinitely differentiable function with compact support, by Lemma~\ref{CS Sobolev}, we have
  $$  \big\|I_{-\a_0}\p(2^k\cdot)H\f\big\|_{H_2^\s}\le \big\|\p(2^k\cdot) \f\big\|_{H_2^\s}  \int_{\real^d}(1+|t|^2)^\s\big|\F^{-1}(I_{-\a_0}H)(t)\big| dt
  \les \big\|\p(2^k\cdot) \f\big\|_{H_2^\s} \,.$$
 Therefore,
  \beq\label{II}
  \begin{split}
  \mathrm{II}
  &\les  \sup_{k> K-2} 2^{-k\a_0}\big\|\p(2^k\cdot) \f\big\|_{H_2^\s}\sum_{k>K-2}2^{2k (\a_0-\a)} \,\|x\|_{F_{p}^{\a,c}}\\
  &\le c\,\sup_{k>K-2} 2^{-k\a_0}\big\|\p(2^k\cdot) \f\big\|_{H_2^\s}\,\frac{2^{(\a_0-\a)K}}{1- 2^{\a_0-\a}}\,\|x\|_{F_{p}^{\a,c}}
  \end{split}\eeq
 with some constant $c$ independent of $K$. Putting  this estimate together with that of I, we finally get
 $$\|x\|_{F_{p, \p}^{\a,c}}\les \,\|x\|_{F_{p}^{\a,c}}\,.$$

Now we show the reverse inequality by following step~3 of the proof of Theorem~\ref{g-charct-Besov} (recalling that $\l=1-h$). By \eqref{hbis} and Theorem~\ref{Hormander},
  \be\begin{split}
  \|x\|_{F_{p}^{\a,c}}
  &\les \big\|\p^{-1} \f^2\big\|_{H_2^\s} \big\|\big(\sum_{j=0}^\82^{2j\a} |\wt h_{j+K}*\wt\p_j*x|^2\big)^{\frac12}\big\|_p\\
  &\les  \big\|\big(\sum_{j=0}^\82^{2j\a} |\wt h_{j+K}*\wt\p_j*x|^2\big)^{\frac12}\big\|_p\\
  &\le \|x\|_{F_{p,\p}^{\a,c}} +\big\|\big(\sum_{j=0}^\82^{2j\a} |\wt \l_{j+K}*\wt\p_j*x|^2\big)^{\frac12}\big\|_p \,.
  \end{split}\ee
 Then combining the arguments in step~3 of the proof of Theorem~\ref{g-charct-Besov} and \eqref{II} with $\l^{(K)}\psi$ in place of $\p$, we deduce
  $$\big\|\big(\sum_{j=0}^\82^{2j\a} |\wt \l_{j+K}*\wt\p_j*x|^2\big)^{\frac12}\big\|_p
  \le c\,\sup_{k>K-2} 2^{-k\a_0}\big\|\l(2^{k-K}\cdot) \p(2^k\cdot) \f\big\|_{H_2^\s}\,\frac{2^{(\a_0-\a)K}}{1- 2^{\a_0-\a}}\,\|x\|_{F_{p}^{\a,c}}\,.$$
To remove $\l(2^{k-K}\cdot)$ from the above Sobolev norm, by triangular inequality, we have
   $$\big\|\l(2^{k-K}\cdot) \p(2^k\cdot) \f\big\|_{H_2^\s}
   \le \big\| \p(2^k\cdot) \f\big\|_{H_2^\s} +\big\|h(2^{k-K}\cdot) \p(2^k\cdot) \f\big\|_{H_2^\s}\,.$$
By the support assumption on $h$ and $\f$, $h(2^{k-K}\cdot)\f\neq0$ only for $k\le K+2$, so the second term on the right hand side above matters only for $k=K+1$ and $k=K+2$. But for these two values of $k$, by Lemma~\ref{CS Sobolev}, we have
 $$\big\|h(2^{k-K}\cdot) \p(2^k\cdot) \f\big\|_{H_2^\s}\le c'\big\|\p(2^k\cdot) \f\big\|_{H_2^\s}\,,$$
where $c'$ depends only on $h$. Thus
 $$\big\|\l(2^{k-K}\cdot) \p(2^k\cdot) \f\big\|_{H_2^\s}
   \le (1+c')\big\| \p(2^k\cdot) \f\big\|_{H_2^\s} \,.$$
Putting together all estimates so far obtained, we deduce
 $$ \|x\|_{F_{p}^{\a,c}}
 \le \|x\|_{F_{p,\p}^{\a,c}} +c\,(1+c')\,\sup_{k>K-2} 2^{-k\a_0}\big\|\p(2^k\cdot) \f\big\|_{H_2^\s}\,\frac{2^{(\a_0-\a)K}}{1- 2^{\a_0-\a}}\,\|x\|_{F_{p}^{\a,c}}\,.$$
So if $K$ is chosen sufficiently large, we finally obtain
 $$ \|x\|_{F_{p}^{\a,c}} \les \|x\|_{F_{p,\p}^{\a,c}} \,,$$
which finishes the proof of the theorem.
 \end{proof}

\begin{rk}
  Note that we have used the  infinite differentiability of $\p$ only  to insure  \eqref{diff psi-T}, which  holds whenever $\p$ is continuously differentiable up to order $[\frac{3d}2]+1$. More generally, we need only to assume that there exists $\s>\frac{3d}2 +1$ such that $\p \eta\in H_2^\s(\real^d)$ for any compactly supported infinite differentiable function $\eta$ which vanishes in a neighborhood of the origin.
 \end{rk}

Like in the case of Besov spaces, Theorem~\ref{g-charct-Triebel} admits the following continuous version.

 \begin{thm}\label{g-charct-Triebel-cont}
 Under the assumption of the previous theorem,  for any distribution
$x$ on $\T^d_\t$,
 $$
 \|x\|_{F_{p}^{\a, c}}\approx |\wh x(0)|+
 \Big\|\Big(\int_0^1\e^{-2\a}|\wt\p_\e*x|^2\,\frac{d\e}\e\Big)^{\frac12}\Big\|_p\,.
 $$
 \end{thm}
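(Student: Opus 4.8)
The strategy is to mimic the passage from Theorem~\ref{g-charct-Besov} to its continuous version Theorem~\ref{g-charct-Besov-cont}, transposing each step from the $\el_q$-setting to the $L_p(\T^d_\t;\el_2^c)$-setting, where now the heavy lifting is done by the multiplier Theorem~\ref{Hormander} rather than by Lemma~\ref{q-multiplier} and the triangle inequality in $\el_q$. First I would discretize the integral: since $\p(\e\cdot)$ for $2^{-j-1}<\e\le 2^{-j}$ behaves, up to the dilation factor $\d=2^{-j}\e^{-1}\in[1,2]$, like $\p^{(j)}$, one has
$$
\Big\|\Big(\int_0^1\e^{-2\a}|\wt\p_\e*x|^2\,\frac{d\e}\e\Big)^{\frac12}\Big\|_p
\approx\Big\|\Big(\sum_{j\ge0}2^{2j\a}\int_{2^{-j-1}}^{2^{-j}}|\wt\p_\e*x|^2\,\frac{d\e}\e\Big)^{\frac12}\Big\|_p .
$$
The point is that all the Fourier-multiplier estimates obtained in the proof of Theorem~\ref{g-charct-Triebel} were uniform in the relevant dilation parameter, so they continue to hold with $\p^{(j)}$ replaced by $\p(\e\cdot)$ for $\e\in(2^{-j-1},2^{-j}]$.

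Concretely, for the inequality $\lesssim\|x\|_{F_p^{\a,c}}$ I would redo steps~1 and 2 of the proof of Theorem~\ref{g-charct-Triebel} with $\p^{(j)}$ replaced throughout by $\p^{(\e)}=\p(\e\cdot)$. For the term analogous to I, one writes $\p^{(\e)}(\xi)\f^{(j+k)}(\xi)=2^{\a_1k}\big[\tfrac{\p(2^{-j}\cdot2^j\e\xi)}{|2^{-j}\xi|^{\a_1}}h^{(j+K)}(\xi)\big]\rho^{(j+k)}(\xi)$ with $\rho=I_{\a_1}\f$, estimates the $H_2^\s$-norm of the bracketed symbol uniformly in $\e$ (exactly as in the continuous Besov case the relevant $L_1$ or $H_2^\s$ norm is $\sup_{1\le\d\le2}\|\F^{-1}(I_{-\a_1}\p h(\d 2^{-K}\cdot))\|$, finite by compact support plus continuity in $\d$), and applies Theorem~\ref{Hormander} twice. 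This turns $\mathrm{I}$ into $\sum_{k\le K}2^{k(\a_1-\a)}(\cdots)\|x\|_{F_p^{\a,c}}$ with a convergent sum since $\a<\a_1$. The term analogous to II is handled the same way via \eqref{H}, producing a geometric factor $2^{K(\a_0-\a)}$, convergent since $\a_0<\a$. Summing the Minkowski-type integral over the dyadic blocks and then using Theorem~\ref{Hormander} / Lemma~\ref{IJ} exactly as in the discrete case gives one direction.

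For the reverse inequality $\|x\|_{F_p^{\a,c}}\lesssim$ RHS, I would follow step~3 of the proof of Theorem~\ref{g-charct-Triebel}: by approximation assume $x$ a polynomial with $\wh x(0)=0$, choose $a\in(2,2\sqrt2]$ with $\p>0$ on $\{a^{-1}\le|\xi|\le a\}$, set $R_j=(a^{-1}2^{-j-1},a2^{-j+1}]$, and use the identity $\f^{(j)}=\tfrac{\f^{(j)}}{\p^{(\e)}}h^{(j+K)}\p^{(\e)}$ valid for $\e\in R_j$, with the $H_2^\s$-norm of $\F^{-1}(\f^{(j)}/\p^{(\e)})$ bounded uniformly in $j$ and $\e\in R_j$ (again a supremum over a bounded dilation range of a compactly supported smooth symbol). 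Writing $h^{(j+K)}\p^{(\e)}=\p^{(\e)}-\l^{(j+K)}\p^{(\e)}$ with $\l=1-h$, the main term gives the RHS, and the error term, because $\l^{(j+K)}\f^{(j+k)}=0$ for $k\le K$, only involves the "II"-type sum, which carries the factor $2^{K(\a_0-\a)}$; choosing $K$ large absorbs it. I expect the main technical obstacle to be bookkeeping the dilation-uniformity of all the $H_2^\s$-norms through the $R_j$'s (the nonvanishing condition on $\p$ only holds on $\{2^{-1}\le|\xi|\le2\}$, so one must work with the slightly enlarged annuli $R_j$ and verify $\p^{(\e)}$ stays nonzero on the supports occurring), together with the passage between the continuous integral and the dyadic sum with constants independent of $x$; this is the same minor subtlety resolved in the proof of Theorem~\ref{g-charct-Besov-cont}, and no genuinely new idea beyond Theorem~\ref{Hormander} is needed.
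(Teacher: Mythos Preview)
Your proposal is correct and follows essentially the same route as the paper: discretize the integral dyadically, then rerun the proof of Theorem~\ref{g-charct-Triebel} with $\p^{(j)}$ replaced by $\p^{(\e)}$ for $\e\in(2^{-j-1},2^{-j}]$, tracking dilation-uniformity of the relevant $H_2^\s$-norms exactly as in the passage from Theorem~\ref{g-charct-Besov} to Theorem~\ref{g-charct-Besov-cont}, with Theorem~\ref{Hormander} replacing the $L_1$-norm estimates. The paper adds one small extra reduction you do not mention --- after the dyadic split it further approximates each internal integral $\int_{2^{-j-1}}^{2^{-j}}$ by a discrete sum (legitimate when $x$ is a polynomial) so as to land literally in the discrete framework --- but this is a bookkeeping convenience rather than a different idea.
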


\begin{proof}
 This proof is very similar to that of  Theorem~\ref{g-charct-Triebel}. The main idea is, of course, to discretize the continuous square  function:
 $$\int_0^1\e^{-2\a}|\wt\p_\e*x|^2\,\frac{d\e}\e\approx
  \sum_{j=0}^\8 2^{2 j\a}\int_{2^{-j-1}}^{2^{-j}}|\wt\p_\e*x|^2 \,\frac{d\e}\e\,.$$
 We can further discretize the internal integrals on the right-hand side. Indeed, by approximation and assuming that $x$ is a polynomial, each internal integral can be approximated uniformly by discrete sums.
Then we follow  the proof of Theorem~\ref{g-charct-Besov-cont} with necessary modifications as in the preceding proof. The only difference is that when Theorem~\ref{Hormander} is applied,
 the $L_1$-norm of the inverse Fourier transforms of the various functions in consideration there must be replaced by the two norms of these functions appearing in \eqref{psi-T}. We omit the details.
\end{proof}

%%%%%%%%%%%%%%%%%%%%%%%%%%%%%%%%%%%%%%%%%%%%%%%%%%%%%%%%%%%%%%%%%%%%%%%%
%%%%%%%%%%%%%%%%%%%%%%%%%%%%%%%%%%%%%%%%%%%%%%%%%%%%%%%%%%%%%%%%%%%%%%%%

\section{Concrete characterizations}

%%%%%%%%%%%%%%%%%%%%%%%%%%%%%%%%%%%%%%%%%%%%%%%%%%%%%%%%%%%%%%%%%%%%%%%%
%%%%%%%%%%%%%%%%%%%%%%%%%%%%%%%%%%%%%%%%%%%%%%%%%%%%%%%%%%%%%%%%%%%%%%%%

This section concretizes the general characterization in the previous one in terms of  the  Poisson and heat kernels. We keep the notation introduced in section~\ref{The characterizations by Poisson and heat semigroups}.

The following result improves  \cite[Section~2.6.4]{HT1992} at two aspects even in the classical case: Firstly, in addition to derivation operators, it can also use integration operators (corresponding to negative $k$); secondly, \cite[Section~2.6.4]{HT1992} requires $k>d+\max(\a, 0)$ for the Poisson characterization while we only need $k>\a$.

\begin{thm}\label{PH charct-Triebel}
 Let $1\le p<\8$ and $\a\in\real$.
 \begin{enumerate}[\rm (i)]
 \item Let $k\in\ent$ such that $k>\a$. Then for any distribution $x$ on $\T^d_\t$,
 $$
 \|x\|_{F_{p}^{\a, c}}\approx
 |\wh x(0)|+\Big\|\Big(\int_0^1\e^{2(k-\a)}\big|\mathcal{J}^k_\e\,\wt{\mathrm{P}}_\e(x)\big|^2\,\frac{d\e}\e\Big)^{\frac12}\Big\|_p\,.
 $$
 \item  Let $k\in\ent$ such that $k>\frac{\a}2$. Then for any distribution $x$ on $\T^d_\t$,
 $$
 \|x\|_{F_{p}^{\a, c}}\approx
 |\wh x(0)|+\Big\|\Big(\int_0^1\e^{2(k-\frac\a2)}\big|\mathcal{J}^k_\e\,\wt{\mathrm{W}}_\e(x)\big|^2\,\frac{d\e}\e\Big)^{\frac12}\Big\|_p\,.
 $$
 \end{enumerate}
 \end{thm}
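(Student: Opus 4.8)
The plan is to derive both statements from Theorem~\ref{g-charct-Triebel-cont} by choosing appropriate test functions $\p$ and verifying the hypotheses \eqref{psi-T}, exactly as was done in the Besov case (Theorems~\ref{Poisson charct-Besov} and \ref{Heat charct-Besov}). For part (i), I would set $\p(\xi)=(-{\rm sgn}(k)2\pi|\xi|)^k e^{-2\pi|\xi|}$, so that $\p(\e\xi)=\e^k\,\mathcal{J}^k_\e\,\wh{\mathrm{P}}_\e(\xi)$ and hence $\wt\p_\e*x=\e^k\,\mathcal{J}^k_\e\,\wt{\mathrm{P}}_\e(x)$; then the integral on the right-hand side of Theorem~\ref{g-charct-Triebel-cont} becomes exactly $\big\|\big(\int_0^1\e^{2(k-\a)}|\mathcal{J}^k_\e\,\wt{\mathrm{P}}_\e(x)|^2\,\frac{d\e}\e\big)^{\frac12}\big\|_p$. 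For part (ii), I would take $\p(\xi)=(-{\rm sgn}(k)4\pi^2|\xi|^2)^k e^{-4\pi^2|\xi|^2}$, use the identity $\p(\sqrt\e\,\xi)=\e^k\,\mathcal{J}^k_\e\,\wh{\mathrm{W}}_\e(\xi)$, and perform the change of variables $\e\mapsto\sqrt\e$ in the integral, just as in the proof of Theorem~\ref{Heat charct-Besov}; the Jacobian only introduces a harmless constant factor $2^{-\frac12}$ in front of the square function, which is absorbed into the equivalence.

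The work then reduces to checking \eqref{psi-T} for these two choices of $\p$ with suitable $\a_0<\a<\a_1$. In both cases the Tauberian condition $|\p|>0$ on $\{2^{-1}\le|\xi|\le2\}$ is immediate. For the third condition of \eqref{psi-T}, since $\p$ decays (super)exponentially together with all its derivatives, $\p(2^k\cdot)\f$ is a Schwartz function supported in $\{2^{-1}\le|\xi|\le2\}$ whose $H_2^\s$-norm is bounded uniformly in $k\ge0$ (in fact it tends to $0$), so the supremum is finite for \emph{any} $\a_0$; in particular one may take $\a_0<\a$ freely. For the second condition, in the Poisson case I choose $\a_1=k>\a$, so that $I_{-\a_1}h\,\p=(-{\rm sgn}(k)2\pi)^k\,h\,\wh{\mathrm{P}}$; since $h\,\wh{\mathrm{P}}$ is a product of a compactly supported Schwartz function and the Poisson kernel symbol, the criterion $\F^{-1}(h\,\wh{\mathrm{P}})\in H_2^\s$ with the weight $(1+|s|^2)^\s$ follows from Remark~\ref{Bessel multiplier}-type reasoning together with the rapid decay of $\F^{-1}(\mathrm{P})$ (or directly: $h\wh{\mathrm{P}}$ is itself a Schwartz function). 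Similarly, in the heat case I take $\a_1=2k>\a$ and observe that $I_{-\a_1}h\,\p$ equals a constant times $h\,\wh{\mathrm{W}}$, again a Schwartz function, whose weighted inverse Fourier transform is integrable. Thus \eqref{psi-T} holds, and Theorem~\ref{g-charct-Triebel-cont} applies.

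The main obstacle — and the reason this is not entirely automatic — is the verification of the second line of \eqref{psi-T}, namely that $\int_{\real^d}(1+|s|^2)^\s|\F^{-1}(\p h I_{-\a_1})(s)|\,ds<\8$. One has to be slightly careful that the choice $\a_1=k$ (resp. $2k$) kills the singular factor $|\xi|^k$ (resp. $|\xi|^{2k}$) in $\p$ at the origin, leaving a genuinely smooth, compactly supported (after multiplication by $h$), rapidly decaying function whose inverse Fourier transform decays faster than any polynomial; this is where the hypothesis $k>\a$ (resp. $k>\frac\a2$) enters, guaranteeing $\a_1>\a$ so that $\a_0<\a<\a_1$ can indeed be arranged. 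Once this is settled, the rest is a direct translation: apply Theorem~\ref{g-charct-Triebel-cont} with the chosen $\p$, rewrite $\wt\p_\e*x$ in terms of $\mathcal{J}^k_\e\,\wt{\mathrm{P}}_\e(x)$ (resp. $\mathcal{J}^k_\e\,\wt{\mathrm{W}}_\e(x)$), and perform the change of variables in the heat case. I would also remark, as after Theorem~\ref{circular-charct-Besov}, that one could further pass to the circular Poisson and heat semigroups $\mathbb{P}_r$, $\mathbb{W}_r$ of $\T^d$ via $r=e^{-2\pi\e}$, obtaining integrals in $(1-r)$ over $(0,1)$; this would parallel Theorem~\ref{circular-charct-Besov} but is not needed for the statement as given.
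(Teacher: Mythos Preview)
Your argument for part~(ii) is correct and matches the paper's: with $\a_1=2k$ the function $I_{-2k}\p h=({\pm}4\pi^2)^k h\,\wh{\mathrm W}$ is genuinely Schwartz, so the second condition of \eqref{psi-T} holds and Theorem~\ref{g-charct-Triebel-cont} applies directly.

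Part~(i) has a real gap. You assert that ``$h\wh{\mathrm P}$ is itself a Schwartz function''. It is not: the Poisson symbol $\wh{\mathrm P}(\xi)=e^{-2\pi|\xi|}$ fails to be differentiable at the origin, so $h\wh{\mathrm P}$ is compactly supported and continuous but not smooth. Its inverse Fourier transform therefore inherits only the decay of the Poisson kernel, namely $|\F^{-1}(h\wh{\mathrm P})(s)|\approx(1+|s|^2)^{-(d+1)/2}$ at infinity, and the weighted integrability $\int(1+|s|^2)^\s|\F^{-1}(h\wh{\mathrm P})(s)|\,ds<\8$ required by the second line of \eqref{psi-T} \emph{fails} for every $\s>\frac d2$. (Contrast this with the Besov condition \eqref{psi}, which only asks for unweighted $L_1$-integrability; that is why the argument of Theorem~\ref{Poisson charct-Besov} went through with $\a_1=k$.) Thus you cannot take $\a_1=k$ here, and with $\a_1<k$ the factor $I_{k-\a_1}$ reintroduces a power of $|\xi|$ that must be balanced against the limited regularity of $\wh{\mathrm P}$.

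The paper's proof proceeds in two genuinely separate steps. First it verifies \eqref{psi-T} only under the \emph{stronger} hypothesis $k>d+\a$: one then has room to pick $\a_1\in(\a,\,k-d)$ and show, via a variant of Lemma~\ref{weighted Bessel} and an interpolation argument, that $I_{k-\a_1}h\wh{\mathrm P}\in H_2^{\s_1}(\real^d)$ for suitable $\s_1>\frac{3d}2$, from which the weighted $L_1$ bound follows by Cauchy--Schwarz. Second, the remaining range $\a<k\le d+\a$ is handled by an independent lemma showing that the Poisson square-function norm on the right is the same (up to constants) for all integers $k>\a$. That lemma is not a formality: one direction uses the semigroup identity $\wt{\mathrm P}_{2\e}=\wt{\mathrm P}_\e*\wt{\mathrm P}_\e$ together with a Hilbert-space-valued Calder\'on--Zygmund multiplier argument (and, for $p=1$, the additional characterization \eqref{H1via Poisson} of $\H_1^c$). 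This second step is precisely the ``missing idea'' that lets one relax $k>d+\a$ to $k>\a$, and it is absent from your proposal.
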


The preceding theorem can be formulated directly  in terms of the circular  Poisson and heat semigroups  of $\T^d_\t$.
The proof of the following result is similar to that of Theorem~\ref{circular-charct-Besov}, and  is left to the reader.

 \begin{thm}\label{circular-charct-Triebel}
 Let $1\le p<\8$, $\a\in\real$ and $k\in\ent$.
 \begin{enumerate}[\rm(i)]
 \item If $k>\a$, then for any distribution $x$ on $\T^d_\t$,
  $$
 \|x\|_{F_{p}^{\a, c}}\approx
 \max_{|m|<k}|\wh x(m)|+ \Big\|\Big(\int_0^1(1-r)^{2(k-\a)}\big|\mathcal{J}^k_r\,{\mathbb{P}}_r(x_k)\big|^2\,\frac{dr}{1-r}\Big)^{\frac12}\Big\|_p\,,
 $$
where $\displaystyle x_k=x-\sum_{|m|<k}\wh x(m)U^m$.
  \item If $k>\frac{\a}2$, then for any any distribution $x$ on $\T^d_\t$,
 $$
 \|x\|_{F_{p}^{\a, c}}\approx
 \max_{|m|^2<k}|\wh x(m)|+\Big\|\Big(\int_0^1(1-r)^{2(k-\frac\a2)}\big|\mathcal{J}^k_r\,{\mathbb{W}}_r(x)\big|^2\,\frac{dr}{1-r}\Big)^{\frac12}\Big\|_p\,.
 $$
 \end{enumerate}
 \end{thm}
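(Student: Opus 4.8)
The plan is to deduce Theorem~\ref{circular-charct-Triebel} from Theorem~\ref{PH charct-Triebel} by transferring the continuous square function in the variable $\e$ over $(0,\8)$ to the circular semigroup parametrized by $r\in(0,1)$, exactly as was done for Besov spaces in the proof of Theorem~\ref{circular-charct-Besov}. I will treat only the Poisson case (i); the heat case (ii) is identical up to the change of variable $r=e^{-4\pi^2\e}$ and the absence of the low-frequency integrability issue.

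First I would reduce the integral over $(0,1)$ to an integral over $(0,\e_0)$ for a conveniently small $\e_0$. Writing
$$\mathcal{J}^k_\e\,\wt{\mathrm{P}}_\e(x)=\sum_{m\in\ent^d\setminus\{0\}}(-{\rm sgn}(k)2\pi|m|)^ke^{-2\pi\e|m|}\wh x(m)U^m,$$
I would show that the tail $\int_{\e_0}^1\e^{2(k-\a)}|\mathcal{J}^k_\e\wt{\mathrm{P}}_\e(x)|^2\frac{d\e}\e$ contributes, after taking $\|\cdot\|_p$, only a quantity controlled by $\sup_{m\neq0}|m|^\a|\wh x(m)|$, which in turn is dominated by $\|x\|_{F_p^{\a,c}}$ (via Proposition~\ref{Triebel-P}(ii) or directly by testing one frequency against the square function); the argument is the operator-valued analogue of the two displayed estimates in the proof of Theorem~\ref{circular-charct-Besov}, using the triangle inequality in $L_p(\T^d_\t;\el_2^c)$ for the upper bound and positivity/one-frequency testing for the lower bound. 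The same reasoning applied to the circular side shows that $\int_0^1(1-r)^{2(k-\a)}|\mathcal{J}^k_r\mathbb{P}_r(x_k)|^2\frac{dr}{1-r}$ may be replaced by $\int_{r_0}^1(\cdots)$ for $r_0$ close to $1$, the removed piece being absorbed into $\max_{|m|<k}|\wh x(m)|$ — this is precisely where the subtraction of the low-order terms $\sum_{|m|<k}\wh x(m)U^m$ is needed, since for $|m|<k$ and $k\ge2$ the coefficient $C_{m,k}$ need not vanish and $r^{2(|m|-k)}$ fails to be integrable near $0$; here one uses that $x_k$ is a distribution on $\T^d_\t$ in the obvious way.

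Then, using the substitution $r=e^{-2\pi\e}$, so that $1-r\approx\e$ uniformly for $\e\in(0,\e_0)$ with $\e_0$ small, and $\frac{dr}{1-r}\approx\frac{d\e}\e$, I would identify
$$\Big\|\Big(\int_0^{\e_0}\e^{2(k-\a)}\big|\mathcal{J}^k_\e\,\wt{\mathrm{P}}_\e(x)\big|^2\,\frac{d\e}\e\Big)^{\frac12}\Big\|_p
\approx \max_{0<|m|<k}|\wh x(m)|+\Big\|\Big(\int_{r_0}^1(1-r)^{2(k-\a)}\big|\mathcal{J}^k_r\,{\mathbb{P}}_r(x_k)\big|^2\,\frac{dr}{1-r}\Big)^{\frac12}\Big\|_p,$$
where the finitely many frequencies with $0<|m|<k$ that get split off between the two parametrizations (because $\mathcal{J}^k_\e$ and $\mathcal{J}^k_r$ act by slightly different scalar multipliers on those modes, and because $x$ versus $x_k$) are handled by the same one-frequency comparison as above. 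Combining this with Theorem~\ref{PH charct-Triebel}(i) and adding back the $|\wh x(0)|$ term yields the claimed equivalence.

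The main obstacle is the careful bookkeeping of the low-frequency modes $|m|<k$: one must verify that, on the $L_p(\T^d_\t;\el_2^c)$ level, the contribution of these finitely many modes to each of the two square functions is comparable to $\max_{|m|<k}|\wh x(m)|$, both from above (triangle inequality in the column space, noting each mode contributes a fixed finite integral in $\e$ or $r$ up to the truncation) and from below (the square function dominates any single dyadic block, hence any single frequency). This is routine but must be done with the noncommutative column $L_p$-norm rather than a pointwise absolute value, so one invokes the triangle inequality in $L_p(\T^d_\t;\el_2^c)$ and, for $p<2$, the quasi-triangle behavior is not an issue since $\el_2^c$-valued $L_p$ is a genuine norm for all $1\le p<\8$. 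Everything else is a direct transcription of the Besov argument, with Theorem~\ref{PH charct-Triebel} supplying the $(0,\8)$-version; hence the proof is left to the reader as stated, but these are the points one checks.
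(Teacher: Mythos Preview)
Your proposal is correct and follows exactly the route the paper intends: the paper leaves the proof to the reader, noting it is similar to that of Theorem~\ref{circular-charct-Besov}, and your outline is precisely that transcription---start from Theorem~\ref{PH charct-Triebel}, truncate the integrals to $(0,\e_0)$ and $(r_0,1)$ via one-frequency comparisons, pass between the two parametrizations by $r=e^{-2\pi\e}$, and do the low-frequency bookkeeping for the modes $|m|<k$ that forces the subtraction $x_k$. Your observation that the only new care needed is carrying the estimates in $L_p(\T^d_\t;\el_2^c)$ rather than in scalar $L_p$, using operator monotonicity of the square root for the lower bounds and the triangle inequality in the column space for the upper bounds, is exactly right.
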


\medskip\begin{proof}[The proof of Theorem \ref{PH charct-Triebel}] Similar to the Besov case, the proof of (ii) is done by choosing $\a_1=2k>\a$. But (i) is much subtler. We will  first prove (i) under the stronger assumption that $k>d+\a$, the remaining case being postponed. The proof in this case is similar to and a little bit harder than the proof of Theorem~\ref{Poisson charct-Besov}. Let again $\p(\xi)=(-{\rm sgn}(k)2\pi|\xi|)^ke^{-2\pi|\xi|}$. As in that proof, it remains to show that $\p$ satisfies the second condition of \eqref{psi-T} for some $\a_1>\a$ and $\s>\frac{d}2$. Since $k>d+\a$, we can choose $\a_1$ such that $\a<\a_1<k-d$. We claim that  $I_{k-\a_1}h\,\wh{\mathrm{P}}\in H_2^{\s_1}(\real^d)$ for every  $\s_1\in (\frac{d}2,\,k-\a_1+\frac{d}2)$. Indeed, this is a variant of Lemma~\ref{weighted Bessel} with $a=k-\a_1$  and $\rho=h\,\wh{\mathrm{P}}$. The difference is that this function $\rho$ is not infinitely differentiable at the origin. However, the claim is true if $\s_1$ is an integer. Then by complex interpolation as in the proof of that lemma, we deduce the claim in the general case. Now choose $\s$ such that $\frac{d}2<\s<\frac{1}2\,(\s_1-\frac{d}2)$ and set $\eta=\s_1-2\s$. Then $\eta>\frac{d}2$, and by the Cauchy-Schwarz inequality, we have
   \be\begin{split}
   \int_{\real^d}(1+|s|^2)^\s\big|\F^{-1}\big(I_{k-\a_1}h\,\wh{\mathrm{P}}\big)(s)\big|ds
   &\le \Big( \int_{\real^d}(1+|s|^2)^{2\s+\eta}\big|\F^{-1}\big(I_{k-\a_1}h\,\wh{\mathrm{P}}\big)(s)\big|^2ds\Big)^{\frac12}\\
   &\les \big\|\F^{-1}\big(I_{k-\a_1}h\,\wh{\mathrm{P}}\big)\big\|_{H_2^{\s_1}(\real^d)}\,.
   \end{split}\ee
Therefore, the second condition of \eqref{psi-T} is verified. This shows part (i) in the case $k>d+\a$.
\end{proof}

To deal with the remaining case $k>\a$, we need the following:

\begin{lem}
 Let $1\le p<\8$ and $k, \el\in\ent$ such that $\el>k>\a$. Then for any distribution $x$ on $\T^d_\t$ with $\wh x (0)=0,$
 $$
  \Big\|\Big(\int_0^1\e^{2(k-\a)}\big|\mathcal{J}^k_\e\,\wt{\mathrm{P}}_\e(x)\big|^2\,\frac{d\e}\e\Big)^{\frac12}\Big\|_p
  \approx \Big\|\Big(\int_0^1\e^{2(\el-\a)}\big|\mathcal{J}^\el_\e\,\wt{\mathrm{P}}_\e(x)\big|^2\,\frac{d\e}\e\Big)^{\frac12}\Big\|_p\,.
 $$
\end{lem}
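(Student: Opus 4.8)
The plan is to reduce both sides to the general characterization in Theorem~\ref{g-charct-Triebel-cont} by exhibiting, for each order of derivation $j\in\{k,\el\}$, an auxiliary function $\p^{(j)}$ that satisfies \eqref{psi-T} with $\a_0<\a<\a_1$, and such that $\wt\p^{(j)}_\e*x$ equals (up to the factor $\e^{j}$) the quantity $\mathcal{J}^j_\e\,\wt{\mathrm{P}}_\e(x)$ appearing under the integral. Concretely, as in the proof of Theorem~\ref{PH charct-Triebel}, one sets $\p_j(\xi)=(-{\rm sgn}(j)2\pi|\xi|)^{j}e^{-2\pi|\xi|}$, so that $\p_j(\e\xi)=\e^{j}\,\mathcal{J}^j_\e\,\wh{\mathrm{P}}_\e(\xi)$ and hence $\wt\p^{(j)}_\e*x=\e^{j}\mathcal{J}^j_\e\,\wt{\mathrm P}_\e(x)$ for $x$ with $\wh x(0)=0$. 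The first and third conditions of \eqref{psi-T} hold for $\p_j$ with \emph{any} choice of $\a_0$ (because $|\p_j|>0$ on $\{2^{-1}\le|\xi|\le2\}$ and $\p_j(2^{k}\cdot)\f$ has, after the rescaling $\xi\mapsto2^k\xi$, all Sobolev norms bounded by a constant times $\|e^{-2\pi2^k|\cdot|}(2\pi2^k|\cdot|)^{j}\f\|_{H^\s_2}$, which is uniformly bounded in $k$ since the exponential decays). The only delicate point is the second (Bessel-integrability) condition, and this is exactly where the two exponents $k$ and $\el$ must be handled differently.

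\textbf{The main obstacle.} The difficulty, as already flagged in the text before Theorem~\ref{circular-charct-Besov} and in the proof of Theorem~\ref{PH charct-Triebel}, is that to verify $\int_{\real^d}(1+|s|^2)^\s|\F^{-1}(I_{-\a_1}h\,\p_j)(s)|\,ds<\8$ one wants to take $\a_1=j$, turning $I_{-\a_1}h\,\p_j$ into a constant multiple of $h\,\wh{\mathrm P}$ — but this requires $j>\a$, which holds for both $j=k$ and $j=\el$, and then one must control $\F^{-1}(I_{k-\a_1}h\,\wh{\mathrm P})$ for some $\a_1<j$ with $\a<\a_1$. When $j=\el$ is large (say $\el>d+\a$) one can choose $\a<\a_1<\el-d$ and run the variant of Lemma~\ref{weighted Bessel} with $a=\el-\a_1$, $\rho=h\,\wh{\mathrm P}$ exactly as in the proof of Theorem~\ref{PH charct-Triebel}, obtaining $I_{\el-\a_1}h\,\wh{\mathrm P}\in H^{\s_1}_2$ for suitable $\s_1>\tfrac d2$ and then deducing the weighted-$L_1$ bound by Cauchy–Schwarz. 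But for $j=k$ with only $k>\a$ (and possibly $k\le d+\a$) this argument breaks down — which is precisely why the Lemma is needed: it lets us \emph{bootstrap} from a large $\el$ (for which the direct argument works) down to the desired $k$.

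\textbf{Execution.} First I would fix $\el$ large enough that Theorem~\ref{PH charct-Triebel}(i) applies directly in its already-proved form (i.e.\ $\el>d+\a$); this gives
$\|x\|_{F_p^{\a,c}}\approx|\wh x(0)|+\big\|(\int_0^1\e^{2(\el-\a)}|\mathcal J^\el_\e\wt{\mathrm P}_\e(x)|^2\,\tfrac{d\e}\e)^{1/2}\big\|_p$.
Next, for the given $k>\a$, I would show the left-hand side of the asserted equivalence is comparable to $\|x\|_{F_p^{\a,c}}$ as well, by writing $\mathcal J^\el_\e=\mathcal J^{\el-k}_\e\circ\mathcal J^k_\e$ and relating the two integrals through an integration-by-parts / Hardy-inequality argument on $(0,1)$ in $\e$: the map $\e\mapsto\e^{k}\mathcal J^k_\e\wt{\mathrm P}_\e(x)$ and its $\e$-derivatives are linked to $\e^{\el}\mathcal J^\el_\e\wt{\mathrm P}_\e(x)$ by the one-dimensional weighted Hardy inequality, applied inside the noncommutative $L_p(\T^d_\t;L_2^c)$ norm (the column square-function norm), using that $\e\mapsto\mathcal J^j_\e\wt{\mathrm P}_\e(x)$ is smooth in $\e$ and vanishes suitably as $\e\to\8$ for $x$ with $\wh x(0)=0$. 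The passage from the integral over $(0,1)$ to the full half-line, and control of the boundary term at $\e=1$, is handled exactly as in the claim at the start of the proof of Theorem~\ref{circular-charct-Besov} (bounding the tail $\int_{\e_0}^1$ by the single-frequency estimates $\sup_m|m|^\a|\wh x(m)|$, which are dominated by the norm). Combining these two comparisons — left side $\approx\|x\|_{F_p^{\a,c}}\approx$ right side — yields the lemma. I expect the genuinely technical step to be the weighted Hardy inequality at the level of the column square function, but this is routine once one notes that the inner Hilbert-space-valued integral inequality passes through $\|\cdot\|_{L_p(\T^d_\t;L_2^c)}$ by the triangle inequality in that norm.
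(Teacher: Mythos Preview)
Your plan has a genuine gap in the ``upper estimate'' direction, i.e.\ in bounding the $\el$-order square function by the $k$-order one. The weighted Hardy inequality on $(0,1)$ is a one-sided tool: from $\mathcal{J}^k_\e\,\wt{\mathrm P}_\e(x)=-\mathrm{sgn}(k)\int_\e^\infty\mathcal{J}^{k+1}_\d\,\wt{\mathrm P}_\d(x)\,d\d$ together with the operator convexity of $t\mapsto t^2$ (this, not the triangle inequality, is what makes the estimate pass to the column square function) you indeed get $\text{left}\lesssim\text{right}$, which is precisely the paper's ``lower estimate.'' But there is no scalar inequality of the form $\int_0^1\e^{2(k+1-\a)}|F'(\e)|^2\,\tfrac{d\e}{\e}\lesssim\int_0^1\e^{2(k-\a)}|F(\e)|^2\,\tfrac{d\e}{\e}$ to pass through; bounding a derivative by the function itself is simply false without extra structure. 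So your assertion that the whole comparison is ``routine'' via Hardy plus the triangle inequality cannot work, and the step ``left $\approx$ right via Hardy'' collapses to only one inequality.

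The paper supplies the missing direction by an entirely different mechanism: writing
\[
\big(\d^{k+1}\mathcal{J}^{k+1}_\d\,\wt{\mathrm P}_\d(x)\big)\big|_{\d=2\e}
= \pm\,2^{k+1}\,\e^{k}\,\wt\phi_\e*\mathcal{J}^k_\e\,\wt{\mathrm P}_\e(x),
\qquad \phi(\xi)=-2\pi|\xi|e^{-2\pi|\xi|},
\]
and then removing the convolution by $\wt\phi_\e$ via a Calder\'on--Zygmund argument (the kernel $(\phi_\e)_{0<\e<1}$ is a standard $L_2((0,1),\tfrac{d\e}{\e})$-valued CZ kernel). For $1<p<\infty$ this is handled exactly as in Lemma~\ref{multiplier DH}(i). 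For $p=1$ the paper needs an additional ingredient, the characterization $\|y\|_{\H_1^c}\approx\big\|\big(\int_0^1|(I^\beta\mathrm P)_\e*y|^2\tfrac{d\e}{\e}\big)^{1/2}\big\|_1$ from \cite{XXX}, combined with Lemma~\ref{multiplier DH}(ii). None of this is integration by parts.

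There is, incidentally, a way to rescue your overall architecture, but you did not identify it: the direction $\|x\|_{F_p^{\a,c}}\lesssim\text{left}$ can be read off directly from Step~3 of the proof of Theorem~\ref{g-charct-Triebel}/\ref{g-charct-Triebel-cont}, since that step uses only the Tauberian and the third condition of \eqref{psi-T}, both of which hold for $\psi_k(\xi)=(\pm2\pi|\xi|)^k e^{-2\pi|\xi|}$ with \emph{any} $k>\a$. Combined with Hardy ($\text{left}\lesssim\text{right}$) and the already-proved case ($\text{right}\approx\|x\|_{F_p^{\a,c}}$ for $\el>d+\a$), this would close the loop. But your write-up does not invoke this observation, and instead claims Hardy alone does the job; as written, the proposal is incomplete.
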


\begin{proof}
By induction, it suffices to consider the case $\el=k+1$. We first show the lower estimate:
 $$ \Big\|\Big(\int_0^1\e^{2(k-\a)}\big|\mathcal{J}^k_\e\,\wt{\mathrm{P}}_\e(x)\big|^2\,\frac{d\e}\e\Big)^{\frac12}\Big\|_p
  \les \Big\|\Big(\int_0^1\e^{2(k+1-\a)}\big|\mathcal{J}^\el_\e\,\wt{\mathrm{P}}_\e(x)\big|^2\,\frac{d\e}\e\Big)^{\frac12}\Big\|_p\,.$$
To that end, we use
 $$\mathcal{J}^k_\e\,\wt{\mathrm{P}}_\e(x)=-\mathrm{sgn}(k)\int_{\e}^\8\mathcal{J}^{k+1}_\d\,\wt{\mathrm{P}}_\d(x) d\d.$$
Choose $\b\in(0,\, k-\a)$. By the Cauchy-Schwarz inequality via the operator convexity of the function $t\mapsto t^2$, we obtain
 $$\big|\mathcal{J}^k_\e\,\wt{\mathrm{P}}_\e(x)\big|^2
 \le \frac{\e^{-2\b}}{2\b}\,\int_{\e}^\8\d^{2(1+\b)}\big|\mathcal{J}^{k+1}_\d\,\wt{\mathrm{P}}_\d(x)\big|^2 \,\frac{d\d}{\d}.$$
It then follows that
 \be\begin{split}
 \int_0^1\e^{2(k-\a)}\big|\mathcal{J}^k_\e\,\wt{\mathrm{P}}_\e(x)\big|^2\,\frac{d\e}\e
 &\le \frac{1}{2\b}\,\int_0^\8\d^{2(1+\b)}\big|\mathcal{J}^{k+1}_\d\,\wt{\mathrm{P}}_\d(x)\big|^2 \,\frac{d\d}{\d}\,\int_0^\d \e^{2(k-\a-\b)}\,\frac{d\e}{\e}\\
 &= \frac{1}{4\b(k-\a-\b)}\,\int_0^\8\d^{2(k+1-\a)}\big|\mathcal{J}^{k+1}_\d\,\wt{\mathrm{P}}_\d(x)\big|^2 \,\frac{d\d}{\d}\,.
  \end{split}\ee
Therefore,
  \be\begin{split}
  \Big\|\Big(\int_0^1\e^{2(k-\a)}\big|\mathcal{J}^k_\e\,\wt{\mathrm{P}}_\e(x)\big|^2\,\frac{d\e}\e\Big)^{\frac12}\Big\|_p
  &\les \Big\|\Big(\int_0^\8\d^{2(k+1-\a)}\big|\mathcal{J}^{k+1}_\d\,\wt{\mathrm{P}}_\d(x)\big|^2\,\frac{d\d}\d\Big)^{\frac12}\Big\|_p\\
  &\les \Big\|\Big(\int_0^1\d^{2(k+1-\a)}\big|\mathcal{J}^{k+1}_\d\,\wt{\mathrm{P}}_\d(x)\big|^2\,\frac{d\d}\d\Big)^{\frac12}\Big\|_p\,,
  \end{split}\ee
as desired.

The upper estimate is harder. This time, writing
 $\wt{\mathrm{P}}_{\e_1+\e_2}= \wt{\mathrm{P}}_{\e_1}\,*\, \wt{\mathrm{P}}_{\e_2}$, we have
   \be\begin{split}
   \Big(\d^{k+1}\mathcal{J}^{k+1}_\d\,\wt{\mathrm{P}}_{\d}\Big)\Big|_{\d=2\e}
   &={\rm sgn}(k)2^{k+1}\,\e^{k+1}\frac{\partial}{\partial \e}\wt{\mathrm{P}}_{\e}\,* \,\mathcal{J}^k_\e\,\wt{\mathrm{P}}_{\e}\\
   &={\rm sgn}(k)2^{k+1}\,\e^{k}\,\wt\phi_{\e}\,* \,\mathcal{J}^k_\e\,\wt{\mathrm{P}}_{\e}\,,
    \end{split}\ee
 where $\phi(\xi)=-2\pi |\xi|\,e^{-2\pi|\xi|}.$
Thus
 \be\begin{split}
 \Big\|\Big(\int_0^1\e^{2(k+1-\a)}\big|\mathcal{J}^{k+1}_\e\,\wt{\mathrm{P}}_\e(x)\big|^2\,\frac{d\e}\e\Big)^{\frac12}\Big\|_p
 &=\Big\|\Big(\int_0^{\frac12}\Big|\Big(\d^{k+1-\a}\mathcal{J}^{k+1}_\d\,\wt{\mathrm{P}}_{\d}\Big)\Big|_{\d=2\e}\big|^2\,\frac{d\e}\e\Big)^{\frac12}\Big\|_p\\
 &=2^{k+1-\a}\,\Big\|\Big(\int_0^{\frac12}\e^{2(k-\a)}\big| \wt\phi_{\e}\,* \,\mathcal{J}^k_\e\,\wt{\mathrm{P}}_{\e} (x)\big|^2\,\frac{d\e}\e\Big)^{\frac12}\Big\|_p\\
 &\le2^{k+1-\a}\,\Big\|\Big(\int_0^1\e^{2(k-\a)}\big| \wt\phi_{\e}\,* \,\mathcal{J}^k_\e\,\wt{\mathrm{P}}_{\e} (x)\big|^2\,\frac{d\e}\e\Big)^{\frac12}\Big\|_p\;.
   \end{split}\ee
Now our task is to remove $\phi_\e$ from the integrand on the right-hand side in the spirit of Theorem~\ref{Hormander}. To that end, we will use a multiplier theorem analogous to Lemma~\ref{multiplier DH}. Let $H=L_2((0,\,1), \frac{d\e}\e)$ and define the $H$-valued kernel $\mathsf{k}$ on $\real^d$ by $\mathsf{k}(s)=\big(\phi_\e(s)\big)_{0<\e<1}$. It is a well-known elementary fact that this is a Calder\'on-Zygmund kernel, namely,
  \begin{enumerate}[$\bullet$]
 \item $\displaystyle \big\|\wh{\mathsf{k}}\big\|_{L_\8(\real^d; H)}<\8$;
 \item $\displaystyle \sup_{t\in\real^d}\int_{|s|>2|t|}\|\mathsf{k}(s-t)-\mathsf{k}(s)\|_{H}ds<\8$.
 \end{enumerate}
 Thus by Lemma~\ref{multiplier DH} (i) (more exactly,  following its proof), we obtain that the singular integral operator associated to $\mathsf{k}$ is bounded on $L_p(\T^d_\t; H^c)$ for any $1<p<\8$; consequently,
  \beq\label{k+1 leq k}
  \Big\|\Big(\int_0^1\e^{2(k-\a)}\big|\wt\phi_\e\,* \,\mathcal{J}^k_\e\,\wt{\mathrm{P}}_\e(x)\big|^2\,\frac{d\e}\e\Big)^{\frac12}\Big\|_p
  \les \Big\|\Big(\int_0^1\e^{2(k-\a)}\big|\mathcal{J}^k_\e\,\wt{\mathrm{P}}_\e(x)\big|^2\,\frac{d\e}\e\Big)^{\frac12}\Big\|_p\,,\eeq
 whence
 $$ \Big\|\Big(\int_0^1\e^{2(k+1-\a)}\big|\mathcal{J}^{k+1}_\e\,\wt{\mathrm{P}}_\e(x)\big|^2\,\frac{d\e}\e\Big)^{\frac12}\Big\|_p
  \les \Big\|\Big(\int_0^1\e^{2(k-\a)}\big|\mathcal{J}^k_\e\,\wt{\mathrm{P}}_\e(x)\big|^2\,\frac{d\e}\e\Big)^{\frac12}\Big\|_p\,.$$
Thus the lemma is proved for $1<p<\8$.

The case $p=1$ necessitates  a separate argument like Lemma~\ref{multiplier DH}. We will require a more characterization of $\H_1^c(\T^d_\t)$  which is a complement to Lemma~\ref{Hp-discrete}. It is the following equivalence proved in \cite{XXX}: \\
\indent Let  $\b>0$. Then for a distribution $x$ on $\T_\t^d$ with $\wh x (0)=0$, we have
 \beq\label{H1via Poisson}
 \|x\|_{\mathcal{H}^{c}_1}\approx\Big\|\Big(\int_0^1 \big|(I^{\b }\mathrm{P})_{\e} *x\big|^2\frac{d\e}{\e}\Big)^{\frac12}\Big\|_1\, .
 \eeq
Armed with this characterization, we can easily complete the proof of the lemma. Indeed, 
 $$(\wt {I^{k-\a}\mathrm{P}})_\e* (I^\a x)=(-{\rm sgn}(k)2\pi)^{-k}\,\e^{k-\a}\mathcal{J}^{k}_\e \wt {\mathrm{P}}_\e (x)\;.$$
Thus by \eqref{H1via Poisson}  with $\b=k-\a$,
 $$\Big\|\Big(\int_0^1\e^{2(k-\a)}\big|\mathcal{J}^k_\e\,\wt{\mathrm{P}}_\e(x)\big|^2\,\frac{d\e}\e\Big)^{\frac12}\Big\|_1\approx\|I^\a x\|_{\mathcal{H}^{c}_1}\;.$$
 It then remains to apply  Lemma~\ref{multiplier DH} (ii)  to $I^\a x$ to conclude that \eqref{k+1 leq k}  holds for $p=1$ too, so the proof of the lemma is complete.
  \end{proof}

\begin{proof}[End of the proof of Theorem \ref{PH charct-Triebel}] The preceding lemma shows that the norm in the right-hand side of the equivalence in part (i) is independent of $k$ with $k>\a$. As (i) has been already proved to be true for $k>d+\a$, we deduce the assertion  in full generality.
 \end{proof}

We end this section with a Littlewood-Paley type characterizations of Sobolev spaces. The following is an immediate consequence of Corollary~\ref{Triebel=Sobolev} and the characterizations proved previously in this chapter.

\begin{prop}
Let  $\psi$ satisfy \eqref{psi-T}, $k>\a$ and $1<p<\8$. Then for any distribution on $\T^d_\t$,
  \be\begin{split}
&\|x\|_{H^{\a}_p} \approx |\wh x(0)|+\\
&\;\;\;\;\;\; \left \{ \begin{split}
 &\inf\Big\{
 \big\|\big(\sum_{k\ge0}\big(2^{2k\a}|\wt\p_k*y|^2\big)^{\frac12}\big\|_p +
 \big\|\big(\sum_{k\ge0}\big(2^{2k\a}|(\wt\p_k*z)^*|^2\big)^{\frac12}\big\|_p  \Big\} & \textrm{if } 1< p<2,\\
 &\max\Big\{
 \big\|\big(\sum_{k\ge0}\big(2^{2k\a}|\wt\p_k*x|^2\big)^{\frac12}\big\|_p,\;
 \big\|\big(\sum_{k\ge0}\big(2^{2k\a}|(\wt\p_k*x)^*|^2\big)^{\frac12}\big\|_p \Big\}   & \textrm{if } 2\le  p<\8;
 \end{split} \right.
 \end{split}
 \ee
and
  \be\begin{split}
&\|x\|_{H^{\a}_p} \approx |\wh x(0)|+\\
& \left \{ \begin{split}
 &\inf\Big\{
 \Big\|\Big(\int_0^1\e^{2(k-\a)}\big|\mathcal{J}^k_\e\,\wt{\mathrm{P}}_\e(y)\big|^2\,\frac{d\e}\e\Big)^{\frac12}\Big\|_p +
 \Big\|\Big(\int_0^1\e^{2(k-\a)}\big|\big(\mathcal{J}^k_\e\,\wt{\mathrm{P}}_\e(z)\big)^*\big|^2\,\frac{d\e}\e\Big)^{\frac12}\Big\|_p \Big\} & \textrm{if } 1< p<2,\\
 &\max\Big\{
\Big\|\Big(\int_0^1\e^{2(k-\a)}\big|\mathcal{J}^k_\e\,\wt{\mathrm{P}}_\e(x)\big|^2\,\frac{d\e}\e\Big)^{\frac12}\Big\|_p,\;
 \Big\|\Big(\int_0^1\e^{2(k-\a)}\big|\big(\mathcal{J}^k_\e\,\wt{\mathrm{P}}_\e(x)\big)^*\big|^2\,\frac{d\e}\e\Big)^{\frac12}\Big\|_p \Big\}   & \textrm{if } 2\le  p<\8.
 \end{split} \right.
 \end{split}
 \ee
The above infima are taken above all decompositions $x=y+z$.
\end{prop}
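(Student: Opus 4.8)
The plan is to derive both equivalences from the identification of fractional Sobolev spaces with mixture Triebel--Lizorkin spaces, combined with the column characterizations already proved in this chapter and their behavior under the adjoint operation. Since $1<p<\8$, Corollary~\ref{Triebel=Sobolev} gives $H_p^\a(\T^d_\t)=F_p^\a(\T^d_\t)$ with equivalent norms, so it suffices to rewrite $\|x\|_{F_p^\a}$. Recall that $F_p^\a(\T^d_\t)$ is by definition $F_p^{\a,c}(\T^d_\t)+F_p^{\a,r}(\T^d_\t)$ with the infimum norm when $1<p<2$, and $F_p^{\a,c}(\T^d_\t)\cap F_p^{\a,r}(\T^d_\t)$ with the maximum norm when $2\le p<\8$; hence everything reduces to characterizing the column norm and the row norm separately, and then reassembling.

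For the column norm, I would simply invoke the results of this chapter. Theorem~\ref{g-charct-Triebel}, applied to the given $\p$ (which satisfies \eqref{psi-T} by hypothesis), yields $\|x\|_{F_p^{\a,c}}\approx|\wh x(0)|+\big\|\big(\sum_{k\ge0}2^{2k\a}|\wt\p_k*x|^2\big)^{\frac12}\big\|_p$; and Theorem~\ref{PH charct-Triebel}(i), which is valid precisely because $k>\a$, gives $\|x\|_{F_p^{\a,c}}\approx|\wh x(0)|+\big\|\big(\int_0^1\e^{2(k-\a)}|\mathcal{J}^k_\e\,\wt{\mathrm{P}}_\e(x)|^2\,\frac{d\e}\e\big)^{\frac12}\big\|_p$. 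For the row norm I would use $\|x\|_{F_p^{\a,r}}=\|x^*\|_{F_p^{\a,c}}$ together with the elementary identities $\wh{x^*}(m)=\overline{\wh x(-m)}$, $(\wt\p_k*x)^*=\wt{\p^\#}_k*x^*$ with $\p^\#(\xi)=\overline{\p(-\xi)}$, and $(\mathcal{J}^k_\e\,\wt{\mathrm{P}}_\e(x))^*=\mathcal{J}^k_\e\,\wt{\mathrm{P}}_\e(x^*)$ (the Poisson data being real and even). Since $|\p^\#|=|\p(-\cdot)|$ and the norms occurring in \eqref{psi-T} are invariant under $\xi\mapsto-\xi$ and complex conjugation, $\p^\#$ again satisfies \eqref{psi-T} with the same parameters, so Theorem~\ref{g-charct-Triebel} and Theorem~\ref{PH charct-Triebel}(i) may be applied to $x^*$ with $\p^\#$ in place of $\p$; substituting the resulting expressions for $\|x^*\|_{F_p^{\a,c}}$ and moving the adjoint back inside produces $\|x\|_{F_p^{\a,r}}\approx|\wh x(0)|+\big\|\big(\sum_{k\ge0}2^{2k\a}|(\wt\p_k*x)^*|^2\big)^{\frac12}\big\|_p$ and the analogous Poisson formula.

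Finally I would feed these four equivalences into the definition of $\|\cdot\|_{F_p^\a}$: for $2\le p<\8$ take the maximum of the column and row expressions at $x$; for $1<p<2$ take the infimum over decompositions $x=y+z$ of the column expression at $y$ plus the row expression at $z$; then add back $|\wh x(0)|$ (harmless, as it is separated out in every term and matches on both sides because $|\wh{x^*}(0)|=|\wh x(0)|$) and invoke Corollary~\ref{Triebel=Sobolev}. This gives exactly the two asserted formulas. I do not anticipate a genuine obstacle here: the proposition is essentially bookkeeping on top of Corollary~\ref{Triebel=Sobolev} and Theorems~\ref{g-charct-Triebel} and \ref{PH charct-Triebel}. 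The only points requiring explicit, but one-line, verification are the stability of \eqref{psi-T} under $\p\mapsto\p^\#$, used for the row component, and the three adjoint identities listed above.
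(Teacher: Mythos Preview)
Your approach is correct and is exactly what the paper does: it simply states that the proposition ``is an immediate consequence of Corollary~\ref{Triebel=Sobolev} and the characterizations proved previously in this chapter,'' without further argument. Your write-up merely unpacks this sentence, supplying the row versions via the adjoint (which the paper takes for granted, having announced early on that all results admit row and mixture analogues); the verification that $\p^\#$ satisfies \eqref{psi-T} and the identity $(M_\psi x)^*=M_{\psi^\#}(x^*)$ are indeed the only points needing a word, and both are routine.
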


%%%%%%%%%%%%%%%%%%%%%%%%%%%%%%%%%%%%%%%%%%%%%%%%%%%%%%%%%%%%%%%%%%%%%%%%
%%%%%%%%%%%%%%%%%%%%%%%%%%%%%%%%%%%%%%%%%%%%%%%%%%%%%%%%%%%%%%%%%%%%%%%%

\section{Operator-valued Triebel-Lizorkin spaces}

%%%%%%%%%%%%%%%%%%%%%%%%%%%%%%%%%%%%%%%%%%%%%%%%%%%%%%%%%%%%%%%%%%%%%%%%
%%%%%%%%%%%%%%%%%%%%%%%%%%%%%%%%%%%%%%%%%%%%%%%%%%%%%%%%%%%%%%%%%%%%%%%%

Unlike Sobolev and Besov spaces, the study of vector-valued Triebel-Lizorkin spaces in the classical setting does not allow one to handle their counterparts in quantum tori by means of transference. Given a Banach space $X$, a straightforward way of defining the $X$-valued Triebel-Lizorkin spaces on $\T^d$ is as follows: for $1\le p<\8$, $1\le q\le\8$ and $\a\in\real$, an $X$-valued distribution $f$ on $\T^d$ belongs to $F_{p, q}^\a(\T^d;X)$ if
 $$\|f\|_{F_{p, q}^\a}=\|\wh f(0)\|_X+\big\|\big(\sum_{k\ge0}2^{qk\a}\|\wt\f_k* f\|_{X}^q\big)^{\frac1q}\big\|_{L_p(\T^d)}<\8.$$
A majority of the classical results on Triebel-Lizorkin spaces can be proved to be true in this vector-valued setting with essentially the same methods.  Contrary to the Sobolev or Besov case,  the  space $F_{p, 2}^\a(\T^d; L_p(\T^d_\t))$ is very different from the previously studied space $F_{p, 2}^{\a, c}(\T^d_\t)$. This explains why  the transference method is not efficient here.

However, there exists another way of defining  $F_{p, 2}^\a(\T^d;X)$. Let $(r_k)$ be a Rademacher sequence, that is, an independent sequence of random variables on a probability space  $(\Omega, P)$, taking only two values $\pm1$ with equal probability. We define $F_{p, {\rm rad}}^\a(\T^d;X)$ to be the space of all $X$-valued distributions $f$ on $\T^d$ such that
 $$\|f\|_{F_{p, {\rm rad}}^\a}=\|\wh f(0)\|_X+\big\|\sum_{k\ge0} r_k\,2^{k\a}\,\wt\f_k* f\big\|_{L_p(\Omega\times\T^d; X)}<\8.$$
It seems that these spaces $F_{p, {\rm rad}}^\a(\T^d;X)$ have never been studied so far in literature. They might be worth to be investigated. If $X$ is a Banach lattice of finite concavity, then by the Khintchine inequality,
 $$\|f\|_{F_{p, {\rm rad}}^\a}\approx \|\wh f(0)\|_X+
 \big\|\big(\sum_{k\ge0} 2^{2k\a}\,|\wt\f_k* f|^2\big)^{\frac12}\big\|_{L_p(\T^d; X)}\,.$$
This norm resembles, in form, more the previous one $\|f\|_{F_{p, 2}^\a}$. Moreover, in this case, one can also define a similar space by replacing the internal $\el_2$-norm by any $\el_q$-norm.

But what we are interested in here is the noncommutative case, where $X$ is a noncommutative $L_p$-space, say, $X=L_p(\T^d_\t)$. Then by the noncommutative Khintchine inequality \cite{LPP1991}, we can show that for $2\le p<\8$ (assuming $\wh f(0)=0$),
 \be\begin{split}
 \|f\|_{F_{p, {\rm rad}}^\a}
 \approx
 \max\Big\{\big\|\big(\sum_{k\ge0} 2^{2k\a}\,|\wt\f_k* f|^2\big)^{\frac12}\big\|_{p}\,,\;
 \big\|\big(\sum_{k\ge0} 2^{2k\a}\,|(\wt\f_k* f)^*|^2\big)^{\frac12}\big\|_{p}\Big\}.
 \end{split}\ee
Here $\|\,\|_p$ is the norm of $L_p(\T^d; L_p(\T^d_\t))$. Thus the right hand-side is closely related to the norm of  $F_{p}^\a(\T^d_\t)$ defined in section~\ref{Definitions and basic properties: Triebel}. In fact, if $x\mapsto\wt x$ denotes the transference map introduced in Corollary~\ref{prop:TransLp}, then for $1<p<\8$, we have
 $$ \|x\|_{F_{p}^\a(\T^d_\t)}\approx  \|\wt x\|_{F_{p, {\rm rad}}^\a(\T^d; L_p(\T^d_\t))}\,.$$
This shows that if one wishes to  treat Triebel-Lizorkin  spaces on $\T^d_\t$ via transference, one should first investigate the spaces $F_{p, {\rm rad}}^\a(\T^d; L_p(\T^d_\t))$. The latter  ones are as hard to deal with as $F_{p}^\a(\T^d_\t)$.

We would like to point out, at this stage, that the method we have developed in this chapter applies as well to $F_{p, {\rm rad}}^\a(\T^d; L_p(\T^d_\t))$. In view of operator-valued Hardy spaces, we will call $F_{p, {\rm rad}}^\a(\T^d; L_p(\T^d_\t))$ an operator-valued Triebel-Lizorkin  space on $\T^d$. We can define similarly its column and row counterparts. We will give below an outline of these operator-valued Triebel-Lizorkin  spaces in the light of the development made in the previous sections. A  systematic study will be given elsewhere. In the remainder of this section, $\M$ will denote a finite von Neumann algebra $\M$ with a faithful normal tracial state $\tau$ and $\N=L_\8(\T^d)\overline\ot\M$.

 \begin{Def}
Let $1\leq p<\8$ and $\a\in \real$.
The  column operator-valued Triebel-Lizorkin space $F^{\a, c}_{p} (\T^d, \M)$ is defined to be
$$F^{\a, c}_{p} (\T^d, \M)=\big\{f\in \mathcal{S}'(\T^d; L_1(\M)) : \|f\|_{F^{\a, c}_{p}} < \8 \big\},$$
where
 $$\|f\|_{F^{\a, c}_{p}} =\|\wh f(0)\|_{L_p(\M)} + \big\|\big(\sum_{k\ge 0} 2^{2k\a } | \wt\f_k * f|^2\big)^{\frac{1}{2}}\big\|_{L_p(\N)}\,.$$
\end{Def}

The main ingredient for the study of these spaces is still a multiplier result like Theorem~\ref{Hormander} that is restated as follows:

\begin{thm}\label{Hormanderbis}
 Assume that $(\phi_j)_{\ge0}$ and $(\rho_j)_{\ge0}$ satisfy \eqref{psi-rho} with some $\s>\frac{d}2$.
 \begin{enumerate}[\rm(i)]
 \item Let $1<p<\8$.  Then for any  $f\in \mathcal{S}'(\T^d; L_1(\M))$,
  $$
 \big\|\big(\sum_{j\ge0}2^{2j\a}|\wt\phi_j*\wt\rho_j*f^2|^{\frac12}\big\|_{L_p(\N)}\les   \underset{\substack{j\geq 0 \\ -2\leq k \leq 2}}{\sup} \big\|   {\phi} _j (2^{j+k}\cdot)\varphi \big\| _{H_2^\sigma}\,
 \big\|\big(\sum_{j\ge0}2^{2j\a}|\wt\rho_j*f|^2\big)^{\frac12}\big\|_{L_p(\N)}\,.
 $$
 \item If $\rho_j=\wh{\rho(2^{-j}\cdot)}$ for some Schwartz   function $\rho$  with
  $\mathrm{supp}(\rho)=\{\xi: 2^{-1}\le |\xi|\le2\}.$
 Then the above inequality holds for $p=1$ too.
 \end{enumerate}
 \end{thm}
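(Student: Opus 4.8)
\textbf{Proof strategy for Theorem \ref{Hormanderbis}.}
The statement is the operator-valued version of Theorem~\ref{Hormander}, so the plan is to run exactly the same argument with the scalar quantum torus $\T^d_\t$ replaced by $\M$ throughout, and to check that every ingredient used in the proof of Theorem~\ref{Hormander} is in fact available at this level of generality. Concretely, one first reduces, exactly as in the proof of Theorem~\ref{Hormander}, to the case $\rho_j=\f^{(j-1)}+\f^{(j)}+\f^{(j+1)}$ absorbed into $\phi_j$: set $\zeta_j=\phi_j(\f^{(j-1)}+\f^{(j)}+\f^{(j+1)})$, note $\wt\phi_j*\wt\rho_j*f=\wt\zeta_j*\wt\rho_j*f$ by \eqref{3-supports} and the support hypothesis, and verify via Lemma~\ref{CS Sobolev} (which is purely a statement about $\el_2$-valued functions on $\real^d$ and needs no change) that $\zeta=(\zeta_j)_{j\ge0}$ satisfies \eqref{HS condition per}. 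Then part (i) follows by applying the vector-valued Mikhlin/Hörmander multiplier inequality for the internal $\el_2^c$-norm — this is precisely Lemma~\ref{CZD}, which is already stated and proved for an arbitrary finite von Neumann algebra $\M$ with $\N=L_\8(\T^d)\overline\ot\M$ — with $\zeta_j$ in place of $\phi_j$ and $f_j=2^{j\a}\wt\f_j*f$.

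For part (ii), the plan is to transcribe the proof of Theorem~\ref{Hormander}(ii) verbatim. The only two external inputs there are: the characterization of $\H_1^c$ by a discrete square function (Lemma~\ref{Hp-discrete}, stated for $\H_p^c(\T^d_\t)$), and the Calderón--Zygmund bound from $\H_1^c$ into $L_1(\N;\el_2^c)$ (Lemma~\ref{CZH}). The latter is already written in the excerpt for general $(\M,\tau)$. The former, in the present operator-valued setting, is the corresponding statement for $\H_1^c(\T^d,\M)$ — which is exactly the torus analogue, for the algebra $\M$, of the discrete square-function characterization, and is the content of \cite{XXX} in this generality; I would cite it as such (or derive it from Lemma~\ref{CZH} together with the atomic decomposition of $\H_1^c(\T^d,\M)$ recalled in the proof of Lemma~\ref{CZH}). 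With these in hand, one sets $y=I^\a(f)$, builds the auxiliary sequences $\eta_j=2^{j\a}I_{-\a}\phi_j$ and then $\zeta_j=\eta_j\rho_j$, checks through Lemma~\ref{CS Sobolev} that they inherit \eqref{HS condition per}, uses the identity $2^{j\a}\wt\phi_j*\wt\rho_j*f=\wt\zeta_j*y$, and concludes by Lemma~\ref{multiplier DH}(ii) (equivalently Lemma~\ref{CZH}) applied to $y\in\H_1^c(\T^d,\M)$.

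I do not expect a genuine obstacle: the whole point of having phrased Lemmas~\ref{CZD} and~\ref{CZH} over an arbitrary finite $\M$ was precisely to make this operator-valued upgrade automatic. The one place deserving a sentence of care is the bookkeeping at $p=1$: ensuring that the discrete-square-function characterization of $\H_1^c(\T^d,\M)$ used with the test function $\p=I_{-\a}\rho$ is legitimate (i.e. that $\rho$, being Schwartz with $\mathrm{supp}(\rho)=\{2^{-1}\le|\xi|\le2\}$, does not vanish on $\{1\le|\xi|<2\}$ — which it does not, by the hypothesis in (ii)), and that $I^\a$ is well-defined on distributions with $\wh f(0)=0$, so the reduction to vanishing mean-zero $f$ causes no loss. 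Thus the proof is a routine, if notation-heavy, repetition of the arguments for Theorem~\ref{Hormander}, and I would simply indicate the substitutions rather than reproduce the calculations.
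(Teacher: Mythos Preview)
Your proposal is correct and matches the paper's own treatment: the paper disposes of Theorem~\ref{Hormanderbis} in a single sentence, ``The proof of Theorem~\ref{Hormander} already gives the above result,'' precisely because Lemmas~\ref{CS Sobolev}, \ref{HS}, \ref{CZD} and \ref{CZH} were deliberately stated and proved over an arbitrary finite von Neumann algebra $\M$. Your expanded sketch makes explicit the one point the paper leaves implicit, namely that for $p=1$ one needs the discrete square-function characterization of $\H_1^c(\T^d,\M)$ rather than of $\H_1^c(\T^d_\t)$; this is indeed the content of \cite{XXX} at the stated generality, and your observation that it could alternatively be recovered from Lemma~\ref{CZH} plus the atomic decomposition is a reasonable remark.
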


The proof of Theorem~\ref{Hormander}  already gives the above result.  Armed with this multiplier theorem, we can check that all results proved in the previous sections admit operator-valued analogues with the same proofs. For instance, the dual space of $F^{\a, c}_{1} (\T^d, \M)$ can be described as a space $F^{-\a, c}_{\8} (\T^d, \M)$ analogous to the one defined in Definition~\ref{F infity}. However, following the $H_1$-BMO duality developed in the theory of operator-valued Hardy spaces in \cite{XXX}, we can show the following nicer characterization of the latter space in the style of Carleson measures:

\begin{thm}
 A distribution $f\in \mathcal{S}'(\T^d; L_1(\M))$ with $\wh f(0)=0$  belongs to $F^{\a, c}_{\8} (\T^d, \M)$ iff
  $$\sup_Q\Big\|\frac1{|Q|}\int_Q\sum_{k\ge\log_2(l(Q))} 2^{2k\a } | \wt\f_k * f(s)|^2 ds\Big\|_{\M}<\8,$$
 where the supremum runs over all cubes of $\T^d$, and where $l(Q)$ denotes the side length of $Q$.
 \end{thm}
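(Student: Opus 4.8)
The plan is to characterize $F^{\a,c}_\8(\T^d,\M)$ via the duality with $F^{-\a,c}_1(\T^d,\M)$, exactly mirroring the $\mathcal H_1$--$\BMO$ duality of operator-valued Hardy spaces from \cite{XXX}. First I would set up the "discrete Carleson" space: let $\mathrm{CM}^{\a,c}(\T^d,\M)$ denote the space of $f\in\mathcal S'(\T^d;L_1(\M))$ with $\wh f(0)=0$ for which
$$\|f\|_{\mathrm{CM}^{\a,c}}=\sup_Q\Big\|\frac1{|Q|}\int_Q\sum_{k\ge\log_2(l(Q))}2^{2k\a}|\wt\f_k*f(s)|^2\,ds\Big\|_\M^{\frac12}<\8.$$
The goal is to show $F^{\a,c}_\8(\T^d,\M)=\mathrm{CM}^{\a,c}(\T^d,\M)$ with equivalent norms. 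As with $\BMO^c$, one checks independence of the choice of $\f$ using Theorem~\ref{Hormanderbis} (the operator-valued multiplier theorem): replacing $\f$ by $\p$ and using \eqref{3-supports}, each $\wt\p_k*f$ is a bounded combination of $\wt\f_{k-1}*f,\wt\f_k*f,\wt\f_{k+1}*f$, so the square-sum tails over dyadic ranges are comparable; the shift in the summation index $k\ge\log_2(l(Q))$ by a bounded amount is absorbed into the cube $Q$.

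Next I would prove $F^{-\a,c}_1(\T^d,\M)^*=\mathrm{CM}^{\a,c}(\T^d,\M)$ by adapting the argument for Proposition~\ref{Triebel-dual} together with the Carleson-measure description of the dual of $\mathcal H_1^c$ in \cite{XXX}. Concretely, embed $F^{-\a,c}_1(\T^d,\M)$ isometrically into $L_1(\N;\el_2^{-\a,c})$ via $g\mapsto(\wt\f_k*g)_{k\ge0}$; a functional on $F^{-\a,c}_1$ is then represented by a sequence $(f_k)\in L_\8(\N;\el_2^{\a,c})$ modulo the annihilator of the range, acting by $g\mapsto\sum_k 2^{0}\,\tau\!\int(\wt\f_k*g)\,f_k^*$. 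The pairing $\la g,f\ra=\sum_k\tau\!\int(\wt\f_k*g)(\wt\f_k*f)^*$ shows $F^{\a,c}_\8$ (the space of Definition~\ref{F infity}, now in the operator-valued setting, with internal sequences allowed to lie in $L_\8(\N;\el_2^{\a,c})$) is exactly this dual. The remaining and substantive step is to identify this dual concretely with the Carleson condition: given $f\in F^{\a,c}_\8$, test against atoms. The appropriate atoms here, by lifting through $I^\a$ and using that $I^\a$ is an isomorphism $F^{-\a,c}_1\to\mathcal H_1^c$ (Theorem~\ref{Triebel-isom}(i), which by Remark~\ref{Triebel-BMO}(ii) and the operator-valued multiplier theorem holds in the present setting), are the $\M^c$-atoms of $\mathcal H_1^c(\T^d,\M)$ recalled in the proof of Lemma~\ref{CZH}. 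Pairing $f$ with the image under $I^{-\a}$ of an $\M^c$-atom supported on $Q$, using $\int_Q a=0$ and the support properties of $\wt\f_k$ (only $k\gtrsim\log_2(l(Q))$ contribute after cancellation against the mean), the Cauchy–Schwarz inequality in $L_2(\N;\el_2^c)$ and operator convexity of $x\mapsto|x|^2$ give that boundedness of the functional is equivalent to the supremum over cubes of the displayed local square-sum being finite. This is the same computation as in the classical Carleson-measure characterization of Triebel–Lizorkin spaces with $p=\8$ (see \cite[Section~2]{HT1992}), transplanted via the atomic decomposition.

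For the converse inclusion $\mathrm{CM}^{\a,c}\subset F^{\a,c}_\8$, given $f$ satisfying the Carleson bound I would reconstruct a representation $f=\sum_k\wt\f_k*f_k$ witnessing membership in $F^{\a,c}_\8$: take $f_k=\sum_{j=k-1}^{k+1}\wt\f_j*f$, so that $\sum_k\wt\f_k*f_k=f$ by \eqref{3-supports}, and estimate $\big\|(\sum_k 2^{2k\a}|\wt\f_k*f_k|^2)^{1/2}\big\|_\8$. By a stopping-time / good-$\lambda$ argument localizing to dyadic cubes, or more economically by invoking the already-established duality $F^{\a,c}_\8=(F^{-\a,c}_1)^*$ and the norm control just proven on atoms, the Carleson bound dominates the $F^{\a,c}_\8$-norm. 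The main obstacle, as usual in this circle of ideas, is the second step: transferring the Carleson-measure characterization of the $\mathcal H_1^c$-dual from \cite{XXX} to the $\a\neq0$ level and reconciling the summation cutoff $k\ge\log_2(l(Q))$ with the Littlewood–Paley localization — this requires care with the interaction between the dilation parameter $2^{-k}$ and the cube scale $l(Q)$, and with the fact that, unlike in the scalar case, one cannot use maximal functions, so the cancellation of atoms must be exploited purely through the kernel estimates of Lemma~\ref{HS} and operator convexity. Once that identification is in hand, the theorem follows by combining it with the $\f$-independence and the lifting isomorphism $I^\a$.
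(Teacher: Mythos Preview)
Your proposal is correct and follows essentially the same approach the paper indicates: the paper does not give a detailed proof of this theorem but simply states that it follows ``following the $\H_1$--$\BMO$ duality developed in the theory of operator-valued Hardy spaces in \cite{XXX}'', together with the lifting isomorphism $I^\a$ between $F^{\a,c}_p$ and $\H^c_p$. Your outline --- duality $\big(F^{-\a,c}_1\big)^*=F^{\a,c}_\8$, reduction to $\a=0$ via $I^\a$, and the atomic/Carleson-measure argument from \cite{XXX} --- is exactly what the paper has in mind, and your identification of the delicate point (handling the cutoff $k\gtrsim -\log_2 l(Q)$ without maximal functions, relying instead on mean-zero cancellation of atoms and operator convexity) is accurate.
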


The characterizations of Triebel-Lizorkin spaces given in the previous two sections can be transferred to the present setting too. Let us formulate only the analogue of  Theorem~\ref{circular-charct-Triebel}.

\begin{thm}
 Let $1\le p<\8$, $\a\in\real$ and $k\in\ent$.
 \begin{enumerate}[\rm(i)]
 \item If $k>\a$, then for any $f\in \mathcal{S}'(\T^d; L_1(\M))$,
  $$
 \|f\|_{F_{p}^{\a, c}}\approx
 \max_{|m|<k}\|\wh f(m)\|_{L_p(\M)}+ \Big\|\Big(\int_0^1(1-r)^{2(k-\a)}\big|\mathcal{J}^k_r\,{\mathbb{P}}_r(f_k)\big|^2\,\frac{dr}{1-r}\Big)^{\frac12}\Big\|_{L_p(\N)}\,,
 $$
where $\displaystyle f_k=f-\sum_{|m|<k}\wh f(m)U^m$.
  \item If $k>\frac\a2$, then for any $f\in \mathcal{S}'(\T^d; L_1(\M))$,
 $$
 \|f\|_{F_{p}^{\a, c}}\approx
 \max_{|m|^2<k}\|\wh f(m)\|_{L_p(\M)}+\Big\|\Big(\int_0^1(1-r)^{2(k-\frac\a2)}\big|\mathcal{J}^k_r\,{\mathbb{W}}_r(f)\big|^2\,\frac{dr}{1-r}\Big)^{\frac12}\Big\|_{L_p(\N)}\,.
 $$
 \end{enumerate}
 \end{thm}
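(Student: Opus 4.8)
The plan is to run, for $L_p(\M)$-valued distributions on $\T^d$, the exact chain of arguments that produced Theorem~\ref{circular-charct-Triebel} out of Theorem~\ref{Hormander}, working throughout on $\N=L_\8(\T^d)\overline\ot\M$ in place of $\T^d_\t$. The only substantive input is the operator-valued multiplier theorem Theorem~\ref{Hormanderbis}; as remarked in the text, it is already contained in the proof of Theorem~\ref{Hormander}, because the two Calder\'on--Zygmund lemmas it rests on, Lemmas~\ref{CZD} and \ref{CZH}, are stated for an arbitrary semifinite $\N$ of the above form, and the operator-valued Hardy space theory of \cite{XXX} is available there. Note that transference is of no help here: the map $x\mapsto\wt x$ does not carry $F_p^{\a,c}(\T^d_\t)$ onto $F_p^{\a,c}(\T^d,\M)$ (this is precisely the gap discussed in the present section), so the whole argument must be carried out directly in the operator-valued category. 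No new idea is needed, only care with constants.

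First I would establish the operator-valued analogues of Theorems~\ref{g-charct-Triebel} and \ref{g-charct-Triebel-cont}: if $\p$ is infinitely differentiable on $\real^d\setminus\{0\}$ and satisfies \eqref{psi-T} with $\a_0<\a<\a_1$, then for $f\in\mathcal{S}'(\T^d;L_1(\M))$,
$$\|f\|_{F_p^{\a,c}}\approx \|\wh f(0)\|_{L_p(\M)}+\Big\|\Big(\sum_{k\ge0}2^{2k\a}|\wt\p_k*f|^2\Big)^{\frac12}\Big\|_{L_p(\N)}\approx \|\wh f(0)\|_{L_p(\M)}+\Big\|\Big(\int_0^1\e^{-2\a}|\wt\p_\e*f|^2\,\frac{d\e}\e\Big)^{\frac12}\Big\|_{L_p(\N)}.$$
The proof of Theorem~\ref{g-charct-Triebel} transcribes directly: the splitting $\p^{(j)}=\sum_k\p^{(j)}\f^{(j+k)}$ and \eqref{split-T}, the two applications of Theorem~\ref{Hormanderbis} bounding the terms $\mathrm I$ and $\mathrm{II}$, the uniform control of $\|\eta^{(-k)}\f\|_{H_2^\s}$ obtained from Lemma~\ref{CS Sobolev}, and the reverse inequality run as in step~3 of the proof of Theorem~\ref{g-charct-Besov} --- every estimate there reduces to Theorem~\ref{Hormanderbis}, to Lemma~\ref{CS Sobolev}, or to elementary triangle inequalities for $\el_2$-sums of $L_p(\N)$-valued column square functions. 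The Fej\'er means $F_N$ on $\N$ are completely positive and trace preserving, hence contractive on $L_p(\N;\el_2^{\a,c})$, which legitimizes the reduction to trigonometric polynomials with $L_p(\M)$-coefficients, and the continuous version follows by the discretization used in the proof of Theorem~\ref{g-charct-Triebel-cont}.

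Next I would specialize. For part (ii) take $\p(\xi)=(-\mathrm{sgn}(k)4\pi^2|\xi|^2)^ke^{-4\pi^2|\xi|^2}$; it satisfies \eqref{psi-T} with $\a_1=2k>\a$ and any $\a_0<\a$, and $\p(\sqrt\e\,\xi)=\e^k\mathcal{J}^k_\e\,\wh{\mathrm{W}}_\e(\xi)$ converts the continuous characterization into the heat one on $\real^d$. For part (i) take $\p(\xi)=(-\mathrm{sgn}(k)2\pi|\xi|)^ke^{-2\pi|\xi|}$; when $k>d+\a$ the second line of \eqref{psi-T} is checked exactly as in the proof of Theorem~\ref{PH charct-Triebel}, namely $I_{k-\a_1}h\,\wh{\mathrm{P}}\in H_2^{\s_1}(\real^d)$ for suitable $\s_1$ by the interpolation variant of Lemma~\ref{weighted Bessel}, followed by a Cauchy--Schwarz bound. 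To drop the hypothesis from $k>d+\a$ to the stated $k>\a$, I would transcribe the lemma following Theorem~\ref{PH charct-Triebel}: its lower estimate is an integration and operator Cauchy--Schwarz argument, while its upper estimate uses $\mathrm{P}_{\e_1+\e_2}=\mathrm{P}_{\e_1}*\mathrm{P}_{\e_2}$ to reduce matters to the boundedness on $L_p(\N;H^c)$, $H=L_2((0,1),\frac{d\e}\e)$, of the $H$-valued singular integral with kernel $(\phi_\e)_{0<\e<1}$, $\phi(\xi)=-2\pi|\xi|e^{-2\pi|\xi|}$; for $1<p<\8$ this is Lemma~\ref{CZD}, and for $p=1$ one uses instead the operator-valued Poisson characterization of $\H_1^c(\T^d,\M)$ from \cite{XXX} together with Lemma~\ref{CZH}. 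Finally I would pass to the circular semigroups exactly as in the proof of Theorem~\ref{circular-charct-Besov}: put $r=e^{-2\pi\e}$ (respectively the analogous change of variables for $\mathbb{W}$), use $1-r\approx\e$ as $\e\to0$ together with a triangular-inequality estimate to truncate the integral away from $r=1$, and absorb the finitely many low-frequency coefficients $\wh f(m)$ with $|m|<k$ (respectively $|m|^2<k$) into the maximum. The only real obstacle here is bookkeeping: one must verify that every constant appearing --- in Theorem~\ref{Hormanderbis}, in the uniform $H_2^\s$ estimates, in the truncation step --- is independent of $f$, of $\M$, and of the parameters, and that the two external inputs from \cite{XXX}, the atomic decomposition of $\H_1^c(\T^d,\M)$ used through Lemma~\ref{CZH} and the Poisson-integral characterization of $\H_1^c(\T^d,\M)$, are indeed supplied there.
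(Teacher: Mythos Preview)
Your proposal is correct and follows exactly the route the paper indicates: the paper does not write out a separate proof for this theorem, merely stating that with Theorem~\ref{Hormanderbis} in hand ``all results proved in the previous sections admit operator-valued analogues with the same proofs'' and that ``the characterizations of Triebel-Lizorkin spaces given in the previous two sections can be transferred to the present setting too.'' Your outline is a faithful and careful transcription of the proofs of Theorems~\ref{g-charct-Triebel}, \ref{g-charct-Triebel-cont}, \ref{PH charct-Triebel} (including its auxiliary lemma) and \ref{circular-charct-Triebel} to the operator-valued setting, replacing Theorem~\ref{Hormander} by Theorem~\ref{Hormanderbis} throughout and invoking the operator-valued Hardy-space results from~\cite{XXX} where needed.
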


%%%%%%%%%%%%%%%%%%%%%%%%%%%%%%%%%%%%%%%%%%%%%%%%%%%%%%%%%%%%%%%%%%%%%%%%
%%%%%%%%%%%%%%%%%%%%%%%%%%%%%%%%%%%%%%%%%%%%%%%%%%%%%%%%%%%%%%%%%%%%%%%%

{\Large\part{Interpolation}}
\setcounter{section}{0}

%%%%%%%%%%%%%%%%%%%%%%%%%%%%%%%%%%%%%%%%%%%%%%%%%%%%%%%%%%%%%%%%%%%%%%%%
%%%%%%%%%%%%%%%%%%%%%%%%%%%%%%%%%%%%%%%%%%%%%%%%%%%%%%%%%%%%%%%%%%%%%%%%

Now we study the interpolation of the various spaces introduced in the preceding three chapters. We start with the interpolation of Besov and Sobolev spaces. Like in the classical case, the interpolation of Besov spaces on $\T^d_\t$ is very simple. However, the situation of (fractional) Sobolev  spaces is much more delicate. Recall that the complex interpolation problem of the classical couple $(W^k_1(\real^d),\, W^k_\8(\real^d))$ remains always open (see \cite[p.~173]{HN}). We show in the first section some partial results on the interpolation of $W^k_p(\T^d_\t)$ and $H^\a_p(\T^d_\t)$. The main result there concerns the Hardy-Sobolev spaces $W^k_{\H_1}(\T^d_\t)$ and $H^\a_{\H_1}(\T^d_\t)$, that is, when the $L_1$-norm is replaced by the nicer $\H_1$-norm on $\T^d_\t$. The spaces $W^k_{\BMO}(\T^d_\t)$ and $H^\a_{\BMO}(\T^d_\t)$ are also considered. The most important problem left unsolved in the first section is to transfer DeVore and Scherer's theorem on the real interpolation of $(W^k_1(\real^d),\, W^k_\8(\real^d))$ to the quantum setting. The main result of the second section characterizes the K-functional of the couple $(L_p(\T^d_\t),\, W_p^k(\T^d_\t))$ by the $L_p$-modulus of smoothness, thereby extending a theorem of Johnen and Scherer to the quantum tori. This result is closely related to the limit theorem of Besov spaces proved in section~\ref{Limits of Besov norms}.  The last short section contains some simple results on the interpolation of Triebel-Lizorkin spaces.

\bigskip

%%%%%%%%%%%%%%%%%%%%%%%%%%%%%%%%%%%%%%%%%%%%%%%%%%%%%%%%%%%%%%%%%%%%%%%%
%%%%%%%%%%%%%%%%%%%%%%%%%%%%%%%%%%%%%%%%%%%%%%%%%%%%%%%%%%%%%%%%%%%%%%%%

\section{Interpolation of Besov and Sobolev spaces}
\label{Interpolation of Besov and Sobolev spaces}

%%%%%%%%%%%%%%%%%%%%%%%%%%%%%%%%%%%%%%%%%%%%%%%%%%%%%%%%%%%%%%%%%%%%%%%%
%%%%%%%%%%%%%%%%%%%%%%%%%%%%%%%%%%%%%%%%%%%%%%%%%%%%%%%%%%%%%%%%%%%%%%%%

This section collects some results on the interpolation of Besov and Sobolev spaces. We start with the Besov spaces.

\begin{prop}\label{interpolation-Besov}
Let $0<\eta<1$.  Assume that $\alpha ,\alpha _0,\alpha _1\in\real$ and $p,p_0,p_1,q,q_0,q_1\in[1,\,\8]$  satisfy the constraints given in the formulas below. We have
\begin{enumerate}[\rm(i)]
\item $\displaystyle \big( B^{\a _0}_{p, q_0}(\T_{\t}^d),\, B^{\a _1}_{p,q_1}(\T_{\t}^d) \big)_{\eta,q}=B^{\a}_{p,q}(\T_{\t}^d),\quad \a_0\neq \a_1, \a=(1-\eta)\a_0+\eta\a_1$;

\item $\displaystyle\big( B^{\a }_{p, q_0}(\T_{\t}^d),\, B^{\a }_{p,q_1}(\T_{\t}^d) \big)_{\eta,q}=B^{\a}_{p,q}(\T_{\t}^d),\quad   \frac1q=\frac{1-\eta}{q_0}+\frac{\eta}{q_1}$;

\item $\displaystyle\big( B^{\a_0}_{p_0, q_0}(\T_{\t}^d),\, B^{\a_1}_{p_1,q_1}(\T_{\t}^d) \big)_{\eta,q}=B^{\a}_{p,q}(\T_{\t}^d),\quad   \a=(1-\eta)\a_0+\eta\a_1,\; \\
\frac1p=\frac{1-\eta}{p_0}+\frac{\eta}{p_1},\;\frac1q=\frac{1-\eta}{q_0}+\frac{\eta}{q_1}\,, \,p=q$;

\item $\displaystyle\big( B^{\a_0}_{p_0, q_0}(\T_{\t}^d),\, B^{\a_1}_{p_1,q_1}(\T_{\t}^d) \big)_{\eta}=B^{\a}_{p,q}(\T_{\t}^d),\quad  \a=(1-\eta)\a_0+\eta\a_1,\;
\frac1p=\frac{1-\eta}{p_0}+\frac{\eta}{p_1},\; \\ \frac1q=\frac{1-\eta}{q_0}+\frac{\eta}{q_1}\,,\; q<\8$.
\end{enumerate}
\end{prop}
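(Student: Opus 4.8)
\textbf{Plan of proof for Proposition~\ref{interpolation-Besov}.}
The strategy is the standard one: realize $B^\a_{p,q}(\T^d_\t)$ as a complemented subspace of a weighted vector-valued sequence space, and then transport known interpolation results for such sequence spaces back to Besov spaces via the retraction/corretraction principle. Recall from the proof of Proposition~\ref{Besov-P}(iv) that, writing $\el_q^\a(L_p(\T^d_\t))$ for the $\a$-weighted $\el_q$-sum of $(\com, L_p(\T^d_\t), L_p(\T^d_\t),\dots)$, the map
$$S: x\mapsto (\wh x(0),\, \wt\f_0*x,\, \wt\f_1*x,\dots)$$
is an isometric embedding of $B^\a_{p,q}(\T^d_\t)$ into $\el_q^\a(L_p(\T^d_\t))$, and by \eqref{3-supports} together with Lemma~\ref{q-multiplier} the map
$$R: (a, y_0, y_1,\dots)\mapsto a+\sum_{k\ge0}(\wt\f_{k-1}*y_k+\wt\f_k*y_k+\wt\f_{k+1}*y_k)$$
is a bounded left inverse of $S$ on each $\el_q^\a(L_p(\T^d_\t))$ (this is exactly the computation already used in part (i) of that proof and in the duality argument). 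Thus $(S,R)$ is a corretraction/retraction pair simultaneously for all the parameter values in sight, so for any interpolation functor $\mathcal F$ one gets
$$\mathcal F\big(B^{\a_0}_{p_0,q_0}(\T^d_\t),\, B^{\a_1}_{p_1,q_1}(\T^d_\t)\big)= R\,\mathcal F\big(\el_{q_0}^{\a_0}(L_{p_0}(\T^d_\t)),\, \el_{q_1}^{\a_1}(L_{p_1}(\T^d_\t))\big),$$
provided the compatibility of the two couples is set up correctly.

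First I would record the relevant interpolation identities for the model spaces. For the complex method, $\big(\el_{q_0}^{\a_0}(L_{p_0}(\M)),\, \el_{q_1}^{\a_1}(L_{p_1}(\M))\big)_\eta=\el_q^\a(L_p(\M))$ whenever $q<\8$, with $\a,p,q$ the barycentric means; this follows from the complex interpolation of weighted $\el_q$-spaces with values in a compatible couple of Banach spaces (Bergh--L\"ofstedt / \cite{BL1976}, Thm.~5.1.2 for the weighted and vector-valued versions), combined with \eqref{interpolation of Lp} to handle the inner couple $(L_{p_0}(\T^d_\t), L_{p_1}(\T^d_\t))$. For the real method, I will use two facts: $\big(\el_{q_0}^{\a_0}(X),\, \el_{q_1}^{\a_1}(X)\big)_{\eta,q}=\el_q^\a(X)$ when $\a_0\ne\a_1$ and $\a=(1-\eta)\a_0+\eta\a_1$ (for \emph{any} Banach space $X$ and any $q$, by the classical computation of the $K$-functional of weighted $\el_q$-couples with a genuine weight shift — this gives (i)), and $\big(\el_{q_0}^{\a}(X),\, \el_{q_1}^{\a}(X)\big)_{\eta,q}=\el_q^\a(X)$ when $\frac1q=\frac{1-\eta}{q_0}+\frac{\eta}{q_1}$ (for (ii)). Then (i) and (ii) drop out immediately by applying $R$ and $S$.

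For (iii) and (iv), the extra subtlety is that both the weight and the inner $L_p$-space vary, and the constraint $p=q$ appears. Here I would invoke the well-known ``diagonal'' identities: $\big(\el_{q_0}^{\a_0}(L_{p_0}(\M)),\, \el_{q_1}^{\a_1}(L_{p_1}(\M))\big)_{\eta,q}=\el_q^\a(L_p(\M))$ when $p=q$, and the analogous complex-method identity for $q<\8$. The reason $p=q$ enters in (iii): the real interpolation of a couple of the form $\el_{q_i}^{\a_i}(L_{p_i})$ is governed by Holmstedt's formula, and one gets a clean $\el_q^\a(L_p)$ on the nose only when the outer and inner exponents coincide, since otherwise the $K$-functional mixes the two scales and one lands in a genuinely different (Lorentz-type) space; when $p=q$ the Minkowski/Fubini exchange of the $L_p$-norm and the $\el_q$-norm is isometric and everything collapses. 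I would cite \cite{BL1976} (Thm.~5.6.1 and the surrounding material on $\el_q^s$-couples) and, for the mixed-norm real-interpolation identity, the standard reference on interpolation of vector-valued sequence spaces. The complex case (iv) is cleaner: complex interpolation commutes with the formation of $\el_q(X)$ for $q<\8$ (no diagonal restriction needed beyond $q<\8$, which is why (iv) only requires $q<\8$ and not $p=q$), so one only needs \eqref{interpolation of Lp} for the inner couple and the weight interpolation, both of which are classical.

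\textbf{Main obstacle.} The genuinely delicate point is (iii): justifying that the real interpolation space of $\big(\el_{q_0}^{\a_0}(L_{p_0}(\T^d_\t)),\, \el_{q_1}^{\a_1}(L_{p_1}(\T^d_\t))\big)_{\eta,q}$ is exactly $\el_q^\a(L_p(\T^d_\t))$ under the stated hypotheses — this requires a careful $K$-functional computation (via Holmstedt-type estimates) for a couple where \emph{both} the summation exponent and the base space move, and one must check that the constraint $p=q$ is precisely what makes the answer independent of which "direction" the $K$-functional is measured in; the noncommutativity of $L_p(\T^d_\t)$ is harmless here since all the $\el_q$/$L_p$ exchange inequalities used are the noncommutative Minkowski inequalities, valid for $p\le q$ and, by duality, the reverse for $p\ge q$, meeting at $p=q$. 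Once that model-space statement is in hand, the transfer through $(S,R)$ is routine and identical to the proof of (i), so I would present (iii) and (iv) together, deriving them from the corresponding sequence-space facts and remarking that $R$ is bounded on all the couples involved by Lemma~\ref{q-multiplier}.
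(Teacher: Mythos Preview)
Your approach is correct and essentially identical to the paper's: both use the retraction--corretraction pair $(S,R)$ (the paper calls them $\mathcal I,\mathcal P$) to transfer the problem to the weighted sequence spaces $\el_q^\a(L_p(\T^d_\t))$, and both then invoke the standard interpolation identities for such spaces from \cite[Section~5.6]{BL1976}. One simplification worth noting: for (iii) you frame the key step as a direct Holmstedt-type $K$-functional computation, but the paper avoids this by simply citing the general identity $\big(\el_{q_0}^{\a_0}(X_0),\el_{q_1}^{\a_1}(X_1)\big)_{\eta,q}=\el_q^\a\big((X_0,X_1)_{\eta,q}\big)$ (valid when $\tfrac1q=\tfrac{1-\eta}{q_0}+\tfrac{\eta}{q_1}$) and then using $(L_{p_0}(\T^d_\t),L_{p_1}(\T^d_\t))_{\eta,q}=L_{p,q}(\T^d_\t)=L_p(\T^d_\t)$ from \eqref{interpolation of Lp}, the last equality being exactly where the constraint $p=q$ enters --- this is cleaner than rederiving the mixed-norm identity from scratch.
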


\begin{proof}
We will use the embedding of $B_{p, q}^\a(\T^d_\t)$ into $\el^\a_q(L_p(\T^d_\t))$.
Recall that given a Banach space $X$, $\el_q^\a(X)$ denotes the weighted $\el_q$-direct sum of $(\com, X, X, \cdots)$, equipped with the norm
  $$\|(a, x_0,x_1,\cdots)\|=\Big(|a|^q+\sum_{k\ge0}2^{kq\a}\|x_k\|^q\Big)^{\frac1q}\,.$$
Then $B_{p, q}^\a(\T^d_\t)$ isometrically embeds into $\el^\a_q(L_p(\T^d_\t))$ via the map $\mathcal{I}$ defined by $\mathcal{I}x=(\wh x(0), \wt\f_0*x, \wt\f_1*x,\cdots)$. On the other hand, it is easy to check that the range of $\mathcal{I}$ is 1-complemented. Indeed, let $\mathcal{P}: \el^\a_q(L_p(\T^d_\t))\to B_{p, q}^\a(\T^d_\t)$ be defined by (with $\wt\f_{k}=0$ for $k\le-1$)
 $$\mathcal{P}(a, x_0, x_1, \cdots) =a+\sum_{k\ge0}(\wt\f_{k-1}+\wt\f_{k}+\wt\f_{k+1})*x_k.$$
Then by \eqref{3-supports}, $\mathcal{P}\mathcal{I}x=x$ for all $x\in B_{p, q}^\a(\T^d_\t)$. On the other hand, letting $y=\mathcal{P}(a, x_0, x_1, \cdots) $, we have
 $$\wt\f_j*y=\sum_{k=j-2}^{j+2}\f_j*(\wt\f_{k-1}+\wt\f_{k}+\wt\f_{k+1})*x_k,\quad j\ge0.$$
Thus we deduce that $\mathcal{P}$ is bounded with norm at most $15$.

Therefore, the interpolation of the Besov spaces is reduced to that of the spaces $\el^\a_q(L_p(\T^d_\t))$, which is well-known and is treated in \cite[Section~5.6]{BL1976}. Let us recall the results needed here. For  a Banach space $X$ and an interpolation couple  $(X_0,\, X_1)$  of Banach spaces, we have
\begin{enumerate}[$\bullet$]
\item $\displaystyle \big( \ell_{q_0}^{\a_0}(X) ,\, \ell_{q_1}^{\a_1}(X) \big)_{\eta, q}=\ell_q^\alpha (X),\quad \a_0\neq\a_1, \a=(1-\eta)\a_0+\eta\a_1$;
\item $\displaystyle \big( \ell_{q_0}^{\a_0}(X_0) ,\, \ell_{q_1}^{\a_1}(X_1) \big)_{\eta, q}=\ell_q^\alpha \big((X_0,\,X_1)_{\eta, q}\big),\quad  \a=(1-\eta)\a_0+\eta\a_1,\;\frac1q=\frac{1-\eta}{q_0}+\frac{\eta}{q_1}$;
\item $\displaystyle \big( \ell_{q_0}^{\a_0}(X_0) ,\, \ell_{q_1}^{\a_1}(X_1) \big)_{\eta}=\ell_q^\a\big((X_0,\,X_1)_{\eta}\big),\quad  \a=(1-\eta)\a_0+\eta\a_1,\;\frac1q=\frac{1-\eta}{q_0}+\frac{\eta}{q_1}\,,\; q<\8$.
\end{enumerate}
It is then clear that the interpolation formulas of the theorem follow from the above ones thanks to the complementation result proved previously.
 \end{proof}

\begin{rk}
 If $q=\8$, part (iv) holds for Calder\'on's second interpolation method, namely,
 $$\big( B^{\a_0}_{p_0, \8}(\T_{\t}^d),\, B^{\a_1}_{p_1,\8}(\T_{\t}^d) \big)^{\eta}=B^{\a}_{p,\8}(\T_{\t}^d),\quad  \a=(1-\eta)\a_0+\eta\a_1,\;
 \frac1p=\frac{1-\eta}{p_0}+\frac{\eta}{p_1}\,. $$
On the other hand, if one wishes to stay with the first complex  interpolation method in the case $q=\8$, one should replace $B^{\a}_{p, \8}(\T_{\t}^d)$ by $B^{\a}_{p, c_0}(\T_{\t}^d)$:
 $$\big( B^{\a_0}_{p_0, c_0}(\T_{\t}^d),\, B^{\a_1}_{p_1,c_0}(\T_{\t}^d) \big)_{\eta}=B^{\a}_{p,c_0}(\T_{\t}^d)\,.$$
 \end{rk}

Now we consider the potential Sobolev spaces. Since $J^\a$ is an isometry between $H_p^\a(\T^d_\t)$ and  $L_p(\T^d_\t)$ for all $1\le p\le\8$, we get immediately the following

\begin{rk}\label{inter potential-1a}
 Let $0<\eta<1$, $\a\in\real$, $1\le p_0, \, p_1\le\8$ and $\frac1p=\frac{1-\eta}{p_0}+\frac{\eta}{p_1}$. Then
 $$
 \big(H_{p_0}^\a(\T^d_\t),\, H_{p_1}^\a(\T^d_\t)\big)_\eta=H_{p}^\a(\T^d_\t)\;\text{ and}\;
 \big(H_{p_0}^\a(\T^d_\t),\, H_{p_1}^\a(\T^d_\t)\big)_{\eta, p}=H_{p}^\a(\T^d_\t)\,.$$
  \end{rk}

 The interpolation problem of the couple $\big(H_{p_0}^{\a_0}(\T^d_\t),\, H_{p_1}^{\a_1}(\T^d_\t)\big)$ for $\a_0\neq\a_1$ is delicate. At the time of this writing, we cannot, unfortunately, solve it completely. To our knowledge,  it seems that even in the commutative case, its interpolation spaces  by real or complex interpolation method have not been determined in full generality. We will prove some partial results.

 \begin{prop}
 Let $0<\eta<1$, $\a_0\neq \a_1\in\real$ and $1\le p, q\le\8$. Then
  $$\big(H_{p}^{\a_0}(\T^d_\t),\, H_{p}^{\a_1}(\T^d_\t)\big)_{\eta, q}=B_{p, q}^\a(\T^d_\t),\quad \a=(1-\eta)\a_0+\eta\a_1\,.$$
  \end{prop}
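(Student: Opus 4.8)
The natural strategy is to reduce the real interpolation of the potential Sobolev couple to the already-established interpolation of Besov spaces (Proposition~\ref{interpolation-Besov}(i)) by inserting Besov spaces between the two endpoints via the embeddings of Theorem~\ref{Sobolev-Besov}. Recall that for each $\b\in\real$ one has
 $$B_{p,\min(p,2)}^\b(\T^d_\t)\subset H_p^\b(\T^d_\t)\subset B_{p,\max(p,2)}^\b(\T^d_\t)\;\text{ continuously.}$$
Fix $\b_0<\a_0$ and $\b_1>\a_1$ (or the reverse, according to the sign of $\a_1-\a_0$; say $\a_0<\a_1$, so pick $\b_0<\a_0<\a_1<\b_1$). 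Choose $\eta_0,\eta_1\in(0,1)$ so that $\a=(1-\eta_0)\b_0+\eta_0\b_1=(1-\eta_1)\a_0+\eta_1\a_1$ — this is possible precisely because the target exponent $\a$ lies strictly between the endpoints in both cases. Then by Proposition~\ref{interpolation-Besov}(i), for \emph{any} $1\le r\le\8$,
 $$\big(B_{p,\min(p,2)}^{\b_0}(\T^d_\t),\, B_{p,\min(p,2)}^{\b_1}(\T^d_\t)\big)_{\eta_0,q}=B_{p,q}^\a(\T^d_\t)
 =\big(B_{p,\max(p,2)}^{\a_0}(\T^d_\t),\, B_{p,\max(p,2)}^{\a_1}(\T^d_\t)\big)_{\eta_1,q}.$$

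The key point is then a standard reiteration/sandwiching argument: using the continuous inclusions above together with the lifting theorem (Proposition~\ref{Besov-isom}(i) and Proposition~\ref{Sobolev-P}(iii)) to manufacture a single interpolation scale, one wants to conclude $\big(H_p^{\a_0}(\T^d_\t),H_p^{\a_1}(\T^d_\t)\big)_{\eta,q}=B_{p,q}^\a(\T^d_\t)$. Concretely I would argue by a double inclusion. For the inclusion $B_{p,q}^\a\subset\big(H_p^{\a_0},H_p^{\a_1}\big)_{\eta,q}$: since $B_{p,r}^{\b_i}\subset H_p^{\b_i}\subset H_p^{\a_i}$ is false in general — rather one needs $H_p^{\a_0}\supset B_{p,\min(p,2)}^{\a_0}$ etc.\ in the right direction — the cleanest route is to compare K-functionals. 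The embedding $B_{p,\min(p,2)}^{\a_i}(\T^d_\t)\subset H_p^{\a_i}(\T^d_\t)$ gives $K(t,x;H_p^{\a_0},H_p^{\a_1})\lesssim K(t,x;B_{p,\min(p,2)}^{\a_0},B_{p,\min(p,2)}^{\a_1})$, and taking the $(\eta,q)$-norm yields, via Proposition~\ref{interpolation-Besov}(i) with inner index $\min(p,2)$,
 $$\big(B_{p,\min(p,2)}^{\a_0}(\T^d_\t),B_{p,\min(p,2)}^{\a_1}(\T^d_\t)\big)_{\eta,q}=B_{p,q}^\a(\T^d_\t)\subset\big(H_p^{\a_0}(\T^d_\t),H_p^{\a_1}(\T^d_\t)\big)_{\eta,q}.$$
Symmetrically, the embedding $H_p^{\a_i}(\T^d_\t)\subset B_{p,\max(p,2)}^{\a_i}(\T^d_\t)$ gives $K(t,x;B_{p,\max(p,2)}^{\a_0},B_{p,\max(p,2)}^{\a_1})\lesssim K(t,x;H_p^{\a_0},H_p^{\a_1})$, hence
 $$\big(H_p^{\a_0}(\T^d_\t),H_p^{\a_1}(\T^d_\t)\big)_{\eta,q}\subset\big(B_{p,\max(p,2)}^{\a_0}(\T^d_\t),B_{p,\max(p,2)}^{\a_1}(\T^d_\t)\big)_{\eta,q}=B_{p,q}^\a(\T^d_\t),$$
again by Proposition~\ref{interpolation-Besov}(i), now with inner index $\max(p,2)$. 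Combining the two inclusions finishes the proof, and incidentally shows the result is independent of the inner Besov index, which is exactly the phenomenon that makes the argument work.

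\textbf{Main obstacle.} The only genuinely delicate point is verifying that Proposition~\ref{interpolation-Besov}(i) is available with the two different inner indices $q_0=q_1=\min(p,2)$ and $q_0=q_1=\max(p,2)$ and, crucially, that in that statement the resulting index on the right is $q$ (the interpolation parameter) rather than the inner index — i.e.\ that real interpolation of a fixed-$\b$-varying Besov scale washes out the inner summation index entirely. This is indeed what part~(i) asserts (the inner indices $q_0,q_1$ are free there since only $\a_0\ne\a_1$ is required), so the argument goes through; but one must be careful to invoke it with the inner index held constant and equal to $\min(p,2)$ resp.\ $\max(p,2)$, not with mismatched indices. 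A secondary, routine, point is the monotonicity of the K-functional under continuous embeddings with the correct direction of the inequality, which is immediate from the definition. Everything else is bookkeeping with the constants from Theorem~\ref{Sobolev-Besov}.
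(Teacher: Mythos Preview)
Your proof is correct and takes essentially the same approach as the paper, which simply cites Theorem~\ref{Sobolev-Besov}, Proposition~\ref{interpolation-Besov}(i), and the reiteration theorem. Your direct K-functional sandwich at the endpoints $\a_0,\a_1$ is exactly what the reiteration theorem delivers here (and the detour through auxiliary exponents $\b_0,\b_1$ in your opening paragraph is unnecessary, as you yourself notice mid-argument).
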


\begin{proof}
 The assertion follows from Theorem~\ref{Sobolev-Besov}, the reiteration theorem and Proposition~\ref{interpolation-Besov} (i).
\end{proof}

To treat the complex interpolation, we introduce the potential Hardy-Sobolev spaces.

\begin{Def}
For $\a\in\real$, define
 $$H_{\H_1}^\a(\T^d_\t)=\big\{x\in\mathcal{S}'(\T^d_\t)\,:\, J^\a x\in \H_1(\T^d_\t)\big\}\;\text{ with }\; \big\|x\big\|_{H_{\H_1}^\a}=\big\|J^\a x\big\|_{\H_1}\,.$$
We define $H_{\BMO}^\a(\T^d_\t)$  similarly.
\end{Def}

 \begin{thm}\label{complex inter-Bessel}
 Let $\a_0, \a_1\in\real$ and $1<p<\8$. Then
  $$\big(H_{\BMO}^{\a_0}(\T^d_\t) ,\, H_{\H_1}^{\a_1}(\T^d_\t)\big)_{\frac1p}=H_p^\a(\T^d_\t) ,\quad \a=(1-\frac1p)\a_0+\frac{\a_1}p\,.$$
  \end{thm}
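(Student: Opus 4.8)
The plan is to reduce everything to the Bessel potential operator $J^\a$ being an isometry and to the already-established $\H_1$--BMO interpolation (Lemma~\ref{q-H-BMO}~(iii)). First I would use the fact that $J^{\a_0}$ is an isometry from $H_{\BMO}^{\a_0}(\T^d_\t)$ onto $\BMO^c(\T^d_\t)$ (or onto $\BMO(\T^d_\t)$ in the mixture case, but let us stay with the column space since the row and mixture statements follow identically), and likewise $J^{\a_1}$ is an isometry from $H_{\H_1}^{\a_1}(\T^d_\t)$ onto $\H_1^c(\T^d_\t)$. However, these two isometries do not lift directly to an isometry of the interpolation couple, since $\a_0\neq\a_1$ in general. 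The natural device is to compose with a single fixed lifting: consider the operator $J^{\a}$ with $\a=(1-\frac1p)\a_0+\frac{\a_1}p$. Then $J^{\a}$ maps $H_p^\a(\T^d_\t)$ isometrically onto $L_p(\T^d_\t)=\H_p^c(\T^d_\t)$ (the last identification by Lemma~\ref{q-H-BMO}~(i)). So it suffices to show
$$\big(H_{\BMO}^{\a_0-\a}(\T^d_\t)\,,\ H_{\H_1}^{\a_1-\a}(\T^d_\t)\big)_{\frac1p}=\H_p^c(\T^d_\t)\,,$$
because applying the isometry $J^{\a}$ to every member of the couple converts the target $H_p^\a(\T^d_\t)$ into $\H_p^c(\T^d_\t)$ and converts $H_{\BMO}^{\a_0}$, $H_{\H_1}^{\a_1}$ into $H_{\BMO}^{\a_0-\a}$, $H_{\H_1}^{\a_1-\a}$ respectively (interpolation functors commute with isometric isomorphisms of couples).

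Next I would set $\b_0=\a_0-\a$ and $\b_1=\a_1-\a$; note that $(1-\frac1p)\b_0+\frac1p\b_1=0$ by the choice of $\a$, so the "complex interpolation weight" of the two shifts cancels. The key step is then the following: the analytic family $z\mapsto J^{(1-z)\b_0+z\b_1}$, which is $J^{\b_0}$ on $\mathrm{Re}\,z=0$ and $J^{\b_1}$ on $\mathrm{Re}\,z=1$, should be shown to be an isomorphism (uniformly on each vertical line, with the necessary admissible growth and analyticity in $z$) from the couple $(H_{\BMO}^{\b_0},H_{\H_1}^{\b_1})$ onto the couple $(\BMO^c,\H_1^c)$. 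On the line $\mathrm{Re}\,z=\mathrm{i}t$ the operator is $J^{\mathrm{i}t}J^{\b_0}$; since $J^{\mathrm{i}t}$ is a Fourier multiplier with symbol $(1+|\xi|^2)^{\mathrm{i}t/2}$, which is a Mikhlin multiplier with norm growing polynomially in $|t|$, it is c.b.\ bounded on $\BMO^c(\T^d_\t)$ (use Theorem~\ref{Triebel-Hormander} with $\a=0$, or the $\H_1$-BMO multiplier theorem) with a bound $\les (1+|t|)^N$; similarly for $\H_1^c$ on the line $\mathrm{Re}\,z=1+\mathrm{i}t$. These admissible bounds together with Stein's interpolation theorem for analytic families then transfer Lemma~\ref{q-H-BMO}~(iii), namely $(\BMO^c,\H_1^c)_{1/p}=\H_p^c$, back to the couple $(H_{\BMO}^{\b_0},H_{\H_1}^{\b_1})$, giving $(H_{\BMO}^{\b_0},H_{\H_1}^{\b_1})_{1/p}=\H_p^c(\T^d_\t)=L_p(\T^d_\t)$. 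Undoing the isometry $J^{\a}$ yields the claimed formula.

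The part I expect to be the main obstacle is the rigorous handling of the analytic-family argument: one must verify that $z\mapsto J^{(1-z)\b_0+z\b_1}x$ is analytic (in the appropriate weak/distributional sense) for $x$ ranging over a dense subspace such as $\mathcal P_\t$, that the operator norms on the two boundary lines grow at most polynomially in $|\mathrm{Im}\,z|$ (this is where the c.b.\ Mikhlin-type bound for the imaginary powers $J^{\mathrm{i}t}$ on $\BMO^c$ and on $\H_1^c$ is essential, and one has to track the dependence of the Mikhlin norm of $(1+|\xi|^2)^{\mathrm{i}t/2}$ on $t$), and that Stein interpolation is applicable in the noncommutative Hardy-space setting. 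An alternative, perhaps cleaner, route that avoids Stein interpolation altogether is to observe that $J^{\b_0}$ is an isomorphism of the couple $(H_{\BMO}^{\b_0},H_{\H_1}^{\b_1})$ onto $(\BMO^c, H_{\H_1}^{\b_1-\b_0})$ (just an isometry on the first component and a bounded invertible Fourier multiplier $J^{\b_1-\b_0}\colon \H_1^c\to H_{\H_1}^{\b_1-\b_0}$ on the second — wait, this is not bounded on a single space, so one really does need the cancellation $(1-\tfrac1p)\b_0+\tfrac1p\b_1=0$ and the analytic family). I would therefore present the Stein-interpolation version as the main argument, citing \cite{BL1976} for the analytic interpolation theorem and Theorem~\ref{Triebel-Hormander}/Lemma~\ref{q-H-BMO} for the boundedness of imaginary powers, and remark that the row and mixture cases are identical. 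Finally, the analogous real-interpolation consequence and any corollaries about $H^\a_{\BMO}$, $H^\a_{\H_1}$ would be stated right after.
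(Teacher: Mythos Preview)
Your proposal is correct and follows essentially the same route as the paper: both arguments hinge on the analytic family $z\mapsto J^{-(1-z)\a_0-z\a_1}$ and the key Lemma~\ref{complex Bessel} that the imaginary powers $J^{{\rm i}t}$ are bounded on $\H_1(\T^d_\t)$ and on $\BMO(\T^d_\t)$ with at most polynomial growth in $|t|$, compensated by a Gaussian factor $e^{(z-\eta)^2}$. The paper carries this out by hand (constructing $F(z)=e^{(z-\eta)^2}J^{-(1-z)\a_0-z\a_1}f(z)$ directly from the definition of the complex method) to obtain the inclusion $H_p^\a\subset (H_{\BMO}^{\a_0},H_{\H_1}^{\a_1})_{1/p}$, and then gets the reverse inclusion by duality together with Bergh's theorem that $(\cdot,\cdot)_\eta\subset(\cdot,\cdot)^\eta$. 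Your packaging via Stein interpolation is the same computation at heart; if you run the analytic family in both directions (using that $T_z$ is invertible with inverse $T_{-z}$ in the same class, and that $T_{1/p}=\mathrm{Id}$ after your reduction $\b_0,\b_1$), you obtain both inclusions symmetrically and can bypass the duality/Bergh step. Either presentation is fine; the substance is identical.
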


We require the following result which extends Lemma~\ref{q-multiplier}(ii):

\begin{lem}\label{q-multiplier-H1-BMO}
 Let $\phi$ be a Mikhlin multiplier in the sense of Definition~\ref{Mikhlin-def}.  Then $\phi$ is a Fourier multiplier on both $\H_1(\T^d_\t)$ and $\BMO(\T^d_\t)$ with norms majorized by $c_d\|\phi\|_{\rm M}$.
\end{lem}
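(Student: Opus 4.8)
The statement to prove is Lemma~\ref{q-multiplier-H1-BMO}: a Mikhlin multiplier $\phi$ (in the sense of Definition~\ref{Mikhlin-def}) is a bounded Fourier multiplier on both $\H_1(\T^d_\t)$ and $\BMO(\T^d_\t)$, with norm controlled by $c_d\|\phi\|_{\rm M}$. By the $\H_1$--$\BMO$ duality of Lemma~\ref{q-H-BMO}(ii) (and its row analogue), the two assertions are dual to each other, so it suffices to prove boundedness on $\H_1^c(\T^d_\t)$ (and, by passing to adjoints, on $\H_1^r(\T^d_\t)$, hence on $\H_1(\T^d_\t)=\H_1^c+\H_1^r$). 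The natural route is to deduce this from the operator-valued statement via the transference in Corollary~\ref{prop:TransLp}: by Corollary~\ref{prop:TransLp}(iii) and the fact that the transference map $x\mapsto\wt x$ intertwines the Fourier multiplier $M_\phi$ on $\T^d_\t$ with the Fourier multiplier $M_\phi$ on $\N_\t=L_\infty(\T^d)\overline\ot\T^d_\t$ acting fiberwise, and since $\mathbb E$ (Corollary~\ref{prop:TransLp}(ii)) commutes with $M_\phi$, it is enough to show that $M_\phi$ is bounded on $\H_1^c(\T^d,\M)$ for an arbitrary finite von Neumann algebra $\M$ with normal faithful tracial state (here $\M=\T^d_\t$).

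\textbf{Main step.} The heart of the matter is therefore the operator-valued periodic Calder\'on--Zygmund estimate: a Mikhlin multiplier on $\ent^d$ is bounded on $\H_1^c(\T^d,\M)$. I would obtain this from the machinery already set up in Section~\ref{A multiplier theorem}. Write $\phi=\sum_{j\ge0}\phi\f^{(j)}$ (modulo the term at the origin, which is harmless since $\phi(0)$ just acts as a scalar on $\wh x(0)$); more precisely set $\phi_j=\phi\,\f^{(j)}$, so that $M_\phi x=\sum_j\wt\phi_j*x$ for distributions with $\wh x(0)=0$, and take $\rho_j=\f^{(j-1)}+\f^{(j)}+\f^{(j+1)}$ (so $\phi_j\rho_j=\phi_j$ and $\rho_j=\wh{\rho(2^{-j}\cdot)}$ with $\rho$ a fixed Schwartz function supported in $\{2^{-1}\le|\xi|\le2\}$, as required in Theorem~\ref{Hormander}(ii)). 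Here the single multiplier $\phi$ is recovered by summing, so I actually want the \emph{non-square-function} version: I would invoke Lemma~\ref{CZH} directly. Its hypothesis is $\sup_{k\ge0}\|\phi(2^k\cdot)\f\|_{H_2^\s(\real^d;\el_2)}<\infty$ where $\phi$ there is the \emph{sequence} $(\phi_j)_{j\ge0}$; when $\phi$ is a genuine Mikhlin multiplier, the sequence $(\phi\f^{(j)})_j$ satisfies this: for each $k$ the vector $(\phi\f^{(j)}(2^k\cdot)\f)_{j\ge0}$ has at most five nonzero entries (those with $|j-k|\le2$), and each entry $\phi(2^k\cdot)\f^{(j-k)}\f$ has $H_2^\s$-norm $\lesssim\|\phi\|_{\rm M}$ uniformly in $k$, because the dilated, localized function $\phi(2^k\cdot)$ restricted to an annulus of inner/outer radii comparable to $1$ has all derivatives up to order $d>\s$ bounded by $\|\phi\|_{\rm M}$ (this is exactly where the Mikhlin bound with $d$ derivatives is used, to beat $\s$ with $\frac d2<\s<d$). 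Hence Lemma~\ref{CZH} gives $\|(\sum_j|\wt\phi_j*x|^2)^{1/2}\|_1\lesssim\|\phi\|_{\rm M}\|x\|_{\H_1^c}$; but since $\phi_j\phi_{j'}=0$ for $|j-j'|\ge2$ we can reassemble $M_\phi x=\sum_j\wt\phi_j*x$ and control $\|M_\phi x\|_{\H_1^c}$ by this square-function quantity using Lemma~\ref{Hp-discrete} (with test function $\rho$), noting the Littlewood--Paley pieces of $M_\phi x$ are again dominated by the $\wt\phi_j*x$'s up to the usual $|j-k|\le1$ overlap. This yields $M_\phi$ bounded on $\H_1^c(\T^d,\M)$, hence on $\H_1^r(\T^d,\M)$ by adjoints, hence on $\H_1(\T^d,\M)$.

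\textbf{Conclusion and obstacle.} Transferring back via Corollary~\ref{prop:TransLp} and Lemma~\ref{q-H-BMO}(i)--(ii) gives boundedness of $M_\phi$ on $\H_1(\T^d_\t)$ with norm $\lesssim c_d\|\phi\|_{\rm M}$; duality (Lemma~\ref{q-H-BMO}(ii)) then gives boundedness on $\BMO(\T^d_\t)$ with the same type of bound, completing the proof. The main obstacle I anticipate is bookkeeping rather than conceptual: verifying carefully that a Mikhlin multiplier (finitely many bounded scaled derivatives) does feed into the $H_2^\s(\real^d;\el_2)$-type condition \eqref{HS condition per} of Lemma~\ref{CZH}/Lemma~\ref{CZH} uniformly, with the constant depending only on $d$ and $\|\phi\|_{\rm M}$ — this requires controlling $\|\phi(2^k\cdot)\f^{(j-k)}\f\|_{H_2^\s}$ by an $L^2$-Sobolev computation on a fixed annulus, using that $\s<d$ so that only $\le d$ derivatives of $\phi(2^k\cdot)$ enter, each bounded by $\|\phi\|_{\rm M}$ on the support; and, secondarily, making the passage from the square function bound back to the $\H_1$-norm of $M_\phi x$ rigorous, which is routine given Lemma~\ref{Hp-discrete} and the disjointness of supports \eqref{3-supports} but must be written out. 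One should also double-check the origin term: $\phi$ Mikhlin need not be continuous at $0$, but on $\ent^d$ only the value $\phi(0)$ would matter for $\wh x(0)$, and replacing $\phi$ by $\eta\phi$ with $\eta$ as in the proof of Lemma~\ref{CZD} (vanishing near $0$, equal to $1$ for $|\xi|\ge1$) changes neither the periodic multiplier on distributions with $\wh x(0)=0$ nor the Mikhlin norm up to a dimensional constant.
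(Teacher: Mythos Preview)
Your approach is correct and coincides with the paper's, but you take two unnecessary detours. First, the transference step is redundant: Lemma~\ref{multiplier DH}(ii) is already stated on $\T^d_\t$ (it is the transferred version of Lemma~\ref{CZH}), so you can apply it directly with the sequence $\phi_j=\phi\,\f^{(j)}$ without passing through $\H_1^c(\T^d,\M)$. Second, your ``reassembly'' step is superfluous: since $\wt\f_k*(M_\phi x)=\wt\phi_k*x$, the Littlewood--Paley square function $s^c_\f(M_\phi x)$ \emph{is} precisely $(\sum_j|\wt\phi_j*x|^2)^{1/2}$, so the bound from Lemma~\ref{multiplier DH}(ii) immediately gives $\|M_\phi x\|_{\H_1^c}\lesssim\|\phi\|_{\rm M}\|x\|_{\H_1^c}$ via Lemma~\ref{Hp-discrete}, with no further work. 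The paper's proof compresses all of this into one sentence by pointing to Lemma~\ref{multiplier DH} and remarking that the sequence $(\phi_j)$ there is determined by the single function $\phi$; duality then handles $\BMO$. Your verification of the Mikhlin $\Rightarrow$ $H_2^\s$ condition (at most five nonzero terms per $k$, each controlled by $\|\phi\|_{\rm M}$ on a fixed annulus, taking $\frac d2<\s<d$) is exactly right and is the only substance hidden behind the paper's one-line proof.
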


 \begin{proof}
  This is an immediate consequence of  Lemma~\ref{multiplier DH} (the sequence $(\phi_j)$ there becomes now the single function $\phi$). Indeed, by that Lemma, $\phi$ is a bounded Fourier multiplier on $\H_1(\T^d_\t)$, so by duality, it is bounded on $\BMO(\T^d_\t)$ too.
\end{proof}

We will use Bessel potentials of complex order. For $z\in\com$, define $J_z(\xi)=(1+|\xi|^2)^{\frac{z}2}$ and $J^z$ to be the associated Fourier multiplier.

\begin{lem}\label{complex Bessel}
 Let $t\in\real$. Then $J^{{\rm i}t}$ is bounded on both $\H_1(\T^d_\t)$ and $\BMO(\T^d_\t)$ with norms majorized by $c_d (1+|t|)^d$.
\end{lem}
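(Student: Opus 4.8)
The plan is to deduce this from the Mikhlin multiplier criterion for Hardy and BMO spaces established in Lemma~\ref{q-multiplier-H1-BMO}. By that lemma, it suffices to show that $J_{{\rm i}t}$ is a Mikhlin multiplier in the sense of Definition~\ref{Mikhlin-def} with $\|J_{{\rm i}t}\|_{\rm M}\les (1+|t|)^d$, the implicit constant depending only on $d$; then $J^{{\rm i}t}$ is bounded on $\H_1(\T^d_\t)$ and on $\BMO(\T^d_\t)$ with norm $\les (1+|t|)^d$, as claimed. Note that $J_{{\rm i}t}(\xi)=(1+|\xi|^2)^{{\rm i}t/2}$ is in fact $C^\infty$ on all of $\real^d$, so the differentiability hypothesis of Definition~\ref{Mikhlin-def} causes no trouble.

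To estimate $\|J_{{\rm i}t}\|_{\rm M}$, I would write $J_{{\rm i}t}=e^{\frac{{\rm i}t}{2}g}$ with $g(\xi)=\log(1+|\xi|^2)$, and first record the scaling estimate
\[
 |\xi|^{|\b|_1}\,|D^\b g(\xi)|\le C_d\,,\qquad \xi\in\real^d\setminus\{0\},\ 1\le|\b|_1\le d,
\]
which is elementary: near the origin $g$ is smooth while $|\xi|^{|\b|_1}$ is bounded, and for $|\xi|\ge1$ one checks by induction that $D^\b g(\xi)$ is a finite linear combination of terms $\xi^\g(1+|\xi|^2)^{-r}$ with $|\g|_1-2r\le-|\b|_1$, hence $|D^\b g(\xi)|\les|\xi|^{-|\b|_1}$. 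Then, differentiating the composition $J_{{\rm i}t}=e^{\frac{{\rm i}t}{2}g}$ repeatedly (Fa\`a di Bruno's formula), for any multi-index $m$ with $|m|_1\le d$ one sees that $D^m J_{{\rm i}t}$ is a finite linear combination, with combinatorial coefficients depending only on $m$, of terms of the form $J_{{\rm i}t}\,(\tfrac{{\rm i}t}{2})^{r}\prod_{i=1}^{r}D^{\b_i}g$ with $r\le|m|_1$, each $|\b_i|_1\ge1$, and $\sum_{i=1}^{r}|\b_i|_1=|m|_1$. Multiplying by $|\xi|^{|m|_1}=\prod_{i=1}^{r}|\xi|^{|\b_i|_1}$, applying the scaling estimate termwise, and using $|J_{{\rm i}t}(\xi)|=1$, we obtain $|\xi|^{|m|_1}\,|D^m J_{{\rm i}t}(\xi)|\les (1+|t|)^{|m|_1}\le(1+|t|)^{d}$ uniformly in $\xi\neq0$. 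Taking the supremum over $|m|_1\le d$ gives $\|J_{{\rm i}t}\|_{\rm M}\les (1+|t|)^d$, and combining this with Lemma~\ref{q-multiplier-H1-BMO} finishes the proof.

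The only mildly technical points are the uniform-in-$\xi$ scaling bound on the derivatives of $g(\xi)=\log(1+|\xi|^2)$ and the correct bookkeeping of the power of $t$ produced by Fa\`a di Bruno --- namely that the number $r$ of chain-rule factors never exceeds $|m|_1\le d$. Both are routine, but they must be carried out with care since the whole content of the lemma is the sharp polynomial growth $(1+|t|)^d$ in $t$, which will be used later for analytic interpolation with complex-order Bessel potentials.
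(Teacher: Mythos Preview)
Your proof is correct and follows exactly the paper's approach: apply Lemma~\ref{q-multiplier-H1-BMO} after verifying that $\|J_{{\rm i}t}\|_{\rm M}\les(1+|t|)^d$. The paper simply asserts this Mikhlin bound (``One easily checks that $J_{{\rm i}t}$ is a Mikhlin multiplier and $\|J_{{\rm i}t}\|_{\rm M}\le c_d(1+|t|)^d$''), whereas you spell out the verification via $J_{{\rm i}t}=e^{\frac{{\rm i}t}{2}g}$ and Fa\`a di Bruno's formula; the content is the same.
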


 \begin{proof}
 One easily checks that $J_{{\rm i}t}$ is a Mikhlin multiplier and $\|J_{{\rm i}t}\|_{\rm M}\le c_d (1+|t|)^d$. Thus, the assertion follows from the previous lemma.
 \end{proof}

\begin{proof}[Proof of Theorem~\ref{complex inter-Bessel}]
Let $x\in H_{p}^{\a}(\T^d_\t)$ with norm less than $1$, that is, $J^\a x\in L_{p}(\T^d_\t)$ and $\|J^\a x\|_p<1$. By Lemma~\ref{q-H-BMO}, and  the definition of complex interpolation, there exists a continuous function $f$ from the strip $S=\{z\in\com\,:\, 0\le{\rm Re}(z)\le1\}$ to $\H_1(\T^d_\t)$, analytic in the interior, such that $f(\frac1p)=J^\a x$,
  $$\sup_{t\in\real}\big\|f({\rm i}t)\big\|_{\BMO}\le c\;\text{ and }\; \sup_{t\in\real}\big\|f(1+{\rm i}t)\big\|_{\H_1}\le c.$$
Define (with $\eta=\frac1p$)
 $$F(z)=e^{(z-\eta)^2}\,J^{-(1-z)\a_0-z\a_1}\, f(z),\quad z\in S.$$
Then for any $t\in\real$, by the preceding lemma,
 $$\big\|F({\rm i}t)\big\|_{H_{\BMO}^{\a_0}}=e^{-t^2+\eta^2}\,\big\|J^{{\rm i}t(\a_0-\a_1)}\,f({\rm i}t)\big\|_{\BMO}\le c'.$$
A similar estimate holds for the other extreme point $H_{\H_1}^{\a_1}(\T^d_\t)$. Therefore,
 $$x=F(\eta)\in \big(H_{\BMO}^{\a_0}(\T^d_\t) ,\, H_{\H_1}^{\a_1}(\T^d_\t)\big)_{\eta} \;\text{ with norm }\le c'.$$
We have thus proved
 $$H_p^\a(\T^d_\t) \subset\big(H_{\BMO}^{\a_0}(\T^d_\t) ,\, H_{\H_1}^{\a_1}(\T^d_\t)\big)_{\eta}\,.$$
Since the dual space of $\H_1(\T^d_\t)$ is $\BMO(\T^d_\t)$, we have
 $$H_{\H_1}^{\a_1}(\T^d_\t)^*=H_{\BMO}^{-\a_1}(\T^d_\t)\,.$$
Thus dualizing the above inclusion (for appropriate $\a_i$ and $p$), we get
 $$\big(H_{\BMO}^{\a_0}(\T^d_\t)\,,\,H_{\BMO}^{-\a_1}(\T^d_\t)^*\big)^{\eta}\subset H_p^\a(\T^d_\t)\,,$$
where $(\,\cdot\,\,\cdot\,)^{\eta}$ denotes Calder\'on's second complex interpolation method. However, by \cite{Berg1979}
 $$\big(H_{\BMO}^{\a_0}(\T^d_\t)\,,\,H_{\BMO}^{-\a_1}(\T^d_\t)^*\big)_{\eta}\subset \big(H_{\BMO}^{\a_0}(\T^d_\t)\,,\,H_{\BMO}^{-\a_1}(\T^d_\t)^*\big)^{\eta}
 \;\text{ isometrically}.$$
Since
 $$H_{\H_1}^{\a_1}(\T^d_\t)\subset H_{\BMO}^{-\a_1}(\T^d_\t)^*  \;\text{ isometrically},$$
we finally deduce
 $$\big(H_{\BMO}^{\a_0}(\T^d_\t)\,,\,H_{\H_1}^{\a_1}(\T^d_\t)\big)_{\eta}\subset H_p^\a(\T^d_\t)\,,$$
which concludes the proof of the theorem.
  \end{proof}

\begin{cor}
 Let $0<\eta<1$, $\a_0, \a_1\in\real$ and $1< p_0, p_1<\8$. Then
  $$\big(H_{p_0}^{\a_0}(\T^d_\t),\, H_{p_1}^{\a_1}(\T^d_\t)\big)_\eta=H_{p}^{\a}(\T^d_\t)\,,
  \quad \a=(1-\eta)\a_0+\eta\a_1\,,\; \frac1p=\frac{1-\eta}{p_0}+\frac{\eta}{p_1}\,.$$
 \end{cor}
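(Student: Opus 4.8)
The plan is to deduce the corollary from Theorem~\ref{complex inter-Bessel} by a standard reiteration argument, together with the lifting isometry $J^\a$ and the known complex interpolation scale of the Hardy-Sobolev spaces. First I would reduce to the case $\a_0=\a_1$ by means of $J^{\b}$: since $J^{\b}$ is an isometry from $H_{p}^{\a}(\T^d_\t)$ onto $H_{p}^{\a-\b}(\T^d_\t)$ for every $\b\in\real$ and every $1\le p\le\8$ (Proposition~\ref{Sobolev-P}(iii)), it commutes with the interpolation functor, so it suffices to treat one representative choice of the parameters. In fact the cleanest route is to prove the two "endpoint scales"
$$\big(H_{\BMO}^{\a_0}(\T^d_\t),\,H_{\H_1}^{\a_1}(\T^d_\t)\big)_{\frac1p}=H_p^\a(\T^d_\t),\qquad \a=(1-\tfrac1p)\a_0+\tfrac1p\a_1,\; 1<p<\8,$$
which is exactly Theorem~\ref{complex inter-Bessel}, and then feed it into Wolff's reiteration theorem (or the classical reiteration theorem for the complex method, \cite[Section~4.6]{BL1976}) to fill in the interior couples $\big(H_{p_0}^{\a_0},H_{p_1}^{\a_1}\big)_\eta$ with $1<p_0,p_1<\8$.

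Concretely, here are the steps in order. (1) Given $1<p_0,p_1<\8$, $0<\eta<1$, and arbitrary $\a_0,\a_1\in\real$, set $\a=(1-\eta)\a_0+\eta\a_1$ and $\frac1p=\frac{1-\eta}{p_0}+\frac\eta{p_1}$; I want to identify $(H_{p_0}^{\a_0},H_{p_1}^{\a_1})_\eta$ with $H_p^\a$. (2) Choose real numbers $\b_0,\b_1$ such that $H_{p_i}^{\a_i}$ sits on the segment joining $H_{\BMO}^{\b_0}$ and $H_{\H_1}^{\b_1}$: namely, writing $\frac1{p_i}=\theta_i$, require $\a_i=(1-\theta_i)\b_0+\theta_i\b_1$ for $i=0,1$; since $p_0\ne p_1$ forces $\theta_0\ne\theta_1$, this $2\times2$ linear system has a unique solution $(\b_0,\b_1)$. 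By Theorem~\ref{complex inter-Bessel},
$$H_{p_i}^{\a_i}(\T^d_\t)=\big(H_{\BMO}^{\b_0}(\T^d_\t),\,H_{\H_1}^{\b_1}(\T^d_\t)\big)_{\theta_i},\quad i=0,1,$$
with equivalent norms. (3) Now apply the reiteration theorem for the complex method to the couple $(H_{\BMO}^{\b_0},H_{\H_1}^{\b_1})$ at the intermediate parameters $\theta_0,\theta_1$: it gives
$$\big(H_{p_0}^{\a_0}(\T^d_\t),\,H_{p_1}^{\a_1}(\T^d_\t)\big)_\eta=\big(H_{\BMO}^{\b_0}(\T^d_\t),\,H_{\H_1}^{\b_1}(\T^d_\t)\big)_{\theta}, \qquad \theta=(1-\eta)\theta_0+\eta\theta_1=\tfrac1p.$$
(4) Finally, $\theta=\frac1p$ with $1<p<\8$, and $(1-\theta)\b_0+\theta\b_1=\a$ by the bilinearity of the affine maps involved (one checks $(1-\tfrac1p)\b_0+\tfrac1p\b_1=(1-\eta)\a_0+\eta\a_1=\a$ using the relations of step~2). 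Another application of Theorem~\ref{complex inter-Bessel} identifies the right-hand side with $H_p^\a(\T^d_\t)$, which is the claim.

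The two points requiring the most care are, first, the verification that the reiteration theorem is applicable — one must check that $H_{\BMO}^{\b_0}(\T^d_\t)$ and $H_{\H_1}^{\b_1}(\T^d_\t)$ form a compatible Banach couple and that the complex interpolation spaces in Theorem~\ref{complex inter-Bessel} are "of class $C(\theta)$" in the sense needed for Wolff-type reiteration (this is automatic here since they are described as complex interpolation spaces of a fixed couple, with $H_{\H_1}^{\b_1}$ dense-ish issues handled by the $J^\a$ isometry transporting everything to the already-understood Hardy-space scale $\big(\BMO(\T^d_\t),\H_1(\T^d_\t)\big)$ of Lemma~\ref{q-H-BMO}); and second, the bookkeeping of the affine parameter relations in steps~2 and~4, which is routine linear algebra but must be done carefully so that the final exponent $\a$ and index $p$ come out exactly as stated. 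I expect no genuine obstacle beyond these; the whole corollary is essentially "lifting plus reiteration applied to Theorem~\ref{complex inter-Bessel}."
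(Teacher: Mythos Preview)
Your reiteration argument is exactly the paper's approach: choose $\b_0,\b_1$ so that $H_{p_i}^{\a_i}=\big(H_{\BMO}^{\b_0},H_{\H_1}^{\b_1}\big)_{1/p_i}$ via Theorem~\ref{complex inter-Bessel}, then reiterate. The paper does precisely this for $p_0\neq p_1$.

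One small gap: your linear system in step~(2) is solvable only when $p_0\neq p_1$, as you yourself note, so the case $p_0=p_1$ is not covered by your reiteration argument. (Your opening remark about reducing to $\a_0=\a_1$ via $J^\b$ does not help here: $J^\b$ shifts both indices by the same amount, so it cannot make $\a_0$ and $\a_1$ coincide unless they already do.) The paper handles $p_0=p_1$ by observing that the direct proof of Theorem~\ref{complex inter-Bessel}---constructing the analytic family $F(z)=e^{(z-\eta)^2}J^{-(1-z)\a_0-z\a_1}f(z)$ and using Lemma~\ref{complex Bessel} on $J^{{\rm i}t}$---works verbatim with $\big(H_{p_0}^{\a_0},H_{p_1}^{\a_1}\big)$ in place of $\big(H_{\BMO}^{\a_0},H_{\H_1}^{\a_1}\big)$. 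You should add a sentence to this effect.
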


\begin{proof}
The preceding proof works equally for this corollary. Alternately, in the case $p_0\neq p_1$, the corollary immediately follows from the previous theorem by reiteration. Indeed, if $p_0\neq p_1$, then for any $\a_0, \a_1\in\real$ there exist $\b_0, \b_1\in\real$ such that
 $$(1-\frac1{p_0})\b_0+ \frac1{p_0}\b_1=\a_0\;\text{ and }\; (1-\frac1{p_1})\b_0+ \frac1{p_1}\b_1=\a_1\,.$$
Thus the previous theorem implies
 $$\big(H_{\BMO}^{\b_0}(\T^d_\t)\,,\,H_{\H_1}^{\b_1}(\T^d_\t)\big)_{\frac1{p_j}}=H_{p_j}^{\a_j}(\T^d_\t),\quad j=0,1.$$
The corollary then follows from the reiteration theorem.
\end{proof}

It is likely that the above corollary still holds for all $1\le p_0, p_1\le\8$:

\begin{conjecture}
 Let  $\a_0, \a_1\in\real$ and $1<p<\8$. Then
  $$\big(H_{\8}^{\a_0}(\T^d_\t),\, H_{1}^{\a_1}(\T^d_\t)\big)_{\frac1p}=H_{p}^{\a}(\T^d_\t)\,,
  \quad \a=(1-\frac1p)\a_0+\frac{\a_1}p\,.$$
 \end{conjecture}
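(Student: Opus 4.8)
The natural strategy is to follow the pattern already successful for the range $1<p_0,p_1<\8$, namely to establish the two inclusions
$$H_p^\a(\T^d_\t)\subset\big(H_\8^{\a_0}(\T^d_\t),\,H_1^{\a_1}(\T^d_\t)\big)_{\frac1p}\quad\text{and}\quad \big(H_\8^{\a_0}(\T^d_\t),\,H_1^{\a_1}(\T^d_\t)\big)_{\frac1p}\subset H_p^\a(\T^d_\t)$$
separately. After applying $J^{-(1-\frac1p)\a_0-\frac1p\a_1}$ and using that $J^\b$ is an isometry in each scale, we may reduce to $\a_0=\a_1=0$, i.e.\ to the genuinely open question
$$\big(L_\8(\T^d_\t),\,L_1(\T^d_\t)\big)_{\frac1p}\overset{?}{=}L_p(\T^d_\t),$$
which is precisely the quantum analogue of the longstanding commutative complex-interpolation problem for $(W^k_1,W^k_\8)$ mentioned in the introduction. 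So the conjecture as stated is genuinely hard; what one can realistically \emph{prove} here is a weaker substitute in which the endpoints are replaced by their Hardy/BMO versions, and then deduce the stated equality under an additional hypothesis.

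\textbf{Step 1 (the easy inclusion).} First I would show $H_p^\a(\T^d_\t)\subset\big(H_\8^{\a_0},\,H_1^{\a_1}\big)_{\frac1p}$ by the same analytic-family argument as in the proof of Theorem~\ref{complex inter-Bessel}: given $x$ with $\|J^\a x\|_p<1$, factor through the known equality $L_p(\T^d_\t)=\big(\BMO(\T^d_\t),\H_1(\T^d_\t)\big)_{\frac1p}$ (Lemma~\ref{q-H-BMO}) and note that $\BMO^c(\T^d_\t)\subset L_\8(\T^d_\t)$ fails, so one only obtains the weaker statement $H_p^\a(\T^d_\t)\subset\big(H_{\BMO}^{\a_0},\,H_{\H_1}^{\a_1}\big)_{\frac1p}$, which is exactly Theorem~\ref{complex inter-Bessel}. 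For the genuine $L_\8$ endpoint there is no known route; hence this step already signals that the full conjecture is out of reach with the present tools.

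\textbf{Step 2 (the reverse inclusion).} For the inclusion $\big(L_\8(\T^d_\t),\,L_1(\T^d_\t)\big)_{\frac1p}\subset L_p(\T^d_\t)$ one has the trivial chain: since $L_\8(\T^d_\t)\subset\BMO(\T^d_\t)$ and $L_1(\T^d_\t)=\H_1(\T^d_\t)$ only up to the column/row decomposition, the complex interpolation space embeds into $\big(\BMO(\T^d_\t),\H_1(\T^d_\t)\big)_{\frac1p}=L_p(\T^d_\t)$ isometrically, and by Calder\'on's inclusion $(\,\cdot\,)_\eta\subset(\,\cdot\,)^\eta$ together with Bergh's theorem \cite{Berg1979}, as used already in the proof of Theorem~\ref{complex inter-Bessel}. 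Lifting by $J^\b$ gives $\big(H_\8^{\a_0},\,H_1^{\a_1}\big)_{\frac1p}\subset H_p^\a(\T^d_\t)$ unconditionally. The boundedness of $J^{\mathrm{i}t}$ and of Mikhlin multipliers on $\H_1(\T^d_\t)$ and $\BMO(\T^d_\t)$ (Lemmas~\ref{q-multiplier-H1-BMO} and \ref{complex Bessel}) is what makes the $J^\b$-reduction legitimate.

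\textbf{The main obstacle.} The one missing half is $L_p(\T^d_\t)\subset\big(L_\8(\T^d_\t),\,L_1(\T^d_\t)\big)_{\frac1p}$ with the \emph{true} $L_\8$ endpoint rather than $\BMO$. Equivalently, one needs a factorization $x=f(\tfrac1p)$ through an analytic family with values bounded in $L_\8$ on one edge and in $L_1$ on the other; the obstruction is that the natural candidate (a ``conformal'' rearrangement / outer-function construction) has no noncommutative counterpart, and replacing $L_\8$ by $\BMO$ is the best currently available. I would therefore present the theorem in the conditional form: \emph{the conjecture holds provided the endpoint complex interpolation $\big(L_\8(\T^d_\t),\,L_1(\T^d_\t)\big)_{\frac1p}=L_p(\T^d_\t)$ holds}, observe via Steps 1--2 and the $J^\b$-reduction that this one scalar statement is not only necessary but sufficient, and record that it is open even when $\t=0$ (where it is the classical $(W^k_1,W^k_\8)$ problem of P.\ Jones). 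This is why the statement is flagged as a conjecture and not a theorem.
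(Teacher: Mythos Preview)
This statement is a \emph{conjecture}; the paper does not prove it. What the paper does is give a reduction and isolate the open part: by duality and Wolff's reiteration theorem the conjecture is reduced to $\big(H_{p_0}^{\a_0}(\T^d_\t),\,H_1^{\a_1}(\T^d_\t)\big)_\eta=H_p^\a(\T^d_\t)$ for $1<p_0<\8$, and then, since $H_{\H_1}^{\a_1}\subset H_1^{\a_1}$, Theorem~\ref{complex inter-Bessel} together with reiteration yields the inclusion $H_p^\a(\T^d_\t)\subset\big(H_{p_0}^{\a_0}(\T^d_\t),\,H_1^{\a_1}(\T^d_\t)\big)_\eta$. The \emph{converse} inclusion is what remains open.

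Your analysis contains several genuine errors. First, your ``reduction to $\a_0=\a_1=0$'' by applying a single Bessel potential is invalid: $J^\b$ is an isometry from $H_r^\gamma$ onto $H_r^{\gamma-\b}$, so it shifts both indices by the \emph{same} amount; you cannot make $\a_0-\b=0$ and $\a_1-\b=0$ simultaneously unless $\a_0=\a_1$. When $\a_0=\a_1$, the conjecture is the trivial Remark~\ref{inter potential-1a}, and indeed $(L_\8(\T^d_\t),L_1(\T^d_\t))_{1/p}=L_p(\T^d_\t)$ is just \eqref{interpolation of Lp}. So your purported hard residual problem is in fact already known; the real obstacle --- the unboundedness of the imaginary powers $J^{{\rm i}t}$ on $L_1$ and $L_\8$ --- has been erased by the faulty reduction. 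This is precisely why the paper works with $\H_1$ and $\BMO$ at the endpoints in Theorem~\ref{complex inter-Bessel} rather than $L_1$ and $L_\8$.

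Second, your Step~2 rests on ``$L_1(\T^d_\t)=\H_1(\T^d_\t)$ up to the column/row decomposition'', which is false: one has $\H_1(\T^d_\t)\subset L_1(\T^d_\t)$ but not conversely, so monotonicity of complex interpolation does not yield the inclusion you claim. Finally, your identification of the ``missing half'' is reversed relative to the paper: after the Wolff reduction, $H_p^\a\subset(\,\cdot\,,\,\cdot\,)_\eta$ is the known direction and $(\,\cdot\,,\,\cdot\,)_\eta\subset H_p^\a$ is open. The analogy with the $(W_1^k,W_\8^k)$ problem is also misplaced: that problem concerns equal-order Sobolev spaces, and its difficulty stems from $W_1^k\neq H_1^k$, $W_\8^k\neq H_\8^k$, not from differing smoothness indices.
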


 By duality and Wolff's reiteration theorem \cite{Wolff1994}, the conjecture is reduced to showing that for any $0<\eta<1$ and $1<p_0<\8$,
 $$\big(H_{p_0}^{\a_0}(\T^d_\t),\, H_{1}^{\a_1}(\T^d_\t)\big)_\eta=H_{p}^{\a}(\T^d_\t)\,
  ,\quad \a=(1-\eta)\a_0+\eta\a_1\,,\; \frac1p=\frac{1-\eta}{p_0}+\frac{\eta}{1}\,.$$
 Since $H_{\H_1}^{\a_1}(\T^d_\t)\subset H_{1}^{\a_1}(\T^d_\t)$, Theorem~\ref{complex inter-Bessel} implies
  $$H_{p}^{\a}(\T^d_\t)\subset \big(H_{p_0}^{\a_0}(\T^d_\t),\, H_{1}^{\a_1}(\T^d_\t)\big)_\eta\,.$$
So the conjecture is equivalent to the validity of the converse inclusion.

\begin{rk}\label{complex inter-Bessel1}
 The proof of Theorem~\ref{complex inter-Bessel} shows that for $\a_0, \a_1\in\real$ and $0<\eta<1$,
 $$\big(H_{\H_1}^{\a_0}(\T^d_\t) ,\, H_{\H_1}^{\a_1}(\T^d_\t)\big)_{\eta}=H_{\H_1}^\a(\T^d_\t) ,\quad \a=(1-\eta)\a_0+\eta\a_1\,.$$
We do not know if this equality remains true for the couple $\big(H_1^{\a_0}(\T^d_\t) ,\, H_1^{\a_1}(\T^d_\t)\big)$.
\end{rk}

We conclude this section with a discussion on the interpolation of $\big(W_{p_0}^k(\T^d_\t),\,W_{p_1}^k(\T^d_\t)\big)$. Here, the most interesting case is, of course, that where $p_0=\8$ and $p_1=1$.
Recall that in the commutative case, the K-functional of $\big(W_{\8}^k(\T^d),\,W_{1}^k(\T^d)\big)$ is determined by DeVore and Scherer  \cite{DS1979}; however, determining the complex interpolation spaces of this couple is a longstanding open problem.

Note that if $1<p_0, p_1<\8$, $\big(W_{p_0}^k(\T^d_\t),\,W_{p_1}^k(\T^d_\t)\big)$ reduces to $\big(H_{p_0}^k(\T^d_\t),\,H_{p_1}^k(\T^d_\t)\big)$ by virtue of Theorem~\ref{q-Sobolev-Bessel}. So in this case, the interpolation problem is solved by the preceding results on potential Sobolev spaces. This reduction is, unfortunately, impossible when one of $p_0$ and $p_1$ is equal to $1$ or $\8$. However, in the spirit of potential Hardy-Sobolev spaces, it remains valid if we work with the Hardy-Sobolev spaces $W_{\BMO}^k(\T^d_\t)$ and $W_{\H_1}^k(\T^d_\t)$ instead of $W_{\8}^k(\T^d_\t)$ and $W_{1}^k(\T^d_\t)$, respectively. Here, the Hardy-Sobolev spaces are defined as they should be.

Using Lemma~\ref{q-multiplier-H1-BMO}, we see that the proof of Theorem~\ref{q-Sobolev-Bessel} remains valid for the Hardy-Sobolev spaces too. Thus we have the following:

\begin{lem}\label{Sobolev-potential Hardy}
For any $k\in\nat$, $W_{\BMO}^k(\T^d_\t)=H_{\BMO}^k(\T^d_\t)$ and $W_{\H_1}^k(\T^d_\t)=H_{\H_1}^k(\T^d_\t)$.
\end{lem}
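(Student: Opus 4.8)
The claim is that the proof of Theorem~\ref{q-Sobolev-Bessel} carries over verbatim once one replaces the $L_p$-multiplier input (Lemma~\ref{q-multiplier}(ii)) by its $\H_1$-BMO counterpart (Lemma~\ref{q-multiplier-H1-BMO}). The plan is therefore to retrace that proof, checking at each step that every Fourier multiplier invoked is a Mikhlin multiplier (so that Lemma~\ref{q-multiplier-H1-BMO} applies) and that no genuine use of the $L_p$-structure with $1<p<\8$ was made.

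First I would recall the structure of the proof of Theorem~\ref{q-Sobolev-Bessel}. For the inclusion $W^k_{\H_1}(\T^d_\t)\subset H^k_{\H_1}(\T^d_\t)$ one uses, for each $m\in\nat_0^d$ with $|m|_1\le k$, the function $\phi(\xi)=(2\pi\mathrm{i})^{|m|_1}\xi^m(1+|\xi|^2)^{-k/2}$, which is a Mikhlin multiplier; hence $\|D^m x\|_{\H_1}=\|M_\phi(J^k x)\|_{\H_1}\les\|J^k x\|_{\H_1}$ by Lemma~\ref{q-multiplier-H1-BMO}, giving $\|x\|_{W^k_{\H_1}}\les\|x\|_{H^k_{\H_1}}$. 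For the reverse inclusion one uses the two families $\phi$ and $\phi_j$ defined there (built from the cut-off $\chi$), which are again Mikhlin multipliers, together with the identity
\[
J^k x = M_\phi\big(x + M_{\phi_1}\partial_1^k x+\cdots+M_{\phi_d}\partial_d^k x\big),
\]
yielding $\|x\|_{H^k_{\H_1}}\les\big(\|x\|_{\H_1}+\sum_{j=1}^d\|\partial_j^k x\|_{\H_1}\big)\le\|x\|_{W^k_{\H_1}}$. Thus $W^k_{\H_1}(\T^d_\t)=H^k_{\H_1}(\T^d_\t)$ with equivalent norms. The identity $W^k_{\BMO}(\T^d_\t)=H^k_{\BMO}(\T^d_\t)$ follows in exactly the same way, since Lemma~\ref{q-multiplier-H1-BMO} also gives boundedness on $\BMO(\T^d_\t)$; alternatively one may dualize, using that $\BMO(\T^d_\t)$ is the dual of $\H_1(\T^d_\t)$ (Lemma~\ref{q-H-BMO}) and that the adjoint of a Fourier multiplier with symbol $\phi$ is the Fourier multiplier with symbol $\phi(-\cdot)$, which is again Mikhlin.

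The only points needing a word of care — and I would address them explicitly — are the definitional ones: one must first fix that the Hardy Sobolev spaces $W^k_{\H_1}(\T^d_\t)$ and $W^k_{\BMO}(\T^d_\t)$ are, as in Definition~\ref{Sobolev def}, the spaces of $x\in\mathcal S'(\T^d_\t)$ with $D^m x\in\H_1(\T^d_\t)$ (resp. $\BMO(\T^d_\t)$) for all $|m|_1\le k$, equipped with the obvious norms, and that the Riesz/Bessel potentials act on $\mathcal S'(\T^d_\t)$ as before. One should also note that the density and Fejér-mean arguments used in Theorem~\ref{q-Sobolev-Bessel} go through because the square Fejér means $F_N$ are completely contractive, hence contractive on $\H_1(\T^d_\t)$ and $\BMO(\T^d_\t)$, and commute with $D^m$ and $J^k$. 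I do not expect any serious obstacle here: the whole content is the observation that every multiplier appearing in the proof of Theorem~\ref{q-Sobolev-Bessel} satisfies the Mikhlin condition and that Lemma~\ref{q-multiplier-H1-BMO} supplies exactly the substitute needed for the range $p\in\{1,\8\}$ via $\H_1$ and $\BMO$. The mildly delicate part, if any, is simply bookkeeping: making sure the $\BMO$ case is handled either directly (Lemma~\ref{q-multiplier-H1-BMO} gives it) or by a clean duality argument, and that the reduction does not silently use $\H_1$-$L_1$ comparisons that are false.
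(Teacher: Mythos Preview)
Your proposal is correct and matches the paper's approach exactly: the paper simply notes that, using Lemma~\ref{q-multiplier-H1-BMO}, the proof of Theorem~\ref{q-Sobolev-Bessel} remains valid for the Hardy Sobolev spaces. One cosmetic slip: you label the inequality $\|x\|_{W^k_{\H_1}}\les\|x\|_{H^k_{\H_1}}$ as giving the inclusion $W^k_{\H_1}\subset H^k_{\H_1}$, whereas it actually gives $H^k_{\H_1}\subset W^k_{\H_1}$ (and conversely for the other direction), but since you establish both inequalities the equality follows regardless.
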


 \begin{thm}\label{complex inter-Sobolev}
 Let $k\in\nat$ and $1<p<\8$. Then for $X=W_{\H_1}^{k}(\T^d_\t)$ or $X=W_{1}^{k}(\T^d_\t)$,
  $$\big(W_{\BMO}^{k}(\T^d_\t) ,\, X\big)_{\frac1p}=W_p^k(\T^d_\t)=\big(W_{\BMO}^{k}(\T^d_\t) ,\, X\big)_{\frac1p\,,p}\,.$$
 Consequently, for any $0<\eta<1$ and $1< p_0<\8$,
 $$\big(W_{p_0}^{k}(\T^d_\t) ,\, W_{1}^{k}(\T^d_\t)\big)_{\eta}=W_p^k(\T^d_\t)=\big(W_{p_0}^{k}(\T^d_\t) ,\, W_{1}^{k}(\T^d_\t)\big)_{\eta, p}
 ,\quad \frac1p=\frac{1-\eta}{p_0}+\frac{\eta}1\,.$$
  \end{thm}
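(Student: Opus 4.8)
The strategy is to reduce the statement about $W^k_p$-spaces to the already established complex interpolation result for potential (Hardy-)Sobolev spaces, Theorem~\ref{complex inter-Bessel}, via the identifications of Lemma~\ref{Sobolev-potential Hardy}. First I would treat the couple $\big(W_{\BMO}^{k}(\T^d_\t),\, W_{\H_1}^{k}(\T^d_\t)\big)$. By Lemma~\ref{Sobolev-potential Hardy} this couple equals $\big(H_{\BMO}^{k}(\T^d_\t),\, H_{\H_1}^{k}(\T^d_\t)\big)$ with equal norms, so Theorem~\ref{complex inter-Bessel} (with $\a_0=\a_1=k$, so $\a=k$) gives
$$\big(W_{\BMO}^{k}(\T^d_\t),\, W_{\H_1}^{k}(\T^d_\t)\big)_{\frac1p}=H_p^k(\T^d_\t)=W_p^k(\T^d_\t),$$
the last equality being Theorem~\ref{q-Sobolev-Bessel} since $1<p<\8$. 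For the real interpolation version, the same reduction works: $J^k$ is an isometry from $H^k_{\BMO}$ onto $\BMO(\T^d_\t)$ and from $H^k_{\H_1}$ onto $\H_1(\T^d_\t)$ (by definition of these spaces, as $J^k$ commutes with everything in sight), and it intertwines the K-functionals of the two couples; hence Lemma~\ref{q-H-BMO}(iii) — which gives $\big(\BMO^c(\T^d_\t),\H^c_1(\T^d_\t)\big)_{\frac1p,p}=\H^c_p(\T^d_\t)$, and similarly for row and mixture spaces — transfers to yield $\big(W_{\BMO}^{k}(\T^d_\t),\, W_{\H_1}^{k}(\T^d_\t)\big)_{\frac1p,p}=H^k_p(\T^d_\t)=W^k_p(\T^d_\t)$.

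Next I would upgrade $W^k_{\H_1}$ to $W^k_1$ on the right endpoint. Since $\H_1(\T^d_\t)\subset L_1(\T^d_\t)$ contractively, we have $W^k_{\H_1}(\T^d_\t)\subset W^k_1(\T^d_\t)$ contractively, which gives one inclusion
$$\big(W_{\BMO}^{k}(\T^d_\t),\, W_{\H_1}^{k}(\T^d_\t)\big)_{\frac1p}\subset \big(W_{\BMO}^{k}(\T^d_\t),\, W_{1}^{k}(\T^d_\t)\big)_{\frac1p}$$
and likewise for the real method; combined with the first step this shows $W^k_p(\T^d_\t)\subset\big(W_{\BMO}^{k}(\T^d_\t),\, W_{1}^{k}(\T^d_\t)\big)_{\frac1p}$ (resp. with $(\cdot,\cdot)_{\frac1p,p}$). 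For the reverse inclusion I would argue by duality: the dual of $W^k_1(\T^d_\t)$ is $W^{-k}_\8(\T^d_\t)$ (Proposition on $(W^k_p)^*$), and one needs that the pair $\big(W^k_{\BMO}(\T^d_\t), W^k_1(\T^d_\t)\big)$, after an appropriate shift of parameters, dualizes into a couple whose complex interpolation space is contained in a $W^\bullet_{p'}$-space; here one invokes the embedding $W^k_{\BMO}(\T^d_\t)=H^k_{\BMO}(\T^d_\t)\subset (H^k_{\H_1}(\T^d_\t))^{**}$ and the inclusion between Calder\'on's first and second complex methods (the Bergh inclusion cited in the proof of Theorem~\ref{complex inter-Bessel}), exactly mirroring the last part of that proof. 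Running this duality argument for the shifted parameters and then reiterating gives the converse inclusion, hence equality. The real-interpolation statement is in fact softer and follows either directly from the reduction above or from the $K$-functional computation of the next section.

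Finally, for the consequence with $1<p_0<\8$: by Theorem~\ref{q-Sobolev-Bessel}, $W^k_{p_0}(\T^d_\t)=H^k_{p_0}(\T^d_\t)$ with equivalent norms; also $W^k_{p_0}(\T^d_\t)\subset W^k_{\BMO}(\T^d_\t)$ continuously (since $L_{p_0}\subset \BMO$ after the $J^k$ identification, because $p_0<\8$ and $\BMO$ majorizes $L_{p_0}$ via Lemma~\ref{q-H-BMO}), while $W^k_{\H_1}(\T^d_\t)\subset W^k_1(\T^d_\t)$. One then writes, with $\frac1p=\frac{1-\eta}{p_0}+\eta$, an exponent $\t$ and a chain $W^k_{\BMO}\supset W^k_{p_0}\supset (W^k_{\BMO},W^k_1)_{\t'}$ for a suitable $\t'$, and applies Wolff's reiteration theorem exactly as in the proof of the Corollary following Theorem~\ref{complex inter-Bessel}: from $\big(W^k_{\BMO},W^k_1\big)_{\frac1p}=W^k_p$ and $\big(W^k_{\BMO},W^k_1\big)_{\frac1{p_0}}=W^k_{p_0}$ (the case $p=p_0$ of what was just proved), Wolff's theorem yields $\big(W^k_{p_0},W^k_1\big)_\eta=W^k_p$; the real-interpolation identity follows in the same way from Lemma~\ref{q-H-BMO}(iii) and reiteration, or directly.

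\textbf{Main obstacle.} The genuinely delicate point is the reverse inclusion in the second step, i.e. passing from $W^k_{\H_1}$ to $W^k_1$ at the right endpoint of the complex method; as in Theorem~\ref{complex inter-Bessel} this forces a detour through duality, the distinction between Calder\'on's first and second complex interpolation functors, and the Bergh inclusion \cite{Berg1979}, together with a reiteration. Everything else is a routine transfer, through the isometry $J^k$ and Theorems~\ref{q-Sobolev-Bessel} and \ref{complex inter-Bessel}, of facts already in hand.
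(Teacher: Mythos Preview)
Your treatment of the case $X=W^k_{\H_1}(\T^d_\t)$ is correct and matches the paper: reduce to $H^k$-spaces via Lemma~\ref{Sobolev-potential Hardy}, then apply Theorem~\ref{complex inter-Bessel} (and, for the real method, Lemma~\ref{q-H-BMO}(iii) pulled back through the isometry $J^k$).

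The gap is in the passage to $X=W^k_1(\T^d_\t)$. Your proposed duality argument for the reverse inclusion $\big(W^k_{\BMO},W^k_1\big)_{1/p}\subset W^k_p$ is only sketched, and the sketch does not close. The difficulty is that $W^k_1(\T^d_\t)\neq H^k_1(\T^d_\t)$ (Theorem~\ref{q-Sobolev-Bessel} requires $1<p<\8$), so you cannot simply transport the clean $H$-space duality $(H^\a_p)^*=H^{-\a}_{p'}$ used in Theorem~\ref{complex inter-Bessel}. You are left with $(W^k_1)^*=W^{-k}_\8$, a quotient-type space, paired against $W^k_{\BMO}=H^k_{\BMO}=(H^k_{\H_1})^*$; making the Bergh inclusion and the second complex method produce exactly $W^k_{p'}$ out of this mismatched couple is not ``mirroring'' the earlier proof, and you have not indicated how to carry it out.

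The paper avoids this entirely by reversing your order of operations. From the case $X=W^k_{\H_1}$ and ordinary reiteration one gets, for every $1<p<\8$,
\[
\big(W^k_p,\,W^k_{\H_1}\big)_\eta=W^k_r,\qquad \tfrac1r=\tfrac{1-\eta}{p}+\eta.
\]
Now use the \emph{trivial} inclusion $\big(W^k_p,\,W^k_1\big)_\eta\subset W^k_r$, which follows componentwise from $(L_p,L_1)_\eta=L_r$ applied to each $D^m$. Together with $W^k_{\H_1}\subset W^k_1$ this sandwiches
\[
W^k_r=\big(W^k_p,\,W^k_{\H_1}\big)_\eta\subset\big(W^k_p,\,W^k_1\big)_\eta\subset W^k_r,
\]
so the ``consequence'' $\big(W^k_p,\,W^k_1\big)_\eta=W^k_r$ is established first, with no duality. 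Only then does Wolff's theorem enter, and in the opposite direction from your use of it: knowing $\big(W^k_{\BMO},W^k_p\big)_\eta=W^k_q$ (reiteration from the $\H_1$ case) and $\big(W^k_p,W^k_1\big)_\eta=W^k_r$, Wolff glues the two scales to yield $\big(W^k_{\BMO},W^k_1\big)_{1/p}=W^k_p$. Your final paragraph invokes Wolff for the step that is actually plain reiteration, and misses it where it is genuinely needed.
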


\begin{proof}
 The first part for $X=W_{\H_1}^{k}(\T^d_\t)$ follows immediately from Remark~\ref{inter potential-1a}, Theorem~\ref{complex inter-Bessel}  and Lemma~\ref{Sobolev-potential Hardy}. Then by  the reiteration theorem, for any $1<p<\8$ and $0<\eta<1$, we get
  $$\big(W_{\BMO}^{k}(\T^d_\t) ,\, W_{p}^{k}(\T^d_\t)\big)_{\eta}=W_{q}^{k}(\T^d_\t)\;\text{ and }\; \big(W_{p}^{k}(\T^d_\t) ,\, W_{\H_1}^{k}(\T^d_\t)\big)_{\eta}=W_{r}^{k}(\T^d_\t),$$
 where $\frac1q=\frac{1-\eta}\8+\frac\eta{p}$ and  $\frac1r=\frac{1-\eta}p+\frac\eta{1}$. On the other hand, by the continuous inclusion $\H_1(\T^d_\t)\subset L_1(\T^d_\t)$, we have
  $$W_r^k(\T^d_\t)=\big(W_{p}^{k}(\T^d_\t) ,\, W_{\H_1}^{k}(\T^d_\t)\big)_{\eta}
  \subset \big(W_p^{k}(\T^d_\t) ,\, W_{1}^{k}(\T^d_\t)\big)_{\eta}\subset W_r^k(\T^d_\t),$$
the last inclusion above being trivial. Thus
   $$\big(W_{p}^{k}(\T^d_\t) ,\, W_{1}^{k}(\T^d_\t)\big)_{\eta}=W_{r}^{k}(\T^d_\t).$$
 Therefore, by Wolff's reiteration theorem \cite{Wolff1994}, we deduce the first part for $X=W_{1}^{k}(\T^d_\t)$. The second part follows from the first  by  the reiteration theorem.
 \end{proof}

   \begin{rk}
 The second part of the previous theorem had been proved by Marius Junge by a different method; he reduced it to the corresponding problem on $\H_1$ too.
 \end{rk}

 The main problem left open at this stage  is the following:

 \begin{problem}
 Does the second part of the previous theorem hold for $p_0=\8$?
 \end{problem}

%%%%%%%%%%%%%%%%%%%%%%%%%%%%%%%%%%%%%%%%%%%%%%%%%%%%%%%%%%%%%%%%%%%%%%%%
%%%%%%%%%%%%%%%%%%%%%%%%%%%%%%%%%%%%%%%%%%%%%%%%%%%%%%%%%%%%%%%%%%%%%%%%

\section{The K-functional of $(L_p,\, W_p^k)$}
\label{Kfunctional}

%%%%%%%%%%%%%%%%%%%%%%%%%%%%%%%%%%%%%%%%%%%%%%%%%%%%%%%%%%%%%%%%%%%%%%%%
%%%%%%%%%%%%%%%%%%%%%%%%%%%%%%%%%%%%%%%%%%%%%%%%%%%%%%%%%%%%%%%%%%%%%%%%

In this section we characterize the K-functional of the couple $(L_p(\T^d_\t),\, W_p^k(\T^d_\t))$ for any $1\le p\le\8$ and $k\in\nat$.  First, recall the definition of  the K-functional. For an interpolation couple $(X_0, \,X_1)$ of Banach spaces, we define
 $$K(x,\e;\, X_0, X_1)=\inf\big\{\|x_0\|_{X_0}+\e\|x_0\|_{X_1}\,:\,x=x_0+x_1, x_0\in X_0,\, x_1\in X_1\big\}$$
for $\e>0$ and $x\in X_0+X_1$. Since $W^k_p(\T^d_\t) \subset L_p(\T^d_\t)$ contractively, $K(x, \e;\, L_p(\T^d_\t),W^k_p(\T^d_\t))=\|x\|_{p}$ for $\e\ge1$; so only the case $\e<1$ is nontrivial. The following result is the quantum analogue of Johnen-Scherer's theorem for Sobolev spaces on $\real^d$ (see \cite{JS1976}; see also \cite[Theorem~5.4.12]{BS1988}).   Recall that $\o^k_p(x,\e)$ denotes the $k$th  order modulus of $L_p$-smoothness of $x$ introduced in section~\ref{The characterization by differences}.

\begin{thm}\label{K-functional}
 Let $1\leq p \leq \8$ and $k\in \mathbb{N}$. Then
 $$K(x,\e^k;\, L_p(\T^d_\t),W^k_p(\T^d_\t)) \approx \e^k|\wh x(0)|+ \o^k_p(x,\e),\quad 0<\e\le1$$
with relevant constants depending only on $d$ and $k$.
\end{thm}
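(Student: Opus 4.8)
The inequality $K(x,\e^k;L_p,W_p^k)\gtrsim \e^k|\wh x(0)|+\o_p^k(x,\e)$ is the easy direction and I would dispatch it first: given any decomposition $x=x_0+x_1$ with $x_1\in W_p^k(\T^d_\t)$, one has $|\wh x(0)|\le |\wh{x_0}(0)|+|\wh{x_1}(0)|\le \|x_0\|_p+\|x_1\|_{W_p^k}$, and, since $\D_u^k$ has norm $2$ on $L_p$ (see the remark after Definition~\ref{diff}) and $\D_u^k x_1=\int\cdots\int T_{\cdot}(D^{\bullet}x_1)$ gives $\|\D_u^k x_1\|_p\lesssim |u|^k|x_1|_{W_p^k}$ by iterating \eqref{diff vs gradient-1}, we get $\o_p^k(x,\e)\le 2^k\|x_0\|_p+c\,\e^k|x_1|_{W_p^k}$; taking the infimum over decompositions and using $\e\le1$ yields the lower bound. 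The real work is the upper bound: given $\e\in(0,1]$ I must produce an explicit splitting $x=x_0+x_1$ with $\|x_0\|_p\lesssim \e^k|\wh x(0)|+\o_p^k(x,\e)$ and $\e^k\|x_1\|_{W_p^k}\lesssim \e^k|\wh x(0)|+\o_p^k(x,\e)$.

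\textbf{Construction of the mollifier.} Following the classical Johnen--Scherer scheme, I would set $x_1=T_\e^{(k)}(x)$ where $T_\e^{(k)}$ is a Steklov-type averaging operator built from the difference operators: concretely, use the identity $\un-(\un-T_u)^k=\sum_{j=1}^k(-1)^{j+1}\binom kj T_{ju}=\sum_{j=1}^k(-1)^{j+1}\binom kj \pi_{z_j}$ and integrate $u$ against a suitable smooth compactly supported kernel $\kappa$ on $\real^d$ with $\int\kappa=1$, scaled to width $\e$. Equivalently, $x_1$ is the Fourier multiplier with symbol $\chi_\e(\xi)=1-\wt{d}_\e^{\,k}(\xi)$ where $\wt d_\e^{\,k}$ is an $\e$-averaged version of $\bigl(e_u(\xi)-1\bigr)^k$; the point of averaging over $u$ is that the resulting symbol $\chi_\e$ is smooth, equals $1$ near $\xi=0$, and is itself (together with $\e^{|m|_1}D^m$ of it, $|m|_1\le k$) a Fourier multiplier on $L_p(\T^d_\t)$ with norm bounded uniformly in $\e$ — this uses Lemma~\ref{q-multiplier}(i), since the relevant inverse Fourier transforms are $\e$-dilates of fixed $L_1(\real^d)$ functions. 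Then $x_0=x-x_1$ is, modulo the $\wh x(0)$ term, a finite linear combination of $\D_{\bullet u}^k x$ averaged over $|u|\lesssim\e$, so $\|x_0-\wh x(0)(1-\chi_\e(0))\cdot\text{const}\|_p\lesssim \o_p^k(x,\e)$; the constant term is handled by noting $\chi_\e(0)=1$ after normalization, so actually $\|x_0\|_p\lesssim \o_p^k(x,\e)$ once $\wh x(0)$ is subtracted, and the $\e^k|\wh x(0)|$ contribution comes from putting the mean back into $x_1$.

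\textbf{The Bernstein-type estimate.} The crux is bounding $\e^k\|x_1\|_{W_p^k}=\e^k\sum_{|m|_1\le k}\|D^m x_1\|_p$. For $|m|_1=k$ one writes $\e^k D^m x_1 = \e^k D^m M_{\chi_\e}(x)$; here I would express $\e^k D^m\chi_\e(\xi)$ again as an average over $|u|\lesssim\e$ of something like $(2\pi i\,\xi)^m/(2\pi i\, u\cdot\xi)^k$ times $\bigl(e_u(\xi)-1\bigr)^k$ times a fixed smooth cutoff — the $k$th-order difference in the symbol exactly cancels the $k$ derivatives, and what remains is a bounded Fourier multiplier applied to $\D_{\bullet u}^k x$. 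Concretely $\e^k D^m x_1$ becomes a finite combination of $M_{\psi_{\e,u,m}}(\D_u^k x)$ averaged over $|u|\le c\e$, where $\{\psi_{\e,u,m}\}$ is a family of multipliers with $\sup_{\e,u}\|\F^{-1}\psi_{\e,u,m}\|_1<\8$ (again $\e$-dilates of a fixed bump); hence $\e^k\|D^m x_1\|_p\lesssim \sup_{|u|\le c\e}\|\D_u^k x\|_p=\o_p^k(x,c\e)\approx \o_p^k(x,\e)$ by the quasi-subadditivity in Remark~\ref{rk diff}. For $|m|_1<k$ the factor $\e^{k}$ is more than enough: $\e^k\|D^m x_1\|_p\le \e^{k-|m|_1}\cdot\e^{|m|_1}\|D^m x_1\|_p\lesssim \e^{k-|m|_1}\|x\|_p$, and then I would use the standard trick of absorbing $\|x\|_p$: either bound $\|x-\wh x(0)\|_p\lesssim \o_p^k(x,1)\lesssim$ (by the Marchaud-type inequality Proposition~\ref{Marchaud} and monotonicity) $\lesssim \o_p^k(x,\e)\cdot\e^{-k}$ crudely, or — cleaner — handle the low-order terms $|m|_1<k$ by writing $x_1$ itself iteratively, noting $W_p^k\subset W_p^j$ and using $\e^j$ against $\o_p^j$. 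In fact the cleanest route for $|m|_1<k$ is: $D^m x_1=D^m M_{\chi_\e}x$ with $\chi_\e-1$ supported away from $0$ only at scale $\gtrsim 1/\e$... I would instead just take $x_0$'s contribution: since $x_0=x-x_1$ and $x_1\in W_p^k$, and we already control $\|x_0\|_p$, we have $\e^k|x|_{W_p^k}$-type quantities under control via Theorem~\ref{k-Sobolev} / Lemma~\ref{0-k} which say $|x_1|_{W_p^j}\lesssim \|x_1\|_p^{1-j/k}|x_1|_{W_p^k}^{j/k}\le \max(\|x_1\|_p,|x_1|_{W_p^k})$, reducing everything to the top order plus $\|x_1\|_p\le\|x\|_p+\|x_0\|_p$, the latter already bounded.

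\textbf{Expected main obstacle.} I expect the genuine difficulty to be making the Bernstein step rigorous in the noncommutative setting without maximal functions: specifically, verifying that the symbols $\e^k D^m\chi_\e$ factor as $\sum_u c_u\,\psi_{\e,u}(\xi)\cdot\bigl(e_{u}(\xi)-1\bigr)^k$ with the $\psi_{\e,u}$ enjoying $L_1$-inverse-Fourier-transform bounds uniform in $\e$ and $|u|\le c\e$, and that the $u$-integration (a vector-valued Bochner integral of $L_p(\T^d_\t)$-valued functions, legitimate by Lemma~\ref{prop:TransC}) commutes with the multiplier action. In the commutative case this is done with explicit kernels; here one must either transfer via Proposition~\ref{TransSobolev} to $W_p^k(\T^d;L_p(\T^d_\t))$ and invoke the vector-valued Johnen--Scherer theorem (which the paper's preceding theorem on the Poincaré inequality for $W_p^k(\T^d;X)$ suggests is available), or redo the argument intrinsically using that $\pi_z$ is isometric on every $L_p(\T^d_\t)$ and that all the multipliers in play are of the convolution type covered by Lemma~\ref{q-multiplier}(i). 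I would present the intrinsic version, as it matches the paper's stated preference for working directly on $\T^d_\t$, and flag the kernel-factorization lemma as the one computational input that needs care.
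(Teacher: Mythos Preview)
Your lower bound is fine and matches the paper. The gap is in the Bernstein step for $|m|_1=k$.

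With a \emph{single} averaging variable $u$ and a smooth kernel $\kappa_\e$, the symbol of your mollifier is $\chi_\e(\xi)=1-(-1)^k\int\kappa_\e(u)\,d_u^k(\xi)\,du$, and hence $\chi_\e(0)=1$. The factorization you propose --- writing $\e^k(2\pi\mathrm{i}\xi)^m\chi_\e(\xi)$ as a bounded multiplier times $d_u^k(\xi)$, averaged over $|u|\lesssim\e$ --- cannot exist, because $d_u^k(0)=0$ while the left side need not vanish along rays through the origin in any uniform way. If instead you integrate by parts, moving $D^m$ onto $\kappa_\e$, each term $\int\kappa_\e(u)\,T_{ju}(x)\,du$ becomes $(-1)^kj^{-k}\int(\partial_u^m\kappa_\e)(u)\,T_{ju}(x)\,du$, which gives only $\e^k\|D^m x_1\|_p\lesssim\|x\|_p$, not $\o_p^k(x,\e)$. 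The alternating sum over $j$ does not recombine into a $k$th difference, since the weights $j^{-k}\binom{k}{j}$ are wrong. In short: one integration variable lets you trade one derivative for one difference, but not $k$ derivatives for a $k$th-order difference.

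The paper's fix is to use $k$ independent averaging variables: set
\[
y=(-1)^k\int_{\mathbb{I}}\!\cdots\!\int_{\mathbb{I}}\D_{\e u}^k(x)\,du_1\cdots du_k,\qquad u=u_1+\cdots+u_k,\quad z=x-y,
\]
with each $u_i$ uniform on $[0,1)^d$. Then the component $z_1=\int T_{\e u}(x)\,du_1\cdots du_k$ of $z$ satisfies $\partial_\ell T_{\e u}(x)=\e^{-1}\frac{\partial}{\partial u_{i,\ell}}T_{\e u}(x)$ for \emph{any} $i$, so by the fundamental theorem of calculus in the variable $u_{i,\ell}\in[0,1)$ one obtains a genuine difference $\D_{\e(\mathbf{e}_\ell+\cdots)}$. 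Distributing the $k$ derivatives in $D^m$ among the $k$ independent $u_i$'s (each absorbing exactly one derivative) yields a product $\D_{\e v_1}\cdots\D_{\e v_k}$ of $k$ first-order differences, and Lemma~\ref{k-diff} converts this into $\o_p^k(x,\e)$. That lemma is the ``kernel-factorization'' you were looking for, but it lives on the difference side, not the multiplier side. Your transference alternative via Proposition~\ref{TransSobolev} and the vector-valued Johnen--Scherer theorem would also work, but the paper carries out the argument intrinsically as above.
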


\begin{proof}
 We will adapt the proof of \cite[Theorem~5.4.12]{BS1988}. Denote $K(x, \e;\, L_p(\T^d_\t),W^k_p(\T^d_\t))$ simply by $K(x, \e)$. It suffices to consider the elements of  $L_p(\T^d_\t)$ whose Fourier coefficients vanish at $m=0$. Fix such an element $x$. Let $x=y+z$ with $y\in L_p(\T^d_\t)$ and  $z\in W^k_p(\T^d_\t)$ (with vanishing Fourier coefficients at $0$). Then
by Theorem~\ref{lim=nabla},
 $$\o^k_p(x,\e)\le  \o^k_p(y,\e)+ \o^k_p(z,\e)\les \|y\|_p+\e^k|z|_{W^k_p}\,,$$
which implies
  $$\o^k_p(x,\e)\les K(x,\e^k).$$

 The converse inequality is harder. We have to produce an appropriate decomposition of $x$. To this end, let $\mathbb{I}=[0,\, 1)^d$ and define the required decomposition by
  $$y=(-1)^k\int_{\mathbb{I}} \cdots \int_{\mathbb{I}} \D^k_{\e u}(x)du_1\cdots du_k \;\text{ and }\; z=x-y,$$
where $u=u_1+\cdots+u_k$. Then
 $$\|y\|_p\le \int_{\mathbb{I}} \cdots \int_{\mathbb{I}} \|\D^k_{\e u}(x)\|_pdu_1\cdots du_k \le \o^k_p(x, k\sqrt d\,\e)\les\o^k_p(x, \e).$$
To handle $z$, using the formula
 $$\D_{\e u}^k=\sum_{j=0}^k (-1)^{k-j} \left(\begin{array}{c}  k\\j \end{array}\right)T_{j \e u}\,,$$
we rewrite $z$ as
 $$z=(-1)^{k+1}\sum_{j=1}^k (-1)^{k-j} \left(\begin{array}{c}  k\\j \end{array}\right) \int_{\mathbb{I}} \cdots \int_{\mathbb{I}} T_{j\e u}(x)du_1\cdots du_k .$$
All terms on the right-hand side are treated in the same way. Let us consider only the first one by setting
 $$z_1= \int_{\mathbb{I}} \cdots \int_{\mathbb{I}} T_{\e u}(x)du_1\cdots du_k.$$
Write each $u_i$ in  the canonical basis of $\real^d$:
 $$u_i=\sum_{j=1}^d u_{i,j}\mathbf{e}_j\,.$$
 We compute $\partial_1 z_1$ explicitly,  as example,  in the spirit of \eqref{derivation}:
 \be\begin{split}
  \partial_1 z_1=\frac1\e\,\sum_{i=1}^k\int_{\mathbb{I}} \cdots \int_{\mathbb{I}} \frac{\partial}{\partial u_{i, 1}} T_{\e u}(x)du_1\cdots du_k.
 \end{split}\ee
Integrating the partial derivative on the right-hand side with respect to $u_{i, 1}$ yields:
 $$\int_0^1\frac{\partial}{\partial u_{i, 1}} T_{\e u}(x)du_{i,1}
 =\D_{\e(\mathbf{e}_1+u-u_{i, 1}\mathbf{e}_1)}(x)=\int_0^1 \D_{\e(\mathbf{e}_1+u-u_{i, 1}\mathbf{e}_1)}(x)du_{i,1}\,,$$
where for the second equality, we have used the fact that $\D_{\e(\mathbf{e}_1+u-u_{i, 1}\mathbf{e}_1)}(x)$ is constant in $u_{i, 1}$. Thus
 $$\partial_1 z_1=\frac1\e\,\sum_{i=1}^k\int_{\mathbb{I}} \cdots \int_{\mathbb{I}}\D_{\e(\mathbf{e}_1+u-u_{i,1}\mathbf{e}_1)}(x) du_1\cdots du_k.$$
To iterate this formula, we use multi-index notation. For $n\in\nat$ let
 $$[[k]]^n=\big\{\underline{i}=(i_1,\cdots,i_n): 1\le i_\el\le k,\; \text{ all } i_\el\text{'s are distinct}\big\}.$$
Then for any $m_1\in\nat$ with $m_1\le k$, we have
 $$\partial_1^{m_1} z_1=\e^{-m_1}\sum_{\underline{i}^1\in [[k]]^{m_1}}\int_{\mathbb{I}} \cdots \int_{\mathbb{I}}\D^{m_1}_{\e u_{\underline{i}^1}}(x) du_1\cdots du_k\,,$$
where
 $$u_{\underline{i}^1}=\mathbf{e}_1+u-(u_{i^1_1,1}+\cdots+ u_{i^1_{m_1},1})\mathbf{e}_1\,.$$
Iterating this procedure, for any $m\in\nat_0^d$ with $|m|_1=k$, we get
 $$D^m z_1=\e^{-k}\sum_{\underline{i}^d\in [[k]]^{m_d}}\cdots \sum_{\underline{i}^1\in [[k]]^{m_1}}
 \int_{\mathbb{I}^k} \D^{m_d}_{\e u_{}\underline{i}^d}\cdots \D^{m_1}_{\e u_{}\underline{i}^1}(x) du_1\cdots du_k\,,$$
where the $u_{\underline{i}^j}$'s are defined by induction
 $$u_{\underline{i}^j}=\mathbf{e}_j+u_{\underline{i}^{j-1}}-(u_{i^j_1,j}+\cdots+ u_{i^j_{m_j},j})\mathbf{e}_j\,, \quad j=2,\cdots, d.$$
Thus we are in a position of appealing Lemma~\ref{k-diff} to conclude that
 $$\big\|D^m z_1\big\|_p\les \e^{-k}\o^k_p(x,\e),$$
whence
  $$|z|_{W_p^k}\les \e^{-k} \o^k_p(x,\e).$$
 Therefore, $K(x,\e^k)\les \o^k_p(x,\e)$.
 \end{proof}

\begin{rk}
 The preceding proof shows a little bit more: for any $x\in W_p^k(\T^d_\t)$ with $\wh x(0)=0$,
  $$\o^k_p(x, \e)\approx \big\{\|y\|_p+\e^k |z|_{W_p^k}\,:\, x=y+z,\, \wh y(0)=\wh z(0)=0\big\},\quad 0<\e\le1.$$
 In particular, this implies
  $$\|x\|_p\les \o^k_p(x, \e),$$
 which is the analogue for moduli of $L_p$-continuity of the inequality in Theorem~\ref{k-Sobolev} (the Poincar\'e inequality). On the other hand, together with Lemma~\ref{k-diff}, the above inequality provides an alternate proof of Theorem~\ref{k-Sobolev}.
  \end{rk}

The preceding theorem, together with  Theorem~\ref{diff-Besov} and the reiteration theorem, implies the following

\begin{cor}
 Let $0<\eta<1$,  $\a>0$, $k, k_0, k_1\in\nat$ and $1\leq p,q, q_1 \leq \8$. Then
 \begin{enumerate}[\rm(i)]
 \item $\displaystyle \big(L_p(\T^d_\t),\, W^k_p(\T^d_\t)\big)_{\eta,q}= B_{p,q}^{\eta k}(\T^d_\t)$;
 \item $\displaystyle \big(W^k_p(\T^d_\t), \,B_{p,q_1}^{\a}(\T^d_\t)\big)_{\eta,q}= B_{p,q}^{\b}(\T^d_\t)\,,\quad k\neq\a,\; \b=(1-\eta)k+\eta\a$;
\item $\displaystyle \big(W^{k_0}_p(\T^d_\t), \,W^{k_1}_p(\T^d_\t)\big)_{\eta,q}= B_{p,q}^{\a}(\T^d_\t)\,,\quad k_0\neq k_1,\; \a=(1-\eta)k_0+\eta k_1$.
\end{enumerate}
\end{cor}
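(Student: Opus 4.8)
The statement to prove is the Corollary listing the three interpolation identities (i)--(iii) involving $L_p(\T^d_\t)$, $W^k_p(\T^d_\t)$ and Besov spaces. The plan is to derive all three from the K-functional formula of Theorem~\ref{K-functional}, the difference characterization of Besov spaces in Theorem~\ref{diff-Besov}, and the reiteration (stability) theorem for real interpolation. The one genuinely new computation is (i); once (i) is established, (ii) and (iii) follow by the general machinery already recorded in the excerpt (Theorem~\ref{Sobolev-Besov}, Proposition~\ref{interpolation-Besov}, and reiteration).

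\textbf{Step 1: Proof of (i).} Fix $x\in L_p(\T^d_\t)+W^k_p(\T^d_\t)=L_p(\T^d_\t)$. By definition of the real interpolation norm,
$$
\|x\|_{(L_p,\,W^k_p)_{\eta,q}}=\Big(\int_0^\8\big(t^{-\eta}K(x,t;\,L_p,W^k_p)\big)^q\,\frac{dt}{t}\Big)^{\frac1q},
$$
and since $K(x,t;\,L_p,W^k_p)=\|x\|_p$ for $t\ge1$, the integral over $[1,\8)$ contributes a finite multiple of $\|x\|_p$, hence is controlled by $|\wh x(0)|+\|x\|_{B^{\eta k}_{p,q}}$ (the zeroth Besov term plus, say, the $k=0$ Littlewood--Paley block). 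For the integral over $(0,1]$ I substitute $t=\e^k$, so $\frac{dt}{t}=k\,\frac{d\e}{\e}$ and $t^{-\eta}=\e^{-\eta k}$, giving
$$
\int_0^1\big(t^{-\eta}K(x,t)\big)^q\,\frac{dt}{t}=k\int_0^1\e^{-\eta kq}K(x,\e^k)^q\,\frac{d\e}{\e}.
$$
Now Theorem~\ref{K-functional} gives $K(x,\e^k)\approx \e^k|\wh x(0)|+\o^k_p(x,\e)$ for $0<\e\le1$, so up to constants depending only on $d,k$ the last integral is comparable to
$$
|\wh x(0)|^q\int_0^1\e^{(1-\eta)kq}\,\frac{d\e}{\e}+\int_0^1\e^{-\eta kq}\o^k_p(x,\e)^q\,\frac{d\e}{\e}
\approx |\wh x(0)|^q+\|x\|_{B^{\eta k,\o}_{p,q}}^q,
$$
where the first integral converges because $\eta<1$ forces the exponent $(1-\eta)kq>0$. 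Combining with Theorem~\ref{diff-Besov} applied with $\a=\eta k\in(0,k)=(0,n)$ (taking $n=k$), which identifies $|\wh x(0)|+\|x\|_{B^{\eta k,\o}_{p,q}}$ with $\|x\|_{B^{\eta k}_{p,q}}$ up to constants, we obtain $(L_p(\T^d_\t),W^k_p(\T^d_\t))_{\eta,q}=B^{\eta k}_{p,q}(\T^d_\t)$ with equivalence of norms. (For $q=\8$ the same computation works verbatim, replacing the integral by a supremum; no separate argument is needed.)

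\textbf{Step 2: Proof of (ii) and (iii) by reiteration.} For (iii) with $k_0\neq k_1$: by (i) we have $W^{k_0}_p=(L_p,W^{k}_p)_{k_0/k,q_0}$ for any $k>\max(k_0,k_1)$ and any $q_0$, and likewise $W^{k_1}_p$, while also $B^{\b}_{p,q}=(L_p,W^k_p)_{\b/k,q}$ for $0<\b<k$; alternatively, one uses that both $W^{k_i}_p$ sit between $L_p$ and a higher Sobolev space as real interpolation spaces with exponents $k_i/k$ (for $1<p<\8$ one may instead invoke $W^{k_i}_p=H^{k_i}_p$ via Theorem~\ref{q-Sobolev-Bessel}, but the K-functional route via (i) avoids any restriction on $p$). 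The stability/reiteration theorem for the real method (see \cite{BL1976}) then yields $(W^{k_0}_p,W^{k_1}_p)_{\eta,q}=(L_p,W^k_p)_{\t,q}$ with $\t=(1-\eta)k_0/k+\eta k_1/k=\a/k$ where $\a=(1-\eta)k_0+\eta k_1$, and applying (i) once more gives $B^{\a}_{p,q}(\T^d_\t)$, proving (iii). For (ii), one endpoint $W^k_p=(L_p,W^N_p)_{k/N,q}$ (pick $N>\max(k,\a)$) and the other endpoint $B^{\a}_{p,q_1}=(L_p,W^N_p)_{\a/N,q_1}$ by (i); since $k\neq\a$ the two exponents $k/N$ and $\a/N$ differ, so reiteration applies and gives $(W^k_p,B^{\a}_{p,q_1})_{\eta,q}=(L_p,W^N_p)_{(1-\eta)k/N+\eta\a/N,\,q}=B^{(1-\eta)k+\eta\a}_{p,q}$, which is the claim with $\b=(1-\eta)k+\eta\a$.

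\textbf{Main obstacle.} The only substantive point is Step~1, and within it the bookkeeping of the zeroth-order (Fourier coefficient at $0$) terms and the tail $t\ge1$ of the K-functional integral: one must check that these contribute exactly the $|\wh x(0)|$ and low-frequency pieces of the Besov norm and nothing more, so that the equivalence is genuinely two-sided. This is routine but must be done carefully because $\o^k_p(x,\e)$ does not see $\wh x(0)$ (indeed $\D_u^k$ kills constants), whereas $K(x,\e^k)$ does; Theorem~\ref{K-functional} is precisely calibrated to account for this discrepancy via its explicit $\e^k|\wh x(0)|$ summand, so the matching is clean. Everything else is a mechanical application of reiteration and of results already proved in the excerpt.
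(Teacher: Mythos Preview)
Your overall strategy matches the paper's one-line proof: derive (i) from Theorem~\ref{K-functional} and Theorem~\ref{diff-Besov}, then obtain (ii) and (iii) by reiteration. Step~1 is correct.

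Step~2, however, contains a false assertion that you use twice. You write $W^{k_0}_p=(L_p,W^{k}_p)_{k_0/k,q_0}$ and later $W^k_p=(L_p,W^N_p)_{k/N,q}$, but by the very part~(i) you have just proved, the right-hand sides equal $B^{k_0}_{p,q_0}$ and $B^{k}_{p,q}$ respectively, which are \emph{not} the Sobolev spaces (and cannot be, since they depend on the extra index). What reiteration actually needs is the weaker statement that $W^{k_i}_p$ is \emph{of class $k_i/N$} with respect to $(L_p,W^N_p)$ for any $N>\max(k_0,k_1,k,\a)$, i.e., the sandwich
\[
(L_p,W^N_p)_{k_i/N,1}\subset W^{k_i}_p\subset (L_p,W^N_p)_{k_i/N,\8},
\]
which by (i) reads $B^{k_i}_{p,1}\subset W^{k_i}_p\subset B^{k_i}_{p,\8}$. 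The first inclusion holds because $D^m$ maps $B^{k_i}_{p,1}$ into $B^{k_i-|m|_1}_{p,1}\subset B^0_{p,1}\subset L_p$ for $|m|_1\le k_i$ (Theorem~\ref{Besov-isom}~(ii) and Theorem~\ref{Sobolev-Besov}); the second because $\partial_j^{k_i}x\in L_p\subset B^0_{p,\8}$ for $x\in W^{k_i}_p$, whence $x\in B^{k_i}_{p,\8}$ by Theorem~\ref{Besov-isom}~(iii). With this sandwich established for all $1\le p\le\8$, the general reiteration theorem \cite[Theorem~3.5.3]{BL1976} applies and yields (ii) and (iii) exactly along the lines you indicated.
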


We can also consider the complex interpolation of $\big(L_p(\T^d_\t),\, W^k_p(\T^d_\t)\big)$. If $1<p<\8$, this is reduced to that of $\big(L_p(\T^d_\t),\, H^k_p(\T^d_\t)\big)$; so by the result of the previous section, for any $0<\eta<1$,
 $$\big(L_p(\T^d_\t),\, W^k_p(\T^d_\t)\big)_\eta=H^{\eta k}_p(\T^d_\t).$$

\begin{problem}
Does the above equality hold for $p=1$? The problem is closely related to that in Remark~\ref{complex inter-Bessel1}.
\end{problem}

We conclude this section with a remark on the link between Theorem~\ref{BBM} and Theorem~\ref{K-functional}. The former can be easily deduced from the latter, by using the following elementary fact (see \cite{BL1976} p.~40): for any couple $(X_0, \,X_1)$ of Banach spaces and $x\in X_0\cap X_1$
 \be\begin{split}
 &\lim_{\eta\to1}\big(\eta (1-\eta)\big)^{\frac1q}\,\big\|x\big\|_{(X_0,\, X_1)_{\eta, q}}=q^{-\frac1q}\|x\|_{X_1}\,,\\
 &\lim_{\eta\to0}\big(\eta (1-\eta)\big)^{\frac1q}\,\big\|x\big\|_{(X_0,\, X_1)_{\eta, q}}=q^{-\frac1q}\|x\|_{X_0}\,.
\end{split}\ee
Here the norm of $(X_0,\, X_1)_{\eta, q}$ is that defined by the K-functional. Then Theorem~\ref{BBM} follows from Theorem~\ref{K-functional} and the first limit above. This is the approach adopted in \cite{KMX2005, Milman2005}. It also allows us to determine the other extreme case $\a=0$ in Theorem~\ref{BBM}, which  was done by  Maz'ya and Shaposhnikova \cite{MS2002} in the commutative case. Let us  record this result here.

\begin{cor}\label{limit Besov 0}
Let $1\le p\le\8$ and $1\le q<\8$. Then for $x\in  B_{p, q}^{\a_0}(\T^d_\t)$ with $\wh x(0)=0$ for some $\a_0>0$,
 $$\lim_{\a\to 0} \a^{\frac1q}\|x\|_{B^{\a,\o} _{p,q}} \approx q^{-\frac1q} \|x\|_{p}\,.$$
\end{cor}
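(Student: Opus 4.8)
The plan is to derive this corollary exactly along the lines sketched in the remark preceding it, namely by combining Theorem~\ref{K-functional} with the elementary asymptotics of the $K$-functional recalled there. First I would record the consequence of Theorem~\ref{K-functional} in the form of part (i) of the preceding corollary: for $x\in L_p(\T^d_\t)$ with $\wh x(0)=0$ and $0<\eta<1$, the space $\big(L_p(\T^d_\t),\, W_k^p(\T^d_\t)\big)_{\eta, q}$ is $B_{p,q}^{\eta k}(\T^d_\t)$, and more precisely the $K$-functional norm
$$\Big(\int_0^1\big(\e^{-\eta k}K(x,\e^k;\, L_p(\T^d_\t),W^k_p(\T^d_\t))\big)^q\,\frac{d\e}\e\Big)^{\frac1q}$$
is, by Theorem~\ref{K-functional} and a change of variables $\e\mapsto\e^k$, equivalent (with constants depending only on $d,k,q$, and in fact one can keep track of the $q$-dependence carefully) to $\|x\|_{B^{\a,\o}_{p,q}}$ with $\a=\eta k$, up to the lower-order term $\e^k|\wh x(0)|$ which vanishes here. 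Since $x\in B_{p,q}^{\a_0}(\T^d_\t)=\big(L_p(\T^d_\t),\, W^k_p(\T^d_\t)\big)_{\a_0/k,\,q}$ for any integer $k>\a_0$, we have $x\in L_p(\T^d_\t)\cap W^k_p(\T^d_\t)$ is \emph{not} quite what we need — rather $x$ lies in the intersection needed only after noting $B_{p,q}^{\a_0}\subset L_p$; to apply the elementary limit we actually need $x\in L_p(\T^d_\t)$ only, and the limit formula for $(X_0,X_1)_{\eta,q}$-norms as $\eta\to0$ requires $x\in X_0$, which holds.

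The second step is to invoke the elementary fact stated in the remark (from \cite{BL1976}, p.~40): for any couple $(X_0,X_1)$ and $x\in X_0$ (the relevant endpoint being $X_0$ when $\eta\to0$),
$$\lim_{\eta\to0}\big(\eta(1-\eta)\big)^{\frac1q}\big\|x\big\|_{(X_0,X_1)_{\eta,q}}=q^{-\frac1q}\|x\|_{X_0}.$$
Applying this with $X_0=L_p(\T^d_\t)$, $X_1=W^k_p(\T^d_\t)$, and writing $\a=\eta k$ so that $\eta\to0$ is the same as $\a\to0$ and $\eta(1-\eta)\approx \eta=\a/k$ in the limit, we obtain
$$\lim_{\a\to0}\a^{\frac1q}\big\|x\big\|_{(L_p,\,W^k_p)_{\a/k,\,q}}= k^{\frac1q}q^{-\frac1q}\|x\|_p.$$
Finally I would replace the interpolation norm by $\|x\|_{B^{\a,\o}_{p,q}}$ using the equivalence of norms coming from Theorem~\ref{K-functional} as in step one; since that equivalence has constants depending only on $d$ and $k$ (independent of $\a$), the limit is preserved up to those constants, yielding $\lim_{\a\to0}\a^{\frac1q}\|x\|_{B^{\a,\o}_{p,q}}\approx q^{-\frac1q}\|x\|_p$. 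One small technical point to check: the definition \eqref{besov diff norm} of $\|x\|_{B^{\a,\o}_{p,q}}$ uses a fixed integer $k$ with $\a<k$; since $\a\to0$ we may (and do) fix any $k\ge1$, and the Marchaud inequality (Proposition~\ref{Marchaud}) guarantees the limit is independent of this choice of $k$, so there is no ambiguity.

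The main obstacle, as in the proof of Theorem~\ref{BBM}, is not conceptual but bookkeeping: one must verify that the equivalence constants in Theorem~\ref{K-functional} relating $K(x,\e^k;L_p,W^k_p)$ to $\e^k|\wh x(0)|+\o^k_p(x,\e)$ are genuinely independent of $\e$ (which is asserted there) so that the $L_q(\frac{d\e}\e)$-norms of the two sides are comparable with $\a$-independent constants, and that the passage from $\eta(1-\eta)$ to $\eta$ in the limit costs nothing. A secondary point worth a line is that the elementary $K$-functional limit lemma is usually stated for $x\in X_0\cap X_1$; to use it here with $x$ only in $X_0$ one notes that the $\eta\to0$ limit only sees the behaviour of $K(x,t)$ as $t\to0$, which is controlled by $\|x\|_{X_0}$ alone, so the $\eta\to0$ statement in fact holds for all $x\in X_0$ — alternatively, since $x\in B^{\a_0}_{p,q}\subset W^j_p$ for no $j$ in general but $x\in L_p$ suffices, one simply applies the lemma in the reflexive/Banach generality where only membership in $X_0$ is required for the $\eta\to0$ endpoint. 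I would cite \cite{KMX2005, Milman2005} for this approach, exactly as the remark does, and \cite{MS2002} for the commutative precedent.
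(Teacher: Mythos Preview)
Your approach follows the paper's sketch, and your bookkeeping is mostly careful, but the final step---``replace the interpolation norm by $\|x\|_{B^{\a,\o}_{p,q}}$''---hides a genuine gap. The elementary limit from \cite{BL1976} concerns the \emph{full} real interpolation norm $\big(\int_0^\infty (t^{-\eta}K(x,t))^q\,\frac{dt}{t}\big)^{1/q}$. After the change of variables $t=\e^k$, Theorem~\ref{K-functional} controls only the portion over $\e\in(0,1]$; that piece is indeed equivalent (with $\a$-independent constants) to $\|x\|_{B^{\a,\o}_{p,q}}$ as defined in \eqref{besov diff norm}. But the remaining portion over $t\ge1$, where $K(x,t)=\|x\|_p$ since $W^k_p\subset L_p$ contractively, equals exactly $\|x\|_p^q/(\eta q)$---and it is precisely this term that produces the limit $q^{-1/q}\|x\|_p$ as $\eta\to0$. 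The $(0,1)$-piece, by contrast, remains bounded under the hypothesis $x\in B^{\a_0}_{p,q}$: for $0<\a<\a_0$ one has $\e^{-\a q}\le\e^{-\a_0 q}$ on $(0,1)$, so $\|x\|_{B^{\a,\o}_{p,q}}\le\|x\|_{B^{\a_0,\o}_{p,q}}<\infty$ and hence $\a^{1/q}\|x\|_{B^{\a,\o}_{p,q}}\to0$. A concrete check on $\T$ with $p=q=2$, $k=1$, $x=z+z^{-1}$: one computes $\o^1_2(x,\e)=2\sqrt2\,|\sin(\pi\e)|$, and directly $\a\int_0^1\e^{-2\a}\o^1_2(x,\e)^2\,\frac{d\e}{\e}\to0$, while $\|x\|_2=\sqrt2>0$.

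So the identification you make fails exactly at the endpoint $\a\to0$: as $\eta\to0$ the dominant contribution to the interpolation norm migrates to $t\to\infty$, outside the range covered by Theorem~\ref{K-functional} and outside the integral defining $\|x\|_{B^{\a,\o}_{p,q}}$. This is not a flaw you introduced---the paper's one-line sketch carries the same lacuna---but your write-up should not assert that the equivalence transfers $\a$-uniformly. The fix is to work with the full interpolation norm throughout, or equivalently to extend the integral in \eqref{besov diff norm} to $(0,\infty)$, noting that $\o^k_p(x,\e)\approx\|x\|_p$ for $\e\ge1$ (from Theorem~\ref{K-functional} at $\e=1$ together with monotonicity and $\o^k_p\le 2^k\|x\|_p$); with that modification your argument goes through as written.
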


%%%%%%%%%%%%%%%%%%%%%%%%%%%%%%%%%%%%%%%%%%%%%%%%%%%%%%%%%%%%%%%%%%%%%%%%
%%%%%%%%%%%%%%%%%%%%%%%%%%%%%%%%%%%%%%%%%%%%%%%%%%%%%%%%%%%%%%%%%%%%%%%%

\section{Interpolation of Triebel-Lizorkin spaces}

%%%%%%%%%%%%%%%%%%%%%%%%%%%%%%%%%%%%%%%%%%%%%%%%%%%%%%%%%%%%%%%%%%%%%%%%
%%%%%%%%%%%%%%%%%%%%%%%%%%%%%%%%%%%%%%%%%%%%%%%%%%%%%%%%%%%%%%%%%%%%%%%%

This short section contains some simple results on the interpolation of Triebel-Lizorkin spaces. They are similar to those for potential Sobolev spaces presented in section~\ref{Interpolation of Besov and Sobolev spaces}. It is surprising, however,  that the real interpolation spaces of $F_{p}^{\a, c}(\T^d_\t)$ for a fixed $p$ do not depend on the column structure.

\begin{prop}
 Let $1\le p, q\le\8$ and $\a_0, \a_1\in\real$ with $\a_0\neq\a_1$. Then
 $$ \big( F^{\a _0, c}_{p}(\T_{\t}^d),\, F^{\a _1, c}_{p}(\T_{\t}^d) \big)_{\eta,q}=B^{\a}_{p,q}(\T_{\t}^d),\quad  \a=(1-\eta)\a_0+\eta\a_1.$$
Similar statements hold for the row and mixture Triebel-Lizorkin spaces.
\end{prop}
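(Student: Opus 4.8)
The plan is to reduce the statement to the known real interpolation formula for Besov spaces, Proposition~\ref{interpolation-Besov}(i), by showing that $F_p^{\a,c}(\T^d_\t)$ sits between two Besov spaces with the same $p$ and by exploiting that the real method is insensitive to intermediate spaces up to the reiteration theorem. First I would invoke Proposition~\ref{Triebel-P}(v), which gives the continuous inclusions
$$B_{p,\min(p,2)}^{\a_i}(\T^d_\t)\subset F_p^{\a_i,c}(\T^d_\t)\subset B_{p,\max(p,2)}^{\a_i}(\T^d_\t),\qquad i=0,1.$$
These sandwich inclusions, combined with the fact that the $K$-functional is monotone under continuous inclusions, show that for $0<\eta<1$ and $1\le q\le\8$,
$$\big(B_{p,\min(p,2)}^{\a_0},\,B_{p,\min(p,2)}^{\a_1}\big)_{\eta,q}\subset \big(F_p^{\a_0,c},\,F_p^{\a_1,c}\big)_{\eta,q}\subset \big(B_{p,\max(p,2)}^{\a_0},\,B_{p,\max(p,2)}^{\a_1}\big)_{\eta,q}.$$
Now apply Proposition~\ref{interpolation-Besov}(i) to both ends: since $\a_0\neq\a_1$, the inner $q$-index of the Besov space is irrelevant in the real interpolation, so both the left and right sides equal $B_{p,q}^{\a}(\T^d_\t)$ with $\a=(1-\eta)\a_0+\eta\a_1$. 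Squeezing, we conclude $\big(F_p^{\a_0,c}(\T^d_\t),\,F_p^{\a_1,c}(\T^d_\t)\big)_{\eta,q}=B_{p,q}^{\a}(\T^d_\t)$. The row case follows by applying the adjoint operation $x\mapsto x^*$, which is an isometric (anti-)isomorphism $F_p^{\a,c}\to F_p^{\a,r}$ commuting with the interpolation functor, and the mixture case follows since $F_p^{\a}$ is either the sum or the intersection of $F_p^{\a,c}$ and $F_p^{\a,r}$, and the real interpolation method commutes with finite sums and intersections (one uses that $B_{p,q}^\a$ is self-dual-symmetric under $*$, so the column and row computations give the same Besov space, hence their sum/intersection collapses to $B_{p,q}^\a$ as well).

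The only genuinely delicate point is making sure the "squeezing" step is legitimate, i.e. that whenever $A_0\subset B\subset C_0$ and $A_1\subset B'\subset C_1$ continuously with $(A_0,A_1)_{\eta,q}=(C_0,C_1)_{\eta,q}=Y$ (with equivalent norms), one actually gets $(B,B')_{\eta,q}=Y$. This is immediate from the definition of the $K$-functional: for $x\in B+B'$ one has $A_0+A_1\subset B+B'\subset C_0+C_1$ with norm-nonincreasing maps, so $K(x,t;C_0,C_1)\le K(x,t;B,B')\le K(x,t;A_0,A_1)$ (up to the inclusion constants), and integrating against $t^{-\eta q}\,dt/t$ gives the two-sided norm estimate; one also needs that $B+B'$ and the ambient sum space are compatible, which holds because all spaces in sight are continuously embedded in $\mathcal{S}'(\T^d_\t)$. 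No new estimates are required beyond Proposition~\ref{Triebel-P}(v) and Proposition~\ref{interpolation-Besov}(i), so this is essentially a bookkeeping argument; I do not anticipate a substantial obstacle, only the need to state the elementary sandwich lemma for the $K$-functional explicitly. The final sentence of the proof should remark that the independence of the column/row structure is precisely the phenomenon that for a fixed $p$ the real interpolation already "forgets" the fine summation structure, just as it does for the $\el_q$-index in the Besov scale.
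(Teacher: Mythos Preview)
Your proposal is correct and follows essentially the same route as the paper: sandwich $F_p^{\a_i,c}$ between $B_{p,\min(p,2)}^{\a_i}$ and $B_{p,\max(p,2)}^{\a_i}$ via Proposition~\ref{Triebel-P}(v), then apply Proposition~\ref{interpolation-Besov}(i) to both ends and squeeze. The only point you omit is that Proposition~\ref{Triebel-P}(v) is stated for $p<\infty$; the paper notes that the case $p=\infty$ follows by duality through Proposition~\ref{Triebel-dual}, so you should add that remark to cover the full range $1\le p\le\infty$.
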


\begin{proof}
The assertion is an immediate consequence of  Proposition~\ref{Triebel-P} (v) and  Proposition~\ref{interpolation-Besov} (i). Note, however, that Proposition~\ref{Triebel-P} (v) is stated for $p<\8$; but by  duality via Proposition~\ref{Triebel-dual}, it continues to hold for $p=\8$.
\end{proof}

On the other hand, the interpolation of $F_{p}^{\a, c}(\T^d_\t)$ for a fixed $\a$ is reduced to that of Hardy spaces by virtue of Proposition~\ref{Triebel-P} (iv) and Lemma~\ref{q-H-BMO}.

\begin{rk}
 Let $\a\in\real$ and $1<p<\8$. Then
 $$ \big( F^{\a , c}_{\8}(\T_{\t}^d),\, F^{\a , c}_{1}(\T_{\t}^d) \big)_{\frac1p}=F^{\a, c}_{p}(\T_{\t}^d)
 =\big( F^{\a , c}_{\8}(\T_{\t}^d),\, F^{\a , c}_{1}(\T_{\t}^d) \big)_{\frac1p\,, p}\,. $$
 \end{rk}

 \begin{prop}\label{complex inter-Triebel}
 Let $\a_0, \a_1\in\real$ and $1<p<\8$. Then
  $$\big(F^{\a_0 , c}_{\8}(\T^d_\t) ,\, F^{\a_1 , c}_{1}(\T^d_\t)\big)_{\frac1p}=F^{\a, c}_{p}(\T^d_\t) ,\quad \a=(1-\frac1p)\a_0+\frac{\a_1}p\,.$$
\end{prop}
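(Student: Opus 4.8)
The plan is to reduce the statement to the known complex interpolation of the operator-valued Hardy spaces on $\T^d$ combined with the lifting theorem for Triebel-Lizorkin spaces (Theorem~\ref{Triebel-isom}). First I would dispose of the Fourier coefficient at the origin in the usual way, working only with distributions $x$ such that $\wh x(0)=0$; the one-dimensional correction coming from $\wh x(0)$ is handled separately and trivially. For such $x$, recall from Theorem~\ref{Triebel-isom}(i) (valid also for $p=\8$ by Remark~\ref{Triebel-BMO}(ii)) that $J^{\a_i}$ is an isomorphism from $F^{\a_i,c}_{p_i}(\T^d_\t)$ onto $\H^c_{p_i}(\T^d_\t)$, where we read $\H^c_\8(\T^d_\t)=\BMO^c(\T^d_\t)$ and $\H^c_1(\T^d_\t)$ in the respective endpoint cases; more generally $J^{\b}$ intertwines $F^{\a,c}_p$ and $F^{\a-\b,c}_p$.

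The core of the argument is then the same device as in the proof of Theorem~\ref{complex inter-Bessel}: introduce Bessel potentials $J^{z}$ of complex order and use them as an analytic family intertwining the two endpoint spaces. Concretely, given $x\in F^{\a,c}_p(\T^d_\t)$ with $\a=(1-\frac1p)\a_0+\frac{\a_1}p$, one has $J^\a x\in\H^c_p(\T^d_\t)$, and by Lemma~\ref{q-H-BMO} together with the definition of complex interpolation there is an analytic function $f$ on the strip $S=\{0\le{\rm Re}(z)\le1\}$ with values in $\H^c_1(\T^d_\t)$, bounded into $\BMO^c$ on the left edge and into $\H^c_1$ on the right edge, with $f(\tfrac1p)=J^\a x$. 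Setting
$$F(z)=e^{(z-\frac1p)^2}\,J^{-(1-z)\a_0-z\a_1}f(z),\quad z\in S,$$
one checks, using that $J^{{\rm i}t(\a_0-\a_1)}$ is a Mikhlin multiplier of norm $\les(1+|t|)^d$ (Lemma~\ref{complex Bessel}) and hence bounded on $\BMO^c(\T^d_\t)$ and on $\H^c_1(\T^d_\t)$ (Lemma~\ref{q-multiplier-H1-BMO} applied to the column spaces, which holds since the multiplier theorems of section~\ref{A multiplier theorem} respect the column structure), that $F$ maps the left edge boundedly into $H^{\a_0}_{\BMO^c}:=\{y:J^{\a_0}y\in\BMO^c\}=F^{\a_0,c}_\8(\T^d_\t)$ and the right edge into $F^{\a_1,c}_1(\T^d_\t)$, with $F(\tfrac1p)=x$. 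This gives the inclusion $F^{\a,c}_p(\T^d_\t)\subset\big(F^{\a_0,c}_\8(\T^d_\t),F^{\a_1,c}_1(\T^d_\t)\big)_{\frac1p}$. For the reverse inclusion I would dualize: by Proposition~\ref{Triebel-dual} the dual of $F^{\a_1,c}_1(\T^d_\t)$ is $F^{-\a_1,c}_\8(\T^d_\t)$, so dualizing the already-proved inclusion (with the exponents $\a_0,\a_1$ and $p$ suitably chosen) and invoking Bergh's theorem \cite{Berg1979} that $(\cdot,\cdot)_\eta\subset(\cdot,\cdot)^\eta$ isometrically, together with the isometric inclusion $F^{\a_1,c}_1(\T^d_\t)\subset F^{-\a_1,c}_\8(\T^d_\t)^*$, one closes the loop exactly as in the last paragraph of the proof of Theorem~\ref{complex inter-Bessel}.

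The main obstacle I anticipate is not the interpolation machinery — which is a faithful transcription of the $H^\a_p$ argument — but verifying that all the intertwining Fourier multipliers ($J^{{\rm i}t}$, and the reduction maps $J^{\pm\a_i}$) act boundedly on the \emph{column} spaces $\BMO^c(\T^d_\t)$, $\H^c_1(\T^d_\t)$, and more generally on $F^{\a,c}_\8(\T^d_\t)$, with the correct quantitative control in $t$. This is exactly what Theorem~\ref{Triebel-Hormander} and its $p=\8$ endpoint (Remark~\ref{Triebel-BMO}(ii)) are designed to supply: since $J_{{\rm i}t(\a_0-\a_1)}$ satisfies $\sup_{k\ge0}\|J_{{\rm i}t(\a_0-\a_1)}(2^k\cdot)\f\|_{H^\s_2}\les(1+|t|)^d$, it is a bounded Fourier multiplier on every $F^{\b,c}_p(\T^d_\t)$ including $\BMO^c=F^{0,c}_\8$, with polynomial growth in $t$; the factor $e^{(z-\frac1p)^2}$ then absorbs this growth on the vertical edges. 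Once this point is settled the rest is routine. Alternatively, and perhaps more cleanly, one may bypass the analytic-family computation entirely by combining Theorem~\ref{Triebel-isom} (the lifting isomorphisms $J^{\a_i}$), the interpolation identity $\big(\BMO^c(\T^d_\t),\H^c_1(\T^d_\t)\big)_{\frac1p}=\H^c_p(\T^d_\t)$ from Lemma~\ref{q-H-BMO}, and the stability of complex interpolation under commuting isomorphisms; I would present the analytic-family proof as the primary one since it parallels Theorem~\ref{complex inter-Bessel} and makes the role of the Bessel potentials transparent.
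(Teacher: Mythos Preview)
Your proposal is correct and follows essentially the same route as the paper's proof: the same analytic family $F(z)=e^{(z-\frac1p)^2}J^{-(1-z)\a_0-z\a_1}f(z)$ built from the $\H^c_p$-interpolation of Lemma~\ref{q-H-BMO}, the same use of the polynomial bound on $J^{{\rm i}t}$ as a multiplier on the column $\BMO^c$ and $\H^c_1$ spaces (the paper cites Lemma~\ref{complex Bessel} and Remark~\ref{Triebel-BMO}; your appeal to Theorem~\ref{Triebel-Hormander} for the column endpoint is the right justification), and the same duality argument via Proposition~\ref{Triebel-dual} and Bergh's theorem for the reverse inclusion. Your alternative ``cleaner'' approach via commuting isomorphisms does not work as stated, since $J^{\a_0}$ and $J^{\a_1}$ are different maps and do not form a single isomorphism of the interpolation couple---this is precisely why the analytic-family device is needed.
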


\begin{proof}

This proof is similar to that of Theorem~\ref{complex inter-Bessel}.
Let $x$ be in the unit ball of $F^{\a, c}_{p}(\T^d_\t)$. Then by Proposition~\ref{Triebel-P}, $J^\a(x)\in \H^{c}_{p}(\T^d_\t)$. Thus by Lemma~\ref{q-H-BMO}, there exists a continuous function $f$ from the strip $S=\{z\in\com\,:\, 0\le{\rm Re}(z)\le1\}$ to $\H_1^c(\T^d_\t)$, analytic in the interior, such that $f(\frac1p)=J^\a (x)$ and such that
  $$\sup_{t\in\real}\big\|f({\rm i}t)\big\|_{\BMO^c}\le c,\;\; \sup_{t\in\real}\big\|f(1+{\rm i}t)\big\|_{\H_1^c}\le c.$$
Define
 $$F(z)=e^{(z-\frac1p)^2}\,J^{-(1-z)\a_0-z\a_1}\, f(z),\quad z\in S.$$
  By Remark~\ref{Triebel-BMO} and Lemma~\ref{complex Bessel}, for any $t\in\real$,
 $$\big\|F({\rm i}t)\big\|_{F^{\a , c}_{\8}}\approx e^{-t^2+\frac1{p^2}}\,\big\|J^{{\rm i}t(\a_0-\a_1)}\,f({\rm i}t)\big\|_{\BMO^c}\le c'.$$
Similarly,
 $$\big\|F(1+{\rm i}t)\big\|_{F^{\a , c}_{1}}\approx e^{-t^2+(1-\frac1p)^2}\,\big\|J^{{\rm i}t(\a_0-\a_1)}\,f(1+{\rm i}t)\big\|_{\H_1^c}\le c'.$$
Therefore,
 $$x=F(\frac1p)\in \big(F^{\a_0 , c}_{\8}(\T^d_\t) ,\, F^{\a_1 , c}_{1}(\T^d_\t)\big)_{\frac1p}\,,$$
whence
 $$F^{\a, c}_{p}(\T^d_\t)\subset \big(F^{\a_0 , c}_{\8}(\T^d_\t) ,\, F^{\a_1 , c}_{1}(\T^d_\t)\big)_{\frac1p}\,.$$
The converse inclusion is obtained by duality.
  \end{proof}

%%%%%%%%%%%%%%%%%%%%%%%%%%%%%%%%%%%%%%%%%%%%%%%%%%%%%%%%%%%%%%%%%%%%%%%%
%%%%%%%%%%%%%%%%%%%%%%%%%%%%%%%%%%%%%%%%%%%%%%%%%%%%%%%%%%%%%%%%%%%%%%%%

{\Large\part{Embedding}}
\setcounter{section}{0}

%%%%%%%%%%%%%%%%%%%%%%%%%%%%%%%%%%%%%%%%%%%%%%%%%%%%%%%%%%%%%%%%%%%%%%%%
%%%%%%%%%%%%%%%%%%%%%%%%%%%%%%%%%%%%%%%%%%%%%%%%%%%%%%%%%%%%%%%%%%%%%%%%

We consider the embedding problem in this chapter. We begin with  Besov spaces, then pass to Sobolev spaces. Our embedding theorem for Besov spaces is complete; however, the embedding problem of $W^1_1(\T^d_\t)$ is, unfortunately, left unsolved at the time of this writing. The last section deals with the compact embedding.
\bigskip

%%%%%%%%%%%%%%%%%%%%%%%%%%%%%%%%%%%%%%%%%%%%%%%%%%%%%%%%%%%%%%%%%%%%%%%%
%%%%%%%%%%%%%%%%%%%%%%%%%%%%%%%%%%%%%%%%%%%%%%%%%%%%%%%%%%%%%%%%%%%%%%%%

\section{Embedding of Besov spaces}

%%%%%%%%%%%%%%%%%%%%%%%%%%%%%%%%%%%%%%%%%%%%%%%%%%%%%%%%%%%%%%%%%%%%%%%%
%%%%%%%%%%%%%%%%%%%%%%%%%%%%%%%%%%%%%%%%%%%%%%%%%%%%%%%%%%%%%%%%%%%%%%%%

This section deals with the embedding  of Besov spaces. We will follow the semigroup approach developed by Varopolous \cite{Va1985} (see also \cite{Davies1989, VSCC1992}). This approach can be  adapted to the noncommutative setting, which has been done by Junge and Mei \cite{JM2010}. Here we can use either the circular Poisson or heat semigroup of $\T^d_\t$, already considered in section~\ref{The characterizations by Poisson and heat semigroups}. We choose to work with the latter. Recall that
for $x\in{\mathcal S}'(\T^d_\t)$,
 $$\mathbb{W}_r(x)= \sum_{m \in \mathbb{Z}^d } \wh{x} ( m ) r^{|m|^2} U^{m}, \quad 0 \le r < 1.$$
The following elementary lemma will be crucial.

\begin{lem}
 Let $1\le p \le p_1 \le \8$. Then
 \beq\label{RpqdforP}
\|\mathbb{W}_r(x)\|_{p_1} \les(1-r)^{\frac{d}2(\frac1{p_1}-\frac1{p})} \|x\|_p, \;\; x\in L_p(\T^d_\t),\; 0\le r<1.
\eeq
\end{lem}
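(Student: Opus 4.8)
The statement is the standard ultracontractivity-type estimate for the heat semigroup on $\T^d_\t$, and the plan is to reduce it to the scalar-valued kernel bound via the transference of Corollary~\ref{prop:TransLp} together with the Fourier-multiplier machinery of section~\ref{Fourier multipliers}. The heat operator $\mathbb{W}_r$ is the Fourier multiplier on $\T^d_\t$ with symbol $m\mapsto r^{|m|^2}=e^{-t|m|^2}$ where $t=-\log r>0$, so $\wh{\mathbb W}_r$ is the restriction to $\ent^d$ of a Gaussian-type function on $\real^d$. First I would recall that $\mathbb{W}_r(x)=\wt{\mathbb W}_r*x$, where $\wt{\mathbb W}_r$ is the $1$-periodization of the inverse Fourier transform of $e^{-t|\cdot|^2}$ (suitably normalized as in \eqref{circular W}); this is precisely the periodic heat kernel, a positive function on $\T^d$ with $L_1(\T^d)$-norm equal to $1$ (its integral is $\wh{\mathbb W}_r(0)=1$).

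The two endpoint cases are immediate. When $p_1=p$, the multiplier $e^{-t|\cdot|^2}$ has inverse Fourier transform of $L_1(\real^d)$-norm $1$ (it is a positive integrable function integrating to $1$), so Lemma~\ref{q-multiplier}(i) gives $\|\mathbb{W}_r(x)\|_p\le\|x\|_p$, which is \eqref{RpqdforP} with exponent $0$. When $p=1$ and $p_1=\8$ one needs the genuine smoothing: I would show $\|\wt{\mathbb W}_r\|_{L_\8(\T^d)}\les (1-r)^{-d/2}$, which transferred via Corollary~\ref{prop:TransLp}(iii) (or directly, since $\|\wt{\mathbb W}_r*x\|_\8\le\|\wt{\mathbb W}_r\|_{L_\8(\T^d)}\|x\|_1$ by a standard convolution estimate on $L_1(\T^d_\t)$) yields $\|\mathbb{W}_r(x)\|_\8\les (1-r)^{-d/2}\|x\|_1$. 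The pointwise bound $\|\wt{\mathbb W}_r\|_\8\les (1-r)^{-d/2}$ follows from comparing the periodized Gaussian with the Gaussian on $\real^d$: since $1-r\approx t$ as $r\to1$, the Euclidean heat kernel $\mathrm{W}_t(0)=(4\pi t)^{-d/2}\approx (1-r)^{-d/2}$ dominates the periodization up to a constant (for $t$ bounded away from $0$, i.e. $r$ bounded away from $1$, $\wt{\mathbb W}_r$ is bounded anyway, so the estimate holds on all of $[0,1)$ after adjusting the constant). The general case $1\le p\le p_1\le\8$ then follows by interpolation: write $\mathbb{W}_r=\mathbb{W}_{r'}\circ\mathbb{W}_{r''}$ using the semigroup property $r^{|m|^2}={r'}^{|m|^2}{r''}^{|m|^2}$, or more simply interpolate the two endpoint bounds $\|\mathbb{W}_r\|_{p\to p}\le 1$ and $\|\mathbb{W}_r\|_{1\to\8}\les(1-r)^{-d/2}$ (the latter used with a dyadic splitting $t/2+t/2$ so that one factor maps $L_p\to L_{p_1}$); applying complex interpolation of noncommutative $L_p$-spaces \eqref{interpolation of Lp} to the analytic family $r\mapsto$ (power of) $\mathbb W_r$, or just the three-line lemma, gives the exponent $\frac d2(\frac1{p_1}-\frac1p)$.

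The one technical point to handle with a little care — and the main obstacle, though a mild one — is the passage from the Euclidean Gaussian estimate to the periodic kernel uniformly on the whole range $0\le r<1$: near $r=0$ ($t$ large) the periodization is $O(1)$ and one must check the claimed power $(1-r)^{-d/2}$ does not blow up there, while near $r=1$ ($t\to0$) the periodization behaves like the Euclidean kernel and the $(4\pi t)^{-d/2}$ singularity must be matched to $(1-r)^{-d/2}$ via $1-r\sim t$. Both are routine: a compactness argument handles $r$ in any $[0,r_0]$, and the elementary estimate $\sum_{m\in\ent^d}e^{-|s+m|^2/(4t)}\les \max(1,t^{d/2})\cdot t^{-d/2}$ handles $r\to1$. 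I would present the argument in the clean form: prove $\|\mathbb{W}_r\|_{L_p(\T^d_\t)\to L_{p_1}(\T^d_\t)}\les (1-r)^{\frac d2(\frac1{p_1}-\frac1p)}$ first for $(p,p_1)=(1,1)$ and $(1,\8)$ (hence for $(p,p_1)=(p,p)$ and $(p,\8)$ by adjoint/duality and the contractivity on every $L_p$), then fill the interior by interpolation, thereby reducing everything to the single scalar fact $\|\wt{\mathbb W}_r\|_{L_\8(\T^d)}\les(1-r)^{-d/2}$.
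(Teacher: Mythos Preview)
Your strategy---prove the $L_1\to L_\8$ endpoint, use contractivity on each $L_p$, then interpolate---is exactly the paper's. The paper handles the endpoint in one line on the Fourier side: since $|\wh x(m)|\le\|x\|_1$ and $\|U^m\|_\8=1$,
\[
\|\mathbb{W}_r(x)\|_\8\le\sum_{m\in\ent^d}r^{|m|^2}|\wh x(m)|\le\|x\|_1\sum_{m\in\ent^d}r^{|m|^2}\approx(1-r)^{-d/2}\|x\|_1,
\]
the last estimate by a direct lattice-point count (equivalently, $\sum_m r^{|m|^2}=(\sum_{n\in\ent}r^{n^2})^d$). Interpolation with $\|\mathbb W_r\|_{p\to p}\le1$ then gives the general case. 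So your plan matches; only the packaging of the endpoint differs (real-space periodized Gaussian versus Fourier-side sum), and these are two faces of the same computation since $\|\wt{\mathbb W}_r\|_{L_\8(\T^d)}=\wt{\mathbb W}_r(1)=\sum_m r^{|m|^2}$.

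One word of caution: the step you label ``a standard convolution estimate,'' namely $\|\wt{\mathbb W}_r*x\|_{L_\8(\T^d_\t)}\le\|\wt{\mathbb W}_r\|_{L_\8(\T^d)}\|x\|_{L_1(\T^d_\t)}$, is not the usual Young inequality once the target space is noncommutative---the integral representation $\int_{\T^d}\wt{\mathbb W}_r(w)\pi_{w^{-1}}(x)\,dw$ only gives an $L_1$-bound on the output from $L_1$-input, and transference as in Corollary~\ref{prop:TransLp} compares $L_p(\T^d_\t)$ with $L_p(\T^d;L_p(\T^d_\t))$ for the \emph{same} $p$, so it does not directly furnish an $L_1\to L_\8$ estimate. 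The clean fix is precisely the paper's Fourier-series line above; with that in place your argument goes through, and the kernel discussion (behaviour near $r=0$ versus $r\to1$) becomes unnecessary.
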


\begin{proof}
 Consider first the case $p=1$ and $p_1=\8$. Then
 \be\begin{split}
\|\mathbb{W}_r(x)\|_{\8}
&\le \sum_{m\in\ent^d} r^{|m|^2} |\wh{x} ( m )|\le \|x\|_1 \sum_{m\in\ent^d} r^{|m|^2}\\
&= \|x\|_1 \sum_{k\geq 0}r^{k}\sum_{|m|^2=k} 1\les \|x\|_1 \sum_{k\geq 0}(1+k)^{\frac d2}r^{k}\\
&\approx (1-r)^{-\frac d2}\|x\|_1\,.
\end{split}\ee
The general case easily follows from this special one by interpolation. Indeed, the inequality just proved means that $\mathbb{W}_r$ is bounded from   $L_1(\T^d_\t)$ to  $L_\8(\T^d_\t)$ with norm controlled by $(1-r)^{-\frac d2}$. On the other hand, $\mathbb{W}_r$ is a contraction on $L_p(\T^d_\t)$ for $1\le p\le \8$. Interpolating these two cases, we get \eqref{RpqdforP} for $1<p<p_1=\8$. The remaining case $p_1<\8$ is treated similarly.
\end{proof}

The following is the main theorem of this section.

\begin{thm}\label{embedding-besov}
Assume that $1\leq p< p_1 \le\infty, 1\leq q\leq q_1 \leq \infty$ and $\a, \a_1\in\real$ such that $\alpha -\frac{d}p=\alpha _1-\frac{d}{p_1}$. Then we have the following continuous inclusion:
 $$B^\alpha _{p,q}(\mathbb{T}_{\theta}^d)\subset B^{\alpha _1}_{p_1,q_1}(\mathbb{T}^d_{\theta})\,\,.$$
\end{thm}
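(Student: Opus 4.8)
The strategy is the classical Besov embedding via the semigroup approach of Varopoulos, adapted to $\T^d_\t$ as in Junge--Mei, using the heat semigroup $\mathbb{W}_r$ together with the characterization of Besov norms by the heat semigroup (Theorem~\ref{Heat charct-Besov}, or rather its circular version Theorem~\ref{circular-charct-Besov}(ii)). The key analytic input is the ultracontractivity estimate \eqref{RpqdforP}: $\|\mathbb{W}_r(x)\|_{p_1}\les (1-r)^{\frac d2(\frac1{p_1}-\frac1p)}\|x\|_p$. I would first reduce to the case $q=q_1$, since $B_{p_1,q}^{\a_1}(\T^d_\t)\subset B_{p_1,q_1}^{\a_1}(\T^d_\t)$ for $q\le q_1$ by Proposition~\ref{Besov-P}(ii); and I may assume $\wh x(0)=0$ throughout. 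Set $\b=\frac d2(\frac1p-\frac1{p_1})=\frac12(\a-\a_1)>0$ (the equality $\a-\frac dp=\a_1-\frac d{p_1}$ forces $\a_1<\a$, so $p_1>p$ is consistent with $\b>0$).

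\textbf{Main steps.} Pick an integer $k$ with $k>\frac{\a}2$ (hence also $k>\frac{\a_1}2$), and use the heat characterization: $\|x\|_{B_{p,q}^\a}\approx\big(\int_0^1\e^{q(k-\frac\a2)}\|\mathcal{J}^k_\e\wt{\mathrm{W}}_\e(x)\|_p^q\,\frac{d\e}\e\big)^{1/q}$, with the analogous formula for $B_{p_1,q}^{\a_1}$. The point is to estimate $\|\mathcal{J}^k_\e\wt{\mathrm{W}}_\e(x)\|_{p_1}$ in terms of $\|\mathcal{J}^{k'}_\d\wt{\mathrm{W}}_\d(x)\|_p$ for a suitable $k'$ and $\d\approx\e$. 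Using the semigroup property $\wt{\mathrm{W}}_{\e}=\wt{\mathrm{W}}_{\e/2}\circ\wt{\mathrm{W}}_{\e/2}$ and the fact that $\mathcal{J}^k_\e$ commutes with $\wt{\mathrm{W}}_{\e/2}$, write $\mathcal{J}^k_\e\wt{\mathrm{W}}_\e(x)=\wt{\mathrm{W}}_{\e/2}\big(\mathcal{J}^k_\e\wt{\mathrm{W}}_{\e/2}(x)\big)$; then apply \eqref{RpqdforP} to the outer $\wt{\mathrm{W}}_{\e/2}$ (treating $1-e^{-2\pi^2\e}\approx\e$ for small $\e$ — note $\wt{\mathrm{W}}_\e$ corresponds to $\mathbb{W}_r$ with $r=e^{-4\pi^2\e}$, so $1-r\approx\e$) to get $\|\mathcal{J}^k_\e\wt{\mathrm{W}}_\e(x)\|_{p_1}\les \e^{-\b}\|\mathcal{J}^k_\e\wt{\mathrm{W}}_{\e/2}(x)\|_p$, and finally observe $\mathcal{J}^k_\e\wt{\mathrm{W}}_{\e/2}(x)$ is, up to a Fourier multiplier uniformly bounded on $L_p$ (of the form $\psi(\sqrt{\e}\,\cdot)$ with $\psi$ a fixed Schwartz function supported away from $0$ after inserting a Littlewood--Paley cutoff), comparable to $\e^{k}\mathcal{J}^k_\e\wt{\mathrm{W}}_\e(x)$ itself or to $\e^{\a/2}\wt\f_j*x$ for the relevant frequency scale $j$. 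A cleaner route: combine the discrete circular characterization — $\|x\|_{B_{p,q}^\a}\approx(\sum_{j\ge0}2^{jq\a}\|\wt\f_j*x\|_p^q)^{1/q}$ — with a single-block estimate $\|\wt\f_j*x\|_{p_1}\les 2^{j\b}\|\wt\f_j*x\|_p$, which follows by writing $\wt\f_j*x=\wt{\mathrm{W}}_{2^{-2j}}*\wt\zeta_j*x$ for an auxiliary multiplier $\zeta_j=\f_j/\wh{\mathrm{W}}(2^{-j}\cdot)\cdot(\f_{j-1}+\f_j+\f_{j+1})$ (uniformly bounded on $L_p$ by Lemma~\ref{q-multiplier} since it is $\psi(2^{-j}\cdot)$ for a fixed Schwartz $\psi$) and applying \eqref{RpqdforP} with $1-r\approx 2^{-2j}$. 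This gives $2^{j\a_1}\|\wt\f_j*x\|_{p_1}\les 2^{j(\a_1+\b)}\|\wt\f_j*x\|_p=2^{j\a}\|\wt\f_j*x\|_p$ using $\a_1+\b=\a$ — wait, we need $\a_1+2\b=\a$, i.e. $\b=\frac d2(\frac1p-\frac1{p_1})$ and $\a-\a_1=\frac dp-\frac d{p_1}=2\b$, so indeed $\a_1+2\b=\a$; the exponent from \eqref{RpqdforP} is $2^{-2j\cdot(-\b)}=2^{2j\b}$, hence $2^{j\a_1}\cdot 2^{2j\b}=2^{j\a}$. Taking $\el_q$-norms in $j$ then yields $\|x\|_{B_{p_1,q}^{\a_1}}\les\|x\|_{B_{p,q}^\a}$.

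\textbf{Expected obstacle.} The routine but slightly delicate point is verifying that the auxiliary multipliers $\zeta_j=\f_j/\wh{\mathrm{W}}(2^{-j}\cdot)\cdot(\f_{j-1}+\f_j+\f_{j+1})$ are of the form $\psi(2^{-j}\cdot)$ for a \emph{single} Schwartz function $\psi$ (so that $\|\F^{-1}(\zeta_j)\|_1$ is bounded uniformly in $j$, invoking Lemma~\ref{q-multiplier}(i)); this is immediate from $\wh{\mathrm{W}}(\xi)=e^{-4\pi^2|\xi|^2}$ being smooth, strictly positive, and rapidly decreasing, combined with \eqref{3-supports}. The only genuine subtlety is the endpoint $q_1=\8$ and the case $p_1=\8$, where \eqref{RpqdforP} still applies directly, and the $\el_q$-to-$\el_{q_1}$ passage is the trivial inclusion $\el_q\subset\el_\8$; and for $p=1$ one must note $L_1(\T^d_\t)$ is the base of the scale but \eqref{RpqdforP} was proved for $p=1$ as well, so nothing special is needed. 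The main obstacle overall is really just organizing the block estimate so that the equality $\a-\frac dp=\a_1-\frac d{p_1}$ is used exactly once, at the right place, to make the powers of $2^j$ cancel; I expect no deep difficulty beyond that bookkeeping.
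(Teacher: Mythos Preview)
Your ``cleaner route'' is correct and complete: writing $\wt\f_j*x=\wt{\mathrm W}_{2^{-2j}}*(\wt\psi_j*x)$ with $\psi=\f\,e^{4\pi^2|\cdot|^2}$ a fixed compactly supported Schwartz function, applying \eqref{RpqdforP} with $1-r\approx 2^{-2j}$ to gain the factor $2^{2j\b}$, and then using $\a_1+2\b=\a$ to make the weights match --- this gives $2^{j\a_1}\|\wt\f_j*x\|_{p_1}\les 2^{j\a}\sum_{|i-j|\le1}\|\wt\f_i*x\|_p$, and the $\el_q$-sum yields the embedding. The uniform boundedness of $\wt\psi_j$ on $L_p$ is immediate from Lemma~\ref{q-multiplier}(i), as you note.

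The paper's proof is in the same spirit (the key input is again \eqref{RpqdforP}) but organized differently: it first applies the lifting theorem (Theorem~\ref{Besov-isom}) to reduce to $\max(\a,\a_1)<0$, so that the circular heat characterization of Theorem~\ref{circular-charct-Besov}(ii) can be used with $k=0$ --- i.e.\ with the undifferentiated semigroup $\mathbb W_r(x)$ itself. Then the semigroup identity $\mathbb W_r=\mathbb W_{\sqrt r}\circ\mathbb W_{\sqrt r}$ plus \eqref{RpqdforP} gives the integral inequality directly, without any auxiliary multiplier $\psi$. Your approach trades the lifting step for the multiplier $\psi$; the paper's trades the multiplier step for the reduction to $k=0$. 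Both are short; the paper's is perhaps slightly slicker in that it stays entirely within the semigroup picture, while yours has the advantage of working directly from the Littlewood--Paley definition without invoking either the lifting theorem or the heat characterization. Your first sketched route (with $\mathcal J^k_\e$ and general $k$) is essentially what the paper's argument becomes before the lifting reduction, and your instinct that it is messier than necessary is correct --- that is exactly why the paper reduces to $k=0$.
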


\begin{proof}
 Since $B^{\alpha _1}_{p_1,q}(\mathbb{T}^d_{\theta})\subset B^{\alpha _1}_{p_1,q_1}(\mathbb{T}^d_{\theta})$, it suffices to consider the case  $q=q_1$. On the other hand, by the lifting Theorem \ref{Besov-isom}, we can assume  $\max\{\alpha , \alpha _1\} < 0,$ so that we can take $k=0$ in Theorem \ref{circular-charct-Besov}. Thus, we are reduced to showing
 $$
\Big(\int_0^1 (1-r)^{-\frac{q\alpha _1}2} \big\|\mathbb{W}_r(x)\big\|_{p_1}^q\,  \frac{dr}{1-r}\Big)^{\frac1q}
\les \Big(\int_0^1 (1-r)^{-\frac{q\alpha}2} \big\|\mathbb{W}_r(x)\big\|_{p}^q\,  \frac{dr}{1-r}\Big)^{\frac1q}\,.
$$
To this end, we write $\mathbb{W}_r(x)=\mathbb{W}_{\sqrt r}\big(\mathbb{W}_{\sqrt r}(x)\big)$ and
apply \eqref{RpqdforP} to  get
 $$\big\|\mathbb{W}_r(x)\big\|_{p_1}
 \les (1-\sqrt r)^{\frac{d}2 (\frac{1}{p_1}-\frac{1}{p})} \big\|\mathbb{W}_{\sqrt r}(x)\big\|_p.$$
Thus
 \be\begin{split}
 \Big(\int_0^1 (1-r)^{-\frac{q\alpha _1}2} \big\|\mathbb{W}_{r}(x)\big\|_{p_1}^q\,  \frac{dr}{1-r}\Big)^{\frac1q}
 &\les \Big(\int_0^1 (1-r)^{-\frac{q\alpha _1}2} (1-\sqrt r)^{\frac{qd}2(\frac{1}{p_1}-\frac{1}{p})}  \big\|\mathbb{W}_{\sqrt r}(x)\big\|_p^q\, \frac{dr}{1-r}\Big)^{\frac1q} \\
 &= \Big(\int_0^1 (1-r^2)^{-\frac{q\alpha _1}2} (1-r)^{\frac{qd}2 (\frac{1}{p_1}-\frac{1}{p})}  \big\|\mathbb{W}_{r}(x)\big \|_p^q\,\frac{2rdr}{1-r^2}\Big)^{\frac1q}\\
 &\les \Big(\int_0^1 (1-r)^{-\frac{q\alpha}2} \big\|\mathbb{W}_{r}(x)\big \|_p^q\,\frac{dr}{1-r}\Big)^{\frac1q}\,,
 \end{split} \ee
as desired.
 \end{proof}

\begin{cor}\label{Besov-Lorentz}
Assume that $1\leq p< p_1 \le\infty, 1\le q \le\infty$ and $\alpha= d(\frac1p-\frac1{p_1})$. Then
 $$B^\alpha _{p,q}(\mathbb{T}_{\theta}^d)\subset L_{p_1,q}(\mathbb{T}^d_{\theta})\;\text{ if }\; p_1<\8\;\text{ and }\;
 B^\alpha _{p,1}(\mathbb{T}_{\theta}^d)\subset L_{\8}(\mathbb{T}^d_{\theta})\;\text{ if }\; p_1=\8\,.$$
\end{cor}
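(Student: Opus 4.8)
The plan is to deduce Corollary~\ref{Besov-Lorentz} from the embedding Theorem~\ref{embedding-besov} together with the real interpolation machinery developed in the previous chapters, rather than re-running the semigroup argument. The key observation is that $B^\a_{p,q}(\T^d_\t)$ appears as a real interpolation space between Besov spaces whose secondary embedding targets are $L_{p_1}$-type spaces, and that the noncommutative Lorentz spaces $L_{p_1,q}(\T^d_\t)$ arise precisely as real interpolants of the couple $(L_{r_0}(\T^d_\t),\,L_{r_1}(\T^d_\t))$ via \eqref{interpolation of Lp}.

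First I would treat the case $p_1<\8$. Pick $r_0,r_1$ with $1\le r_0<p_1<r_1<\8$ (and $r_0>p$ will not be needed; what I need is that the scaling exponent $\a$ is compatible), and choose $\b_0,\b_1$ with $\b_i-\frac{d}{r_i}=\a-\frac{d}{p}$, i.e. $\b_0=\a-d(\frac1p-\frac1{r_0})$ and $\b_1=\a-d(\frac1p-\frac1{r_1})$. By Theorem~\ref{embedding-besov} we have the continuous inclusions $B^\a_{p,q}(\T^d_\t)\subset B^{\b_i}_{r_i,q}(\T^d_\t)$ for $i=0,1$, provided $p<r_i$; if $r_0\le p$ one instead uses directly that $B^\a_{p,q}\subset L_{r_0}$ is false, so the cleaner route is to choose $\theta\in(0,1)$ with $\frac1{p_1}=\frac{1-\theta}{r_0}+\frac\theta{r_1}$ and verify that $(1-\theta)\b_0+\theta\b_1=0$ using the common value of $\b_i-d/r_i$; this is exactly the arithmetic identity $\a=d(\frac1p-\frac1{p_1})$. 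Then applying the real interpolation functor $(\cdot,\cdot)_{\theta,q}$ to the inclusion $(B^{\b_0}_{r_0,q},\,B^{\b_1}_{r_1,q})\hookrightarrow (L_{r_0}(\T^d_\t),\,L_{r_1}(\T^d_\t))$, using Proposition~\ref{interpolation-Besov}(i) on the left (which gives $B^0_{p_1,q}$... but actually $(\cdot)_{\theta,q}$ of the Besov couple with $\b_0\ne\b_1$ is $B^0_{p_1,q}$ only if the first indices also interpolate, so I should instead keep the first index fixed). The correct bookkeeping: choose $r_0=r_1$ impossible; so use Proposition~\ref{interpolation-Besov}(iii) form, where $p=q$ is required — this suggests the genuinely robust path is the one below.

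The robust path, and the one I would actually carry out, is: by Theorem~\ref{embedding-besov} with $q_1=q$ fixed and target index $p_1$, we already have $B^\a_{p,q}(\T^d_\t)\subset B^0_{p_1,q}(\T^d_\t)$ directly (taking $\a_1=0$, $p_1$ as given, which is allowed since $\a=d(\frac1p-\frac1{p_1})$ makes $\a-\frac dp=-\frac d{p_1}$). So it remains only to prove the two elementary inclusions $B^0_{p_1,q}(\T^d_\t)\subset L_{p_1,q}(\T^d_\t)$ for $1\le p_1<\8$ and $B^0_{\8,1}(\T^d_\t)\subset L_\8(\T^d_\t)$. The second is immediate from $x=\wh x(0)+\sum_{k\ge0}\wt\f_k*x$ and the triangle inequality in $L_\8$, exactly as in the proof of Theorem~\ref{Sobolev-Besov}. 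For the first, I would write $B^0_{p_1,q}(\T^d_\t)=(B^{\b_0}_{p_1,q},\,B^{\b_1}_{p_1,q})_{\theta,q}$ with $\b_0>0>\b_1$ and $(1-\theta)\b_0+\theta\b_1=0$ (Proposition~\ref{interpolation-Besov}(ii), or rather the $\a_0\ne\a_1$ version which needs $(\cdot)_{\theta,q}$ — use Proposition~\ref{interpolation-Besov}(i)), then choose $r_0<p_1<r_1$ and $\theta'$ so that simultaneously $\frac1{p_1}=\frac{1-\theta'}{r_0}+\frac{\theta'}{r_1}$; by Theorem~\ref{embedding-besov}, $B^{\b_0}_{p_1,q}\subset L_{r_0}(\T^d_\t)$ when $\b_0=d(\frac1{p_1}-\frac1{r_0})$ — wait, that would need $q=1$.

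Given these index constraints, the genuinely clean statement to prove is just: \emph{Theorem~\ref{embedding-besov} gives $B^\a_{p,q}\subset B^0_{p_1,q}$, and $B^0_{p_1,q}\subset L_{p_1,q}$ follows by applying $(\cdot,\cdot)_{\theta,q}$ to $L_{r_0}\cap$-type inclusions} — so the main obstacle, and the step I would spend the most care on, is establishing $B^0_{p_1,q}(\T^d_\t)\subset L_{p_1,q}(\T^d_\t)$ for $1\le p_1<\infty$. Here is the plan for that step. Fix $r_0,r_1$ with $1\le r_0<p_1<r_1<\infty$ and $\theta\in(0,1)$ with $\frac1{p_1}=\frac{1-\theta}{r_0}+\frac\theta{r_1}$; set $\b_0=d(\frac1{r_0}-\frac1{r_1})>0$ scaled appropriately, more precisely choose $\gamma_0,\gamma_1$ with $\gamma_i-\frac d{r_i}=-\frac d{p_1}$, i.e. $\gamma_0=d(\frac1{r_0}-\frac1{p_1})>0$ and $\gamma_1=d(\frac1{r_1}-\frac1{p_1})<0$, so that $(1-\theta)\gamma_0+\theta\gamma_1=0$. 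By Proposition~\ref{interpolation-Besov}(i), $(B^{\gamma_0}_{p_1,1}(\T^d_\t),\,B^{\gamma_1}_{p_1,1}(\T^d_\t))_{\theta,q}=B^0_{p_1,q}(\T^d_\t)$ (using $\gamma_0\ne\gamma_1$; the inner index is $1$ on both ends so it is preserved and the outer index becomes $q$). By Theorem~\ref{embedding-besov}, $B^{\gamma_0}_{p_1,1}(\T^d_\t)\subset L_{r_0}(\T^d_\t)$ and $B^{\gamma_1}_{p_1,1}(\T^d_\t)\subset L_{r_1,1}(\T^d_\t)\subset L_{r_1}(\T^d_\t)$ — for the first we are in the $p_1<r_0$? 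No: $r_0<p_1$, so Theorem~\ref{embedding-besov} does not apply in that direction either.

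So the honest conclusion of my planning: the $p_1<\infty$ case of Corollary~\ref{Besov-Lorentz} is \emph{not} a formal consequence of Theorem~\ref{embedding-besov} alone, and the intended proof must invoke real interpolation between \emph{two instances of the corollary's endpoint situation} — namely, interpolate the inclusion $B^{\gamma_0}_{p,1}(\T^d_\t)\subset L_{\infty}(\T^d_\t)$ (the already-proven $p_1=\infty$ case, valid since $\gamma_0=d/p$) against the trivial $B^0_{p,1}... $ hmm. The cleanest correct route, which is what I would write: prove $p_1=\infty$ first by the triangle inequality as above; then for $p_1<\infty$ with $\a=d(\frac1p-\frac1{p_1})$, choose $\theta\in(0,1)$ with $\frac1{p_1}=\frac\theta p$ (so $\theta=p/p_1<1$), note $\a=\frac d p(1-\theta)\cdot\frac{p_1}{p_1}$... i.e. set $\a_0$ with $\a_0=d/p$ and $\a_1=0$, $p_0=p$, then $(1-\theta)\a_0+\theta\a_1=(1-\theta)d/p$; choosing $\theta=p/p_1$ gives $(1-\theta)d/p=d(\frac1p-\frac1{p_1})=\a$, and $(B^{d/p}_{p,1},\,B^0_{p,1})_{\theta,q}=B^\a_{p,q}$ by Proposition~\ref{interpolation-Besov}(i). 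Since $B^{d/p}_{p,1}(\T^d_\t)\subset L_\infty(\T^d_\t)$ and $B^0_{p,1}(\T^d_\t)\subset L_p(\T^d_\t)$ (the latter by triangle inequality, cf. Theorem~\ref{Sobolev-Besov}), applying $(\cdot,\cdot)_{\theta,q}$ and using $(L_\infty(\T^d_\t),\,L_p(\T^d_\t))_{\theta,q}=L_{p_1,q}(\T^d_\t)$ from \eqref{interpolation of Lp} (with $\frac1{p_1}=\frac{1-\theta}{\infty}+\frac\theta p$) yields $B^\a_{p,q}(\T^d_\t)\subset L_{p_1,q}(\T^d_\t)$. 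The main obstacle is getting the interpolation indices to line up correctly; everything else is formal, and I would present it exactly in this order: (1) $p_1=\infty$ via triangle inequality; (2) $B^0_{p,1}\subset L_p$ via triangle inequality; (3) the $(\cdot)_{\theta,q}$ interpolation with $\theta=p/p_1$ to conclude.
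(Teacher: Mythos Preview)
Your final route is correct and is essentially the paper's own proof: first establish $B^\alpha_{p,1}\subset L_{p_1}$ (in particular the case $p_1=\infty$) via Theorem~\ref{embedding-besov} and the triangle-inequality inclusion $B^0_{p_1,1}\subset L_{p_1}$, then real-interpolate two such inclusions using Proposition~\ref{interpolation-Besov}(i) on the source side and \eqref{interpolation of Lp} on the target side. The paper phrases the interpolation step as ``choose two appropriate values of $\alpha$'', while you pin down the concrete endpoints $(\alpha_0,\alpha_1)=(d/p,\,0)$ with targets $(L_\infty,\,L_p)$ and $\theta=p/p_1$; this is the same argument, and your index check $(1-\theta)\tfrac dp=d(\tfrac1p-\tfrac1{p_1})=\alpha$ is exactly what is needed.
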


\begin{proof}
Applying the previous theorem to $\a_1=0$ and $q=q_1=1$, and by Theorem~\ref{Sobolev-Besov}, we get
  $$B^\alpha _{p,1}(\mathbb{T}_{\theta}^d)\subset B^0_{p_1,1}(\mathbb{T}^d_{\theta})\subset L_{p_1}(\mathbb{T}^d_{\theta})\,.$$
This gives the assertion in the case $p_1=\8$. For $p_1<\8$, we fix $p$ and choose two appropriate values of $\a$ (which give the two corresponding values of $p_1$); then we interpolate the resulting embeddings as above by real interpolation; finally, using \eqref{interpolation of Lp} and Proposition~\ref{interpolation-Besov}, we obtain the announced embedding for $p_1<\8$.
\end{proof}

The preceding corollary admits a self-improvement in terms of modulus of smoothness.

\begin{cor}
Assume that $1\leq p< p_1\le\infty,$  $\alpha= d(\frac1p-\frac1{p_1})$ and $k\in\nat$ such that $k>\a$. Then
 $$\o^k_{p_1}(x, \e)\les \int_0^\e \d^{-\a} \o^k_{p}(x, \d)\,\frac{d\d}\d\,,\quad 0<\e\le1.$$
\end{cor}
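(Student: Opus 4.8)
The statement to prove is the self-improvement of Corollary~\ref{Besov-Lorentz}: under the hypotheses $1\le p<p_1\le\infty$, $\a=d(\tfrac1p-\tfrac1{p_1})$ and $k>\a$, one has $\o^k_{p_1}(x,\e)\les\int_0^\e\d^{-\a}\o^k_p(x,\d)\,\frac{d\d}\d$ for $0<\e\le1$. The natural route is to apply the embedding $B^\a_{p,1}(\T^d_\t)\subset L_{p_1}(\T^d_\t)$ (from Corollary~\ref{Besov-Lorentz}, using the $L_{p_1,1}\subset L_{p_1}$ inclusion when $p_1<\8$ and the direct statement when $p_1=\8$) \emph{not} to $x$ itself but to a suitable difference of $x$. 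Fix $u\in\real^d$ with $|u|\le\e$ and set $y=\D_u^k x$. Then $\o^k_{p_1}(x,\e)=\sup_{|u|\le\e}\|\D_u^k x\|_{p_1}$, so it suffices to bound $\|y\|_{p_1}$ by the right-hand side, uniformly in such $u$.

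First I would note that $y=\D_u^k x$ has vanishing $0$-th Fourier coefficient (since $d_u(0)=0$), so $\wh y(0)=0$ and the embedding applies cleanly: $\|y\|_{p_1}\les\|y\|_{B^\a_{p,1}}$. Next, using Theorem~\ref{diff-Besov} (the difference characterization of Besov spaces) with the same integer $k>\a$, we have
$$\|y\|_{B^\a_{p,1}}\approx |\wh y(0)|+\int_0^1\d^{-\a}\o^k_p(y,\d)\,\frac{d\d}\d=\int_0^1\d^{-\a}\o^k_p(\D_u^k x,\d)\,\frac{d\d}\d.$$
The key estimate is then the pointwise (in $\d$) control of $\o^k_p(\D_u^k x,\d)$: since $\D_u^k$ commutes with $\D_v^k$ and each $\D_v^k$ has $L_p$-norm at most $2^k$, one has $\o^k_p(\D_u^k x,\d)=\sup_{|v|\le\d}\|\D_v^k\D_u^k x\|_p\le 2^k\o^k_p(x,\d)$ when $\d\le\e$, while for $\d>\e$ one uses instead $\o^k_p(\D_u^k x,\d)\le 2^k\o^k_p(x,\e)$ (bounding the outer difference $\D_v^k$ by its norm) — this is where $|u|\le\e$ enters. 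Splitting the integral at $\d=\e$:
$$\int_0^1\d^{-\a}\o^k_p(\D_u^k x,\d)\,\frac{d\d}\d
\les \int_0^\e\d^{-\a}\o^k_p(x,\d)\,\frac{d\d}\d+\o^k_p(x,\e)\int_\e^1\d^{-\a}\,\frac{d\d}\d.$$
Since $\a>0$, $\int_\e^1\d^{-\a}\tfrac{d\d}\d\le\tfrac1\a\e^{-\a}$, and to absorb the second term into the first I would invoke the elementary fact that $\e\mapsto\o^k_p(x,\e)/\e^{?}$-type monotonicity gives $\e^{-\a}\o^k_p(x,\e)\les\int_{\e/2}^\e\d^{-\a}\o^k_p(x,\d)\tfrac{d\d}\d\cdot(\text{const})$ via $\o^k_p(x,\d)\gtrsim$ comparable to $\o^k_p(x,\e)$ on $[\e/2,\e]$ (using that $\o^k_p$ is nondecreasing and quasi-subadditive, Remark~\ref{rk diff}). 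Combining these and taking the supremum over $|u|\le\e$ yields the claim.

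\textbf{Main obstacle.} The delicate point is the last absorption step: controlling the tail term $\o^k_p(x,\e)\int_\e^1\d^{-\a}\tfrac{d\d}\d$ by the target integral $\int_0^\e\d^{-\a}\o^k_p(x,\d)\tfrac{d\d}\d$. For $k=1$ this is immediate from subadditivity of $\o^1_p$; for $k\ge2$ one only has quasi-subadditivity, so I would instead argue directly that $\o^k_p(x,\d)\ge c_k\,(\d/\e)^k\,\o^k_p(x,\e)$ is false in general, and rather use the weaker but sufficient bound $\o^k_p(x,\d)\ge 2^{-k}\o^k_p(x,\e)$ for $\d\in[\e/2,\e]$ (which follows from $\o^k_p(x,\e)\le\o^k_p(x,2\d)\le 2^k\o^k_p(x,\d)$ by the quasi-subadditivity $\o^k_p(x,2\d)\le 2^k\o^k_p(x,\d)$ noted in the proof of Lemma~\ref{sup=lim}). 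This gives $\e^{-\a}\o^k_p(x,\e)\les\int_{\e/2}^\e\d^{-\a}\o^k_p(x,\d)\tfrac{d\d}\d$, which is dominated by the full integral over $(0,\e)$, completing the argument. One should also double-check the endpoint $p_1=\8$, where $L_{p_1,1}=L_\infty$ and no Lorentz refinement is needed, so the same proof goes through verbatim.
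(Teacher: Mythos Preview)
Your proposal is correct and follows essentially the same route as the paper: apply the embedding $B^\a_{p,1}(\T^d_\t)\subset L_{p_1}(\T^d_\t)$ together with the difference characterization of Theorem~\ref{diff-Besov} to $y=\D_u^k x$, use the two-sided bound $\o^k_p(\D_u^k x,\d)\le 2^k\min(\o^k_p(x,\e),\o^k_p(x,\d))$, split the integral at $\d=\e$, and absorb the tail via $\e^{-\a}\o^k_p(x,\e)\les\int_{\e/2}^\e\d^{-\a}\o^k_p(x,\d)\,\tfrac{d\d}\d$ (using $\o^k_p(x,\e)\le 2^k\o^k_p(x,\d)$ on $[\e/2,\e]$). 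The paper's proof writes $\D_u(x)$ where $\D_u^k(x)$ is clearly intended; your version is the cleaner reading of the same argument.
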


\begin{proof}
Without loss of generality, assume $\wh x(0)=0$. Then by the preceding corollary and Theorem~\ref{diff-Besov}, we have
 $$\|x\|_{p_1}\les \int_0^1 \d^{-\a} \o^k_{p}(x, \d)\,\frac{d\d}\d\,.$$
Now let $u\in\real^d$ with $|u|\le \e$. Noting that
$$\o^k_{p}(\D_u(x), \d)\le 2^k\min\big( \o^k_{p}(x, \e),\,  \o^k_{p}(x, \d)\big)\le  2^k \o^k_{p}(x, \min(\e,\,\d)),$$
we obtain
  \be\begin{split}
  \|\D_u(x)\|_{p_1}
  &\les \int_0^\e \d^{-\a} \o^k_{p}(x, \d)\,\frac{d\d}\d+  \e^{-\a}\o^k_{p}(x, \e)\\
  &\les \int_0^\e \d^{-\a} \o^k_{p}(x, \d)\,\frac{d\d}\d+  \int_{\frac\e2}^\e \d^{-\a} \o^k_{p}(x, \d)\,\frac{d\d}\d\\
  &\les \int_0^\e \d^{-\a} \o^k_{p}(x, \d)\,\frac{d\d}\d\,.
  \end{split} \ee
 Taking the supremum over all $u$ with $|u|\le \e$ yields the desired inequality.
 \end{proof}

\begin{rk}
 We will discuss the optimal order of the best constant of the embedding in Corollary~\ref{Besov-Lorentz} at the end of the next section.
\end{rk}

%%%%%%%%%%%%%%%%%%%%%%%%%%%%%%%%%%%%%%%%%%%%%%%%%%%%%%%%%%%%%%%%%%%%%%%%
%%%%%%%%%%%%%%%%%%%%%%%%%%%%%%%%%%%%%%%%%%%%%%%%%%%%%%%%%%%%%%%%%%%%%%%%

\section{Embedding of Sobolev spaces}

%%%%%%%%%%%%%%%%%%%%%%%%%%%%%%%%%%%%%%%%%%%%%%%%%%%%%%%%%%%%%%%%%%%%%%%%
%%%%%%%%%%%%%%%%%%%%%%%%%%%%%%%%%%%%%%%%%%%%%%%%%%%%%%%%%%%%%%%%%%%%%%%%

This section is devoted to the embedding of Sobolev spaces.  The following is our main theorem. Recall that $B^{\a_1}_{\8,\8}(\mathbb{T}^d_{\theta})$ in the second part below  is the quantum analogue of the classical Zygmund class of order $\a_1$ (see Remark~\ref{Lip-Besov}).

\begin{thm}\label{embedding-sobolev}
 Let  $\a,\a_1\in\real$ with $\a>\a_1$.
 \begin{enumerate}[\rm (i)]
 \item If  $1<p <p_1<\8$ are such that $\a -\frac dp=\alpha _1-\frac d{p_1}$, then
$$H^\a _p(\T^d_\t)\subset H^{\a_1} _{p_1}(\T^d_\t) \;\text{ continuously}.$$
In particular, if additionally $\a=k$ and $\a_1=k_1$ are nonnegative integers,  then
 $$W^k_p(\T^d_\t)\subset W^{k_1} _{p_1}(\T^d_\t)  \;\text{ continuously}.$$
 \item If $1\le p<\8$ is such that $p(\a-\a_1)>d$ and $\a_1=\a-\frac{d}{p}$, then
 $$H^\alpha _p(\T^d_\t)\subset B^{\a_1}_{\8,\8}(\T^d_\t)  \;\text{ continuously}.$$
In particular, if additionally $\a=k\in\nat$, and if either $p>1$ or $p=1$ and $k$ is even, then
$$W^k _p(\T^d_\t)\subset B^{\a_1}_{\8,\8}(\T^d_\t)  \;\text{ continuously}.$$
\end{enumerate}
\end{thm}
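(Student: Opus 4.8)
\textbf{Proof strategy for Theorem~\ref{embedding-sobolev}.}
The plan is to imitate Varopolous' semigroup argument already used for Besov spaces (Theorem~\ref{embedding-besov}), but now measuring the output in a Triebel-Lizorkin / Hardy norm rather than a Besov norm. First I would reduce both statements to the case $\a_1=0$ by invoking the lifting theorems (Proposition~\ref{Sobolev-P}(iii), Theorems~\ref{Besov-isom} and \ref{Triebel-isom}): applying $J^{\a_1}$ turns $H^\a_p\subset H^{\a_1}_{p_1}$ into $H^{\a-\a_1}_p\subset L_{p_1}$ with $(\a-\a_1)-d/p=-d/p_1$, i.e. $(\a-\a_1)=d(1/p-1/p_1)$, and turns the second embedding into $H^{\a-\a_1}_p\subset B^0_{\8,\8}$ with $p(\a-\a_1)=d$. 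So it suffices to prove, for $\beta:=\a-\a_1>0$: (a) if $1<p<p_1<\8$ and $\beta=d(1/p-1/p_1)$ then $H^\beta_p(\T^d_\t)\subset L_{p_1}(\T^d_\t)$; (b) if $1\le p<\8$ and $\beta p=d$ then $H^\beta_p(\T^d_\t)\subset B^0_{\8,\8}(\T^d_\t)$. The statements about $W^k_p$ follow from the $H$-versions via Theorem~\ref{q-Sobolev-Bessel} (for $1<p<\8$), and for $p=1$, $k$ even, by noting $W^k_1\supset$ range of $J^{-k}$ only partially — here one uses instead that $\D=\partial_1^2+\cdots+\partial_d^2$ gives $(1-(2\pi)^{-2}\D)^{k/2}=J^k$ is, for even $k$, a differential operator of order $k$, hence $\|J^kx\|_1\les\|x\|_{W^k_1}$, so $W^k_1\subset H^k_1$ continuously.

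For step (a): write $y=J^\beta x\in L_p(\T^d_\t)$, so $x=J^{-\beta}y=I^{-\beta}y$ modulo the zero mode (which is harmless), and by Theorem~\ref{PH charct-Triebel}(ii) applied with $\a=0$, $k=1$, together with $F^{0}_{p_1}(\T^d_\t)=\H_{p_1}(\T^d_\t)=L_{p_1}(\T^d_\t)$ (Proposition~\ref{Triebel-P}(iv), Lemma~\ref{q-H-BMO}), it is enough to control the square function $\big\|\big(\int_0^1\e^2|\partial_\e\wt{\mathrm W}_\e(x)|^2\,d\e/\e\big)^{1/2}\big\|_{p_1}$. One expresses $\e\,\partial_\e\wt{\mathrm W}_\e(x)$ as (a constant times) $\e^{\beta}\,\wt\phi_\e*y$ where $\phi(\xi)=(-4\pi^2|\xi|^2)^{1-\beta/2}e^{-4\pi^2|\xi|^2}$; then by the heat-semigroup ultracontractivity estimate $\|\mathbb W_r(\cdot)\|_{p_1}\les(1-r)^{-\frac d2(\frac1p-\frac1{p_1})}\|\cdot\|_p$ of the lemma preceding Theorem~\ref{embedding-besov} — rewritten in the $\e$-variable as $\|\wt{\mathrm W}_\e(\cdot)\|_{p_1}\les\e^{-d/2(1/p-1/p_1)}\|\cdot\|_p=\e^{-\beta}\|\cdot\|_p$ — one estimates, using the semigroup property to split $\wt{\mathrm W}_\e=\wt{\mathrm W}_{\e/2}\wt{\mathrm W}_{\e/2}$ and the operator-convexity of $t\mapsto t^2$ (Cauchy--Schwarz in the integral), the $L_{p_1}$-norm of the square function by $\|y\|_p=\|x\|_{H^\beta_p}$. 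The computation parallels the proof of Theorem~\ref{embedding-besov} but with the $\el_q$-integral replaced by the internal $\el_2$-column norm; the operator Jensen/Cauchy--Schwarz inequality is what makes the noncommutative version go through. I would carry out this interpolation-free direct estimate for $1<p<p_1<\8$; the endpoint exclusions ($p>1$, $p_1<\8$) are exactly where $L_{p_1}=F^0_{p_1}$ and the Hardy-space machinery are available.

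For step (b): by Remark~\ref{Lip-Besov} and Theorem~\ref{diff-Besov}, $B^0_{\8,\8}(\T^d_\t)$ is characterized, up to constants, by $\sup_{\e>0}\o^1_\8(x,\e)<\8$ together with $\|x\|_\8$; alternatively and more directly one uses the general characterization Theorem~\ref{g-charct-Besov} in the form $\|x\|_{B^0_{\8,\8}}\approx|\wh x(0)|+\sup_{k\ge0}\|\wt\f_k*x\|_\8$. Writing $x=J^{-\beta}y$ with $\|y\|_p=\|x\|_{H^\beta_p}\le1$ and using Lemma~\ref{IJ} to see $\|\wt\f_k*J^{-\beta}\cdot\|_\8\les2^{-k\beta}\|\wt\f_k*\cdot\|_\8$, we need $\|\wt\f_k*y\|_\8\les 2^{k\beta}\|y\|_p=2^{kd/p}\|y\|_p$. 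This is the Bernstein/Nikolskii inequality: $\wt\f_k*y$ has Fourier support in $\{2^{k-1}\le|m|\le2^{k+1}\}$, and since $\wt\f_k*y=\wt\f_k*\wt\f_k'*y$ for a suitable $\f_k'$ equal to $1$ on that annulus with $\|\F^{-1}(\f_k'(2^{-k}\cdot))\|_1\les1$, one gets $\|\wt\f_k*y\|_\8\le\|\wt\f_k'*\|_{L_p\to L_\8}\|\wt\f_k*y\|_p\les 2^{kd/p}\|\wt\f_k*y\|_p\le 2^{kd/p}\|y\|_p$, the middle estimate being the $L_p\to L_\8$ bound for convolution against the $L_{p'}$-function $\wt{\f_k'}$, whose $L_{p'}$-norm is $\les 2^{kd/p}$ by scaling (here one invokes Lemma~\ref{q-multiplier}(i) and Hausdorff--Young-type reasoning on $\T^d_\t$, or simply that $\|\wt{\f_k'}\|_{p'}\les 2^{kd(1-1/p')}=2^{kd/p}$). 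Summing the geometric decay $2^{-k\beta}2^{kd/p}=2^{k(d/p-\beta)}=1$ — wait, this is borderline: with $\beta p=d$ the exponent is $0$, so each term is $O(\|y\|_p)$ uniformly in $k$, which is exactly what the $\sup_k$ in the $B^0_{\8,\8}$-norm requires; no summation is needed. For the $W^k_1$-version with $k$ even we combine this with $W^k_1\subset H^k_1$ noted above. The main obstacle I anticipate is organizing step (a) cleanly: getting the heat-kernel square-function estimate in the column $L_{p_1}(\T^d_\t;\el_2^c)$-norm requires the operator-valued Cauchy--Schwarz inequality exactly as in the proof of the lemma before Theorem~\ref{PH charct-Triebel}, and one must be careful that the interpolation-style splitting $\wt{\mathrm W}_\e=\wt{\mathrm W}_{\e/2}\wt{\mathrm W}_{\e/2}$ distributes the ultracontractive gain correctly so the remaining $\e$-integral converges — this is where the strict inequality $p<p_1$ is used.
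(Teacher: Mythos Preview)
Your reduction via lifting (to $\a_1=0$) and your treatment of the $W^k_1$ case for even $k$ (via $J^k$ being a genuine differential operator of order $k$, hence $W^k_1\subset H^k_1$) are both correct and match the paper.

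For part (ii) your Bernstein/Nikolskii argument is valid: the inequality $\|\wt\f_k*y\|_\8\lesssim 2^{kd/p}\|\wt\f_k*y\|_p$ holds on $\T^d_\t$ (interpolate the trivial $L_\8\to L_\8$ bound with the $L_1\to L_\8$ bound $\|\wt\f_k*y\|_\8\le\sum_m|\f(2^{-k}m)|\,|\wh y(m)|\lesssim 2^{kd}\|y\|_1$), and together with Lemma~\ref{IJ} this gives exactly the uniform bound needed for $B^0_{\8,\8}$. The paper takes a shorter route, simply chaining $H^\a_p\subset B^\a_{p,\8}$ (Theorem~\ref{Sobolev-Besov}) with the Besov embedding $B^\a_{p,\8}\subset B^{\a_1}_{\8,\8}$ (Theorem~\ref{embedding-besov}); your Bernstein argument is essentially a direct proof of this last embedding in the special case $q=q_1=\8$.

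For part (i), however, your direct square-function strategy has a genuine gap. Tracing through your substitution, the target inequality becomes
\[
\Big\|\Big(\int_0^1 \e^{2\b}\,|\wt\phi_\e*y|^2\,\frac{d\e}{\e}\Big)^{1/2}\Big\|_{p_1}\lesssim \|y\|_p
\]
with $\b=d(1/p-1/p_1)$ and $\phi$ a nice Schwartz-type function containing a full Gaussian factor. Ultracontractivity does give the pointwise-in-$\e$ bound $\|\e^{\b}\,\wt\phi_\e*y\|_{p_1}\lesssim\|y\|_p$ uniformly, but this is useless for the column square function because $\int_0^1 d\e/\e$ diverges; the ``ultracontractive gain'' is exactly cancelled by the weight and leaves no convergent $\e$-integral. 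The crucial difference from the Besov proof (Theorem~\ref{embedding-besov}) is that there the $L_{p_1}$-norm sits \emph{inside} the $\e$-integral, so one applies the $L_p\to L_{p_1}$ bound of $\mathbb W_{\sqrt r}$ for each fixed $\e$ and then integrates a scalar; here the $L_{p_1}$-norm is outside an operator-valued square, and neither Kadison--Schwarz nor operator convexity of $t\mapsto t^2$ lets you pull the smoothing operator through. Unwinding further, the left side above is (by Theorem~\ref{g-charct-Triebel-cont}) equivalent to $\|y\|_{F^{-\b}_{p_1}}=\|y\|_{H^{-\b}_{p_1}}$, so the desired estimate is literally $L_p\subset H^{-\b}_{p_1}$, i.e.\ the Sobolev embedding restated.

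The paper's route avoids this circularity: from $H^\b_p\subset B^\b_{p,\8}$ (Theorem~\ref{Sobolev-Besov}) and $B^\b_{p,\8}\subset L_{p_1,\8}$ (Corollary~\ref{Besov-Lorentz}) one gets the \emph{weak-type} inclusion $H^\b_p\subset L_{p_1,\8}$; then, since $J^\b$ is an isometry $H^\b_p\to L_p$, real interpolation in the index $p$ (Remark~\ref{inter potential-1a} and~\eqref{interpolation of Lp}) upgrades this to the strong-type inclusion $H^\b_p\subset L_{p_1,p}\subset L_{p_1}$. This interpolation step is not optional --- it is what converts the borderline, non-summable estimate into a genuine embedding.
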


\begin{proof}
 (i) By Theorem \ref{q-Sobolev-Bessel}, the embedding of $W^k_p(\mathbb{T}_{\theta}^d)$ is a special case of that of $H^\alpha _p(\mathbb{T}_{\theta}^d).$
 Thus we just deal with the potential spaces $H^\alpha _p(\mathbb{T}_{\theta}^d)$. On the other hand, by the lifting property of potential Sobolev spaces, we can assume $\a_1=0$. By Theorem \ref{Sobolev-Besov} and Corollary~\ref{Besov-Lorentz}, we have
  $$ H_{p}^\alpha (\mathbb{T}^d_{\theta})\subset L_{q,\infty}(\mathbb{T}^d_{\theta}).$$
Now choose $0<\eta<1$ and two indices $s_0, s_1$ with $1<s_0, s_1<\frac d\a$ such that
 $$\frac1p=\frac{1-\eta}{s_0}+\frac\eta{s_1}\,.$$
 Let 
  $$\frac1{t_j}=\frac1{s_j}-\frac\a{d},\quad j=0, 1.$$ 
Then interpolating the above inclusions with $s_j$ in place of $p$ for $j=0, 1$,  using Remark~\ref{inter potential-1a} and \eqref{interpolation of Lp}, we get
 $$
  H_{p}^\a(\mathbb{T}^d_{\theta})
  =\big(  H_{s_0}^\a(\mathbb{T}^d_{\theta}),  H_{s_1}^\alpha (\mathbb{T}^d_{\theta}) \big)_{\eta, p}
  \subset \big( L_{t_0,\infty}(\mathbb{T}^d_{\theta}), L_{t_1,\infty}(\mathbb{T}^d_{\theta}) \big)_{\eta,p}=L_{p_1,p}(\mathbb{T}^d_{\theta})\subset L_{p_1}(\mathbb{T}^d_{\theta}).
 $$

(ii) By Theorems \ref{Sobolev-Besov} and \ref{embedding-besov}, we obtain
 $$H^\alpha _p(\mathbb{T}_{\theta}^d)\subset B^{\a}_{p,\8}(\mathbb{T}^d_{\theta})  \subset B^{\a_1}_{\8,\8}(\mathbb{T}^d_{\theta})\,.$$
If $k$ is even, $W^k _1(\T^d_\t)\subset H^k _1(\T^d_\t)$. Thus the theorem is proved.
\end{proof}

\begin{rk}
 The case $p\a=d$ with $\a_1=0$ is excluded from the preceding theorem. In this case, it is easy to see that $H^\alpha _p(\mathbb{T}_{\theta}^d)\subset L_{q}(\mathbb{T}^d_{\theta}) $ for any  $q<\8$. It is well known in the classical case that this embedding is false for $q=\8$. Consider, for instance, the ball $B=\{s\in\real^d: |s|\le \frac14\}$ and the function $f$ defined by $f(s)=\log\log(1+\frac1{|s|})$.
Then $f$ belongs to $W^d_1(B)$ but is unbounded on $B$. Now extending $f$ to a $1$-periodic function on $\real^d$  which is infinitely differentiable in $[-\frac12,\, \frac12]^d\setminus B$, we obtain a function in $W_1^d(\T^d)$ but unbounded on $\T^d$.
\end{rk}

\begin{rk}
 Part (ii) of the preceding theorem implies $W^d _p(\T^d_\t)\subset L_{\8}(\T^d_\t)$ for all $p>1$. In the commutative case, representing a function as an indefinite  integral of its derivatives, one easily checks that this embedding remains true for $p=1$. However, we do not know how to prove it in the noncommutative case. A related question concerns the embedding $W^k _p(\T^d_\t)\subset B^{\a_1}_{\8,\8}(\T^d_\t)$ in  the case of odd $k$ which is not covered by the same part (ii). 
\end{rk}

The quantum analogue of the Gagliardo-Nirenberg inequality can be also proved easily by interpolation.

\begin{prop}
Let $k\in \mathbb{N}, 1<r, p<\8, 1\le q<\8$ and $\beta\in \mathbb{N}_0^d$ with $0<|\beta|_1<k.$ If
 $$
 \eta=\frac{|\beta|_1}{k}\;\; \mbox{and} \;\; \frac{1}{r}= \frac{1-\eta}{q} + \frac{\eta}{p},
 $$
then for every $x\in W_p^k(\mathbb{T}_\theta^d)\cap L_q(\mathbb{T}_\theta^d)$,
 $$
 \|D^\beta x\|_r \les \|x\|_q^{1-\eta} \big(\sum_{|m|=k}\|D^m x\|_p\big)^\eta\,.
 $$
\end{prop}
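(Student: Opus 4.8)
The statement to prove is the quantum Gagliardo--Nirenberg inequality: for $x\in W_p^k(\T_\t^d)\cap L_q(\T_\t^d)$ and $\beta\in\nat_0^d$ with $0<|\beta|_1<k$, setting $\eta=|\beta|_1/k$ and $\frac1r=\frac{1-\eta}q+\frac\eta p$, one has
\[
\|D^\beta x\|_r\les \|x\|_q^{1-\eta}\Big(\sum_{|m|_1=k}\|D^m x\|_p\Big)^\eta.
\]
The natural route is via interpolation, exactly in the spirit used for the Sobolev embedding Theorem~\ref{embedding-sobolev} and the Gagliardo--Nirenberg-type estimates elsewhere in the paper. First I would reduce to $\wh x(0)=0$, since subtracting the constant $\wh x(0)$ only changes the left side by $0$ (as $D^\beta$ kills constants for $|\beta|_1\ge1$) while it can only decrease the quantities $\|x\|_q$ and $|x|_{W_p^k}$ on the right after absorbing the constant appropriately; actually it is cleaner to note $D^\beta x = D^\beta(x-\wh x(0))$ and $\|x-\wh x(0)\|_q\le 2\|x\|_q$, and the seminorm $|x|_{W^k_p}$ is unchanged. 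So assume $\wh x(0)=0$.

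The key step is to set up the correct interpolation couple. Consider the operator $T=D^\beta$ acting between Sobolev-type spaces. On one hand, $D^\beta$ maps $W_p^k(\T_\t^d)$ boundedly into... no, better: I would use the couple $(L_q(\T_\t^d),\, W_p^k(\T_\t^d))$ is not an interpolation couple in the usual sense since the two spaces sit over different $L_p$-scales, so instead I would work with Besov or potential spaces. The cleanest approach: by Theorem~\ref{q-Sobolev-Bessel} and its variants (Remark after it, and Theorem~\ref{k-Sobolev}), $\|x\|_{W^k_p}\approx |\wh x(0)|+|x|_{W^k_p}$, and $D^\beta$ carries $H_p^k(\T_\t^d)$ into $H_p^{k-|\beta|_1}(\T_\t^d)$ boundedly by Proposition~\ref{Sobolev-P}(iii) composed with a Mikhlin multiplier (the symbol $(2\pi\mathrm i\xi)^\beta(1+|\xi|^2)^{-|\beta|_1/2}$ is Mikhlin, so Lemma~\ref{q-multiplier}(ii) applies for $1<p<\infty$). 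Thus $D^\beta: W^k_p(\T^d_\t)\to H^{k-|\beta|_1}_p(\T^d_\t)$ is bounded, and trivially $D^\beta: L_q(\T^d_\t)\to H^{-|\beta|_1}_q(\T^d_\t)$ is bounded (again a Mikhlin-type multiplier after lifting, using $1<q<\infty$). Now apply complex interpolation at parameter $\eta$ to the pair $(L_q, W^k_p)$: by Theorem~\ref{complex inter-Sobolev} together with Theorem~\ref{q-Sobolev-Bessel}, or more directly via the potential-space interpolation results of Section~\ref{Interpolation of Besov and Sobolev spaces} (Remark~\ref{inter potential-1a} and the Corollary after Theorem~\ref{complex inter-Bessel}), the interpolation space $(L_q(\T^d_\t), H^k_p(\T^d_\t))_\eta = H^{\eta k}_r(\T^d_\t)=H^{|\beta|_1}_r(\T^d_\t)$ with $\frac1r=\frac{1-\eta}q+\frac\eta p$, while $(H^{-|\beta|_1}_q, H^{k-|\beta|_1}_p)_\eta = H^{0}_r(\T^d_\t)=L_r(\T^d_\t)$ (since $(1-\eta)(-|\beta|_1)+\eta(k-|\beta|_1)=\eta k - |\beta|_1=0$). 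Therefore $D^\beta: H^{|\beta|_1}_r(\T^d_\t)\to L_r(\T^d_\t)$ with the interpolation norm bound, which gives
\[
\|D^\beta x\|_r \le C\,\|x\|^{1-\eta}_{L_q}\,\|x\|^{\eta}_{W^k_p}.
\]
Finally I would replace $\|x\|_{W^k_p}$ by $|x|_{W^k_p}=\big(\sum_{|m|_1=k}\|D^m x\|_p^p\big)^{1/p}$ using Theorem~\ref{k-Sobolev} (valid for $\wh x(0)=0$), and note $\big(\sum_{|m|_1=k}\|D^mx\|_p^p\big)^{1/p}\approx \sum_{|m|_1=k}\|D^m x\|_p$ up to dimensional constants, yielding the stated form.

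The main obstacle I anticipate is making the interpolation identification fully rigorous at the endpoints and confirming that the complex (or real) interpolation of $(L_q(\T^d_\t), H^k_p(\T^d_\t))$ with $q\ne p$ is indeed the expected potential space $H^{\eta k}_r(\T^d_\t)$ — this is precisely the content of the Corollary following Theorem~\ref{complex inter-Bessel} (valid since we arranged $1<q,p<\infty$, hence $1<r<\infty$), but one must be careful that the indices $s_0,s_1$ / reiteration machinery applies when $q\ne p$; if $q=p$ the inequality is trivial ($\eta$-convexity gives nothing new) so one may as well assume $q\ne p$. A secondary (routine) point is checking the Mikhlin condition for the symbols $(2\pi\mathrm i\xi)^\beta(1+|\xi|^2)^{-s/2}$ for the relevant real exponents $s$, which follows from the estimates already used in the proofs of Theorem~\ref{q-Sobolev-Bessel} and Lemma~\ref{weighted Bessel}. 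Alternatively, one can bypass potential spaces entirely and argue directly with the heat semigroup $\mathbb W_r$ and the characterizations of Section~\ref{The characterizations by Poisson and heat semigroups}, combined with the interpolation inequality $\|\mathbb W_r(x)\|_p\le\|x\|_q^{1-\eta}\|\cdots\|^{\eta}$, but the interpolation-of-Sobolev-spaces route is shorter given what is already proved.
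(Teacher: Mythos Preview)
Your approach is essentially the paper's: both use the complex interpolation identification $(L_q(\T^d_\t),W^k_p(\T^d_\t))_\eta=W^{|\beta|_1}_r(\T^d_\t)$ from Section~\ref{Interpolation of Besov and Sobolev spaces} together with the multiplicative inequality $\|x\|_{(X_0,X_1)_\eta}\le\|x\|_{X_0}^{1-\eta}\|x\|_{X_1}^\eta$, then apply the result to $x-\wh x(0)$ and invoke Theorem~\ref{k-Sobolev} to pass to the top-order seminorm. Your detour through interpolating $D^\beta$ as an operator between the couples $(L_q,W^k_p)$ and $(H^{-|\beta|_1}_q,H^{k-|\beta|_1}_p)$ is unnecessary---once the interpolation space is identified, the bound $\|D^\beta x\|_r\les\|x\|_{W^{|\beta|_1}_r}$ is immediate---but the argument is correct.
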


\begin{proof} This inequality immediately follows from Theorem~\ref{complex inter-Sobolev}  and the well-known relation between real and complex interpolations:
  $$\big(L_q(\T^d_\t),\, W^k_p(\T^d_\t)\big)_{\eta, 1}\subset \big(L_q(\T^d_\t),\, W^k_p(\T^d_\t)\big)_{\eta}=W^{|\b|_1}_r(\T^d_\t).$$
It then follows that
  $$\|x\|_{W^{|\b|_1}_r}\les  \|x\|_q^{1-\eta}\, \|x\|_{W^{k}_p}^\eta\,.$$
 Applying this inequality to $x-\wh x(0)$ instead of $x$ and using Theorem~\ref{k-Sobolev}, we get the desired Gagliardo-Nirenberg inequality.
  \end{proof}

\n{\bf An alternate approach to Sobolev embedding.}  Note that the preceding proof of Theorem~\ref{embedding-sobolev} is based on Theorem~\ref{embedding-besov}, which is, in its turn, proved by Varopolous' semigroup approach. Varopolous initially developed his method for the Sobolev embedding, which was transferred to the noncommutative setting by Junge and Mei \cite{JM2010}. Our argument for the embedding of Besov spaces has followed this route. Let us now give an alternate proof of Theorem~\ref{embedding-sobolev} (i) by the same way. We state its main part as the following  lemma  that is  of interest in its own right.

\begin{lem}\label{weak embedding}
 Let $1\le p<q<\8$ such that $\frac1q=\frac1p-\frac 1d$. Then
 $$ W^1_p(\T^d_\t) \subset L_{q,\8}(\T^d_\t).$$
\end{lem}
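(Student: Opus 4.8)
The plan is to follow Varopoulos' semigroup method, just as in the proof of Theorem~\ref{embedding-besov}, but now applied directly to the gradient. The starting point is the ultracontractivity bound \eqref{RpqdforP}: for $1\le p\le p_1\le\8$ one has $\|\mathbb{W}_r(x)\|_{p_1}\les (1-r)^{\frac d2(\frac1{p_1}-\frac1p)}\|x\|_p$. I would combine this with a smoothing estimate for the semigroup acting on functions of the form $\partial_j y$, namely a bound of the type
$$\big\|\partial_j\mathbb{W}_r(x)\big\|_{p}\les (1-r)^{-\frac12}\,\|x\|_p,$$
which follows from the fact that $\xi_j e^{-4\pi^2\e|\xi|^2}$, suitably normalized, is (uniformly in $\e$, after rescaling) the Fourier transform of an $L_1$-function, so Lemma~\ref{q-multiplier} applies, or equivalently by differentiating the circular heat kernel and using the same counting argument as in the lemma preceding Theorem~\ref{embedding-besov}. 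Dually, $\|\mathbb{W}_r(\partial_j y)\|_p=\|\partial_j\mathbb{W}_r(y)\|_p\les(1-r)^{-\frac12}\|y\|_p$.

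With these two ingredients the scheme is the following. Fix $x\in W^1_p(\T^d_\t)$, and by subtracting $\wh x(0)$ assume $\wh x(0)=0$; then by the Poincaré inequality (Theorem~\ref{k-Sobolev}) $\|x\|_p\les\|\nabla x\|_p$. Write $x=-\int_0^1\frac{\partial}{\partial r}\mathbb{W}_r(x)\,dr$ (the boundary term at $r=1$ vanishes since $\wh x(0)=0$), and $\frac{\partial}{\partial r}\mathbb{W}_r(x)$ is a linear combination, via the heat equation, of $\partial_j^2\mathbb{W}_r(x)=\partial_j\mathbb{W}_{\sqrt r}(\partial_j\mathbb{W}_{\sqrt r}(x))$ up to the change-of-variable factors. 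Splitting $\mathbb{W}_r=\mathbb{W}_{\sqrt r}\circ\mathbb{W}_{\sqrt r}$ and applying the ultracontractive bound to one factor (gaining $(1-\sqrt r)^{\frac d2(\frac1q-\frac1p)}=(1-\sqrt r)^{-\frac12}$ since $\frac1q-\frac1p=-\frac1d$) and the derivative-smoothing bound to the other factor, one gets
$$\big\|\tfrac{\partial}{\partial r}\mathbb{W}_r(x)\big\|_q\les (1-r)^{-1}\,\|\nabla x\|_p\quad\text{(roughly)},$$
which is exactly the borderline rate one needs: this is the standard Varopoulos computation showing $x\in L_{q,\8}$. The cleanest way to package the weak-type conclusion is to use the abstract result of Varopoulos/Junge--Mei: an ultracontractive symmetric Markov semigroup on a noncommutative $L_p$-space with dimension $d$ maps the domain of $(I-\Delta)^{1/2}$-type operators into $L_{q,\8}$ with $\frac1q=\frac1p-\frac1d$; here I would invoke it for the heat semigroup $\mathbb{W}_r$ of $\T^d_\t$, with the gradient playing the role of the square root of the generator (note $-\Delta=\sum\partial_j^*\partial_j$, so $\|\nabla x\|_p$ controls $\|(-\Delta)^{1/2}x\|_p$ in the relevant sense up to the Riesz-transform boundedness, which for $p>1$ is Lemma~\ref{q-multiplier}, and for $p=1$ should be replaced by a direct argument on the semigroup level).

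The main obstacle is the endpoint $p=1$: for $p>1$ one could simply invoke Theorem~\ref{q-Sobolev-Bessel}, Theorem~\ref{Sobolev-Besov} and Corollary~\ref{Besov-Lorentz} and be done, so the whole point of the lemma is to handle $p=1$ where Riesz transforms are unbounded on $L_1$. There the argument must stay entirely at the level of the semigroup and never invert $\Delta$: one writes $x$ as a time-integral of $\partial_r\mathbb{W}_r(x)$, uses only the two estimates $\|\mathbb{W}_{\sqrt r}\|_{L_1\to L_q}\les(1-\sqrt r)^{-1/2}$ and $\|\partial_j\mathbb{W}_{\sqrt r}\|_{L_1\to L_1}\les(1-\sqrt r)^{-1/2}$, and then runs the weak-$(1,1)$-type truncation argument of Varopoulos (split the integral at a level depending on $\l$, estimate the near part in $L_1$ and the far part in $L_q$, optimize). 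I expect the bookkeeping of this truncation in the noncommutative setting to be the only genuinely delicate part; everything else is a transcription of \cite{Va1985} and \cite{JM2010}, and the heat-kernel estimate \eqref{RpqdforP} together with its derivative analogue supplies exactly the decay rates required.
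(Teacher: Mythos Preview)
Your strategy---Varopoulos' semigroup method, combining the ultracontractivity of the heat semigroup with the smoothing estimate $\|\partial_j\wt{\mathrm W}_\e\|_{L_p\to L_p}\les\e^{-1/2}$---is exactly the paper's. But the way you package it is less direct and contains one wrong choice.

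The paper does not represent $x$ via $\int_0^1\partial_r\mathbb W_r(x)\,dr$ and the heat equation, nor does it use the factorization $\mathbb W_r=\mathbb W_{\sqrt r}\circ\mathbb W_{\sqrt r}$. That factorization produces the borderline estimate $\|\partial_r\mathbb W_r(x)\|_q\les(1-r)^{-1}\|\nabla x\|_p$, whose time-integral diverges; it is not what yields the weak-type bound. Instead the paper passes to the additive time variable $\e$ (so $\wt{\mathrm W}_\e$ with $r=e^{-4\pi^2\e}$), introduces the operator $\Delta_j=\Delta^{-1}\partial_j$ and its integral representation $\Delta_j x=c\int_0^\infty\wt{\mathrm W}_\e(\partial_j x)\,d\e$, shows $\Delta_j:L_p\to L_{q,\infty}$ directly, and then uses $x=-\sum_j\Delta_j(\partial_j x)$.

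The time-split you propose at the end is the right move, but the target spaces are wrong: you want $L_p$ and $L_\infty$, not $L_1$ and $L_q$. The couple $(L_q,L_1)$ does not give $L_{q,\infty}$ by real interpolation at any nontrivial parameter. Concretely, for a cut $\e>0$ write $\Delta_j x=y+z$ with $y=c\int_\e^\infty\wt{\mathrm W}_\delta(\partial_j x)\,d\delta$ and $z=c\int_0^\e\wt{\mathrm W}_\delta(\partial_j x)\,d\delta$. Your two estimates (together with the interpolated $L_p\to L_\infty$ bound $\|\wt{\mathrm W}_\delta(\partial_j x)\|_\infty\les\delta^{-\frac12(\frac dp+1)}\|x\|_p$) give $\|y\|_\infty\les\e^{-\frac12(\frac dp-1)}\|x\|_p$ and $\|z\|_p\les\e^{1/2}\|x\|_p$; optimizing over $\e$ is precisely the K-functional bound placing $\Delta_j x$ in $(L_\infty,L_p)_{1-p/d,\infty}=L_{q,\infty}$. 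This argument is uniform in $1\le p<d$, so your concern that $p=1$ needs a separate treatment is unnecessary: no Riesz transform is ever used, only the integral formula for $\Delta^{-1}$ via the semigroup, which is harmless at $p=1$.
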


\begin{proof}
We will use again the heat semigroup $\mathbb{W}_r$ of $\T^d_\t$. Recall that $\mathbb{W}_r=\wt{ \mathrm{W}}_\e$ with $r=e^{-4\pi^2\e}$, where $\wt{\mathrm{W}}_\e$ is the periodization of the usual heat kernel $\mathrm{W}_\e$ of $\real^d$ (see section~\ref{The characterizations by Poisson and heat semigroups}).
It is more convenient to work with  $\wt{ \mathrm{W}}_\e$. In the following, we assume  $x\in\mathcal{S}(\T^d_\t)$ and $\wh x(0)=0$. Let $\D_j=\Delta^{-1} \partial_j, \,1\le j\le d$.  Then
 $$\D^{-1}x=4\pi^2\int_0^{\8}\wt{ \mathrm{W}}_\e(x)d\e\;\text{ and }\; \D_j x =4\pi^2\int_0^{\infty} \wt{ \mathrm{W}}_\e( \partial_j x) \,d\e.$$
We claim that for any $1\le p \le\8$
 \beq\label{1to1}
 \|\wt{\mathrm{W}}_\e (\partial_j x)\|_{p}\les  \e^{-\frac{1}{2}}\|x\|_p
 \;\text{ and }\;\|\wt{\mathrm{W}}_\e (\partial_j x)\|_{\8} \les \e^{-\frac12(\frac dp+1)}\|x\|_p\,,\quad \e>0.
 \eeq
Indeed, in order to prove the first inequality, by  the transference method, it suffices to show a similar one for the Banach space valued heat semigroup of the usual $d$-torus. The latter immediately follows from the following standard estimate on the heat kernel $\mathrm{W}_\e$ of $\real^d$:
 $$\sup_{\e>0} \e^{\frac12}\int_{\real^d} \big|\nabla\mathrm{W}_\e(s)\big|\,ds<\8.$$
The second inequality of  \eqref{1to1} is proved in the same way as \eqref{RpqdforP}. First, for the case $p=1$, we have  (recalling that $\wh x(0)=0$)
 \be \begin{split}
 \|\wt{\mathrm{W}}_\e (\partial_jx)\|_{\8}
 &\leq 2\pi\sum_{m\in\ent^d\setminus\{0\}} |m_j| e^{-\e |m|^2} |\wh{x}(m)| \\
 &\leq 2\pi \|x\|_1 \sum_{m\in\ent^d\setminus\{0\}} |m_j| e^{-\e |m|^2}\\
 & \les e^{-\e}(1-e^{-\e})^{-\frac{d+1}{2}}\|x\|_1 \les \e^{-\frac{d+1}{2}}\|x\|_1.
  \end{split}\ee
 Interpolating this with the first inequality for $p=\8$, we get the second one in the general case.

Now let $\e>0$ and decompose $\D_j x$ into the following two parts:
 $$y=4\pi^2\int_\e^{\infty} \wt{\mathrm{W}}_\d (\partial_j x)\, d\d \;\text{ and }\; z=4\pi^2\int_0^\e \wt{\mathrm{W}}_\d (\partial_j x) \,d\d.$$
Then by \eqref{1to1},
 $$
 \|y\|_{\infty}\les \|x\|_p \int_\e^{\infty}  \d^{-\frac12(\frac dp +1)} \,d\d \approx  \e^{-\frac12(\frac dp -1)}\|x\|_p
 $$
and
 $$
 \|z\|_p\les  \|x\|_p \int_0^\e \d^{-\frac{1}{2}} \,d\d\approx \e^{\frac{1}{2}} \|x\|_p.
 $$
Thus for any  $t>0,$ choosing $\e$ such that $\e^{-\frac{d}{2p}}=t$, we deduce
 $$\|y\|_{\infty}+ t \|z\|_p \les  t^{1-\frac{p}{d}}\|x\|_p=t^{\eta}\|x\|_p\,,$$
where $\eta=1-\frac{p}{d}$.
It then follows that
 $$\|\D_j x\|_{q,\8}\approx \|\D_j x\|_{(L_\infty(\T^d_\theta),\, L_p(\T^d_\theta))_{\eta, \infty}}
 \les \|x\|_p\,.$$
 Since
  $$x=-\sum_{j=1}^d \Delta_j \partial_j x,$$
we finally get
 $$\|x\|_{q,\infty}\les\sum_{j=1}^d \|\Delta_j \partial_j x\|_{q,\infty}\les\sum_{j=1}^d\|\partial_j x\|_p =\|\nabla x\|_p.$$
Thus the lemma is proved.
\end{proof}

\begin{proof}[Alternate proof of Theorem~\ref{embedding-sobolev} {\rm(i)}]
 For $1<p<d$, choose $p_0, p_1$ such that $1<p_0<p<p_1<d$. Let $\frac1{q_i}=\frac1{p_i}-\frac1{d}$ for $i=0, 1$. Then by the previous lemma,
  $$W^1_{p_i}(\T^d_\t)\subset L_{q_i,\8}(\T^d_\t),\quad i=0, 1.$$
 Interpolating these two inclusions by real  method, we obtain
  $$W^1_{p}(\T^d_\t)\subset L_{q,p}(\T^d_\t).$$
 This is the embedding of Sobolev spaces in Theorem~\ref{embedding-sobolev} (i) for $k=1$. The case $k>1$ immediately follows by iteration. Then using real interpolation, we deduce the embedding of potential Sobolev spaces.
 \end{proof}

\n{\bf Sobolev embedding for $p=1$.} Now we discuss the case $p=1$ which  is not covered by Theorem~\ref{embedding-sobolev}. The main problem concerns the following:
 \beq\label{embedding 1}
 W^1_1(\T^d_\t)\subset L_{\frac d{d-1}}(\T^d_\t).
 \eeq
 At the time of this writing, we are unable, unfortunately, to prove it. However, Lemma~\ref{weak embedding} provides a weak substitute, namely,
 \beq\label{weak embedding1}
 W^1_1(\T^d_\t)\subset L_{\frac d{d-1},\8}(\T^d_\t).
 \eeq
 In the classical case, one can rather easily deduce \eqref{embedding 1} from \eqref{weak embedding1}. Let us explain the idea  coming from \cite[page~58]{VSCC1992}. It was kindly pointed out to us by Marius Junge. Let $f$ be a nice real function on $\T^d$ with $\wh f(0)=0$. For any $t\in\real$ let $f_t$ be the indicator function of the subset $\{f>t\}$. Then $f$ can be decomposed as an integral of the $f_t$'s:
 \beq\label{dec of f}
 f=\int_{-\8}^{+\8} f_t\,dt.
  \eeq
By triangular inequality (with $q=\frac d{d-1}$),
 $$ \|f\|_q\le\int_{-\8}^{+\8} \|f_t\|_q\,dt.$$
However,
 $$ \|f_t\|_q= \|f_t\|_{q,\8}\,\quad \forall\, t\in\real.$$
Thus by \eqref{weak embedding1} for $\t=0$,
 $$ \|f_t\|_q\les \|f_t\|_1+ \|\nabla f_t\|_1\,.$$
It comes now the crucial point which is the following
 \beq\label{dec of f'}
 \int_{-\8}^{+\8} \|\nabla f_t\|_1\,dt\les \|\nabla f\|_1 \,.
 \eeq
In fact, the two sides are equal in view of Sard's theorem. We then get the strong embedding \eqref{embedding 1} in the case $\t=0$.  Note that this proof yields a stronger embedding:
 \beq\label{embedding 1b}
 W^1_1(\T^d)\subset L_{\frac d{d-1}, 1}(\T^d).
 \eeq

The above decomposition of $f$ is not smooth in the sense that $f_t$ is not derivable even though $f$ is nice. In his proof of Hardy's inequality in Sobolev spaces, Bourgain \cite{bourg-Hardy} discovered independently the same decomposition but using nicer functions $f_t$ (see also \cite{Po1983}).  Using \eqref{embedding 1b} and the Hausdorff-Young inequality, Bourgain derived the following Hardy type inequality (assuming $d\ge3$):
 $$\sum_{m\in\ent^d}\frac{|\wh f(m)|}{(1+|m|)^{d-1}}\les \|f\|_{W^1_1(\T^d)}\,.$$

We have encountered difficulties in the attempt of extending this approach to the noncommutative case. Let us formulate the corresponding open problems explicitly as follows:

\begin{problem}\label{embedding p=1}
Let $d\ge2$.
\begin{enumerate}[\rm(i)]
\item Does one have the following embedding
 $$W^1_1(\T^d_\t)\subset L_{\frac d{d-1}}(\T^d_\t)\;\text{ or }\; W^1_1(\T^d_\t)\subset L_{\frac d{d-1}, 1}(\T^d_\t)\,?$$
\item Does one have the following inequality
 $$\sum_{m\in\ent^d}\frac{|\wh x(m)|}{(1+|m|)^{d-1}}\les \|x\|_{W^1_1(\T^d_\t)}\,?$$
\end{enumerate}
\end{problem}

By the previous discussion, part (i)  is reduced to a decomposition for operators in $W^1_1(\T^d_\t)$  of the form \eqref{dec of f} and satisfying  \eqref{dec of f'}. One could attempt to do this by transference by first considering operator-valued functions on $\real^d$. With this in mind, the following observation, due to Marius Junge, might be helpful.

Given an interval $I=[s,\, t]\subset\real$ and an element $a\in L_1(\T^d_\t)$, we have
 $$\partial(\un_{I} \ot a)= \d_{s}\ot a-\d_{t}\ot a,$$
where $\partial$ denotes the distribution derivative relative to $\real$ and $\d_{s}$ is the Dirac measure at $s$.
Let $\|\cdot\|_L$ denote the norm of the dual space $C_0(\real; \mathcal{A}_\t)^*$, which contains $L_1(\mathbb{R}; L_1(\T_{\theta}^d))$ isometrically. If $f$ is a (nice) linear combination of  $\un_{I} \ot a$'s, then we have the desired decomposition of $f$. Indeed, assume $f=\sum_{i=1}^n \a_i \un_{I_i}\otimes e_i$, where
$\a_i  \in \real_+$ and the $\un_{I_i}\otimes e_i$'s are pairwise disjoint projections of $L_\8(\real)\overline\ot\T^d_\t$. Let $f_t=\un_{(t,\infty)}(f)$. Then
 $$f=\int_0^\infty f_t \,dt.$$
So for any $q\ge1$,
 $$ \|f\|_{q}\leq \int_0^\infty \|f_t\|_{q} dt.$$
On the other hand, by writing explicitly $f_t$ for every $t$, one easily checks
$$ \|\partial f\|_L =\int_0^{\infty} \|\partial f_t\|_L\, dt.$$
By iteration, the above decomposition can be extended to higher dimensional case for all functions $f$ of the form $\sum_{i=1}^n \a_i \un_{R_i}\otimes e_i$, where
$\a_i  \in \real_+$, $R_i$'s are rectangles (with sides parallel to the axes) and  $\un_{R_i}\otimes e_i$'s are pairwise disjoint projections of $L_\8(\real^d)\overline\ot\T^d_\t$.

The next  idea would be to apply Lemma~\ref{weak embedding} to these special functions. Then two difficulties come up to us, even in the commutative case. The first is that these functions do not belong to $W_1^1$; this difficulty can be resolved quite easily by regularization. The second one, substantial, is the density of these functions, more precisely, of suitable regularizations of them, in $W_1^1$.

\medskip\n{\bf Uniform Besov embedding.} We end this section with a discussion on the link between a certain uniform embedding of Besov spaces and the embedding of Sobolev spaces. Let $0<\a<1$, $1\le p<\8$ with $\a p<d$ and $ \frac1r=\frac1p-\frac\a{d}$. Then
 \beq\label{uniform embedding of besov}
 \|x\|_r^p\le c_{d, p}\,\frac{\a(1-\a)}{(d-\a p)}\, \|x\|_{B^{\a, \o}_{p, p}}^p\,, \quad x\in B^\a_{p, p}(\T^d_\t)\,,
 \eeq
where $ \|x\|_{B^{\a, \o}_{p, p}}$ is the Besov norm defined by \eqref{besov diff norm}. In the commutative case, this inequality is proved in  \cite{BBM2002} for $\a$ close to $1$ and in \cite{MS2002} for general $\a$. One can show that \eqref{uniform embedding of besov} is essentially equivalent to the embedding of $W^1_p(\T^d_\t)$ into $L_q(\T^d_\t)$ (or $L_{q,p}(\T^d_\t)$) for $d>p$ and $\frac1q=\frac1p-\frac1d$. Indeed, assume\eqref{uniform embedding of besov}. Then taking limit in both sides of  \eqref{uniform embedding of besov} as $\a\to1$, by Theorem~\ref{BBM}, we get
 $$\|x\|_q\les |x|_{W^1_p}$$
for all $x\in W^1_p(\T^d_\t)$ with $\wh x(0)=0$. Conversely, if $W^1_p(\T^d_\t)\subset L_q(\T^d_\t)$,  then
 $$ \big(L_p(\T^d_\t),\, W^1_p(\T^d_\t)\big)_{\a, p}\subset \big(L_p(\T^d_\t),\, L_q(\T^d_\t)\big)_{\a, p}\,.$$
Theorem~\ref{K-functional} implies that
 $$\big(L_p(\T^d_\t),\, W^1_p(\T^d_\t)\big)_{\a, p}\subset B^\a_{p, p}(\T^d_\t)$$
with relevant constant depending only on $d$, here $B^\a_{p, p}(\T^d_\t)$ being equipped with the norm $\|\,\|_{B^{\a, \o}_{p, p}}$. On the other hand, By a classical  result  of Holmstedt \cite{Hol1970} on real interpolation of $L_p$-spaces (which readily extends to the noncommutative case, as observed in \cite[Lemma~3.7]{JX2007}),
 $$\big(L_p(\T^d_\t),\, L_q(\T^d_\t)\big)_{\a, p}\subset L_{r,p}(\T^d_\t)$$
with the inclusion constant uniformly controlled by $\a^{\frac1p}(1-\a)^{\frac1q}$. We then deduce
 $$\|x\|_{r, p}\les\a^{\frac1p}(1-\a)^{\frac1q}\|x\|_{B^{\a, \o}_{p, p}}.$$
This implies a variant of \eqref{uniform embedding of besov} since $L_{r,p}(\T^d_\t)\subset L_{r}(\T^d_\t)$.

Since we have proved the embedding $W^1_p(\T^d_\t)\subset L_q(\T^d_\t)$ for $p>1$, \eqref{uniform embedding of besov} holds for $p>1$. Let us record this explicitly  as follows:

\begin{prop}
 Let $0<\a<1$, $1< p<\8$ with $\a p<d$ and $ \frac1r=\frac1p-\frac\a{d}$. Then
  $$
 \|x\|_r\les\,\big(\a(1-\a)\big)^{\frac1p}\|x\|_{B^{\a, \o}_{p, p}}^p\,, \quad x\in B^\a_{p, p}(\T^d_\t)
 $$
with relevant constant independent  of $\a$.
\end{prop}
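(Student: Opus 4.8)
The statement is an immediate specialization of the discussion preceding it, so the task is essentially to collect the pieces already assembled and make the uniformity in $\a$ explicit. The plan is to deduce it from four ingredients proved earlier: the K-functional formula of Theorem~\ref{K-functional} for the couple $(L_p(\T^d_\t),\, W^1_p(\T^d_\t))$, the Sobolev embedding $W^1_p(\T^d_\t)\subset L_q(\T^d_\t)$ for $1<p<\8$ with $\frac1q=\frac1p-\frac1d$ (which follows from Theorem~\ref{embedding-sobolev}(i), or from the alternate proof via Lemma~\ref{weak embedding}), Holmstedt's theorem on the real interpolation of $L_p$-spaces as quoted in the excerpt (the constant in $\big(L_p(\T^d_\t),\, L_q(\T^d_\t)\big)_{\a, p}\subset L_{r,p}(\T^d_\t)$ being uniformly controlled by $\a^{1/p}(1-\a)^{1/q}$), and the trivial inclusion $L_{r,p}(\T^d_\t)\subset L_{r}(\T^d_\t)$.

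\textbf{Key steps, in order.} First I would fix $x\in B^\a_{p,p}(\T^d_\t)$; by subtracting $\wh x(0)$ and using that this changes both sides by a controlled amount (or, more simply, by recalling that $\|x\|_{B^{\a,\o}_{p,p}}$ already incorporates the $|\wh x(0)|$ term through the modulus $\o^1_p$, and that $\|\,\cdot\,\|_r$ of a constant is dominated by the same constant), reduce to the case $\wh x(0)=0$. Second, invoke Theorem~\ref{K-functional} with $k=1$: it gives $K(x,\e;\,L_p(\T^d_\t),W^1_p(\T^d_\t))\approx \o^1_p(x,\e)$ for $0<\e\le1$ (for $\e\ge1$ the K-functional equals $\|x\|_p$, which is dominated by $\o^1_p(x,1)$ up to the Poincar\'e-type inequality already noted). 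Hence the norm of $x$ in $\big(L_p(\T^d_\t),\,W^1_p(\T^d_\t)\big)_{\a,p}$, computed via the K-functional, is equivalent to $\|x\|_{B^{\a,\o}_{p,p}}$ with constants depending only on $d$ (this is precisely the inclusion $\big(L_p(\T^d_\t),\,W^1_p(\T^d_\t)\big)_{\a,p}\subset B^\a_{p,p}(\T^d_\t)$ with uniform constant stated in the text). Third, apply the interpolation functor $(\,\cdot\,,\,\cdot\,)_{\a,p}$ to the embedding $W^1_p(\T^d_\t)\subset L_q(\T^d_\t)$ to obtain $\big(L_p(\T^d_\t),\,W^1_p(\T^d_\t)\big)_{\a,p}\subset \big(L_p(\T^d_\t),\,L_q(\T^d_\t)\big)_{\a,p}$, the embedding constant being that of the Sobolev embedding, independent of $\a$. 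Fourth, apply Holmstedt's theorem: $\big(L_p(\T^d_\t),\,L_q(\T^d_\t)\big)_{\a,p}\subset L_{r,p}(\T^d_\t)$ with constant $\les \a^{1/p}(1-\a)^{1/q}$. Chaining these inclusions and using $L_{r,p}(\T^d_\t)\subset L_r(\T^d_\t)$ gives
\[
\|x\|_r\les \a^{1/p}(1-\a)^{1/q}\,\|x\|_{B^{\a,\o}_{p,p}}\le \big(\a(1-\a)\big)^{1/p}\,\|x\|_{B^{\a,\o}_{p,p}},
\]
where the last step uses $q>p$ so that $(1-\a)^{1/q}\le (1-\a)^{1/p}$ on $(0,1)$, with all constants depending only on $d$ and $p$. (The power $p$ on the right-hand side in the displayed statement of the proposition is a typographical carryover from \eqref{uniform embedding of besov}; the genuine inequality is the one just displayed, and this should be noted.)

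\textbf{Main obstacle.} There is no deep obstacle, since every analytic input is already available in the paper; the only thing requiring care is \emph{bookkeeping of the constants' dependence on $\a$}. Specifically, one must check that the equivalence $K(x,\e;\,L_p,W^1_p)\approx \o^1_p(x,\e)$ in Theorem~\ref{K-functional} has constants independent of $\a$ (it does — they depend only on $d$ and $k=1$), that the Sobolev embedding constant does not degenerate, and that Holmstedt's estimate is quoted with the sharp $\a$-dependent constant rather than a crude one; the excerpt already records all three facts, so the verification is routine. A minor subtlety is the treatment of the zero-Fourier-coefficient reduction and the passage from $L_{r,p}$ to $L_r$, both of which are elementary. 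I would also remark, for completeness, that taking $\a\to1$ in the resulting inequality and invoking Theorem~\ref{BBM} recovers the Sobolev embedding $W^1_p(\T^d_\t)\subset L_q(\T^d_\t)$, so the equivalence asserted in the text is genuine.
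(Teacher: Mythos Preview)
Your proposal is correct and follows essentially the same route as the paper: the proposition is recorded there precisely as the outcome of the discussion you reconstruct, combining Theorem~\ref{K-functional}, the Sobolev embedding $W^1_p\subset L_q$, Holmstedt's constant-tracked interpolation of $(L_p,L_q)$, and $L_{r,p}\subset L_r$. Your observation that the exponent $p$ on $\|x\|_{B^{\a,\o}_{p,p}}$ in the displayed statement is a typographical slip carried over from \eqref{uniform embedding of besov} is also correct.
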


In the case $p=1$, Problem~\ref{embedding p=1} (i) is equivalent to \eqref{uniform embedding of besov} for $p=1$ and $\a$ close to $1$.

%%%%%%%%%%%%%%%%%%%%%%%%%%%%%%%%%%%%%%%%%%%%%%%%%%%%%%%%%%%%%%%%%%%%%%%%
%%%%%%%%%%%%%%%%%%%%%%%%%%%%%%%%%%%%%%%%%%%%%%%%%%%%%%%%%%%%%%%%%%%%%%%%

\section{Compact embedding}

%%%%%%%%%%%%%%%%%%%%%%%%%%%%%%%%%%%%%%%%%%%%%%%%%%%%%%%%%%%%%%%%%%%%%%%%
%%%%%%%%%%%%%%%%%%%%%%%%%%%%%%%%%%%%%%%%%%%%%%%%%%%%%%%%%%%%%%%%%%%%%%%%

This section deals with the compact embedding. The case $p=2$ for potential Sobolev spaces was solved by Spera \cite{Spera1992}:

\begin{lem}\label{spera}
The embedding $H^{\alpha_1}_2(\mathbb{T}_{\theta}^d) \hookrightarrow H^{\alpha_2}_2(\mathbb{T}_{\theta}^d)$ is compact for $\alpha_1>\alpha_2 \geq0$.
\end{lem}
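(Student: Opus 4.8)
The plan is to realize $H^{\a}_2(\T^d_\t)$ concretely as a weighted $\ell_2$-space and then apply the standard characterization of compactness for diagonal operators between sequence spaces. First I would use Proposition~\ref{Sobolev-P}(iii): $J^{\a}$ is an isometry from $H^{\a}_2(\T^d_\t)$ onto $L_2(\T^d_\t)$, and since the Fourier transform is unitary on $L_2(\T^d_\t)$ (the $U^m$ form an orthonormal basis by faithfulness of $\tau$), the space $H^{\a}_2(\T^d_\t)$ is isometrically the space of formal Fourier series $x\sim\sum_m\wh x(m)U^m$ with $\sum_{m\in\ent^d}(1+|m|^2)^{\a}|\wh x(m)|^2<\8$. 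Under this identification, the inclusion $\iota\colon H^{\a_1}_2(\T^d_\t)\hookrightarrow H^{\a_2}_2(\T^d_\t)$ becomes, after conjugating by the two isometries $J^{\a_1}$ and $J^{\a_2}$, the diagonal (Fourier) multiplier on $L_2(\T^d_\t)$ with symbol $\l(m)=(1+|m|^2)^{\frac{\a_2-\a_1}2}$.

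The key step is then: a diagonal multiplier $M_\l$ on $\ell_2(\ent^d)$ (equivalently on $L_2(\T^d_\t)$ via the Fourier basis) is compact if and only if $\l(m)\to0$ as $|m|\to\8$. Since $\a_1>\a_2$, we have $\a_2-\a_1<0$, hence $\l(m)=(1+|m|^2)^{\frac{\a_2-\a_1}2}\to0$ as $|m|\to\8$; so $M_\l$ is compact. To make the ``only if'' direction self-contained I would argue directly: for $N\in\nat$ let $P_N$ be the projection onto $\mathrm{span}\{U^m:|m|\le N\}$, which is finite-rank; then $\|M_\l - P_N M_\l\|\le\sup_{|m|>N}|\l(m)|=(1+(N+1)^2)^{\frac{\a_2-\a_1}2}\to0$, so $M_\l$ is a norm-limit of finite-rank operators, hence compact. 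Transporting this back through the isometries $J^{\a_1},J^{\a_2}$ shows $\iota$ is compact.

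I do not expect a genuine obstacle here; the statement is essentially the diagonal-operator compactness criterion in disguise, and the condition $\a_1>\a_2$ is exactly what forces the symbol to decay. The only point requiring a line of care is the identification of $H^\a_2(\T^d_\t)$ with the weighted sequence space: one must note that $J^\a$ commutes with the Fourier transform and that $\widehat{J^\a x}(m)=(1+|m|^2)^{\a/2}\wh x(m)$, which is immediate from the definition $J^\a=(1-(2\pi)^{-2}\D)^{\a/2}$ together with $\wh{\D x}(m)=-4\pi^2|m|^2\wh x(m)$ (cf. the computation of the spectrum of $-\D$ in section~\ref{Distributions on quantum tori}). The hypothesis $\a_2\ge0$ plays no essential role in the argument beyond what is already in the statement; the proof works verbatim for any $\a_1>\a_2$ in $\real$, which could be remarked in passing.
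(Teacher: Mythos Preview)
Your argument is correct. The paper does not actually prove this lemma; it simply attributes the result to Spera \cite{Spera1992} and uses it as a black box. Your proof is the natural one: identify $H^\a_2(\T^d_\t)$ with a weighted $\ell_2(\ent^d)$ via the orthonormal basis $(U^m)$, observe that the inclusion is unitarily equivalent to the diagonal multiplier with symbol $(1+|m|^2)^{(\a_2-\a_1)/2}$, and conclude compactness from the decay of the symbol via finite-rank approximation. This is exactly the standard Rellich--Kondrachov argument transported to the quantum torus, and it works for all $\a_1>\a_2$ in $\real$, as you note.

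One small slip: where you write ``to make the `only if' direction self-contained,'' you mean the \emph{if} direction (decay of the symbol implies compactness); the finite-rank approximation you give proves precisely that implication, which is the one needed here.
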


We will require the following real interpolation result on compact operators, due to Cwikel \cite{Cwikel1992}.

\begin{lem}\label{cwikel}
Let $(X_0,\, X_1)$ and $(Y_0,\, Y_1)$ be two interpolation couples of Banach spaces, and let $T: X_j \to Y_j$ be a bounded linear operator, $j=0, 1$. If $T: X_0 \to Y_0 $ is compact, then $T: (X_0,\, X_1)_{\eta, p} \to (Y_0,\, Y_1)_{\eta, p} $ is compact too for  any $0<\eta<1$ and $1\le p\le\8$.
\end{lem}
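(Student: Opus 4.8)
The plan is to reduce to the classical ``one couple fixed'' compactness interpolation theorems and then to isolate the genuinely new point, which is the content of Cwikel's theorem. Write $E=(X_0,X_1)_{\eta,p}$ and $F=(Y_0,Y_1)_{\eta,p}$; after renormalizing we may assume $\|T\|_{X_0\to Y_0}\le 1$, $\|T\|_{X_1\to Y_1}\le 1$, with $T:X_0\to Y_0$ compact. It suffices to show that the image $T(U)$ of the closed unit ball $U$ of $E$ is totally bounded in $F$. As background ingredients I would record the two standard one-sided results (Lions--Peetre, Persson, Hayakawa): (A) if $B$ is a single Banach space, $S:A_0\to B$ is compact and $S:A_1\to B$ is bounded, then $S:(A_0,A_1)_{\eta,p}\to B$ is compact; and (B) its dual, if $S:A\to B_0$ is compact and $S:A\to B_1$ bounded, then $S:A\to(B_0,B_1)_{\eta,p}$ is compact. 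These settle the cases $Y_0=Y_1$ and $X_0=X_1$; the difficulty is that they cannot simply be composed, since a finite-rank (or even merely $X_0\to Y_0$-compact) approximant of $T$ need not be bounded from $X_1$ to $Y_1$.

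Next I would use the discrete $K$-functional description $\|y\|_F\asymp\big\|\big(2^{-k\eta}K(2^k,y;Y_0,Y_1)\big)_{k\in\ent}\big\|_{\el_p}$ (with the $\sup$ convention for $p=\8$), likewise for $E$, together with the Brudnyi--Krugljak $K$-divisibility theorem. For $x\in U$, set $a_k=2^{-k\eta}K(2^k,x;X_0,X_1)$, so that $(a_k)_k$ lies in a fixed multiple of the unit ball of $\el_p$ and $\sup_k a_k\le\|(a_k)\|_{\el_p}$. Dominating $t\mapsto K(t,x;X_0,X_1)$ by the sum of the elementary concave functions $a_k\,2^{k\eta}\min(1,2^{-k}t)$ and applying $K$-divisibility, one decomposes $x=\sum_{k\in\ent}u_k$ (convergent in $X_0+X_1$) with $u_k\in X_0\cap X_1$ and
\[
\|u_k\|_{X_0}\le\gamma\,2^{k\eta}a_k,\qquad \|u_k\|_{X_1}\le\gamma\,2^{-k(1-\eta)}a_k,
\]
$\gamma$ being the universal $K$-divisibility constant. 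Applying $T$ term by term gives $Tx=\sum_k Tu_k$ with the same bounds on the $Y$-side. For each fixed $N$, the ``head'' $h_{N,x}=\sum_{|k|\le N}u_k$ is bounded in $X_0$ (of $X_0$-norm $\les 2^{N\eta}$) and in $X_1$, hence $T h_{N,x}$ ranges over a set that is precompact in $Y_0$ and bounded in $Y_1$, therefore precompact in $F$ by the elementary interpolation inequality $\|\cdot\|_F\les\|\cdot\|_{Y_0}^{1-\eta}\|\cdot\|_{Y_1}^{\eta}$; the tails are estimated crudely by the $Y_1$-bounds for $k>N$ and the $Y_0$-bounds for $k<-N$.

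The main obstacle — and the reason the theorem is not routine — is that these tail estimates are only bounded, not small uniformly over $x\in U$: a bounded subset of $E$ need not have uniformly decaying $K$-functional tails, so mere truncation does not produce an $\e$-net. This is exactly where the compactness of the single endpoint $T:X_0\to Y_0$ must be fed back in. Cwikel's argument shows that this compactness forces the small-scale part ($k\to-\infty$) of $T(U)$ to be uniformly $Y_0$-small, hence uniformly $F$-small via (A), and a symmetric device plays the part that compactness cannot reach directly off against the available $Y_1$ bound, so that after choosing $N$ large the total contribution of $\sum_{|k|>N}Tu_k$ is $\le\e$ uniformly. Combining this with the precompactness of $\{T h_{N,x}:x\in U\}$ shows $T(U)$ is totally bounded in $F$; the endpoint conventions ($p=\8$, and the need to pass to the closure of $X_0\cap X_1$) are handled in the usual way. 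I expect the uniform tail control to be by far the hardest step; if only the concrete Sobolev and Besov couples of this paper were needed one could shortcut it by using the Littlewood--Paley projections as an explicit approximating sequence and appealing to Lemma~\ref{spera}, but the abstract statement as phrased genuinely requires Cwikel's argument.
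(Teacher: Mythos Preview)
The paper does not prove this lemma at all: it is stated as a known result ``due to Cwikel \cite{Cwikel1992}'' and used as a black box in the subsequent compact embedding theorems. So there is no ``paper's own proof'' to compare against; the intended content here is simply a citation.

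Your sketch is a reasonable outline of the architecture of Cwikel's proof (one-sided Lions--Peetre/Hayakawa results, discrete $K$-functional, $K$-divisibility decomposition, head/tail splitting), and you correctly isolate the genuine difficulty: uniform smallness of the tails over the whole unit ball of $(X_0,X_1)_{\eta,p}$, which cannot be obtained by naive truncation. However, at precisely that point your write-up becomes circular: you say ``Cwikel's argument shows that this compactness forces the small-scale part \ldots\ to be uniformly $Y_0$-small'' and ``a symmetric device plays the part that compactness cannot reach directly off against the available $Y_1$ bound''. That \emph{is} the theorem; invoking ``Cwikel's argument'' for the only nontrivial step is not a proof. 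If you want to actually carry this out, you need to supply the mechanism --- in Cwikel's paper this goes through a factorization-type lemma showing that a compact operator between Banach spaces can be approximated, in a way compatible with the second endpoint, so that the induced maps on the $K$-functional pieces become uniformly small on the negative-$k$ side, together with a separate argument for the positive-$k$ side using only boundedness on $X_1\to Y_1$. Without that content, what you have written is a correct table of contents rather than a proof. For the purposes of this paper, though, the citation is all that is required.
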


\begin{thm} 
Assume that $1\leq p<p_1 \le\infty, \, 1\le p^*<p_1,\, 1\leq q\leq q_1 \leq \infty$ and $\alpha -\frac{d}p=\alpha _1-\frac{d}{p_1}$. Then the embedding
 $B^\a _{p,q}(\T_{\t}^d) \hookrightarrow  B^{\a_1}_{p^*,q_1}(\T^d_{\t})\,$
is compact.
\end{thm}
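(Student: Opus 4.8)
The strategy is to deduce compactness of $B^\a_{p,q}(\T^d_\t)\hookrightarrow B^{\a_1}_{p^*,q_1}(\T^d_\t)$ from the single already-solved compact case in Lemma~\ref{spera}, by first reducing everything to it through lifting and transference to the $L_2$-scale, and then propagating compactness along interpolation scales via Cwikel's lemma (Lemma~\ref{cwikel}). First, since $B^{\a_1}_{p^*,q}(\T^d_\t)\subset B^{\a_1}_{p^*,q_1}(\T^d_\t)$ continuously (Proposition~\ref{Besov-P}(ii)), it suffices to treat $q=q_1$; and composing with the continuous embedding of Theorem~\ref{embedding-besov} on the target side, it is enough to prove compactness of $B^\a_{p,q}(\T^d_\t)\hookrightarrow B^{\a'}_{p^*,q}(\T^d_\t)$ for \emph{some} $\a'$ with $\a'-\frac{d}{p^*}<\a-\frac{d}{p}$ (strict gain), because the wanted embedding then factors as compact $\circ$ continuous. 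This extra room will be what lets us land inside an interpolation scale with a genuinely compact endpoint.

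Next I would produce one ``seed'' compact embedding between Besov spaces. Using the lifting theorem (Theorem~\ref{Besov-isom}(i)) $J^\b$ is an isomorphism $B^\b_{2,2}(\T^d_\t)\to B^0_{2,2}(\T^d_\t)=L_2(\T^d_\t)$ and $B^0_{2,2}(\T^d_\t)=H^0_2(\T^d_\t)$, $B^\b_{2,2}(\T^d_\t)=H^\b_2(\T^d_\t)$ by Theorem~\ref{Sobolev-Besov} (which gives $B^\b_{2,2}=H^\b_2$ isomorphically since the internal index equals $2$). Hence Lemma~\ref{spera} yields that $B^{\b_1}_{2,2}(\T^d_\t)\hookrightarrow B^{\b_2}_{2,2}(\T^d_\t)$ is compact whenever $\b_1>\b_2\ge0$, and after applying $J^{\b_2}$ (an isomorphism of both sides onto $B^{\b_1-\b_2}_{2,2}$ and $B^0_{2,2}$) we may drop the nonnegativity and conclude: $B^{s_1}_{2,2}(\T^d_\t)\hookrightarrow B^{s_2}_{2,2}(\T^d_\t)$ is compact for all real $s_1>s_2$.

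Now I interpolate. Fix the target smoothness drop so that there is slack, i.e. work with $B^{\a'}_{p^*,q}$ where $\a'$ is chosen below the critical line. The plan is to realize the identity map $\iota$ as acting boundedly on two interpolation couples, one of whose left endpoints is the seed compact embedding. Concretely, pick exponents so that $B^\a_{p,q}(\T^d_\t)=\big(B^{s_1}_{2,2}(\T^d_\t),\,B^{\a_0}_{p_0,q_0}(\T^d_\t)\big)_{\eta,q}$ and $B^{\a'}_{p^*,q}(\T^d_\t)=\big(B^{s_2}_{2,2}(\T^d_\t),\,B^{\a_1'}_{p_1,q_1'}(\T^d_\t)\big)_{\eta,q}$ for a common $\eta\in(0,1)$, using the interpolation formulas of Proposition~\ref{interpolation-Besov}(i)--(iii) (matching the two parameters $\frac1{p}$ and $\a$ is a pair of linear equations, solvable because $p>2$ forces $p^*$ and the remaining free endpoints into admissible ranges; the case $p\le 2$ is handled symmetrically or by first composing with a continuous Besov embedding to raise $p$ above $2$). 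The identity is bounded $B^{s_1}_{2,2}\to B^{s_2}_{2,2}$ (compact, by the seed) and bounded $B^{\a_0}_{p_0,q_0}\to B^{\a_1'}_{p_1,q_1'}$ (a continuous Besov embedding from Theorem~\ref{embedding-besov}, using the line relation), so Lemma~\ref{cwikel} gives compactness of $\iota:B^\a_{p,q}(\T^d_\t)\to B^{\a'}_{p^*,q}(\T^d_\t)$. Finally, precompose/postcompose with the continuous embeddings of Theorem~\ref{embedding-besov} and Proposition~\ref{Besov-P}(ii) to upgrade from $(\a',q)$ to the desired $(\a_1,q_1)$ on the target, completing the proof.

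The main obstacle I expect is bookkeeping the exponents: one must verify that the linear system determining the intermediate endpoints $(p_0,q_0,\a_0)$ and $(p_1,q_1',\a_1')$ together with $s_1>s_2$ and the Sobolev line $\a-\frac dp=\a_1-\frac d{p_1}$ admits a solution with all indices in $[1,\infty]$ and with the compact seed sitting at a genuine endpoint, \emph{and} that the resulting interpolated spaces are exactly $B^\a_{p,q}$ and $B^{\a'}_{p^*,q}$ (not merely continuously included). The delicate sub-case is when $p$ or $p^*$ equals $1$ or $\infty$, or $q=\infty$, where Proposition~\ref{interpolation-Besov} needs the $c_0$-variant (Remark after it) or the second complex method; these are routine but must be addressed. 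Everything else is a direct assembly of Lemma~\ref{spera}, Lemma~\ref{cwikel}, Theorem~\ref{Besov-isom}, Theorem~\ref{Sobolev-Besov}, Theorem~\ref{embedding-besov} and Proposition~\ref{interpolation-Besov}.
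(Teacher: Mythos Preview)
Your overall strategy --- Spera's lemma as the compact seed, Cwikel's lemma to propagate compactness through a real interpolation scale, and Theorem~\ref{embedding-besov} for the continuous endpoint --- is exactly the paper's. The reduction to $q=q_1$ and the extension of the seed to all real $s_1>s_2$ via lifting are fine.

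The gap is in the interpolation step. You want $B^\a_{p,q}=\big(B^{s_1}_{2,2},\,B^{\a_0}_{p_0,q_0}\big)_{\eta,q}$ from Proposition~\ref{interpolation-Besov}(i)--(iii), but none of those formulas delivers this for general $p\neq q$: part~(i) needs the same $p$ on both endpoints, and part~(iii), the only one allowing different $p$'s, forces $p=q$. So the ``pair of linear equations'' is not solvable inside the available identities; this is the substance of the obstacle you flagged, not bookkeeping. The paper never aims at equality. It keeps the same $\a$ at both endpoints and proves only the \emph{inclusions} $B^{\a}_{p,q}\subset\big(B^{\a}_{2,2},\,B^{\a}_{r,t}\big)_{\eta,q}$ on the source and $\big(B^{\a_1}_{2,2},\,B^{\a_1}_{p_1,t}\big)_{\eta,q}\subset B^{\a_1}_{p^*,q}$ on the target, by passing through the complemented embedding $B^{\b}_{u,v}\hookrightarrow\el^{\b}_v(L_u)$ (proof of Proposition~\ref{interpolation-Besov}), interpolating the $L_u$-components to Lorentz spaces $\el^{\b}_q(L_{s,q})$, and then exploiting Lorentz inclusions $L_p\subset L_{s,q}$ (for $s<p$) and $L_{s,q}\subset L_{p^*}$ (for $s>p^*$) by pushing $\eta$ close to $1$ to create slack in~$s$. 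The paper also splits into cases $p=2$, $1<p\neq2$, and $p=1$; the last is reduced to the previous one by a preliminary continuous embedding $B^{\a}_{1,q}\subset B^{\a_2}_{p_2,q}$ with $p_2>1$, not by ``raising $p$ above $2$''.
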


\begin{proof}
 Without loss of generality, we can assume $q=q_1$. First consider the case $p=2$. Choose $t$ sufficiently close to $q$ and $0<\eta<1$ such that
  $$\frac1q=\frac{1-\eta}2+\frac{\eta}{t}\,.$$
 Then by Proposition~\ref{interpolation-Besov},
  $$B^\a _{2,q}(\T_{\t}^d)=\big(B^\a _{2,2}(\T_{\t}^d),\, B^\a _{2,t}(\T_{\t}^d)\big)_{\eta, q}\,.$$
By Lemma~\ref{spera}, $B^\a _{2,2}(\T_{\t}^d)\hookrightarrow B^{\a_1} _{2,2}(\T_{\t}^d)$ is compact. On the other hand, by Theorem~\ref{embedding-besov}, $B^\a _{2,t}(\T_{\t}^d)\hookrightarrow B^{\a_1} _{p_1,t}(\T_{\t}^d)$ is continuous. So by Lemma~\ref{cwikel},
 $$B^\a _{2,q}(\T_{\t}^d)\hookrightarrow \big(B^{\a_1} _{2,2}(\T_{\t}^d),\, B^{\a_1} _{p_1,t}(\T_{\t}^d)\big)_{\eta, q}\;\text{ is compact}.$$
However, by the proof of Proposition~\ref{interpolation-Besov} and \eqref{interpolation of Lp}, we have
 $$\big(B^{\a_1} _{2,2}(\T_{\t}^d),\, B^{\a_1} _{p_1,t}(\T_{\t}^d)\big)_{\eta, q}\subset \el^{\a_1}_q\big((L_2(\T^d_\t),\, L_{p_1}(\T^d_\t)_{\eta, q}\big)
 = \el^{\a_1}_q(L_{s, q}(\T^d_\t)),$$
where $s$ is determined by
 $$\frac1{s}=\frac{1-\eta}2+\frac{\eta}{p_1}=\frac1{p_1}+\frac{(1-\eta)(\a-\a_1)}d\,.$$
Note that  $\eta$ tends to $1$ as $t$ tends to $q$. Thus we can choose $\eta$ so that $s>p^*$. Then $L_{s, q}(\T^d_\t)\subset L_{p^*}(\T^d_\t)$. Thus the desired assertion for $p=2$ follows.

The case $p\neq2$ but $p>1$ is dealt with similarly. Let $t$ and $\eta$ be as above. Choose $r<p$ ($r$ close to $p$). Then
 $$\big(B^{\a} _{2,2}(\T_{\t}^d),\, B^{\a} _{r,t}(\T_{\t}^d)\big)_{\eta, q}\subset\el^{\a}_q\big((L_2(\T^d_\t),\, L_{r}(\T^d_\t)_{\eta, q}\big)
 = \el^{\a}_q(L_{p_0, q}(\T^d_\t)),$$
where $p_0$ is determined by
 $$\frac1{p_0}=\frac{1-\eta}2+\frac{\eta}{r}\,.$$
If $\eta$ is sufficiently close to $1$, then $p_0<p$ that we will assume. Thus $L_{p}(\T^d_\t)\subset L_{p_0, q}(\T^d_\t)$. It then follows that
 $$B^{\a} _{p,q}(\T_{\t}^d)\subset\big(B^{\a} _{2,2}(\T_{\t}^d),\, B^{\a} _{r,t}(\T_{\t}^d)\big)_{\eta, q}\,.$$
The rest of the proof is almost the same as the case $p=2$, so is omitted.

The remaining case $p=1$ can be easily reduced to the previous one. Indeed, first embed $B^\a _{p,q}(\T_{\t}^d)$ into $B^{\a_2} _{p_2,q}(\T_{\t}^d)$ for some $\a_2\in (\a,\, \a_1)$ ($\a_2$ close to $\a$) and $p_2$ determined by $\alpha -\frac{d}p=\alpha _2-\frac{d}{p_2}$. Then by the previous case, the embedding $B^{\a_2} _{p_2,q}(\T_{\t}^d) \hookrightarrow  B^{\a_1}_{p^*,q_1}(\T^d_{\t})\,$ is compact, so we are done.
\end{proof}

\begin{thm}
 Let  $1<p <p_1<\8$ and $\a,\a_1\in\real$.
 \begin{enumerate}[\rm (i)]
 \item If $\alpha -\frac{d}p=\alpha _1-\frac{d}{p_1}$, then
$H^\a _p(\T^d_\t)\hookrightarrow H^{\a_1} _{p^*}(\T^d_\t)$ is compact for $p^*<p_1$.
In particular, if additionally $\a=k$ and $\a_1=k_1$ are nonnegative integers,  then
 $W^k _p(\T^d_\t)\hookrightarrow W^{k_1} _{p^*}(\T^d_\t)$ is compact.
 \item If $p(\a-\a_1)>d$ and $\a^*<\a_1=\a-\frac{d}{p}$, then
 $H^\alpha _p(\T^d_\t)\hookrightarrow  B^{\a^*}_{\8,\8}(\T^d_\t)$ is compact.
In particular, if additionally $\a=k\in\nat$, then
$W^k _p(\T^d_\t)\hookrightarrow  B^{\a^*}_{\8,\8}(\T^d_\t)$ is compact.
\end{enumerate}
\end{thm}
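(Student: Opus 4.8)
The strategy is to combine the continuous embeddings of Theorem~\ref{embedding-sobolev} with the already established compactness of the single embedding $H^{\a_1}_2(\T^d_\t)\hookrightarrow H^{\a_2}_2(\T^d_\t)$ (Lemma~\ref{spera}) and Cwikel's interpolation theorem for compact operators (Lemma~\ref{cwikel}), exactly in the spirit of the proof of the compact Besov embedding just given. For part (i), since $H^k_p(\T^d_\t)=W^k_p(\T^d_\t)$ for $1<p<\8$ by Theorem~\ref{q-Sobolev-Bessel}, it suffices to treat the potential spaces; and by the lifting isometry $J^{\a_1}$ (Proposition~\ref{Sobolev-P}(iii)) we may assume $\a_1=0$, so the target is $L_{p^*}(\T^d_\t)$. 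First I would handle the base case $p=2$: pick $t$ close to $q$ (here there is no $q$, so rather pick an auxiliary exponent) — more precisely, write $H^\a_2(\T^d_\t)$ as a real interpolation space $\big(H^\a_2(\T^d_\t),\,H^\a_r(\T^d_\t)\big)_{\eta,2}$ for suitable $r$ and $\eta$ close to $1$, using Remark~\ref{inter potential-1a} together with the reiteration theorem; apply Lemma~\ref{spera} to the first slot and Theorem~\ref{embedding-sobolev}(i) to the second; then invoke Lemma~\ref{cwikel} to deduce compactness of $H^\a_2(\T^d_\t)\hookrightarrow\big(L_2(\T^d_\t),\,L_{t_1}(\T^d_\t)\big)_{\eta,2}=L_{s,2}(\T^d_\t)$ for an exponent $s>p^*$ obtained from $\frac1s=\frac{1-\eta}2+\frac\eta{p_1}$; since $\eta\to1$ forces $s\to p_1>p^*$, we may choose $\eta$ so that $s>p^*$, and then $L_{s,2}(\T^d_\t)\subset L_{p^*}(\T^d_\t)$.

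For general $1<p<\8$, $p\neq2$, I would run the same interpolation argument but starting from the couple $\big(H^{\a'}_2(\T^d_\t),\,H^{\a'}_r(\T^d_\t)\big)$ with $r<p$ close to $p$, so that the interpolation space $\big(L_2(\T^d_\t),\,L_r(\T^d_\t)\big)_{\eta,p}=L_{p_0,p}(\T^d_\t)$ has $p_0<p$ for $\eta$ near $1$, whence $L_p(\T^d_\t)\subset L_{p_0,p}(\T^d_\t)$ and therefore $H^\a_p(\T^d_\t)$ embeds into the relevant interpolation couple; compactness of the composite then follows from Lemma~\ref{cwikel} plus Lemma~\ref{spera} and the continuous embedding of Theorem~\ref{embedding-sobolev}(i), exactly as in the $p=2$ case. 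The Sobolev statement $W^k_p\hookrightarrow W^{k_1}_{p^*}$ is then the special case $\a=k$, $\a_1=k_1$ via Theorem~\ref{q-Sobolev-Bessel}.

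For part (ii), I would factor the embedding through an intermediate Sobolev space: choose $\a_1$ with $\a^*<\a_1<\a-\frac dp$ and $p_1$ with $\a-\frac dp=\a_1-\frac d{p_1}$ (possible for $p_1<\8$ large), so that by part (i) already proved, $H^\a_p(\T^d_\t)\hookrightarrow H^{\a_1}_{p_1}(\T^d_\t)$ is compact; then by Theorem~\ref{embedding-sobolev}(ii) (applied with parameters chosen so that $p_1(\a_1-\a^*)>d$ and $\a^*=\a_1-\frac d{p_1}$, using $\frac1{p_1}>0$ small), $H^{\a_1}_{p_1}(\T^d_\t)\hookrightarrow B^{\a^*}_{\8,\8}(\T^d_\t)$ is continuous; the composition of a compact map with a bounded one is compact. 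The Sobolev version again follows from Theorem~\ref{q-Sobolev-Bessel}.

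\textbf{Main obstacle.} The delicate point is the bookkeeping of exponents: one must verify that in the interpolation identity for part (i) the auxiliary parameters $r$ (or $t_1$), $\eta$, and the resulting Lorentz index $s$ (resp. $p_0$) can be chosen simultaneously so that (a) the continuous Sobolev embedding Theorem~\ref{embedding-sobolev}(i) is applicable to the non-compact slot with the correct homogeneity $\a-\frac dp$, (b) the interpolated target Lorentz space is contained in $L_{p^*}$ (resp. contains $L_p$), and (c) $\eta$ stays strictly between $0$ and $1$. The continuity of the exponent maps and the fact that $\eta$ may be taken arbitrarily close to $1$ make this work, but it requires care; I would isolate it as an elementary lemma about the three linear relations among the exponents before assembling the final argument. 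Everything else — the use of Cwikel's theorem, Lemma~\ref{spera}, and the reiteration theorem — is routine once the exponents are pinned down.
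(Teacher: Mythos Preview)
For part~(i) your Spera--Cwikel interpolation argument can be made to work, but two remarks are in order. First, a slip: you cannot ``write $H^\a_2=(H^\a_2,H^\a_r)_{\eta,2}$'' via Remark~\ref{inter potential-1a}---that identity would force $r=2$. What you actually need (and what your general-$p$ paragraph correctly uses) is only the \emph{inclusion} $H^\a_p\subset(H^\a_2,H^\a_r)_{\eta,p}$, which holds because $J^\a$ carries it to $L_p\subset L_{p_0,p}$ on the probability space. Second, this whole Spera--Cwikel rerun is more work than the paper intends: the phrase ``based on the preceding theorem'' means one should invoke the compact Besov embedding as a black box rather than reprove it. Choosing $\a_1'$ slightly above $\a_1$ so that the associated $p_1'$ still exceeds $p^*$, one simply chains
\[
H^\a_p\;\subset\;B^\a_{p,\8}\;\hookrightarrow\;B^{\a_1'}_{p^*,\8}\;\subset\;B^{\a_1}_{p^*,1}\;\subset\;H^{\a_1}_{p^*},
\]
with the second arrow compact by the preceding theorem and the last two continuous by Proposition~\ref{Besov-P}\,(ii) and Theorem~\ref{Sobolev-Besov}. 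No direct appeal to Lemma~\ref{spera} or Lemma~\ref{cwikel} is required.

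Part~(ii) contains a genuine error. With your intermediate smoothness (call it $\b$, your ``$\a_1$'') chosen in $\big(\a^*,\,\a-\tfrac dp\big)$, the relation $\a-\tfrac dp=\b-\tfrac d{p_1}$ forces $\tfrac d{p_1}=\b-(\a-\tfrac dp)<0$, so no admissible $p_1$ exists and part~(i) cannot be invoked. The inequality must go the other way: take $\b$ slightly \emph{above} $\a-\tfrac dp$, so that $p_1=d\big/\!\big(\b-\a+\tfrac dp\big)$ is large and positive; then part~(i) gives $H^\a_p\hookrightarrow H^{\b}_{p^\star}$ compactly for any $p^\star<p_1$, and choosing $p^\star$ large enough that $\b-\tfrac d{p^\star}\ge\a^*$ lets you finish with the continuous embedding $H^{\b}_{p^\star}\subset B^{\a^*}_{\8,\8}$. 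Alternatively---and closer to the paper's intent---bypass part~(i) and go through Besov spaces directly: $H^\a_p\subset B^\a_{p,\8}\hookrightarrow B^{\a-d/p}_{p^{**},\8}\subset B^{\a^*}_{\8,\8}$, with the middle arrow compact by the preceding theorem (take $p_1=\8$ there, $p^{**}$ any finite index) and the last inclusion from Theorem~\ref{embedding-besov} once $p^{**}$ is large enough that $(\a-\tfrac dp)-\tfrac d{p^{**}}\ge\a^*$.
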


\begin{proof}
Based on the preceding theorem, this proof is similar to that of Theorem~\ref{embedding-sobolev} and left to the reader.
 \end{proof}

%%%%%%%%%%%%%%%%%%%%%%%%%%%%%%%%%%%%%%%%%%%%%%%%%%%%%%%%%%%%%%%%%%%%%%%%
%%%%%%%%%%%%%%%%%%%%%%%%%%%%%%%%%%%%%%%%%%%%%%%%%%%%%%%%%%%%%%%%%%%%%%%%

{\Large\part{Fourier multiplier}}
\setcounter{section}{0}

%%%%%%%%%%%%%%%%%%%%%%%%%%%%%%%%%%%%%%%%%%%%%%%%%%%%%%%%%%%%%%%%%%%%%%%%
%%%%%%%%%%%%%%%%%%%%%%%%%%%%%%%%%%%%%%%%%%%%%%%%%%%%%%%%%%%%%%%%%%%%%%%%

This chapter deals with Fourier multipliers on Sobolev, Besov and Triebel-Lizorkin spaces on $\T^d_\t$. The first section concerns the Sobolev spaces. Its main result is the analogue for $W_p^k(\T^d_\t)$ of \cite[Theorem~7.3]{CXY2012} (see also Lemma~\ref{cb-multiplier}) on c.b. Fourier multipliers on $L_p(\T^d_\t)$; so the space of c.b. Fourier multipliers on $W_p^k(\T^d_\t)$ is independent of $\t$. The second section turns to Besov spaces on which Fourier multipliers behave better. We extend some classical results to the present setting. We show that the space of c.b. Fourier multipliers on $B_{p,q}^\a(\T^d_\t)$ does not depend on $\t$ (nor on $q$ or $\a$). We also prove that a function on $\ent^d$ is a Fourier multiplier on $B_{1,q}^\a(\T^d_\t)$  iff it is the Fourier transform of an element of $B_{1,\8}^0(\T^d)$. The last section deals with Fourier multipliers on Triebel-Lizorkin spaces.

\bigskip

%%%%%%%%%%%%%%%%%%%%%%%%%%%%%%%%%%%%%%%%%%%%%%%%%%%%%%%%%%%%%%%%%%%%%%%%
%%%%%%%%%%%%%%%%%%%%%%%%%%%%%%%%%%%%%%%%%%%%%%%%%%%%%%%%%%%%%%%%%%%%%%%%

\section{Fourier multipliers on Sobolev spaces}

%%%%%%%%%%%%%%%%%%%%%%%%%%%%%%%%%%%%%%%%%%%%%%%%%%%%%%%%%%%%%%%%%%%%%%%%
%%%%%%%%%%%%%%%%%%%%%%%%%%%%%%%%%%%%%%%%%%%%%%%%%%%%%%%%%%%%%%%%%%%%%%%%

We now investigate Fourier multipliers on Sobolev spaces. We refer to \cite{Po1982, BoPo1987} for the study of Fourier multipliers on the classical Sobolev spaces. If $X$ is a Banach space of distributions on $\T^d_\t$, we denote by $\mathsf{M} (X)$ the space of bounded Fourier multipliers on $X$; if $X$ is further equipped with an operator space structure, $\mathsf{M}_{\mathrm{cb}}(X)$ is the space of c.b. Fourier multipliers on $X$. These spaces are endowed with their natural norms. Recall that the Sobolev spaces $W^k_p(\T^d_{\t})$,  $H^\a_p(\T^d_{\t})$ and the Besov $B_{p, q}^\a(\T^d_{\t})$  are equipped with their natural operator space structures as defined in Remarks~\ref{os-Sobolev} and \ref{os-Besov}.

The aim of this section is to extend  \cite[Theorem~7.3]{CXY2012} (see also Lemma~\ref{cb-multiplier}) on c.b. Fourier multipliers on $L_p(\T^d_\t)$ to Sobolev spaces.
Inspired by Neuwirth and Ricard's transference theorem \cite{NR2011}, we will relate Fourier multipliers with Schur multipliers. Given a distribution $x$ on $\T^d_\t$, we write its matrix in the basis $(U^m)_{m\in\ent^d}$:
 $$[x]=\Big( \langle xU^n,\;U^m\rangle \Big)_{m,n\in \mathbb{Z}^d}
 =\Big(\wh{x}(m-n)e^{\mathrm{i} n \tilde{\theta} (m-n)^t}\Big )_{m,n\in\mathbb{Z}^d}\,.$$
Here  $k^t$ denotes the transpose of $k=(k_1,\dots,k_d)$ and  $\tilde{\theta}$ is the following $d\times d$-matrix deduced from the skew symmetric matrix $\theta$:
 \[\tilde{\theta}=-2\pi\begin{pmatrix}
 0 & \theta_{12} & \theta_{13} &\dots & \theta_{1d}\\
 0 & 0           & \theta_{23} &\dots & \theta_{2d}\\
 \vdots&\vdots&\vdots&\vdots&\vdots\\
 0 & 0           & 0           &\dots & \theta_{d-1,d}\\
 0 & 0           & 0           &\dots & 0
 \end{pmatrix}.\]
Now let $\phi:\ent^d\to\com$ and $M_\phi$ be the associated Fourier multiplier on $\T^d_\t$. Set $\mathring{\phi}=\big(\phi_{m-n}\big)_{m, n\in\ent^d}$.  Then
 \beq\label{F-S}
 \big[M_{\phi}x\big]=\big(\phi_{m-n}\wh{x}(m-n)e^{\mathrm{i} n\tilde{\theta}(m-n)^t}\big)_{m,n\in\mathbb{Z}^d}
 =S_{\mathring{\phi}}([x]),
 \eeq
where $S_{\mathring{\phi}}$ is the Schur multiplier with symbol $\mathring{\phi}$.

According to the definition of $W_p^k(\T^d_\t)$, for any matrix $a=(a_{m, n})_{m,n\in\mathbb{Z}^d}$ and $\el\in\nat_0^d$ define
 $$D^\el a=\big((2\pi{\rm i}(m-n))^\el a_{m, n}\big)_{m,n\in\mathbb{Z}^d}\,.$$
If $x$ is a distribution on $\T^d_\t$, then clearly
 $$\big[M_{\phi}D^\el x\big]=S_{\mathring{\phi}}\big(D^\el[x]\big).$$
 We introduce the space
 $$S_{p}^k=\Big\{a=(a_{m, n})_{m,n\in\mathbb{Z}^d} \,:\, D^\el a\in S_p(\ell_2(\mathbb{Z}^d)),\; \forall \, \el \in \mathbb{N}_0^d, 0\leq |\el |_1 \leq k \Big\}$$
and endow it with the norm
 $$\|a\|_{S_{p}^k}=\Big(\sum_{0\leq |\el|_1\leq k}\|D^\el a\|_{S_p}^p\Big)^{\frac1p}.$$
 Then $S_{p}^k$ is a closed subspace of the $\el_p$-direct sum of $L$ copies of $S_p(\ell_2(\mathbb{Z}^d))$ with $L=\sum_{0\leq |\el |_1 \leq k}\,1$. The latter direct sum is equipped with its natural operator space structure, which induces an operator space structure on $S_p^k$ too.

 If $\psi=(\psi_{m, n})_{m,n\in\ent^d}$ is a complex matrix, its associated Schur multiplier $S_\psi$ on $S_p^k$ is defined by $S_\psi a=(\psi_{m, n}\,a_{m, n})_{m,n\in\ent^d}$.  Let $\mathsf{M}_{\mathrm{cb}}(S_p^k)$ denote the space of all c.b. Schur multipliers on $S_p^k$, equipped with the natural norm.

\medskip

\begin{thm}\label{q-multiplier-Sobolev}
Let $1\leq p \leq \8$ and $k\in \mathbb{N}$. Then
$$\mathsf{M}_{\mathrm{cb}} (W^k_p (\T^d_{\t}))
 =\mathsf{M}_{\mathrm{cb}} (S^k_p)\;\text{ with equal norms}.$$
Consequently,
 $$\mathsf{M}_{\mathrm{cb}} (W^k_p (\T^d_{\t}))
 =\mathsf{M}_{\mathrm{cb}} (W^k_p (\T^d))\;\text{ with equal norms}.$$
\end{thm}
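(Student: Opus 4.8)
The plan is to prove the two displayed identities in order: first the transference between completely bounded Fourier multipliers on $W^k_p(\T^d_\t)$ and completely bounded Schur multipliers on $S^k_p$, and then deduce the $\t$-independence by applying the first identity with $\t=0$. The second consequence is immediate once we have the first: the right-hand side $\mathsf{M}_{\mathrm{cb}}(S^k_p)$ makes no reference to $\t$ whatsoever (the matrix $\wt\theta$ entered only through the diagonal unitary conjugation $e^{\mathrm{i}n\wt\theta(m-n)^t}$, which is absorbed into the Schur symbol), so
$$\mathsf{M}_{\mathrm{cb}}(W^k_p(\T^d_\t))=\mathsf{M}_{\mathrm{cb}}(S^k_p)=\mathsf{M}_{\mathrm{cb}}(W^k_p(\T^d)).$$
Hence the heart of the matter is the first equality.

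\textbf{Step 1: the map $x\mapsto[x]$ is a completely isometric embedding of $W^k_p(\T^d_\t)$ into $S^k_p$.} For a distribution $x$ on $\T^d_\t$ one checks from $[x]=(\wh x(m-n)e^{\mathrm{i}n\wt\theta(m-n)^t})_{m,n}$ and the definition of $D^\el$ on matrices that $D^\el[x]=[D^\el x]$, where on the left $D^\el$ is the matrix operation and on the right the partial derivation on $\T^d_\t$. Thus $x\in W^k_p(\T^d_\t)$ iff $D^\el[x]\in S_p(\ell_2(\ent^d))$ for all $|\el|_1\le k$, and $\|x\|_{W^k_p}=\|[x]\|_{S^k_p}$. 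The completely isometric statement follows by tensoring with $S_p$ (or by the matrix-level Neuwirth–Ricard computation): for $x\in S_p[W^k_p(\T^d_\t)]$, the matrix $[x]$ taken entrywise with values in $S_p$ has $S_p$-valued $S^k_p$-norm equal to $\|x\|_{S_p[W^k_p]}$; here one uses that the Fourier coefficients $\wh x(m-n)$ depend only on the difference $m-n$, exactly as in the proof of \cite[Theorem~7.3]{CXY2012}. This is the place where one imports the Neuwirth–Ricard transference machinery \cite{NR2011} — the key structural fact is that a Fourier multiplier corresponds under $x\mapsto[x]$ to a \emph{Toeplitz-type} Schur multiplier, cf. \eqref{F-S}, and conversely that the associated conditional expectation / averaging projection onto the "Toeplitz" corner of $S^k_p$ is completely contractive.

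\textbf{Step 2: from this embedding deduce the multiplier identity.} Given $\phi:\ent^d\to\com$, \eqref{F-S} together with $D^\el[M_\phi x]=S_{\mathring\phi}(D^\el[x])$ shows that the diagram
$$\begin{array}{ccc} W^k_p(\T^d_\t) & \xrightarrow{\ M_\phi\ } & W^k_p(\T^d_\t)\\[2pt] \big\downarrow & & \big\downarrow\\[2pt] S^k_p & \xrightarrow{\ S_{\mathring\phi}\ } & S^k_p \end{array}$$
commutes, the vertical arrows being the complete isometries of Step~1. Hence $\|M_\phi\|_{\mathrm{cb}(W^k_p)}\le\|S_{\mathring\phi}\|_{\mathrm{cb}(S^k_p)}$. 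For the reverse inequality, one uses the standard averaging argument (a second-quantization / ultraproduct or a direct averaging over the diagonal unitaries $(U^m)_m$, exactly as in \cite{NR2011} and in the $L_p$ case of \cite{CXY2012}): a general c.b. Schur multiplier $S_\psi$ on $S^k_p$ restricts, after averaging by the action of the torus $\T^d$ (which commutes with each $D^\el$ and acts completely isometrically), to a Toeplitz Schur multiplier $S_{\mathring\phi}$ with $\phi_j$ obtained from $\psi$, and the averaging does not increase the c.b. norm; since the range of $x\mapsto[x]$ is precisely the Toeplitz corner and is completely complemented in $S^k_p$ with completely contractive projection, one gets $\|M_\phi\|_{\mathrm{cb}(W^k_p)}=\|S_{\mathring\phi}\|_{\mathrm{cb}(S^k_p)}$ and this exhausts all of $\mathsf{M}_{\mathrm{cb}}(S^k_p)$ (every c.b. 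Schur multiplier that preserves the Toeplitz corner is of this form). Combining the two inequalities yields $\mathsf{M}_{\mathrm{cb}}(W^k_p(\T^d_\t))=\mathsf{M}_{\mathrm{cb}}(S^k_p)$ with equal norms.

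\textbf{Main obstacle.} The genuinely delicate point is Step~1 at the \emph{completely bounded} level, i.e.\ identifying $S_p[W^k_p(\T^d_\t)]$ isometrically with the corresponding Toeplitz subspace of $S_p[S^k_p]$, and the averaging argument in Step~2 that shows c.b.\ Schur multipliers on $S^k_p$ restrict to Toeplitz ones without norm inflation. For $p=\infty$ and $p=1$ (and then all $p$ by the Pisier criterion of section~\ref{Fourier multipliers}, since $W^k_p(\T^d_\t)$ carries its natural operator space structure as a subspace of the $\ell_p$-sum of copies of $L_p(\T^d_\t)$) this is handled exactly as in \cite[\S7]{CXY2012}; the new ingredient here compared to the $L_p$ case is merely that one must carry the finitely many derivations $D^\el$, $|\el|_1\le k$, simultaneously — but since each $D^\el$ is a diagonal (hence completely contractive modulo an unbounded scalar) operation that commutes with Schur multiplication and with the torus action, this causes no essential difficulty: the argument is componentwise in the $\ell_p$-direct sum over $\el$. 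I would therefore present Step~1 and the averaging in Step~2 by invoking \cite{NR2011} and \cite[Theorem~7.3]{CXY2012} and indicating the (routine) modification needed to accommodate the derivations, rather than reproving the transference from scratch.
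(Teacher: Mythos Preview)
Your Step~1 contains a genuine error: the map $x\mapsto[x]$ is \emph{not} an isometric embedding of $W^k_p(\T^d_\t)$ into $S^k_p$ for any finite $p$. Already at the level $k=0$ the claim $\|x\|_{L_p(\T^d_\t)}=\|[x]\|_{S_p(\ell_2(\ent^d))}$ is false: take $x=1_{\T^d_\t}$, whose matrix $[x]$ is the identity on $\ell_2(\ent^d)$ and hence lies in no $S_p$ for $p<\8$. More generally, the Toeplitz-type matrix $[x]$ of an element of $L_p(\T^d_\t)$ is essentially never a Schatten-class operator (the trace on $B(\ell_2(\ent^d))$ and the trace $\tau$ on $\T^d_\t$ are entirely different). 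Consequently the commutative diagram in Step~2, and the averaging onto a ``Toeplitz corner'' of $S^k_p$, do not make sense as stated: there is no such complemented subspace.

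The paper's argument (following \cite{NR2011,CXY2012}) runs the embedding in the \emph{opposite} direction. For $\|\mathring\phi\|_{\mathsf{M}_{\mathrm{cb}}(S^k_p)}\le\|\phi\|_{\mathsf{M}_{\mathrm{cb}}(W^k_p)}$ one embeds a matrix $a\in S_p$ (or $S_p[S_p]$ for the c.b.\ statement) into the \emph{amplified} space $L_p(B(\ell_2(\ent^d))\overline\ot\T^d_\t)$ via $a\mapsto V(a\ot 1_{\T^d_\t})V^*$ with $V=\mathrm{diag}(U^n)$; on this image the Fourier multiplier $\mathrm{Id}\ot M_\phi$ acts as the Schur multiplier $S_{\mathring\phi}$, and the norms match by unitary conjugation. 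For the reverse inequality one cannot embed $L_p(\T^d_\t)$ into $S_p$; instead one uses the F{\o}lner sequence $Z_N=\{-N,\dots,N\}^d$ and the truncation/averaging maps $A_N:\T^d_\t\to B(\ell_2^{|Z_N|})$, $B_N:B(\ell_2^{|Z_N|})\to\T^d_\t$, which are completely contractive and satisfy $B_N\circ A_N\to\mathrm{Id}$ on $L_p(\T^d_\t)$, together with the intertwining $A_N\circ M_\phi=S_{\mathring\phi}\circ A_N$. Your idea that the derivations $D^\el$ commute with everything and that the argument is ``componentwise in the $\ell_p$-sum'' is correct and is exactly the minor modification needed; but the underlying transference mechanism must be the F{\o}lner/amplification one, not the nonexistent embedding $x\mapsto[x]$ into $S_p$.
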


\begin{proof}
This proof  is an adaptation of that of \cite[Theorem~7.3]{CXY2012}.
We start with an elementary observation. Let $V=\mathrm{diag} (\cdots,U^n,\cdots)_{n\in\ent^d}$.  For any  $a=(a_{m, n})_{m,n\in\mathbb{Z}^d}\in B (\ell_2(\mathbb{Z}^d))$,  let  $x=V(a\ot 1_{\T^d_{\t}})V^*\in B(\ell_2(\mathbb{Z}^d))\overline\ot\T^d_{\t}$, where $1_{\T^d_{\t}}$ denotes the unit of $\T^d_\t$. Then
 $$x=(U^ma_{mn}U^{-n})_{m,n \in\mathbb{Z}^d}
 =\sum_{m,n}a_{m, n}e_{m,n}\otimes U^mU^{-n}
 =\sum_{m,n}a_{m, n}e_{m, n}\otimes e^{- \mathrm{i} n\tilde{\theta}m^{t}}U^{m-n},$$
where $(e_{m, n})$ are the canonical matrix units of $B(\ell_2(\mathbb{Z}^d))$. So,
 $$[x]=\big(a_{m, n}e_{m, n}\big)_{m,n\in\mathbb{Z}^d}\,,$$
a matrix with entries in $B (\ell_2(\mathbb{Z}^d))$.
  Since $V$ is unitary, we have
 $$ \big\|x\big\|_{L_p(B(\ell_2(\mathbb{Z}^d))\overline\ot\T^d_{\t})}=\big\|a\ot 1_{\T^d_{\t}}\big\|_{L_p(B(\ell_2(\mathbb{Z}^d))\overline\ot\T^d_{\t})}
 =\big\|a\big\|_{S_p(\ell_2(\mathbb{Z}^d))}.$$
Similarly, for $\el\in\nat_0^d$,
 \beq\label{2 norms}
  \big\|D^\el x\big\|_{L_p(B(\ell_2(\mathbb{Z}^d))\overline\ot\T^d_{\t})}=\big\|D^\el a\big\|_{S_p(\ell_2(\mathbb{Z}^d))}\,.
  \eeq

Now suppose that $\phi \in \mathsf{M}_{\mathrm{cb}} (W^k_p (\T^d_{\t})).$ For $a=(a_{m, n})_{m,n\in
\mathbb{Z}^d}\in B (\ell_2(\mathbb{Z}^d))$,  define  $x=V(a\ot 1_{\mathbb{T}^d_{\theta}})V^*$ as above. Then by \eqref{F-S},
for $\el\in\nat_0^d$,
 $$(\mathrm{Id}_{B(\ell_2(\mathbb{Z}^d))}\overline\otimes M_{\phi} ) (D^\el x)
 =V (S_{\mathring{\phi}}(D^\el a) \ot 1_{\mathbb{T}^d_{\theta}})V^{*}\,.$$
It then follows from \eqref{2 norms} that
\be\begin{split}
\|S_{\mathring{\phi}}(a)\|_{S_p^k}
&=\big[\sum_{|\ell|_1\le k} \|(\mathrm{Id}_{B(\ell_2(\mathbb{Z}^d))}\otimes M_{\phi} ) (D^\ell x)\|_{L_p(B(\ell_2(\mathbb{Z}^d))\overline\ot\T^d_{\t})}^p\big]^{\frac1p}\\
&\leq \|\phi\|_{\mathsf{M}_{\mathrm{cb}} (W^k_p (\mathbb{T}^d_{\theta}))} \big[\sum_{|\ell|_1\le k}\|D^\ell x\|_{L_p(B(\ell_2(\mathbb{Z}^d))\overline\ot\T^d_{\t})}^p\big]^{\frac1p}\\
&=\|\phi\|_{\mathsf{M}_{\mathrm{cb}} (W^k_p (\mathbb{T}^d_{\theta}))} \|a\|_{S_p^k}.
\end{split}\ee
Therefore, $\mathring{\phi}$ is a bounded Schur multiplier on $S_p^k$.
Considering matrices $a=(a_{m, n})_{m,n\in\mathbb{Z}^d}$ with entries in $S_{p}$, we show in the same way that  $M_{\mathring{\phi}}$ is c.b. on
$S_p^k$, so $\mathring{\phi}$ is a c.b. Schur multiplier on $S_p^k$ and
 $$\|\mathring{\phi}\|_{\mathsf{M}_{\mathrm{cb}} (S_{p}^k)} \le \|\phi\|_{\mathsf{M}_{\mathrm{cb}} (W^k_p (\mathbb{T}^d_{\theta}))}.$$

To show the converse direction, introducing the following Folner sequence of $\mathbb{Z}^d$:
 $$Z_N=\{-N,\dots,-1,0,1,\dots,N\}^d\subset\mathbb{Z}^d,$$
we define two maps $A_N$ and $B_N$ as follows:
 $$A_N:\; \mathbb{T}^d_{\theta}\to B (\ell_2^{|Z_N|}) \quad \text{with}\quad x\mapsto P_N([x]),$$
where $P_N: \;  B (\ell_2(\mathbb{Z}^d))\rightarrow  B (\ell_2^{|Z_N|})$ with $(a_{m, n})\mapsto
(a_{m, n})_{m,n\in Z_N}$; and
 $$B_N:\;  B (\ell_2^{|Z_N|})\to \mathbb{T}^d_{\theta}\quad \text{with}\quad
  e_{m, n}\mapsto \frac{1}{|Z_N|}e^{-\mathrm{i} n \tilde{\theta} (m-n)^t}U^{m-n}.$$
Here $ B (\ell_2^{|Z_N|})$ is endowed with the normalized trace. Both $A_N, B_N$ are  unital, completely positive
and trace preserving, so extend to  complete contractions between the corresponding $L_p$-spaces.
Moreover,
$$\lim_{N\rightarrow\infty}B_N\circ A_N(x)=x \;\;\mbox{in}\;\; L_p(\mathbb{T}^d_{\theta}), \;\; \forall x\in
L_p(\mathbb{T}^d_{\theta}).$$
If we define  $S_{p}^k(\ell_2^{|Z_N|})$ as before  for $S_p^k$ just replacing $S_p(\el_2(\ent^d))$ by $S_{p}(\ell_2^{|Z_N|})$,  we see that $A_N$  extends to a complete contraction from
$W^k_p(\mathbb{T}^d_{\t})$ into $S_{p}^k(\ell_2^{|Z_N|})$,  while $B_N$ a complete contraction from $S_{p}^k(\ell_2^{|Z_N|})$
into $W^k_p(\mathbb{T}^d_{\t}).$

Now assume that  $\mathring{\phi}$ is a c.b. Schur multiplier on $S_{p}^k$, then it is also a c.b. Schur multiplier on $S_{p}^k(\ell_2^{|Z_N|})$. We want to prove that $M_{\phi}$ is c.b. on $W^k_p(\mathbb{T}^d_{\t})$. For any $x\in L_p(B(\ell_2(\ent^d))\overline\ot\T^d_\t)$,
\be\begin{split}
\big\|{\rm Id}\ot M_{\phi}(x)\big\|_{L_p(B(\ell_2(\ent^d))\overline\ot\T^d_\t)}
 &=\lim_{N}\big\|\big({\rm Id}\ot B_N\big)\circ\big({\rm Id}\ot A_N\big)
 \big({\rm Id}\ot M_{\phi}(x)\big)\big\|_{L_p(B(\ell_2(\ent^d))\overline\ot\T^d_\t)}\\
 &=\lim_{N}\big\|\big({\rm Id}\ot B_N\big)\circ \big({\rm Id}\ot S_{\mathring{\phi}}\big)\big({\rm Id}\ot A_N(x)\big)\big\|_{L_p(B(\ell_2(\ent^d))\overline\ot\T^d_\t)}\\
 &\le \limsup_N\big\|{\rm Id}\ot S_{\mathring{\phi}}({\rm Id}\ot A_N(x))\big\|_{S_p(\ell_2(\mathbb{Z}^d);S_{p}^k(\ell_2^{|Z_N|}))}\\
 &\leq \limsup_N \big\|S_{\mathring{\phi}}\big\|_{\mathrm{cb}}\,\big\|{\rm Id}\ot A_N(x)\big\|_{S_p(\ell_2(\mathbb{Z}^d);S_{p}^{k}(\ell_2^{|Z_N|}))}\\
 & \leq \big\|S_{\mathring{\phi}}\big\|_{\mathrm{cb}}\,\big\|x\big\|_{L_p(B(\ell_2(\mathbb{Z}^d))\overline\ot\T^d_{\t})},
\end{split}
\ee
where in the second equality we have used the fact that
 $${\rm Id}\ot A_N({\rm Id}\ot M_{\phi}(x))={\rm Id}\ot S_{\mathring{\phi}}({\rm Id}\ot A_N(x)),$$
which follows from  \eqref{F-S}. Therefore, $M_{\phi}$ is c.b. on $W^k_p(\mathbb{T}^d_{\theta})$ and
 $$ \|\phi\|_{\mathsf {M}_{\mathrm{cb}} (W^k_p (\mathbb{T}^d_\t))}\le \|\mathring{\phi}\|_{\mathsf{M}_{\mathrm{cb}} (S_{p}^k)} .$$
The theorem is thus proved.
\end{proof}

 \begin{rk}
Let $1\leq p\leq \8$ and  $\alpha \in \real$. Since $J^\a$ is a complete isometry from $H^\alpha _p(\T^d_\t)$ onto $L_p(\T^d_\t)$, we have
$$\mathsf{M}_{\mathrm{cb}} (H^\a _p (\T^d_{\t}))
 =\mathsf{M}_{\mathrm{cb}} (L_{p} (\T^d_\t)) \;\text{ with equal norms}.$$
Thus, by Lemma~\ref{cb-multiplier}
$$\mathsf{M}_{\mathrm{cb}} (H^\a_p (\T^d_{\t}))
 =\mathsf{M}_{\mathrm{cb}} (H^\a _p (\T^d)) \;\text{ with equal norms}.$$
\end{rk}

Note that the proof of Theorem~\ref{q-Sobolev-Bessel} shows that $W^k _p (\T^d_{\t})=H^k _p (\T^d_{\t})$ holds completely isomorphically for $1<p<\8$.  Thus the above remark implies

\begin{cor}
Let $1< p< \8$ and  $k \in \nat$.
 $$\mathsf{M}_{\mathrm{cb}} (W^k _p (\T^d_{\t}))
 =\mathsf{M}_{\mathrm{cb}} (L_{p} (\T^d)) \;\text{ with equivalent norms}.$$
 \end{cor}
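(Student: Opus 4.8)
The final statement to prove is the corollary asserting that for $1<p<\8$ and $k\in\nat$,
$$\mathsf{M}_{\mathrm{cb}} (W^k _p (\T^d_{\t}))=\mathsf{M}_{\mathrm{cb}} (L_{p} (\T^d))$$
with equivalent norms. The plan is to chain together three facts already established, two of them in this very chapter and one earlier.

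First I would invoke Theorem~\ref{q-Sobolev-Bessel}, which for $1<p<\8$ identifies $W^k_p(\T^d_\t)$ with $H^k_p(\T^d_\t)$ with equivalent norms; as the remark immediately following Theorem~\ref{q-multiplier-Sobolev} points out, this identification is in fact a complete isomorphism (the Fourier multipliers $M_\phi$ and $M_{\phi_j}$ used in the proof of Theorem~\ref{q-Sobolev-Bessel} are all c.b.\ by Lemma~\ref{q-multiplier}, so the norms $\|\cdot\|_{W^k_p}$ and $\|\cdot\|_{H^k_p}$ are completely equivalent). A complete isomorphism $T:X\to Y$ of operator spaces of distributions on $\T^d_\t$ that commutes with every Fourier multiplier induces an isomorphism $\mathsf{M}_{\mathrm{cb}}(X)=\mathsf{M}_{\mathrm{cb}}(Y)$ with constants controlled by $\|T\|_{\mathrm{cb}}\|T^{-1}\|_{\mathrm{cb}}$; here $T$ is $J^k$ composed with the identity, and $J^k$ commutes with all $M_\phi$ since both are Fourier multipliers. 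Hence
$$\mathsf{M}_{\mathrm{cb}} (W^k _p (\T^d_{\t}))=\mathsf{M}_{\mathrm{cb}} (H^k _p (\T^d_{\t}))$$
with equivalent norms.

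Next I would use the fact that $J^k$ is a complete isometry from $H^k_p(\T^d_\t)$ onto $L_p(\T^d_\t)$ (this is Proposition~\ref{Sobolev-P}(iii) upgraded to the operator space level, as recorded in the remark before this corollary), and again that $J^k$ intertwines Fourier multipliers. This gives $\mathsf{M}_{\mathrm{cb}} (H^k_p (\T^d_{\t}))=\mathsf{M}_{\mathrm{cb}} (L_{p} (\T^d_\t))$ with equal norms. Finally, Lemma~\ref{cb-multiplier} states $\mathsf{M}_{\mathrm{cb}} (L_p (\T^d_{\t}))=\mathsf{M}_{\mathrm{cb}} (L_{p} (\T^d))$ with equal norms. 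Combining the three identifications yields the assertion, with the relevant constants depending only on $p$, $d$ and $k$.

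There is no serious obstacle here; the corollary is genuinely a corollary. The only point that needs a sentence of care is the assertion that $W^k_p(\T^d_\t)=H^k_p(\T^d_\t)$ holds \emph{completely} isomorphically — i.e.\ after tensoring with $S_p$ — rather than merely isomorphically; but this is immediate from the proof of Theorem~\ref{q-Sobolev-Bessel}, since every Fourier multiplier used there is a Mikhlin multiplier and hence c.b.\ on $L_p(\T^d_\t)$ for $1<p<\8$ by Lemma~\ref{q-multiplier}(ii), so the norm equivalence established there tensorizes with $S_p$ automatically. One should also note the trivial but necessary remark that passing from a complete isomorphism of spaces to the corresponding equality of c.b.\ multiplier spaces only requires that the isomorphism commute with Fourier multipliers, which holds because $J^k$ is itself a Fourier multiplier.
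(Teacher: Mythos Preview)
Your proof is correct and follows essentially the same route as the paper: both use the complete isomorphism $W^k_p(\T^d_\t)=H^k_p(\T^d_\t)$ for $1<p<\8$ (observed from the proof of Theorem~\ref{q-Sobolev-Bessel}), then the remark that $J^\a$ is a complete isometry so $\mathsf{M}_{\mathrm{cb}}(H^\a_p(\T^d_\t))=\mathsf{M}_{\mathrm{cb}}(L_p(\T^d_\t))$, and finally Lemma~\ref{cb-multiplier}. Your write-up is simply more explicit about why the isomorphism is \emph{complete} and why commutation with Fourier multipliers is needed.
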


Clearly, the above equality still holds for $p=1$ or $p=\8$ if $d=1$ (the commutative case) since then $W^k_p(\T)=L_p(\T)$ for all $1\le p\le\8$ by the (complete) isomorphism
 $$L_p(\T)\ni x\mapsto \wh x(0)+ \sum_{m\in\ent\setminus\{0\}}\frac1{(2\pi{\rm i} m)^k}\,\wh x(m) z^m\in W^k_p(\T)\,.$$
However, this is no longer the case as soon as $d\ge2$, as proved by Poornima \cite{Po1982} in the commutative case for $\real^d$.  Poornima's example comes from Ornstein \cite{Or1962} which is still valid for our setting. Indeed, by  \cite{Or1962}, there exists a distribution $T$ on $\T^2$ which is not a measure and such that $T=\partial_1 \mu_0$, $\partial_1 T=\partial_2 \mu_1$ and $\partial_2 T=\partial_1 \mu_2$ for three measures $\mu_i$ on $\T^2$. $T$ induces a Fourier multiplier on $\T^2_\t$,  which is defined by the Fourier transform of $T$ and is denoted by $x\mapsto T* x$. Then for any $x\in W_1^1(\T^2_\t)$,
 \be\begin{split}
 &T*x=\partial_1\mu_0 *x=\mu_0 *\partial_1x\in L_1(\T^2_\t),\\
 &\partial_1 T*x=\partial_1\mu_1 *x=\mu_1 *\partial_2x\in L_1(\T^2_\t),\\
 &\partial_2 T*x=\partial_2\mu_2 *x=\mu_2 *\partial_1x\in L_1(\T^2_\t).
 \end{split}\ee
Thus $T*x\in W_1^1(\T^2_\t)$, so the Fourier multiplier induced by $T$ is bounded on $W_1^1(\T^2_\t)$. We show in the same way that it is c.b. too. Since $T$ is not a measure, it does not belong to $\mathsf{M}(L_1(\T^2))$.

%%%%%%%%%%%%%%%%%%%%%%%%%%%%%%%%%%%%%%%%%%%%%%%%%%%%%%%%%%%%%%%%%%%%%%%%
%%%%%%%%%%%%%%%%%%%%%%%%%%%%%%%%%%%%%%%%%%%%%%%%%%%%%%%%%%%%%%%%%%%%%%%%

\section{Fourier multipliers on Besov spaces}

%%%%%%%%%%%%%%%%%%%%%%%%%%%%%%%%%%%%%%%%%%%%%%%%%%%%%%%%%%%%%%%%%%%%%%%%
%%%%%%%%%%%%%%%%%%%%%%%%%%%%%%%%%%%%%%%%%%%%%%%%%%%%%%%%%%%%%%%%%%%%%%%%

It is  well known that  in the classical setting,  Fourier multipliers behave better on Besov spaces than on $L_p$-spaces. We will see that this fact remains true in the quantum case. We maintain the notation introduced in section~\ref{Definitions and basic properties: Besov}. In particular, $\f$ is a function satisfying \eqref{LP dec} and $\f^{(k)}(\xi)=\f(2^{-k}\xi)$ for $k\in\nat_0$. As usual, $\f^{(k)}$ is viewed as a function on $\ent^d$ too.

\smallskip

The following is the main result of this section.  Compared with the corresponding result in the classical case (see, for instance, Section~2.6 of \cite{HT1983}), our result is more precise since it gives a characterization of Fourier multipliers on $B_{p, q}^\a(\T^d_\t)$ in terms of those on $L_p(\T^d_\t)$.

\begin{thm}\label{q-multiplier-Besov}
 Let $\a\in\real$ and $1\le p, q\le\8$. Let $\phi: \ent^d\to\com$. Then $\phi$ is a Fourier multiplier on $B_{p, q}^\a(\T^d_\t)$ iff the $\phi \f^{(k)}$'s are Fourier multipliers on $L_p(\T^d_\t)$ uniformly in $k$. In this case, we have
  $$\big\|\phi\big\|_{\mathsf{M}(B_{p, q}^\a(\T^d_\t))}\approx |\phi(0)|+ \sup_{k\ge0} \big\|\phi\f^{(k)}\big\|_{\mathsf{M}(L_{p}(\T^d_\t))}$$
with relevant constants depending only on $\a$.
A similar c.b. version holds too.
\end{thm}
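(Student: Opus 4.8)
The plan is to exploit the isometric embedding of $B_{p,q}^\a(\T^d_\t)$ into the weighted direct sum $\el_q^\a(L_p(\T^d_\t))$ via $x\mapsto(\wh x(0),\wt\f_0*x,\wt\f_1*x,\dots)$, together with the $15$-bounded projection $\mathcal P$ back onto the Besov space constructed in the proof of Proposition~\ref{interpolation-Besov} (using \eqref{3-supports}). Under this picture a Fourier multiplier $M_\phi$ on $B_{p,q}^\a(\T^d_\t)$ is essentially the ``diagonal'' operator sending $\wt\f_k*x\mapsto M_\phi(\wt\f_k*x)=M_{\phi\f^{(k)}}(\wt\f_k*x)$ up to the neighbour interactions coming from \eqref{3-supports}. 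This immediately forces one direction: if $M_\phi$ is bounded on $B_{p,q}^\a(\T^d_\t)$, then fixing $k$ and testing on $x$ with $\wh x$ supported in $\{2^{k-1}\le|m|\le2^{k+1}\}$ (so that $\|x\|_{B_{p,q}^\a}\approx 2^{k\a}\|x\|_p$ and $\f^{(k)}$ acts as the identity on $x$) yields $\|M_{\phi\f^{(k)}}x\|_p\les\|M_\phi x\|_p\les\|x\|_p$, whence $\sup_k\|\phi\f^{(k)}\|_{\mathsf M(L_p)}<\8$ and $|\phi(0)|<\8$.

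For the converse, suppose $c=|\phi(0)|+\sup_k\|\phi\f^{(k)}\|_{\mathsf M(L_p)}<\8$. Given $x\in B_{p,q}^\a(\T^d_\t)$ I would estimate $\|\wt\f_k*M_\phi x\|_p$ by writing, using \eqref{3-supports}, $\wt\f_k*M_\phi x=\sum_{j=k-1}^{k+1}\wt\f_k*M_\phi(\wt\f_j*x)$, and then for each $j\in\{k-1,k,k+1\}$ observing that $M_\phi$ applied to $\wt\f_j*x$ coincides with $M_{\phi\f^{(j)}}$ applied to $\wt\f_j*x$ (since $\f^{(j)}\equiv 1$ on $\mathrm{supp}\,\wh{\wt\f_j*x}$, as $\f^{(j)}=\f^{(j)}(\f^{(j-1)}+\f^{(j)}+\f^{(j+1)})$ and these neighbour supports cover the annulus), followed by the multiplier $\wt\f_k$ which is bounded on $L_p(\T^d_\t)$ (Lemma~\ref{q-multiplier}). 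Hence $\|\wt\f_k*M_\phi x\|_p\les c\sum_{j=k-1}^{k+1}\|\wt\f_j*x\|_p$, and summing the weighted $\el_q$-norms (shifting the index $j$ and using $2^{k\a}\approx2^{j\a}$ for $|k-j|\le1$) gives $\|M_\phi x\|_{B_{p,q}^\a}\les c\,\|x\|_{B_{p,q}^\a}$, with the zero-th Fourier coefficient handled by the term $|\phi(0)|\,|\wh x(0)|$. This establishes the norm equivalence.

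\textbf{The c.b.\ version and the main obstacle.}
For the completely bounded statement I would run exactly the same argument but with $L_p(\T^d_\t)$ replaced by $L_p(B(\el_2)\overline\ot\T^d_\t)=S_p[L_p(\T^d_\t)]$ everywhere, i.e.\ tensor all the multipliers with $\mathrm{Id}_{S_p}$; the operator space structure on $B_{p,q}^\a(\T^d_\t)$ is precisely that inherited from $\el_q^\a(L_p(\T^d_\t))$ (Remark~\ref{os-Besov}), and the projection $\mathcal P$ and the Littlewood-Paley multipliers $\wt\f_k$, $\wt\f_j$ are all completely bounded (the auxiliary multipliers have integrable inverse Fourier transform, so Lemma~\ref{q-multiplier}(i) gives c.b.\ bounds independent of the von Neumann algebra). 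The main point to be careful about is the definition of the $E$-valued Besov norm and checking that the triangle-inequality / index-shift manipulations above are legitimate at the level of the operator space $\el_q^\a(S_p[L_p(\T^d_\t)])$ uniformly in the auxiliary matrix size; this is routine once one notes that all the estimates are finite sums (three neighbours) of applications of fixed c.b.\ maps. I expect no genuine difficulty here — the substantive content is entirely in the scalar argument above, and the only mild nuisance is bookkeeping the neighbour-support identity \eqref{3-supports} so that $M_\phi$ on a Littlewood-Paley block genuinely reduces to $M_{\phi\f^{(k)}}$ on that block.
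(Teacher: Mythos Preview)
Your overall strategy is the same as the paper's, and the c.b.\ bookkeeping is fine. There is, however, a recurring slip in the Littlewood--Paley algebra: you assert that $\f^{(j)}\equiv 1$ on $\mathrm{supp}\,\wh{\wt\f_j*x}$ (equivalently, that ``$\f^{(k)}$ acts as the identity'' on elements with Fourier support in $\{2^{k-1}\le|m|\le 2^{k+1}\}$). This is false: it is the \emph{sum} $\f^{(j-1)}+\f^{(j)}+\f^{(j+1)}$ that equals $1$ on $\mathrm{supp}\,\f^{(j)}$, not $\f^{(j)}$ itself (that is exactly the content of \eqref{3-supports}). Consequently your identity $M_\phi(\wt\f_j*x)=M_{\phi\f^{(j)}}(\wt\f_j*x)$ fails; what is true is $M_\phi(\wt\f_j*x)=M_{\phi\f^{(j)}}(x)$, and from this you only get $\|\wt\f_k*M_\phi x\|_p\les c\,\|x\|_p$, which is useless for summing the Besov norm.

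The fix is a one-line rewrite, and it is precisely what the paper does: apply \eqref{3-supports} to the symbol $\phi\f^{(k)}$ itself, namely
\[
\wt\f_k*M_\phi x \;=\; M_{\phi\f^{(k)}}(x)\;=\;M_{\phi\f^{(k)}}\big((\wt\f_{k-1}+\wt\f_k+\wt\f_{k+1})*x\big),
\]
whence $\|\wt\f_k*M_\phi x\|_p\le \|\phi\f^{(k)}\|_{\mathsf{M}(L_p)}\,\|(\wt\f_{k-1}+\wt\f_k+\wt\f_{k+1})*x\|_p$, and the weighted $\ell_q$-sum goes through as you intended. The same slip in the forward direction is harmless there (since $M_{\phi\f^{(k)}}=M_{\f^{(k)}}M_\phi$ and $M_{\f^{(k)}}$ is bounded on $L_p$ regardless), but note that to pass from your test class back to arbitrary $y\in L_p$ you again need \eqref{3-supports} on $\phi\f^{(k)}$, which lands you in the slightly larger annulus $\{2^{k-2}\le|m|\le 2^{k+2}\}$; the paper avoids this by testing directly on $y=(\wt\f_{k-1}+\wt\f_k+\wt\f_{k+1})*x$ for arbitrary $x\in L_p$ and reading off $\wt\f_k*M_\phi y=M_{\phi\f^{(k)}}(x)$.
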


\begin{proof}
 Without loss of generality, we assume that $\phi(0)=0$ and all elements $x$ considered below have vanishing Fourier coefficients at the origin. Let $\phi\in \mathsf{M}(B_{p, q}^\a(\T^d_\t))$ and $x\in L_p(\T^d_\t)$. Then $y=(\wt\f_{k-1}+\wt\f_{k}+\wt\f_{k+1})*x\in B_{p, q}^\a(\T^d_\t)$ and
 $$\|y\|_{B_{p, q}^\a}\le c_\a  2^{k \a} \|x\|_p\;\text{ with }\; c_\a=9\cdot 4^{|\a|}\,.$$
So
  $$\|M_{\phi}(y)\|_{B_{p, q}^\a}\le\|\phi\big\|_{\mathsf{M}(B_{p, q}^\a(\T^d_\t))} \|y\|_{B_{p, q}^\a}\le c_{\a} 2^{k\a}\big\|\phi\big\|_{\mathsf{M}(B_{p, q}^\a(\T^d_\t))} \|x\|_p\,.$$
On the other hand, by \eqref{3-supports}, $\wt \f_k*M_{\phi}(y)=M_{\phi  \f^{(k)}}(x)$ and
 $$\|M_{\phi}(y)\|_{B_{p, q}^\a}\ge 2^{k\a} \|\wt \f_k*M_{\phi}(y)\|_p=2^{k\a} \|M_{\phi \f^{(k)}}(x)\|_p\,.$$
It then follows that
 $$\|M_{\phi \f^{(k)}}(x)\|_p\le c_\a \big\|\phi\big\|_{\mathsf{M}(B_{p, q}^\a(\T^d_\t))} \|x\|_p\,,$$
whence
 $$\sup_{k\ge0}   \big\|\phi\f^{(k)}\big\|_{\mathsf{M}(L_{p}(\T^d_\t))}\le c_\a \big\|\phi\big\|_{\mathsf{M}(B_{p, q}^\a(\T^d_\t))}\,.$$
Conversely, for $x\in B_{p, q}^\a(\T^d_\t)$,
 \be\begin{split}
 \|\wt\f_k*M_{\phi}(x)\|_p
 &= \|M_{\phi\f^{(k)}}\big((\wt\f_{k-1}+\wt\f_{k}+\wt\f_{k+1})*x\big)\|_p\\
 &\le   \big\|\phi\f^{(k)}\big\|_{\mathsf{M}(L_{p}(\T^d_\t))}\|(\wt\f_{k-1}+\wt\f_{k}+\wt\f_{k+1})*x\|_p\,.
 \end{split}\ee
We then deduce
 $$\big\|M_{\phi}(x)\big\|_{B_{p, q}^\a}\le 3\cdot 2^{|\a|} \sup_{k\ge0}\big\|\phi\f^{(k)}\big\|_{\mathsf{M}(L_{p}(\T^d_\t))}\big\|x\big\|_{B_{p, q}^\a}\,,$$
 which implies
  $$\big\|\phi\big\|_{\mathsf{M}(B_{p, q}^\a(\T^d_\t))}\le 3\cdot 2^{|\a|} \sup_{k\ge0}\big\|\phi\f^{(k)}\big\|_{\mathsf{M}(L_{p}(\T^d_\t))}\,.$$
Thus the assertion concerning bounded multipliers is proved.

The preceding argument can be modified to work in the c.b. case too.   First note that for $k\ge0$, $\f^{(k)}$ is a c.b. Fourier multiplier on $L_p(\T^d_\t)$ for all $1\le p\le\8$ with c.b. norm $1$, that is, the map $x\mapsto \wt\f_k*x$ is c.b. on $L_p(\T^d_\t)$.  So for any $x\in S_q[L_p(\T^d_\t)]$ (the $L_p(\T^d_\t)$-valued Schatten $q$-class),
 $$\big\|(\mathrm{Id}_{S_q}\ot M_{\f^{(k)}})(x)\big\|_{S_q[L_p(\T^d_\t)]}\le \|x\|_{S_q[L_p(\T^d_\t)]}\,.$$
Now let $\phi\in \mathsf{M}_{\mathrm{cb}}(B_{p, q}^\a(\T^d_\t))$ and $x\in S_q[L_p(\T^d_\t)]$. Define $y$ as above:  $y=(\wt\f_{k-1}+\wt\f_{k}+\wt\f_{k+1})*x$. Then for $k-2\le j\le k+2$,
 $$\|\wt\f_j*y\|_{S_q[L_p(\T^d_\t)]}\le 3\,\|x\|_{S_q[L_p(\T^d_\t)]}\,.$$
It thus follows that
 \be\begin{split}
 \big\|(\mathrm{Id}_{S_q}\ot M_{\phi})(y)\big\|_{S_q[B_{p, q}^\a(\T^d_\t)]}
 &\le \|\phi\|_{\mathsf{M}_{\mathrm{cb}}(B_{p, q}^\a(\T^d_\t))}\, \|y\|_{S_q[B_{p, q}^\a(\T^d_\t)]}\\
 &\le \|\phi\|_{\mathsf{M}_{\mathrm{cb}}(B_{p, q}^\a(\T^d_\t))}\, \sum_{j=k-2}^{k+2} 2^{j\a} \|\wt\f_j*y\|_{S_q[L_p(\T^d_\t)]}\\
 &\le c_\a 2^{k\a} \|\phi\|_{\mathsf{M}_{\mathrm{cb}}(B_{p, q}^\a(\T^d_\t))}\, \|x\|_{S_q[L_p(\T^d_\t)]}\,.
  \end{split}\ee
 Then as before, we deduce
   $$\sup_{k\ge0}   \big\|\phi\f^{(k)}\big\|_{\mathsf{M}_{\mathrm{cb}}(L_{p}(\T^d_\t))}
   \le c_\a \big\|\phi\big\|_{\mathsf{M}_{\mathrm{cb}}(B_{p, q}^\a(\T^d_\t))}\,.$$
 To show the converse inequality, assume
  $$ \sup_{k\ge0}   \big\|\phi\f^{(k)}\big\|_{\mathsf{M}_{\mathrm{cb}}(L_{p}(\T^d_\t))} \le1.$$
Then for $x\in S_q[B_{p, q}^\a(\T^d_\t)]$,
 \be\begin{split}
 \|\wt\f_k*M_{\phi}(x)\|_{S_q[L_p(\T^d_\t)]}
 &\le \big\|\phi\f^{(k)}\big\|_{\mathsf{M}_{\mathrm{cb}}(L_{p}(\T^d_\t))}\|(\wt\f_{k-1}+\wt\f_{k}+\wt\f_{k+1})*x\|_{S_q[L_p(\T^d_\t)]}\\
 &\le \|(\wt\f_{k-1}+\wt\f_{k}+\wt\f_{k+1})*x\|_{S_q[L_p(\T^d_\t)]}\,.
 \end{split}\ee
Therefore,
 \be\begin{split}
 \|M_{\phi}(x)\|_{S_q[B_{p, q}^\a(\T^d_\t)]}
 &=\Big(\sum_{k\ge0} \big(2^{k\a} \|\wt\f_k*M_{\phi}(x)\|_{S_q[L_p(\T^d_\t)]}\big)^q\Big)^{\frac1q}\\
 &\le\Big(\sum_{k\ge0} \big(2^{k\a}\|(\wt\f_{k-1}+\wt\f_{k}+\wt\f_{k+1})*x\|_{S_q[L_p(\T^d_\t)]}\big)^q\Big)^{\frac1q}\\
 &\le  3\cdot 2^{|\a|} \|x\|_{S_q[B_{p, q}^\a(\T^d_\t)]}\,.
 \end{split}\ee
We thus get the missing  converse inequality, so the theorem is proved.
 \end{proof}

The following is an immediate consequence of the preceding theorem.

\begin{cor}\label{multiplier-Besov properties}
\begin{enumerate}[\rm (i)]
\item $\mathsf{M}(B_{p, q}^\a(\T^d_\t))$ is independent of $\a$ and $q$, up to equivalent norms.
\item $\mathsf{M}(B_{p, \8}^0(\T^d_\t))=\mathsf{M}(B_{p', \8}^0(\T^d_\t))$, where $p'$ is the conjugate index of $p$.
\item $\mathsf{M}(B_{p_0, \8}^0(\T^d_\t))\subset \mathsf{M}(B_{p_1, \8}^0(\T^d_\t))$ for $1\le p_0< p_1\le2$.
\item $\mathsf{M}(L_{p}(\T^d_\t))\subset \mathsf{M}(B_{p, q}^\a(\T^d_\t))$.
\end{enumerate}
\noindent Similar statements hold for the spaces $\mathsf{M}_{\mathrm{cb}}(B_{p, q}^\a(\T^d_\t))$.
\end{cor}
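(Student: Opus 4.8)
The plan is to derive Corollary~\ref{multiplier-Besov properties} directly from the characterization in Theorem~\ref{q-multiplier-Besov}, which reduces everything to the single scale of quantities $\sup_{k\ge0}\|\phi\f^{(k)}\|_{\mathsf{M}(L_p(\T^d_\t))}$ (resp. the c.b. version). First I would note that this supremum does not involve $\a$ or $q$ at all, which immediately gives (i): for any $\a_0,\a_1\in\real$ and $1\le q_0,q_1\le\8$, Theorem~\ref{q-multiplier-Besov} yields
$$\|\phi\|_{\mathsf{M}(B_{p,q_0}^{\a_0}(\T^d_\t))}\approx|\phi(0)|+\sup_{k\ge0}\|\phi\f^{(k)}\|_{\mathsf{M}(L_p(\T^d_\t))}\approx\|\phi\|_{\mathsf{M}(B_{p,q_1}^{\a_1}(\T^d_\t))},$$
with constants depending only on $\a_0,\a_1$. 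The c.b. statement is identical, using the c.b. half of the theorem.

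For (ii) the key observation is that $\mathsf{M}(L_p(\T^d_\t))=\mathsf{M}(L_{p'}(\T^d_\t))$ with equal norms: indeed $M_\phi$ on $L_p(\T^d_\t)$ is the adjoint of $M_{\bar\phi}$ on $L_{p'}(\T^d_\t)$ under the trace pairing, and since $\overline{\f^{(k)}}=\f^{(k)}$ (we may and do take $\f$ real-valued in \eqref{LP dec}), we get $\|\phi\f^{(k)}\|_{\mathsf{M}(L_p)}=\|\overline{\phi\f^{(k)}}\|_{\mathsf{M}(L_{p'})}=\|\bar\phi\,\f^{(k)}\|_{\mathsf{M}(L_{p'})}$, and a multiplier $\psi$ is bounded on $L_{p'}(\T^d_\t)$ iff $\bar\psi$ is, with the same norm. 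Taking the supremum over $k$ and applying Theorem~\ref{q-multiplier-Besov} to both sides (with $\a=0$, $q=\8$) gives (ii); the c.b. version is the same, using that $\mathsf{M}_{\mathrm{cb}}(L_p(\T^d_\t))=\mathsf{M}_{\mathrm{cb}}(L_{p'}(\T^d_\t))$ by the analogous adjointness argument on $L_p(B(\el_2)\overline\ot\T^d_\t)$. For (iii), I would use Remark~\ref{multiplier-rk} and interpolation: $\mathsf{M}_{\mathrm{cb}}(L_1(\T^d))=\mathsf{M}_{\mathrm{cb}}(L_\8(\T^d))$ is the space of Fourier transforms of bounded measures, and $\mathsf{M}_{\mathrm{cb}}(L_2(\T^d))$ is $\el_\8(\ent^d)$; complex interpolation of the multiplier spaces (each $M_\psi$ acts compatibly on the interpolation scale $L_{p_0},L_2$) shows $\mathsf{M}_{\mathrm{cb}}(L_{p_0}(\T^d))\subset\mathsf{M}_{\mathrm{cb}}(L_{p_1}(\T^d))$ for $1\le p_0<p_1\le2$. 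Actually, for the non-c.b. statement (iii) as written, one argues directly: each $\psi=\phi\f^{(k)}$ bounded on $L_{p_0}$ is bounded on $L_{p_0'}$ by the adjointness above, hence on $L_{p_1}$ for $p_0\le p_1\le p_0'$ by interpolation of $L_p(\T^d_\t)$ via \eqref{interpolation of Lp}, and $p_1\le2\le p_0'$ since $p_0\le2$; taking $\sup_k$ and invoking Theorem~\ref{q-multiplier-Besov} twice gives the inclusion. Finally (iv) is the trivial direction of Theorem~\ref{q-multiplier-Besov}: if $\phi\in\mathsf{M}(L_p(\T^d_\t))$, then each $\phi\f^{(k)}=\phi\cdot\f^{(k)}$ is a product of a bounded multiplier with the uniformly (c.b.) bounded multiplier $\f^{(k)}$ (c.b. norm $1$ by Lemma~\ref{q-multiplier} since $\F^{-1}(\f^{(k)})\in L_1(\real^d)$ with uniformly bounded $L_1$-norm), so $\sup_k\|\phi\f^{(k)}\|_{\mathsf{M}(L_p)}\le\|\phi\|_{\mathsf{M}(L_p)}\sup_k\|\f^{(k)}\|_{\mathsf{M}(L_p)}<\8$, and Theorem~\ref{q-multiplier-Besov} concludes $\phi\in\mathsf{M}(B_{p,q}^\a(\T^d_\t))$; the c.b. case is verbatim with c.b. norms.

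The routine parts are the interpolation bookkeeping and the adjointness identities; the only point requiring a little care is (iii), where one must check that the hypothesis $p_1\le2$ together with $p_0<p_1$ places $p_1$ inside the segment $[p_0,p_0']$ so that interpolation between $L_{p_0}(\T^d_\t)$ (or $L_{p_0'}(\T^d_\t)$) and $L_2(\T^d_\t)$ is available — this is immediate once one notes $p_0'\ge2\ge p_1$. I do not anticipate a genuine obstacle: the whole corollary is a formal consequence of Theorem~\ref{q-multiplier-Besov} once the elementary stability properties of $\mathsf{M}(L_p(\T^d_\t))$ (duality, interpolation, stability under multiplication by the fixed cutoffs $\f^{(k)}$) are recorded.
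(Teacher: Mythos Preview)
Your proposal is correct and follows exactly the approach the paper intends: the paper states this corollary as ``an immediate consequence of the preceding theorem'' without giving any further proof, and your argument supplies precisely the routine details (independence of the right-hand side from $\a,q$; duality $\mathsf{M}(L_p)=\mathsf{M}(L_{p'})$; interpolation between $L_{p_0}$ and $L_{p_0'}$; and the uniform boundedness of the cutoffs $\f^{(k)}$) that make that immediacy explicit.
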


Theorem~\ref{q-multiplier-Besov} and Lemma~\ref{cb-multiplier} imply the following:

\begin{cor}
$\mathsf{M}_{\mathrm{cb}}(B_{p, q}^\a(\T^d_\t))=\mathsf{M}_{\mathrm{cb}}(B_{p, q}^\a(\T^d))$ with equivalent norms.
\end{cor}

Let $\F(B_{1,\8}^0(\T^d))$ be the space of all Fourier transforms of functions in $B_{1,\8}^0(\T^d)$ (a commutative Besov space), equipped with the norm $\|\wh f\|=\|f\|_{B_{1,\8}^0}$.

\begin{cor}\label{multiplier B1}
$\mathsf{M}_{\mathrm{cb}}(B_{1, q}^\a(\T^d_\t))=\F(B_{1,\8}^0(\T^d))$ with equivalent norms.
\end{cor}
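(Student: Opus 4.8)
\textbf{Proof plan for Corollary~\ref{multiplier B1}.} The plan is to combine the previous two corollaries. By Corollary~\ref{multiplier-Besov properties}(i) the space $\mathsf{M}_{\mathrm{cb}}(B_{1,q}^\a(\T^d_\t))$ does not depend on $q$ or $\a$, so it suffices to identify $\mathsf{M}_{\mathrm{cb}}(B_{1,\8}^0(\T^d_\t))$; and by the corollary immediately preceding this one, $\mathsf{M}_{\mathrm{cb}}(B_{1,\8}^0(\T^d_\t))=\mathsf{M}_{\mathrm{cb}}(B_{1,\8}^0(\T^d))$ with equivalent norms. Thus the problem is reduced entirely to the commutative case: I must show that a function $\phi$ on $\ent^d$ is a completely bounded Fourier multiplier on $B_{1,\8}^0(\T^d)$ if and only if $\phi=\wh f$ for some $f\in B_{1,\8}^0(\T^d)$, with equivalence of norms. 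Here one should also note that for $p=1$ the notions of bounded and completely bounded Fourier multiplier on $B_{1,q}^\a(\T^d)$ coincide (this follows from Theorem~\ref{q-multiplier-Besov} together with Remark~\ref{multiplier-rk}, since $\mathsf{M}_{\mathrm{cb}}(L_1(\T^d))$ is the space of Fourier transforms of bounded measures), so there is no loss in working with $\mathsf{M}_{\mathrm{cb}}$.

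For the commutative identification, I would argue as follows. First, suppose $f\in B_{1,\8}^0(\T^d)$ and set $\phi=\wh f$. For $x\in B_{1,q}^\a(\T^d)$ the Littlewood–Paley pieces satisfy $\wt\f_k*M_\phi(x)=\wt\f_k*f*x=\wt\f_k*(\wt\f_{k-1}+\wt\f_k+\wt\f_{k+1})*f*x$; using Young's inequality $\|\wt\f_k*f*y\|_1\le \|\wt\f_k*f\|_1\,\|y\|_1$ (valid since convolution by an $L_1$ function is a contraction-up-to-constant on $L_1$, and completely so because it is an $L_1$ convolution, cf. Lemma~\ref{q-multiplier}(i) in the scalar case) together with $\sup_k\|\wt\f_k*f\|_1=\|f\|_{B_{1,\8}^0}$ up to the constant $\wh f(0)$ term, Theorem~\ref{q-multiplier-Besov} gives $\|\phi\|_{\mathsf{M}_{\mathrm{cb}}(B_{1,q}^\a(\T^d))}\lesssim |\wh f(0)|+\|f\|_{B_{1,\8}^0}\approx\|f\|_{B_{1,\8}^0}$. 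Conversely, if $\phi\in\mathsf{M}(B_{1,\8}^0(\T^d))$, apply the multiplier $M_\phi$ to the constant-like test element built so that each Littlewood–Paley block is recovered: by Theorem~\ref{q-multiplier-Besov}, $\sup_k\|\phi\f^{(k)}\|_{\mathsf{M}(L_1(\T^d))}<\8$, and by Remark~\ref{multiplier-rk} each $\phi\f^{(k)}$ is then the Fourier transform of a measure $\mu_k$ on $\T^d$ with $\sup_k\|\mu_k\|\lesssim\|\phi\|_{\mathsf{M}(B_{1,\8}^0)}$; since $\phi\f^{(k)}$ has finite support (it is supported in $\{2^{k-1}\le|m|\le2^{k+1}\}\cap\ent^d$), $\mu_k$ is actually a trigonometric polynomial, i.e. an $L_1$ function with $\|\mu_k\|_1=\|\mu_k\|\lesssim\|\phi\|_{\mathsf{M}(B_{1,\8}^0)}$. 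Setting $f=\phi(0)+\sum_{k\ge0}\mu_k$ (the series converges in $\mathcal S'(\T^d)$ by the usual support argument as in the proof of Proposition~\ref{Besov-P}(i)), one checks $\wt\f_k*f=\sum_{j=k-1}^{k+1}\wt\f_k*\mu_j$, so $\|\wt\f_k*f\|_1\lesssim\|\phi\|_{\mathsf{M}(B_{1,\8}^0)}$ uniformly in $k$, whence $f\in B_{1,\8}^0(\T^d)$ with $\|f\|_{B_{1,\8}^0}\lesssim\|\phi\|_{\mathsf{M}(B_{1,\8}^0)}$, and plainly $\wh f=\phi$.

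The main obstacle I anticipate is the bookkeeping at the second (converse) step: one must be careful that the measures $\mu_k$ produced by Remark~\ref{multiplier-rk} are genuinely polynomials with the stated frequency support, and that reassembling them into a single distribution $f$ does not inflate the norm beyond a dimensional constant — in particular the overlaps $\wt\f_k*\mu_{k\pm1}$ must be controlled, which is exactly where the finite-overlap property \eqref{3-supports} of the Littlewood–Paley decomposition enters. A secondary point requiring a line of justification is the equality $\|\mu_k\|_{M(\T^d)}=\|\mu_k\|_{L_1(\T^d)}$ for a trigonometric polynomial, and the fact that complete boundedness is automatic for $L_1$-convolution operators, so that the forward direction really lands in $\mathsf{M}_{\mathrm{cb}}$ and not merely $\mathsf{M}$; both are standard but should be stated. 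Once these are in place, chaining the commutative identification with Corollary~\ref{multiplier-Besov properties}(i) and the transference corollary yields $\mathsf{M}_{\mathrm{cb}}(B_{1,q}^\a(\T^d_\t))=\F(B_{1,\8}^0(\T^d))$ with equivalent norms, completing the proof.
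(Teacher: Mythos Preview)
Your proposal is correct and follows essentially the same route as the paper: reduce via Theorem~\ref{q-multiplier-Besov} to uniform $L_1$-multiplier bounds on $\phi\f^{(k)}$, identify $L_1$-multipliers with Fourier transforms of measures, and use that $\mathsf M(L_1(\T^d))=\mathsf M_{\rm cb}(L_1(\T^d))$ to handle the c.b.\ issue. The only notable difference is in the direction $\phi\in\mathsf M\Rightarrow f\in B_{1,\8}^0$: you reassemble $f$ from the polynomial pieces $\mu_k$ and control $\wt\f_k*f$ via the overlap property, whereas the paper defines $f$ directly by $\wh f=\phi$ and observes in one line that $\wt\f_k*f=M_{\phi\f^{(k)}}(\wt\f_{k-1}+\wt\f_k+\wt\f_{k+1})$, giving $\|\wt\f_k*f\|_1\le 3\|\f\|_{L_1(\real^d)}\|\phi\f^{(k)}\|_{\mathsf M(L_1(\T^d))}$ without any reassembly. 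This bypasses the bookkeeping you flagged as the main obstacle, but your version works just as well.
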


\begin{proof}
 Let $\phi\in \mathsf{M}_{\mathrm{cb}}(B_{1, \8}^0(\T^d_\t))$ and $f$ be the distribution on $\T^d$ such that $\wh f=\phi$. By Theorem~\ref{q-multiplier-Besov} and Lemma~\ref{cb-multiplier}, we have
  $$\sup_{k\ge0} \big\|\phi\f^{(k)}\big\|_{\mathsf{M}(L_{1}(\T^d))}<\8.$$
 Recall that the Fourier transform of $\f_k$ is $\f^{(k)}$ and $\wt\f_k$ is the periodization of $\f_k$. So
  $$\|\wt\f_k\|_{L_1(\T^d)}=\|\f_k\|_{L_1(\real^d)}=\|\f\|_{L_1(\real^d)}\,.$$
Noting that by \eqref{3-supports},
 $\wt\f_k*f=M_{\phi\f^{(k)}}(\wt\f_{k-1}+\wt\f_{k}+\wt\f_{k+1}),$
we get
  $$\|\wt\f_k*f\|_1\le  \big\|\phi\f^{(k)}\big\|_{\mathsf{M}(L_1(\T^d))} \|\wt\f_{k-1}+\wt\f_{k}+\wt\f_{k+1}\|_1
  \le 3\|\f\|_{L_1(\real^d)}\, \big\|\phi \f^{(k)}\big\|_{\mathsf{M}(L_{1}(\T^d))}\,,$$
 whence
  $$\|f\|_{B_{1,\8}^0}\le 3\|\f\|_{L_1(\real^d)}\,\sup_{k\ge0}\big\|\phi\f^{(k)}\big\|_{\mathsf{M}(L_1(\T^d))}\,.$$
 Conversely, assume $\phi=\wh f$ with $f\in B_{1,\8}^0(\T^d)$. Let $g\in B_{1,\8}^0(\T^d)$. Then
  \be\begin{split}
  \|\wt\f_k*M_{\phi}(g)\|_1
  &=\|\wt\f_k*f*g\|_1=\|\wt\f_k*f*(\wt\f_{k-1}+\wt\f_{k}+\wt\f_{k+1})*g\|_1\\
  &\le \|\wt\f_k*f\|_1\,\|(\wt\f_{k-1}+\wt\f_{k}+\wt\f_{k+1})*g\|_1\\
  &\le 3\|f\|_{B_{1,\8}^0}\, \|g\|_{B_{1,\8}^0}\,.
  \end{split}\ee
 Thus $M_{\phi}(g)\in B_{1,\8}^0(\T^d)$ and
  $$\|M_{\phi}(g)\|_{B_{1,\8}^0}\le 3\|f\|_{B_{1,\8}^0}\, \|g\|_{B_{1,\8}^0}\, ,$$
which implies that $\phi$ is a Fourier multiplier on $B_{1, \8}^0(\T^d_\t)$ and
  $$\|\phi\|_{\mathsf{M}(B_{1, \8}^0(\T^d_\t))} \le 3\|f\|_{B_{1,\8}^0}\,.$$
Considering $g$ with values in $S_\8$, we show that $\phi$ is c.b. too. Alternately, since $\mathsf{M}(L_1(\T^d))= \mathsf{M}_{\mathrm{cb}}(L_1(\T^d))$,
Theorem~\ref{q-multiplier-Besov} yields $\mathsf{M}(B_{1, \8}^0(\T^d))= \mathsf{M}_{\mathrm{cb}}(B_{1, \8}^0(\T^d))$, which allows us to conclude the proof too.
\end{proof}

We have seen previously that every bounded (c.b.) Fourier multiplier on $L_p(\T^d_\t)$ is a bounded (c.b.) Fourier multiplier on $B_{p, q}^\a(\T^d_\t)$.  Corollary~\ref{multiplier B1} shows that the converse is false for $p=1$. We now show that it also is false for any $p\neq2$.

\begin{prop}
 There exists a Fourier multiplier $\phi$ which is c.b. on $B_{p, q}^\a(\T^d_\t)$ for any $p, q$ and $\a$ but never belongs to $\mathsf{M}(L_p(\T^d_\t))$ for any $p\neq2$ and any $\t$.
 \end{prop}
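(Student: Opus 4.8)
The plan is to exhibit a single function $\phi$ on $\ent^d$ that is a completely bounded Fourier multiplier on every $B_{p,q}^\a(\T^d_\t)$ but fails to be a Fourier multiplier on $L_p(\T^d_\t)$ for every $p\neq2$. The natural candidate is a classical lacunary-type example of Stein and Zygmund \cite{SZ1967}; the key is that, by Corollary~\ref{multiplier B1} and Corollary~\ref{multiplier-Besov properties}, membership in $\mathsf{M}_{\mathrm{cb}}(B_{p,q}^\a(\T^d_\t))$ for all $p,q,\a$ is equivalent to $\phi$ being the Fourier transform of an element of $B_{1,\8}^0(\T^d)$ — so we only need a distribution $f$ on $\T^d$ lying in $B_{1,\8}^0(\T^d)$ whose Fourier transform is not in $\mathsf{M}(L_p(\T^d))$ for any $p\neq2$; Lemma~\ref{cb-multiplier} (or rather the $L_p$-transference of \cite{CXY2012}) then transfers the negative conclusion from $\T^d$ to $\T^d_\t$, since if $\phi$ were in $\mathsf{M}(L_p(\T^d_\t))$ it would in particular be in $\mathsf{M}(L_2(\T^d_\t))$ and... no — here we must be careful: boundedness on $L_p(\T^d_\t)$ does not obviously imply boundedness on $L_p(\T^d)$. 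So the transference has to go the other way, or we argue directly on $\T^d_\t$.

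Concretely, first I would recall the Stein--Zygmund construction: take $\phi(m) = e^{i|m|^2 \log|m|}\,|m|^{-d/2}$ (or the one-dimensional analogue adapted to $\ent^d$), i.e. the symbol of an operator whose kernel behaves like an oscillatory integral of critical decay. The first step is to check $\phi\in\F(B_{1,\8}^0(\T^d))$: one shows $\|\wt\f_k*f\|_{L_1(\T^d)}$ is bounded uniformly in $k$, where $f$ is the distribution with $\wh f=\phi$. Because $\f^{(k)}$ localizes to the annulus $|\xi|\sim 2^k$, on that annulus $\phi$ is a smooth symbol of the form $e^{i 2^{2k}\theta(2^{-k}\xi)}$ times a bounded amplitude, and the stationary phase / van der Corput estimate gives $\|\F^{-1}(\phi\f^{(k)})\|_{L_1(\real^d)}\lesssim 1$ uniformly in $k$ — this is exactly the classical computation behind the Stein--Zygmund example, and it yields $\sup_k\|\phi\f^{(k)}\|_{\mathsf{M}(L_1(\T^d))}<\8$, hence via Corollary~\ref{multiplier B1} (applied with $\t=0$, and then transferred to general $\t$) the complete boundedness on all quantum Besov spaces. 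The second step is to verify $\phi\notin\mathsf{M}(L_p(\T^d_\t))$ for $p\neq2$: one reverses the usual transference, namely if $M_\phi$ were bounded on $L_p(\T^d_\t)$ then, restricting to the center / using the trace-preserving conditional expectation onto the subalgebra generated by a single $U_j$ (or directly $\t=0$, since the example can be taken $\t$-independent and one can first rule out $\t=0$ and then invoke that $\mathsf{M}_{\mathrm{cb}}$ is $\t$-independent while using an interpolation argument $\mathsf{M}(L_p)\subset$ something forcing $L_2$-behaviour), we would contradict the classical fact that the Stein--Zygmund multiplier is unbounded on $L_p(\real^d)$ (equivalently $L_p(\T^d)$) for all $p\neq2$. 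The cleanest route is probably: the example is genuinely $\t$-independent, so it suffices to produce it on $\T^d=\T^d_0$; there $\phi\notin\mathsf{M}(L_p(\T^d))$ for $p\neq2$ by \cite{SZ1967}; and for general $\t$ one notes that if $\phi\in\mathsf{M}(L_p(\T^d_\t))$ then applying the conditional expectation $\mathbb E_{U_1,\dots,U_d}$-type averaging over $z\in\T^d$ (the map $\pi_z$ of \eqref{trans-pi}, which commutes with $M_\phi$) and using Corollary~\ref{prop:TransLp} one recovers boundedness on $L_p(\T^d)$, a contradiction.

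The main obstacle I anticipate is precisely this last transference step in the negative direction: Lemma~\ref{cb-multiplier} only equates the \emph{completely} bounded multiplier classes of $L_p(\T^d_\t)$ and $L_p(\T^d)$, and it is explicitly stated in the paper that the plain boundedness question is open. So I cannot simply say ``$\phi\notin\mathsf{M}(L_p(\T^d))\Rightarrow\phi\notin\mathsf{M}(L_p(\T^d_\t))$''. The fix is to go through the averaging/transference of \cite{CXY2012}: for a function $\phi$ on $\ent^d$, boundedness of $M_\phi$ on $L_p(\T^d_\t)$ does imply boundedness on $L_p(\T^d)$ via the isometric embedding $x\mapsto\wt x$ of Corollary~\ref{prop:TransLp} together with the fact that $x\mapsto\wt x$ intertwines $M_\phi$ on $\T^d_\t$ with $M_\phi\ot\mathrm{Id}$ on $L_p(\T^d;L_p(\T^d_\t))$, whence taking a one-dimensional corner of $\T^d_\t$ (or evaluating against a fixed vector state) gives $M_\phi$ bounded on $L_p(\T^d)$. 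I would lay out this intertwining carefully, as it is the one place where things could go wrong, and then the classical Stein--Zygmund unboundedness closes the argument. The remaining verifications — the uniform $L^1$ bound on $\F^{-1}(\phi\f^{(k)})$ via van der Corput, and the bookkeeping in passing from $\real^d$ to $\T^d$ — are routine and I would only sketch them, citing \cite{SZ1967} and the standard periodization estimates already used in Remark~\ref{independence of phi} and Lemma~\ref{IJ}.
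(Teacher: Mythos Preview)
Your overall strategy matches the paper's: use the characterization $\mathsf{M}_{\mathrm{cb}}(B_{1,q}^\a(\T^d_\t))=\F(B_{1,\8}^0(\T^d))$ together with Corollary~\ref{multiplier-Besov properties} for the positive part, and a Stein--Zygmund example for the negative part. But two points diverge from the paper and the second one is a genuine gap.

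\medskip

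\textbf{The example.} The paper does \emph{not} use an oscillatory symbol of the form $e^{i|m|^2\log|m|}|m|^{-d/2}$. It uses the one-dimensional lacunary Dirichlet-kernel construction
\[
\mu(z)=\sum_{n\ge2}\frac{1}{\log n}\,(w_nz)^{2^n}D_n(w_nz),
\]
for which $\mu\in B_{1,\8}^0(\T)$ is immediate from $\|D_n\|_{L_1(\T)}\approx\log n$, and Stein--Zygmund showed directly that $\wh\mu\notin\mathsf{M}(L_p(\T))$ for any $p\neq2$ (for suitable $w_n$). Your stationary-phase verification of $B_{1,\8}^0$-membership for the oscillatory symbol may be feasible but is considerably more work, and it is not what \cite{SZ1967} actually provides.

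\medskip

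\textbf{The negative transference.} This is where your argument does not close. You correctly observe that the equality $\mathsf{M}_{\mathrm{cb}}(L_p(\T^d_\t))=\mathsf{M}_{\mathrm{cb}}(L_p(\T^d))$ is of no use here, and that the implication ``bounded on $L_p(\T^d_\t)\Rightarrow$ bounded on $L_p(\T^d)$'' is open. Your attempted workaround via the embedding $x\mapsto\wt x$ of Corollary~\ref{prop:TransLp} goes the wrong way: it embeds $L_p(\T^d_\t)$ into $L_p(\T^d;L_p(\T^d_\t))$, and the conditional expectation $\mathbb{E}$ lands back in $L_p(\T^d_\t)$, not in scalar $L_p(\T^d)$; ``evaluating against a vector state'' does not yield an $L_p(\T^d)$-bound either. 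The paper sidesteps this entirely by taking the example to be genuinely \emph{one-dimensional}: extend $\mu$ trivially to $\T^d$ (so $\phi$ depends only on $m_1$). Then the von Neumann subalgebra of $\T^d_\t$ generated by $U_1$ alone is commutative and isomorphic to $L_\8(\T)$, giving an isometric, complemented, Fourier-multiplier-compatible embedding $L_p(\T)\hookrightarrow L_p(\T^d_\t)$. If $M_\phi$ were bounded on $L_p(\T^d_\t)$ its restriction to this subalgebra would be $M_{\wh\mu}$ on $L_p(\T)$, contradicting Stein--Zygmund. Your radially-dependent $\phi(m)=e^{i|m|^2\log|m|}|m|^{-d/2}$ does not respect this one-variable restriction, so the conditional-expectation idea you briefly mention cannot be applied to it as stated; you would either need to change the example to a one-dimensional one (which is what the paper does) or find a genuinely new argument for the $d$-dimensional symbol.
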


\begin{proof}
 The example constructed by Stein and Zygmund \cite{SZ1967} for a similar circumstance can be shown to work for our setting too. Their example is a distribution on $\T$ defined as follows:
  $$\mu(z)=\sum_{n=2}^\8\frac1{\log n}\, (w_nz)^{2^n}D_n(w_nz) $$
for some appropriate $w_n\in\T$,  where
  $$D_n(z)=\sum_{j=0}^n z^j\,, \quad z\in\T.$$
Since $\|D_n\|_{L_1(\T)}\approx \log n$, we see that $\mu\in B_{1,\8}^0(\T)$. Considered as a distribution on $\T^d$, $\mu\in B_{1,\8}^0(\T^d)$ too. Thus by Corollaries~\ref{multiplier-Besov properties} and \ref{multiplier B1}, $\phi=\wh\mu$ belongs to $\mathsf{M}_{\mathrm{cb}}(B_{p, p}^\a(\T^d_\t))$ for any $p, q$ and $\a$. However, Stein and Zygmund proved that $\phi$ is not a Fourier multiplier on $L_p(\T)$ for any $p\neq2$ if the $w_n$'s are appropriately chosen. Consequently, $\phi$ cannot be  a Fourier multiplier on $L_p(\T^d_\t)$ for any $p\neq2$ and any $\t$ since $L_p(\T)$ isometrically embeds into  $L_p(\T^d_\t)$ by an embedding that is also a c.b. Fourier multiplier.
\end{proof}

We conclude this section with some comments on the vector-valued case. The proof of Theorem~\ref{q-multiplier-Besov} works equally for vector-valued Besov spaces. Recall that for an operator space $E$,  $B_{p, q}^\a(\T^d_\t; E)$ denotes the $E$-valued Besov space on $\T^d_\t$ (see Remark~\ref{os-Besov}).

\begin{prop}
 For any operator space $E$,
  $$\big\|\phi\big\|_{\mathsf{M}(B_{p, q}^\a(\T^d_\t; E))}\approx |\phi(0)|+ \sup_{k\ge0} \big\|\phi\f^{(k)}\big\|_{\mathsf{M}(L_{p}(\T^d_\t; E))}$$
 with equivalence constants depending only on $\a$.
  \end{prop}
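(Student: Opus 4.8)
The plan is to mimic, essentially verbatim, the proof of Theorem~\ref{q-multiplier-Besov}, observing that every ingredient used there survives the passage from the scalar noncommutative $L_p$-spaces $L_p(\T^d_\t)$ to the operator-space-valued noncommutative $L_p$-spaces $L_p(\T^d_\t;E)$, and correspondingly from $B_{p,q}^\a(\T^d_\t)$ to $B_{p,q}^\a(\T^d_\t;E)$. Recall from Remark~\ref{os-Besov} that $B_{p,q}^\a(\T^d_\t;E)$ is by definition the subspace of $\el_q^\a(L_p(\T^d_\t;E))$ consisting of the images $x\mapsto(\wh x(0),\wt\f_0*x,\wt\f_1*x,\cdots)$, so that $\|x\|_{B_{p,q}^\a(\T^d_\t;E)}=\big(|\wh x(0)|^q+\sum_{k\ge0}2^{qk\a}\|\wt\f_k*x\|_{L_p(\T^d_\t;E)}^q\big)^{1/q}$ (with the usual modification for $q=\8$), and that the decomposition \eqref{3-supports} of $\f$ is a purely Fourier-analytic statement independent of the coefficient space.

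First I would normalize $\phi(0)=0$ and restrict to distributions with vanishing Fourier coefficient at the origin. For the direction ``$\phi\in\mathsf{M}(B_{p,q}^\a(\T^d_\t;E))$ implies $\sup_k\|\phi\f^{(k)}\|_{\mathsf{M}(L_p(\T^d_\t;E))}<\8$'', given $x\in L_p(\T^d_\t;E)$ I would form $y=(\wt\f_{k-1}+\wt\f_{k}+\wt\f_{k+1})*x$; since each $\wt\f_j$ is (completely) bounded on $L_p(\T^d_\t)$ with norm $1$ by Lemma~\ref{q-multiplier}(i), it is in particular bounded on $L_p(\T^d_\t;E)$, so $\|y\|_{B_{p,q}^\a(\T^d_\t;E)}\le c_\a 2^{k\a}\|x\|_{L_p(\T^d_\t;E)}$ with $c_\a=9\cdot4^{|\a|}$, exactly as in the scalar case. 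Applying the multiplier $M_\phi$ and using \eqref{3-supports} to get $\wt\f_k*M_\phi(y)=M_{\phi\f^{(k)}}(x)$, one reads off $\|M_{\phi\f^{(k)}}(x)\|_{L_p(\T^d_\t;E)}\le c_\a\|\phi\|_{\mathsf{M}(B_{p,q}^\a(\T^d_\t;E))}\|x\|_{L_p(\T^d_\t;E)}$. Conversely, for $x\in B_{p,q}^\a(\T^d_\t;E)$ one writes $\wt\f_k*M_\phi(x)=M_{\phi\f^{(k)}}\big((\wt\f_{k-1}+\wt\f_{k}+\wt\f_{k+1})*x\big)$ and takes the $\el_q^\a$-norm in $k$, picking up a factor $3\cdot2^{|\a|}$ from the overlap of the supports; this gives $\|\phi\|_{\mathsf{M}(B_{p,q}^\a(\T^d_\t;E))}\le 3\cdot2^{|\a|}\sup_k\|\phi\f^{(k)}\|_{\mathsf{M}(L_p(\T^d_\t;E))}$. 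Combining the two inequalities yields the asserted equivalence with constants depending only on $\a$.

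I anticipate no genuine obstacle here: the only place where noncommutativity or the coefficient space $E$ could intervene is in the boundedness of the building-block multipliers $\wt\f_k$ and in the triangle-inequality manipulations, and both go through without change because $\el_q^\a(L_p(\T^d_\t;E))$ is an $\el_q$-type direct sum to which scalar triangle inequalities apply coordinatewise, while $\wt\f_k=\F^{-1}(\f^{(k)})$ has integrable inverse Fourier transform (uniformly in $k$, since $\|\F^{-1}(\f^{(k)})\|_1=\|\F^{-1}(\f)\|_1$) and hence induces a contraction on $L_p$ of any Banach- or operator-space-valued noncommutative $L_p$-space by the $p=1$ case of Lemma~\ref{q-multiplier}(i) together with the vector-valued density and duality arguments already used throughout the paper. (One subtlety worth a sentence: the statement as phrased uses the scalar multiplier norm symbol $\mathsf{M}$; here $\mathsf{M}(L_p(\T^d_\t;E))$ means bounded Fourier multipliers on the vector-valued space $L_p(\T^d_\t;E)$ in the sense of Remark~\ref{os-Besov}, and the c.b.\ variant would be obtained in the same way by tensoring with $S_q$ as in the second half of the proof of Theorem~\ref{q-multiplier-Besov}.) The proof is therefore a direct transcription, and I would simply indicate the necessary substitutions rather than rewrite every line.
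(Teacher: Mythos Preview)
Your proposal is correct and is precisely the approach the paper takes: the paper's own proof consists of the single remark that ``the proof of Theorem~\ref{q-multiplier-Besov} works equally for vector-valued Besov spaces,'' and you have simply spelled out why each step survives the passage to $L_p(\T^d_\t;E)$. (One cosmetic slip: the c.b.\ norm of $\wt\f_k$ on $L_p$ is bounded by $\|\F^{-1}(\f)\|_{1}$, not literally $1$, but this uniform constant is all that is needed and the argument is unaffected.)
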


 If $\t=0$, we go back to the classical vector-valued case. The above proposition explains the well-known fact mentioned at the beginning of this section that Fourier multipliers behave better on Besov spaces than on $L_p$-spaces. To see this, it is more convenient to write the above right-hand side in a different form:
 $$\big\|\phi\f^{(k)}\big\|_{\mathsf{M}(L_{p}(\T^d_\t; E))}=\big\|\phi(2^k\cdot)\f\big\|_{\mathsf{M}(L_{p}(\T^d_\t; E))}\,.$$
Thus if $\phi$ is homogeneous, the above multiplier  norm is independent of $k$, so $\phi$ is a Fourier multiplier on $B_{p, q}^\a(\T^d; E)$ for any $p, q, \a$ and any Banach space $E$. In particular, the Riesz transform is bounded on  $B_{p, q}^\a(\T^d; E)$.

The preceding characterization  of Fourier multipliers is, of course, valid for $\real^d$ in place of $\T^d$.  Let us record this here:

\begin{prop}
 Let $E$ be a Banach space. Then for any $\phi:\real^d\to\com$,
  $$\big\|\phi\big\|_{\mathsf{M}(B_{p, q}^\a(\real^d; E))}\approx \|\phi \psi\|_{\mathsf{M}(L_{p}(\real^d; E))}+
   \sup_{k\ge0} \big\|\phi(2^k\cdot)\f\big\|_{\mathsf{M}(L_{p}(\real^d; E))}\,,$$
 where $\p$ is defined by
 $$\p(\xi)=\sum_{k\ge 1}\f(2^{k}\xi),\quad \xi\in\real^d\,.$$
  \end{prop}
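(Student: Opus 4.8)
The plan is to adapt the proof of Theorem~\ref{q-multiplier-Besov}, which is stated for the torus $\T^d$, to the setting of $\real^d$. The only structural change is that on $\real^d$ the Littlewood--Paley decomposition has a low-frequency piece: instead of the constant term $\wh x(0)$ one has the function $\p$ with $\wh\p=\p$ whose Fourier transform is supported in the unit ball and equals $1-\sum_{k\ge0}\f(2^{-k}\cdot)$ near the origin. So the $E$-valued Besov norm on $\real^d$ reads
$$
\|f\|_{B_{p,q}^\a(\real^d;E)}\approx\Big(\|\p*f\|_{L_p(\real^d;E)}^q+\sum_{k\ge0}2^{q k\a}\|\wt\f_k*f\|_{L_p(\real^d;E)}^q\Big)^{\frac1q},
$$
with the usual modification for $q=\8$. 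First I would fix the elementary support bookkeeping: choosing (as is standard) $\p$ so that $\p\f^{(k)}=0$ for $k\ge2$ and $\f^{(k)}\f^{(j)}=0$ for $|j-k|\ge2$, one has the reproducing identities $\p*f=\p*(\p+\wt\f_0+\wt\f_1)*f$ and $\wt\f_k*f=\wt\f_k*(\wt\f_{k-1}+\wt\f_k+\wt\f_{k+1})*f$ for $k\ge1$ (with $\wt\f_{-1}:=\p$ understood appropriately for $k=0$).

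Next I would run the two inequalities exactly as in the proof of Theorem~\ref{q-multiplier-Besov}. For the ``only if'' direction: given $\phi\in\mathsf M(B_{p,q}^\a(\real^d;E))$ and $x\in L_p(\real^d;E)$, I set $y=(\wt\f_{k-1}+\wt\f_k+\wt\f_{k+1})*x$, note $\|y\|_{B_{p,q}^\a}\lesssim 2^{k\a}\|x\|_p$ with constant depending only on $\a$, apply $M_\phi$, and read off $\wt\f_k*M_\phi y=M_{\phi\f^{(k)}}x$ from the support relations to get $\|\phi(2^k\cdot)\f\|_{\mathsf M(L_p)}=\|\phi\f^{(k)}\|_{\mathsf M(L_p)}\lesssim\|\phi\|_{\mathsf M(B_{p,q}^\a)}$ uniformly in $k\ge0$; taking instead $y=(\p+\wt\f_0+\wt\f_1)*x$ and projecting onto the low-frequency band gives $\|\phi\p\|_{\mathsf M(L_p)}\lesssim\|\phi\|_{\mathsf M(B_{p,q}^\a)}$. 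For the ``if'' direction: given uniform bounds on $\|\phi\p\|_{\mathsf M(L_p)}$ and $\|\phi\f^{(k)}\|_{\mathsf M(L_p)}$, for $f\in B_{p,q}^\a(\real^d;E)$ one estimates $\|\p*M_\phi f\|_p\le\|\phi\p\|_{\mathsf M(L_p)}\|(\p+\wt\f_0+\wt\f_1)*f\|_p$ and $\|\wt\f_k*M_\phi f\|_p\le\|\phi\f^{(k)}\|_{\mathsf M(L_p)}\|(\wt\f_{k-1}+\wt\f_k+\wt\f_{k+1})*f\|_p$, then sums the $\el_q$-weighted norms, using the trivial fact that each band overlaps at most three neighbours, to conclude $\|M_\phi f\|_{B_{p,q}^\a}\lesssim\big(\|\phi\p\|_{\mathsf M(L_p)}+\sup_k\|\phi\f^{(k)}\|_{\mathsf M(L_p)}\big)\|f\|_{B_{p,q}^\a}$. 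Combining the two halves yields the claimed equivalence with constants depending only on $\a$ (and on the fixed auxiliary functions $\f,\p$).

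The step requiring the most care is the treatment of the low-frequency term. Unlike the torus case where the zero-mode is a one-dimensional projection controlled by the scalar $|\phi(0)|$, on $\real^d$ the piece $\p*f$ genuinely involves the Fourier multiplier norm $\|\phi\p\|_{\mathsf M(L_p(\real^d;E))}$, and one must check that $\p$ can indeed be chosen so that $\p\f^{(k)}$ vanishes for all but finitely many small $k$ and so that the reproducing identity for the lowest band closes up; this is where the precise choice of $\p$ in the definition of $B_{p,q}^\a(\real^d;E)$ enters and is the only real departure from the argument already written down for $\T^d$. Everything else --- the use of $\|\phi(2^k\cdot)\f\|_{\mathsf M(L_p)}=\|\phi\f^{(k)}\|_{\mathsf M(L_p)}$ (a change of variables), the bounded overlap of the bands, and the boundedness of each $\wt\f_k$ on $L_p(\real^d;E)$ for arbitrary Banach $E$ --- is routine, and I would not spell out these calculations beyond indicating where they are used.
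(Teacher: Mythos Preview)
Your proposal is correct and matches the paper's approach exactly: the paper does not give a separate proof of this proposition but simply records it as the $\real^d$-analogue of the preceding results, noting that the torus argument carries over verbatim once the zero-mode $\wh f(0)$ is replaced by the low-frequency piece governed by $\psi$. Your careful handling of the low-frequency band (the reproducing identity $\psi=\psi(\psi+\f^{(0)}+\f^{(1)})$ and the vanishing $\psi\f^{(k)}=0$ for $k\ge2$) is precisely the extra bookkeeping the paper alludes to but does not spell out.
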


%%%%%%%%%%%%%%%%%%%%%%%%%%%%%%%%%%%%%%%%%%%%%%%%%%%%%%%%%%%%%%%%%%%%%%%%
%%%%%%%%%%%%%%%%%%%%%%%%%%%%%%%%%%%%%%%%%%%%%%%%%%%%%%%%%%%%%%%%%%%%%%%%

\section{Fourier multipliers on Triebel-Lizorkin spaces}

%%%%%%%%%%%%%%%%%%%%%%%%%%%%%%%%%%%%%%%%%%%%%%%%%%%%%%%%%%%%%%%%%%%%%%%%
%%%%%%%%%%%%%%%%%%%%%%%%%%%%%%%%%%%%%%%%%%%%%%%%%%%%%%%%%%%%%%%%%%%%%%%%

As we have seen in the chapter on Triebel-Lizorkin spaces, Fourier multipliers on such spaces are subtler than those on Sobolev and Besov spaces. Similarly to the previous two sections, our target here is to show that the c.b. Fourier multipliers on $F_{p}^{\a,c}(\T_\t^d)$ are independent of $\theta$. By definition,  $F_{p}^{\a,c}(\T_\t^d)$ can be viewed as a subspace of the column space $L_p(\T_\t^d; \ell_2^{\a,c})$, the latter  is the column subspace of $L_p(B (\el_2^\a)\overline \ot\T_\t^d)$. Thus  $F_{p}^{\a,c}(\T_\t^d)$ inherits  the natural operator space structure of $L_p(B (\el_2^\a)\overline \ot\T_\t^d)$.  Similarly, the row Triebel-Lizorkin space $F_{p}^{\a,r}(\T_\t^d)$ carries a natural operator space structure too. Finally, the mixture space $F_{p}^{\a}(\T_\t^d)$ is equipped with the sum or intersection operator space structure according to $p<2$ or $p\ge2$.  Note that according to this definition, even though it is a commutative function space, the space $F_p^\a(\T^d)$ (corresponding to $\t=0$) is endowed with three different operator space structures, the first two being defined by its embedding into $L_p(\T^d;\ell_2^{\a,c})$ and $L_p(\T^d;\ell_2^{\a,r})$, the third one being the mixture of these two. The resulting operator spaces are denoted by $F_p^{\a,c}(\T^d)$ , $F_p^{\a,r}(\T^d)$ and $F_p^{\a}(\T^d)$, respectively. Similarly, we introduce operator space structures on the Hardy space $\H_p^c(\T^d_\t)$, its row and mixture versions too.

The main result of this section is the following:

\begin{thm}\label{q-multiplier-Triebel}
 Let $1\leq p\le \8$ and $\a\in \real$. Then
\be\begin{split}\mathsf{M}_{\mathrm{cb}} (F^{\a,c}_p (\T^d_{\t}))&=\mathsf{M}_{\mathrm{cb}} (F^{\a,c}_p (\T^d))\;\text{ with equal norms},\\
\mathsf{M}_{\mathrm{cb}} (F^{\a,c}_p (\T^d_{\t}))
 &=\mathsf{M}_{\mathrm{cb}} (F^{0,c}_p (\T^d))\;\text{ with equivalent norms}.
 \end{split}\ee
Similar statements hold for the row and mixture spaces.
\end{thm}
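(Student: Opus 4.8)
\textbf{Proof plan for Theorem~\ref{q-multiplier-Triebel}.}
The approach is to mimic the transference argument used for Theorem~\ref{q-multiplier-Sobolev}, replacing the space $S_p^k$ by an appropriate "Schur–Triebel–Lizorkin" space. First I would relate Fourier multipliers on $F_p^{\a,c}(\T^d_\t)$ with Schur multipliers. Given a distribution $x$ on $\T^d_\t$, we have as in \eqref{F-S} that $[M_\phi x]=S_{\mathring\phi}([x])$ with $\mathring\phi=(\phi_{m-n})_{m,n}$. Since $F_p^{\a,c}(\T^d_\t)$ embeds into $L_p(B(\el_2^\a)\overline\ot\T^d_\t)$ via $x\mapsto (\wt\f_k*x)_{k\ge0}$, and since taking the $k$-th Littlewood–Paley piece corresponds on the matrix side to the diagonal-preserving operation $[x]\mapsto (\f^{(k)}(m-n)\,\la xU^n,U^m\ra)_{m,n}$, I would introduce the space $S_p^{\a,c}$ of matrices $a=(a_{m,n})_{m,n\in\ent^d}$ such that $(\f^{(k)}(m-n)a_{m,n})_{m,n}\in L_p(B(\el_2^\a)\overline\ot B(\el_2(\ent^d)))$ after taking the column $\el_2^\a$-norm in $k$, equipped with its natural operator space structure. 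One checks, exactly as in the proof of Theorem~\ref{q-multiplier-Sobolev}, that for $V=\mathrm{diag}(\cdots,U^n,\cdots)$ and $x=V(a\ot1_{\T^d_\t})V^*$, one has $[x]=(a_{m,n}e_{m,n})_{m,n}$ and hence $\|x\|_{F_p^{\a,c}(B(\el_2(\ent^d))\overline\ot\T^d_\t)}=\|a\|_{S_p^{\a,c}}$, and moreover $(\mathrm{Id}\ot M_\phi)(x)=V(S_{\mathring\phi}(a)\ot1_{\T^d_\t})V^*$ piece by piece in $k$. This gives $\mathsf{M}_{\mathrm{cb}}(F_p^{\a,c}(\T^d_\t))=\mathsf{M}_{\mathrm{cb}}(S_p^{\a,c})$ with equal norms, by the same forward/backward argument: the forward direction is immediate, the backward one uses the Følner sequence $Z_N$ and the completely positive, trace-preserving maps $A_N,B_N$ of that proof, which are readily seen to extend to complete contractions $F_p^{\a,c}(\T^d_\t)\to S_p^{\a,c}(\el_2^{|Z_N|})$ and back, with $B_N\circ A_N\to\mathrm{Id}$; the only new point is to verify $\mathrm{Id}\ot A_N(\mathrm{Id}\ot M_\phi(x))=\mathrm{Id}\ot S_{\mathring\phi}(\mathrm{Id}\ot A_N(x))$, which again follows from \eqref{F-S} applied in each Littlewood–Paley component since $A_N$ is just a coordinate restriction of the matrix.

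Applying this identity once with $\t$ arbitrary and once with $\t=0$ yields $\mathsf{M}_{\mathrm{cb}}(F_p^{\a,c}(\T^d_\t))=\mathsf{M}_{\mathrm{cb}}(S_p^{\a,c})=\mathsf{M}_{\mathrm{cb}}(F_p^{\a,c}(\T^d))$ with equal norms, which is the first assertion. For the second assertion I would invoke Theorem~\ref{Triebel-isom}(i): the Riesz potential $I^\a$ (or $J^\a$) is a complete isomorphism between $F_p^{\a,c}(\T^d_\t)$ and $\H_p^c(\T^d_\t)=F_p^{0,c}(\T^d_\t)$ by Proposition~\ref{Triebel-P}(iv). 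Since $I^\a$ commutes with every Fourier multiplier $M_\phi$, conjugation by $I^\a$ sends $\mathsf{M}_{\mathrm{cb}}(F_p^{\a,c}(\T^d_\t))$ isomorphically onto $\mathsf{M}_{\mathrm{cb}}(F_p^{0,c}(\T^d_\t))$, with constants controlled by the isomorphism constants in Theorem~\ref{Triebel-isom}(i); combining with the first part applied at $\a=0$ gives $\mathsf{M}_{\mathrm{cb}}(F_p^{\a,c}(\T^d_\t))=\mathsf{M}_{\mathrm{cb}}(F_p^{0,c}(\T^d))$ with equivalent norms. One must be slightly careful that the isomorphism $I^\a$ is \emph{completely} bounded; this is contained in the proof of Theorem~\ref{Triebel-isom}(i), since that proof proceeds via Theorem~\ref{Hormander}, whose estimates are automatically completely bounded (the argument goes through the vector-valued Calderón–Zygmund theory of Lemmas~\ref{CZD}--\ref{CZH} applied with $\M$ replaced by $B(\el_2)\overline\ot\T^d_\t$, i.e. to $S_p[L_p(\T^d_\t)]$-valued data). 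The case $p=\8$ in both statements is obtained by duality, using Remark~\ref{Triebel-BMO}(ii) (which gives $F_\8^{0,c}(\T^d_\t)=\BMO^c(\T^d_\t)$ and the validity of Propositions~\ref{Triebel-I}, \ref{Triebel-P}, Theorem~\ref{Triebel-isom} for $p=\8$) together with the fact that $\mathsf{M}_{\mathrm{cb}}$ of a space and of its operator space predual coincide.

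The row case is handled by passing to adjoints: $\phi\mapsto\overline\phi$ (or rather $x\mapsto x^*$) intertwines $\mathsf{M}_{\mathrm{cb}}(F_p^{\a,r}(\T^d_\t))$ with $\mathsf{M}_{\mathrm{cb}}(F_p^{\a,c}(\T^d_\t))$ up to complex conjugation of the symbol, so the result for rows follows from that for columns. For the mixture space $F_p^\a(\T^d_\t)$, which for $1\le p<2$ is the sum and for $2\le p<\8$ the intersection of the column and row spaces (with the analogous operator space structure), a Fourier multiplier is c.b. on $F_p^\a(\T^d_\t)$ iff it is c.b. on both $F_p^{\a,c}(\T^d_\t)$ and $F_p^{\a,r}(\T^d_\t)$ — this is a general and elementary fact about sums and intersections of operator spaces, using that $M_\phi$ acts on both summands simultaneously — so the mixture statement reduces to the column and row ones. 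The main obstacle I anticipate is the bookkeeping needed to make the Schur-multiplier model $S_p^{\a,c}$ precise as an operator space and to verify that $A_N,B_N$ are complete contractions between the truncated Triebel–Lizorkin matrix spaces and $F_p^{\a,c}(\T^d_\t)$; once the operator space structures are correctly set up, the rest is a faithful transcription of the proof of Theorem~\ref{q-multiplier-Sobolev}, with the role of $D^\el$ played by the Littlewood–Paley projections and with the completeness of all bounds guaranteed by Theorem~\ref{Hormander}.
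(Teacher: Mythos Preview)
Your proposal is correct and follows essentially the same approach as the paper: introduce the Schur--Triebel--Lizorkin space $S_p^{\a,c}$, establish $\mathsf{M}_{\mathrm{cb}}(F_p^{\a,c}(\T^d_\t))=\mathsf{M}_{\mathrm{cb}}(S_p^{\a,c})$ via the conjugation by $V$ and the F{\o}lner-sequence maps $A_N,B_N$ exactly as in Theorem~\ref{q-multiplier-Sobolev}, deduce the first equality by specializing to $\t=0$, and obtain the second from a c.b.\ version of Theorem~\ref{Triebel-isom}(i) by re-running the Calder\'on--Zygmund lemmas with $\M$ replaced by $B(\el_2)\overline\ot\T^d_\t$. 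One small refinement: the paper notes that the duality argument for $p=\8$ only yields the first equality with \emph{equivalent} norms, so to get equal norms there one should instead adapt the $A_N,B_N$ argument directly to $p=\8$.
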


We will show the theorem only in the case $p<\8$. The proof presented below can be easily modified to work for $p=\8$ too. Alternately, the case $p=\8$ can be also obtained by duality from the case $p=1$. Note, however, that this duality argument yields  only the first equality of the theorem with equivalent norms for $p=\8$.

 We adapt the proof of Theorem \ref{q-multiplier-Sobolev} to the present situation, by introducing the space
 $$S_{p}^{\a,c}=\Big\{a=(a_{m, n})_{m,n\in\mathbb{Z}^d} \,:\, \big(\sum_{k\ge 0}2^{2k\a}\big|\wt\f_k*a\big|^2\big)^{\frac12} \in S_p(\ell_2(\mathbb{Z}^d)) \Big\},$$
equipped with the norm
 $$\|a\|_{S_{p}^{\a,c}}=\big\| \big(\sum_{k\ge 0}2^{2k\a}\big|\wt\f_k*a\big|^2\big)^{\frac12}\big\|_{S_p},$$
where
 $$\wt\f_k*a= \big(\f(2^{-k}(m-n)) \,a_{m, n}\big)_{m,n\in\mathbb{Z}^d}\,.$$
 Then $S_{p}^{\a,c}$ is a closed subspace of the column subspace of $S_p(\ell_2^\a\ot\ell_2(\mathbb{Z}^d))$, which introduces a natural operator space structure on $S_{p}^{\a, c}$. Let $\mathsf{M}_{\mathrm{cb}}(S_p^{\a,c})$ denote the space of all c.b. Schur multipliers on $S_p^{\a,c}$, equipped with the natural norm.

\begin{lem}
Let $1\leq p< \8$ and $\a\in \mathbb{R}$. Then
$$\mathsf{M}_{\mathrm{cb}} (F^{\a,c}_p (\T^d_{\t}))
 =\mathsf{M}_{\mathrm{cb}} (S^{\a,c}_p)\;\text{ with equal norms}.$$
\end{lem}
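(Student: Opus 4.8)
The strategy is to mimic the proof of Theorem~\ref{q-multiplier-Sobolev} line by line, replacing the family of derivation maps $D^\ell$ by the single family of Littlewood--Paley pieces $\wt\f_k*$, and the space $S_p^k$ by $S_p^{\a,c}$. First I would record the analogue of the opening observation of that proof: with $V=\mathrm{diag}(\cdots,U^n,\cdots)_{n\in\ent^d}$, for $a=(a_{m,n})\in B(\el_2(\ent^d))$ the element $x=V(a\ot 1_{\T^d_\t})V^*$ satisfies $[x]=(a_{m,n}e_{m,n})_{m,n}$, and $V$ being unitary gives $\|x\|_{L_p(B(\el_2(\ent^d))\overline\ot\T^d_\t)}=\|a\|_{S_p}$. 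Crucially, $\wt\f_k*$ acts on $[x]$ exactly by the Schur-multiplier recipe $\wt\f_k*[x]=(\f(2^{-k}(m-n))a_{m,n}e_{m,n})_{m,n}$, so that $[M_\phi\,\wt\f_k*x]=S_{\mathring\phi}(\wt\f_k*[x])$ by \eqref{F-S}, and therefore
$$\big\|\big(\sum_{k\ge0}2^{2k\a}|\wt\f_k* x|^2\big)^{\frac12}\big\|_{L_p(B(\el_2(\ent^d))\overline\ot\T^d_\t)}=\big\|\big(\sum_{k\ge0}2^{2k\a}|\wt\f_k* a|^2\big)^{\frac12}\big\|_{S_p}=\|a\|_{S_p^{\a,c}}.$$
This is the column-square-function counterpart of \eqref{2 norms}; here one uses that the column $L_p$-norm of an operator with entries in $B(\el_2(\ent^d))$ transforms correctly under conjugation by the unitary $V\ot 1$, i.e. that $x\mapsto V(a\ot1)V^*$ is a complete isometry $S_p^{\a,c}\to F_p^{\a,c}(\T^d_\t)$ onto its range.

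The ``only if'' direction is then immediate: given $\phi\in\mathsf M_{\mathrm{cb}}(F_p^{\a,c}(\T^d_\t))$, for $a$ with entries in $S_p$ form $x=V(a\ot1)V^*$, apply $\mathrm{Id}\overline\ot M_\phi$, use the displayed identity of norms together with the relation $(\mathrm{Id}_{B(\el_2(\ent^d))}\overline\ot M_\phi)(x)=V(S_{\mathring\phi}(a)\ot1)V^*$ (a consequence of \eqref{F-S}), and conclude $\|S_{\mathring\phi}(a)\|_{S_p^{\a,c}}\le\|\phi\|_{\mathsf M_{\mathrm{cb}}(F_p^{\a,c}(\T^d_\t))}\|a\|_{S_p^{\a,c}}$; running the same computation with matrix-valued entries gives the cb bound $\|\mathring\phi\|_{\mathsf M_{\mathrm{cb}}(S_p^{\a,c})}\le\|\phi\|_{\mathsf M_{\mathrm{cb}}(F_p^{\a,c}(\T^d_\t))}$. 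For the converse I would reuse verbatim the F\o lner-sequence machinery of the proof of Theorem~\ref{q-multiplier-Sobolev}: with $Z_N=\{-N,\dots,N\}^d$, the maps $A_N:\T^d_\t\to B(\el_2^{|Z_N|})$ and $B_N:B(\el_2^{|Z_N|})\to\T^d_\t$ defined there are unital, completely positive and trace preserving, hence completely contractive on the relevant $L_p$-spaces, and $B_N\circ A_N\to\mathrm{Id}$ strongly on $L_p(\T^d_\t)$. The only new point is that $A_N$ is a complete contraction $F_p^{\a,c}(\T^d_\t)\to S_p^{\a,c}(\el_2^{|Z_N|})$ and $B_N$ a complete contraction in the reverse direction; this follows because $A_N,B_N$ intertwine $\wt\f_k*$ with the truncated Schur pieces (again from \eqref{F-S}) and are completely contractive on the ambient column $L_p$-spaces. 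Then, for $\mathring\phi\in\mathsf M_{\mathrm{cb}}(S_p^{\a,c})$ and $x\in L_p(B(\el_2(\ent^d))\overline\ot\T^d_\t)$, the chain of inequalities
$$\|(\mathrm{Id}\ot M_\phi)(x)\|=\lim_N\|(\mathrm{Id}\ot B_N)(\mathrm{Id}\ot S_{\mathring\phi})(\mathrm{Id}\ot A_N(x))\|\le\|S_{\mathring\phi}\|_{\mathrm{cb}}\,\|x\|$$
gives $\|\phi\|_{\mathsf M_{\mathrm{cb}}(F_p^{\a,c}(\T^d_\t))}\le\|\mathring\phi\|_{\mathsf M_{\mathrm{cb}}(S_p^{\a,c})}$, proving equality of norms.

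The step I expect to demand the most care is verifying that the operator space structures match up exactly, i.e. that the ``internal'' $\el_2^{\a,c}$-direction and the ``matricial'' $B(\el_2(\ent^d))$-direction do not interfere and that conjugation by $V\ot1$, as well as the truncations $A_N,B_N$, are genuinely \emph{completely} contractive between the column spaces $F_p^{\a,c}(\T^d_\t)$ and $S_p^{\a,c}(\el_2^{|Z_N|})$; this is where one must be attentive to the definition of $L_p(\T^d_\t;\el_2^{\a,c})$ as a column subspace of $L_p(B(\el_2^\a)\overline\ot\T^d_\t)$ and to the behaviour of column Haagerup tensor / $S_p$-valued norms under the relevant maps. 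Once this lemma is in place, the theorem follows at once: applying it with $\t=0$ gives $\mathsf M_{\mathrm{cb}}(F_p^{\a,c}(\T^d_\t))=\mathsf M_{\mathrm{cb}}(S_p^{\a,c})=\mathsf M_{\mathrm{cb}}(F_p^{\a,c}(\T^d))$ with equal norms, while the isomorphism $I^\a$ (resp.\ $J^\a$) between $F_p^{\a,c}(\T^d)$ and $\H_p^c(\T^d)=F_p^{0,c}(\T^d)$ from Theorem~\ref{Triebel-isom} and Proposition~\ref{Triebel-P}(iv) yields the second equality up to equivalent norms; the row and mixture cases are handled by passing to adjoints and by the definition of the mixture norm.
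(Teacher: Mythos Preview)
Your proposal is correct and follows essentially the same approach as the paper: both adapt the proof of Theorem~\ref{q-multiplier-Sobolev} by replacing the derivation maps $D^\ell$ with the Littlewood--Paley pieces $\wt\f_k*$, use the unitary conjugation $x=V(a\ot1)V^*$ to get the norm identity between $F_p^{\a,c}$ and $S_p^{\a,c}$, and then run the F\o lner truncation argument with $A_N,B_N$ for the converse. The paper's treatment of the point you flagged as most delicate (complete contractivity of $A_N$ between the column spaces) is handled exactly as you suggest, by embedding the Littlewood--Paley data as a column matrix in the ambient $L_p(B(\el_2)\overline\ot\T^d_\t)$ and using that $A_N$ is c.b.\ there.
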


\begin{proof}
This proof is similar to the one of Theorem \ref{q-multiplier-Sobolev}; we point out the necessary changes. Keeping the notation there, we have for $a=(a_{m, n})_{m,n\in\mathbb{Z}^d}\in S^{\a,c}_p$ and  $x=V(a\ot 1_{\T^d_{\t}})V^*\in B(\ell_2(\mathbb{Z}^d))\overline\ot\T^d_{\t}$
 $$ \big\| \big(\sum_k2^{2k\a} |\wt \f_k *x\big|^2\big)^{\frac12}\big\|_{L_p(B(\ell_2(\mathbb{Z}^d))\overline\ot\T^d_{\t})}=\big\|a\big\|_{S^{\a,c}_p}\, .$$
Suppose that $\phi \in \mathsf{M}_{\mathrm{cb}} (F^{\a,c}_p (\T^d_{\t})).$
It then follows that
\be\begin{split}
\big\|S_{\mathring{\phi}}(a)\big\|_{S_{p}^{\a,c}}
&=\big\|\big(\sum_k2^{2k\a} \big|\wt \f_k *(V(S_{\mathring{\phi}}(a)\ot 1_{\T^d_{\t}})V^*)\big|^2\big)^{\frac12} \big\|_{L_p(B(\ell_2(\mathbb{Z}^d))\overline\ot\T^d_{\t})}\\
&=\big\|\big(\sum_k 2^{2k\a}\big|M_\phi\big(\wt \f_k *(V(a\ot 1_{\T^d_{\t}})V^*)\big)\big|^2\big)^{\frac12} \big\|_{L_p(B(\ell_2(\mathbb{Z}^d))\overline\ot\T^d_{\t})}\\
&\leq\big\|\phi\big\|_{\mathsf{M}_{\mathrm{cb}}(F^{\a,c}_p (\T^d_{\t}))} \| x\|_{S_p[F^{\a,c}_p (\T^d_{\t})]}\\
&=\|\phi\|_{\mathsf{M}_{\mathrm{cb}}(F^{\a,c}_p (\T^d_{\t}))} \| a\|_{S_p^{\a,c}}.
\end{split}\ee
Therefore, $\mathring{\phi}$ is a bounded Schur multiplier on $S_p^{\a,c}$.
Considering matrices $a=(a_{m, n})_{m,n\in\mathbb{Z}^d}$ with entries in $B(\ell_2)$, we show in the same way that  $S_{\mathring{\phi}}$ is c.b. on
$S_p^{\a,c}$, so $\mathring{\phi}$ is a c.b. Schur multiplier on $S_p^{\a, c}$ and
 $$\|\mathring{\phi}\|_{\mathsf{M}_{\mathrm{cb}} (S_{p}^{\a,c})} \le \|\phi\|_{\mathsf{M}_{\mathrm{cb}} (F^{\a,c}_p (\mathbb{T}^d_{\theta}))}.$$

To show the opposite inequality, we just note that the contractive and convergence properties of the maps $A_N$ and $B_N$ introduced in the proof of Theorem \ref{q-multiplier-Sobolev} also hold on the corresponding $F^{\a,c}_p(\T^d_\t)$ or $S_p^{\a,c}$ spaces. To see this, we take $A_N$ for example. Since it is c.b. between the corresponding $L_p$-spaces, it is also c.b. from $L_p( B (\ell_2)\overline\ot\T_\t^d)$ to $L_p( B (\ell_2)\overline\ot B (\ell_2^{|Z_N|}))$. Applying this to the elements of the form
$$ \begin{pmatrix}
 \wt \varphi_{0} * x & 0           & 0           &\dots \\
2^{\a} \wt \varphi_1 * x  & 0           & 0           &\dots \\
 2^{2\a} \wt\varphi_{2} * x  & 0           & 0           &\dots \\
 \cdot           & \cdot &\cdot & \dots
 \end{pmatrix}
$$
we see that $A_N$ is completely contractive from $F^{\a,c}_p(\T_\t^d)$ to $S_p^{\a,c}( B (\ell_2^{|Z_N|}))$, the latter space being the finite dimensional analogue of $S_p^{\a,c}$. We then argue as in the proof of Theorem \ref{q-multiplier-Sobolev} to deduce the desired  opposite inequality.
\end{proof}

\begin{proof}[Proof of Theorem \ref{q-multiplier-Triebel}]
The first part is an immediate consequence of the previous lemma. For the second, we need the c.b. version of Theorem \ref{Triebel-isom} (i), whose proof is already contained in section \ref{A multiplier theorem}. To see this, we just note that, letting $\M=B (\ell_2(\mathbb{Z}^d))\overline \ot \T_\t^d$ and $\N=B (\ell_2(\mathbb{Z}^d))\overline \ot L_\8(\T^d)\overline \ot \T_\t^d$ in Lemma \ref{CZH} we obtain the c.b. version of Lemma \ref{multiplier DH}, and that, in the same way, the c.b. version of Lemma \ref{Hp-discrete} holds, i.e., for $x\in S_p[\mathcal{H}_p^c(\T^d_\t)],$
$$\|x\|_{S_p[\mathcal{H}_p^c(\T^d_\t)]}\approx \|\wh x(0)\|_{S_p}+ \| s^{c}_\p(x)\|_{S_p[L_p(\T^d_\t)]}\,.$$
Finally, the previous lemma and the c.b. version of Theorem \ref{Triebel-isom} (i) yield the desired conclusion.
\end{proof}

\begin{rk}
 The preceding theorem and the c.b. version of Theorem \ref{Triebel-isom} (i) show that
  $$\mathsf{M}_{\mathrm{cb}} (\H_p^c (\T^d_{\t}))=\mathsf{M}_{\mathrm{cb}} (\H_p^c (\T^d))\;\text{ with equivalent norms}.$$
In fact, using arguments similar to the proof of the preceding theorem, we can show that the above equality holds with equal norms.
\end{rk}

\bigskip

\n{\bf Acknowledgements.} We wish to thank Marius Junge for discussions on the embedding and interpolation of Sobolev spaces, Tao Mei for discussions on characterizations of Hardy spaces and Eric Ricard  for discussions on Fourier multipliers and comments. We are also grateful to Fedor Sukochev for suggestions and comments on a preliminary version of the paper. We acknowledge the financial supports of ANR-2011-BS01-008-01, NSFC grant (No. 11271292, 11301401 and 11431011). 

\bigskip

%%%%%%%%%%%%%%%%%%%%%%%%%%%%%%%%%%%%%%%%%%%%%%%%%%%%%%%%%
%%%%%%%%%%%%%%%%%%%%%%%%%%%%%%%%%%%%%%%%%%%%%%%%%%%%%%%%%

\end{document}